\DeclareMathAlphabet{\mathpzc}{OT1}{pzc}{m}{it}
\DeclareMathOperator*{\esssup}{ess\,sup}
\DeclareMathOperator*{\essinf}{ess\,inf}
\DeclareMathOperator*{\oph}{Op_h}
\newcommand{\opht}[1]{\operatorname{Op_{h,#1}}}
\newcommand{\model}{\operatorname{mod}}
\DeclareMathOperator*{\weyl}{Op_{h}}
\DeclareMathOperator*{\diam}{diam}
\DeclareMathOperator*{\op}{Op}
\DeclareMathOperator*{\supp}{supp}
\DeclareMathOperator*{\WF}{{WF}}
\DeclareMathOperator*{\WFh}{{WF_h}}
\DeclareMathOperator*{\WFhp}{{WF_{h,\Psi}}}
\DeclareMathOperator*{\WFhpi}{{WF^i_{h,\Psi}}}
\DeclareMathOperator*{\WFhpf}{{WF^f_{h,\Psi}}}
\DeclareMathOperator*{\loc}{loc}
\DeclareMathOperator*{\comp}{comp}
\DeclareMathOperator*{\Ell}{ell}
\DeclareMathOperator*{\MS}{MS_h}
\DeclareMathOperator*{\MSp}{{MS_{h,\Psi}}}
\DeclareMathOperator*{\graph}{graph}
\DeclareMathOperator*{\sgn}{sgn}
\DeclareMathOperator*{\Id}{Id}
\DeclareMathOperator*{\tr}{tr}
\DeclareMathOperator*{\Arg}{Arg}
\newcommand{\m}{$$}
\newcommand{\Ph}[2]{\Psi^{#1}_{#2}}
\renewcommand{\Re}{\operatorname{Re}}
\renewcommand{\Im}{\operatorname{Im}}
\newtheorem{theorem}{Theorem}[chapter]
\newtheorem{lemma}[theorem]{Lemma}
\theoremstyle{definition}
\newtheorem{defin}[theorem]{Definition}
\theoremstyle{remark}
\newtheorem{remark}[theorem]{Remark}
\numberwithin{section}{chapter}
\numberwithin{equation}{chapter}
\newtheorem{prop}{Proposition}
\numberwithin{prop}{section}
\newtheorem{corol}[prop]{Corollary}
\newtheorem{conjecture}{Conjecture}
\numberwithin{conjecture}{section}
\newenvironment{remarks}[1][]{\begin{remark}\begin{trivlist}
\item[\hskip \labelsep {\bfseries #1}]\end{trivlist}\begin{itemize}}{\end{itemize}\end{remark}}
\numberwithin{figure}{section}
\newcommand{\quotient}[2]{{\left.\raisebox{.2em}{$#1$}\middle/\raisebox{-.2em}{$#2$}\right.}}
\newcommand{\bl}{\begin{flushleft}}
\newcommand{\el}{\end{flushleft}}
\newcommand{\br}{\begin{flushright}}
\newcommand{\ert}{\end{flushright}}
\newcommand{\bc}{\begin{center}}
\newcommand{\ec}{\end{center}}
\newcommand{\mc}[1]{\mathcal{#1}}
\newcommand{\recip}[1]{\frac{1}{#1}}
\newcommand{\imply}{\Rightarrow}
\newcommand{\complex}{\mathbb{C}}
\newcommand{\ints}{\mathbb{Z}}
\newcommand{\numList}{\begin{enumerate}}
\newcommand{\enumList}{\end{enumerate}}
\newcommand{\composed}{\text{\textopenbullet}}
\newcommand{\e}{\epsilon}
\newcommand{\re}{\mathbb{R}}
\newcommand{\nn}{\nonumber\\}
\newcommand{\la}{\langle}
\newcommand{\ra}{\rangle}
\newcommand{\Dloc}{\mathcal{D}_{\loc}}
\newcommand{\Cc}{C_c^\infty}
\renewcommand{\O}[1]{\mathpzc{O}_{#1}}
\renewcommand{\o}[1]{\mathpzc{o}_{#1}}
\renewcommand{\sharp}{\#}
\newcommand{\Deltap}{\Delta_{\partial\Omega,\delta'}}
\newcommand{\Deltad}[1]{\Delta_{#1,\delta}}
\newcommand{\Scl}[2]{S^{#1}_{#2,cl}}
\renewcommand{\div}{\operatorname{div}}
\newcommand{\resd}{\Lambda(h,\delta)}
\newcommand{\resp}{\Lambda(h,\delta')}
\DeclareMathOperator*{\ess}{ess}
\newcommand{\hf}{\frac 12}
\newcommand{\LOmega}{D_\Omega}
\newcommand{\LGamma}{D_\Gamma}
\newcommand{\Lchi}{D_\chi}
\newcommand{\Dl}{\tilde{N}}
\newcommand{\dDl}{\partial_{\nu}\mc{D}\ell}
\renewcommand{\S}{\mc{S}\ell}
\newcommand{\D}{\mc{D}\ell}
\newcommand{\Qs}{Q^G_\lambda}
\newcommand{\Qa}{Q^{\Dl}_\lambda}
\newcommand{\Qdd}{Q^{\dDl}_{\lambda}}
\DeclareMathOperator*{\Fried}{Fried}
\DeclareMathOperator*{\ad}{ad}
\newcommand{\PsiHom}{\Psi_{\textup{con}}}
\newcommand{\SHom}{S_{\textup{con}}}
\newcommand{\IHom}{I_{\textup{con}}}
\newcommand{\pO}{\partial\Omega}
\newcommand{\LO}{D_{\pO}}
\begin{document}

\frontmatter

\title [Distribution of Resonances in Scattering by Thin Barriers]{Distribution of Resonances in Scattering by Thin Barriers}


\author[J. Galkowski]{Jeffrey Galkowski}
\address{Mathematics Department, University of California, Berkeley, CA 94720 USA}
\curraddr{Mathematics Department, Stanford University, 
CA 94305, USA}
\email{jeffrey.galkowski@stanford.edu}
\thanks{The author is grateful to the National Science Foundation for support under the National Science Foundation Graduate Research Fellowship Grant No. DGE 1106400 and grant DMS-1201417}


\subjclass[2010]{Primary 35P20, 31B10, 31B20, 31B25, 31B35;\\ Secondary 45C05, 47F05}


\keywords{transmission problems, layer potentials, layer operators, high frequency, resonances, potential theory, hypersingular operator}

\dedicatory{To Opa}

\maketitle

\tableofcontents

\begin{abstract}
We study high energy resonances for the operators 
$$-\Deltad{\pO}:=-\Delta +\delta_{\partial\Omega}\otimes V\quad \text{and}\quad -\Deltap:=-\Delta+\delta_{\pO}'\otimes V\partial_\nu$$ 
where $\Omega\subset\re^d$ is strictly convex with smooth boundary, $V:L^2(\pO)\to L^2(\pO)$ may depend on frequency, and $\delta_{\pO}$ is the surface measure on $\pO$. These operators are model Hamiltonians for the quantum corrals studied in \cite{Aligia, Heller, Crommie} and for leaky quantum graphs \cite{Exner}. We give a quantum version of the Sabine Law \cite{Sabine} from the study of acoustics for both $-\Deltad{\pO}$ and $-\Deltap$.  It characterizes the decay rates (imaginary parts of resonances) in terms of the system's ray dynamics. In particular, the decay rates are controlled by the average reflectivity and chord length of the barrier. 

For $-\Deltad{\pO}$ with $\Omega$ smooth and strictly convex, our results improve those given for general $\pO$ in \cite{GS} and are generically optimal. Indeed, we show that for generic domains and potentials there are infinitely many resonances arbitrarily close to the resonance free region found by our theorem. In the case of $-\Deltap$, the quantum Sabine law gives the existence of a resonance free region that converges to the real axis at a fixed polynomial rate. The size of this resonance free region is optimal in the case of the unit disk in $\re^2$. As far as the author is aware, this is the only class of examples that is known to have resonances converging to the real axis at a fixed polynomial rate but no faster.

The proof of our theorem requires several new technical tools. We adapt intersecting Lagrangian distributions from \cite{UhlMel} to the semiclassical setting and give a description of the kernel of the free resolvent as such a distribution. We also construct a semiclassical version of the Melrose--Taylor parametrix \cite{MelTayl} for complex energies. We use these constructions to give a complete microlocal description of the single, double, and derivative double layer operators in the case that $\pO$ is smooth and strictly convex. These operators are given respectively for $x\in \pO$ by 
\begin{gather*} 
G(\lambda)f(x):=\int_{\partial\Omega}R_0(\lambda)(x,y)f(y)dS(y)\,,\\
 \Dl(\lambda)f(x):=\int_{\pO}\partial_{\nu_y}R_0(\lambda)(x,y)f(y)dS(y)\\
 \dDl(\lambda)f(x):=\int_{\partial\Omega}\partial_{\nu_x}\partial_{\nu_y}R_0(\lambda)(x,y)f(y)dS(y)\,.
 \end{gather*}
This microlocal description allows us to prove sharp high energy estimates on $G$, $\Dl$, and $\dDl$ when $\Omega$ is smooth and strictly convex, removing the log losses from the estimates for $G$ in \cite{GS,GalkSLO} and proving a conjecture from \cite[Appendix A]{GalkSLO}. 
\end{abstract}
  

\chapter*{Acknowledgements}
The author would like to thank Maciej Zworski for invaluable guidance and discussions. Thanks also to Semyon Dyatlov, Hart Smith and Mike Zaletel for their interest and helpful comments and to the anonymous referee for careful reading and many helpful comments. The author is grateful to the National Science Foundation for support under the National Science Foundation Graduate Research Fellowship Grant No. DGE 1106400 and grant DMS-1201417

\mainmatter


\chapter{Introduction}
We seek to understand the long-term behavior of waves trapped by thin barriers. Thin barriers are a model for quantum corrals, which are physical systems assembled from individual atoms using a scanning tunneling microscope (see for example \cite{Aligia,Heller,fiete2002theory, Crommie} and references therein). The corral formed by the atoms partially confines electron waves to its interior (see Figure \ref{fig:quantumCorral}).  From our point of view, the important features of quantum corrals are:

\begin{enumerate}
\item The potential produced by the confining atoms is intense and localized to a thin region hereafter referred to as the boundary.
\item The potential can vary along the boundary.
\end{enumerate}

\begin{SCfigure}
\includegraphics[width=.5\textwidth]{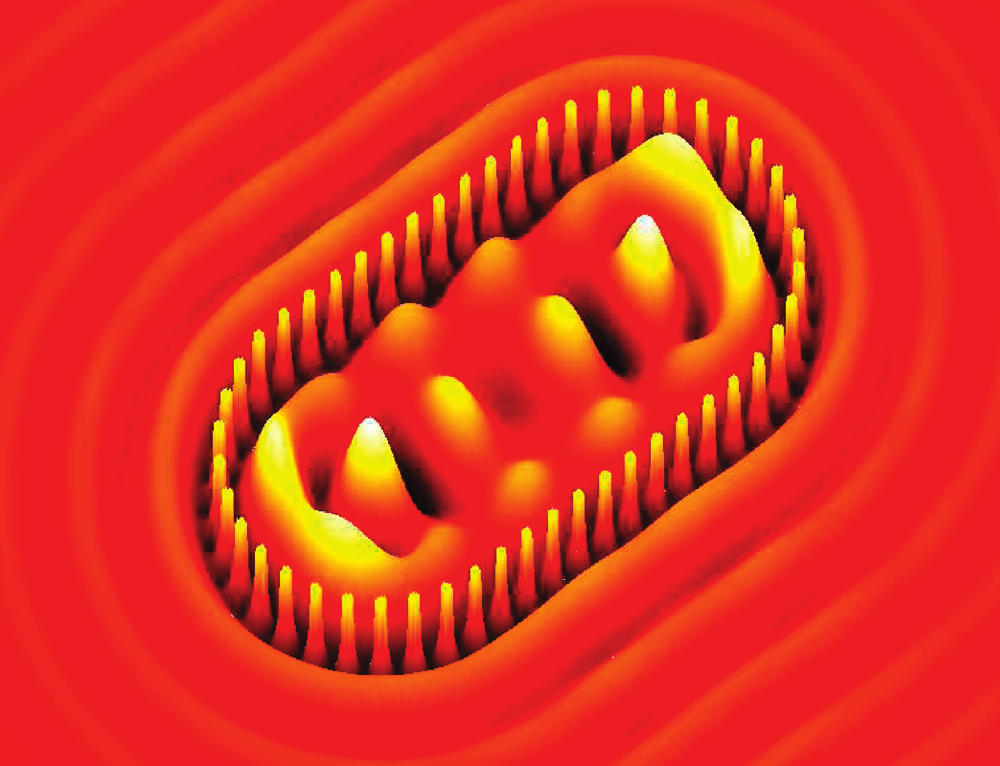}
\vspace{1cm}
\caption[Image of a quantum corral]{This figure shows an image of a quantum corral taken using a scanning tunneling microscope. The atoms produce large spikes in the potential around the boundary of a Bunimovich stadium. The smaller peaks represent the probability density function of the electron. While most of the wavefunction is confined inside the corral, there are smaller ripples in the exterior. Hence, the electron is only partially confined. This image is included from \cite{Heller} with the permission of the authors.\label{fig:quantumCorral}}
\end{SCfigure}

Sound propagation in a concert hall also has the above properties. Moreover, the strength of the interaction with the walls varies as a function of the frequency of the interacting wave. Because of this, we allow our potentials to depend on frequency.

In order to model these systems mathematically, we replace the physical potential with a delta function potential $ \delta_{\Gamma}\otimes V_{\model}$ where $\delta_\Gamma$ is the Hausdorff $d-1$ measure on some hypersurface $\Gamma\Subset \re^d$. This model was suggested in \cite{crommie1995waves} and we show in \cite[Section 7.3]{thesis} that it is an accurate approximation of the physical potential.  

We study the decay of solutions to 
\begin{gather}
\label{eqn:waveIntro}
(\partial_t^2+P)u=0
\end{gather}
where
\begin{gather}
\label{eqn:opDelta}
P=-\Deltad{\Gamma}:=-\Delta + \delta_{\Gamma}\otimes V_{\model},\quad\quad \delta_\Gamma(u)=\int_{\Gamma}ud\mc{H}_{d-1}
\end{gather}
with $\mc{H}_{d-1}$ denoting the Hausdorff $d-1$ measure. The delta function potential is also used to study leaky quantum graphs: models used in the theoretical understanding of waveguides and thin wires in nanotechnology. (See for example the summary article \cite{Exner}.)

Motivated by interest in $\delta'$ interactions from mathematical physics, \cite{SolveModels,boundStatesGeneralSingular, Munoz} spectral theory \cite{seba, SpecDeltaPrime, Kurasov}, and another model of leaky quantum graphs \cite{Exner}, we also study solutions to \eqref{eqn:waveIntro} with $P$ given by
\begin{equation}
\label{eqn:opDeltap}
P=-\Deltap:=-\Delta +\delta'_{\partial\Omega}\otimes V_{\model}\partial_\nu.\quad \quad \delta'_{\partial\Omega}(u):=\int_{\partial\Omega}-\partial_\nu ud\mc{H}_{d-1}.
\end{equation}
We have replaced $\Gamma$ by the boundary of the domain $\Omega$ since we only consider hypersurfaces of that type for the $\delta'$ interaction. 

Solutions to \eqref{eqn:waveIntro} with $P=-\Deltad{\Gamma}$ (as in \eqref{eqn:opDelta}) have \emph{resonance expansions} roughly of the form \begin{equation} 
\label{eqn:expand}u(t,x)\sim \sum_{z\in \text{Res}}e^{-itz}u_z(x)
\end{equation}
where $\text{Res}\subset \mathbb{C}$ is the set of scattering resonances of the operator $-\Deltad{\Gamma}$ (to be defined below).
This was first proved in \cite[Theorem 1.4]{GS}, where one can find a more precise statement. To motivate \eqref{eqn:expand}, we recall the useful fact that solutions to wave equations on compact manifolds have expansions similar to \eqref{eqn:expand} in terms of eigenvalues. Thus, the role played by scattering resonances in leaky systems is similar to that played by eigenvalues in the closed setting.

As seen from \eqref{eqn:expand}, the real and (negative) imaginary part of $z\in \text{Res}$ respectively give the frequency and exponential decay rate of the associated resonant state $e^{-itz}u_z$. Hence, resonances close to the real axis give information about the long term behavior of solutions to \eqref{eqn:waveIntro}. In their seminal works, Lax--Phillips \cite{Lax} and Vainberg \cite{Vain} understood the relation between propagation of singularities for the wave equation and the presence of scattering resonances near the real axis. We study the singularities of solutions to \eqref{eqn:waveIntro} in Chapter \ref{ch:lowerBound} to demonstrate the existence of resonances with prescribed decay rates. 

To get a quantitative heuristic for the decay of waves (the imaginary part of resonances), we imagine solving the wave equation 
$$(\partial_t^2-P)u=0\,,\quad u|_{t=0}=u_0,\quad u_t|_{t=0}=0$$
where $P$ is either $-\Deltap$ or $-\Deltad{\partial\Omega}$ with initial data $u_0$ a wave packet (that is a function localized in frequency and space up to the scale allowed by the uncertainty principle) localized at position $x_0\in \Omega$ and frequency $\xi_0\in \re^d\setminus\{0\}$. The solution, $u$, then propagates along the billiard flow starting from $(x_0,\xi_0)$. At each intersection of the billiard flow with the boundary, the amplitude inside of $\Omega$ will decay by a factor, $R$, depending on the point and direction of intersection. Suppose that the billiard flow from $(x_0,\xi_0)$ intersects the boundary at $(x_n,\xi_n)$ $n>0$. Let $l_n=|x_{n+1}-x_n|$ be the distance between two consecutive intersections with the boundary (see Figure \ref{fig:reflectPic}). Then the amplitude of the wave decays by a factor $\prod_{i=1}^nR_i$ in time $\sum_{i=1}^nl_i$ where $R_i=R(x_i,\xi_i)$. The energy scales as amplitude squared and since the imaginary part of a resonance gives the exponential decay rate of $L^2$ norm, this leads us to the heuristic that resonances should occur at 
\begin{equation} \label{eqn:heurRes}\Im \lambda=\left.\overline{\log |R|^2}\right/(2\bar{l})\end{equation}
where the map $\bar{\cdot}$ is defined by $\bar{f}=\frac{1}{N}\sum_{i=1}^Nf_i$. 
In the early 1900s, Sabine \cite{Sabine} postulated that the decay rate of acoustic waves in a region with leaky walls is determined by the average decay over billiards trajectories. Such a model has also been used for quantum corrals \cite{Heller}. The expression \eqref{eqn:heurRes} provides a precise statement of Sabine's idea. In Chapter \ref{ch:resFree} we show that a quantum Sabine law of the form \eqref{eqn:heurRes} holds for both $-\Deltad{\partial\Omega}$ and $-\Deltap$ (see Theorems \ref{thm:qSabine1} and \ref{thm:qSabine2}). 

\begin{figure}
\centering
\begin{tikzpicture}
\draw (0,0) to [out=0,in=-90](2,2) to[out=90, in=0](1,3)to[out=180, in =95](-2,2)to [out=275,in=180](0,0);
\draw [dashed](0,0)to(2,2)node[right]{$R_1$}to (1,3)node[above]{$R_2$}to (1-2.4,3-2.4)node[left]{$R_3$};
\draw (0.9,1)node[left]{$l_1$} (1.4,2.6)node[below]{$l_2$}(1-1.3,3-1.2)node[left]{$l_3$};
\draw (-1.4,2.4)node{$\Omega$};
\end{tikzpicture}
\caption[Schematic of a wave packet inside a thin barrier]{\label{fig:reflectPic}The figure shows the path of a wave packet along with the lengths between each intersection ($l_i$) and the reflection coefficient at each point of intersection with the boundary ($R_i$). After each reflection with the boundary, the amplitude of the wave packet inside $\Omega$ decays by a factor of $R_i$. The time between reflections is given by $l_i$.}
\end{figure}
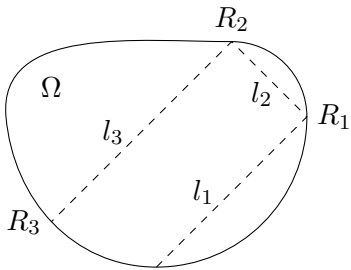

Although the appearance of scattering poles in the expansion \eqref{eqn:expand} is more intuitive, a mathematically more useful characterization of \emph{scattering resonances} of an operator, $P$, is as the poles of the meromorphic continuation of the resolvent
$$R_{P}(\lambda):=(P-\lambda^2)^{-1}$$
from $\Im \lambda \gg 1$. In order to give an expansion of the form \eqref{eqn:expand} (and hence prove exponential decay for waves), we find a resonance free region near the real axis. It suffices to study $P-\lambda^2$ with $|\Re\lambda|\geq C$ and it is convenient to write $\lambda=z/h$ with $h\ll 1$. This converts problems of the form
\begin{equation}\label{eqn:transform} -\Delta+V(x)-(z/h)^2\to -h^2\Delta+h^2V(x)-z^2.
\end{equation}

Since $|\Re \lambda|\gg1$, we are interested in high frequencies and our main intuition comes from the quantum-classical correspondence: high energy waves inherit many properties of the corresponding classical dynamics. In particular, in scattering by smooth compactly supported potentials, the dynamics corresponding to the operator 
\begin{equation}\label{eqn:opHamiltonian}-h^2\Delta +V(x)\end{equation}
is given by the Hamiltonian flow for the Hamiltonian 
\begin{equation}\label{eqn:HamiltonianSmoothPotential}|\xi|^2+V(x).\end{equation}
Our operators are of the form
$$-\Delta + \delta_{\Gamma}\otimes V_{\model}-\lambda^2\quad\quad \text{ and }\quad\quad -\Delta+\delta'_{\partial\Omega}\otimes V_{\model}\partial_\nu-\lambda^2 $$
so, using \eqref{eqn:transform}, we replace $V(x)$ in \eqref{eqn:opHamiltonian} and \eqref{eqn:HamiltonianSmoothPotential} with $h^2V_{\model}\otimes \delta_{\Gamma}\,\,(\text{or } \delta'_{\partial\Omega})$. Then
we can think of a potential of the form $h^2V_{\model}\otimes \delta_\Gamma$ as the distributional limit of a sequence of potentials $\{h^2V_n\}\subset \Cc(\re^d)$. As $n$ increases, $V_n$ narrows and increases in intensity. Because of the $h^2$ scaling, for each fixed $n$, a wave with energy $E>0$ will pass through the potential $h^2V_n$. Thus, the potential will not produce confinement at any positive energy. However, as $V_n$ increases without bound, we expect the corresponding classical dynamics to approach the billiard ball flow (see Section \ref{sec:billiard}). Thus, if $\re^d\setminus \Gamma$ has a bounded component, we expect classical confinement at any energy $E$. Using this naive analysis, we might expect very slow decay of waves at any frequency. However, as the potential $V_n$ narrows, tunneling effects decrease the strength of confinement. In fact, the precise analysis of scattering by delta functions, $\Deltad{\Gamma}$, presented in this article shows that if $V_{\model}$ grows mildly with frequency, then the confinement produced is only slightly stronger than that for $V\in \Cc(\re^d)$. However, if $V_{\model}$ is allowed to depend strongly on frequency, then we demonstrate in \cite{GalkCircle} that as a result of effects coming from paths $x(t)$ nearly tangent to the submanifold $\Gamma$, confinement can become much stronger than that for $V\in \Cc(\re^d)$. Similarly, if the potential is more singular than $\delta_{\Gamma}$, then confinement becomes stronger than that for $V\in \Cc(\re^d)$. 

A key step in understanding the distribution of resonances for thin barriers is relating the poles of $R_{P}(\lambda)$ to the existence of nonzero $\lambda$-outgoing solutions to
\begin{equation}\label{eqn:outgoingSoln}(P-\lambda^2)u=0.\end{equation}
By \emph{$\lambda$-outgoing} we mean that there exist $M>0$ and $\varphi\in \Cc(\re^d)$ with 
$$u(x)=(R_0(\lambda)\varphi)(x)\,,\quad\text{ for }|x|\geq M$$
where, $R_0(\lambda)$, the \emph{free resolvent}, is the meromorphic continuation of 
$$R_0(\lambda):=(-\Delta-\lambda^2)^{-1}$$
from $\Im \lambda\gg 1$. This was done for $-\Deltad{\Gamma}$ by Smith and the author in \cite{GS}. There, it is shown that for the case of $-\Deltad{\partial\Omega}$ this is equivalent to solving 
\begin{equation} 
\label{eqn:transmitnoPrime}
\begin{cases}
(-\Delta -\lambda^2)u_1=0&\text{in }\Omega\\
(-\Delta-\lambda^2)u_2=0&\text{in }\re^d\setminus\overline{\Omega}\\
u_1|_{\partial\Omega}=u_2|_{\partial\Omega}\\
\partial_\nu u_1-\partial_\nu u_2+Vu_1=0&\text{on }\partial\Omega\\
u_2\text{ is }\lambda\text{-outgoing}
\end{cases}.
\end{equation}
In Chapter \ref{ch:mer}, we examine the case of $-\Deltap$ and show that solving \eqref{eqn:outgoingSoln} is equivalent to solving
\begin{equation}
\label{eqn:transmitPrime}
\begin{cases}
(-\Delta -\lambda^2)u_1=0&\text{in }\Omega\\
(-\Delta-\lambda^2)u_2=0&\text{in }\re^d\setminus\overline{\Omega}\\
\partial_\nu u_1|_{\partial\Omega}=\partial_\nu u_2|_{\partial\Omega}\\
u_1-u_2+V\partial_\nu u_1=0&\text{on }\partial\Omega\\
u_2\text{ is }\lambda\text{-outgoing}
\end{cases}.
\end{equation}
Equations \eqref{eqn:transmitnoPrime} and \eqref{eqn:transmitPrime} are examples of transmission problems i.e. problems which involve solving partial differential equations on two different domains with boundary conditions transferring information between the two domains. Another example of a transmission problem is that given by a transparent obstacle having differing wave speeds inside and outside $\Omega$. Resonances in this case were studied by Popov--Vodev \cite{PopVod} and Cardoso--Popov--Vodev \cite{Card, Card2}. However, the methods employed in these instances are quite different than those appearing in the current article

Although we will consider only compact hypersurfaces, it is instructive to look at the case where $\partial\Omega=\{x_1=0\}\subset \re^d$ to gain some heuristic understanding of how resonances behave for $-\Deltap$ and $-\Deltad{\pO}$,. We consider a plane wave with frequency $h^{-1}$, $e^{\frac{i}{h}\la x,\xi\ra }$, approaching $x_1=0$ from the left. (See Figure \ref{fig:1dModelReflection} for a depiction of the setup.) We are then interested in what fraction of the wave is reflected by the barrier and what fraction is transmitted. Let $R_{\delta}$ and $R_{\delta'}$ denote the reflection coefficients and $T_{\delta}$, $T_{\delta'}$ the transmission coefficients.

By using the boundary conditions in \eqref{eqn:transmitnoPrime} (or a formal computation), one can see that the appropriate transmission condition for $V\delta(x_1)$ is
$$u_+(0,x')=u_-(0,x')\quad \quad \partial_{x_1}u_-(0,x')-\partial_{x_1}u_+(0,x')+Vu_+(0,x')=0.$$
This leads to 
\begin{equation}\label{eqn:reflect} R_\delta=\frac{hV}{2i\xi_1-hV}\,,\quad\quad\quad T_\delta=\frac{2i\xi_1}{2i\xi_1-hV}.
\end{equation}
By a similar computation, one can see that the appropriate transmission condition for $V\delta'(x_1)$ is given by 
$$\partial_{x_1}u_+(0,x')=\partial_{x_1}u_-(0,x')\quad\quad u_-(0,x')-u_+(0,x')+V\partial_{x_1}u_-'(0,x')=0,$$
which leads to 
\begin{equation} \label{eqn:reflectPrime} R_{\delta'}=\frac{Vi\xi_1}{Vi\xi_1-2h}\,,\quad\quad\quad T_{\delta'}=\frac{2h}{2h-Vi\xi_1}.
\end{equation}
For general $V$ and $\Omega$, $R_\delta$ and $R_{\delta'}$ are the symbols of certain pseudodifferential operators. However, the definition of these pseudodifferential operators is involved and we postpone it until \eqref{eqn:reflectionOperator} and \eqref{eqn:reflectionOperatorPrime}.

\begin{figure}
\centering
\begin{tikzpicture}
\draw[very thin,-] (1,-4) -- (1,1) ;
\node at (1, 1.5) {$V\delta(x_1)$} ;
\node at (-2.5,0){$u_1=e^{\frac{i}{h}\la x,\xi\ra}+ R_{\delta}e^{\frac{i}{h}(-x_1\xi_1+\la x',\xi'\ra)}$} ;
\node at (2.5,0){$u_2=T_{\delta}e^{\frac{i}{h}\la x,\xi\ra}$} ;
\node at (-0.5,1){$x_1<0$} ;
\node at (2,1){$x_1>0$} ;

\draw[thick]
(-2.5,-1.5)--(-2.5,-1);
\draw[thick]
(-2.5,-1.5)--(-2.3,-1.5);
\draw[thick]
(-2.5,-1)--(-2.3,-1);
\node at (-3,-1.25) {$1$};
\draw[ultra thick, ->, red] 
        (-2,-1.5) sin (-1.5,-1) ;
\draw[ultra thick,->,red]
(-1.5,-1)cos (-1,-1.5) sin (-.5,-2) ;
\draw[ultra thick,->,red]
(-.5,-2)cos(0,-1.5)sin(.5,-1);
\draw[ultra thick,red]
(.5,-1)cos (1,-1.5);
\draw[dashed]
(-2,-1.5)--(1,-1.5);
\node at (-.7,-2.6){$+$};
\begin{scope}[shift={(.5,-2)}]
\draw[thick]
(-3,-1.5)--(-3,-1.8);
\draw[thick]
(-3,-1.5)--(-2.8,-1.5);
\draw[thick]
(-3,-1.8)--(-2.8,-1.8);
\node at (-3.5,-1.65) {$|R_{\delta}|$};
\draw[ultra thick, ->, blue, dashed] 
      (-1.5,-1.2)  cos  (-2,-1.5)sin (-2.5,-1.8);
\draw[ultra thick,->,blue, dashed]
(-.5,-1.8) cos (-1,-1.5) sin (-1.5,-1.2);
\draw[ultra thick,->,blue, dashed]
(.5,-1.2)cos(0,-1.5)sin(-.5,-1.8);
\draw[dashed]
(-2.5,-1.5)--(.5,-1.5);
\end{scope}
\begin{scope}[shift={(2,-.75)}]
\draw[ultra thick,->,red]
 (-1,-1.5) sin (-.5,-1.9) ;
\draw[ultra thick,->,red]
(-.5,-1.9)cos(0,-1.5)sin(.5,-1.1);
\draw[ultra thick,red]
(.5,-1.1)cos (1,-1.5);
\draw[thick]
(1.5,-1.5)--(1.5,-1.1);
\draw[thick]
(1.5,-1.5)--(1.3,-1.5);
\draw[thick]
(1.5,-1.1)--(1.3,-1.1);
\draw[dashed]
(-1,-1.5)--(1,-1.5);
\node at (2,-1.3) {$|T_{\delta}|$};
\end{scope}

\end{tikzpicture}
\caption[Model for calculating reflection coefficients]{\label{fig:1dModelReflection} The setup for plane wave interactions. Here, $u=u_1\oplus u_2$ is a solution to \eqref{eqn:transmitnoPrime} and $R_\delta$ and $T_{\delta}$ are respectively the reflection and transmission coefficients. The waves shown in the red solid lines are traveling to the right. On the left and right they are respectively the initial wave and the transmitted wave. The wave shown in the blue dashed line is the reflected wave traveling to the left. The setup for the $\delta'$ interaction is similar.}
\end{figure}
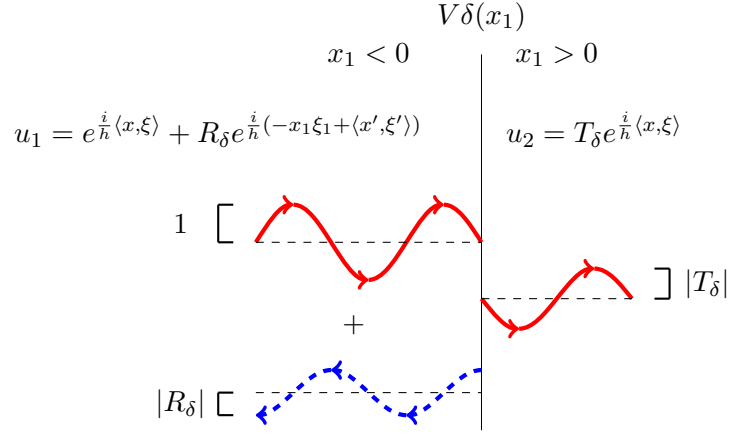

In this computation, we consider waves with frequency equal to $h^{-1}$ and hence have $\xi\in S^{d-1}$. When $\xi_1$ is near $0$, the plane wave travels nearly tangent to $x_1=0$. Our first observation is that as $\xi_1\to 0$, $|R_\delta|\to 1$ while $|R_{\delta'}|\to 0$. This reflects the fact that frequencies tangent to $\Gamma=\{x_1=0\}$ are annihilated by the normal derivative to $\Gamma.$ Thus, we expect glancing (tangent) trajectories to contribute less to the resonances close to the real axis for $-\Deltap$ than for $-\Deltad{\Gamma}$.  

Equation \eqref{eqn:heurRes} with $R=R_{\delta}$ suggests that the resonances of $-\Deltad{\partial\Omega}$ lie in regions with $\Im z\sim h\log h^{-1}.$ On the other hand, if we assume that $V\sim h^\alpha$ for $\alpha<1$ and let $R=R_{\delta'}$, then we obtain for $-\Deltap$ that $\Im z \sim h^{3-2\alpha}.$ Thus, the resonances of $-\Deltap$ are much closer to the real axis than those of $-\Deltad{\partial\Omega}$. Indeed, we show that when written in terms of $\lambda$, the resonances of $-\Deltap$ converge to the real axis at a fixed polynomial rate while those of $-\Deltad{\partial\Omega}$ diverge logarithmically from the real axis. 

The paper \cite{GS} analyzes $-\Deltad{\Gamma}$ when $\Gamma $ is any finite union of subsets of compact $C^{1,1}$ hypersurfaces. In this paper, in order to give a detailed analysis of resonances for the operators $-\Deltad{\Gamma}$ and to introduce $-\Deltap$, we focus on the case where $\Gamma=\pO$ is smooth and strictly convex. 

We now state schematic versions of our main theorems
\begin{theorem}[Quantum Sabine Law: $\delta$ potential]
\label{thm:qSabine1}
Let $\Omega\Subset \re^d$ be strictly convex with smooth boundary. Let $\lambda=z/h$ be a resonance of $-\Deltad{\pO}$. Then there exists $h_0>0$ so that for $0<h<h_0$ and $\Re z\sim 1$, we have
\begin{equation}\label{eqn:1}\frac{\Im z}{h}\leq \sup_{B^*\pO} \left.\overline{\log |R_\delta|^2}\right/(2\bar{l})=:\mc{I}(h)\end{equation}
where $B^*\pO$ denotes the unit coball bundle of $\pO$.
Moreover, for $\Omega=B(0,1)\subset \re^2$ and $V$ constant, \eqref{eqn:1} is sharp.
\end{theorem}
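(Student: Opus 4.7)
The approach will have three phases: reduce the resonance condition to invertibility of a boundary integral operator; describe this operator microlocally so that its iterates encode the billiard dynamics; and verify sharpness on the disk by separation of variables.

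\emph{Boundary reduction.} By \eqref{eqn:transmitnoPrime}, $\lambda = z/h$ is a resonance of $-\Deltad{\pO}$ iff the transmission problem admits a nontrivial $\lambda$-outgoing solution. Representing both $u_1$ and $u_2$ by a single layer potential $u = G(\lambda)f$ with density $f = u|_{\pO} \in L^2(\pO)$ will automatically enforce continuity across $\pO$, and will convert the jump condition on normal derivatives into a boundary equation of the form $(I - VG(\lambda))f = 0$. It will then suffice to prove invertibility of $I - VG(\lambda)$ on $L^2(\pO)$ whenever $\Im z/h > \mc{I}(h) + \varepsilon$, which I will do by bounding $\|(VG(\lambda))^N\|_{L^2 \to L^2} < 1/2$ for a suitable $N = N(h)$ and then inverting via the identity $(I - T)^{-1} = (I + T + \cdots + T^{N-1})(I - T^N)^{-1}$.

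\emph{Microlocal structure and iteration.} The plan is to use the semiclassical intersecting-Lagrangian description of $R_0(\lambda)$ together with the semiclassical Melrose--Taylor parametrix constructed in earlier chapters to show that $VG(\lambda)$ is a semiclassical Fourier integral operator on $L^2(\pO)$. Away from the glancing set $\{|\xi'|=1\} \subset T^*\pO$, its canonical relation will be the graph of the billiard ball map $\beta \colon B^*\pO \to B^*\pO$ associated to $\Omega$, and its principal symbol at $(x,\xi')$ will equal the reflection coefficient $R_\delta(x,\xi')$ from \eqref{eqn:reflect} (with $\xi_1 = \sqrt{1 - |\xi'|^2}$) multiplied by an amplitude factor $\exp(-\Im z\,l(x,\xi')/h)$ coming from the oscillation $e^{i\lambda|x-y|}$ of the free Helmholtz kernel, where $l$ is the chord length to the next reflection. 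Iterating Egorov's theorem, $(VG(\lambda))^N$ will be an FIO quantizing $\beta^N$ with principal symbol
$$\sigma_N(x,\xi') = \prod_{i=0}^{N-1} R_\delta(\beta^i(x,\xi')) \cdot \exp\!\Bigl(-\Im z \sum_{i=0}^{N-1} l_i /h\Bigr),$$
so that by semiclassical $L^2$-boundedness of FIOs,
$$\|(VG(\lambda))^N\|_{L^2 \to L^2} \le \sup_{(x,\xi') \in B^*\pO} \exp\!\Bigl(N\bar{l}\bigl(\mc{I}(h) - \Im z /h\bigr)\Bigr) + \text{remainder}.$$
The principal term drops below $1/2$ for $N \gtrsim 1/\varepsilon$ whenever $\Im z/h > \mc{I}(h) + \varepsilon$. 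The main obstacle will be controlling the Egorov remainder, which threatens to accumulate an $O(h)$ loss at each of the $N$ compositions: $N$ must be large enough to drive the main term below $1/2$, yet with $Nh$ small enough that the symbol calculus does not degenerate. Strict convexity of $\Omega$ will provide the uniform bounds on derivatives of $\beta$ and on the size of the glancing region needed to make this work; near the glancing set, the Melrose--Taylor description will yield $|R_\delta| \le 1 + O(h^\infty)$, so that regime contributes no obstruction.

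\emph{Sharpness for the disk.} When $\Omega = B(0,1) \subset \re^2$ and $V$ is constant, rotational symmetry reduces the resonance condition to an explicit scalar equation in Bessel and Hankel functions: resonances will appear as zeros $\lambda_{m,k}$ of a meromorphic function of $\lambda$ parameterized by $m \in \ints$. Using Debye asymptotics in the non-glancing regime and Langer/Airy-type transitional asymptotics near the turning point $m \approx |\lambda|$, I will exhibit explicit sequences of zeros whose normalized imaginary parts $\Im \lambda_{m,k}/\Re \lambda_{m,k}$ accumulate at $\mc{I}(h)$, proving optimality.
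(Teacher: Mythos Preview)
Your overall architecture is right in spirit, but the central microlocal step has a gap. The operator $GV$ (the paper's boundary equation is $(I+GV)\psi=0$, not $(I-VG)f=0$, though this is minor) is \emph{not} a single FIO quantizing the billiard map $\beta$. The single layer operator decomposes as $G=G_\Delta+G_B+G_g$, where $G_\Delta$ is a genuine pseudodifferential operator with principal symbol $ih/(2\sqrt{1-|\xi'|^2})$, $G_B$ is the FIO associated to $\beta$, and $G_g$ is localized near glancing. So $VG$ is the sum of a pseudo and an FIO, and iterating it produces all words in $\{VG_\Delta,VG_B\}$ rather than a clean product along billiard orbits.

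Relatedly, the reflection coefficient $R_\delta=hV/(2i\xi_1-hV)$ is \emph{not} the symbol of $VG$ or of $VG_B$; the denominator $2i\xi_1-hV$ only appears after an algebraic reduction. The paper multiplies by $G_\Delta^{1/2}V$, sets $T=G_\Delta^{-1/2}G_BG_\Delta^{-1/2}$ and $R_\delta=-(I+G_\Delta^{1/2}VG_\Delta^{1/2})^{-1}G_\Delta^{1/2}VG_\Delta^{1/2}$, and shows the hyperbolic equation becomes $(I-(R_\delta T)^N)\varphi=0$ microlocally. Only then does the symbolic iteration you wrote down become valid, with your $\sigma_N$ as the symbol of $(R_\delta T)^N$. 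You need this step; without it your Egorov iteration is applied to the wrong operator.

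Your glancing treatment is also too quick. The bound $|R_\delta|\le 1$ there gives no contraction (indeed $|R_\delta|\to 1$ and the chord length $\to 0$), so the iteration argument does not cover that region. The paper instead uses the Melrose--Taylor model to show $\|G\|\le C h^{2/3}$ microlocally near glancing, so $\|GVX\psi\|<\|X\psi\|$ under the hypothesis $|\sigma(V)|<\epsilon_\Omega h^{-2/3}$, ruling out concentration there by a direct Neumann-series argument rather than via $R_\delta$. The elliptic region is handled similarly by ellipticity of $I+G_\Delta V$. Your disk sharpness sketch via Bessel asymptotics is correct and matches the paper's Appendix.
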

We also show that Theorem \ref{thm:qSabine1} is sharp for a generic strictly convex $\Omega$ and potential $V$.
\begin{theorem}
\label{thm:qSabineOptimal}
For a generic strictly convex $\Omega\Subset \re^d$ with smooth boundary and $V$, \eqref{eqn:1} is sharp. In particular, there exists a sequence $\{(h_n,z_n)\}_{n=1}^\infty$ with $h_n\to 0$ such that $\Re z_n\sim1$, $z_n/h_n$ is a resonance of $-\Deltad{\pO}$, and for any $\delta>0$ there exists $N>0$ such that for $n>N$
$$\frac{\Im z_n}{h_n}\geq \mc{I}(h_n)-\delta\log h_n^{-1}$$
where $\mc{I}$ is as in \eqref{eqn:1}.
\end{theorem}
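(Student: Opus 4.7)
The plan is to realize the upper bound $\mc{I}(h)$ from Theorem \ref{thm:qSabine1} by constructing explicit quasimodes associated with closed billiard trajectories inside $\Omega$ and then converting these quasimodes into true resonances. The genericity hypothesis enters in order to guarantee that the supremum defining $\mc{I}(h)$ is well approximated by averages along a \emph{periodic} orbit that is non-degenerate enough to support a Gaussian beam construction. Specifically, for each fixed $\delta>0$ I would first produce, via a Birkhoff--style existence result for billiard orbits on strictly convex domains together with a perturbation argument in $(\Omega,V)$, a closed billiard trajectory $\gamma$ of period $N$ whose per-length logarithmic reflection loss
\begin{equation*}
\frac{1}{2\bar l(\gamma)}\,\overline{\log|R_\delta|^2}(\gamma)\;>\;\sup_{B^*\pO}\frac{\overline{\log|R_\delta|^2}}{2\bar l}\;-\;\tfrac{\delta}{2}\,,
\end{equation*}
and whose linearized Poincar\'e map is hyperbolic (or at least has no resonant eigenvalues). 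For generic strictly convex $\Omega$ and generic $V$, such $\gamma$ exist in abundance, since periodic orbits with arbitrary rotation number are dense among convex billiards and the values of $\overline{\log|R_\delta|^2}/(2\bar l)$ over periodic orbits are dense in the range of the continuous function being sup'd.

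Next I would build a WKB/Gaussian beam quasimode $u_h$ concentrated microlocally on $\gamma$ at energy $\Re z\sim 1$. Away from $\pO$ one constructs $u_h$ as an oscillatory function solving the eikonal equation $|\nabla\varphi|^2=\Re z^2$ and the associated transport equations along $\gamma$; at each encounter with $\pO$, the microlocal description of the transmission problem for $-\Deltad{\pO}$ developed in Chapters \ref{ch:mer}--\ref{ch:resFree} shows that the transmitted/reflected pieces are governed by the pseudodifferential operators with principal symbols $R_\delta$ and $T_\delta$ from \eqref{eqn:reflect}, so the amplitude inside $\Omega$ is multiplied by $R_\delta(x_j,\xi_j')$ at the $j$-th reflection. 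Imposing periodicity after one full loop around $\gamma$ yields a Bohr--Sommerfeld--type quantization condition determining a sequence $h_n\to 0$ with
\begin{equation*}
(P-z_n^2)u_{h_n}=\O{L^2}(h_n^\infty),
\qquad
\frac{\Im z_n}{h_n}=\frac{\overline{\log|R_\delta|^2}(\gamma)}{2\bar l(\gamma)}+\O{}(h_n).
\end{equation*}

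Finally, I would invoke a semiclassical Stefanov/Tang--Zworski style argument (applied to the meromorphic family $(P-\lambda^2)^{-1}$ from Chapter \ref{ch:mer}) to convert each quasimode $u_{h_n}$ into a genuine resonance $\tilde z_n/h_n$ of $-\Deltad{\pO}$ lying within $\O{}(h_n^N)$ of $z_n$ for any $N$. The $h_n^N$ proximity is far smaller than the $\delta\log h_n^{-1}$ slack allowed in the statement, so after choosing $\delta$ in the orbit-selection step to match $\delta$ in the theorem, one obtains for $n\gg 1$
\begin{equation*}
\frac{\Im \tilde z_n}{h_n}\;\geq\;\mc{I}(h_n)-\delta\log h_n^{-1}.
\end{equation*}

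The main obstacle is the middle step: propagating a Gaussian beam through infinitely many microlocal reflections while rigorously tracking how the principal symbol of the reflection operator accumulates, and keeping the construction valid uniformly as $h_n\to 0$. This requires the full microlocal description of $G(\lambda)$, $\Dl(\lambda)$, and $\dDl(\lambda)$ developed earlier — in particular, the semiclassical Melrose--Taylor parametrix — so that reflection, transmission, and glancing contributions are described sharply enough to recover the exponent $\overline{\log|R_\delta|^2}/(2\bar l)$ with only logarithmic error. The genericity hypothesis is what prevents degenerate cancellations (e.g.\ resonant Poincar\'e returns or exceptional coincidences in the geometric phase) that would otherwise obstruct the Bohr--Sommerfeld quantization.
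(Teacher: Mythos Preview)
Your approach is plausible but genuinely different from the paper's. The paper does \emph{not} construct Gaussian beam quasimodes along periodic orbits. Instead, it analyzes the singularities of the \emph{wave trace}
\[
\sigma(t)=\sum_{\lambda\in\text{Res}} e^{-it\lambda}
\]
(via the Poisson formula of Sj\"ostrand--Zworski/Zworski) for the transmission problem \eqref{eqn:waveProblem}. Using Safarov's parametrix for propagation through the boundary and the Melrose--Taylor parametrix near glancing, the paper shows that $\sigma(t)$ has a singularity at each period $T$ of a closed billiard trajectory with $N$ reflections lying in $\{V\neq 0\}$, of size $|\widehat{\psi_{\e,T}\sigma}(\tau)|\geq c\tau^{-N}$. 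The order drops by one at each reflection precisely because $R_\delta=\O{}(h)$. Then \cite{SjoZw} converts this singularity into $\geq ch^{-1+\delta}$ resonances in the strip $-\Im z\leq \rho h\log h^{-1}$ for any $\rho>N/T=l_N^{-1}(q)$; iterating the orbit pushes $\rho$ down to $l_M^{-1}(q)$ exactly. The genericity hypothesis is used to ensure the length spectrum is simple, periodic-orbit periods are isolated (so each singularity can be analyzed cleanly), and the fixed-point sets are clean submanifolds --- not to produce hyperbolic Poincar\'e maps as you suggest.

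Your quasimode route could in principle be made to work, but note two gaps. First, the object you would construct is not an $L^2$ quasimode but an approximate \emph{resonant state}: at each bounce a transmitted beam escapes to infinity, so you must either carry all transmitted pieces (and cut them off in an outgoing-compatible way) or work directly with the boundary reduction $(I+GV)\psi=0$ and produce a $\psi$ with $(I+GV)\psi=\O{}(h^\infty)$; the Tang--Zworski step then requires the polynomial resolvent bounds that the paper supplies in Chapter~\ref{ch:resFree}. Second, for a periodic orbit bounded away from glancing you do not need the Melrose--Taylor parametrix at all --- the hyperbolic-region description of $G$ from Lemma~\ref{lem:decompose} suffices --- so that reference is misplaced. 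The trade-off: the paper's trace method is more robust (no need for non-degenerate Poincar\'e return, and it yields a \emph{count} of resonances), while your approach, if completed, would pin down individual resonances more precisely.
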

We also give a quantum Sabine law for the $\delta'$ potential
\begin{theorem}[Quantum Sabine Law: $\delta'$ potential]
\label{thm:qSabine2}
Let $\lambda=z/h$ be a resonance of $-\Deltap$. Then there exists $h_0>0$ so that for $0<h<h_0$ and $\Re z\sim 1$, we have
\begin{equation}\label{eqn:2}\frac{\Im z}{h}\leq \sup_{B^*\pO} \left.\overline{\log |R_{\delta'}|^2}\right/(2\bar{l}).\end{equation}
Moreover, for $\Omega=B(0,1)\subset \re^2$ and $V$ constant, \eqref{eqn:2} is sharp.
\end{theorem}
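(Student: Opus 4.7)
The proof approach is to mirror the strategy for Theorem \ref{thm:qSabine1} while adapting it to the $\delta'$ transmission problem \eqref{eqn:transmitPrime} and to the boundary operator built from the hypersingular kernel $\dDl$. The first step, carried out in Chapter \ref{ch:mer}, is to reduce the resonance problem to a boundary integral equation. Representing an outgoing solution of $(-\Deltap - \lambda^2)u = 0$ as a double layer potential $u = \D(\lambda)\psi$ renders the first transmission condition $\partial_\nu u_1|_\pO = \partial_\nu u_2|_\pO$ automatic, and combining the jump of the double layer with the second condition $u_1 - u_2 + V\partial_\nu u_1 = 0$ produces a boundary equation of the schematic form
\[
(I + V\dDl(\lambda))\psi = 0 \quad\text{on }\pO.
\]
Thus $\lambda$ is a resonance iff this operator has a nontrivial kernel, and by the Gohberg--Sigal theorem the bound \eqref{eqn:2} follows once one proves that $I + V\dDl(\lambda)\colon L^2(\pO)\to L^2(\pO)$ is invertible whenever $\Im z/h > \sup_{B^*\pO}\overline{\log|R_{\delta'}|^2}/(2\bar l) + \varepsilon$.

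The second step is the microlocal description of $\dDl(\lambda)$ supplied by the semiclassical Melrose--Taylor parametrix and the intersecting Lagrangian description of $R_0(\lambda)$ constructed earlier in the paper. In the hyperbolic region of $T^*\pO$, $V\dDl(\lambda)$ is a semiclassical Fourier integral operator whose principal symbol is the reflection coefficient $R_{\delta'}$ of \eqref{eqn:reflectPrime}; this is verified by matching the half-space plane-wave computation. Near the glancing hypersurface the Melrose--Taylor parametrix gives an Airy-type correction, and the crucial gain over the $\delta$ case is that $R_{\delta'}$ vanishes at glancing, so the correction is small rather than large. This vanishing is what converts what would otherwise be a logarithmic resonance-free region (as for $-\Deltad{\pO}$) into the polynomial one claimed in \eqref{eqn:2}.

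The third step is iteration. The $N$-fold composition $[V\dDl(\lambda)]^N$ is, modulo controlled remainders, a semiclassical Fourier integral operator whose canonical relation is the graph of the $N$-fold billiard map $\beta^N$ on $B^*\pO$ and whose principal symbol along a trajectory $(x_0,\xi_0),\dots,(x_N,\xi_N)$ has modulus $\prod_{j=1}^N |R_{\delta'}(x_j,\xi_j)|\exp\!\bigl(-(\Im z/h)\sum_{j=1}^N l_j\bigr)$ with $l_j=|x_{j+1}-x_j|$ the chord lengths. A Calderón--Vaillancourt estimate then yields
\[
\|[V\dDl(\lambda)]^N\|_{L^2(\pO)\to L^2(\pO)} \leq \sup_{B^*\pO}\exp\!\bigl(N\,\overline{\log|R_{\delta'}|} - \tfrac{\Im z}{h}\,N\bar l\bigr) + o(1),
\]
the averages being taken over billiard orbits of length $N$. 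Since $\overline{\log|R_{\delta'}|^2}/(2\bar l) = \overline{\log|R_{\delta'}|}/\bar l$, the hypothesis $\Im z/h > \sup \overline{\log|R_{\delta'}|^2}/(2\bar l) + \varepsilon$ drives this norm below $1$ for $N$ large, and a Neumann series through the $N$-th iterate inverts $I + V\dDl(\lambda)$. The main analytic obstacle is the uniform composition estimate across the glancing set, where the standard FIO calculus fails; the Melrose--Taylor parametrix together with the vanishing of $R_{\delta'}$ at glancing is what saves the argument.

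For the sharpness statement on $\Omega = B(0,1) \subset \re^2$ with $V$ constant, separation of variables reduces \eqref{eqn:transmitPrime} in each angular mode $e^{in\theta}$ to an explicit transcendental equation in $\lambda$ involving $J_n(\lambda)$, $H_n^{(1)}(\lambda)$, and their derivatives. Applying the uniform semiclassical asymptotics of Bessel and Hankel functions of large order (as in \cite{GalkCircle}), particularly in the transition regime $n \approx \Re \lambda$, one locates a sequence of roots whose imaginary parts saturate the supremum in \eqref{eqn:2}.
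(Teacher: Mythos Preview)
Your outline follows the paper's overall strategy, but there are genuine gaps that prevent the argument from going through as written. First, a minor sign issue: with $u=\D\psi$ the jump relations give $(I-V\dDl)\psi=0$, and the paper actually works with $(I-\dDl V)\partial_\nu u=0$ (see \eqref{eqn:boundaryReduced}); this is cosmetic. The substantive gap is your claim that in the hyperbolic region $V\dDl(\lambda)$ is a semiclassical FIO with principal symbol $R_{\delta'}$. It is not: $\dDl$ decomposes as $\dDl_\Delta+\dDl_B+\dDl_g$ (Lemma~\ref{lem:decomposePrime}), with $\dDl_\Delta$ a pseudodifferential operator of symbol $\tfrac{i}{2}h^{-1}\sqrt{1-|\xi'|_g^2}$ and $\dDl_B$ an FIO associated to the billiard map. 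Iterating $V\dDl$ directly mixes these and does not yield a clean $R_{\delta'}$-symbol. The paper's key algebraic step (Section~\ref{sec:appDynamicsPrime}) is to conjugate by $\dDl_\Delta^{1/2}$: setting $T=\dDl_\Delta^{-1/2}\dDl_B\dDl_\Delta^{-1/2}$ (an FIO for $\beta$ with unitary symbol up to $e^{-\Im z\,l/h}$) and $R_{\delta'}=(I-\dDl_\Delta^{1/2}V\dDl_\Delta^{1/2})^{-1}\dDl_\Delta^{1/2}V\dDl_\Delta^{1/2}$ (the pseudodifferential reflection operator of \eqref{eqn:reflectionOperatorPrime}), one obtains $(I-(R_{\delta'}T)^N)\varphi=\text{error}$, and the norm of $(R_{\delta'}T)^N$ is controlled via the shymbol calculus of Section~\ref{sec:shymbol} and sharp G\r{a}rding, not Calder\'on--Vaillancourt. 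Without this factorization your iteration does not produce the product $\prod|R_{\delta'}(\beta^j q)|$ cleanly.

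Your treatment of glancing is heuristically correct but misses the actual mechanism. The paper does not argue via ``$R_{\delta'}$ vanishes at glancing'' per se; rather, Lemma~\ref{lem:nearGlanceEstPrime} uses the Melrose--Taylor model \eqref{eqn:GglancePrime} to show $\|\dDl\,\varphi\|\leq Ch^{-1+\e/2}\|\varphi\|$ for $\varphi$ microlocalized in $\{\,|1-|\xi'|_g|\leq h^\e\}$, and then the assumption $V\in h^\alpha\Psi$ with $\alpha>5/6$ gives $\|\dDl V X\varphi\|=o(1)\|\varphi\|$ directly, so $I-\dDl V$ is invertible there by Neumann series (Section~\ref{sec:glancingPointPrime}). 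This is where the restriction $\alpha>5/6$ in Theorem~\ref{thm:resFreePrime} enters, a hypothesis your sketch does not mention. For sharpness on the disk, the paper (Appendix~\ref{ch:model}, Section~\ref{sec:modelDeltaPrime}) uses modes with $n$ \emph{fixed} relative to $h$ (normal incidence) and asymptotics \eqref{eqn:normalAsymptoticsPrime} of $J_n'H_n^{(1)\prime}$, not the transition regime $n\approx\Re\lambda$.
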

The precise version of these theorems can be found respectively in Theorems \ref{thm:resFree}, \ref{thm:lowerBoundNumRes}, and \ref{thm:resFreePrime}.

\begin{figure}
\centering
\begin{tabular}{ll}
\raisebox{-.2cm}{$\Im \lambda$}\\
\includegraphics[width=4.3in]{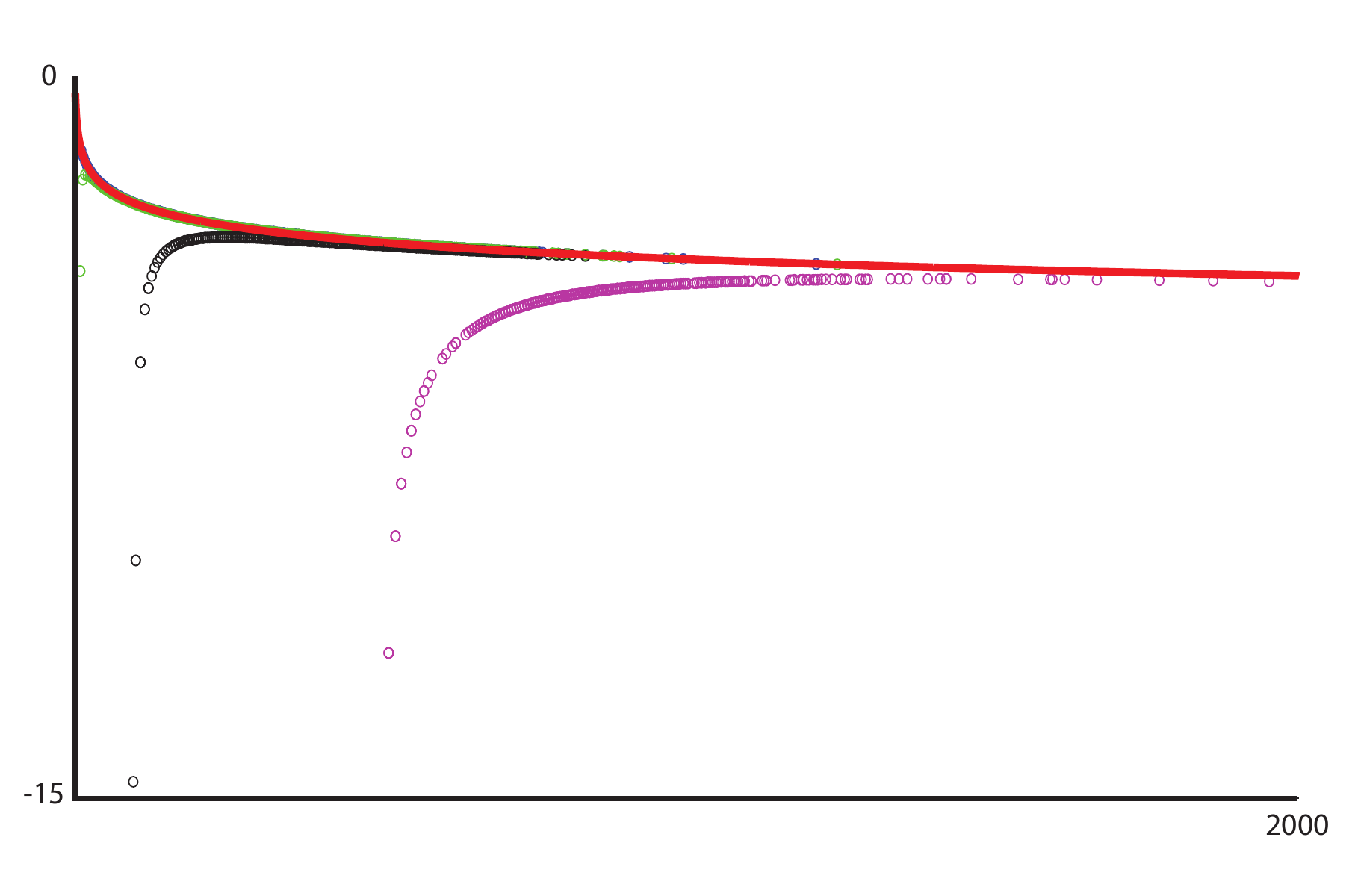}&\raisebox{.75cm}{$\Re \lambda$}
\end{tabular}
\caption[Resonances computed for a circle]{\label{fig:circleRes}When $\Omega=B(0,1)\subset \re^2$, the boundary values of resonance states can be expanded in a Fourier series $\sum a_n e^{inx}.$ We show the resonances for $V\equiv 1$ corresponding to the $n=0,\,10,\,100,$ and $500$ modes. The solid line shows the bound given by Theorem \ref{thm:qSabine1}.}
\end{figure}

One important part of the analysis leading to Theorems \ref{thm:qSabine1}, \ref{thm:qSabineOptimal} and \ref{thm:qSabine2} is the study of boundary layer operators.
The single layer, double layer, and derivative double layer (hypersingular) operators given respectively for $x\in \pO$ by
\begin{gather*} 
G(\lambda)f(x):=\int_{\partial\Omega}R_0(\lambda)(x,y)f(y)dS(y)\,,\\
 \Dl(\lambda)f(x):=\int_{\pO}\partial_{\nu_y}R_0(\lambda)(x,y)f(y)dS(y)\\
 \dDl(\lambda)f(x):=\int_{\partial\Omega}\partial_{\nu_x}\partial_{\nu_y}R_0(\lambda)(x,y)f(y)dS(y).
 \end{gather*}
Resonances for $-\Deltad{\pO}$ and $-\Deltap$ are given respectively by poles of the operators 
 $$(I+G(\lambda)V)^{-1}\quad \quad \text{ and }\quad\quad (I-\dDl(\lambda)V)^{-1}.$$
 The boundary layer operators are also of interest in the numerical solution of the Helmholtz equation. In fact, high frequency estimates on these operators are used to prove convergence and stability of certain numerical schemes (see for example \cite{baskin1504sharp,chandler2009condition,chandler2012numerical} and references therein).
 
 In Chapter \ref{ch:layer}, we give a complete microlocal description of these operators when $\Omega$ is strictly convex with smooth boundary (see Sections \ref{sec:decomposed2} and \ref{sec:BLONearGlance}). We then use this description to prove the following sharp high energy estimate.
 \begin{theorem}
 \label{thm:layerEstimates}
 Let $\Omega\Subset \re^d$ be strictly convex with smooth boundary. Then there exists $\lambda_0>0$ such that for $|\lambda|>\lambda_0$, 
 \begin{align*}
 \|G(\lambda)\|_{L^2(\pO)\to L^2(\pO)}&\leq C\la \lambda\ra^{-2/3}e^{\LO(\Im \lambda)_-}\\
 \|\Dl(\lambda)\|_{L^2(\pO)\to L^2(\pO)}&\leq Ce^{\LO(\Im \lambda)_-}\\
 \|\dDl(\lambda)\|_{H^1(\pO)\to L^2(\pO)}&\leq C\la \lambda\ra e^{\LO(\Im \lambda)_-}
 \end{align*}
 where $(\Im \lambda)_-=\max(-\Im \lambda, 0)$ and $\LO$ is the diameter of $\Omega$. Moreover, all of the above estimates are sharp for $\lambda\in \re$.
 \end{theorem}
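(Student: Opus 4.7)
The plan is to reduce the three estimates to the microlocal description of the boundary layer operators developed in Chapter~\ref{ch:layer}, using the semiclassical reduction $\lambda=z/h$. One decomposes each of $G$, $\Dl$, $\dDl$ via a microlocal partition of unity on $T^*\pO$ into three pieces supported, respectively, in the hyperbolic region $\{|\xi'|_{g^*}<1-\epsilon\}$, the elliptic region $\{|\xi'|_{g^*}>1+\epsilon\}$, and a shrinking neighborhood of glancing $\{\bigl||\xi'|_{g^*}-1\bigr|<2\epsilon\}$. Since $R_0(\lambda)(x,y)$ carries a factor of modulus $e^{-\Im\lambda\,|x-y|}$ and $\sup_{x,y\in\pO}|x-y|=\LO$, the entire analysis can first be carried out for $\Im\lambda\geq 0$; the factor $e^{\LO(\Im\lambda)_-}$ is recovered uniformly from a pointwise bound on the kernel at the end.

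In the elliptic region the operators are semiclassical pseudodifferential operators whose symbols are evanescent in the normal frequency; Calder\'on--Vaillancourt (or a direct $TT^*$ argument) yields the bounds $O(h)$, $O(1)$, and $O(h^{-1})$ for $G$, $\Dl$, and $\dDl$ on $L^2\to L^2$, $L^2\to L^2$, and $H^1\to L^2$, respectively. In the hyperbolic region the same operators are realized as semiclassical Fourier integral operators whose canonical relation is (a graph over) the billiard reflection map on $B^*\pO$, with symbols of the same orders; standard $L^2$ boundedness for such FIOs yields matching bounds. In a neighborhood of glancing I would invoke the semiclassical Melrose--Taylor parametrix constructed earlier, which represents the three operators as Airy-type oscillatory integrals. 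The classical Airy gain produces the critical $O(h^{2/3})$ for $G$, while $\Dl$ and $\dDl$ remain at their non-glancing sizes $O(1)$ and $O(h^{-1})$. Adding the three contributions and substituting $h=\la\lambda\ra^{-1}$ gives the stated upper bounds.

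Sharpness at real $\lambda$ is a test-function argument. For $G$, pick a boundary wave packet $u_h$ microlocalized at a glancing point $(x_0,\xi'_0)\in S^*\pO$; the parametrix representation together with stationary phase applied to its Airy factor give $\|Gu_h\|_{L^2}\gtrsim h^{2/3}\|u_h\|_{L^2}$, saturating the upper bound. For $\Dl$ and $\dDl$, choose $u_h$ microlocalized strictly inside the hyperbolic region, where the operators act as honest FIOs with nonvanishing principal symbol on the billiard reflection; pairing $\Dl u_h$ (resp.\ $\dDl u_h$) with a wave packet concentrated at the reflected point isolates a principal contribution of order $\|u_h\|_{L^2}$ (resp.\ $h^{-1}\|u_h\|_{H^1}$), which is exactly the claimed sharp lower bound.

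The main obstacle is the uniform analysis near glancing, where the FIO description from the hyperbolic region and the Melrose--Taylor parametrix must be glued together without losses in the boundary Sobolev norms. Two subsidiary points are delicate: one has to propagate the full factor $e^{\LO(\Im\lambda)_-}$ through the parametrix by tracking imaginary parts of the Airy phase for $\Im\lambda<0$ (so that no spurious $\log h^{-1}$ losses appear), and one must control the diagonal singularity of $R_0(\lambda)(x,y)$ uniformly in $\lambda$ so that the short-distance contribution does not interfere with the high-frequency $h^{2/3}$ gain in $G$.
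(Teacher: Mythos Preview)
Your overall strategy---decompose into elliptic, hyperbolic, and glancing regions; treat the first two as pseudodifferential and FIO pieces respectively; use the Melrose--Taylor parametrix near glancing---is exactly the route the paper takes (see Lemma~\ref{lem:decompose} and Section~\ref{sec:BLONearGlance}).  So the architecture is right.

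There is, however, a genuine gap in your handling of $\Im\lambda<0$.  You say the exponential factor $e^{\LO(\Im\lambda)_-}$ is ``recovered uniformly from a pointwise bound on the kernel at the end'' and later propose to track imaginary parts of the Airy phase directly through the parametrix.  Neither quite works.  A pointwise kernel bound does not preserve the $h^{2/3}$ glancing gain for $G$: the sharp $L^2\to L^2$ estimate comes from oscillation, not size.  And the Melrose--Taylor parametrix in Appendix~\ref{ch:semiclassicalDirichletParametrices} is only constructed for $|\Im z|\leq Ch\log h^{-1}$; pushing it to arbitrary $\Im\lambda<0$ with uniform control on the Airy asymptotics is precisely what is not done (and not obviously doable).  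The paper instead proves the sharp estimate first for $\Im\lambda\geq 0$, obtains a crude polynomial bound in the lower half plane, and then applies a Phragm\'en--Lindel\"of argument (Lemma~\ref{lem:phragmen}) to interpolate, which automatically produces $e^{\LO(\Im\lambda)_-}$ with no log loss.

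A second, smaller point: in your near-glancing step you jump from ``Airy-type oscillatory integrals'' to an $O(h^{2/3})$ bound for $G$.  The microlocal model \eqref{eqn:Gglance} is only valid on small coordinate patches on $\pO$; to globalize without loss the paper cuts $\pO$ into overlapping $h^{\delta}$-balls and assembles the local estimates via the Cotlar--Stein lemma (proof of Lemma~\ref{lem:nearGlanceEst}).  This is the mechanism that actually glues the glancing and hyperbolic descriptions together and avoids the log loss you flag as delicate.
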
 
\noindent This estimate removes a log loss from the estimate for $G$ in \cite{GS,GalkSLO} and proves the sharp estimate for $\Dl$ conjectured in \cite[Appendix A]{GalkSLO}.

\section{Outline of the article}

We begin in Chapter \ref{ch:prelim} with a review of the geometric and analytical tools that are used in the analysis of $-\Deltad{\pO}$ and $-\Deltap$. In addition to this review, we develop a notion of a sheaf-valued symbol that is sensitive to local changes of semiclassical order.  

In order to analyze $-\Deltad{\partial\Omega}$ and $-\Deltap$, we need a semiclassical analog of the Melrose--Uhlmann \cite{UhlMel} notion of intersecting lagrangian distributions and a semiclassical version of the Melrose--Taylor parametrix for both the interior and exterior of a strictly convex domain that is adapted for use with complex energies. 
We postpone the development of these tools to the Appendices and instead begin the analysis of $-\Deltad{\pO}$ and $-\Deltap$. 

In Chapter \ref{ch:mer}, we give the formal definition of $-\Deltap$ and $-\Deltad{\pO}$. We then give a proof of the meromorphic continuation of the resolvent of $-\Deltap$. However, unlike for $-\Deltad{\pO}$, we already use some microlocal understanding of $\dDl$ to give a proof of the meromorphic continuation and so we restrict our attention to $\Omega$ with smooth boundary.  In Chapter \ref{ch:mer} we also show that (except for $d=1$ and $\lambda=0$) the resonances of $-\Deltap$ occur at $\lambda$ for which there exist nontrivial solutions $\varphi\in H^1(\partial\Omega)$ to 
\begin{equation} (I-\dDl(\lambda)V)\varphi=0\label{eqn:boundTemp2}\end{equation}
where $\dDl$ is the derivative double layer (or hypersingular) operator.


Since the meromorphic continuation for $-\Deltad{\pO}$ was given in \cite{GS} and is analogous to that for $-\Deltap$, we omit the proof here.
However, we recall that in \cite{GS}, the author and Smith show that resonances of $-\Deltad{\pO}$ occur at $\lambda$ for which there exist nontrivial solutions $\varphi\in L^2(\pO)$ to 
\begin{equation}\label{eqn:boundTemp1}(I+G(\lambda)V)\varphi =0\end{equation}
where $G$ is the single layer operator. 

Thus, our next step is to analyze the boundary layer operators $G$, $\Dl$, and $\dDl$, which we do in Chapter \ref{ch:layer}. 
Restricting our attention to the case where $\pO$ is piecewise smooth and Lipschitz, we prove nearly sharp (i.e. modulo a $\log \lambda$ loss) high energy estimates for these operators using restriction bounds for eigenfunctions and their derivatives.  

Our next task is to give a microlocal description of the boundary layer operators in the case that $\pO$ is smooth and strictly convex. To do this we use the semiclassical intersecting Lagrangians developed in Appendix \ref{ch:iLagrange} to give a microlocal description of the free resolvent. Next we use the calculus of semiclassical Fourier integral operators to give a microlocal description of the boundary layer operators away from glancing i.e. away from momenta $\xi$ that are tangent to the boundary. In the case that $\Omega$ is strictly convex, we use the semiclassical Melrose--Taylor parametrix to understand the boundary layer operators near glancing. Finally, we use this microlocal model to prove sharp high energy estimates for $G$, $\Dl$, and $\dDl$.

In Chapter \ref{ch:resFree}, we prove the quantum Sabine law (Theorems \ref{thm:qSabine1} and \ref{thm:qSabine2}) for the resonance free regions of the operators $-\Deltad{\pO}$ and $-\Deltap$ when $\Omega$ is strictly convex with smooth boundary. To prove the theorem, we perform a microlocal analysis of \eqref{eqn:boundTemp2} and \eqref{eqn:boundTemp1} to give a dynamical characterization of the size of the resonance free region for $-\Deltad{\partial\Omega}$. In Appendix \ref{ch:model}, we show that this characterization is sharp for both $-\Deltad{\pO}$ and $-\Deltap$ when $\Omega $ is the unit disk $\re^2$ and $V$ is constant.  

Finally, in Chapter \ref{ch:lowerBound}, we prove Theorem \ref{thm:qSabineOptimal} to show that the quantum Sabine law for $-\Deltad{\pO}$. is generically sharp. The analysis used to prove Theorem \ref{thm:qSabineOptimal} is essentially a rigorous version of the discussion resulting in \eqref{eqn:heurRes},

Appendix \ref{ch:model} demonstrates the sharpness of the above estimates on the size of the resonance free region for $\Omega=B(0,1)\subset \re^2$. For a more complete analysis in the case of the disk, see \cite[Chapter 2]{thesis}, \cite{GalkCircle}.
Appendix \ref{ch:iLagrange} adapts the Melrose-Uhlmann \cite{UhlMel} notion of an intersecting Lagrangian distribution to the semiclassical setting. Appendix \ref{ch:semiclassicalDirichletParametrices} constructs the Melrose-Taylor parametrix \cite{MelTayl}. The parametrix was developed to understand the wave equation near curved boundaries and was adapted by Gerard and Stefanov--Vodev for use in the semiclassical Dirichlet problem outside a strictly convex obstacle in \cite{Gerard, StefVod}. In Appendix \ref{ch:semiclassicalDirichletParametrices}, we adapt this construction to the interior and exterior of a convex domain and to perturbative ($\Im z\leq Mh\log h^{-1}$) complex energies. In particular, we construct operators describing boundary layer operators and potentials for use in Chapter \ref{ch:layer}.


\chapter{Preliminaries}
\label{ch:prelim}
\section{The billiard ball flow and map}
\label{sec:billiard}
We need notation for the billiard ball flow and billiard ball map. Write $\nu$ for the outward pointing unit normal to $\pO$. Then
\m S^*\re^d|_{\partial\Omega}=\partial\Omega_+\sqcup\partial\Omega_-\sqcup\partial\Omega_0\,\,\m
where $(x,\xi)\in \partial\Omega_+$ if $\xi$ is pointing out of $\Omega$ (i.e. $\nu(\xi)>0$), $(x,\xi)\in\partial\Omega_-$ if it points inward (i.e $\nu(\xi)<0$), and $(x,\xi)\in\partial\Omega_0$ if $(x,\xi)\in S^*\partial\Omega$. The points $(x,\xi)\in \partial\Omega_0$ are called \emph{glancing} points. Let $B^*\partial\Omega$ be the unit coball bundle of $\partial\Omega$ and denote by $\pi_{\pm}:\partial\Omega_{\pm}\to B^*\partial\Omega$ and $\pi:S^*\re^d|_{\partial\Omega}\to \overline{B^*\partial\Omega}$ the canonical projections onto $\overline{B^*\partial\Omega}$. Then the maps $\pi_{\pm}$ are invertible. Finally, write 
\m t_0(x,\xi)=\inf\{t>0:\exp_t(x,\xi)\in T^*\re^d|_{\partial\Omega}\}\,\,\m
where $\exp_t(x,\xi)$ denotes the lift of the geodesic flow to the cotangent bundle. That is, $t_0$ is the first positive time at which the geodesic starting at $(x,\xi)$ intersects $\partial\Omega$.

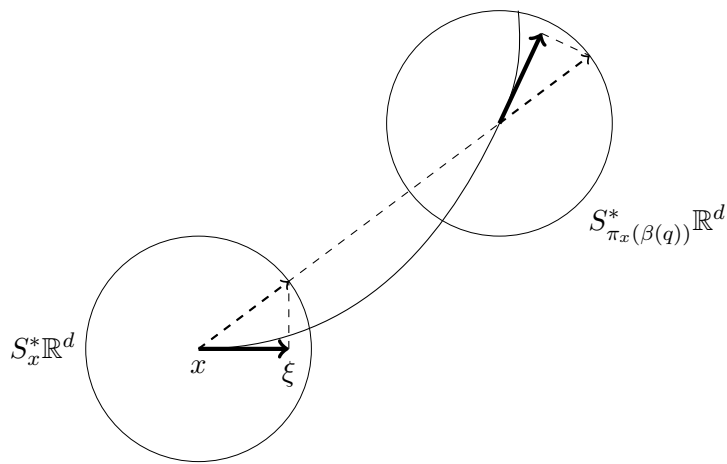
\begin{figure}
\centering
\begin{tikzpicture}
\begin{scope}[shift={(4,3)}, scale =1.5]
\draw[ultra thin] (0,0) circle [radius=1];
\draw[dashed, thick, ->] (0,0) to (.8,.6);
\draw[ultra thick,->] (0,0)to (.373,.799);
\draw[thin, dashed] (0.373,0.799)to (0.8,0.6);
\draw (.7,-.9)node[right]{$S_{\pi_x(\beta(q))}^*\re^d$};
\end{scope}
\begin{scope}[scale=1.5]
\draw[ultra thin] (0,0) circle [radius=1];
\draw[ultra thick,->] (0,0)node[below]{$x$} to (.8,0)node[below]{$\xi$};
\draw[thin, dashed](.8,0) to (.8,.6);
\draw[dashed, thick, ->] (0,0) to (.8,.6);
\draw (-1,0)node[left]{$S_x^*\re^d$};
\end{scope}
\draw [thin](0,0) to [out=0, in =-115] (4,3) to [out=65, in =-85] (4.25,4.5);
\draw[ dashed, thin] (0,0) to (4,3);
\end{tikzpicture}
\caption[Billiard ball map]{\label{fig:billiardBallmap} The figure shows how the billiard ball map is constructed. Let $q=(x,\xi)\in B^*\partial\Omega$. The solid black arrow on the left denotes the covector $\xi\in B_x^*\partial\Omega$ and that on the right $\xi(\beta(q))\in B_{\pi_x(\beta(q))}^*\partial\Omega.$ The center of the left circle is $x$ and that of the right is $\pi_x(\beta(q)).$ }
\end{figure}

We define the broken geodesic flow as in \cite[Appendix A]{DyZw}. Without loss of generality, we assume $t_0> 0$. Fix $(x,\xi)\in S^*\re^d$ and denote $t_0=t_0(x,\xi)$. If $\exp_{t_0}(x,\xi)\in \partial\Omega_0$, then the billiard flow cannot be continued past $t_0$. Otherwise there are two cases: $\exp_{t_0}(x,\xi)\in \partial\Omega_+$ or $\exp_{t_0}(x,\xi)\in \partial\Omega_-$. We let 
$$(x_0,\xi_0)=\begin{cases}
\pi_-^{-1}(\pi_+(\exp_{t_0}(x,\xi)))\in \partial\Omega_-\,,&\text{if }\exp_{t_0}(x,\xi)\in \partial\Omega_+\\
\pi_+^{-1}(\pi_-(\exp_{t_0}(x,\xi)))\in \partial\Omega_+\,,&\text{if }\exp_{t_0}(x,\xi)\in \partial\Omega_-
\end{cases}.$$
We then define $\varphi_t(x,\xi)$, the \emph{broken geodesic flow}, inductively by putting 
$$\varphi_t(x,\xi)=\begin{cases}\exp_t(x,\xi)&0\leq t<t_0\\
\varphi_{t-t_0}(x_0,\xi_0)&t\geq t_0
\end{cases}.$$

We introduce notation from \cite{Saf1} for the billiard flow. Let $K$ be the set of ternary fractions of the form $0.k_1k_2,\dots$, where $k_j=0$ or $1$ and $S$ denote the left shift operator 
\m S(0.k_1k_2\dots)=0.k_2k_3\dots.\m

\noindent For $k\in K$, we define the billiard flow of type $k$, $G_k^t:S^*\re^d\to S^*\re^d$ as follows. For $0\leq t\leq t_0$,
\begin{equation}
\label{eqn:billiardFlow}G_k^t(x,\xi)=\begin{cases}\varphi_t(x,\xi)&\text{if }k_1=0\\
\exp_t(x,\xi)&\text{if }k_1=1
\end{cases}
\end{equation}
Then, we define $G_k^t$ inductively for $t>t_0$ by 
\begin{equation}
\label{eqn:billiardFlow2}
G_k^t(x,\xi)=G_{Sk}^{t-t_0}(G_k^{t_0}(x,\xi)).
\end{equation}
We call $G_k^t$ the billiard flow of type $k$. By \cite[Proposition 2.1]{Saf1}, $G_k^t$ is measure preserving.

\begin{remarks}
\item In \cite{Saf1}, geodesics could be of multiple types when total internal reflection occurred. However, in our situation, the metrics on either side of the boundary match, so there is no total internal reflection and geodesics are uniquely identified by their starting points and $k\in K$. 
\item In general, there exist situations where $G_k^t$ intersects the boundary infinitely many times in finite time. However, since we work in convex domains, we need not consider this situation.
\end{remarks}
Now, for $k\in K $ and $T>0$, we define the set $\mc{O}_{T,k}\subset S^*\re^d$ to be the complement of the set of $(x,\xi)$ such that one can define the flow $G_k^t$ for $t\in[0,T]$. That is, $\mc{O}_{T,k}$ is the set for which the billiard flow of type $k$ is glancing in time $0\leq t\leq T.$ Last, define the set
\begin{equation}
\label{eqn:glancingSet}
\mc{O}_T=\bigcup_{k\in K}\mc{O}_{k,T}.
\end{equation}

The billiard ball map reduces the dynamics of $G_0^k$ to the boundary. We define the billiard ball map as in \cite{HaZel}. Let $(x,\xi')\in B^*\partial\Omega$ and $(x,\xi)=\pi_-^{-1}(x,\xi')\in \partial\Omega_-$ be the unique inward pointing covector with $\pi(x,\xi)=(x,\xi')$. Then, the billiard ball map $\beta:B^*\partial\Omega\to \overline{B^*\partial\Omega}$ maps $(x,\xi')$ to the projection onto $T^*\partial\Omega$ of the first intersection of the billiard flow with the boundary. That is,
\begin{equation}
\label{eqn:billiardMap}\beta:(x,\xi')\mapsto \pi(\exp_{t_0(x,\xi)}(x,\xi)).\end{equation}

\begin{remarks} 
\item Just like the billiard flow, the billiard ball map is not defined for $(x,\xi')\in \pi(\partial\Omega_0)=S^*\partial\Omega$. However, since we consider convex domains, $\beta:B^*\Omega\to B^* \Omega$ and $\beta^n$ is well defined on $B^*\partial\Omega$.
\item Figure \ref{fig:billiardBallmap} shows the process by which the billiard ball map is defined.
\end{remarks}

The billiard ball map is symplectic. This follows from the fact that the Euclidean distance function $|x-x'|$ is locally a generating function for $\beta$; that is, the graph of $\beta$ in a
neighborhood of $(x_0, \xi_0, y_0,\eta_0)$ is given by
\begin{equation}
\label{eqn:locBillRelation}
\{(x\,,\,-d_x|x-y|\,,\,y\,,\,d_y|x-y|\,)\,:(x,y)\in \partial\Omega\times \partial\Omega\}.
\end{equation}
 We denote the graph of $\beta$ by $C_b$. For strictly convex $\Omega$, $C_b$ is given globally by \eqref{eqn:locBillRelation}.

\subsection{Dynamics in Strictly Convex Domains}
\label{sec:dynamicsConvex}
Let $\partial \Omega$ be strictly convex near a point $x_0$ and let $\gamma:[0,\delta)\to \partial \Omega$ be a smooth geodesic parametrized by arc length with $\gamma(0)=x_0$. We are interested in how $|\xi'|_g$ changes under the billiard ball map for $|\xi'|_g$ close to 1. Our interest in this region comes from a desire to understand how the reflection coefficients $R_{\delta}$ and $R_{\delta'}$ from \eqref{eqn:reflect} and \eqref{eqn:reflectPrime} behave when a wave travels nearly tangent to a strictly convex boundary.

We start by examining how the normal component to $\partial \Omega$ changes under the billiard ball map.
Notice that for $|\xi'|_g$ sufficiently close to 1, the strict convexity of $\partial \Omega$ at $x_0$ implies that there is a geodesic connecting $x_0$ to $\pi_x(\beta(x_0))$ which lies inside a small neighborhood of $x_0$. (Here $\pi_x$ denotes projection to the base.)
Hence, we consider
\begin{align*}
\Delta_{\xi_d}&=\frac{((\gamma(s)-\gamma(0))\cdot \nu(0)-(\gamma(0)-\gamma(s))\cdot\nu(s))}{|\gamma(s)-\gamma(0)|}\\
&=\frac{(\gamma(s)-\gamma(0))\cdot(\nu(0)+\nu(s))}{|\gamma(s)-\gamma(0)|}.
\end{align*}
Here $|\cdot|$ is the euclidean norm in $\re^d$ and $\nu$ is the inward pointing unit normal.

First, note that 
\begin{gather*} 
\gamma''(s)=k(s)\nu(s),\qquad\quad \nu'(s)\cdot \gamma'(s)=-k(s),\\
\gamma'(s)\cdot \nu(s)=0,\qquad \quad\|\gamma'(s)\|=\|\nu(s)\|=1
\end{gather*}
where $k(s)$ is the curvature of $\gamma$. 
Then, expanding in Taylor series gives
\begin{align}
\Delta_{\xi_d}\left[s+\O{}(s^2)\right]&=\left[\gamma'(0)s+\gamma''(0) \tfrac{s^2}{2} +\gamma^{(3)}(0)\tfrac{s^3}{6} +\mc O(s^4)\right]\cdot\\
&\quad \quad \left[2\nu(0)+\nu'(0)s+\nu''(0)\tfrac{s^2}{2}+\mc O(s^3)\right]\nn
\Delta_{\xi_d}\left[1+\O{}(s)\right]&=2\gamma'(0)\cdot \nu(0)+\left(\gamma'\cdot \nu\right)'(0)s\nn
&\quad\quad+\left(2\gamma^{(3)}(0)\cdot \nu(0)+3(\gamma'\cdot \nu')'(0)\right)\tfrac{s^2}{6}+\O{}(s^3)\nn
\Delta_{\xi_d}&=\left[2(k'(0)\nu(0)-k(0)\nu'(0))\cdot\nu(0)-3\kappa'(0)\right]\tfrac{s^2}{6}+\mc O(s^3)\nn
\label{eqn:dynamicsApprox}\Delta_{\xi_d}&=(2k'(0)-3k'(0))\tfrac{s^2}{6}+\mc O(s^3)=-k'(0)\tfrac{s^2}{6}+\mc O(s^3).
\end{align}
Next observe that
$$\frac{\gamma(s)-\gamma(0)}{|\gamma(s)-\gamma(0)|}\cdot\nu(0) =\frac{k(0)}{2}s+\O{}(s^2).$$ 

So, using that for $\Omega$ is strictly convex, $k(0)>c>0$, if $\sqrt{1-|\xi'|_g^2}=r$, $cs\leq r\leq Cs$. Using \eqref{eqn:dynamicsApprox}, we have $\sqrt{1-|\xi'(\beta(x_0,\xi'))|_g^2}=r+\mc O(r^2)$.
Summarizing, we have
\begin{lemma}
\label{lem:dynStrictlyConvex}
Let $\Omega\subset \re^d$ be strictly convex. Then, letting $q\in B^*\partial\Omega$ and denote $$r:=\sqrt{1-|\xi'(q)|_g^2},$$ we have
$$\sqrt{1-|\xi'(\beta(q))|^2_g}=r+\O{}(r^2).$$
\end{lemma}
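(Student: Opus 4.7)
My plan is to directly package the Taylor computation carried out in the preceding discussion into a proof of the lemma. The key quantity is $\Delta_{\xi_d}$, the difference between the signed normal components of the unit covector at $q$ and at $\beta(q)$; by definition of the billiard ball map (formula \eqref{eqn:locBillRelation}) and of $r$, the lemma is precisely the statement $\Delta_{\xi_d}=\O{}(r^2)$.

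First I would set up the geometry: by strict convexity, if $r$ is small enough then $\pi_x(\beta(q))$ lies in a normal neighborhood of $x_0:=\pi_x(q)$ inside $\partial\Omega$, so there is a unique unit-speed geodesic $\gamma:[0,s]\to\partial\Omega$ with $\gamma(0)=x_0$ and $\gamma(s)=\pi_x(\beta(q))$. The local generating-function characterization of $\beta$ gives exactly the expression
\[
\Delta_{\xi_d}=\frac{(\gamma(s)-\gamma(0))\cdot(\nu(0)+\nu(s))}{|\gamma(s)-\gamma(0)|},
\]
with $\nu$ the inward unit normal. Next I would expand numerator and denominator in Taylor series, using $\gamma''=k\nu$, $\gamma'\cdot\nu=0$, $\nu'\cdot\gamma'=-k$, and $|\nu|=1$ (so $\nu'\cdot\nu=0$). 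The $s^0$ term vanishes since $\gamma'(0)\cdot\nu(0)=0$; the $s^1$ coefficient is $(\gamma'\cdot\nu)'(0)=k(0)-k(0)=0$; and the $s^2$ coefficient reduces, via $\gamma^{(3)}\cdot\nu=k'$ and $(\gamma'\cdot\nu')'=-k'$, to $(2k'(0)-3k'(0))/6=-k'(0)/6$. Thus $\Delta_{\xi_d}=-\tfrac{k'(0)}{6}s^2+\O{}(s^3)$.

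The remaining task is to translate powers of $s$ into powers of $r$. Expanding $|\gamma(s)-\gamma(0)|^{-1}(\gamma(s)-\gamma(0))\cdot\nu(0)$ gives $\tfrac{k(0)}{2}s+\O{}(s^2)$, and by strict convexity $k(0)\ge c>0$, so for sufficiently small $s$ one has $cs\le r\le Cs$. Substituting back yields $\Delta_{\xi_d}=\O{}(r^2)$, and hence
\[
\sqrt{1-|\xi'(\beta(q))|_g^2}=r+\Delta_{\xi_d}=r+\O{}(r^2),
\]
which is the claim.

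The only real obstacle is algebraic bookkeeping: one must carry the Taylor expansions of $\gamma$ and $\nu$ through order $s^3$ and see that the $s^0$ and $s^1$ contributions both cancel exactly, so that the leading behavior is genuinely $\O{}(s^2)$ rather than $\O{}(s)$. Once that cancellation is verified (using only the Frenet-type identities above), the conversion from $s$ to $r$ via convexity is immediate, and nothing further beyond smoothness of $\partial\Omega$ is needed.
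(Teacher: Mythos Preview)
Your proposal is correct and follows essentially the same approach as the paper: the paper also expands $\Delta_{\xi_d}$ via the Frenet-type identities $\gamma''=k\nu$, $\gamma'\cdot\nu=0$, $\nu'\cdot\gamma'=-k$ to obtain $\Delta_{\xi_d}=-\tfrac{k'(0)}{6}s^2+\O{}(s^3)$, and then uses $(\gamma(s)-\gamma(0))\cdot\nu(0)/|\gamma(s)-\gamma(0)|=\tfrac{k(0)}{2}s+\O{}(s^2)$ together with strict convexity to convert $s$-powers to $r$-powers. The only cosmetic point is the sign in your final line: with the paper's convention $\Delta_{\xi_d}$ equals $r-\sqrt{1-|\xi'(\beta(q))|_g^2}$ rather than the other way around, but since you only use $\Delta_{\xi_d}=\O{}(r^2)$ this does not affect the argument.
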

 
By the calculations above, the set of $\O{}(h^\e)$ near glancing points is stable under the billiard ball map. This also follows from the equivalence of glancing hypersurfaces \cite{MelroseGlanceSurf}. Moreover, we have the following lemma:
\begin{lemma}
\label{lem:iteratedStability}
Fix $\e>0$. Suppose that $\Omega$ is strictly convex and $(x',\xi')\in T^*\partial\Omega$ with $1-|\xi'|_g=\O{}(h^\e).$ Then, for $N=\O{}(h^{-\e})$, 
$$1-|\xi'(\beta^N(x',\xi'))|_g=\O{}(h^\e).$$
\end{lemma}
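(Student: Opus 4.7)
The plan is to iterate Lemma~\ref{lem:dynStrictlyConvex}, extracting cancellation in the leading drift. First I would set $r_n := \sqrt{1-|\xi'(\beta^n(x',\xi'))|_g^2}$; since $1-|\xi'|_g^2 = (1-|\xi'|_g)(1+|\xi'|_g)$, the hypothesis is equivalent to $r_0 = \O{}(h^{\e/2})$ and the conclusion to $r_N = \O{}(h^{\e/2})$ for $N = \O{}(h^{-\e})$.

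A crude induction based on the uniform bound $|r_{n+1}-r_n| \le C r_n^2$ from Lemma~\ref{lem:dynStrictlyConvex} only propagates $r_n \le 2 r_0$ for $n \le C' r_0^{-1} = \O{}(h^{-\e/2})$, which is a factor of $r_0^{-1}$ short of the claim. The remaining length must come from cancellation in the sum $\sum_n (r_{n+1}-r_n)$, so I would refine Lemma~\ref{lem:dynStrictlyConvex} by using \eqref{eqn:dynamicsApprox} and the near-glancing step length $s_n \sim 2r_n/k(x_n)$ to obtain
$$r_{n+1}-r_n \;=\; \tfrac{2}{3}\,\frac{k'(x_n)}{k(x_n)^2}\,r_n^2 + \O{}(r_n^3) \;=\; -\tfrac{2}{3}\,(1/k)'(x_n)\,r_n^2 + \O{}(r_n^3),$$
where $x_n$ denotes the $n$-th impact point on $\partial\Omega$. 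Because $\int_{\partial\Omega}(1/k)'\,d\ell = 0$ on the closed curve $\partial\Omega$, the leading drift $-\tfrac{2}{3}\sum_{n<N}(1/k)'(x_n)\,r_n^2$ is a Riemann sum of an integral that vanishes exactly over each complete revolution of $\{x_n\}$. The trajectory makes $\O{}(Nr_0) = \O{}(h^{-\e/2})$ complete revolutions, so telescoping over revolutions leaves only boundary and discretization errors of size $\O{}(r_0^2)$; combined with the cumulative higher-order error $\O{}(Nr_0^3) = \O{}(h^{\e/2})$, this yields $|r_N - r_0| \lesssim r_0$ uniformly for $N = \O{}(h^{-\e})$.

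The hard part will be making the telescoping rigorous: $s_n$ is nonuniform in position and itself depends on the slowly drifting $r_n$, so the Riemann-sum interpretation demands a bootstrap in which $r_n$ is treated as constant on the timescale of one revolution and the first-order estimate is then reinserted. A cleaner alternative, hinted at by the remark preceding the lemma, is to invoke Melrose's equivalence of glancing hypersurfaces~\cite{MelroseGlanceSurf} to put $\beta$ into a Birkhoff-type normal form near the glancing set; in such coordinates $r$ is conjugate modulo $\O{}(r^\infty)$ to a genuine invariant of $\beta$, from which the conclusion follows immediately for any polynomial $N = \O{}(h^{-M})$.
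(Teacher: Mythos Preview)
You are overthinking this. The paper's proof \emph{is} your ``crude induction'': it simply iterates $r_{n+1} \le r_n + C_1 r_n^2$ from Lemma~\ref{lem:dynStrictlyConvex} via the majorizing Riccati sequence $x_n = x_{n-1}(1+C_1 x_{n-1})$, $x_1 = Ch^\e$, and never invokes cancellation or a normal form. There is in fact some sloppiness in the paper: the initialization $x_1 = Ch^\e$ corresponds to $r_0 = \O{}(h^\e)$ (i.e.\ $1-|\xi'|_g = \O{}(h^{2\e})$) rather than the stated hypothesis $1-|\xi'|_g = \O{}(h^\e)$, and the final displayed line only yields $x_n = x_1\,\O{}(1)$ for $(n-1)^2 = \O{}(h^{-\e})$, i.e.\ $n = \O{}(h^{-\e/2})$. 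A cleaner bootstrap---assume $x_j \le 2x_1$ for $j<n$ and use $x_n/x_1 \le \exp\bigl(2C_1(n-1)x_1\bigr)$---gives $x_n \le 2x_1$ for $n \le c\,x_1^{-1}$, which is $N = \O{}(h^{-\e})$ when $x_1 \sim h^\e$ and $N = \O{}(h^{-\e/2})$ when $x_1 \sim h^{\e/2}$. Either reading suffices for the applications in Chapter~\ref{ch:resFree}, where the number of iterations $N < N_1$ is a fixed constant independent of $h$.

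Your cancellation mechanism (averaging $(1/k)'$ to zero around $\partial\Omega$) or the Melrose normal form would extend the stability window beyond $N \sim r_0^{-1}$, but that is a genuinely harder result---of Lazutkin/KAM type in $d=2$, and unclear in higher dimensions where your closed-curve integral argument does not directly apply---and is not what the lemma is after once the bookkeeping is straightened out.
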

\begin{proof}
Suppose that $\sqrt{1-|\xi'|^2_g}=r$. Then, by \eqref{eqn:dynamicsApprox}, 
$$\sqrt{1-|\xi'(\beta(x',\xi'))|^2_g}\leq r+C_1r^2\quad \text{for }r\text{ small enough}$$
where $C_1>0$ is uniform in $B^*\partial\Omega$. Let $a_n=\sqrt{1-|\xi'(\beta^n(x',\xi'))|^2_g}.$
Then, $a_n\leq a_{n-1}+C_1a_{n-1}^2.$ Therefore, we need only examine the sequence 
$$x_{n}=x_{n-1}+C_1x_{n-1}^2,\quad x_1=Ch^\e.$$

First, observe that if $x_j\leq Cjh^\e$, then, 
$$x_{j+1}=x_j(1+C_1x_j)\leq Cjh^\e(1+CC_1jh^\e)=C(j+1)h^\e\left(\frac{j}{j+1}+\frac{CC_1jh^\e}{j+1}\right).$$
Therefore, for $j\leq C^{-1}C_1^{-1}h^{-\e},$ $x_{j+1}\leq C(j+1)h^\e$. 

Now, we have
\begin{align*} 
\frac{x_n}{x_1}&=\prod_{j=1}^{n-1}(1+C_1x_j)\leq \prod_{j=1}^{n-1}(1+CC_1jh^\e)\\
&\leq (1+CC_1(n-1)h^\e)^{n-1}=\left(1+\frac{(n-1)^2CC_1h^\e}{n-1}\right)^{n-1}.
\end{align*} 
As long as $(n-1)^2=\O{}(h^{-\e})$ and $n-1\leq C_1^{-1}C^{-1}h^{-\e}$, we have
$x_{n}=x_1\O{}(1).$
\end{proof}


\section{Semiclassical preliminaries}
\label{sec:semiclassicalPreliminaries}
In this section, we review the methods of semiclassical analysis which are needed throughout the rest of our work. The theories of pseudodifferential operators, wavefront sets, and the local theory of Fourier integral operators are standard and our treatment follows that in \cite{EZB} and \cite{DyGui}.  We introduce the notion \emph{shymbol} from \cite{Galk} which is a notion of sheaf-valued symbol that is sensitive to local changes in semiclassical order of a symbol. 

\subsection{Notation}
We review the relevant notation from semiclassical analysis in this section. For more details, see \cite{d-s} or \cite{EZB}.
\subsubsection{Big $\O{}$ notation}
The $\O{}(\cdot)$ and $\o{}(\cdot)$ notations are used in this paper in the following ways:
we write $u=\O{\mc X}(F)$ if the norm of $u$ in the functional space $\mc X$ is bounded
by the expression $F$ times a constant. We write $u=\o{\mc{X}}(F)$ if the norm of $u$  has
$$\lim_{s\to s_0}\frac{\|u(s)\|_{\mc{X}}}{F(s)}=0$$
where $s$ is the relevant parameter. If no space $\mc{X}$ is specified, then $u=\O{}(F)$ and $u=\o{}(F)$ mean 
$$|u(s)|\leq C_z|F(s)|\quad \text{and} \quad \lim_{s\to s_0}\frac{|u(s)|}{F(s)}=0$$
respectively. 
\subsubsection{Phase space}
Let $M$ be a $d$-dimensional manifold without boundary. Then, we denote an element of the cotangent bundle to $M$, $(x,\xi)$ where $\xi\in T_x^*M$.
\subsection{Symbols and Quantization}
We start by defining the exotic symbol class $f(h)S^m_\delta(M)$.
\begin{defin}
\label{def:symbol}
Let $a(x,\xi; h)\in C^\infty (T^*M\times [0,h_0) )$, $f\in C^\infty((0,h_0))$, $m\in \re$, and $\delta\in[0,1/2)$. Then, we say that $a\in f(h)S^m_{\delta}(T^*M)$ if for every $K\Subset M$ and $\alpha,\,\beta$ multiindeces, there exists $C_{\alpha\beta K}$ such that
\begin{equation} |\partial_x^\alpha\partial_\xi^\beta a(x,\xi; h)|\leq C_{\alpha\beta K}f(h)h^{-\delta(|\alpha|+|\beta|)}\la \xi\ra^{m-|\beta|}\}\label{eqn:defSymbol}\end{equation}
We denote $S_\delta^\infty:=\cup_m S_\delta^m$, $S_\delta^{-\infty}:=\cap_mS_\delta^m$ and when one of the parameters $\delta$ or $m$ is 0, we suppress it in the notation. 

We say that $a(x,\xi;h)\in S_\delta^{\comp}(M)$  if $a\in S_{\delta}(M)$ and $a$ is supported in some $h$-independent compact set.
\end{defin}
This definition of a symbol is invariant under changes of variables (see for example \cite[Theorem 9.4]{EZB} or more precisely, the arguments therein). 

\subsection{Pseudodifferential operators}
We follow \cite[Section 14.2]{EZB} to define the algebra $\Ph{m}{\delta}(M)$ of pseudodifferential operators with symbols in $S^m_\delta(M)$. (For the details of the construction of these operators, see for example \cite[Sections 4.4, 14.12]{EZB}. See also \cite[Chapter 18]{HOV3} or \cite[Chapter 3]{gr-s}.) Since we have made no assumption on the behavior of our symbols as $x\to \infty$, we do not have control over the behavior of $\Ph{k}{\delta}(M)$ near infinity in $M$. However, we do require that all operators $A\in \Ph{m}{\delta}(M)$ are \emph{properly supported}. That is, the restriction of each projection map $\pi_x,\pi_{x'}M\times M\to M$ to the support of $K_A(x,x;h)$, the Schwartz kernel of $A$, is a proper map. For the construction of such a quantization procedure, see for example \cite[Proposition 18.1.22]{HOV3}. An element in $A\in \Ph{m}{\delta}(M)$ acts $H^s_{h,\loc}(M)\to H^{s-m}_{h,\loc}(M)$ where $H^s_{h,\loc}(M)$ denotes the space of distributions locally in the semiclassical Sobolev space $H_h^s(M)$. The definition of these spaces can be found for example in \cite[Section 7.1]{EZB}. Finally, we say that a properly supported operator, $A$, with
$$A:\mc{D}'(M)\to C^\infty(M)$$ 
and each seminorm $\O{}(h^\infty)$ is $\O{\Ph{-\infty}{}}(h^\infty)$. We include operators that are $\O{\Ph{-\infty}{}}(h^\infty)$  in all pseuodifferential classes. 

With this definition, we have the semiclassical principal symbol map
\begin{equation}\label{eqn:princSymbol}\sigma:\Psi^m_{\delta}(M)\to \quotient{S^m_{\delta}(M)}{h^{1-2\delta}S^{m-1}_\delta(M)}\end{equation}
and a non-canonical quantization map
$$\oph:S^m_\delta(M)\to \Ph{m}{\delta}(M)$$
with the property that $\sigma\composed \oph$ is the natural projection map onto  
$$\quotient{S^m_{\delta}(M)}{h^{1-2\delta}S^{m-1}_\delta(M)}.$$
When pseudodifferential operators act on half-densities, we also have the subprincipal symbol map
$$\sigma_1:\Psi^m_{\delta}(M)\to \quotient{h^{1-2\delta} S^{m-1}_{\delta}(M)}{h^{2-3\delta}S^{m-2}_{\delta}(M)}$$
and one can find a quantization, based locally on the Weyl quantization, satisfying
$$(\sigma+\sigma_1)\composed \oph =\pi: S^m_{\delta}(M)\to  \quotient{S^m_{\delta}(M)}{h^{2-3\delta}S^{m-2}_\delta(M)}$$
for $\pi$ the natural projection map.
These mapping properties follow from keeping careful track of the errors the proof of \cite[Theorem  9.10]{EZB} together with \cite[Section 14.12]{EZB} (see also, \cite[Section 4.2]{thesis}).

Henceforward, we will write $\sigma(A)$ to write any representative of the corresponding equivalence class in the right-hand side of \eqref{eqn:princSymbol}. We do not include the sub-principal symbol because then the calculus of pseudodifferential operators would be more complicated. With this in mind, the standard calculus of pseudodifferential operators with symbols in $S^m_\delta$ gives for $A\in \Ph{m_1}{\delta}(M)$ and $B\in \Ph{m_2}{\delta}(M)$, 
\begin{gather*}
\sigma(A^*)=\overline{\sigma(A)}+\O{S^{m_1-1}_\delta(M)}(h^{1-2\delta})\\
\sigma(AB)=\sigma(A)\sigma(B)+\O{S^{m_1+m_2-1}_{\delta}(M)}(h^{1-2\delta})\\
\sigma([A,B])=-ih\{\sigma(A),\sigma(B)\}+\O{S^{m_1+m_2-2}(M)}(h^{2(1-2\delta)}).
\end{gather*}
Here $\{\cdot,\cdot\}$ denotes the Poisson bracket and we take adjoints with respect to $L^2(M)$. 

\subsubsection{Wavefront sets and microsupport of pseudodifferential operators}
In order to define a notion of wavefront set that captures both $h$-microlocal and $C^\infty$ behavior, we define the \emph{fiber radially compactified cotangent bundle}, $\overline{T}^*M$, by $\overline T
^*M=T^*M\sqcup S^*M$ where 
$$S^*M:=\quotient{\left(T^*M\setminus\{M\times 0\}\right)}{\re_+}$$
and the $\re_+$ action is given by $(t,(x,\xi))\mapsto(x,t\xi).$ Let $|\cdot|_g$ denote the norm induced on $T^*M$ by the Riemannian metric $g$. Then a neighborhood of a point $(x_0,\xi_0)\in S^*M$ is given by $V\cap \{|\xi|_g\geq K\}$ where $V$ is an open conic neigbhorhood of $(x_0,\xi_0)\in T^*M$. 

For each $A\in \Ph{m}{\delta}(M)$ there exists $a\in S^m_\delta(M)$ with $A=\oph(a)+\O{\Ph{-\infty}{}}(h^\infty).$ Then the \emph{semiclassical wavefront set of $A$}, $\WFhp(A)\subset \overline T^*M$, is defined as follows. A point $(x,\xi)\in \overline T^*M$ does not lie in $\WFhp(A)$ if there exists a neighborhood $U$ of $(x,\xi)$ such that each $(x,\xi)$ derivative of $a$ is $\O{}(h^\infty \la \xi\ra^{-\infty})$ in $U$. As in \cite{Alexandrova}, we write 
$$\WFhp(A)=:\WFhpf(A)\sqcup\WFhpi(A)$$
where $\WFhpf(A)=\WFh(A)\cap T^*M$ and $\WFhpi(A)=\WFh(A)\cap S^*M.$

Operators with compact wavefront sets in $T^*M$ are called \emph{compactly microlocalized}. These are operators of the form 
$$\oph(a)+\O{\Ph{-\infty}{}}(h^\infty)$$ for some $a\in S^{\comp}_\delta(M).$ The class of all compactly microlocalized operators in $\Ph{m}{\delta}(M)$ are denoted by $\Ph{\comp}{\delta}(M)$. 

We will also need a finer notion of microsupport on $h$-dependent sets. 

\begin{defin}\label{d-microlocal-vanishing}
An operator $A\in \Ph{\comp}{\delta}(M)$ is \emph{microsupported} on an $h$-dependent family of sets $V(h)\subset T^*M$ if we can write $A=\oph(a)+\O{\Ph{-\infty}{}}(h^\infty)$, where for each compact set $K\subset T^*M$, each differetial operator $\partial^\alpha$ on $T^*M$, and each $N$, there exists a constant $C_{\alpha N K}$ such that for $h$ small enough,
$$\sup_{(x,\xi)\in K\setminus V(h)}|\partial^\alpha a(x,\xi;h)|\leq C_{\alpha NK}h^N.$$
We then write $$\MSp(A)\subset V(h).$$
\end{defin}

%
The change of variables formula for the full symbol of a
pseudodifferential operator~\cite[Theorem~9.10]{EZB} contains an asymptotic expansion in powers
of $h$ consisting of derivatives of the original symbol. Thus
definition~\ref{d-microlocal-vanishing} does not depend on the choice
of the quantization procedure $\oph$.  Moreover, since we take $\delta<1/2$, if
$A\in\Ph{\comp}{\delta}$ is microsupported inside some $V(h)$ and
$B\in\Ph{m}{\delta}$, then $AB$, $BA$, and $A^*$ are also microsupported
inside $V(h)$. This implies the following.
\begin{lemma}
Suppose that $A,B\in \Ph{\comp}{\delta}$ and $\MSp(A)\cap\MSp(B)=\emptyset.$ Then 
$$\WFhp(AB)=\emptyset.$$
\end{lemma}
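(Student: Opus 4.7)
The plan is to compute the full symbol of $AB$ via the composition formula and show that every term, together with all its derivatives, is $\O{}(h^\infty)$ on compact sets of $T^*M$; since both $A$ and $B$ are compactly microlocalized, this will give $\WFhp(AB) = \emptyset$.

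First I would write $A = \oph(a)+\O{\Ph{-\infty}{}}(h^\infty)$ and $B = \oph(b)+\O{\Ph{-\infty}{}}(h^\infty)$ with $a,b \in S_\delta^{\comp}(M)$. Because $A,B\in\Ph{\comp}{\delta}$, the sets $\MSp(A)$ and $\MSp(B)$ are contained in some fixed compact subset of $T^*M$. Using their disjointness, I would choose open sets $U_A,U_B\subset T^*M$ with $\MSp(A)\subset U_A$, $\MSp(B)\subset U_B$, and $\overline{U_A}\cap \overline{U_B}=\emptyset$. By Definition \ref{d-microlocal-vanishing}, for every multi-index $\alpha$ and every $N$, $\partial^\alpha a = \O{}(h^N)$ uniformly on any compact set disjoint from $U_A$, and likewise $\partial^\alpha b = \O{}(h^N)$ uniformly on any compact set disjoint from $U_B$.

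Next I would apply the standard composition formula for operators in $\Ph{\cdot}{\delta}$ with $\delta<1/2$: one has $AB=\oph(c)+\O{\Ph{-\infty}{}}(h^\infty)$ where
\begin{equation*}
c(x,\xi;h) \sim \sum_{\alpha} \frac{h^{|\alpha|}}{\alpha!\,i^{|\alpha|}} \partial_\xi^\alpha a(x,\xi;h)\, \partial_x^\alpha b(x,\xi;h).
\end{equation*}
The key observation is pointwise: at every point $(x,\xi)\in T^*M$, either $(x,\xi)\notin \overline{U_A}$, so all derivatives of $a$ are $\O{}(h^\infty)$ there, or $(x,\xi)\notin \overline{U_B}$, so all derivatives of $b$ are $\O{}(h^\infty)$ there. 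Since these estimates are uniform on compact sets and the bounds involve only a fixed finite loss of $h^{-\delta}$ per derivative coming from the $S^m_\delta$ class, each term $h^{|\alpha|}\partial_\xi^\alpha a\cdot \partial_x^\alpha b$ and each of its derivatives is $\O{}(h^\infty)$ on compact subsets of $T^*M$. Borel summation of the expansion then yields $c\in S^{-\infty}_\delta(M)$ with $c=\O{}(h^\infty)$ together with all derivatives on any compact set, and moreover $c$ is supported in an $h$-independent compact set of $T^*M$ modulo $\O{S^{-\infty}}(h^\infty)$.

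Finally, combining these facts, $c$ satisfies the criterion for $\WFhp(\oph(c))=\emptyset$ at every finite point of $T^*M$, and the compact microlocalization ensures there is nothing to check at fiber-infinity $S^*M$. Hence $\WFhp(AB)=\emptyset$, as claimed. The only delicate point is the bookkeeping in the asymptotic expansion: one must verify that the factor $h^{|\alpha|}$ dominates the $h^{-2\delta|\alpha|}$ loss coming from differentiating symbols of class $S^{\comp}_\delta$, which is precisely why the hypothesis $\delta<1/2$ built into the symbol class is essential.
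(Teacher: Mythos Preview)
Your argument is correct and is essentially the paper's own reasoning written out in detail: the paper simply observes (in the sentence immediately preceding the lemma) that the full-symbol composition formula for $\Ph{\comp}{\delta}$ with $\delta<1/2$ is an asymptotic expansion in derivatives of the factors, so microsupport is preserved under products, and then the lemma follows because $AB$ is microsupported in both $\MSp(A)$ and $\MSp(B)$, hence in their empty intersection.

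One small imprecision worth noting: the microsupports in Definition~\ref{d-microlocal-vanishing} are $h$-dependent families $V_A(h),V_B(h)$, and in general you cannot separate them by $h$-independent open sets $U_A,U_B$ with disjoint closures (they may approach each other as $h\to 0$). However, your pointwise argument does not actually need this separation: since $V_A(h)\cap V_B(h)=\emptyset$, every point of any compact $K$ lies in $K\setminus V_A(h)$ or in $K\setminus V_B(h)$, and the uniform $\O{}(h^N)$ bounds from the definition combine with the $S^{\comp}_\delta$ bounds on the other factor exactly as you wrote.
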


For $A\in \Ph{\comp}{\delta}(M)$, $(x,\xi)\notin \WFh(A)$ if and only if there exists an $h$-independent neighborhood $u$ of $(x,\xi)$ such that $A$ is microsupported on the complement of $U$. However, $A$ need only be microsupported on any $h$-independent neighborhood of $\WFhp(A)$, not on $\WFhp(A)$ itself. Also, notice that by Taylor's formula if $A\in \Ph{\comp}{\delta}(M)$ is microsupported in $V(h)$ and $\delta'>\delta$, then $A$ is also microsupported on the set of all points in $V(h)$ which are at least $h^{\delta'}$ away from the complement of $V(h)$.

\begin{remark} Notice that since we are working with $A\in \Ph{\comp}{\delta}(M)$ for $0\leq\delta<1/2$ we have $a\in S_\delta^{\comp}(T^*M)$ and $a$ can only vary on a scale $\sim h^{-\delta}$. This implies that the set $\MSp(A)$ will respect the uncertainty principle.
\end{remark}

\subsubsection{Ellipticity and $L^2$ operator norm}

For $A\in\Ph{m}{\delta}(M)$, define its \emph{elliptic set}
$\Ell(A)\subset T^*M$ as follows: $(x,\xi)\in\Ell(A)$ if and
only if there exists a neighborhood $U$ of $(x,\xi)$ in $\overline
T^*M$ and a constant $C$ such that $|\sigma(A)|\geq
C^{-1}\langle\xi\rangle^m$ in $U\cap T^*M$.  The following statement
is the standard semiclassical elliptic estimate;
see~\cite[Theorem~18.1.24']{HOV3} for the closely related microlocal
case and for example~\cite[Section~2.2]{zeeman} for the semiclassical
case.
%
%
\begin{lemma}
\label{lem:microlocalElliptic}
Suppose that $P\in\Ph{m}{\delta}(M)$ and $A\in\Ph{m'}{\delta}(M)$ with $\WFhp(A)\subset\Ell(P)$. Then for each $\chi\in\Cc(M)$,  there exist $Q_i\in \Ph{m'-m}{\delta}(M)$ such that 
$$\chi A=\chi Q_1P+\O{\Ph{-\infty}{\delta}}(h^\infty)=\chi PQ_2+\O{\Ph{-\infty}{}}(h^\infty).$$
In particular, for each $s\in \re$ and $u\in H_h^{s+m'}$ there exists $C>0$ such that for all $N>0$, and $\chi_1\in C^\infty(M)$ with $\chi_1 \equiv 1 $ on $\supp \chi$, 
$$\|\chi Au\|_{H_h^s}\leq C\|\chi Pu\|_{H_h^{s+m'-m}}+\O{}(h^\infty)\|\chi_1 u\|_{H_h^{-N}}.$$
\end{lemma}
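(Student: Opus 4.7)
This is the standard semiclassical elliptic parametrix argument. The plan is to construct $Q_1$ (and symmetrically $Q_2$) by iterative symbolic inversion of $P$ on an $h$-independent neighborhood of $\WFhp(A)$, and then derive the quantitative estimate from the resulting operator identity together with the Sobolev mapping properties of pseudodifferential operators.

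\textbf{Construction of $Q_1$.} Since $\WFhp(A)\subset \Ell(P)$, choose a cutoff $\psi\in S^0(M)$ identically equal to $1$ on some $h$-independent neighborhood $U$ of $\WFhp(A)$ with $\supp\psi\subset \Ell(P)$. Ellipticity of $P$ on $\supp\psi$ gives $|\sigma(P)|\ge c\la\xi\ra^{m}$ there, so
$$ q_0 := \psi\,\sigma(A)/\sigma(P)\in S^{m'-m}_\delta(M) $$
is well defined. The symbolic composition formula yields $\oph(q_0)P = \oph(\psi\sigma(A)) + h^{1-2\delta} R^{(1)}$ for some $R^{(1)}\in \Ph{m'-1}{\delta}(M)$, and microlocality (combined with $\psi\equiv 1$ near $\WFhp(A)$) gives $\oph(\psi\sigma(A)) = A + \O{\Ph{-\infty}{}}(h^\infty)$. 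Iterating to annihilate successive remainders and Borel summing the formal series $\sum_{j\ge 0}h^{j(1-2\delta)}\oph(q_j)$ produces $Q_1\in\Ph{m'-m}{\delta}(M)$ with
$$ Q_1 P = A + \O{\Ph{-\infty}{}}(h^\infty). $$
Multiplying on the left by $\chi$ gives the first identity; $Q_2$ is built by the analogous argument with the composition formula for $P\oph(q)$ in place of $\oph(q)P$.

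\textbf{From identity to estimate.} Apply the identity $\chi A=\chi Q_1 P+\O{\Ph{-\infty}{}}(h^\infty)$ to $u$ and take $H_h^s$ norms. Because $\chi_1\equiv 1$ on $\supp\chi$, pseudolocality yields $\chi Q_1(I-\chi_1)\in \Ph{-\infty}{}(h^\infty)$, so up to an $\O{}(h^\infty)\|\chi_1 u\|_{H_h^{-N}}$ error we may replace $\chi Q_1 P u$ by $\chi Q_1\chi_1 Pu$; the boundedness $\chi Q_1:H_h^{s+m'-m}\to H_h^s$ then yields the claimed inequality after a final pseudolocal step to pass between $\chi_1 Pu$ and $\chi Pu$ modulo the same type of error.

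\textbf{Main obstacle.} The only nontrivial bookkeeping concerns the exotic class $S^m_\delta$: the composition formula gives errors of relative order $h^{1-2\delta}$ rather than $h$, so the asymptotic expansion proceeds in powers of $h^{1-2\delta}$, and the hypothesis $\delta<1/2$ built into Definition \ref{def:symbol} is exactly what is needed for the Borel sum to define a genuine element of $\Ph{m'-m}{\delta}(M)$.
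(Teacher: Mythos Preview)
Your approach is the standard elliptic parametrix argument, and the paper does not give its own proof of this lemma---it simply cites H\"ormander \cite[Theorem~18.1.24']{HOV3} and the semiclassical treatment in \cite[Section~2.2]{zeeman}. So there is nothing to compare against beyond confirming that your argument matches the cited one, which it does.

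Two small corrections are worth noting. First, the intermediate claim $\oph(\psi\sigma(A)) = A + \O{\Ph{-\infty}{}}(h^\infty)$ is too strong: $\oph(\psi\sigma(A))$ and $A$ only agree at the level of principal symbols, so their difference lies in $h^{1-2\delta}\Ph{m'-1}{\delta}$, not in $h^\infty\Ph{-\infty}{}$. This is harmless---the discrepancy is absorbed into your $R^{(1)}$ and killed in the iteration---but as written the sentence is false. Second, the ``final pseudolocal step to pass between $\chi_1 Pu$ and $\chi Pu$'' does not work: $\chi_1$ has strictly larger support than $\chi$, and no pseudolocality argument bounds $\|\chi_1 Pu\|$ by $\|\chi Pu\|$ plus an $O(h^\infty)$ error. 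What your argument actually delivers is the bound with $\chi_1 Pu$ (or any cutoff slightly larger than $\chi$) on the right-hand side; the appearance of $\chi Pu$ in the stated inequality is most likely a minor imprecision in the lemma statement rather than something your method should be forced to prove.
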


We also recall the estimate for the $L^2\to L^2$ norm of a pseudodifferential operator (see for example \cite[Chapter 13]{EZB}).
\begin{lemma}
\label{lem:goodL2Bound}
Suppose that $A\in \Ph{}{\delta}(M)$. Then there exists $C>0$ such that 
$$\|A\|_{L^2\to L^2}\leq \sup_{T^*M} |\sigma(A)|+Ch^{1-2\delta}.$$
\end{lemma}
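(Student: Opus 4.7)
The plan is the standard ``square-root trick'' of H\"ormander: bound $A^*A$ from above by a scalar, using the pseudodifferential calculus already stated. Set
$$M_0:=\sup_{T^*M}|\sigma(A)|,$$
fix $\varepsilon>0$, and define
$$b(x,\xi;h):=\sqrt{(M_0+\varepsilon)^2-|\sigma(A)(x,\xi;h)|^2}.$$
The quantity under the radical is bounded below by $\varepsilon(2M_0+\varepsilon)>0$; combined with $\sigma(A)\in S^0_\delta(M)$, this guarantees that $b$ inherits the symbol estimates of $\sigma(A)$, so $b\in S^0_\delta(M)$. Write $B:=\oph(b)$.

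First I would apply the symbolic calculus stated in the preliminaries (composition, adjoint, and principal-symbol formulas) to $B^*B+A^*A$. Its principal symbol is $b^2+|\sigma(A)|^2=(M_0+\varepsilon)^2$, and therefore
$$B^*B+A^*A-(M_0+\varepsilon)^2\,\Id \in h^{1-2\delta}\,\Ph{-1}{\delta}(M).$$
Pairing with $u\in L^2(M)$ gives
$$\|Au\|_{L^2}^2+\|Bu\|_{L^2}^2 = (M_0+\varepsilon)^2\|u\|_{L^2}^2 + \langle Ru,u\rangle,$$
where $R\in h^{1-2\delta}\Ph{-1}{\delta}(M)$. Dropping the nonnegative term $\|Bu\|^2$ and using $|\langle Ru,u\rangle|\leq\|R\|_{L^2\to L^2}\|u\|_{L^2}^2$ yields
$$\|Au\|_{L^2}^2\leq\bigl((M_0+\varepsilon)^2+\|R\|_{L^2\to L^2}\bigr)\|u\|_{L^2}^2.$$

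The remaining step is to show $\|R\|_{L^2\to L^2}\leq C h^{1-2\delta}$; this is the main obstacle, because it is essentially the statement of the lemma itself at one lower order. I would handle it either by quoting the (non-sharp) Calder\'on--Vaillancourt theorem, which gives $L^2$-boundedness of elements of $\Ph{0}{\delta}(M)$ with bound controlled by finitely many symbol seminorms, and then noting that $h^{1-2\delta}S^{-1}_\delta$-quantizations pick up the explicit factor $h^{1-2\delta}$; or, more self-containedly, by induction on the order: iterating the same square-root trick on $R$ reduces the problem to boundedness in $\Ph{-N}{\delta}$ for arbitrarily negative $N$, where Schur's test (or a direct kernel estimate using stationary phase) gives a crude $\O{}(1)$ bound that suffices to close the induction. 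Once this is granted, taking square roots, using $\sqrt{(M_0+\varepsilon)^2+Ch^{1-2\delta}}\leq M_0+\varepsilon+C'h^{1-2\delta}$, and finally letting $\varepsilon\downarrow 0$ delivers the stated estimate
$$\|A\|_{L^2\to L^2}\leq \sup_{T^*M}|\sigma(A)|+Ch^{1-2\delta}.$$
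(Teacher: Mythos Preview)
The paper does not provide its own proof of this lemma; it merely records the statement and cites \cite[Chapter 13]{EZB}. Your argument---the H\"ormander square-root trick, with the remainder $R\in h^{1-2\delta}\Ph{-1}{\delta}$ handled by the (non-sharp) Calder\'on--Vaillancourt theorem---is exactly the standard proof found in that reference, and it is correct; the first of your two options for bounding $\|R\|$ already closes the argument without any induction.
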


\subsection{Semiclassical microlocalization of distributions and operators}
\subsubsection{Semiclassical wavefront sets and microsupport for distributions}
An $h$-dependent
family $u(h):(0,h_0)\to \mc D'(M)$ is called \emph{h-tempered} if
for each open $U\Subset M$, there exist constants
$C$ and $N$ such that 
\begin{equation}
  \label{tempered}
\|u(h)\|_{H^{-N}_{h}(U)}\leq Ch^{-N}.
\end{equation}
For a tempered distribution $u$, we say that $(x_0,\xi_0)\in \overline T^*M$ does not lie in the wavefront set $\WFh(u)$, if there exists a
neighborhood $V$ of $(x_0,\xi_0)$ such that for each
$A\in\Ph{}{}(M)$ with $\WFhp(A)\subset V$, we have $Au=\O{C^\infty}(h^\infty)$. As above, we write 
$$\WFh(u)={\WFh}^f(u)\sqcup{\WFh}^i(u)$$
where ${\WFh}^i(u)=\WFh(u)\cap S^*M$.
By Lemma~\ref{lem:microlocalElliptic},
$(x_0,\xi_0)\not\in\WFh(u)$ if and only if there exists compactly
supported $A\in\Ph{}{}(M)$ elliptic at $(x_0,\xi_0)$ such that
$Au=\O{C^\infty}(h^\infty)$.  The wavefront set of $u$ is a
closed subset of $\overline T^*M$. It is empty if and only if
$u=\O{C^\infty(M)}(h^\infty)$. We can also verify that for
$u$ tempered and $A\in\Ph{m}{\delta}(M)$,
$\WFh(Au)\subset\WFhp(A)\cap\WFh(u)$.

\begin{defin}
A tempered distribution $u$ is said to be \emph{microsupported} on an $h-$dependent family of sets $V(h)\subset T^*M$ if for $\delta\in[0,1/2)$, $A\in \Ph{}{\delta}(M)$, and $\MSp(A)\cap V=\emptyset$, $\WFh(Au)=\emptyset.$
\end{defin}

\subsubsection{Semiclassical wavefront sets of tempered operators}
An $h$- dependent family of operators $A(h):\mc{S}(M)\to \mc{S}'(M')$ is called \emph{h-tempered} if for each $U\Subset M$, there exists $N\geq 0$ and $k\in \ints^+$, such that
\begin{equation}
\label{eqn:temperedOp}
\|A(h)\|_{H_h^k(U)\to H_{h,\loc}^{-k}(M')}\leq Ch^{-N}
\end{equation}

For an $h$-tempered family of operators, we write that the wavefront set of $A$ is given by
$${\WFh}'(A):=\{(x,\xi,y,\eta)\,|,(x,\xi,y,-\eta)\in \WFh(K_A)\}$$
where $K_A$ is the Schwartz kernel of $A$. 
\begin{defin}
\label{d:microlocal-vanishingOps}
A tempered operator $A$ is said to be
\emph{microsupported} on an $h$-dependent family of sets
$V(h)\subset T^*M\times T^*M'$, if for all $\delta\in [0,1/2)$ and each $B_1\in \Ph{}{\delta}(M')$ and $B_2\in \Ph{}{\delta}(M)$ with $(\MSp(B_1)\times\MSp(B_2))\cap V=\emptyset$, we have $\WFh(B_1AB_2)=\emptyset.$ We then write 
$${\MS}'(A)\subset V(h).$$
\end{defin}

\begin{remark}
With the definitions above, we have for $A\in \Ph{m}{\delta}(M)$, $${\WFh}'(A)=\{(x,\xi,x,\xi)\,:\, (x,\xi)\in \WFhp(A)\}.$$ 
In addition, we have that if $A\in \Ph{\comp}{\delta}$, then $\MSp(A)\subset V(h)$ if and only if 
$${\MS}'(A)\subset \{(x,\xi,x,\xi)\,:\, (x,\xi)\in V(h)\}.$$
Since there is a simple relationship between $\WFhp$ and $\WFh$, as well as $\MSp$ and $\MS$, we will only use the notation without $\Psi$ from this point forward and the correct object will be understood from context.
\end{remark}

\subsection{Semiclassical Lagrangian distributions}
  \label{s:prelim.lagrangian}

In this subsection, we review some facts from the theory of
semiclassical Lagrangian distributions.  See~\cite[Chapter~6]{g-s}
or~\cite[Section~2.3]{svn} for a detailed account,
and~\cite[Section~25.1]{HOV4} or~\cite[Chapter~11]{gr-s} for the microlocal case. We do not attempt to define the principal symbol as a globally invariant object. Indeed, it is not always possible to do so in the semiclassical setting. When it is possible to do so, i.e. when the Lagrangian is exact, we define the symbol modulo the Maslov bundle. When the Lagrangian is not exact, a factor $e^{iA/h}$ with $A$ a constant depending on the choice of phase function appears. Taking symbols modulo the Maslov bundle makes the theory considerably simpler. We can make this simplification since for all of our symbolic computations, we work only in a single coordinate chart and, moreover, except when $\Lambda$ is conic and hence exact, we are concerned only with the absolute value of the symbol.

\subsubsection{Phase functions}
Let $M$ be a manifold without boundary.  We denote its dimension
by $d$.  Let $\varphi(x,\theta)$ be a smooth real-valued function on
some open subset $U_\varphi$ of $M\times \mathbb R^L$, for some $L$;
we call $x$ the \emph{base variable} and $\theta$ the \emph{oscillatory
variable}. As in \cite[Section 21.2]{HOV3}, we say that $\varphi$ is a \emph{clean phase
function} with excess $e$ if the number of linearly independent differentials $d
(\partial_{\theta_1}\varphi),\dots,d(\partial_{\theta_L}\varphi)$ on the \emph{critical set}
\begin{equation}
  \label{e:c-varphi}
C_\varphi:=\{(x,\theta)\mid \partial_\theta \varphi=0\}\subset U_\varphi
\end{equation}
is equal to $L-e$ where $e=\dim C_\varphi-\dim X.$
Note that
\[
\Lambda_\varphi:=\{(x,\partial_x\varphi(x,\theta))\mid (x,\theta)\in C_\varphi\}\subset T^*M
\]
is an immersed Lagrangian submanifold (we will shrink the domain of $\varphi$ to make it
embedded). We say that $\varphi$
\emph{generates} $\Lambda_\varphi$. We call $\varphi$ a \emph{non-degenerate} phase function if $e=0$.

\subsubsection{Symbols}
Let $\delta\in [0,1/2)$. A smooth function $a(x,\theta;h)$ is called a
compactly supported symbol of type $\delta$ on $U_\varphi$, if it is supported in some compact $h$-independent subset of $U_\varphi$, and for each differential operator $\partial^\alpha$ on $M\times \mathbb R^L$, there exists a constant $C_\alpha$ such that
$$
\sup_{U_\varphi}|\partial^\alpha a|\leq C_\alpha h^{-\delta|\alpha|}.
$$
As above, we write $a\in
S^{\comp}_\delta(U_\varphi)$ and denote $S^{\comp}:=S^{\comp}_0$. 

\subsubsection{Lagrangian distributions}
Given a clean phase function $\varphi$ with excess $e$ and a symbol $a\in S_\delta^{\comp}(U_{\varphi})$, consider the $h$-dependent family of
functions
\begin{equation}
  \label{e:lagrangian-basic}
u(x;h)=(2\pi h)^{-(d+2L-2e)/4}\int_{\mathbb R^L} e^{i\varphi(x,\theta)/h}a(x,\theta;h)\,d\theta.
\end{equation}
We call $u$ a \emph{Lagrangian distribution} of type $\delta$ generated by $\varphi$ and denote this by $u\in I^{\comp}_{\delta}(\Lambda_{\varphi})$.

By the method of non-stationary phase, if
$\supp a$ is contained in some $h$-dependent compact
set $K(h)\subset U_\varphi$, then
\begin{equation}
  \label{e:lag-wf}
\MS(u)\subset\{(x,\partial_x\varphi(x,\theta))\mid (x,\theta)\in C_\varphi\cap K(h)\}\subset\Lambda_\varphi.
\end{equation}
\begin{remark} We are using the fact that $a\in S_\delta(U_\varphi)$ for some $\delta<1/2$ here.
\end{remark}

The \emph{phase dependent principal symbol} $\sigma_\varphi(u)\in S^{\comp}_\delta(\Lambda_\varphi)$ of $u$ 
is defined modulo $\O{}(h^{1-2\delta})$ by the expression
\begin{equation}
  \label{e:lagrangian-symbol}
\sigma_\varphi(u)(x,\partial_x\varphi(x,\theta);h)= a(x,\theta;h),\quad \text{ for }\quad
(x,\theta)\in C_\varphi.
\end{equation}
That $\sigma_\varphi(u)$ does not depend (modulo $\O{}(h^{1-2\delta})$) on the choice of $a$
producing $u$ will
follow from Lemma~\ref{l:lagrangian-basic}. However, it does depend on the choice of $\varphi$ parameterizing $\Lambda$.

\subsubsection{Principal Symbols}
We define the principal symbol of a Lagrangian distribution independently of the choice of $\varphi$. To do this, we will need to use half-densities on $\Lambda_{\varphi}$ (see, for example \cite[Chapter 9]{EZB} for a definition).

Following \cite[Section 25.1]{HOV4}, we split the $\theta$ variables into $\theta=(\theta',\theta'')$ such that the map $C_{\xi}\ni (x,\theta)\mapsto \theta''$ has bijective differential where 
$$C_{\xi}=\{(x,\theta):\partial_{\theta}\varphi(x,\theta)=0,\, \partial_x\varphi (x,\theta)=\xi\}.$$
Then, letting 
$$\Phi=\left(\begin{array}{cc}\varphi_{xx}''&\varphi_{x\theta'}''\\ 
\varphi_{\theta'x}''&\varphi_{\theta'\theta'}''\end{array}\right),$$ 
\begin{lemma}
\label{l:lagrangian-basic}
Modulo Maslov factors, and a factor $e^{iA/h}$ for some constant $A\in\re$ depending on $\varphi$, the \emph{principal symbol} 
$$\sigma(u)\in \quotient{S_{\delta}^{\comp}(\Lambda_\varphi;\Omega^{1/2})}{h^{1-2\delta}S_{\delta}^{\comp}(\Lambda_{\varphi};\Omega^{1/2})}$$ is a half density given by 
$$\sigma(u)(x,\xi)=|d\xi|^{1/2}\int_{C_{\xi}}a(x,\theta) e^{i\pi/4\sgn\Phi}|\det \Phi|^{-1/2}d\theta'' .$$
\end{lemma}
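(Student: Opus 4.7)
The plan is to reduce the statement to a stationary phase computation and verify invariance under change of phase function modulo the announced ambiguities (Maslov factors and the global constant $e^{iA/h}$). The first step is to use a partition of unity on $C_\varphi$ to localize near a single point of $\Lambda_\varphi$, so that the formula only has to be checked as an identity of half-density germs; the invariance statement can then be upgraded by patching.

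Next I would exploit the clean phase hypothesis. Since the differentials $d(\partial_{\theta_j}\varphi)$ span an $(L-e)$-dimensional space on $C_\varphi$, I can, after a local linear change in $\theta$, split $\theta=(\theta',\theta'')$ with $\theta'\in\RR^{L-e}$ and $\theta''\in\RR^e$ so that the map $C_\xi\ni(x,\theta)\mapsto \theta''$ has bijective differential, and so that $\Phi$, the Hessian of $\varphi$ in the $(x,\theta')$ variables, is non-degenerate on $C_\xi$. I would then introduce an auxiliary variable $\xi$ by rewriting
\[
u(x;h)=(2\pi h)^{-(d+2L-2e)/4}\int e^{i(\varphi(x,\theta)-\langle x,\xi\rangle+\langle x,\xi\rangle)/h}a(x,\theta)\,d\theta,
\]
Fourier transforming in $x$ and applying stationary phase in $(x,\theta')$. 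The stationary points are precisely the fibers $C_\xi$, the remaining integration is in $\theta''$, and the standard stationary phase expansion produces exactly the factor $e^{i\pi/4\,\sgn\Phi}|\det \Phi|^{-1/2}$, with symbolic error $\O{}(h^{1-2\delta})$ since $a\in S^{\comp}_\delta$ with $\delta<1/2$. The half-density factor $|d\xi|^{1/2}$ then arises from identifying the resulting oscillatory integral with a half-density on $\Lambda_\varphi$ in the cotangent coordinates $\xi$.

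Invariance under the choice of amplitude $a$ generating a given $u$ follows from the characterization~\eqref{e:lag-wf} together with the non-stationary phase lemma: if two symbols produce the same $u$ modulo $\O{C^\infty}(h^\infty)$, their difference has vanishing leading symbol on $C_\varphi$. Invariance under change of phase function (modulo Maslov and a global constant) is the classical argument of Hörmander adapted to the semiclassical setting: two clean phase functions generating $\Lambda_\varphi$ locally become equivalent after adding independent quadratic forms in additional oscillatory variables, and applying stationary phase to integrate out these quadratic directions transforms the formula above into its analogue for the new phase, producing the Maslov signs through the signature of each quadratic form. The constant $e^{iA/h}$ appears because, unlike the microlocal/homogeneous case, $\varphi$ is determined by $\Lambda$ only up to an additive constant on each connected component.

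The main obstacle is the change-of-phase-function step: keeping careful track of the signatures arising from successive reductions to a common phase function, and justifying that the $e^{iA/h}$ constant and Maslov bundle contributions are the only residual ambiguities. This is precisely why the symbol is defined only modulo the Maslov bundle and modulo this global phase constant, and why we avoid formulating a fully invariant principal symbol in the semiclassical setting. All other steps are routine semiclassical stationary phase and partition-of-unity manipulations.
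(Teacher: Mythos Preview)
The paper does not actually prove this lemma: it is stated without proof, following the citation ``Following~\cite[Section~25.1]{HOV4}'' to H\"ormander, and treated as a standard result imported from the homogeneous theory. Your sketch---localize, apply clean stationary phase in the $(x,\theta')$ variables to produce the $e^{i\pi/4\,\sgn\Phi}|\det\Phi|^{-1/2}$ factor, then verify invariance under change of amplitude and change of phase function via the H\"ormander equivalence argument---is exactly the standard route one finds in that reference, adapted to the semiclassical setting with $\delta<1/2$ symbols. So there is nothing to compare against: your approach is correct and is the expected one, and the paper simply does not supply its own argument.
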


\begin{remark}
In the case that $\Lambda_{\varphi}$ is exact the factor $e^{iA/h}$ can be removed.
\end{remark}

\begin{defin}
  \label{d:lagrangian}
Let $\Lambda\subset T^*M$ be an embedded Lagrangian submanifold.
We say that an $h$-dependent family of functions
$u(x;h)\in \Cc(M)$ is a (compactly supported
and compactly microlocalized) 
Lagrangian distribution of type $\delta$ associated to $\Lambda$, if
it can be written as a sum of finitely many functions
of the form~\eqref{e:lagrangian-basic}, for different phase functions
$\varphi$ parametrizing open subsets of $\Lambda$, plus an
$\O{\Cc}(h^\infty)$ remainder. 
Denote by $I^{\comp}_\delta(\Lambda)$ the space of all such distributions,
and put $I^{\comp}(\Lambda):=I^{\comp}_0(\Lambda)$.
\end{defin}
%
%
By Lemma~\ref{l:lagrangian-basic}, if $\varphi$ is a phase function and $u\in
I^{\comp}_\delta(\Lambda_\varphi)$, then $u$ can be written in the
form~\eqref{e:lagrangian-basic} for some symbol $a$, plus an $
\O{\Cc}(h^\infty)$ remainder.  The symbol~$\sigma_\varphi(u)$,
given by~\eqref{e:lagrangian-symbol}, is well-defined modulo $\O{}(h^{1-2\delta})$.

The action of a pseudodifferential operator on a Lagrangian
distribution is given by the following Lemma, following from
 the method of stationary
phase:
%
%
\begin{lemma}\label{l:lagrangian-mul}
Let $u\in I^{\comp}_\delta(\Lambda)$ and $P\in \Ph{m}{\delta}(M)$.
Then $Pu\in I^{\comp}_\delta(\Lambda)$. Moreover, if $\Lambda=\Lambda_\varphi$ for some phase function $\varphi$,
then
$$
\sigma(Pu)=\sigma(P)|_{\Lambda}\cdot\sigma(u)
+\O{}(h^{1-2\delta})_{S^{\comp}_\delta(\Lambda)}.
$$
\end{lemma}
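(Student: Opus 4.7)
The plan is to reduce to a single local phase-function representation, substitute the pseudodifferential quantization, and then apply stationary phase in the auxiliary $(y,\eta)$ variables. By a partition of unity and Definition~\ref{d:lagrangian}, it suffices to consider $u$ of the form
\begin{equation*}
u(x;h)=(2\pi h)^{-(d+2L-2e)/4}\int_{\mathbb{R}^L} e^{i\varphi(x,\theta)/h}a(x,\theta;h)\,d\theta
\end{equation*}
for a single clean phase function $\varphi$ parametrizing (an open subset of) $\Lambda$, with $a\in S^{\comp}_\delta(U_\varphi)$. Since $P$ is properly supported, we may write $P=\oph(p)$ with $p\in S^m_\delta(T^*M)$ and use the standard oscillatory representation of $\oph(p)$ in a coordinate chart containing a neighborhood of $\pi_x(\Lambda\cap\WFhp(P))$ (outside that neighborhood $Pu=\O{\Cc}(h^\infty)$ by the wavefront statement for pseudodifferential operators on tempered distributions noted after Definition~\ref{d:microlocal-vanishingOps}).

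Substituting into $Pu$ and Fubini give a single oscillatory integral
\begin{equation*}
Pu(x;h)=(2\pi h)^{-(d+2L-2e)/4-d}\!\!\int\!\!\int\!\!\int e^{i\tilde\varphi/h}\,p(x,\eta;h)\,a(y,\theta;h)\,dy\,d\eta\,d\theta
\end{equation*}
with extended phase $\tilde\varphi(x,y,\eta,\theta)=\langle x-y,\eta\rangle+\varphi(y,\theta)$ and oscillatory variables $(y,\eta,\theta)\in\mathbb{R}^{d+d+L}$. The $(y,\eta)$-part is a non-degenerate quadratic-type phase with unique critical point $y=x$, $\eta=\partial_x\varphi(x,\theta)$; the Hessian determinant is $\pm 1$ and the signature is $0$. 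Applying stationary phase in $(y,\eta)$ (this is the key analytic step, carried out with symbols of type $\delta$ following the proof of \cite[Theorem~3.16]{EZB}) produces an expansion
\begin{equation*}
Pu(x;h)=(2\pi h)^{-(d+2L-2e)/4}\int e^{i\varphi(x,\theta)/h}b(x,\theta;h)\,d\theta+\O{\Cc}(h^\infty),
\end{equation*}
where
\begin{equation*}
b(x,\theta;h)=\sum_{j=0}^{N-1}h^{j}\,L_{j}\bigl(p,a\bigr)(x,\theta;h)+\O{S^{\comp}_\delta}\bigl(h^{N(1-2\delta)}\bigr),
\end{equation*}
with $L_0(p,a)(x,\theta;h)=p(x,\partial_x\varphi(x,\theta);h)\,a(x,\theta;h)$ and $L_j$ a differential operator of order $2j$ in $(y,\eta)$ acting on $p(x,\eta)a(y,\theta)$ evaluated at the critical point. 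Each derivative in $y$ or $\eta$ falling on $p$ or $a$ costs an $h^{-\delta}$, while stationary phase contributes an extra $h$ per order, giving the effective loss $h^{1-2\delta}$ per order; this is precisely where $\delta<1/2$ is used.

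Since $b\in S^{\comp}_\delta(U_\varphi)$ and its leading term is supported where $a$ is, this shows $Pu\in I^{\comp}_\delta(\Lambda_\varphi)$. For the symbol, $(x,\partial_x\varphi(x,\theta))\in\Lambda_\varphi$ on $C_\varphi$, so the formula \eqref{e:lagrangian-symbol} applied to $b$ yields
\begin{equation*}
\sigma_\varphi(Pu)(x,\partial_x\varphi(x,\theta);h)=p(x,\partial_x\varphi(x,\theta);h)\,a(x,\theta;h)+\O{}(h^{1-2\delta}),
\end{equation*}
which is $\sigma(P)|_\Lambda\cdot\sigma_\varphi(u)$ modulo $h^{1-2\delta}S^{\comp}_\delta$. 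The invariantly defined half-density symbol of Lemma~\ref{l:lagrangian-basic} transforms by the same multiplicative factor $\sigma(P)|_\Lambda$, since the half-density factor $|d\xi|^{1/2}$ and Hessian factors in the stationary phase formula depend only on $\varphi$, not on $p$. The main obstacle is the careful bookkeeping of the $S_\delta$-remainders in the stationary phase expansion; once $\delta<1/2$ is invoked to ensure convergence of the asymptotic series at rate $h^{1-2\delta}$, the rest is a routine computation.
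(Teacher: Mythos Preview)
Your proposal is correct and follows exactly the approach the paper indicates: the paper simply states that the lemma ``follow[s] from the method of stationary phase'' without giving further details, and your argument carries this out in the standard way (local reduction, oscillatory integral in $(y,\eta,\theta)$, non-degenerate stationary phase in $(y,\eta)$ with critical point $y=x$, $\eta=\partial_x\varphi(x,\theta)$, and $S_\delta$-bookkeeping using $\delta<1/2$). One minor referencing slip: the wavefront containment $\WFh(Pu)\subset\WFhp(P)\cap\WFh(u)$ you invoke is stated in the paper just before the microsupport definition for distributions, not after Definition~\ref{d:microlocal-vanishingOps}.
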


\subsection{Fourier integral operators}
\label{sec:semiFIO}

A special case of Lagrangian distributions are Fourier integral
operators associated to canonical relations.  Let $M,M'$ be two
manifolds of dimension $d_1$ and $d_2$, respectively. Consider a Lagrangian submanifold $\Lambda\subset T^*M'\times T^*M.$

A compactly supported operator $U:\mc D'(M')\to \Cc(M)$ is
called a (semiclassical) \emph{Fourier integral operator} of type
$\delta$ associated to the canonical relation 
$$C=\{(x,\xi,y,\-\eta):(x,\xi,y,-\eta)\in \Lambda\}$$
if its Schwartz kernel $K_U(x,x')$ lies
in $I^{\comp}_\delta(\Lambda)$. We write $U\in
I^{\comp}_\delta(C)$.  The numerology $h^{-(d+2L-2e)/4}$ in \eqref{e:lagrangian-basic} is explained by the fact that
the normalization for Fourier integral operators is chosen so that
$$\|U\|_{L^2(M')\to L^2(M)}\sim 1$$ when $C$ is the generated by a symplectomorphism.

The main lemma in the calculus of Fourier integral operators is as follows \cite[Theorem 25.2.3]{HOV4}
\begin{lemma}
\label{lem:FIOcomp}
Let $A_1\in I_{\delta}^{\comp}(M_2\times M_1, C_1)$ and $A_2\in I_{\delta}^{\comp}(M_3\times M_2,C_2)$ and suppose that the composition $C=C_2\composed C_1$ is clean with excess $e$. Then, $A_1\composed A_2\in h^{-e/2}I_{\delta}^{\comp}(C)$. For $\gamma\in C$, let $C_\gamma$ denote the fiber over $\gamma$ of the intersection of $C_1\times C_2$ with $T^*M_3\times \Delta(T^*M_2)\times T^*M_1.$ Then, if $a_1$ and $a_2$, and $a$ are the principal symbols of $A_1$, $A_2$ and $A_2A_1$ respectively, then 
$$a=(2\pi h)^{-e/2}\int_{C_\gamma}a_2\times a_1.$$
\end{lemma}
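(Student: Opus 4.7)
The plan is to reduce to a local phase-function computation and then apply stationary phase in the "transverse" directions to the clean intersection. Working locally, I would cover $C_1$ and $C_2$ by phase functions $\varphi_1(x_2,x_1,\theta_1)$ and $\varphi_2(x_3,x_2,\theta_2)$ (with oscillatory variables $\theta_1 \in \mathbb{R}^{L_1}$, $\theta_2 \in \mathbb{R}^{L_2}$) and symbols $a_j \in S_{\delta}^{\comp}$, so that the Schwartz kernels of $A_1$ and $A_2$ take the form \eqref{e:lagrangian-basic}. Using a partition of unity on $C$, it suffices to analyze a single such pair.

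The first step is to write down the Schwartz kernel of $A_2 \circ A_1$ by integrating in $x_2$:
\[
K_{A_2 A_1}(x_3,x_1) = (2\pi h)^{-\alpha}\!\!\int e^{i(\varphi_2(x_3,x_2,\theta_2)+\varphi_1(x_2,x_1,\theta_1))/h} a_2 a_1\, dx_2\, d\theta_1\, d\theta_2,
\]
for the appropriate power $\alpha$. The combined function $\tilde\varphi = \varphi_2 + \varphi_1$ with oscillatory variables $\tilde\theta=(x_2,\theta_1,\theta_2)$ is then a candidate phase function for $C$. The key step is to verify that the cleanness hypothesis on $C_2 \circ C_1$ is exactly what makes $\tilde\varphi$ a clean phase function with excess $e$: the critical set $C_{\tilde\varphi}$ is defined by $\partial_{\theta_1}\varphi_1=0$, $\partial_{\theta_2}\varphi_2=0$, and $\partial_{x_2}\varphi_1+\partial_{x_2}\varphi_2=0$, which is precisely the fiber product defining $C_2\circ C_1$. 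Under the clean composition assumption, these defining equations are independent modulo the tangent space of $C_{\tilde\varphi}$ up to a deficit of exactly $e$, giving a clean phase function of excess $e$ parametrizing $C$.

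Given this, $K_{A_2 A_1}$ is (up to adjusting the normalization) a Lagrangian distribution of type $\delta$ associated to $C$, and the excess $e$ produces the extra factor $h^{-e/2}$ because the normalization in \eqref{e:lagrangian-basic} includes the $2e$ correction. The symbol is then read off directly from \eqref{e:lagrangian-symbol}: since $\tilde\varphi$ is clean rather than nondegenerate, the symbol is obtained from $a_1 a_2$ by integrating over the $e$-dimensional fiber $C_\gamma$ of the critical set above a point $\gamma \in C$, with the factor $(2\pi h)^{-e/2}$ coming from the mismatch between the actual number of oscillatory variables and the number that would appear in a non-degenerate parametrization. This yields
\[
a(\gamma) = (2\pi h)^{-e/2}\int_{C_\gamma} a_2 \cdot a_1,
\]
as claimed.

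The main obstacle is the cleanness analysis in step two: one must check carefully that the algebraic definition of clean composition of canonical relations matches the differential-geometric definition of a clean phase function, and then that the reduction to a non-degenerate phase can be carried out in the complementary directions so as to apply the standard stationary-phase expansion. This is the content of the proof of~\cite[Theorem~25.2.3]{HOV4} and, since $\delta<1/2$ allows us to use the symbol calculus and stationary phase without difficulty in the semiclassical parameter, the classical proof adapts directly; only the powers of $h$ and the normalization constants require careful bookkeeping, which is controlled by the uniform size of derivatives guaranteed by $a_j \in S_\delta^{\comp}$.
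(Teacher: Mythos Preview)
The paper does not give its own proof of this lemma: it simply states the result and cites \cite[Theorem~25.2.3]{HOV4}. Your sketch is precisely the standard argument from that reference, adapted to the semiclassical setting, and you even identify the same citation; so your proposal is correct and aligned with what the paper relies on.
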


We will also need the following version of Lemma~\ref{l:lagrangian-mul} for pseudodifferential compositions.
%
%
%
%

\begin{lemma}
\label{l:lieDerivativeFIO}
Assume that $U\in I_\delta^{\comp}(\Lambda)$ with $\Lambda\subset T^*X\times T^*Y$ and $P\in \Ph{m}{\e}(M')$ with $p=\sigma(P)$ vanishing on the projection of $\Lambda$ onto $T^*(X)$. Denote by $p_1$ the subprincipal symbol of $P$. Then $PA\in h^{1-(\delta+\e)}I_{\max(\delta,\e)}^{\comp}(\Lambda)$ with principal symbol
$$i^{-1}h\mc{L}_{H_p}a+p_1a$$
where $H_p$ is the Hamiltonian vector field generated by $p$.
\end{lemma}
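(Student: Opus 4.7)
The plan is to reduce to a local oscillatory integral representation of $U$, apply $P$ inside the integral, and then carry out stationary phase carefully enough to extract two terms. First, using Definition~\ref{d:lagrangian} and a partition of unity, I would write $U$ as a finite sum (modulo $\O{\Cc}(h^\infty)$) of distributions of the form
$$
K_U(x,y;h)=(2\pi h)^{-(d_1+d_2+2L-2e)/4}\int_{\re^L}e^{i\varphi(x,y,\theta)/h}a(x,y,\theta;h)\,d\theta,
$$
for clean phase functions $\varphi$ parametrizing open pieces of $\Lambda$, with $a\in S^{\comp}_\delta$. Fix one such piece and let $p=\sigma(P)\in S^m_\e$ with subprincipal symbol $p_1$. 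Quantizing $P$ by the Weyl rule and inserting the oscillatory representation, a standard reduction (integrate the $x'$-variable of the Schwartz kernel of $P$ against $e^{i\varphi/h}a$) expresses $PU$ as another oscillatory integral with the same phase $\varphi$ and a new symbol $b(x,y,\theta;h)$ given by a complete asymptotic expansion in powers of $h^{1-\delta-\e}$.

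Next I would apply stationary phase in the auxiliary variables coming from the reduction. The leading term of $b$ is $p(x,\partial_x\varphi(x,y,\theta))\,a(x,y,\theta;h)$. On the critical set $C_\varphi$ the point $(x,\partial_x\varphi)$ lies in the projection of $\Lambda$ onto $T^*X$, and by hypothesis $p$ vanishes there; hence this leading term vanishes on $C_\varphi$ and, after Taylor expanding $p$ to first order about that point, it absorbs one factor of the stationary phase gain, producing a term of size $h^{1-\delta-\e}$ rather than $h^0$. This gives $PU\in h^{1-(\delta+\e)}I^{\comp}_{\max(\delta,\e)}(\Lambda)$.

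To identify the principal symbol of the remainder, I would keep track of two contributions. The first arises from the Taylor coefficient $\partial_\xi p(x,\partial_x\varphi)$ paired with the derivative of $a$ produced by integration by parts (equivalently, by the second-order stationary phase expansion applied to the Weyl quantization); together with the symmetric $\partial_x p$ contribution this reassembles, on $\Lambda$, into $i^{-1}h\,H_p a$. The second contribution comes from the $h$-subprincipal part of $P$, which evaluates on $C_\varphi$ to $p_1\cdot a$. After checking that the half-density interpretation from Lemma~\ref{l:lagrangian-basic} turns $H_p a$ into the Lie derivative $\mathcal L_{H_p}a$ (here I use that $H_p$ is tangent to $\Lambda$ because $p|_\Lambda=0$, so the Lie derivative along $H_p$ makes sense on half-densities over $\Lambda$), the principal symbol of the rescaled Lagrangian distribution $h^{-(1-\delta-\e)}PU$ is $i^{-1}h\mathcal L_{H_p}a+p_1 a$.

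The main obstacle is the bookkeeping in the stationary phase expansion: one has to verify that the symmetric combination of $\partial_\xi p\cdot\partial_x a$ and $\partial_x p\cdot\partial_\xi a$ terms produced by the Weyl calculus, together with the half-density factor $|\det\Phi|^{-1/2}$ appearing in Lemma~\ref{l:lagrangian-basic}, conspire to give exactly the invariant Lie derivative $\mathcal L_{H_p}a$ on $\Lambda$, independently of the choice of phase function $\varphi$. This is a routine but delicate computation; once it is carried out in one coordinate patch, invariance under change of phase follows from the standard transition formulas in \cite{EZB,HOV4}, and patching with the partition of unity completes the proof.
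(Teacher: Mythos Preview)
Your proposal is correct and follows the standard route for this result (as in \cite[Theorem~25.2.4]{HOV4} or the semiclassical analogue): reduce to a local oscillatory integral, apply $P$ under the integral, observe that the leading term $p(x,\partial_x\varphi)a$ vanishes on $C_\varphi$ by hypothesis, and extract the next two terms to obtain $i^{-1}h\mc{L}_{H_p}a+p_1a$. The paper does not give its own proof of this lemma; it is stated as a preliminary fact in the review section on Lagrangian distributions, so there is nothing to compare against beyond noting that your argument is the expected one.

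One small clarification worth making explicit in your write-up: when you say ``$H_p$ is tangent to $\Lambda$ because $p|_\Lambda=0$,'' you are implicitly pulling $p$ back from $T^*X$ to $T^*(X\times Y)$ via the projection $\pi_X$, so that $\pi_X^*p$ vanishes on $\Lambda$ and its Hamilton vector field (which is just $H_p$ in the $X$-variables, trivial in the $Y$-variables) is then tangent to the Lagrangian $\Lambda$. This is exactly right, but since the hypothesis only says $p$ vanishes on the \emph{projection} of $\Lambda$, spelling out this identification avoids any ambiguity.
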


\subsection{Semiclassical wavefront set calculus}

We give some facts which are standard in the homogeneous setting. The following lemma is the analog of \cite[Theorem 8.2.4]{HOV1}
\begin{lemma}
\label{lem:pullback}
Suppose that $u$ is a tempered distribution on $M'$ and $f:M\to M'$ is a $C^\infty$ map. Let 
$$N_f=\{(f(x),\eta)\in M'\times S_{f(x)}^*M'; (df_x)^t\eta=0\}.$$
Then the pullback $f^*u$ can be defined in one and only one way for all $u\in \mc{D}'(M')$ tempered with 
$$N_f\cap {\WFh}^i(u)=\emptyset$$
so that $f^*u=u\composed f$ when $u\in C^\infty$. Moreover, 
$$\WFh(f^*u)\subset \{(x,(df_x)^t\eta): (f(x),\eta)\in \WFh(u)\}=:f^*\WFh(u).$$
\end{lemma}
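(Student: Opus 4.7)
The plan is to mirror H\"{o}rmander's proof of the classical pullback theorem \cite[Theorem~8.2.4]{HOV1}, tracking semiclassical orders throughout. By a partition of unity we reduce to $u$ compactly supported in a single coordinate chart on $M'$ and $f^*u$ computed in a chart on $M$, so we may work locally in $\re^{d_2}$ and $\re^{d_1}$. There we write $u$ via semiclassical Fourier inversion as
\[
u(y)=(2\pi h)^{-d_2}\int e^{i\la y,\eta\ra/h}\mc{F}_h u(\eta)\,d\eta,
\]
with $\mc{F}_h u$ the semiclassical Fourier transform. Temperedness implies $\mc{F}_h u$ is a tempered distribution with polynomial bounds, and the infinite wavefront set $\WFhi(u)$ is encoded by the failure of rapid decay of $\mc{F}_hu$ in conic directions, after localization in the base variable. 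One then defines
\[
f^*u(x):=(2\pi h)^{-d_2}\int e^{i\la f(x),\eta\ra/h}\mc{F}_h u(\eta)\,d\eta,
\]
interpreted as an oscillatory integral.

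To justify the definition, split $\mc{F}_h u$ using a conic partition of unity in $\eta$. The bounded part is smooth in $x$ by uniform temperedness. In conic regions away from $(df_x)^t\eta=0$, integrate by parts with $L=(h/i)(df_x)^t\eta\cdot \partial_x/|(df_x)^t\eta|^2$; this preserves the phase, transfers derivatives onto $\mc{F}_hu$, and gains powers of $h$, yielding absolute convergence modulo $\O{}(h^\infty)$. In the remaining conic region, where $(df_x)^t\eta=0$, one has $(f(x),\eta/|\eta|)\in N_f$, and the hypothesis $N_f\cap \WFhi(u)=\emptyset$ forces rapid decay of $\mc{F}_hu$ there (after localizing in $y$ near $f(x)$), again giving convergence modulo $\O{}(h^\infty)$. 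Uniqueness follows since the space of tempered distributions satisfying the given wavefront constraint admits approximation by $C^\infty$ functions in a topology under which the oscillatory integral definition is continuous, and on smooth functions $f^*u=u\composed f$.

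For the wavefront bound, fix $(x_0,\xi_0)\notin f^*\WFh(u)$, so that every $\eta$ with $(df_{x_0})^t\eta=\xi_0$ satisfies $(f(x_0),\eta)\notin \WFh(u)$. Pick $A\in \Ph{}{}(M)$ microlocalized in a small neighborhood of $(x_0,\xi_0)$ and apply it to the oscillatory integral. Using Lemma~\ref{l:lieDerivativeFIO} and the semiclassical Fourier integral calculus, $Af^*u$ is again an oscillatory integral whose amplitude concentrates near the stationary locus $(df_x)^t\eta=\xi_0$. Non-stationary phase outside this locus, combined with the wavefront hypothesis---which controls both finite-$\eta$ behavior via $\WFhf(u)$ and infinite-$\eta$ behavior via $\WFhi(u)$---shows $Af^*u=\O{C^\infty}(h^\infty)$, so $(x_0,\xi_0)\notin \WFh(f^*u)$. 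The main obstacle is the careful interplay between finite and infinite $\eta$: the hypothesis $N_f\cap \WFhi(u)=\emptyset$ controls only the cone at infinity, so finite-frequency contributions must be absorbed using temperedness together with the boundedness of $\WFhf(u)$ in $T^*M'$ and a matching conic/uniform decomposition of the oscillatory integral.
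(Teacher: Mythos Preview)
Your approach is broadly correct but differs from the paper's in one structural way worth noting. For the existence and uniqueness of $f^*u$ and for the bound on $\WFhi(f^*u)$, the paper simply invokes the classical argument of \cite[Theorem~8.2.4]{HOV1} verbatim, as you do. For the bound on $\WFhf(f^*u)$, however, the paper does \emph{not} run a non-stationary phase argument on the oscillatory integral directly. Instead it first reduces to $u$ compactly microlocalized (i.e.\ $\WFhi(u)=\emptyset$), which lets one insert a compactly supported amplitude $a(x',x,\xi)\in S^{\comp}$ into the representation
\[
f^*u(x)=(2\pi h)^{-d'}\int u(x')\,a(x',x,\xi)\,e^{\frac{i}{h}(\la f(x),\xi\ra-\la x',\xi\ra)}\,d\xi\,dx'+\O{C^\infty}(h^\infty),
\]
and then recognizes this as $F_a u$ for a semiclassical FIO associated to the canonical relation $C=\{(x,(df_x)^t\eta,x',\eta):x'=f(x)\}$. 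The wavefront bound on $\WFhf$ then drops out of the FIO wavefront calculus with no further work. Your direct oscillatory-integral route is legitimate, but the reduction to compact microlocalization is exactly what lets the paper avoid the ``careful interplay between finite and infinite $\eta$'' that you flag as the main obstacle.

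One concrete misstep: your citation of Lemma~\ref{l:lieDerivativeFIO} is inapposite. That lemma concerns the action of a pseudodifferential operator whose principal symbol \emph{vanishes} on a Lagrangian, and computes the resulting Lie-derivative transport term; it has nothing to do with localizing $Af^*u$ to a stationary locus. What you actually need there is just the standard composition of $A\in\Ph{}{}$ with the oscillatory integral (Lemma~\ref{l:lagrangian-mul} or a direct stationary-phase computation), followed by non-stationary phase in $x$ to kill contributions from $\eta$ with $(df_x)^t\eta$ away from $\xi_0$.
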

\begin{proof}
The proof of the first statement follows that in \cite[Theorem 8.2.4]{HOV1} as does the statement for ${\WFh}^i(u).$

To see that ${\WFh}^f(f^*u)\subset f^*{\WFh}^f(u)$ we may assume that $M$ and $M'$ are subsets of $\re^d$ and $\re^{d'}$ respectively. We may also assume that $u$ is supported in a small neighborhood of a point $f(x_0)$ and has ${\WFh}^i(u)=\emptyset$ and hence that $u$ is compactly microlocalized. In this case, we observe that 
$$f^*u=(2\pi h)^{-d'}\int u(x')e^{\frac{i}{h}\left(\la f(x),\xi\ra-\la x',\xi\ra\right)}d\xi dx'$$
Then, since $u$ is compactly microlocalized, we can write
$$f^*u=(2\pi h)^{-d'}\int u(x')a(x',x,\xi)e^{\frac{i}{h}\left(\la f(x),\xi\ra-\la x',\xi\ra\right)}d\xi dx'+\mc{O}_{C^\infty}(h^\infty)$$
where $a\in S^{\comp}.$
Thus, $f^*u=F_a u$ where $F_a$ is a Fourier integral operator associated to the relation
$$C=\{(x,\xi,x',\eta)\in T^*M\times T^*M': x=f(x'),\, \xi=(d f_x)^t\eta\}.$$
The wave front set statement follows.
\end{proof}

Combining Lemma \ref{lem:pullback} with \cite[Lemma 5]{Alexandrova}, we have 
\begin{lemma}
\label{lem:tensor}
Suppose that $u$ and $v$ are tempered distributions on $M$. Then the product $uv$ can be defined as the pullback of $u\otimes v$ by the diagonal map $\delta:X\to X\times X$ provided that 
$${\WFh}^i(u)\cap \{(x,\xi):(x,-\xi)\in {\WFh}^i(v)\}=\emptyset.$$
Moreover,
$${\WFh}(uv)\subset\{(x,\xi+\eta): (x,\xi)\in \WFh(u),(x,\eta)\in \WFh(v)\}.$$
\end{lemma}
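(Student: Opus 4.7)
The plan is to realize $uv$ as the pullback of the tensor product $u \otimes v$ by the diagonal map and invoke Lemma~\ref{lem:pullback}. The hypothesis on wavefront sets is engineered precisely so that the pullback condition $N_\delta \cap \WFh^i(u \otimes v) = \emptyset$ is satisfied.

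\textbf{Step 1: Wavefront set of $u \otimes v$.} I would first establish (as in \cite[Lemma~5]{Alexandrova}) the bound
\[
\WFh(u \otimes v) \subset \bigl(\WFh(u) \times \WFh(v)\bigr) \cup \bigl(\WFh(u) \times 0_{M}\bigr) \cup \bigl(0_{M} \times \WFh(v)\bigr),
\]
where $0_M$ denotes the zero section of $T^*M$. This is done by testing $u \otimes v$ against tensor products $A_1 \otimes A_2$ with $A_i \in \Psi_h^{\comp}(M)$: if $\WFh(A_1) \cap \WFh(u) = \emptyset$ then $(A_1 \otimes A_2)(u \otimes v) = (A_1 u) \otimes (A_2 v) = \O{C^\infty}(h^\infty)$, and a partition-of-unity argument on $T^*(M \times M)$ yields the stated inclusion.

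\textbf{Step 2: Verify the pullback hypothesis.} Apply Lemma~\ref{lem:pullback} to $\delta: M \to M \times M$, $\delta(x) = (x,x)$. The transpose of $d\delta_x$ sends $(\xi, \eta) \mapsto \xi + \eta$, so
\[
N_\delta = \bigl\{\bigl((x,x), (\xi, -\xi)\bigr) \in S^*(M \times M) : \xi \in T_x^*M \setminus \{0\}\bigr\}.
\]
Suppose for contradiction that $((x,x),(\xi,-\xi)) \in N_\delta \cap \WFh^i(u \otimes v)$. Since we are on the sphere at infinity, $\xi \neq 0$, hence $-\xi \neq 0$ as well, ruling out the two ``mixed'' terms in the decomposition from Step~1. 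Therefore $(x,\xi) \in \WFh^i(u)$ and $(x,-\xi) \in \WFh^i(v)$, contradicting the assumption of the lemma. Consequently $N_\delta \cap \WFh^i(u \otimes v) = \emptyset$, and we may define
\[
uv := \delta^*(u \otimes v).
\]

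\textbf{Step 3: Push the wavefront set bound through $\delta^*$.} Lemma~\ref{lem:pullback} gives
\[
\WFh(uv) \subset \delta^* \WFh(u \otimes v) = \bigl\{\bigl(x, \xi + \eta\bigr) : \bigl((x,x),(\xi,\eta)\bigr) \in \WFh(u \otimes v)\bigr\}.
\]
Substituting the inclusion from Step~1 and absorbing the cases $\xi = 0$ or $\eta = 0$ into the single expression $\xi + \eta$ (note that $(x, 0) \in \WFh(w)$ whenever $x$ is essentially in the support of a distribution $w$), we arrive at the claimed bound
\[
\WFh(uv) \subset \bigl\{(x, \xi + \eta) : (x, \xi) \in \WFh(u),\ (x, \eta) \in \WFh(v)\bigr\}.
\]

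The main obstacle is Step~1, the tensor-product wavefront calculation; once that is in hand, Steps~2 and~3 are essentially bookkeeping based on the explicit form of $(d\delta)^t$. This is why citing \cite[Lemma~5]{Alexandrova} alongside our Lemma~\ref{lem:pullback} suffices.
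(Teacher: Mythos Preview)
Your proof is correct and follows exactly the approach the paper takes: the paper's entire proof is the sentence ``Combining Lemma~\ref{lem:pullback} with \cite[Lemma 5]{Alexandrova}, we have,'' and you have unpacked precisely those two ingredients. The only small remark is that your parenthetical about $(x,0)\in\WFh(w)$ is not quite how the semiclassical zero-section behaves, but this does not affect the argument since the mixed terms $\WFh(u)\times 0_M$ and $0_M\times\WFh(v)$ are already contained in the stated bound once you allow $\xi=0$ or $\eta=0$.
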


We also need the following simple lemma
\begin{lemma}
\label{lem:pushforward}
Suppose that $u$ is a tempered distribution on $M$ and $f:M\to M'$ is a $C^\infty$ map. Then the pushforward, $f_*u$ has
$$\WFh(f_*u)\subset(df)^t\WFh(u)$$
where 
$$(df)^tA:=\{(x',\eta):\text{ there exists }(x,\xi)\in A \text{ with }\,x'=f(x),\,(df_{x})^t\xi=\eta\}.$$
\end{lemma}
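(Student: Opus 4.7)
The plan is to realize $f_*$ as a semiclassical Fourier integral operator and then read off $\WFh(f_*u)$ from its canonical relation, paralleling (by duality) the argument given in Lemma~\ref{lem:pullback}. First I would reduce to the local setting via a partition of unity, taking $M\subset\re^d$, $M'\subset\re^{d'}$, and $u$ compactly supported (or $f$ proper on $\supp u$, as is implicitly needed to make $f_*u$ a well-defined tempered distribution). The defining identity
$$\langle f_*u,\varphi\rangle \;=\; \int u(x)\,\varphi(f(x))\,dx,\qquad \varphi\in C_c^\infty(M'),$$
together with Fourier inversion in $\varphi$, expresses $f_*$, modulo $\O{C^\infty}(h^\infty)$, as the operator with Schwartz kernel
$$K_{f_*}(x',x) \;=\; (2\pi h)^{-d'}\!\int a(x',x,\eta)\,e^{\tfrac{i}{h}\langle x'-f(x),\eta\rangle}\,d\eta$$
for a cutoff $a\in S^{\comp}$ which equals $1$ on a neighborhood of $\supp u$ and its image under $f$.

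Next, the phase $\psi(x',x,\eta)=\langle x'-f(x),\eta\rangle$ is a nondegenerate phase function with critical set $\{x'=f(x)\}$; on it, $\partial_{x'}\psi=\eta$ and $\partial_x\psi=-(df_x)^t\eta$. Hence $K_{f_*}$ is Lagrangian and $f_*$ is a Fourier integral operator with canonical relation (after the standard sign flip identifying a Lagrangian in $T^*(M'\times M)$ with a relation)
$$C_{f_*}\;=\;\{((x',\eta),(x,\xi))\in T^*M'\times T^*M : x'=f(x),\ \xi=(df_x)^t\eta\}.$$
The standard wavefront-propagation rule for Fourier integral operators, a direct consequence of Lemma~\ref{lem:FIOcomp}, then yields $\WFh(f_*u)\subset C_{f_*}\circ\WFh(u)$, which unfolds to exactly $(df)^t\WFh(u)$ in the sense of the lemma.

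The step requiring genuine care is the fiber-infinite part $\WFh^i$: since $\eta$ ranges over all of $\re^{d'}$, the FIO calculus for compactly microlocalized symbols must be supplemented by a non-stationary phase argument at large $|\eta|$. I would handle this by testing $f_*u$ against a pseudodifferential operator $A\in\Ph{}{}(M')$ whose full wavefront set (including the $S^*M'$ piece) is disjoint from $(df)^t\WFh(u)$, expanding $Af_*u$ as an oscillatory integral in $(x,\eta)$ together with $A$'s phase variable, and exploiting the resulting nonstationary direction via repeated integration by parts to conclude $Af_*u=\O{C^\infty}(h^\infty)$. This parallels, by duality, the treatment of $\WFh^i$ in the proof of Lemma~\ref{lem:pullback}.
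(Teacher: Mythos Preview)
Your proposal is correct and follows essentially the same approach as the paper: express $f_*$ as a semiclassical Fourier integral operator via the kernel $(2\pi h)^{-d'}\int e^{\frac{i}{h}(\langle f(x),\xi\rangle-\langle x',\xi\rangle)}\,d\xi$, read off the canonical relation $C=\{(x',\eta,x,\xi):x'=f(x),\,(df_x)^t\eta=\xi\}$, and treat the finite and fiber-infinite parts of the wavefront set separately. The paper dispatches $\WFh^i$ by citing the classical homogeneous argument rather than writing out the nonstationary-phase step you sketch, but the content is the same.
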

\begin{proof}
The case of ${\WFh}^i$ follows from the standard proof in the homogeneous setting (see for example \cite[Chapter 6]{g-s}). 

For the case of ${\WFh}^f$, we observe similar to lemma \ref{lem:pullback} that 
$$f_*u=(2\pi h)^{-d'}\int e^{\frac{i}{h}\left(\la f(x),\xi)-\la x',\xi\ra\right)}u(x)d\xi d x.$$
Then, as above, we may assume that $u$ is compactly microlocalized and hence that $f_*$ is a Fourier integral operator associated to 
$$C=\{(x',\eta,x,\xi)\in T^*M'\times T^*M: x'=f(x),\, (df_x)^t\eta=\xi\}.$$
\end{proof}

\subsection{The Conic Calculi}
\subsubsection{Notation for the Kohn-Nirenberg Calculus}
\label{sec:kohn}
We refer the reader to \cite{HOV3} and \cite{HOV4} for the theory of Kohn-Nirenberg pseudodifferential operators and Fourier integral operators. We denote the standard Kohn-Nirenberg symbol classes by $\SHom^m$ where for each $\chi \in \Cc(\re^d)$, 
$$\SHom^m:=\{a\in C^\infty(T^*\re^d)\,:\,|\partial_x^\beta\partial_\xi^\alpha\chi(x)a(x,\xi)|\leq C_{\chi}\la \xi\ra^{m-|\alpha|}.$$
We denote the corresponding pseudodifferential operators, and Fourier integral operators of order $k$ by $\PsiHom^m$ and $\IHom^m$. Furthermore, we denote by 
 \m\sigma:\PsiHom^m(M)\to \SHom^m(M)/\SHom^{m-1}(M)\m 
the symbol map and its right inverse, a non-canonical quantization map 
\m\op:\SHom^m(M)\to \PsiHom^m(M).\m 
We also use the notation $\WF$ to denote the $C^\infty$ wave front set of distributions and ${\WF}'$ to denote the $C^\infty$ wave front set of operators.

\subsubsection{Conic Semiclassical Lagrangian Distributions and FIOs} 
We also need a notion of semiclassical Fourier integral operators associated to conic Lagrangians. Let $\varphi(x,\theta)$ be a clean phase function with excess $e$ that is homogeneous of degree 1 in the $\theta$ variables.

We say that a smooth function $a(x,\theta;h )$ is a symbol of order $k$ on $U_\varphi$ if $\supp a\subset K\times \{|\theta|>C\}$ for some $h$ independent $C$ and $K\Subset M$ and if for $\alpha$, and $\beta$, there exist a constants $C_{\alpha\beta}$ such that
\m
|\partial_x^\alpha\partial_{\theta}^\beta a|\leq C_{\alpha\beta}\la \theta\ra^{k-|\beta|}.
\m
We write $a\in S^m(U_\varphi).$

Then we consider the $h-$dependent family of functions 
\begin{equation}
\label{eqn:semiLagrangeConic}
u(x;h)=(2\pi h)^{-(d+2L-2e)/4}\int_{\re^L}e^{i\varphi(x,\theta)/h}a(x,\theta;h)d\theta.
\end{equation}
We call $u$ a \emph{Lagrangian distribution} of order $k$ generated by $\varphi$ and denote this by $u\in I^m(\Lambda_{\varphi}).$ The properties of such distributions follow from those of the standard homogeneous Lagrangian distributions since \eqref{eqn:semiLagrangeConic} corresponds to a rescaling in the phase variable of a homogeneous Lagrangian distribution.  
\begin{defin}
Let $\Lambda\subset T^*M$ be a Lagrangian submanifold that is conic outside of a compact set in the fiber. We say that a distribution $u(x)\in \mc{D}'(M)$ is a semiclassical Lagrangian distribution of order $k$ and type $\delta$ associated to $\Lambda$ if it can be written as a sum of finitely many distributions of the form \eqref{eqn:semiLagrangeConic} for different phase functions $\varphi$, homogeneous of degree 1 in $\theta$, parametrizing open sets of $\Lambda$ plus an element of $I_\delta^{\comp}(\Lambda)$. Denote by $I_\delta^m(\Lambda)$ the space of all such distributions.
\end{defin}

The notion of conic semiclassical Fourier integral operators follows analogous to that in Section \ref{sec:semiFIO} and the calculus of conic semiclassical integral operators analogous to Lemma \ref{lem:FIOcomp} follows from the proof in the homogeneous setting.

\section{The shymbol}
\label{sec:shymbol}
In Chapters \ref{ch:layer} and \ref{ch:resFree} we will need to compute symbols of operators whose semiclassical order may vary from point to point in $T^*M$.
One can often handle this type of behavior by using weights to compensate for the growth. However, this requires some a priori knowledge of how the order changes and limits the allowable size in the change of order. In this section, we will develop a notion of a sheaf valued symbol, the \emph{shymbol}, that can be used to work in this setting without such a priori knowledge. 

Let $M$ be a compact manifold. Let $\mc{T}(T^*M)$ be the topology on $T^*M$. For $s\in \re$, denote the symbol map 
$$ \sigma_s:h^{s}\Psi^{\comp}_\delta\to h^sS^{\comp}_\delta/h^{s+1-2\delta}S^{\comp}_\delta.\,\,$$
Suppose that for some $N>0$ and $\delta \in [0,1/2)$, $A\in h^{-N}\Psi^{\comp}_\delta(M)$. We define a finer notion of symbol for such a pseudodifferential operator. Fix $0<\e\ll 1-2\delta$. For each open set $U\in \mc{T}(T^*M)$, define \emph{the $\e$-order of $A$ on $U$} 
$$ I_A^\e(U):=\sup_{s\in \mc{S}_\e}s+1-2\delta\,\,$$
where
$$\mc{S}_\e:=\left\{s\in \e\mathbb{Z} \,\left|\,\begin{gathered}\text{ there exists }\chi\in \Cc(T^*M),\,\chi|_U=1,\\\sigma_s(\oph(\chi) A\oph(\chi))|_U\equiv 0\end{gathered}\right.\right\}.$$
Then it is clear that for any $V\Subset U$ there exists $\chi\in \Cc(U)$ with $\chi\equiv 1$ on $V$ such that $\oph(\chi) A\oph(\chi)\in h^{I^\e_A(U)}\Ph{\comp}{\delta}(M).$

Give $\mc{T}(T^*M)$ the ordering that $U\leq V$ if $V\subset U$ with morphisms $U\to V$ if $U\leq V$. Notice that $U\leq V$ implies $I_A^\e(U)\leq I_A^\e(V).$ Then define the functor $F_A^\e:\mc{T}(T^*M)\to \textbf{Comm} $ (the category of commutative rings)
by 
$$F_A^\e(U)=
\begin{cases}
h^{I_A^\e(U)}S^{\comp}_\delta(M)|_U\,/\,h^{I_A^\e(U)+1-2\delta}S^{\comp}_\delta(M)|_U  & I_A^\e(U)\neq \infty\\
\{0\}  &I_A^\e(U)=\infty 
\end{cases},
$$
$$F_A^\e(U\to V)=
\begin{cases}
h^{I_A^\e(V)-I_A^\e(U)}|_V&I_A^\e(V)\neq \infty \\ 
0 &I_A^\e(V)=\infty
\end{cases}.$$

Then $F_A^\e$ is a presheaf on $T^*M$. We sheafify $F_A^\e$, still denoting the resulting sheaf by $F_A^\e$, and say that {\emph{$A$ is of $\e$-class $F_A^\e.$}} We define the \emph{stalk} of the sheaf at $q$ by $F_A^\e(q):=\varinjlim_{q\in U}F_A^\e(U).$

Now, for every $U\subset \mc{T}(T^*M)$, $I_A^\e(U)\neq \infty$, there exists $\chi_U\in \Cc(T^*M)$ with $\chi_U\equiv 1$ on $U$ such that  
$ \sigma_{I_A^\e(U)}(\oph(\chi_U) A\oph(\chi_U))|_U\neq 0.$
Then we define the \emph{$\e$-shymbol of $A$} to be the section of $F_A^\e$, $\tilde\sigma^\e_{(\cdot)}(A):\mc{T}(T^*M)\to F_A^\e(\cdot)$, given by
$$\tilde{\sigma}^\e_U(A) :=\begin{cases}\sigma_{I_A^\e(U)}(\oph(\chi_U) A\oph(\chi_U))|_U&I_A^\e(U)\neq \infty\\0&I_A^\e(U)=\infty\end{cases}.$$
Define also the \emph{$\e$-stalk shymbol}, $\tilde{\sigma}^\e(A)_q$ to be the germ of $\tilde{\sigma}^\e(A)$ at $q$ as a section of $F_A^\e.$ 

Now, define $I_A^\e(q):=\sup_{q\in U}I_A^\e(U).$  We then define the simpler \emph{compressed shymbol} 
\begin{equation}
\label{def:symbFunc}
\begin{gathered}
\tilde{\sigma}^\e(A):T^*M\to \bigsqcup_q\quotient{h^{I_A^\e(q)}\complex}{h^{I_A^\e(q)+1-2\delta}\complex}\quad \text{by}\\
\tilde{\sigma}^\e(A)(q):=\begin{cases} 0 &I_A^\e(q)=\infty\\
\lim\limits_{q\in U}^{}\tilde{\sigma}^\e_U(A)(q)&I_A^\e(q)<\infty\end{cases}\end{gathered}
\end{equation}

The limit in \eqref{def:symbFunc} exists since if $I_A^\e(q)<\infty$, then there exists $U\ni q$ such that for all $V\subset U$, $I_A^\e(V)=I_A^\e(U).$ This also shows that it is enough to take any sequence of $U_n\downarrow q.$
It is easy to see from standard composition formulae that the compressed shymbol has
$$\tilde{\sigma}^\e(AB)(q)=\tilde{\sigma}^\e(A)(q)\tilde{\sigma}(B)(q),\quad A\in h^{-N}\Psi_{\delta}^{\comp}\text{ and }B\in h^{-M}\Psi_{\delta}^{\comp}.$$
Moreover, 
\m \tilde{\sigma}^\e([A,B])(q)=-ih\left\{\tilde{\sigma}^\e(A)(q),\tilde{\sigma}^\e(B)(q)\right\}.\m

The following lemma follows from Lemma \ref{lem:FIOcomp} combined with the definitions above:
\begin{lemma}
\label{lem:EgorovSheaf}
Suppose that $A\in \Psi^{\comp}_\delta$ and let $T$ be a semiclassical FIO associated to the symplectomorphism $\kappa$ with elliptic symbol $t\in S_\delta$. Then for $0<N$ independent of $h$ 
$(AT)_N:=(T^*A^*)^N\left(AT\right)^N$ has $$ \tilde{\sigma}^\e((AT)_N)(q)=\prod\limits_{i=1}^N\left(|\tilde{\sigma}^\e(A)t|^2\composed \kappa^i(q)+\O{}\left(h^{I_{A_i}^\e(\beta^k(q))+1-2\delta}\right)\right).$$
\end{lemma}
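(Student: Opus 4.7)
The plan is to decompose $(AT)_N=[(AT)^N]^{*}(AT)^N$ via the FIO composition calculus, and then promote the resulting principal-symbol computation to the compressed shymbol by localizing along the orbit $\{\kappa^{i}(q)\}_{i=1}^{N}$.

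First I would show that $(AT)^N$ is an FIO associated to $\kappa^N$. Since $T$ is an FIO with canonical relation $\graph(\kappa)$ and elliptic symbol $t$, and $A\in\Ph{\comp}{\delta}$, each factor $AT$ is an FIO associated to $\graph(\kappa)$; iterating Lemma~\ref{lem:FIOcomp} together with Lemma~\ref{l:lieDerivativeFIO} shows that the principal symbol of $(AT)^N$ at $q$ equals $\prod_{i=1}^{N}\sigma(A)(\kappa^{i}q)\,t(\kappa^{i}q)$ modulo $\O{}(h^{1-2\delta})$ times the leading order. Taking adjoints reverses the canonical relation, so $(T^{*}A^{*})^N=[(AT)^N]^{*}$ is an FIO associated to $\kappa^{-N}$, and hence $(AT)_N$ is associated to the identity, i.e.\ a pseudodifferential operator. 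For any FIO $B$ associated to a symplectomorphism, $B^{*}B$ is a pseudodifferential operator with principal symbol $|\sigma(B)|^{2}$; applying this with $B=(AT)^N$ yields
\begin{equation*}
\sigma\bigl((AT)_N\bigr)(q)=\prod_{i=1}^{N}\bigl|\sigma(A)(\kappa^{i}q)\,t(\kappa^{i}q)\bigr|^{2}
\end{equation*}
up to one order smaller in $h^{1-2\delta}$, matching the leading part of the claim.

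To promote this identity to the compressed $\e$-shymbol I would insert cutoff quantizations $\oph(\chi_i)$ with $\chi_i\in\Cc(T^{*}M)$ identically $1$ on shrinking neighborhoods $U_i\ni\kappa^{i}(q)$ between consecutive factors of the product (and mirror-symmetrically among the $T^{*}A^{*}$ factors). Compact microlocalization of $A$ and the fact that $\kappa$ is a symplectomorphism make all off-orbit contributions $\O{\Ph{-\infty}{}}(h^{\infty})$. On each $U_i$, $A$ has well-defined $\e$-order $I_A^{\e}(U_i)$, so one application of the FIO composition formula expresses the $i$-th factor as $|\sigma(A)\,t|^{2}(\kappa^{i}q)$ plus a remainder of size $\O{}(h^{I_A^{\e}(U_i)+1-2\delta})$ at that point. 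Passing to the sheaf limit $U_i\downarrow\{\kappa^{i}(q)\}$ in the definition~\eqref{def:symbFunc} of the compressed shymbol yields the claimed pointwise product.

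The main technical obstacle will be bookkeeping across the $N$ compositions: because the $\e$-orders $I_A^{\e}(\kappa^{i}q)$ may vary along the orbit, the $i$-th remainder must be measured against the \emph{local} order at $\kappa^{i}(q)$ rather than against any global bound, and the $N$ remainders must be combined into the product without promoting lower-order errors to leading ones. This is exactly the situation that the sheaf structure on $F_A^{\e}$ is designed to handle: localizing to $U_i$ before composing freezes the local semiclassical order, so the standard FIO composition estimate applies factor-by-factor and the stated pointwise product with pointwise remainders follows.
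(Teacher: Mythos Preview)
Your proposal is correct and follows essentially the same approach as the paper: localize along the orbit $\{\kappa^i(q)\}$ to freeze the local $\e$-orders, then apply the standard FIO composition calculus (Lemma~\ref{lem:FIOcomp}). The paper's version is slightly more economical in that it only inserts cutoffs $\oph(\chi_k)$ at the outer ends near $q$ and lets the canonical relation of $T$ propagate this localization to each $\kappa^i(q)$, whereas you insert cutoffs between all consecutive factors; but these are equivalent and the core idea is the same. (A minor point: Lemma~\ref{l:lieDerivativeFIO} concerns the transport equation when the principal symbol vanishes on the Lagrangian and is not needed here; Lemma~\ref{lem:FIOcomp} alone suffices.)
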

\begin{proof}
Fix $q\in T^*M$. Let $\chi_k\in \Cc$ have $\chi_k\equiv 1$ on $B_q\left(\recip{k}\right)$ and $\supp \chi_k\subset B_q\left(\frac{2}{k}\right).$ Then let $D:=\oph(\chi_k)(AT)_N\oph(\chi_k).$ We have that 
\m D=\oph(\chi_k)(BT)_N\oph(\chi_k)+\O{\Psi^{\comp}_{\delta}}(h^\infty)\,\,\m
where $B_i=\oph(\psi_{k,i})A_i\oph(\psi_{k,i})$ and $\psi_{k,i}\equiv 1$ in some neighborhood of $\beta^i(q)$ and is supported inside a neighborhood $U_{k,i}$ of $\beta^i(q)$ such that $U_{k,i}\downarrow q.$ Then the result follows from standard composition formulae in Lemma \ref{lem:FIOcomp}.
\end{proof}
Now, since $\e>0$ is arbitrary, we define the \emph{semiclassical order of $A$ at $q$} by $I_A(q):=\sup_{\e>0}I_A^\e(q)$ with the understanding that $f=\O{}(h^{I_A(q)})$ means that for any $\e>0$, 
$$|f(q)|\leq C_\e h^{I_A(q)-\e}.$$
Furthermore, we suppress the $\e$ in the notation $\tilde{\sigma}^\e(A)(q)$ and denote the \emph{compressed shymbol}, $\tilde{\sigma}(A)(q)$, again with the understanding that for any $\e>0$,
$$\tilde{\sigma}(A)(q)\in \quotient{h^{I_A(q)-\e}\mathbb{C}}{h^{I_A(q)+1-2\delta-\e}\mathbb{C}}.$$


\chapter{Meromorphic Continuation of the Resolvent}
\label{ch:mer}
In this chapter, we begin our analysis of $-\Deltad{\pO}$ and $-\Deltap$. We start by giving the formal definition of the operators using quadratic forms. We then prove the meromorphic continuation of the resolvent for $-\Deltap$. The proof for $-\Deltad{\pO}$ can be found in \cite[Section 6]{GS}.
Then, in addition to describing resonances as poles of the meromorphic continuation of the resolvent, we give a more concrete description of resonances as solutions to transmission problems. In particular, 
\begin{theorem}
\label{thm:meromorphyPrime}
Let $\Omega\Subset \re^d$ have smooth boundary. Suppose that $V:L^2(\partial\Omega)\to L^2(\partial\Omega)$ is self adjoint and invertible.
Then $$R_V(\lambda):=(-\Deltap-\lambda^2)^{-1}$$
has a meromorphic continuation from $\Im \lambda\gg 1$ to $\mathbb{C}$ if $d$ is odd and to the logarithmic cover of $\mathbb{C}\setminus\{0\}$ if $d$ is even.
\end{theorem}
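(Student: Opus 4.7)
The plan is to build a candidate resolvent via a double-layer ansatz, reduce the problem to a boundary integral equation, and then invoke analytic Fredholm theory. This follows the standard layer-potential strategy used in \cite{GS} for $-\Deltad{\pO}$, adapted to the $\delta'$ boundary condition in \eqref{eqn:transmitPrime}.

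\textbf{Step 1: Layer potential ansatz.} For $\Im\lambda\gg 1$ and $f\in\Cc(\re^d)$, seek $u=R_V(\lambda)f$ of the form $u=R_0(\lambda)f+\D(\lambda)g$ for an unknown $g$ on $\pO$. A direct distributional computation, using the sign convention $\delta'_{\pO}(\phi)=-\int_{\pO}\partial_\nu\phi\,d\mc{H}_{d-1}$, gives $(-\Delta-\lambda^2)\D(\lambda)g=-\delta'_{\pO}\otimes g$, so the equation $(-\Deltap-\lambda^2)u=f$ reduces to the matching condition $g=V\partial_\nu u|_{\pO}$. Since $\partial_\nu\D(\lambda)g$ is continuous across $\pO$ with common boundary value $\dDl(\lambda)g$, this becomes the scalar boundary equation
\[
(I-V\dDl(\lambda))\,g=V\,\partial_\nu R_0(\lambda)f\big|_{\pO},
\]
and the ansatz suggests the formula
\[
R_V(\lambda)=R_0(\lambda)+\D(\lambda)\,(I-V\dDl(\lambda))^{-1}\,V\,\partial_\nu R_0(\lambda)\big|_{\pO}.
\]

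\textbf{Step 2: Meromorphic extension of the boundary ingredients.} The free resolvent $R_0(\lambda)$ continues meromorphically as $L^2_{\comp}(\re^d)\to H^2_{\loc}(\re^d)$ to $\C$ (odd $d$) or its logarithmic cover (even $d$); this is classical. By restriction and normal differentiation on $\pO$, the operators $\partial_\nu R_0(\lambda)|_{\pO}$, $\D(\lambda)$, and $\dDl(\lambda)$ inherit this continuation. The crucial point is to identify $\dDl(\lambda)$, defined by the hypersingular kernel $\partial_{\nu_x}\partial_{\nu_y}R_0(\lambda)(x,y)$, as a pseudodifferential operator of order $1$ on the compact smooth hypersurface $\pO$ with $\lambda$-independent elliptic principal symbol. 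This uses the microlocal description of the kernel of $R_0(\lambda)$ near the boundary diagonal: the leading singularity of $\partial_{\nu_x}\partial_{\nu_y}R_0(\lambda)$ coincides with that of the static double normal derivative of the Newtonian potential, which is the standard hypersingular operator with principal symbol proportional to $|\xi'|_g$. Hence $\dDl(\lambda):H^1(\pO)\to L^2(\pO)$ is a meromorphic family of elliptic Fredholm operators.

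\textbf{Step 3: Analytic Fredholm alternative.} Set $T(\lambda):=I-V\dDl(\lambda):H^1(\pO)\to L^2(\pO)$. Since $V$ is bounded on $L^2(\pO)$, $V\dDl(\lambda)$ is elliptic of order $1$; consequently $T(\lambda)$ is Fredholm with index independent of $\lambda$ (the index is a homotopy invariant determined by the principal symbol). Self-adjointness of $-\Deltap$, which follows from the quadratic form definition together with $V=V^*$, guarantees that $R_V(\lambda)$, and a fortiori $T(\lambda)^{-1}$, exists at every $\lambda$ with $\Im\lambda$ sufficiently large. In particular $T(\lambda)$ is invertible at one point and hence has index $0$ everywhere. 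The analytic Fredholm theorem applied to the meromorphic family $T(\lambda)$ then produces a meromorphic inverse $T(\lambda)^{-1}$ throughout the continuation domain.

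\textbf{Step 4: Conclusion.} Inserting the meromorphic $T(\lambda)^{-1}$ into the resolvent formula from Step 1 expresses $R_V(\lambda)$ as a composition of meromorphic operator-valued families, giving the desired continuation; uniqueness is automatic since the formula agrees with the self-adjoint resolvent for $\Im\lambda\gg 1$. The main obstacle, and the reason smoothness of $\pO$ is required, is the microlocal identification in Step 2: controlling the hypersingular kernel $\partial_{\nu_x}\partial_{\nu_y}R_0(\lambda)$ on and off the diagonal is genuinely subtle, and forcing this analysis up front is exactly what distinguishes the meromorphic continuation for $-\Deltap$ from that for $-\Deltad{\pO}$ (where the single-layer operator $G(\lambda)$ is order $-1$ and compactness comes essentially for free).
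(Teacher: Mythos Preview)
Your approach is correct and tracks the paper's strategy closely: reduce to a boundary equation involving $\dDl(\lambda)$, identify $\dDl(\lambda)$ as an elliptic first-order pseudodifferential operator, and apply analytic Fredholm theory. Your resolvent formula coincides with the paper's (note $R_0\gamma_1^*=\D$ and $\gamma_1 R_0=\partial_\nu R_0|_{\pO}$). Two points of difference are worth flagging. First, the paper establishes the Fredholm property by factoring $I-V\dDl=(\dDl^{-1}-V)\dDl$ on $L^2$, using that $\dDl^{-1}$ is compact and $V$ is invertible; you instead view $T(\lambda):H^1\to L^2$ directly, but your justification (``$V\dDl$ is elliptic of order $1$ since $V$ is bounded'') is imprecise because $V$ is only assumed bounded and invertible on $L^2$, not pseudodifferential. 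The repair is immediate: $\dDl:H^1\to L^2$ is Fredholm by ellipticity, $V$ is invertible, so $V\dDl$ is Fredholm, and the compact inclusion $I:H^1\hookrightarrow L^2$ preserves Fredholmness. Second, for invertibility at a single $\lambda$ the paper computes the semiclassical symbol of $\dDl(ih^{-1})$ and shows $\|\dDl^{-1}\|_{L^2}\le Ch$, so $-V+\dDl^{-1}$ is invertible for small $h$; your route via self-adjointness of $-\Deltap$ is cleaner in spirit but the inference ``$R_V(\lambda)$ exists, a fortiori $T(\lambda)^{-1}$ exists'' needs the short argument that a nontrivial $g\in\ker T(\lambda)$ would produce, via $u=\D(\lambda)g$, a nonzero $\lambda$-outgoing element of $\ker(-\Deltap-\lambda^2)$, which is excluded for $\Im\lambda\gg1$.
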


Moreover, the poles of  $R_V(\lambda)$ are in 1-1 correspondence with solutions $u\in H_{\Delta,\loc}^{3/2}(\re^d\setminus \partial\Omega)$ to 
\begin{equation}
\label{eqn:mainPrime}
\begin{cases}(-\Deltap -\lambda^2)u=0\\
u \text{ is $\lambda$-outgoing.}
\end{cases}
\end{equation}
Here, 
$$H_{\Delta}^{3/2}(U):=\{u\in H^{3/2}(U)\,:\,\Delta u\in L^2(U)\}.$$

We also show
\begin{theorem}  
\label{thm:ResDomainPrime}
Suppose that $\Omega \Subset \re^d$ has a smooth boundary. Then $\lambda$ is a resonance of $-\Deltap$ if and only if the following has a solution $u=u_1\oplus u_2\in H^{3/2}_{\Delta}(\Omega)\oplus H^{3/2}_{\Delta,\loc}(\re^d\setminus\overline{\Omega})$
\begin{equation}
\label{eqn:mainPrimeTransmit}
\left\{\begin{aligned}(-\Delta-\lambda^2)u_1=0 &&(-\Delta -\lambda^2)u_2=0\\
\partial_\nu u_1|_{\partial\Omega}=\partial_\nu u_2|_{\partial\Omega}&&u_1-u_2+V\partial_\nu u_1=0&\text{ on }\pO\\
u_2\text{ is $\lambda$-outgoing.}
\end{aligned}\right.
\end{equation}
Furthermore, (except for in $d=1$ with $\lambda=0$ which is always a pole) $\lambda$ is a resonance if and only if there is a nonzero solution $\psi\in L^2(\pO)$ of 
$$(I-\dDl(\lambda)V)\psi=0.$$
\end{theorem}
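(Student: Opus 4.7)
The statement contains two equivalences, and I would handle them in turn, taking Theorem \ref{thm:meromorphyPrime} as the starting point.

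\emph{Equivalence with the transmission problem.} Theorem \ref{thm:meromorphyPrime} identifies $\lambda$ as a resonance with the existence of a nontrivial $\lambda$-outgoing $u\in H^{3/2}_{\Delta,\loc}(\re^d\setminus\pO)$ solving $(-\Deltap-\lambda^2)u=0$. Writing $u=u_1\oplus u_2$ and integrating by parts twice against a test function, one sees that $-\Delta u$ on all of $\re^d$ decomposes as the piecewise Laplacian plus two boundary contributions, a multiple of $(\partial_\nu u_1-\partial_\nu u_2)\delta_{\pO}$ and a multiple of $(u_1-u_2)\delta'_{\pO}$. The added $\delta'_{\pO}\otimes V\partial_\nu$ term in $-\Deltap$ modifies only the $\delta'_{\pO}$ coefficient, so vanishing of the full distribution forces $(-\Delta-\lambda^2)u_j=0$ on each side together with $\partial_\nu u_1=\partial_\nu u_2$ and $u_1-u_2+V\partial_\nu u_1=0$; this is exactly \eqref{eqn:mainPrimeTransmit}, and $u_2$ inherits outgoingness from $u$.

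\emph{Equivalence with $(I-\dDl(\lambda)V)\psi=0$.} The key device is a pure double-layer Ansatz. Given a nonzero $\psi\in L^2(\pO)$ with $(I-\dDl(\lambda)V)\psi=0$, set $u:=\D(\lambda)(V\psi)$ on $\re^d\setminus\pO$. The standard jump $u|_+-u|_-=V\psi$ delivers $u_1-u_2+V\psi=0$, while continuity of the normal derivative of the double layer plus the identity $\partial_\nu\D(\lambda)=\dDl(\lambda)$ gives $\partial_\nu u_1=\partial_\nu u_2=\dDl(V\psi)=\psi$, so $u$ solves \eqref{eqn:mainPrimeTransmit}. It is outgoing because $\D(\lambda)$ is built from the outgoing $R_0(\lambda)$, and nontrivial because its normal trace recovers $\psi\neq 0$. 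Conversely, given a solution $u=u_1\oplus u_2$, set $\psi:=\partial_\nu u_1=\partial_\nu u_2\in L^2(\pO)$ and $\varphi_j:=u_j|_{\pO}\in H^1(\pO)$. Green's interior representation gives $u_1=\S(\lambda)\psi-\D(\lambda)\varphi_1$ in $\Omega$, and the exterior outgoing representation gives $u_2=-\S(\lambda)\psi+\D(\lambda)\varphi_2$ in $\re^d\setminus\overline\Omega$; applying each identity with $x$ on the opposite side of $\pO$ yields $\D\varphi_1=\S\psi$ outside and $\D\varphi_2=\S\psi$ inside. Substituting these in and using $\varphi_2-\varphi_1=V\psi$ collapses $u$ to $\D(\lambda)(V\psi)$ on both sides, and a normal trace then gives $\psi=\dDl(\lambda)(V\psi)$. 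The invertibility of $V$ together with $u\not\equiv 0$ forces $\psi\neq 0$.

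\emph{Main obstacle.} The principal difficulty is that every step of the layer-potential calculus must be justified at the prescribed regularity $\psi\in L^2(\pO)$, $\varphi_j\in H^1(\pO)$ dictated by $u\in H^{3/2}_\Delta$. In particular, the jump and trace relations for $\D(\lambda)$ and the identity $\partial_\nu\D(\lambda)=\dDl(\lambda)$ (the most delicate item, since $\dDl$ acts $H^1(\pO)\to L^2(\pO)$ and is only defined as a pairing on this scale) must be read as identities of boundary distributions rather than pointwise objects, and the Green representation formulas must be verified by density from the smooth case. The exception for $d=1$, $\lambda=0$ reflects the failure of $R_0(0)$ to exist in one dimension, which degenerates every layer operator involved.
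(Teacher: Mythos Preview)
Your proposal is correct and aligns closely with the paper's argument. The first equivalence (transmission problem) is handled exactly as the paper does: Green's identities show that the distributional equation $(-\Deltap-\lambda^2)u=0$ is equivalent to the piecewise Helmholtz equations together with the two transmission conditions. For the forward direction of the second equivalence, both you and the paper use the double-layer Ansatz $u=\D(\lambda)(V\psi)$ and read off the transmission data from the jump relations and $\partial_\nu\D=\dDl$.

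The only genuine difference is in the reverse direction of the second equivalence. You assemble the interior and exterior Green representations for $u_1$ and $u_2$ separately and then combine them, using the vanishing of each representation on the opposite side together with $\varphi_2-\varphi_1=V\psi$ to collapse $u$ to $\D(\lambda)(V\psi)$. The paper takes a shorter path: from $(-\Delta-\lambda^2)u=-\delta'_{\pO}\otimes V\partial_\nu u$ and outgoingness, Rellich uniqueness (Lemma~\ref{lem:rellich}) gives directly $u=R_0(\lambda)(\gamma_1^*V\gamma_1 u)=\D(\lambda)(V\partial_\nu u)$ in one step, after which $\gamma_1$ yields $(I-\dDl V)\partial_\nu u=0$. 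Your route is perfectly valid and has the virtue of being self-contained at the layer-potential level; the paper's route is more economical but invokes global uniqueness as an external input. Note also that the paper treats the $d=1$, $\lambda=0$ case by an explicit computation (showing the constant function is the unique outgoing null solution and that $R_V$ inherits a simple pole from $R_0$), not merely by observing that $R_0(0)$ fails; you would need to supply something similar.
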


Once we have defined the operator, we give the proof of Theorem \ref{thm:meromorphyPrime} and \ref{thm:ResDomainPrime}. This is done similarly to the analysis for $-\Deltad{\Gamma}$ in \cite{GS}, however, the process is complicated by the lower regularity of $\delta'_{\partial\Omega}.$ We start by showing that $I-V\dDl$ has a meromorphic inverse and by writing a formula for $R_V$ in terms of $I-V\dDl$ we obtain Theorem \ref{thm:meromorphyPrime}. 

\section{Formal definition of the operators}

\label{sec:formalDefinition}
\subsection{Definition of $-\Deltad{\pO}$}
We define the operator $-\Deltad{\pO}$ using the symmetric quadratic form, with dense domain $H^1(\re^d)\subset L^2(\re^d)$,
$$
Q_{V,\pO}(u,w):=\la \nabla u,\nabla w\ra_{L^2(\re^d)} + \la V\gamma u,\gamma w\ra_{L^2(\pO)}\,.
$$
where $\gamma:H^s(\re^d)\to H^{s-1/2}(\pO)$, $s>1/2$ denotes the restriction operator.

Using the Sobolev embedding and H\"older's inequality,  we can bound 
\begin{equation}
\label{eqn:restrictionBound}
\|\gamma u\|_{L^2(\pO)}\le C\,\|u\|^{\frac 12}_{L^2}\|u\|^{\frac 12}_{H^1}\le C\,\e\, \|u\|_{H^1}+C\,\e^{-1}\|u\|_{L^2}\,.
\end{equation}
It follows that there exist $c\,, C>0$ such that 
$$
|Q_{V,\pO}(u,w)|\leq C\,\|u\|_{H^1}\|w\|_{H^1}\quad\text{ and }\quad c\,\|u\|_{H^1}^2\leq Q_{V,\pO}(u,u)+C\|u\|_{L^2}^2\,.
$$
By Reed-Simon \cite[Theorem VIII.15]{RS}, $Q_{V,\pO}(u,w)$ is determined by a unique self-adjoint operator
$-\Deltad{\pO}$, with domain $\mc{D}_\delta$ consisting of $u\in H^1$ such that $Q_{V,\pO}(u,w)\le C\|w\|_{L^2}$ for all $w\in H^1(\re^d)$.
By Rellich's embedding lemma, the potential term is compact relative to $H^1$.
It follows by Weyl's essential spectrum theorem, see \cite[Theorem XIII.14]{RS2}, that $\sigma_{\ess}(-\Deltad{\pO})=[0,\infty)$.
Additionally, there are at most a finite number of eigenvalues in $(-\infty,0]$, each of finite multiplicity.

If $u\in \mc D_\delta$, by the Riesz representation theorem we then have $Q_{V,\pO}(u,w)=\la g,w\ra$ for some $g\in L^2(\re^d)$, and taking $w\in \Cc(\re^d)$ shows that in the sense of distributions 
\begin{equation}
\label{eqn:LaplaceDistributional}
-\Delta u+(V \gamma u)\delta_{\pO}=g\,.
\end{equation}
Conversely, if $u\in H^1(\re^d)$ and \eqref{eqn:LaplaceDistributional} holds for some $g\in L^2(\re^d)$, then by density of $\Cc\subset H^1$ we have $Q_{V,\pO}(u,w)=\la g,w\ra$ for $w\in H^1(\re^d)$, hence $u\in\mc D_\delta$, and $-\Deltad{\pO} u$ is given by the left hand side of \eqref{eqn:LaplaceDistributional}. We thus can define
$$
\|u\|_{\mc D_\delta} = \|u\|_{H^1}+ \|\Deltad{\pO}u\|_{L^2}\,,
$$
where finiteness of the second term carries the assumption that $\Deltad{\pO}u\in L^2$.

Suppose that $\chi\in \Cc(\re^d\setminus\pO)$ and that $u\in H^1(\re^d)$ solves \eqref{eqn:LaplaceDistributional}. Then, 
$$
\Delta (\chi u)= \chi g +2\la \nabla \chi, \nabla\ra u+(\Delta \chi)u\in L^2(\re^d)\,.
$$
Hence,
$$
\|\chi u\|_{H^2}\le C_\chi \|u\|_{\mc D}\,.
$$
That is, $\mc D_\delta\subset H^1(\re^d)\cap H^2_{\loc}(\re^d\setminus\pO)$, with continuous inclusion.

For general $\pO$ and $V$, elements of $\mc{D}$ may be more singular near $\pO$. However, we assume that $\partial\Omega$ is a $C^{\infty}$ hypersurface and that $V:H^{\frac 12}(\partial\Omega)\to H^{\frac 12}(\partial\Omega)$. Then since $u\in H^1(\re^d)$, and $\gamma:H^s(\re^d)\rightarrow H^{s-\hf}(\partial\Omega)$ for $s\in (\hf,2]$, we have $V\gamma u\in H^{\frac 12}(\partial\Omega)$. By
\eqref{eqn:LaplaceDistributional} we can write $u$ as $(-\Delta)^{-1}g$ plus the single layer potential of a $H^{\hf}(\partial\Omega)$ function, hence estimates such as \cite[Proposition 8.1]{GS}  (see also \cite[Theorems 9, 10]{Epstein}) show that
$$\mc D_\delta\subset \mc E_2=H^1(\re^d)\cap (H^2(\Omega)\oplus H^2(\re^d\setminus\overline{\Omega}))\,,
$$
with continuous inclusion.
We remark that
$H^2(\Omega)$ and $H^2(\re^d\setminus\overline{\Omega})$ can be identified as restrictions of $H^2(\re^d)$ functions; see \cite{Calderon} and \cite[Theorem VI.5]{Stein}. Thus, if $u\in \mc D$ then $u$ has a well defined trace on $\partial\Omega$ of regularity $H^{\frac 32}(\partial\Omega)$, and the first derivatives of $u$ have one-sided traces from the interior and exterior, of regularity $H^{\frac 12}(\partial\Omega)$.

For $w \in H^1(\re^d)$ and $u\in \mc E_2$,
it follows from Green's identities that
$$
Q_{V,\partial\Omega}(u,w)=\la -\Delta u,w\ra_{L^2(\re^d\setminus\pO}+
\la \partial_{\nu}u+\partial_{\nu'}u+V\gamma u,\gamma w\ra_{L^2(\partial\Omega)}\,,
$$
where $\partial_\nu$ and $\partial_{\nu'}$ denote the exterior normal derivatives from $\Omega$ and $\re^d\setminus\overline{\Omega}$.
Thus, in the case that $V$ is bounded from $H^{\frac 12}(\partial\Omega)\to H^{\frac 12}(\partial\Omega)$, we can completely characterize the domain $\mc D$ of the self-adjoint operator $-\Deltad{\partial\Omega}$ as
\begin{equation}\label{eqn:domainChar}
\mc D_\delta=\bigl\{u\in \mc E_2
\quad \text{such that}\quad \partial_{\nu}u+\partial_{\nu'}u+V\gamma u=0\,\bigr\}\,,
\end{equation}
in which case $\Deltad{\partial\Omega}u=\Delta u|_\Omega\oplus \Delta u|_{\re^d\setminus\overline\Omega}$.

\subsection{Definition of $-\Deltap$}
\label{sec:defDeltap}
We assume that $V:L^2(\pO)\to L^2(\pO)$ is self-adjoint and invertible and define $-\Deltap$ using the quadratic form 
$$Q_{V,\delta',\pO}(u,w)=\la \nabla u,\nabla w\ra_{\re^d\setminus\pO}  +\la V^{-1}(\gamma u_2-\gamma u_1),(\gamma w_1-\gamma w_2)\ra_{\pO}$$
with form domain $H^{1}(\re^d\setminus\pO)\cap L^2(\re^d)=:\mc{Q}$ (see also \cite{Berhndt}). Here $u_1=u|_{\Omega}$ and $u_2=u|_{\re^d\setminus\overline{\Omega}}.$

Using the Sobolev embedding and H\"older's inequality as in \eqref{eqn:restrictionBound} that 
\begin{align*} 
\|\gamma u_1\|_{L^2(\pO)}&\leq C\|u\|_{L^2(\re^d)}^{\hf}\|u\|_{H^1(\Omega)}^{\hf}\leq C\e \|u\|_{H^1(\Omega)}+C\e^{-1}\|u\|_{L^2(\re^d)}\\
\|\gamma u_2\|_{L^2(\pO)}&\leq C\|u\|_{L^2(\re^d)}^{\hf}\|u\|_{H^1(\re^d\setminus\overline{\Omega})}^{\hf}\leq C\e \|u\|_{H^1(\re^d\setminus\overline{\Omega})}+C\e^{-1}\|u\|_{L^2(\re^d)}
\end{align*}
So, there exist $c\,,\,C>0$ such that 
\begin{gather*} 
|Q_{V,\delta',\pO}(u,w)|\leq C\|u\|_{\mc{Q}}\|w\|_{\mc{Q}},\\
c\|u\|^2_{\mc{Q}}\leq Q_{V,\delta',\pO}(u,u)+C\|u\|_{L^2}^2.
\end{gather*}
By Reed-Simon \cite[Theorem VIII.15]{RS}, $Q_{V,\delta',\pO}(u,w)$ is determined by a unique self-adjoint operator
$-\Deltap$, with domain $\mc{D}_{\delta'}$ consisting of $u\in \mc{Q}$ such that $Q_{V,\delta',\pO}(u,w)\le C\|w\|_{L^2}$ for all $w\in \mc{Q}$.

By Rellich's embedding lemma, the potential term is compact relative to $\mc{Q}$.
It follows by Weyl's essential spectrum theorem, see \cite[Theorem XIII.14]{RS2}, that $\sigma_{\ess}(-\Deltap)=[0,\infty)$.
Additionally, there are at most a finite number of eigenvalues in $(-\infty,0]$, each of finite multiplicity.

Now, if $u\in \mc{D}_{\delta'}$, then by the Riesz representation theorem 
$$Q_{V,\delta',\pO}(u,w)=\la g,w\ra$$
 for some $g\in L^2(\re^d)$. Then, taking $w\in \Cc(\re^d)$ shows that in the sense of distributions
\begin{equation} \label{eqn:distPrime} -\Delta u -\delta_{\partial\Omega}'\otimes (u_1-u_2)=g.
\end{equation}
Conversely, if $u\in \mc{Q}$ and \eqref{eqn:distPrime} holds for $g\in L^2(\re^d)$, then by density of $\Cc\subset\mc{Q}$, $Q_{V,\delta',\pO}(u,w)=\la g,w\ra$ for $w\in\mc{Q}$ and hence $u\in \mc{D}_{\delta'}$ with $-\Deltap u=g.$ Then we can define
$$\|u\|_{\mc{D}_{\delta'}}=\|u\|_{\mc{Q}}+\|\Deltap u\|_{L^2}.$$
Moreover, by identical arguments to those above, $\mc{D}_{\delta'}\subset H^2_{\loc}(\re^d\setminus\pO).$ 

Now, suppose $u\in \mc{D}_{\delta'}$. Then, applying $(-\Delta)^{-1}$ to \eqref{eqn:distPrime} gives 
\begin{equation}\label{eqn:applyDistEqn} u=-\D(0)(u_1-u_2)+(-\Delta)^{-1}g\end{equation}
where $\D(0)$ denotes the double layer potential.
 Notice that \eqref{eqn:applyDistEqn} implies that 
$$-\Delta u(x)= g(x)\,,\,x\in \re^d\setminus \pO.$$
Hence, $u\in H^1_{\Delta}(\re^d\setminus \pO)\cap L^2(\re^d)$ where for $s<2$, and $U\subset \re^d$ open 
$$H^s_{\Delta}(U)=\{u\in H^s(U)\,:\,\Delta u\in L^2(U)\}$$
which has norm
$$\|u\|_{H^s_{\Delta}(U)}:=\|u\|_{H^s(U)}+\|\Delta u\|_{L^2(U)}.$$
That is, $\mc{D}_{\delta'}\subset H^{1}_{\Delta}(\re^d\setminus \pO)\cap L^2(\re^d).$

\begin{lemma}
Let $U\subset \re^d$ be open with smooth boundary. Then the map 
$$H^s_\Delta(U)\ni u\to \partial_\nu u|_{\partial U}\in H^{s-3/2}$$
can be extended continuously from the corresponding map $C^\infty(U)\to C^\infty(\partial U).$
\end{lemma}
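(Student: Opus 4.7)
The plan is to extend $\partial_\nu$ by continuity from $C^\infty(\overline{U})$, which is dense in $H^s_\Delta(U)$ for $U$ smoothly bounded, with the key a priori estimate obtained by interpreting the normal derivative as a duality pairing via Green's identity against a lifting from $\partial U$. Concretely, I would fix a bounded right inverse of the trace map $R : H^{3/2-s}(\partial U) \to H^{2-s}(U)$, which exists as long as $2-s>0$, i.e.\ $s<2$.

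For $u \in C^\infty(\overline{U})$ and $v \in C^\infty(\partial U)$, start from the first Green identity
$$
\int_{\partial U}(\partial_\nu u)\bar v\,dS \;=\; \int_U (\Delta u)\overline{Rv}\,dx + \int_U \nabla u\cdot \nabla\overline{Rv}\,dx,
$$
and bound each term by $C\|u\|_{H^s_\Delta(U)}\|v\|_{H^{3/2-s}(\partial U)}$. The first term is immediate from Cauchy--Schwarz since $Rv\in H^{2-s}(U)\subset L^2(U)$. The second term is handled as a duality pairing of $\nabla u\in H^{s-1}(U)$ against $\nabla Rv\in H^{1-s}(U)$, using the boundedness of $R$. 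Taking the supremum over $v$ with $\|v\|_{H^{3/2-s}(\partial U)}=1$ gives the trace estimate
$$
\|\partial_\nu u|_{\partial U}\|_{H^{s-3/2}(\partial U)}\;\le\; C\,\|u\|_{H^s_\Delta(U)}
$$
on $C^\infty(\overline U)$, after which density allows us to extend the map continuously. In the easy range $s>3/2$, no Green identity is needed: $\nabla u\in H^{s-1}(U)$ with $s-1>1/2$ has a classical trace by the standard trace theorem.

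The main obstacle is the intermediate range $s\in (1,3/2]$, where $\nabla u$ fails to have a boundary trace and $\nabla Rv$ lives in a negative-index Sobolev space on $U$, so the pairing $\int_U \nabla u\cdot\nabla \overline{Rv}\,dx$ must be defined carefully. The cleanest workaround is to switch to the second Green identity: choose a lifting $\tilde v$ with vanishing Dirichlet trace and Neumann trace equal to $v$, and rewrite the boundary pairing as $\int_U (\Delta u)\overline{\tilde v}-u\,\overline{\Delta \tilde v}\,dx$. Since $u\in H^s(U)\subset H^1(U)\hookrightarrow L^2(U)$ in this range and the two bulk factors pair naturally, the bound goes through; the two extensions then agree on $C^\infty(\overline U)$ by the classical Green identity, so they agree everywhere by density. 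Density of $C^\infty(\overline U)$ in $H^s_\Delta(U)$ itself is a standard Lions--Magenes-type mollification argument, and I would cite it rather than reprove it.
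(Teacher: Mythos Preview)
Your approach is essentially the same as the paper's: fix a test function on $\partial U$, extend it into $U$ by a bounded lifting operator, use Green's identity to rewrite the boundary pairing as bulk terms, and bound those by duality to get the a priori estimate, then extend by density of $C^\infty(\overline U)$ in $H^s_\Delta(U)$. The paper does not separately treat the intermediate range or invoke the second Green identity; it simply writes the gradient pairing bound $|\langle\nabla v,\nabla Ew\rangle|\le C\|v\|_{H^{-s+3/2}}\|Ew\|_{H^{s+1/2}}$ directly, so your extra discussion there is more careful than the paper's own proof rather than different in spirit.
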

\begin{proof}
Let $E:H^{s}(\partial U)\to H^{s+1/2}(U)$, $s\geq -1/2$ denote the extension operator. Then fix $w\in H^{s}(\partial U)$. Observe that for $v\in C^\infty(U)$, 
\begin{align*} 
|\la \partial_\nu v,w\ra_{\partial U}|&\leq |\la \nabla v,\nabla Ew\ra_{L^2(U)}|+\|\Delta  v\|_{L^2(U)}\|Ew\|_{L^2(U)}\\
&\leq C\|v\|_{H^{-s+\frac{3}{2}}(U)}\|Ew\|_{H^{s+\hf}(U)}+\|\Delta  v\|_{L^2(U)}\|Ew\|_{L^2(U)}\\
&\leq C\|v\|_{H^{-s+\frac{3}{2}}_{\Delta}(U)}\|Ew\|_{H^{s+\hf}(U)}\leq C\|v\|_{H^{-s+\frac{3}{2}}_{\Delta}(U)}\|w\|_{H^{s}(\partial U)}.
\end{align*}
Hence, since $w|_{\partial U}\in H^{s}(\partial U)$ is arbitrary, 
$$\|\partial_\nu v\|_{H^{-s}(U)}\leq C\|v\|_{H^{-s+3/2}_{\Delta}(U)}$$
and the lemma follows from density of $C^\infty(U).$ 
\end{proof}

Now,  by Lemma \ref{lem:LayerPotentialInverses} or rather its analog for $\lambda=0$ (see, e.g \cite[Proposition 7.11.4]{Taylor}) we can take the normal derivative of the right hand side of \eqref{eqn:applyDistEqn} from either inside or outside $\pO$ and the limits agree. Hence, 
$$\partial_{\nu}u_1=\partial_\nu u_2=-\partial_{\nu}\Dl(0)(u_1-u_2)+\partial_{\nu}(-\Delta)^{-1}g.$$ Moreover, $\dDl(0)$ is a homogeneous pseudodifferential operator of order 1 and hence maps $H^s(\pO)\to H^{s-1}(\pO)$ for all $s$. Therefore, for $u\in \mc{D}_{\delta'}$,
\begin{equation}\label{eqn:derEqual} 
\partial_{\nu}u_1=\partial_\nu u_2.
\end{equation}

\begin{lemma}
Let $U\subset \re^d$ be open with smooth boundary. Then for $u\in H^{1}_\Delta(U)$ and $w\in H^1(U)$, 
\begin{equation} 
\label{eqn:greens}\la \nabla u,\nabla w\ra_{U}=\la -\Delta u,w\ra_{U}+\la \partial_\nu u,w\ra_{\partial U}.
\end{equation}
where the last pairing is interpreted as the dual pairing of $H^{-\hf}(\partial U)$ and $H^{\hf}(\partial U).$
\end{lemma}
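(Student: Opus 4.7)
The plan is to split $w$ into a boundary-vanishing part and an extension from the boundary, verify the identity for each piece separately using already-established machinery, and add the results. Let $E\colon H^{1/2}(\partial U)\to H^1(U)$ be a continuous extension operator, and given $w\in H^1(U)$ decompose
$$w=w_0+E(w|_{\partial U}),\qquad w_0:=w-E(w|_{\partial U})\in H^1_0(U).$$
Since both sides of \eqref{eqn:greens} are linear and continuous in $w\in H^1(U)$, it suffices to verify \eqref{eqn:greens} separately for $w=w_0$ and for $w=E(w|_{\partial U})$.

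For $w_0\in H^1_0(U)$, the boundary trace vanishes and the boundary pairing in \eqref{eqn:greens} drops out. The remaining identity $\langle\nabla u,\nabla w_0\rangle_U=\langle -\Delta u,w_0\rangle_U$ holds for $w_0\in\Cc(U)$ by the distributional definition of $\Delta u$; both sides agree as genuine $L^2$ pairings since $\Delta u\in L^2(U)$ by hypothesis. Both sides are continuous in $w_0$ with respect to the $H^1(U)$ norm, so density of $\Cc(U)$ in $H^1_0(U)$ extends the identity to all of $H^1_0(U)$.

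For $w=E(w|_{\partial U})$, the classical divergence theorem applied to a smooth $v\in C^\infty(\overline U)$ and $\varphi\in H^{1/2}(\partial U)$ yields
$$\langle\partial_\nu v,\varphi\rangle_{\partial U}=\langle\nabla v,\nabla E\varphi\rangle_U+\langle\Delta v,E\varphi\rangle_U.$$
The right-hand side is continuous in $v$ with respect to the $H^1_\Delta(U)$ norm. The preceding lemma continuously extends $v\mapsto\partial_\nu v$ from $C^\infty(\overline U)$ to $H^1_\Delta(U)$ with values in $H^{-1/2}(\partial U)$, so both sides agree after extension; applied with $v=u$ and $\varphi=w|_{\partial U}$ this is exactly \eqref{eqn:greens} for $w=E(w|_{\partial U})$. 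Summing the two cases yields the claim.

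The only point requiring care is that the extension $\partial_\nu u$ furnished by the preceding lemma is compatible with the Green's identity we want to derive. This is automatic: the bound on $|\langle \partial_\nu v,w\rangle_{\partial U}|$ used in that lemma was obtained by invoking precisely the displayed Green's identity for smooth $v$, so the extended trace is characterized by it. No separate density statement for $C^\infty(\overline U)\subset H^1_\Delta(U)$ is needed.
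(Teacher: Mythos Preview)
Your proof is correct but takes a genuinely different route from the paper's. The paper approximates $u$ directly by a sequence $u_n\in C^\infty(\overline U)$ converging in $H^1_\Delta(U)$, writes Green's identity for each $u_n$ against the fixed $w$, and passes to the limit in all three terms simultaneously (using the preceding lemma to ensure $\partial_\nu u_n\to\partial_\nu u$ in $H^{-1/2}(\partial U)$). Your approach instead fixes $u$ and decomposes $w$: the $H^1_0$ part is handled cleanly from the distributional definition of $\Delta u$ with no approximation of $u$ at all, while for the trace-extension part you observe that the very inequality used to define $\partial_\nu u$ in the preceding lemma came from Green's identity for smooth $v$, so the identity persists after extension. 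Your method is a bit longer but more modular, and it makes explicit that the $H^1_0$ piece needs nothing beyond $\Delta u\in L^2$; the paper's version is shorter and more direct. One small remark: your closing sentence ``no separate density statement \dots\ is needed'' is slightly misleading---density of $C^\infty$ in $H^1_\Delta$ is still essential, it is just that the preceding lemma has already consumed it, so you are right that you need not invoke it again.
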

\begin{proof}
Let $u_n\in C^\infty(U)$ have $u_n\to u$ in $H^1_{\Delta}(U)$. Then, by the previous lemma and the definition of $H^1(U)$, 
$$-\Delta u_n\underset{L^2(U)}{\longrightarrow} -\Delta u\,,\quad u_n\underset{H^1(U)}{\longrightarrow} u\,,\quad \partial_\nu u_n\underset{H^{-1/2}(\partial U)}{\longrightarrowß} \partial_\nu u.$$
Therefore, 
\begin{align*}
\la \nabla u,\nabla w\ra_U&=\la \nabla (u-u_n),\nabla w\ra_U+\la \nabla u_n,\nabla w\ra_U\\
&=\la \nabla (u-u_n),\nabla w\ra_U+\la -\Delta u_n,w\ra_U+\la \partial_\nu u_n, w\ra_{\partial U}\\
&\to \la -\Delta u,w\ra_U+\la \partial_\nu u,w\ra_{\partial U}
\end{align*}
where we have used the fact that $w|_{\partial U}\in H^{1/2}.$
\end{proof}

Now, let $u\in \mc{D}_{\delta'}$ and $w\in \mc{Q}$. Then using \eqref{eqn:derEqual} and \eqref{eqn:greens} we have that for some $g\in L^2(\re^d)$, 
\begin{align*} \la g,w\ra_{L^2(\re^d)}&=Q_{V,\delta',\pO}(u,w)\\
&=\la -\Delta u,w\ra_{L^2(\re^d\setminus \pO)} +\la \partial_\nu u_1+V^{-1}(\gamma u_1-\gamma u_2),\gamma w_1-\gamma w_2\ra_{\pO} 
\end{align*}
Thus, for $u\in \mc{D}_{\delta'}$,
$$\partial_\nu u_1+V^{-1}(\gamma u_1-\gamma u_2)=0$$
and hence $\partial_\nu u_1=\partial_\nu u_2\in L^2(\pO)$.  Now, let $\mc{D}_{N_j}(i)$, $j=1,2$ denote the Neumann to Dirichlet map at $\lambda=i$ with $j=1$ and $2$ corresponding to $\Omega$ and $\re^d\setminus \overline\Omega$ respectively.  Then, $\mc{D}_{N_j}(i):L^2(\pO)\to H^1(\pO)$. and hence there exists $v\in L^2(\re^d)\cap H_{\Delta}^{3/2}(\re^d\setminus \pO)$ solving 
$$(-\Delta +1)v=0,\quad \text{in }\re^d\setminus \pO\quad \partial_\nu v_1=\partial_\nu v_2\,,\,v_j|_{\pO}=\mc{D}_{N_j}(i)\partial_{\nu_j}u_j.$$
So, $u-v$ solves
$$(-\Delta)(u-v)= v-\Delta u\quad\text{ in }\re^d\setminus \pO\,\quad \partial_{\nu_j}(u_j-v_j)=0$$
and hence $u-v\in H^2(\re^d\setminus \pO)$. Together, this implies that $u\in H_{\Delta}^{3/2}(\re^d\setminus \pO).$ 
Thus, we have  
$$\mc{D}_{\delta'}\subset \mc{E}_{\delta'}:=\left\{ \begin{gathered}u\in H^{\frac{3}{2}}_{\Delta}(\re^d\setminus \pO)\cap L^2(\re^d)\,:\\
 \partial_\nu u_1=\partial_\nu u_2\,,\,V\partial_\nu u_1+\gamma u_1-\gamma u_2=0\end{gathered}\right\}.$$

Now, let $u\in \mc{E}_{\delta'}$. Then for $w\in \mc{Q}$
\begin{align*} Q_{V,\delta',\pO}(u,w)&=\la \nabla u,\nabla w\ra_{\re^d\setminus \pO}+\la V^{-1}(\gamma u_2-\gamma u_1),\gamma w_1- \gamma w_2\ra\\
&=\la -\Delta u,w\ra_{\re^d\setminus \pO} +\la \partial_\nu u_1+V^{-1}(\gamma u_1-\gamma u_2),\gamma w_1-\gamma w_2\ra \\
&=\la -\Delta u,w \ra_{\re^d\setminus \pO}. 
\end{align*}
Hence, $u\in \mc{D}_{\delta'}$ i.e. $\mc{E}_{\delta'}\subset \mc{D}_{\delta'}$
and we have fully characterized the domain $\mc{D}_{\delta'}$ of $-\Deltap$ as
\begin{equation}
\label{eqn:domainDeltaPrime} 
\mc{D}_{\delta'}=\{ u\in H^{\frac{3}{2}}_{\Delta}(\re^d\setminus \pO)\cap L^2(\re^d)\,:\, \partial_\nu u_1=\partial_\nu u_2\,,\,V\partial_\nu u_1+\gamma u_1-\gamma u_2=0\}.
\end{equation}
As above, for $u\in \mc{D}_{\delta'}$, $\Deltap u=\Delta u|_{\Omega}\oplus \Delta u|_{\re^d\setminus\overline{\Omega}}.$
Furthermore, using this in \eqref{eqn:distPrime}, as a distribution
$$-\Delta u+\delta_{\pO}'\otimes (V\partial_\nu u)=g.$$
Thus, if $u$ is $\lambda$-outgoing and solves $(-\Deltap-\lambda^2)u=0$, then 
$$-\Delta u+\delta_{\pO}'\otimes(V\partial_\nu u)-\lambda^2u=0$$
and hence, applying $R_0(\lambda)$ on the left,
\begin{equation}
\label{eqn:bveqn}
\begin{aligned} 
u+R_0(\lambda)\delta_{\pO}'\otimes (V\partial_\nu u)&=0\\
u-\D V\partial_\nu u&=0\\
(I-\dDl V)\partial_\nu u&=0
\end{aligned} 
\end{equation}

\section{Meromorphic continuation of the resolvent}
\label{sec:Meromorphy}
We assume that $V$ is self adjoint and invertible. Hence,
\begin{equation} \label{eqn:Vassume} \|u\|_{L^2(\partial\Omega)}\leq C\|Vu\|_{L^2(\partial\Omega)}.\end{equation}
\begin{remark}
We have in mind the situation that $V$ is a self-adjoint pseudodifferential operator that is elliptic. 
\end{remark}

\noindent Under these assumptions, we show that 
$(I-V\dDl (\lambda))^{-1}$ is a meromorphic family of Fredholm operators on the domain of $R_0(\lambda)$.

We start by analyzing $I-V\dDl (\lambda)$. Observe that by Proposition \ref{prop:layerInverse} $\dDl ^{-1}:H^{s}(\partial\Omega)\to H^{s+1}(\partial\Omega)$ exists as a meromorphic family of operators and we can write
$$(I-V\dDl (\lambda))=(\dDl ^{-1}(\lambda)-V)\dDl (\lambda).$$
 Hence, since $\dDl ^{-1}:H^s\to H^{s+1}$, it is compact on $L^2$ and $-V+\dDl ^{-1}$ is Fredholm by \eqref{eqn:Vassume}.

To conclude that $(-V+\dDl ^{-1}(\lambda))^{-1}$ is a meromorphic family of operators, we need only show that there exists $\lambda_0$ with $\Im \lambda_0>0$ such that $(-V+\dDl (\lambda)^{-1})^{-1}$ exists. 

To see this, let $\lambda=ih^{-1}$. Then, $\dDl (ih^{-1})$ is a semiclassical pseudodifferential operator with small parameter $h$ and symbol
\begin{equation}\label{eqn:symbdDlEll}\sigma(\dDl )=-h^{-1}\frac{\sqrt{1+|\xi'|^2_g}}{2}.
\end{equation}
Since we work in the physical half plane we can see that $R_0(ih^{-1})\in h^{-2}\Ph{-2}{}$ elliptic with symbol 
$$\sigma(R_0(ih^{-1}))=\frac{h^2}{|\xi|^2+1}$$
and $R_0=\oph(a)$ where $a$ has terms depending polynomially on $\xi$.

\begin{remark}
In fact, since we work on $\re^d$ 
$$R_0(ih^{-1})=\weyl(h^2(|\xi|^2+1)^{-1}).$$
\end{remark}
\noindent Thus, \eqref{eqn:symbdDlEll} follows from Lemma \ref{lem:diagPiece}.

Using, \eqref{eqn:symbdDlEll}, we have
$\|\dDl ^{-1}u\|_{L^2}\leq Ch\|u\|_{L^2}$
and for $h$ small enough, \eqref{eqn:Vassume} implies
\begin{equation}
\label{eqn:Fredholm}
\|u\|_{L^2}\leq C\|(-V+\dDl ^{-1})u\|_{L^2}.
\end{equation}
Together with the Fredholm property and the Analytic Fredholm Theorem (see for example \cite[Appendix C]{ZwScat}), this implies that for $h$ sufficiently small, $(-V+\dDl ^{-1}(e^{3\pi i/4}h^{-1}))$ is invertible and hence that $(-V+\dDl ^{-1}(\lambda))$ is a meromorphic family of Fredholm operators on $L^2$. Putting this together with the meromorphy of $\dDl (\lambda)^{-1}:H^s\to H^{s+1}$, this implies that 
$$(I-V\dDl (\lambda))^{-1}=\dDl (\lambda)^{-1}(-V+\dDl (\lambda)^{-1})^{-1}:L^2\to H^{1}$$
is a meromorphic family of operators.

We now prove the meromorphy of $R_V(\lambda)$. Let $L$ be a vector field with $L|_{\pO}=\partial_\nu$. Then let $K=-L^*\gamma^*V\gamma LR_0(\lambda)$ and suppose $\rho \in \Cc(\re^d)$ with $\rho\equiv 1 $ on $\Omega$. We have $\dDl (\lambda)=\gamma_1R_0(\lambda)\gamma_1^*$ where $\gamma_1=\gamma L$. 

\begin{remark}
Notice that in this formula for $\dDl$, one must interpret $\gamma L$ as either $\gamma^+L$ or $\gamma^-L$ where $\gamma^{\pm}$ denote restriction as a limit from either inside or outside of $\pO$. However, in the case of $\dDl$, it does not matter how one chooses from $\pm$. (See Proposition \ref{prop:layerInverse})
\end{remark}

Then,
\begin{equation}\label{eqn:relatedDlK} (I+K(\lambda)\rho)^{-1}\gamma_1^*=\gamma_1^*(I-V\dDl (\lambda))^{-1}.
\end{equation}
So, 
\begin{align*} (I+K(\lambda)\rho)^{-1}&=I-(I+K(\lambda)\rho)^{-1}K(\lambda)\rho\\
&=I+\gamma_1^*(I-V\dDl (\lambda))^{-1}V\gamma_1R_0(\lambda)\rho.
\end{align*}
Hence, 
$$K(\lambda)(1-\rho)(I+K(\lambda)\rho)^{-1}=K(\lambda)(1-\rho)$$
and we have that 
\begin{align*}
R_V(\lambda)&=R_0(\lambda)(I+K(\lambda)\rho)^{-1}(I-K(\lambda)(1-\rho))\\
&\!\!\!\!=\left(R_0(\lambda)+R_0(\lambda)\gamma_1^*(I-V\dDl (\lambda))^{-1}V\gamma_1R_0(\lambda)\rho\right)(I-K(\lambda)(1-\rho))
\end{align*}
and hence for $g\in L^2_{\comp}$ we can take $\rho g=g$ to obtain
\begin{equation}\label{eqn:resolveForm}R_V(\lambda)g=R_0(\lambda)g+R_0(\lambda)\gamma_1^*(I-V\dDl (\lambda))^{-1}V\gamma_1R_0(\lambda)g.
\end{equation}
Thus, the meromorphy of $(I+V\dDl (\lambda))^{-1}$ from $L^2 \to H^{1}$ implies that 
$$R_V(\lambda):L^2_{\comp}\to H^{1/2-\e}_{\loc}$$ 
is meromorphic with $\lambda$-outgoing image for $\lambda$ in the domain of $R_0(\lambda)$.

To see that in $d=1$, we have meromorphy of the resolvent at $\lambda=0$, we observe that in that case $I-V\dDl (\lambda)$ is a matrix valued meromorphic function and hence $\det(I-V\dDl (\lambda))$ is a meromorphic function. Together with the invertibility of $I-V\dDl (\lambda)$ for some $\lambda_0$ with $\Im \lambda_0>0$, we have that $\det((I-V\dDl (\lambda))^{-1})$ is a meromorphic family of operators on $\mathbb{C}$. Hence, the meromorphy of $R_0(\lambda)$ at $0$ together with \eqref{eqn:resolveForm} implies the meromorphy of $R_V(\lambda)$ at 0. Moreover, since $\gamma_1^*$ has finite dimensional range, the singular terms of $R_V(\lambda)$ also have this property. 

We use a version of Rellich uniqueness theorem to show that resonances occur at $\lambda$ such \eqref{eqn:mainPrimeTransmit}
has a solution.
\begin{lemma}
\label{lem:rellich}
If $\lambda$ lies in the domain of $R_0(\lambda)$, then a global $\lambda$-outgoing solution to $(-\Delta-\lambda^2)u=0$ vanishes identically. 
\end{lemma}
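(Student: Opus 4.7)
The plan is to show that $u$ coincides with the representation $R_0(\lambda)\varphi$ globally, not just outside a compact set, whence the equation $(-\Delta-\lambda^2)u=0$ together with the defining property of the resolvent will force $u=0$. By the $\lambda$-outgoing hypothesis, there exist $M>0$ and $\varphi\in \Cc(\re^d)$ with $u(x)=R_0(\lambda)\varphi(x)$ for $|x|\geq M$; enlarging $M$ we may arrange that $\supp\varphi\subset B(0,M)$ as well.

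Set $v:=u-R_0(\lambda)\varphi$. Then $v$ vanishes on $\{|x|\geq M\}$, and since $(-\Delta-\lambda^2)u=0$ and $(-\Delta-\lambda^2)R_0(\lambda)\varphi=\varphi$, one has $(-\Delta-\lambda^2)v=-\varphi\in \Cc(\re^d)$. Elliptic regularity for $-\Delta-\lambda^2$ then yields $v\in \Cc(\re^d)$.

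Applying $R_0(\lambda)$ to the equation $(-\Delta-\lambda^2)v=-\varphi$ and using the identity
$$R_0(\lambda)(-\Delta-\lambda^2)w=w,\qquad w\in \Cc(\re^d),$$
gives $v=-R_0(\lambda)\varphi$, and therefore $u=v+R_0(\lambda)\varphi=0$, as required.

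The only delicate point is the justification of the displayed identity for $\lambda$ on the unphysical sheet of $R_0(\lambda)$. For $\Im\lambda\gg 1$ the identity is immediate since $R_0(\lambda)$ is then the two-sided bounded inverse of $-\Delta-\lambda^2$ on $L^2(\re^d)$. For general $\lambda$ in the domain of $R_0(\lambda)$ it extends by meromorphic continuation: for fixed $w\in \Cc(\re^d)$ both sides are meromorphic $\mathcal{D}'(\re^d)$-valued functions of $\lambda$ that agree on $\{\Im\lambda\gg 1\}$, hence everywhere they are defined. Equivalently, the Schwartz kernel of $R_0(\lambda)$ is a fundamental solution of $-\Delta-\lambda^2$ and one can commute $-\Delta-\lambda^2$ through the convolution defining $R_0(\lambda)w$ for $w\in \Cc$. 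This is the only place where the hypothesis that $\lambda$ lies in the domain of $R_0(\lambda)$ enters.
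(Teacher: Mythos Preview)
Your proof is correct. The paper states this lemma without proof, treating it as a known version of Rellich's uniqueness theorem, so there is no approach to compare against. Your argument---showing that $v=u-R_0(\lambda)\varphi$ is compactly supported and smooth, then applying the identity $R_0(\lambda)(-\Delta-\lambda^2)w=w$ for $w\in \Cc(\re^d)$ (justified by analytic continuation from $\Im\lambda\gg 1$)---is the standard route and is carried out cleanly.
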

\begin{lemma}
For $\lambda$ in the domain of $R_0(\lambda)$, $\lambda$-outgoing solutions $u\in H_{\Delta,\loc}^{3/2}(\re^d\setminus\partial\Omega).$ to \eqref{eqn:mainPrime} lie in 1-1 correspondence with solutions $f\in L^2$ of $(I-V\dDl (\lambda))f=0$ given by $u=R_0(\lambda)(\gamma_1^*f)$ and $f=V\gamma_1u.$
\end{lemma}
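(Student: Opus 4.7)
The plan is to realize the claimed bijection explicitly as the pair of maps $u \longmapsto f := V\gamma_1 u$ and $f \longmapsto u := R_0(\lambda)\gamma_1^* f$, and to check that each sends the appropriate class into the appropriate class and that the two are mutually inverse.

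For the forward direction, suppose $u \in H^{3/2}_{\Delta,\loc}(\re^d \setminus \pO)$ is a $\lambda$-outgoing solution of \eqref{eqn:mainPrime}. The trace lemma preceding \eqref{eqn:greens} gives $\gamma_1 u = \partial_\nu u_1 = \partial_\nu u_2 \in L^2(\pO)$, so $f := V\gamma_1 u \in L^2(\pO)$ by boundedness of $V$. The computation already carried out in Section \ref{sec:defDeltap} that led to \eqref{eqn:bveqn} yields $(I - \dDl V)\partial_\nu u = 0$; multiplying on the left by $V$ produces $(I - V\dDl)f = 0$ as required. Moreover, the first identity in \eqref{eqn:bveqn}, combined with the identification of $\gamma_1^*$ with $-\delta'_{\pO}\otimes\,\cdot$ coming from the sign convention in \eqref{eqn:opDeltap}, gives $u = R_0(\lambda)\gamma_1^* f$, so the proposed inverse map recovers $u$.

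For the reverse direction, let $f \in L^2(\pO)$ satisfy $(I - V\dDl(\lambda))f = 0$ and put $u := R_0(\lambda)\gamma_1^* f$. Then $u$ is $\lambda$-outgoing because $\gamma_1^* f$ is compactly supported in $\pO$, and $(-\Delta - \lambda^2)u = \gamma_1^* f$ vanishes on $\re^d\setminus\pO$, so $u_1$ and $u_2$ solve the free Helmholtz equation in their respective regions. The regularity assertion $u \in H^{3/2}_{\Delta,\loc}(\re^d \setminus \pO)$ follows from the $L^2 \to H^{3/2}_{\loc}$ mapping property of the double layer potential, which at $\lambda = 0$ is \cite[Proposition 7.11.4]{Taylor} and extends perturbatively to $\lambda$ in the domain of $R_0(\lambda)$. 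The transmission conditions then come from the jump relations: the normal derivative of the double layer does not jump and equals $\dDl(\lambda)f$ from either side, while the standard jump for $\D(\lambda)$ delivers $\gamma u_1 - \gamma u_2 = f$. Substituting,
$$V\partial_\nu u_1 + \gamma u_1 - \gamma u_2 = V\dDl f - f = -(I - V\dDl)f = 0.$$
That the two maps are mutually inverse is then immediate: starting from $f$ we recover $V\gamma_1 u = V\dDl f = f$ using the hypothesis, while the forward calculation already shows the round trip $u \mapsto f \mapsto R_0(\lambda)\gamma_1^* f$ equals $u$.

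The principal technical difficulty is bookkeeping signs in two places: the identification of $\gamma_1^* f$ with $\pm\,\delta'_{\pO}\otimes f$, and the jump relations for $\D(\lambda)$ and $\dDl(\lambda)$ across $\pO$. Both depend on the orientation of $\nu$ and on the sign convention in \eqref{eqn:opDeltap}, and need to be pinned down carefully for the cancellation in the boundary equation above to produce exactly $-(I - V\dDl)f$. Everything else—injectivity, surjectivity, $\lambda$-outgoingness, and the free Helmholtz equation off $\pO$—is routine once the two maps are shown to land in the correct spaces.
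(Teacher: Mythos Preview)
Your overall strategy matches the paper's, and the two directions are set up correctly. There is, however, a genuine gap in the regularity step of your reverse direction.

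You assert that $u = R_0(\lambda)\gamma_1^*f = \D(\lambda)f \in H^{3/2}_{\Delta,\loc}(\re^d\setminus\pO)$ follows from an $L^2(\pO)\to H^{3/2}_{\loc}$ mapping property of the double layer potential, citing \cite[Proposition~7.11.4]{Taylor}. That proposition concerns the jump relations for $\partial_\nu\D$, not a mapping into $H^{3/2}$. The standard mapping is only $\D:H^s(\pO)\to H^{s+1/2}(\Omega_i)$, so $\D:L^2(\pO)\to H^{1/2}(\Omega_i)$. Indeed, if $\D f$ were in $H^{3/2}(\Omega)$ for general $f\in L^2$, its boundary trace $-\tfrac12 f + \Dl f$ would lie in $H^1(\pO)$, which fails since $\Dl$ is order zero. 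So the claimed mapping is false and the regularity does not follow this way.

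The paper closes this gap by \emph{using the equation} $(I-V\dDl)f=0$: invertibility of $V$ on $L^2$ (assumption \eqref{eqn:Vassume}) gives $\dDl f = V^{-1}f\in L^2(\pO)$, so $\partial_\nu u = \dDl f\in L^2(\pO)$. One then constructs, via the Neumann-to-Dirichlet map $\mc D_{N_i}(i):L^2\to H^1$ at the off-spectrum energy $\lambda=i$, an auxiliary $v\in H^{3/2}_{\Delta,\loc}$ with the same Neumann data, so that $u-v$ has vanishing Neumann data and hence lies in $H^2_{\loc}$ by standard elliptic regularity. This bootstrap gives $u\in H^{3/2}_{\Delta,\loc}$. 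Your argument needs this step (or something equivalent); without it the proof is incomplete.

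Two minor points: your jump computation has a sign slip (one gets $\gamma u_1-\gamma u_2=-f$, not $+f$, from Lemma~\ref{lem:layer}), though your final substitution $V\dDl f - f$ is correct, so the boundary condition does hold. And in the forward direction, the passage $u\mapsto R_0(\lambda)\gamma_1^*(V\gamma_1 u)=u$ for general $\lambda$ in the domain of $R_0$ is exactly the content of the Rellich uniqueness Lemma~\ref{lem:rellich}, which the paper invokes explicitly; your appeal to the derivation of \eqref{eqn:bveqn} is fine but that derivation itself rests on the same point.
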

\begin{proof}
Let $f\in L^2$ with $(I-V\dDl (\lambda))f=0$ and define $u=R_0(\gamma_1^*f)$. Then, by \cite[Section 7.11]{Taylor}, together with $\eqref{eqn:Vassume}$, $(-\Delta-\lambda^2)u=0$ in $\Omega_i$ and $\partial_\nu u|_{\partial\Omega}\in L^2$. Now, let $\mc{D}_{N_i}(i)$ denote the Neumann to Dirichlet map at $\lambda=i$. Then $\mc{D}_{N_i}(i):L^2\to H^1$ and hence, there exists $u_1\in L^2\cap H_{\Delta,\loc}^{3/2}(\re^d\setminus \partial\Omega)$ solving,
$$(-\Delta+1)u=0\,,\quad \text{in }\Omega_i\quad \partial_\nu u_1|_{\partial\Omega}=\partial_\nu u|_{\partial\Omega}\,,\, u|_{\partial\Omega_i}=\mc{D}_{N_i}(i)\partial_{\nu_i} u|_{\partial\Omega}.$$
So, $u-u_1$ solves
$$(-\Delta-\lambda^2)(u-u_1)=-(-\Delta-\lambda^2)u_1\in L^2(\Omega_i)\,,\quad \partial_{\nu_i}(u-u_1)=0$$
and hence $u-u_1\in H_{\loc}^2(\re^d\setminus \partial\Omega).$ Together, this implies $u\in H_{\Delta,\loc}^{3/2}(\re^d\setminus\partial\Omega)$.  By definition $u$ is $\lambda$-outgoing and
$(-\Delta-\lambda^2)u=\gamma_1^*f.$
Now, 
$$\gamma_1^*V\gamma_1u=\gamma_1^*V\dDl (\lambda)f=\gamma_1^*f$$
so $u$ solves \eqref{eqn:mainPrime}.

Now, suppose that $u\in H_{\Delta,\loc}^{3/2}$ solves \eqref{eqn:mainPrime}. Then, by Lemma \ref{lem:rellich}, 
$$\gamma_1 u=\gamma_1 R_0(\lambda)(\gamma_1^*V\gamma_1u)=\dDl (\lambda)V\gamma_1u.$$
So, letting $f=V\gamma_1u$, we have $f-V\dDl (\lambda)f=0$ and $f\in L^2(\partial\Omega).$ 
\end{proof}
\begin{remark}
Notice that the kernel of $I-V\dDl (\lambda)$ is in 1-1 correspondence with that of $I-\dDl (\lambda)V$. In particular, suppose 
\begin{equation}
\label{eqn:boundaryPrelimPrime1}
(I-\dDl (\lambda)V)f=0.
\end{equation}
Then, letting $g=Vf$, $(I-V\dDl (\lambda))g=0.$ Next, suppose that $(I-V\dDl (\lambda))f=0$. Then by \eqref{eqn:Vassume}, $f=V\dDl f=Vg$ for some $g\in L^2$. Hence, $(I-\dDl (\lambda)V)g=\dDl (\lambda)(I-V\dDl (\lambda))f=0.$ 
\end{remark}

The case $d=1$ and $\lambda=0$ must be treated differently. In particular, we need to show that $R_V(\lambda)$ is singular at $\lambda=0$ if and only if there is a nontrivial solution $u\in H_{\Delta,loc}^{3/2}(\re\setminus \partial\Omega)\cap L^\infty(\re)$ to $\Delta u=-\gamma_1^*V\gamma_1 u$ since $u\in L^\infty(\re)$ is equivalent to $0$ outgoing for such $u$. In fact, it is easy to see that $u\equiv a$ is a nontrivial $0$-outgoing solution to  $\Delta u=-\gamma_1^*V\gamma_1 u$. Moreover, let $\partial\Omega=\{x_1,\dots x_m\,:\,x_1<x_2<\dots <x_m\}\subset \re$, (see Figure \ref{fig:omega1-d}) and $V\gamma_1 u=(c_1,\dots c_m)\in \complex^m$.  Then if $\Delta u=-\gamma_1^*V\gamma_1u$, 
$$u=\frac{1}{2}\sum_{j=1}^m(-1)^j\sgn(x-x_j)+ax+b.$$
Now, $u\in L^\infty$ implies $a=0$. Thus, $\gamma_1 u=0$ and hence $c_j=0$ for $j=1,\dots m$. That is, $u=b$ is a constant. $-\Delta -\gamma_1^*V\gamma_1$ has a 1 dimensional $0$-outgoing set of solutions. Thus, it remains to show that $R_V(\lambda)$ always has a simple pole at $\lambda=0$.

\begin{figure}
\centering
\begin{tikzpicture}
\draw[very thin,<->] (-5,0) -- (5,0) ;

\foreach \Point in {(-3,0), (0,0), (2.5,0)}{
    \node at \Point {(};
    
}
\foreach \Point in {(-2,0), (1,0), (3.7,0)}{
    \node at \Point {)};
}
\draw[very thick, ->] (-3,0) -- (-3.5,0) ;
\draw[very thick, ->] (0,0) -- (-0.5,0) ;
\draw[very thick, ->] (2.5,0) -- (2,0) ;
\draw[very thick, ->] (-2,0)-- (-1.5,0) ;
\draw[very thick, ->] (1,0)-- (1.5,0) ;
\draw[very thick, ->] (3.7,0)-- (4.2,0) ;
\node at (-3,-0.5) {$x_1$};
\node at (-2.5,0.5) {$\Omega_1$};
\node at (0,-0.5) {$x_3$};
\node at (.5,0.5) {$\Omega_2$};
\node at (2.5,-0.5) {$x_5$};
\node at (3.1,0.5) {$\Omega_3$};
\node at (-2,-.5) {$x_2$};
\node at (1,-.5) {$x_4$};
\node at (3.7,-.5) {$x_6$};
\node at (5.4,-.5) {$\re$};
\end{tikzpicture}
\caption[Picture of $\Omega\subset \re$ where $\Omega=\bigcup_i\Omega_i$]{Picture of $\Omega\subset \re$ where $\Omega=\bigcup_i\Omega_i$. The arrows show the direction of the normal vector at each point of $\partial\Omega$\label{fig:omega1-d}}
\end{figure}
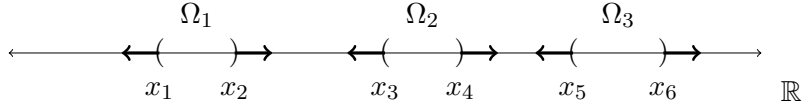

Again, assume $\partial\Omega=\{x_1,\dots x_m\,:\,x_1<x_2<\dots <x_m\}\subset \re$, and $V\gamma_1 u=(c_1,\dots c_m)\in \complex^m$.
Then,
$$(\dDl)_{ij}=\frac{i\lambda}{2}(-1)^{i+j}e^{i\lambda|x_i-x_j|}=\O{}(\lambda).$$
Thus, $(I-V\dDl(\lambda))^{-1}$ has a power series representation at $\lambda=0$ and hence is holomorphic there. In addition, 
\begin{gather*} 
(\gamma_1R_0(\lambda))_j=(-1)^{j+1}\frac{x_j-y}{2|x_j-y|}e^{i\lambda|x_j-y|}\,,\\
 (R_0(\lambda)\gamma_1^*)_j=(-1)^{j}\frac{x_j-y}{2|x_j-y|}e^{i\lambda|x_j-y|}
 \end{gather*}
are both holomorphic at $\lambda=0$. Thus, by \eqref{eqn:resolveForm} $R_V(\lambda)$ has a simple pole at $\lambda=0$ coming from the pole of $R_0(\lambda).$

We now relate the existence of resonances to the solution of the transmission problem \eqref{eqn:mainPrimeTransmit}.
\begin{proof}[{Proof of Theorem} \ref{thm:ResDomainPrime}]
Suppose now that $\Gamma=\partial\Omega$ for a compact domain $\Omega\subset\re^d$ with $C^\infty$ boundary. Then the analysis leading to \eqref{eqn:domainDeltaPrime} shows that a $\lambda$-outgoing solution of \eqref{eqn:mainPrime} with $u\in H^{3/2}_{\Delta,\loc}$  belongs to $\mc E_{\delta',\loc}$ and satisfies the transmission problem \eqref{eqn:mainPrimeTransmit}. Conversely, suppose $u\in\mc E_{\delta'',\loc}$ satisfies \eqref{eqn:mainPrimeTransmit}. For $w\in C_c^\infty(\re^d)$, Green's identities yield
$$
\int_{\re^d} u\,(-\Delta-\lambda^2)w=-\int_{\partial\Omega}(u_1-u_2)\,\partial_\nu w=
\int_{\partial\Omega}(V\partial_\nu u)\, \partial_\nu w\,.
$$
Hence $u$ is a $\lambda$-outgoing $H^{3/2}_{\Delta,\loc}$ distributional solution to 
$(-\Delta-\lambda^2)u+\delta'_{\partial\Omega}\otimes(V\partial_\nu))u=0$, and by the above $\lambda$ is a resonance.
\end{proof}

\begin{remark}
The proof of meromorphic continuation of the resolvent for $-\Deltad{\pO}$ and equivalence of \eqref{eqn:outgoingSoln} and \eqref{eqn:transmitnoPrime} are similar and can be found in \cite{GS}.
\end{remark}


\chapter{Boundary Layer Operators}
\label{ch:layer}
In Chapter \ref{ch:mer}, the existence of resonances for $-\Deltad{\Gamma}$ and $-\Deltap$ was related to a certain equation involving boundary layer operators of the Helmholtz equation. In this chapter we prepare for the analysis of these equations by understanding the boundary layer potentials from a semiclassical point of view.  We first review some of the standard theory of boundary layer potentials. We then proceed to prove (nearly) sharp high frequency estimates on layer potentials using $L^2$ estimates on restrictions of quasimodes and their derivatives to hypersurfaces. We then give a microlocal description of the boundary layer operators for domains with smooth boundary away from glancing. In the process, we give a description of the free resolvent as a semiclassical intersecting Lagrangian distribution. Finally, in the case that the domain is strictly convex, we use the Melrose-Taylor parametrix from Appendix \ref{ch:semiclassicalDirichletParametrices} to produce a microlocal model of the boundary layer operators near glancing. As a consequence of these microlocal models we improve the nearly sharp estimates on these operators to sharp estimates in the case that the domain has smooth, strictly convex boundary.
\section{Classical layer potential theory}
\label{sec:LayerPotential}
We review here some facts about boundary layer potentials in the context of the Helmholtz equation. We start by considering $\Im \lambda >0$. Then, 
$$(-\Delta_x-\lambda^2)R_0(\lambda)(x,y)=\delta_y(x).$$
Moreover, the equality continues analytically through $\Re\lambda\geq 0$ to $\mathbb{C}$ in the case that $d$ is odd and to the logarithmic cover of $\mathbb{C}\setminus\{0\}$ if $d$ is even. 

For $x\notin \pO$, let
\begin{equation*}
\S f(x):=\int_{\partial\Omega}R_0(\lambda,x,y)dS(y)\,,\quad\quad \D f(x):=\int_{\partial\Omega}\partial_{\nu_y}R_0(\lambda,x,y)f(y)dS(y)
\end{equation*}
be respectively the single and double layer potential.
We prove the following lemma similar to \cite[Propositions 11.1, 11.2]{Taylor}
\begin{lemma}
\label{lem:layer}
Let $\Omega\Subset\re^d$ be open with smooth boundary. For $x\in \Omega$, let $v_{+}(x)$ and $v_-(x)$ denote limits respectively from $x\in \Omega$ and $x\in \re^d\setminus \overline{\Omega}.$ Then for $x\in \Omega$,
\begin{gather*} 
(\S f)_{\pm}(x)=Gf(x),\quad \quad\quad\quad
(\D f)_{\pm}(x)=\mp \frac{1}{2}f(x)+\Dl f(x)\\
(\partial_{\nu_x}\S f)_{\pm}(x)=\pm \frac{1}{2}f(x)+\Dl^{\sharp}f(x)
\end{gather*}
where for $x\in \partial\Omega$,
\begin{gather*}
Gf(x):=\int_{\partial\Omega} R_0(\lambda)(x,y)f(y)dS(y)\\
\Dl f(x):=\int_{\partial\Omega}\partial_{\nu_y}R_0(\lambda)(x,y)f(y)dS(y)\\
\Dl^{\sharp}f(x):=\int_{\partial\Omega}\partial_{\nu_x}R_0(\lambda)(x,y)f(y)dS(y)
\end{gather*}
and $\partial_{\nu_x}$ denotes the outward unit normal derivative to $\partial\Omega$ at $x$.
\end{lemma}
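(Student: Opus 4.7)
The plan is to reduce to the classical jump relations for the Newtonian potential, then lift them back to $R_0(\lambda)$ by a smooth-remainder decomposition. First I would write $R_0(\lambda)(x,y)=G_0(x-y)+E_\lambda(x,y)$, where $G_0$ is the free-space fundamental solution of $-\Delta$ (a dimensional constant times $|z|^{2-d}$ for $d\geq 3$, and $-(2\pi)^{-1}\log|z|$ for $d=2$). From the explicit form of $R_0(\lambda)$---for instance $R_0(\lambda)(x,y)=(4\pi|x-y|)^{-1}e^{i\lambda|x-y|}$ in $d=3$, or the Hankel function expansion in $d=2$---one checks that $E_\lambda$, $\partial_{\nu_y}E_\lambda$, and $\partial_{\nu_x}E_\lambda$ are continuous up to the diagonal $x=y$. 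Hence the boundary integral operators with these kernels are continuous across $\partial\Omega$ and contribute no jump, reducing the lemma to the jump relations for harmonic layer potentials with kernel $G_0$.

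For $\S$, the restriction of $G_0(x-y)$ to $\partial\Omega\times\partial\Omega$ is weakly singular of order $|x-y|^{2-d}$ (respectively $\log|x-y|$ for $d=2$) and hence integrable on the $(d-1)$-dimensional surface, so $\S f$ extends continuously across $\partial\Omega$ with both limits equal to $Gf(x)$. For $\D$, I would invoke the classical solid-angle identity: for $x_0\in\partial\Omega$,
$$\lim_{x\to x_0,\,x\in\Omega}\int_{\partial\Omega}\partial_{\nu_y}G_0(x-y)\,dS(y)=-1,\quad \lim_{x\to x_0,\,x\in\re^d\setminus\overline{\Omega}}\int_{\partial\Omega}\partial_{\nu_y}G_0(x-y)\,dS(y)=0,$$
with principal-value analog equal to $-\hf$, all following from the divergence theorem on $\Omega$ (resp.\ on $\Omega\setminus B_\varepsilon(x_0)$) together with $-\Delta_yG_0(x-y)=\delta_x(y)$. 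Splitting $f(y)=f(x_0)+(f(y)-f(x_0))$ under the integral, the first piece produces the jump $\mp\hf f(x_0)$, while the second has a weakly singular integrand (since $(x-y)\cdot\nu(y)=O(|x-y|^2)$ on smooth $\partial\Omega$ forces $\partial_{\nu_y}G_0(x-y)=O(|x-y|^{2-d})$ along $\partial\Omega$) and converges by dominated convergence to $\Dl f(x_0)$.

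For $\partial_{\nu_x}\S$, I would use that $G_0$ depends only on $x-y$, so $\nabla_xG_0=-\nabla_yG_0$ and
$$\partial_{\nu_x}G_0(x-y)=-\partial_{\nu_y}G_0(x-y)+\nabla_yG_0(x-y)\cdot(\nu(y)-\nu(x_0)).$$
The correction term is $O(|x-y|^{2-d})$ since $|\nu(y)-\nu(x_0)|=O(|y-x_0|)$ on smooth $\partial\Omega$, hence weakly singular and jump-free; the leading term contributes the jump opposite in sign to that of $\D$, producing $\pm\hf f(x_0)+\Dl^\sharp f(x_0)$. The main subtlety, and likely the only real obstacle, is that this argument is cleanest for $\Im\lambda\gg 1$ where all integrals converge absolutely; to extend to the full meromorphic domain of $R_0(\lambda)$ one notes that $G_0$ is $\lambda$-independent while $E_\lambda$ depends holomorphically on $\lambda$ (meromorphically on the logarithmic cover for even $d$), so both sides of each jump identity are meromorphic in $\lambda$ and the identities propagate by analytic continuation.
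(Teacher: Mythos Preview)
Your argument is correct and is the standard classical route (decompose $R_0(\lambda)$ as the Newtonian kernel plus a remainder with better regularity, then invoke the solid-angle identity and weakly-singular estimates). The paper takes a genuinely different approach: it works in local coordinates flattening $\partial\Omega$ to $\{x_1=0\}$, writes $P(x,D)(f\delta_{\partial\Omega})$ for a general pseudodifferential operator $P$ with symbol $p$, and reduces the jump question to the behavior of the one-dimensional integral $q(x_1,x',\xi')=(2\pi)^{-1}\int e^{ix_1\xi_1}p(x,\xi)\,d\xi_1$ as $x_1\to 0^\pm$. The jump is then read off from the parity of the leading asymptotic coefficients $C^j_\pm$ of $p$ as $\xi_1\to\pm\infty$; applied to $p=|\xi|^{-2}$ (for $\S$) and $p=\pm i\langle\nu,\xi\rangle|\xi|^{-2}$ (for $\D$ and $\partial_\nu\S$) this yields the stated constants directly.

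Your approach is more elementary and self-contained, requiring no symbol calculus. The paper's approach buys generality---it applies to any $P\in\PsiHom$ satisfying the parity condition, not just $R_0(\lambda)$---and this exact mechanism is reused later in the paper (Lemma~\ref{lem:diagPiece}) to handle the semiclassical boundary restriction $\gamma^\pm A\gamma^*$ for $A\in\Ph{m}{\delta}$, where no Newtonian decomposition is available. The paper's route also sidesteps your analytic-continuation step, since the symbol computation is purely algebraic and valid for all $\lambda$ at once. One minor point: your claim that $\partial_{\nu_y}E_\lambda$ is continuous up to the diagonal is slightly too strong in $d=2$ (a logarithmic term survives), but it remains weakly singular, which is all you need.
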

\noindent We call $G$ the \emph{single layer operator} and $\Dl$ the \emph{double layer operator}.
\begin{proof}
We start by considering a general pseudodifferential operator $P(x,D)$. Let $S\in \mc{E}'(\re^d)$ denote the surface measure on $\partial\Omega$ and make a local change of coordinates so that $\partial\Omega=\{x_1=0\}$ with $\Omega\cap U=\{x_1<0\}\cap V$. Then, for $f\in \mc{D}'(\partial\Omega)$, letting $x=(x_1,x')$ and $y=(y_1,y')$
\begin{align*}
P(x,D)(fS)&=(2\pi )^{-d}\iint e^{i\la x'-y',\xi'\ra+ix_1\xi_1 }p(x,\xi',\xi_1)f(y')dy'd\xi'd\xi_1\\
&=q(x_1,x',D')f
\end{align*}
where 
\begin{equation}\label{eqn:boundaryPseud} q(x_1,x',\xi')=(2\pi )^{-1}\int e^{ix_1\xi_1}p(x_1,x',\xi)d\xi_1.\end{equation}
Now, suppose that $p\in \Scl{m}{0}$. Then, for $m<-1$, \eqref{eqn:boundaryPseud} is absolutely integrable and hence continuous at $x_1=0$. On the other hand, if $m\geq-1$, we can write
$$p\sim \sum_{j=-\infty}^{m}C^j_{\pm}(x,\xi') \xi_1^j\quad \pm \xi_1\to \infty. $$
Then $q$ is smooth away from $x_1=0$ and, if $C^j_{-}(x,\xi')=(-1)^jC^j_{+}(x,\xi')$ for $j\geq -1$, there is a jump discontinuity at $x_1=0$ (see for example \cite[Chapter 3]{Taylor} or Lemma \ref{lem:diagPiece}). 

Now we apply this to $\S$ and $\D$. Note that the (homogeneous) symbol of $R_0(\lambda)$ is $|\xi|^{-2}$ so we immediately obtain that there is no jump for $\S$. 

On the other hand, let $L$ be a vector field equal to $\partial_\nu$ on $\partial\Omega$. Then, 
$$\D f=R_0(\lambda)L^*(fS),\quad \quad \partial_\nu \S f=LR_0(\lambda)(fS)$$
where $L^*=-L-(\div L)$. So, the symbol of $R_0L^*$ is $-|\xi|^{-2}i\la \nu(x),\xi\ra$ and that of $LR_0$ is $|\xi|^{-2}i\la\nu(x),\xi\ra.$ Then, writing
$$|\xi\pm \tau \nu(x)|^{-2}i\la \nu(x),\xi\pm\tau \nu(x)\ra$$
we see that $\pm C_{\pm}(x,\xi')^{-1}\equiv i$. Computing the integral \eqref{eqn:boundaryPseud} with $p=-|\xi|^2i\xi_1$ gives the constant $\mp \frac{1}{2}$ for $\D$ and, since the symbols are related by multiplication by $-1$, $\pm\frac{1}{2}$ for $\partial_\nu \S$.
\end{proof}

Now, suppose that $\Im \lambda > 0$ and that $u$ solves
\begin{equation}
\label{eqn:LaplaceIn}
(-\Delta-\lambda^2)u(x)=0\quad \quad x\in \Omega.
\end{equation}
Then using Green's formula and the fact that $R_0(\lambda)(x,y)=R_0(\lambda)(x,y)$,
\begin{equation}
\label{eqn:boundaryLayerFormulaIn}
\S \partial_{\nu_i}u|_{\partial\Omega}-\D u|_{\partial\Omega}=\begin{cases}u(x)&x\in \Omega\\
0&x\notin \overline{\Omega}
\end{cases}
\end{equation}
So, taking limits from inside and outside $\Omega$ in \eqref{eqn:boundaryLayerFormulaIn}, we have 
$$G\partial_{\nu_i}u+\frac{1}{2}u-\Dl u=u\quad \quad G\partial_{\nu_i}u-\frac{1}{2}u-\Dl u=0.$$
That is, 
\begin{equation}\label{eqn:sLayerIn} G\partial_{\nu_i}u=\frac{1}{2}u+\Dl u.
\end{equation}
Next, apply $\partial_{\nu_i} $ to \eqref{eqn:boundaryLayerFormulaIn} and take limits from inside and outside $\Omega$ to obtain
$$\frac{1}{2}\partial_{\nu_i}u+\Dl ^\sharp \partial_{\nu_i}u-(\partial_{\nu_i}\mc{D}u)_+=\partial_{\nu_i}u\quad \quad -\frac{1}{2}\partial_{\nu_i}u+\Dl ^\sharp \partial_{\nu_i}u-(\partial_{\nu_i}\mc{D}u)_-=0.$$
That is, 
\begin{equation}
\label{eqn:dDLIn}
(\partial_{\nu_i}\mc{D}u)_\pm =-\frac{1}{2}\partial_{\nu_i}u+\Dl ^\sharp \partial_{\nu_i}u.
\end{equation}
On the other hand, suppose that $u$ solves 
\begin{equation}\label{eqn:LaplaceOut} (-\Delta -\lambda^2)u(x)=0\quad \quad x\notin \overline{\Omega}\quad\quad u\,\text{ is } \lambda\text{-outgoing}.
\end{equation}
Then, using Green's formula and the fact that $R_0(\lambda)(x,y)=R_0(\lambda)(x,y)$,
\begin{equation}
\label{eqn:boundaryLayerFormulaOut}
\mc{S} \partial_{\nu_e}u|_{\partial\Omega}+\mc{D}u|_{\partial\Omega}=\begin{cases}0&x\in \Omega\\
u(x)&x\notin \overline{\Omega}
\end{cases}
\end{equation}
So, taking limits from inside and outside $\Omega$ in \eqref{eqn:boundaryLayerFormulaOut}, we have 
$$G\partial_{\nu_e}u-\frac{1}{2}u+\Dl u=0\quad \quad G\partial_{\nu_e}u+\frac{1}{2}u+\Dl u=u.$$
That is, 
\begin{equation}
\label{eqn:sLayerOut}
G\partial_{\nu_e}u=\frac{1}{2}u-\Dl u.
\end{equation}
Next, apply $\partial_{\nu_i} $ to \eqref{eqn:boundaryLayerFormulaOut} and take limits from inside and outside $\Omega$ to obtain
$$\frac{1}{2}\partial_{\nu_e}u+\Dl ^\sharp \partial_{\nu_e}u+(\partial_{\nu_i}\mc{D}u)_+=0\quad \quad -\frac{1}{2}\partial_{\nu_e}u+\Dl ^\sharp \partial_{\nu_e}u+(\partial_{\nu_i}\mc{D}u)_-=\partial_{\nu_i}u.$$
That is, 
\begin{equation}
\label{eqn:dDLOut}(\partial_{\nu_i}\mc{D}u)_\pm =-\frac{1}{2}\partial_{\nu_e}u-\Dl ^\sharp \partial_{\nu_e}u.
\end{equation}

Now, let $f\in C^\infty(\partial\Omega)$ and $u_i$ be the unique solution to  \eqref{eqn:LaplaceIn} with $u_i|_{\partial\Omega}=f$. Then the \emph{interior Dirichlet to Neumann Map} is given by $\mc{N}_i:f\mapsto \partial_{\nu_i}u_i$. If $u_e$ solves \eqref{eqn:LaplaceOut} with $u_e|_{\partial\Omega}=f$, then the \emph{exterior Dirichlet to Neumann Map} is given by $\mc{N}_e:f\mapsto \partial_{\nu_e}u_e.$ 

Next, suppose that $v_i$ is the unique solution to \eqref{eqn:LaplaceIn} with $\partial_{\nu_i}v_i=f$. Then the \emph{interior Neumann to Dirichlet Map} is given by $\mc{D}_{N_i}:f\mapsto v_i|_{\partial\Omega}$. Finally, suppose that $v_e$ solves \eqref{eqn:LaplaceOut} with $\partial_{\nu_e}v_e=f$. Then, the \emph{exteriror Neumann to Dirichlet Map } is given by $\mc{D}_{N_e}:f\mapsto v_e|_{\partial\Omega}. $ 

Then \eqref{eqn:sLayerIn} \eqref{eqn:dDLIn} \eqref{eqn:sLayerOut} and \eqref{eqn:dDLOut} combined with density of $C^\infty$ in distributions give the following
\begin{lemma}
\label{lem:LayerPotentialInverses}
Let $G$, $\Dl $, and $\Dl ^\sharp$ be as in Lemma \ref{lem:layer}. Then for $\Im \lambda>0$,
$$G\mc{N}_i=\frac{1}{2}I+\Dl \quad \quad G\mc{N}_e=\frac{1}{2}I-\Dl .$$
Moreover, $\partial_{\nu_i}\mc{D}$ has no jump across $\partial\Omega$ and 
$$\dDl =(\partial_{\nu_i}\mc{D})_{\pm}=\left(-\frac{1}{2}I+\Dl ^\sharp\right)\mc{N}_i=\left(-\frac{1}{2}I-\Dl ^\sharp\right)\mc{N}_e $$
where 
$$\dDl(\lambda)f(x)=\int_{\partial\Omega}\partial_{\nu_x}\partial_{\nu_y}R_0(\lambda)(x,y)f(y)dS(y).$$
Finally, 
$$\dDl \mc{D}_{N_i}=-\frac{1}{2}I+\Dl ^\sharp,\quad\quad \dDl\mc{D}_{N_e}=-\frac{1}{2}I-\Dl ^\sharp.$$
\end{lemma}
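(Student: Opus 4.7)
The plan is to take the pointwise identities \eqref{eqn:sLayerIn}, \eqref{eqn:dDLIn}, \eqref{eqn:sLayerOut}, \eqref{eqn:dDLOut}---already established for any smooth solution of the interior, respectively exterior, Helmholtz equation---and convert each into an operator identity on $\partial\Omega$ by a careful choice of test solution, then extend from smooth boundary data to distributions by density.

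For the single layer identities, I fix $f\in C^\infty(\partial\Omega)$ and let $u_i$ be the unique solution of \eqref{eqn:LaplaceIn} with $u_i|_{\partial\Omega}=f$ (unique because $\Im\lambda>0$ places us off the real spectrum of the interior Dirichlet Laplacian). Then $\partial_{\nu_i}u_i=\mc{N}_if$ by definition, so \eqref{eqn:sLayerIn} reads $G\mc{N}_if=\tfrac12 f+\Dl f$, yielding the first identity. Taking $u_e$ the $\lambda$-outgoing solution of \eqref{eqn:LaplaceOut} with $u_e|_{\partial\Omega}=f$ and applying \eqref{eqn:sLayerOut} gives the second identity in the same way.

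The no-jump assertion for $\partial_{\nu_i}\mc{D}$ is immediate from the form of \eqref{eqn:dDLIn} and \eqref{eqn:dDLOut}: both $(\partial_{\nu_i}\mc{D}u)_+$ and $(\partial_{\nu_i}\mc{D}u)_-$ are equal to the same expression on the right-hand side, independent of the direction of the limit. To extract $\dDl=(-\tfrac12 I+\Dl^\sharp)\mc{N}_i$, I substitute the same $u=u_i$ with Dirichlet data $f$ into \eqref{eqn:dDLIn}: since $\mc{D}u|_{\partial\Omega}$ depends only on $u|_{\partial\Omega}=f$, the left-hand side becomes $\dDl f$, and the right-hand side is $(-\tfrac12 I+\Dl^\sharp)\mc{N}_if$. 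The exterior version follows by substituting $u=u_e$ with $u_e|_{\partial\Omega}=f$ into \eqref{eqn:dDLOut}.

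For the last two identities, I swap the roles of Dirichlet and Neumann data: choose $u_i$ solving \eqref{eqn:LaplaceIn} with prescribed Neumann data $\partial_{\nu_i}u_i=f$, so that $u_i|_{\partial\Omega}=\mc{D}_{N_i}f$; inserting into \eqref{eqn:dDLIn} produces $\dDl\mc{D}_{N_i}f=(-\tfrac12 I+\Dl^\sharp)f$, and the exterior analog comes from \eqref{eqn:dDLOut} in the same way. The remaining task---really the only step requiring any care---is extending each identity from $f\in C^\infty(\partial\Omega)$ to distributions; this follows from the continuity of $G,\Dl,\Dl^\sharp,\dDl,\mc{N}_{i,e},\mc{D}_{N_{i,e}}$ between the appropriate Sobolev spaces on $\partial\Omega$ together with density of $C^\infty(\partial\Omega)$ therein, once the Dirichlet and Neumann problems are known to be well-posed, which is where the hypothesis $\Im\lambda>0$ is used.
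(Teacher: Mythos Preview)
Your proof is correct and follows essentially the same route as the paper: the paper simply says that \eqref{eqn:sLayerIn}, \eqref{eqn:dDLIn}, \eqref{eqn:sLayerOut}, \eqref{eqn:dDLOut} combined with density of $C^\infty$ in distributions give the lemma, and you have spelled out exactly those substitutions in detail. The only elaboration you add is the explicit choice of test solution with prescribed Dirichlet (resp.\ Neumann) data and the remark that well-posedness for $\Im\lambda>0$ underlies the density argument, which is implicit in the paper's one-line proof.
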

\noindent We call $\dDl$ the \emph{derivative double layer operator}.

Now let $\Im \lambda>0$ and fix $h\in C^\infty(\partial\Omega)$ and suppose that $u(x)=\mc{S}h$. Then $u|_{\partial\Omega}=Gh$ and hence 
$\partial_{\nu_i}u=\mc{N}_iGh$, $\partial_{\nu_e}u=\mc{N}_eGh.$ On the other hand, taking limits from inside and outside $\Omega$ and using Lemma \ref{lem:layer}, we have 
$$\partial_{\nu_i}u=\left(\frac{1}{2}I+\Dl^\sharp\right)h\quad \partial_{\nu_e}u=\left(\frac{1}{2}I-\Dl^\sharp\right)h.$$

Similarly, if we let $u(x)=\mc{D}h$. Then, $\partial_{\nu_i}u=\dDl h$ and $(u)_+=\mc{D}_{N_i}\dDl h$, $(u)_-=-\mc{D}_{N_e} \dDl h.$ On the other hand, taking limits from inside and outside $\Omega$, and using Lemma \ref{lem:layer}, we have 
$$(u)_+=\left(-\frac{1}{2}I+\Dl\right)h\quad \quad (u)_-=\left(\frac{1}{2}I+\Dl \right)h.$$

Again, using the density of $C^\infty$ in $\mc{D}'$, we have proven
\begin{lemma}
\label{lem:LayerPotentialInverse2}
Let $G$, $\Dl $, and $\Dl ^\sharp$ and $\dDl$ be as in Lemma \ref{lem:LayerPotentialInverses} and $\Im \lambda>0$. Then
$$\mc{N}_iG=\frac{1}{2}I+\Dl^\sharp \quad \quad \mc{N}_eG=\frac{1}{2}I-\Dl^\sharp .$$
Moreover,
$$ \mc{D}_{N_i}\dDl=-\frac{1}{2}I+\Dl ,\quad\quad \mc{D}_{N_e}\dDl=-\frac{1}{2}I-\Dl.$$
\end{lemma}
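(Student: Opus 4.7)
The plan is to run the computation dually to that of Lemma \ref{lem:LayerPotentialInverses}: instead of starting from a solution of the Helmholtz equation and reconstructing it via Green's identity as a combination of layer potentials, I will start from layer potentials generated by a smooth density $h\in C^\infty(\partial\Omega)$, compute its interior and exterior Cauchy data in two ways, and equate.

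For the first pair of identities I will set $u=\S h$. Away from $\partial\Omega$, $R_0(\lambda)(x,y)$ satisfies $(-\Delta_x-\lambda^2)R_0(\lambda)(x,y)=0$ in $x$, so $u$ solves the homogeneous Helmholtz equation in $\Omega$ and in $\re^d\setminus\overline\Omega$, and is $\lambda$-outgoing by construction of $R_0(\lambda)$. By Lemma \ref{lem:layer}, $u$ has no jump across $\partial\Omega$ and $u|_{\partial\Omega}=Gh$, so by definition of the Dirichlet-to-Neumann maps $\partial_{\nu_i}u=\mc{N}_i(Gh)$ and $\partial_{\nu_e}u=\mc{N}_e(Gh)$. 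On the other hand, the jump formula for $\partial_{\nu_x}\S$ in Lemma \ref{lem:layer} gives $\partial_{\nu_i}u=(\tfrac12 I+\Dl^\sharp)h$ from the interior side; taking the limit from the exterior and accounting for the fact that $\partial_{\nu_e}=-\partial_{\nu}$ produces $\partial_{\nu_e}u=(\tfrac12 I-\Dl^\sharp)h$. Equating the two expressions for $\partial_{\nu_i}u$ and $\partial_{\nu_e}u$ yields the first two identities on $C^\infty(\partial\Omega)$.

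For the remaining pair I will set $u=\D h$. By Lemma \ref{lem:LayerPotentialInverses}, $\partial_{\nu_i}\D$ has no jump and equals $\dDl h$; combined with $\partial_{\nu_e}=-\partial_\nu$, this gives interior Neumann data $\dDl h$ and exterior Neumann data $-\dDl h$. Inverting the Neumann problems on each side therefore identifies the two boundary traces as $u|_{\partial\Omega}^+=\mc{D}_{N_i}(\dDl h)$ and $u|_{\partial\Omega}^-=-\mc{D}_{N_e}(\dDl h)$. On the other hand, Lemma \ref{lem:layer} gives these same boundary values directly from the jump relation for $\D$ as $(-\tfrac12 I+\Dl)h$ and $(\tfrac12 I+\Dl)h$ respectively. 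Equating yields the last two identities.

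The only real obstacle is the bookkeeping of signs between $\partial_{\nu}$, $\partial_{\nu_i}$, and $\partial_{\nu_e}$, and verifying that for $\Im\lambda>0$ the operators $\mc{N}_i,\mc{N}_e,\mc{D}_{N_i},\mc{D}_{N_e}$ are all well defined; the latter follows because $\lambda^2$ lies off the spectrum of the self-adjoint Dirichlet and Neumann Laplacians in $\Omega$, and the exterior problems are uniquely solvable under the $\lambda$-outgoing condition. Continuity of all operators involved on the natural Sobolev scale together with density of $C^\infty(\partial\Omega)$ then extends the identities from smooth densities $h$ to the full distributional range, as indicated just before the lemma statement.
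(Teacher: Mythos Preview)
Your proof is correct and follows essentially the same route as the paper: set $u=\S h$ and $u=\D h$ for smooth $h$, read off the Cauchy data on each side once via the jump relations of Lemma~\ref{lem:layer} and once via the definitions of $\mc{N}_i,\mc{N}_e,\mc{D}_{N_i},\mc{D}_{N_e}$, equate, and extend by density. Your explicit remark on well-posedness of the boundary maps for $\Im\lambda>0$ and the careful tracking of the sign $\partial_{\nu_e}=-\partial_\nu$ are welcome additions that the paper leaves implicit.
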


Now, to see that Lemmas \ref{lem:LayerPotentialInverses} and \ref{lem:LayerPotentialInverse2} hold for $\lambda$ in the domain of $R_0(\lambda)$, observe that computing symbols as in Lemma \ref{lem:layer} (see also Lemma \ref{lem:diagPiece}) for $G$ and $\dDl$, we have that 
$G\in \PsiHom^{-1}$ elliptic and $\dDl\in \PsiHom^{1}$ elliptic. Thus, $G$ and $\dDl$ are meromorphic families of Fredholm operators on the domain of $R_0(\lambda)$. Now, Lemma \ref{lem:LayerPotentialInverses} together with Lemma \ref{lem:LayerPotentialInverse2} imply that $G$ and $\dDl$ are invertible for $\Im \lambda>0$. Thus, the meromorphic Fredholm theorem implies that they have meromorphic inverses. This implies that $\mc{N}_i$, $\mc{N}_e$, $\mc{D}_{N_i}$, and $\mc{D}_{N_e}$ are meromorphic families of operators. Hence, we have 
\begin{prop}
\label{prop:layerInverse}
For $\lambda$ in the domain of meromorphy of $R_0(\lambda)$, 
\begin{gather*} G\mc{N}_i=\frac{1}{2}I+\Dl \quad \quad G\mc{N}_e=\frac{1}{2}I-\Dl \\
\mc{N}_iG=\frac{1}{2}I+\Dl^\sharp \quad \quad \mc{N}_eG=\frac{1}{2}I-\Dl^\sharp \\
.
\end{gather*} 
Moreover, $\partial_{\nu_i}\mc{D}$ has no jump across $\partial\Omega$ and 
$$\dDl =(\partial_{\nu_i}\mc{D})_{\pm}=\left(-\frac{1}{2}I+\Dl ^\sharp\right)\mc{N}_i=\left(-\frac{1}{2}I-\Dl ^\sharp\right)\mc{N}_e. $$
Furthermore
\begin{gather*} \dDl \mc{D}_{N_i}=-\frac{1}{2}I+\Dl ^\sharp,\quad\quad \dDl\mc{D}_{N_e}=-\frac{1}{2}I-\Dl ^\sharp\\
\mc{D}_{N_i}\dDl=-\frac{1}{2}I+\Dl ,\quad\quad \mc{D}_{N_e}\dDl=-\frac{1}{2}I-\Dl .
\end{gather*} 
\end{prop}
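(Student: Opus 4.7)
The plan is to extend the identities of Lemmas \ref{lem:LayerPotentialInverses} and \ref{lem:LayerPotentialInverse2} from $\{\Im\lambda>0\}$ to the full meromorphic domain of $R_0(\lambda)$ by combining the meromorphic Fredholm theorem with analytic continuation. The key structural fact is that $G(\lambda)$ and $\dDl(\lambda)$ are elliptic pseudodifferential operators of orders $-1$ and $+1$ respectively, so their inverses exist as meromorphic families on the entire domain of $R_0(\lambda)$, allowing the auxiliary operators $\mc{N}_i,\mc{N}_e,\mc{D}_{N_i},\mc{D}_{N_e}$ to be meromorphically continued as well.

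First, I would note that since $R_0(\lambda)$ is a meromorphic operator-valued function of $\lambda$, the defining boundary integrals yield meromorphic families $G(\lambda),\Dl(\lambda),\Dl^\sharp(\lambda),\dDl(\lambda)$ of bounded operators between the appropriate Sobolev spaces on $\pO$. Second, by the symbol computation in the proof of Lemma \ref{lem:layer} (cf.~Lemma \ref{lem:diagPiece}), the principal symbols are nonvanishing: $G(\lambda)\in\PsiHom^{-1}(\pO)$ and $\dDl(\lambda)\in\PsiHom^{1}(\pO)$ are both elliptic, and hence are meromorphic families of Fredholm operators of index zero on the Sobolev scale.

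To apply the meromorphic Fredholm theorem, I need invertibility at a single point of each connected component of the domain. For all $\lambda$ with $\Im\lambda>0$, the value $\lambda^2$ avoids the spectrum of the interior Dirichlet and Neumann Laplacians, so $\mc{N}_i,\mc{N}_e,\mc{D}_{N_i},\mc{D}_{N_e}$ exist as bounded operators by classical elliptic theory combined with the Sommerfeld radiation condition. Adding the pairs of identities in Lemmas \ref{lem:LayerPotentialInverses} and \ref{lem:LayerPotentialInverse2} yields the explicit two-sided inverses
\begin{align*}
G(\lambda)^{-1} &= \mc{N}_i(\lambda)+\mc{N}_e(\lambda),\\
\dDl(\lambda)^{-1} &= -\bigl(\mc{D}_{N_i}(\lambda)+\mc{D}_{N_e}(\lambda)\bigr),
\end{align*}
valid throughout $\{\Im\lambda>0\}$. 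The meromorphic Fredholm theorem then supplies meromorphic inverses $G(\lambda)^{-1}$ and $\dDl(\lambda)^{-1}$ on the entire domain of $R_0(\lambda)$.

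Finally, I would \emph{define} $\mc{N}_i,\mc{N}_e,\mc{D}_{N_i},\mc{D}_{N_e}$ as meromorphic families on the full domain via any of the equivalent formulas of Lemmas \ref{lem:LayerPotentialInverses}--\ref{lem:LayerPotentialInverse2}, for instance $\mc{N}_i(\lambda):=G(\lambda)^{-1}\bigl(\tfrac{1}{2}I+\Dl(\lambda)\bigr)$. Each identity asserted in the proposition is then an equation between meromorphic families of operators that holds on the nonempty open set $\{\Im\lambda>0\}$, and hence persists on the entire domain by analytic continuation. The main point to watch — and the only real subtlety — is the bookkeeping of poles: $\mc{N}_i$ acquires poles at interior Dirichlet eigenvalues and $\mc{D}_{N_i}$ at interior Neumann eigenvalues, but these line up with the poles of $G^{-1}$ and $\dDl^{-1}$ where they appear in the identities, and uniqueness of meromorphic extension guarantees the cancellation is consistent on both sides of every equation.
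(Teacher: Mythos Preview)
Your proposal is correct and follows essentially the same approach as the paper: use ellipticity of $G\in\PsiHom^{-1}$ and $\dDl\in\PsiHom^{1}$ to get Fredholmness, use the identities from Lemmas~\ref{lem:LayerPotentialInverses}--\ref{lem:LayerPotentialInverse2} to get invertibility for $\Im\lambda>0$, apply the meromorphic Fredholm theorem, and then analytically continue. Your added observation that summing the paired identities yields explicit two-sided inverses $G^{-1}=\mc{N}_i+\mc{N}_e$ and $\dDl^{-1}=-(\mc{D}_{N_i}+\mc{D}_{N_e})$ is a slightly cleaner way to verify invertibility than the paper's terse reference to the two lemmas, but the argument is the same.
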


\section{High energy estimates on the boundary layer operators}
Next we give semiclassical estimates for the single, double, and derivative double layer operators.  The estimates on single layer operators appear in \cite[Theorem 1.2]{GS}, and those for double layer operators appear in \cite{GalkSLO} but we repeat them below for the convenience of the reader. 

Let $\gamma:H^{1/2+\e}_{\loc} \to L^2(\Gamma)$ denote restriction to $\Gamma$ for a $C^{1,1}$ embedded hypersurface $\Gamma$ and $\gamma^*:L^2(\Gamma)\to H_{\comp}^{-1/2-\e}(\re^d)$ its dual. Then $\gamma^*$ is the inclusion map $f\mapsto f\delta_{\Gamma}$ where $\delta_\Gamma$ is $d-1$ dimensional Hausdorff measure on $\Gamma$. Then when $\Gamma=\partial\Omega$, $G$ can be written 
\begin{equation} \label{eqn:slo}G=\gamma R_0\gamma^*.\end{equation}
Because of this, we redefine the single layer operator to be given by \eqref{eqn:slo}

Similarly, if we assume that $\Gamma=\partial\Omega$ and $L$ is a vectorfield equal to $\partial_\nu$ on $\Gamma$, then 
\begin{gather}
\label{eqn:ddlo}\dDl(\lambda)=\gamma LR_0(\lambda) L^*\gamma^*.
\end{gather}
and we redefine the derivative double layer operator to be given by \eqref{eqn:ddlo}. Here we interpret $\gamma$ as a limit from either inside or outside $\Omega$ as in Lemma \ref{lem:LayerPotentialInverses}. Note that we cannot quite define $\Dl$ by
$$\gamma R_0(\lambda)L^*\gamma^*$$
since there is a jump across $\partial\Omega$. However, since 
$$\Dl=\pm \frac{1}{2}\Id+\gamma^{\pm}R_0L^*\gamma^*,$$
where $\gamma^+$ and $\gamma^-$ denote restrictions from the interior and exterior respectively,
this will not cause problems when obtaining bounds on $\Dl$ that have non-negative powers of $\la \lambda\ra$. 

If $d=1$ then $\delta_\Gamma$ is a finite sum of point measures, and from the formula $R_0(\lambda,x,y)=-(2i\lambda)^{-1}e^{i\lambda|x-y|}$ we see, using the notation of Theorem \ref{thm:optimal} below, that
\begin{equation}\label{eqn:optimal1d}
\begin{aligned} 
\|G(\lambda)\|_{L^2(\Gamma)\rightarrow L^2(\Gamma)}&\le C\,|\lambda|^{-1}\,e^{\LGamma(\Im \lambda)_-}\,,\\
\|\Dl(\lambda)\|_{L^2(\Gamma)\rightarrow L^2(\Gamma)}&\leq C e^{\LGamma(\Im \lambda)_-}\\
\|\dDl(\lambda)\|_{L^2(\Gamma)\rightarrow L^2(\Gamma)}&\leq C|\lambda|e^{\LGamma(\Im\lambda)_-}
\end{aligned}
\end{equation}
In higher dimensions, we establish the following theorem:

\begin{theorem}
\label{thm:optimal}
Let $\Gamma\Subset\re^d$ be a piecewise smooth, Lipschitz hypersurface. Then there exists $\lambda_0>0$ such that for $|\lambda|>\lambda_0$,
\begin{equation}
\label{eqn:optimalFlat}
\begin{aligned}
\|G(\lambda)\|_{L^2(\Gamma)\to L^2(\Gamma)} & \leq \;
C\,\la\lambda\ra^{-\frac 12}\,\log \la \lambda\ra \,e^{\LGamma(\Im \lambda)_-}\\
\|\Dl(\lambda)\|_{L^2(\Gamma)\to L^2(\Gamma)}&\leq C\,\la\lambda\ra^{\frac 14}\,\log \la \lambda\ra \,e^{\LGamma(\Im \lambda)_-}\\
\|\dDl(\lambda)\|_{H^1(\Gamma)\to L^2(\Gamma)}&\leq C\,\la\lambda\ra\,\log \la \lambda\ra \,e^{\LGamma(\Im \lambda)_-}
\end{aligned}
\end{equation}
where $\LGamma$ is the diameter of the set $\Gamma$, and we assume $-\pi\le \arg\lambda\le 2\pi$ if $d$ is even.

If $\Gamma$ can be written as a finite union of compact subsets of strictly convex $C^\infty$ hypersurfaces, then for some $C$ and all $|\lambda|>\lambda_0$ the following stronger estimates hold
\begin{equation}
\label{eqn:optimalConvex}
\begin{aligned}
\|G(\lambda)\|_{L^2(\Gamma)\to L^2(\Gamma)} & \leq \;
C\,\la\lambda\ra^{-\frac 23}\,\log \la \lambda\ra \,e^{\LGamma(\Im \lambda)_-}\\
\|\Dl(\lambda)\|_{L^2(\Gamma)\to L^2(\Gamma)}&\leq C\,\la\lambda\ra^{\frac 16}\,\log \la \lambda\ra \,e^{\LGamma(\Im \lambda)_-}
\end{aligned}
\end{equation}
\end{theorem}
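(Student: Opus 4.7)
The plan is to combine kernel estimates for the free resolvent $R_0(\lambda)$ with $L^2$ restriction estimates for quasimodes of $-\Delta$, applied via the identifications $G(\lambda)=\gamma R_0(\lambda)\gamma^*$ and $\dDl(\lambda)=\gamma L R_0(\lambda)L^*\gamma^*$, where $L$ is a vector field equal to $\partial_\nu$ on $\Gamma$. For $\Dl$ a similar identification holds modulo the jump term from Lemma \ref{lem:layer}, which contributes only a harmless $O(1)$ factor.

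First I would reduce to $\Im\lambda\geq 0$. Writing $R_0(\lambda)(x,y)=e^{i\lambda|x-y|}\,\tilde R(\lambda,|x-y|)$ with $\tilde R$ of polynomial growth in $\lambda$ and $|x-y|^{-1}$, the factor $e^{i\lambda|x-y|}$ is pointwise bounded by $e^{(\Im\lambda)_-|x-y|}\leq e^{\LGamma(\Im\lambda)_-}$ on $\Gamma\times\Gamma$, without destroying the oscillation carried by $\Re\lambda$. Pulling this exponential weight out of the operator reduces the estimates to $\Im\lambda\geq 0$, and by limiting absorption to the hard case of real $\lambda$ with $|\lambda|\gg 1$, where $R_0(\lambda)$ is unbounded on $L^2(\re^d)$.

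For real $\lambda$ I would split $R_0(\lambda)(x,y)$ into a near-diagonal piece on $\{|x-y|\leq|\lambda|^{-1}\}$, treated by Calder\'on--Zygmund methods and contributing $O(|\lambda|^{-1})$ to $\|G\|_{L^2\to L^2}$, $O(1)$ to $\|\Dl\|_{L^2\to L^2}$, and $O(|\lambda|)$ to $\|\dDl\|_{H^1\to L^2}$ (all within the claimed bounds), and a far-diagonal oscillatory piece. For the oscillatory piece, setting $h=|\lambda|^{-1}$, I would invoke quasimode restriction estimates: the Burq--G\'erard--Tzvetkov / Tataru bound
$$\|\gamma u\|_{L^2(\Gamma)}\leq C\, h^{-1/4}(\log h^{-1})^{1/2}\bigl(\|u\|_{L^2}+h^{-1}\|(-h^2\Delta-1)u\|_{L^2}\bigr)$$
for piecewise smooth Lipschitz $\Gamma$, sharpened by Hassell--Tacy to $h^{-1/6}$ when $\Gamma$ is a finite union of subsets of strictly convex smooth hypersurfaces. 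Expanding $R_0(\lambda)=\int(\mu^2-\lambda^2)^{-1}\,dE_{\mu^2}$ spectrally and applying $TT^*$ to the spectral clusters $\gamma\,\mathbf{1}_{[\mu,\mu+1]}(\sqrt{-\Delta})\gamma^*$ converts these restriction bounds into the advertised operator bounds on $G(\lambda)$. The estimates for $\Dl$ and $\dDl$ follow the same template but with restriction bounds for the normal derivative of quasimodes; the additional powers of $\la\lambda\ra$ they cost produce exactly the exponent shifts between $G$, $\Dl$, and $\dDl$ in the statement.

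The main obstacle is the origin of the $\log\la\lambda\ra$ loss: converting a restriction bound at spectral level $\mu\sim|\lambda|$ into a resolvent bound requires summing over roughly $\log\la\lambda\ra$ dyadic shells around $\mu=|\lambda|$, since $(\mu^2-\lambda^2)^{-1}$ is not absolutely integrable near the characteristic variety. This is exactly why the bounds obtained here are merely near-sharp; removing this log loss in the strictly convex case is the task taken up in the remainder of Chapter \ref{ch:layer}, via the Melrose--Taylor parametrix constructed in Appendix \ref{ch:semiclassicalDirichletParametrices} and the semiclassical intersecting Lagrangian distributions developed in Appendix \ref{ch:iLagrange}.
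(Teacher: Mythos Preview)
Your overall strategy---quasimode restriction bounds for $\gamma$ and $\gamma L$, fed into a spectral decomposition of $R_0(\lambda)$---is exactly what the paper does. But there are two genuine gaps.

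First, your reduction to $\Im\lambda\geq 0$ does not work. Writing $R_0(\lambda)(x,y)=e^{i\lambda|x-y|}\tilde R$ and bounding $|e^{i\lambda|x-y|}|\leq e^{\LGamma(\Im\lambda)_-}$ does not reduce to the real-$\lambda$ operator: after extracting the constant $e^{\LGamma(\Im\lambda)_-}$ you are left with a multiplicative factor $e^{-\Im\lambda(|x-y|-\LGamma)}$ whose derivatives are of size $|\Im\lambda|$, which can be arbitrarily large and therefore cannot simply be absorbed into the amplitude without spoiling the oscillatory-integral gains. The paper instead proves the sharp bounds for $\Im\lambda\geq 0$ directly (via Plancherel on $\re^d$ and Fourier restriction to spheres, i.e.\ the dual form of the quasimode estimates), establishes a crude polynomial bound $C\langle\lambda\rangle^N e^{\LGamma(\Im\lambda)_-}$ for $\Im\lambda<0$, and then interpolates by Phragm\'en--Lindel\"of (Lemma~\ref{lem:phragmen}).

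Second, the restriction estimate you quote is wrong: the Burq--G\'erard--Tzvetkov/Tataru bound has no $(\log h^{-1})^{1/2}$ factor. The correct statement is $\|\gamma u\|_{L^2(\Gamma)}\leq Ch^{-1/4}$ (respectively $h^{-1/6}$ in the curved case) for $O(h)$-quasimodes; see Lemmas~\ref{lem:quasimodeEstimates} and~\ref{lem:quasimodeNormalEstimates}. Your final paragraph correctly locates the $\log$ loss in the integration of $(|\xi|^2-\lambda^2)^{-1}$ over the shell $\bigl||\xi|-|\lambda|\bigr|\geq 1$, which is precisely where it appears in the paper (equation~\eqref{eqn:logLoss}); you should remove the spurious log from the restriction estimate itself.
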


Here we set $\la\lambda\ra=(2+|\lambda|^2)^{\frac 12}$, and $(\Im\lambda)_-=\max(0,-\Im\lambda)\,.$
The powers on $\la \lambda\ra$ in the estimates \eqref{eqn:optimalFlat} and \eqref{eqn:optimalConvex}, respectively, are in general optimal (see \cite[Appendix A]{GalkSLO} for the sharpness of the estimates for $G$ and $\Dl$). The sharpness of the exponent for $\dDl$ follows from an identical argument to that for $G$ i.e. that the corresponding estimate for the restriction of eigenfunctions is optimal.

\subsection{Proof of the Theorem}
We start by proving a conditional result which assumes a certain estimate on restriction of the Fourier transform of surface measures to the sphere of radius $r$.
\begin{lemma}
\label{lem:Q}
Suppose that for $\Gamma\Subset \re^d$ any compact embedded $C^\infty$ hypersurface, and some $\alpha\,,\,\beta>0$, 
\begin{align}
\label{eqn:fourierRestrictionEstimate1}
\int |\widehat{L^*f \delta_\Gamma}|^2(\xi)\delta(|\xi|-r)d\xi&\leq C_{\Gamma}\la r\ra^{2\alpha}\|f\|^2_{L^2(\Gamma)},\\
\label{eqn:fourierRestrictionEstimate2}
\int |\widehat{f \delta_\Gamma}|^2(\xi)\delta(|\xi|-r)d\xi&\leq C_{\Gamma}\la r\ra^{2\beta}\|f\|^2_{L^2(\Gamma)}.
\end{align}
with $2\beta<1$ and $\alpha\geq \beta$.
Let $\Gamma_1,\,\Gamma_2\Subset \re^d$ be compact embedded $C^\infty$ hypersurfaces.
Let $L_i$ be a vector field with $L_i=\partial_{\nu}$ on $\Gamma_i$ for some choice of normal $\nu$ on $\Gamma_1$ and $\psi\in \Cc(\re)$ with $\psi\equiv 1$ in neighborhood of $0$. Then define for $f\in L^2(\Gamma_1)$, $g\in L^2(\Gamma_2)$
\begin{gather*}
\Qs(f,g):=\int R_0(\lambda)(\psi(\lambda^{-1}D)f\delta_{\Gamma_1})\bar{g}\delta_{\Gamma_2}\,,\\
\Qa(f,g):=\int R_0(\lambda)(\psi(\lambda^{-1}D)L_1^*(f\delta_{\Gamma_1}))\bar{g}\delta_{\Gamma_2}\\
\Qdd(f,g):=\int R_0(\lambda)(\psi(\lambda^{-1}D)L_1^*(f\delta_{\Gamma_1})\overline{L_2^*(g\delta_{\Gamma_2})}
\end{gather*}
Then for $\Im\lambda>0$,
\begin{align}
\label{eqn:lowFreqSingle}
|\Qs(f,g)|&\leq C_{\Gamma_1,\Gamma_2}\la \lambda \ra ^{2\beta-1}\log \la \lambda\ra\|f\|_{L^2(\Gamma_1)}\|g\|_{L^2(\Gamma_2)}\\
\label{eqn:lowFreqDouble}
|\Qa(f,g)|&\leq C_{\Gamma_1,\Gamma_2,\psi}\la \lambda \ra ^{\alpha +\beta-1}\log \la \lambda\ra\|f\|_{L^2(\Gamma_1)}\|g\|_{L^2(\Gamma_2)}\\
\label{eqn:lowFreqDerDouble}
|\Qdd(f,g)|&\leq C_{\Gamma_1,\Gamma_2,\psi}\la \lambda \ra ^{2\alpha-1}\log \la \lambda\ra\|f\|_{L^2(\Gamma_1)}\|g\|_{L^2(\Gamma_2)}.
\end{align}
\end{lemma}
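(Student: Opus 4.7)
The strategy is to Fourier-transform everything: using $R_0(\lambda)=(|\xi|^2-\lambda^2)^{-1}$ on the Fourier side and Parseval's formula, one rewrites
$$
\Qs(f,g)=(2\pi)^{-d}\!\int_{\re^d}\!\frac{\psi(\xi/\lambda)\,\widehat{f\delta_{\Gamma_1}}(\xi)\,\overline{\widehat{g\delta_{\Gamma_2}}(\xi)}}{|\xi|^2-\lambda^2}\,d\xi,
$$
and analogously for $\Qa$ and $\Qdd$ with one or two additional factors of $L_j^*$ inside the Fourier transforms.  Passing to polar coordinates $\xi=r\omega$ separates the radial variable from the angular one, and the inner integral over $\{|\xi|=r\}$ is an $L^2$ inner product on the sphere.

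Applying Cauchy--Schwarz on each sphere and invoking the restriction hypotheses \eqref{eqn:fourierRestrictionEstimate1}--\eqref{eqn:fourierRestrictionEstimate2} disposes of the angular integral and produces an extra factor of $\la r\ra^{2\beta}$ in the $\Qs$ case, $\la r\ra^{\alpha+\beta}$ in the $\Qa$ case, and $\la r\ra^{2\alpha}$ in the $\Qdd$ case, each multiplied by $\|f\|_{L^2(\Gamma_1)}\|g\|_{L^2(\Gamma_2)}$.  The whole lemma is thereby reduced to the uniform one-dimensional estimate
$$
J_m(\lambda):=\int_0^\infty \frac{|\psi(r/\lambda)|\,\la r\ra^{2m}}{|r^2-\lambda^2|}\,dr\;\lesssim\;\la\lambda\ra^{2m-1}\log\la\lambda\ra,
$$
to be established for $m\in\{\beta,(\alpha+\beta)/2,\alpha\}$.

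To prove the $J_m$ bound I would split the range of $r$ into the three natural regions.  On the two far regions $r\in [0,|\lambda|/2]$ and $r\in[2|\lambda|,\infty)\cap\supp\psi(\cdot/\lambda)$, one has $|r^2-\lambda^2|\gtrsim \la\lambda\ra^2$ and $|r^2-\lambda^2|\gtrsim r^2$ respectively, and a direct integration in $r$ gives a bound of size $\la\lambda\ra^{2m-1}$, with a harmless logarithmic loss only in the borderline case $2m=1$.  The near-diagonal region $r\in[|\lambda|/2,2|\lambda|]$ is where the logarithm genuinely arises.  Substituting $r=|\lambda|+s$ and using $|r+\lambda|\sim\la\lambda\ra$ together with the lower bound $|r-\lambda|\gtrsim\max(|s|,\Im\lambda)$ supplied by the hypothesis $\Im\lambda>0$, one bounds this piece by
$$
\la\lambda\ra^{2m}\cdot\la\lambda\ra^{-1}\!\int_{-|\lambda|/2}^{|\lambda|/2}\!\frac{ds}{\max(|s|,\Im\lambda)}\;\lesssim\;\la\lambda\ra^{2m-1}\log\la\lambda\ra.
$$
The main obstacle is precisely this near-diagonal contribution: the restriction estimate cannot detect the near-pole of the resolvent at $r=|\lambda|$, and one needs the positivity of $\Im\lambda$ to cap the logarithmic singularity at the level claimed in the statement.
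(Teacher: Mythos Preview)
Your reduction to the one-dimensional radial integral is correct up to the near-diagonal region, but the final step there contains a genuine gap. The constants $C_{\Gamma_1,\Gamma_2}$ in the statement are supposed to be independent of $\lambda$, so the bound must hold \emph{uniformly} for $\Im\lambda>0$. However your claimed inequality
\[
\int_{-|\lambda|/2}^{|\lambda|/2}\frac{ds}{\max(|s|,\Im\lambda)}\;\lesssim\;\log\langle\lambda\rangle
\]
is false in this uniform sense: the left-hand side is comparable to $\log\bigl(|\lambda|/\Im\lambda\bigr)$, which blows up as $\Im\lambda\to 0^{+}$ with $|\lambda|$ fixed. Having applied Cauchy--Schwarz on each sphere first, you are left with $\int |r^2-\lambda^2|^{-1}\,dr$ near $r=|\lambda|$, and there is no way to bound that uniformly---the pointwise information in the restriction hypothesis is already spent.

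The paper avoids this by \emph{not} taking absolute values immediately in the near region. Instead it keeps the bilinear integral
$\int_{||\xi|-|\lambda||\le 1}(|\xi|^2-\lambda^2)^{-1}F(\xi)\overline{G(\xi)}\,d\xi$, writes
\[
\frac{1}{|\xi|^2-\lambda^2}=\frac{1}{|\xi|+\lambda}\,\frac{\xi}{|\xi|}\cdot\nabla_\xi\log(|\xi|-\lambda),
\]
and integrates by parts. The point is that $\log(|\xi|-\lambda)$ has uniformly bounded $L^1$ norm on $\{\,||\xi|-|\lambda||\le 1\,\}$ as $\Im\lambda\to 0^{+}$, whereas $(|\xi|-\lambda)^{-1}$ does not. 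The price is that one needs control on $\nabla_\xi F$ and $\nabla_\xi G$ on spheres, which the paper obtains from the compactness of $\Gamma$ (since $\nabla_\xi\widehat{f\delta_\Gamma}=\widehat{xf\delta_\Gamma}$ and $xf\in L^2(\Gamma)$). This integration-by-parts step is the missing idea in your argument; with it the near-diagonal piece contributes $|\lambda|^{2m-1}$ with no logarithm, and the $\log\langle\lambda\rangle$ arises solely from the region $1\le ||\xi|-|\lambda||\le M|\lambda|$, exactly as you computed.
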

\begin{proof}
We follow \cite{GS} \cite{GalkSLO} to prove the lemma. First, observe that due to the compact support of $f\delta_{\Gamma_i}$, \eqref{eqn:fourierRestrictionEstimate1} and \eqref{eqn:fourierRestrictionEstimate2}  imply that for $\Gamma\Subset \re^d$,
\begin{align}
\label{eqn:fourierRestrictionEstimate3}
\int\left|\nabla_{\xi}\,\widehat{L^* f\delta_\Gamma}(\xi)\right|^2\delta(|\xi|-r)&\leq C\,\la r\ra^{2\alpha}\|f\|_{L^2(\Gamma)}^2\,,\\
\label{eqn:fourierRestrictionEstimate4}
\int\left|\nabla_{\xi}\,\widehat{f\delta_\Gamma}(\xi)\right|^2\delta(|\xi|-r)&\leq C\,\la r\ra^{2\beta}\|f\|_{L^2(\Gamma)}^2\,.
\end{align}
Indeed, $\nabla_\xi \widehat{f\delta_\Gamma}=\widehat{xf\delta_{\Gamma}}$
and since $\Gamma$ is compact, 
$$\|xf\|_{L^2(\Gamma)}\leq C\|f\|_{L^2(\Gamma)}.$$
Also, $\nabla_ \xi\widehat{L^*(f\delta_\Gamma)}=\widehat{xL^*(f\delta_{\Gamma})}.$
Then
$$xL^*(f\delta_\Gamma)=L^*(xf\delta_\Gamma)+[x,L^*]f\delta_\Gamma$$
and $[x,L^*]\in C^\infty$. Therefore, using compactness of $\Gamma$,
$$\|xf\|_{L^2(\Gamma)}+\|[x,L^*]f\|_{L^2(\Gamma)}\leq C\|f\|_{L^2(\Gamma)}.$$

Now, $g\delta_{\Gamma_2}\in H^{-\frac 12-\e}(\re^d)$, $L_2^*(g\delta_{\Gamma_2})\in H^{-3/2-\e}(\re^d)$ and 
\begin{equation}
\label{eqn:smoothLeft}
\begin{gathered}R_0(\lambda)(\psi(\lambda^{-1}|D|)L^*(f\delta_{\Gamma_1}))\in C^\infty(\re^d),\\ R_0(\lambda)(\psi(\lambda^{-1}|D|))f\delta_{\Gamma_1})\in C^\infty(\re^d).\end{gathered}
\end{equation}

By Plancherel's theorem,
\begin{gather*}
 \Qs(f,g)=\int \psi(\lambda^{-1}|\xi|)\frac{\widehat{f\delta_{\Gamma_1}}(\xi)\overline{\widehat{g\delta_{\Gamma_2}}}(\xi)}{|\xi|^2-\lambda^2}\\
 \Qa(f,g)=\int \psi(\lambda^{-1}|\xi|)\frac{\widehat{L^* f\delta_{\Gamma_1}}(\xi)\,\overline{\widehat{g\delta_{\Gamma_2}}(\xi)}}{|\xi|^2-\lambda^2},\\
\Qdd(f,g)=\int \psi(\lambda^{-1}|\xi|)\frac{\widehat{L_1^*f\delta_{\Gamma_1}}(\xi)\overline{\widehat{L_2^*g\delta_{\Gamma_2}}}(\xi)}{|\xi|^2-\lambda^2}
\end{gather*}


\noindent Thus, to prove the lemma, we only need estimate 
\begin{equation}
\label{eqn:restrictedDualPlancherel}
\int \psi(\lambda^{-1}|\xi|)\frac{F(\xi)\,G(\xi)}{|\xi|^2-\lambda^2}
\end{equation}
where by \eqref{eqn:fourierRestrictionEstimate1}, \eqref{eqn:fourierRestrictionEstimate2}, \eqref{eqn:fourierRestrictionEstimate3}, and \eqref{eqn:fourierRestrictionEstimate4}
\begin{gather*} \|F\|_{L^2(S_r^{d-1})}+\|\nabla_{\xi}F\|_{L^2(S_r^{d-1})}\leq C\la r\ra^{\delta_1}\|f\|_{L^2(\Gamma)},\\
\|G\|_{L^2(S_r^{d-1})}+\|\nabla_{\xi}G\|_{L^2(S_r^{d-1})}\leq C\la r\ra ^{\delta_2}\|g\|_{L^2(\Gamma)}.\end{gather*}

Consider first the integral in \eqref{eqn:restrictedDualPlancherel} over $\bigl||\xi|-|\lambda|\bigr|\ge 1$. Since $\bigl||\xi|^2-\lambda^2\bigr|\ge \bigl||\xi|^2-|\lambda|^2\bigr|$, by the Schwartz inequality, \eqref{eqn:fourierRestrictionEstimate1}, and \eqref{eqn:fourierRestrictionEstimate2}
this piece of the integral is bounded by
\begin{align}
\int_{\left||\xi|-|\lambda|\right|\geq 1} \left|\psi(\lambda^{-1}|\xi|)\frac{F(\xi)\,G(\xi)}{|\xi|^2-\lambda^2}\right|\!\!\!\!\!\!\!\!\!\!\!\!\!\!\!\!\!\!\!\!\!\!\!\!\!\!\!\!\!\!\!\!\!\!\!\!\!\!\!\!\!\!\!\!\!\!\!\!\!\!\!\!\!\!\!\!\nonumber\\
&\leq \int_{M\lambda \geq \left|r-|\lambda|\right|\geq 1}\frac{1}{r^2-|\lambda|^2}\int_{S_r^{d-1}}F(r\theta)\,G(r\theta)dS(\theta) dr\nonumber\\
&\leq C\|f\|_{L^2(\Gamma)}\|g\|_{L^2(\Gamma)}
\int_{M|\lambda|\ge |r-|\lambda||\ge 1}
\la r\ra^{\delta_1+\delta_1}\,\bigl|\,r^2-|\lambda|^2\,\bigr|^{-1}dr\nonumber\\
&\leq C\|f\|_{L^2(\Gamma)}\|g\|_{L^2(\Gamma)}\lambda^{\delta_1+\delta_2-1}\int_{M|\lambda|\geq \left|r-|\lambda|\right|\geq 1}\left|r-|\lambda|\right|^{-1}dr\nonumber\\
&\leq C\,|\lambda|^{\delta_1+\delta_2-1}\log |\lambda|\,\|f\|_{L^2(\Gamma)}\|g\|_{L^2(\Gamma)}.\label{eqn:logLoss}
\end{align}

\begin{remark} The estimate \eqref{eqn:logLoss} is the only term where the $\log$ appears. 
\end{remark}

Next, if $\Im\lambda\ge 1$, then $\bigl||\xi|^2-\lambda^2\bigr|\ge |\lambda|$, and by 
\eqref{eqn:fourierRestrictionEstimate1}, \eqref{eqn:fourierRestrictionEstimate2} 
$$
\Biggl|\;\int_{||\xi|-|\lambda||\le 1}
\frac{F(\xi)\,G(\xi)}{|\xi|^2-\lambda^2}\,d\xi\;\Biggr|\le C\,|\lambda|^{\delta_1+\delta_2-1}\,\|f\|_{L^2(\Gamma)}\|g\|_{L^2(\Gamma)}.
$$
Thus, we may restrict our attention to $0\le\Im \lambda\leq 1$ and $\bigl||\xi|-|\lambda|\bigr|\le 1$. 

We consider $\Re\lambda\ge 0$, the other case following similarly, and write
$$
\frac{1}{|\xi|^2-\lambda^2}=\frac{1}{|\xi|+\lambda}\;\frac{\xi}{|\xi|}\cdot\nabla_\xi\log(|\xi|-\lambda)\,,
$$
where the logarithm is well defined since $\Im(|\xi|-\lambda)<0$. Let $\chi(r)=1$ for $|r|\le 1$ and vanish for $|r|\ge \frac 32$. We then use integration by parts, together with
\eqref{eqn:fourierRestrictionEstimate1}, \eqref{eqn:fourierRestrictionEstimate2}, \eqref{eqn:fourierRestrictionEstimate3}, and \eqref{eqn:fourierRestrictionEstimate4} to bound
\begin{multline*}
\Biggl|\;\int\chi(|\xi|-|\lambda|)\,\frac{1}{|\xi|+\lambda}\,
F(\xi)\,G(\xi)\,\;\frac{\xi}{|\xi|}\cdot\nabla_\xi\log(|\xi|-\lambda)\,d\xi\;\Biggr|\\
\le C\,|\lambda|^{\delta_1+\delta_2-1}\,\|f\|_{L^2(\Gamma)}\|g\|_{L^2(\Gamma)}.
\end{multline*}
Now, taking $\delta_1=\delta_2=\alpha$ gives \eqref{eqn:lowFreqSingle}, and taking $\delta_1=\alpha$ and $\delta_2=\beta$ gives \eqref{eqn:lowFreqDouble} and taking $\delta_1=\delta_2=\beta$ gives \eqref{eqn:lowFreqDerDouble}.
\end{proof}

\begin{remark}
Note that the estimate on $\Qs$ holds uniformly in $\psi$ and so putting in the cutoff $\psi$ is unnecessary. However, so that the presentation of all of the estimates are similar, we include the cutoff here.
\end{remark}

We now prove the estimates \eqref{eqn:fourierRestrictionEstimate1} and \eqref{eqn:fourierRestrictionEstimate2}.  To do so, we will need the following restriction estimates for quasimodes. 

\begin{lemma}
\label{lem:quasimodeEstimates}
Let $U\Subset \re^d$ be open with $\Gamma \Subset U$ a $C^\infty$ embedded hypersurface. Suppose that $\|u\|_{L^2(U)}=1$ and 
$$(-h^2\Delta -1)u=\O{L^2}(h).$$ 
Then for $0<h<h_0$,
\begin{equation} \label{restrict}
\|u\|_{L^2(\Gamma)}\leq \begin{cases} Ch^{-1/4}\\
Ch^{-1/6}&\Gamma\text{ curved}\end{cases}.
\end{equation}
\end{lemma}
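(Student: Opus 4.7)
The strategy I would take is to reduce the quasimode restriction bound to an estimate for the restriction of a semiclassical spectral cluster projector, and then bound the latter by the $TT^*$ method followed by stationary phase, essentially a semiclassical rephrasing of the Burq--G\'erard--Tzvetkov restriction theorem. First I would choose $\psi\in\Cc(\re)$ with $\psi\equiv 1$ near $1$ and $\chi\in\Cc(U)$ equal to $1$ in a neighborhood of $\Gamma$. Since $-h^2\Delta-1$ is elliptic on the complement of $\{|\xi|^2=1\}$, Lemma \ref{lem:microlocalElliptic} combined with iterated application of the quasimode hypothesis $(-h^2\Delta-1)u=\O{L^2}(h)$ gives $\|(I-\psi(-h^2\Delta))u\|_{H^s_h(\re^d)}=\O{}(h^\infty)$ for every $s$, and by the ordinary trace theorem this piece contributes only $\O{}(h^\infty)$ to $\|u\|_{L^2(\Gamma)}$. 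It therefore suffices to bound $\|T\|_{L^2(\re^d)\to L^2(\Gamma)}$, where $T:=R\psi(-h^2\Delta)\chi$ and $R$ is restriction to $\Gamma$.

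By duality $\|T\|^2=\|TT^*\|_{L^2(\Gamma)\to L^2(\Gamma)}$, and up to an $\O{}(h^\infty)$ smoothing error from semiclassical composition with $\chi^2$, the kernel of $TT^*$ on $\Gamma\times\Gamma$ is
\[
K_h(x,y)=(2\pi h)^{-d}\int_{\re^d}e^{i(x-y)\cdot\xi/h}\,\psi(|\xi|^2)^2\,d\xi.
\]
Passing to polar coordinates $\xi=r\omega$ and applying stationary phase in $r$ realizes $K_h$, for $|x-y|\ge Ch$, as a sum of two oscillatory integrals on the unit sphere with phases $\pm|x-y|/h$ and amplitudes bounded by $h^{-(d-1)/2}|x-y|^{-(d-1)/2}$. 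The residual piece with $|x-y|\le Ch$ is pointwise of size $\O{}(h^{-d})$ but supported in a diagonal band of $L^2(\Gamma)$-measure $\O{}(h^{d-1})$, contributing only $\O{}(h^{-1/2})$ to $\|T\|$ by Schur, which is compatible with both claims.

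The remaining task is to estimate on $L^2(\Gamma)$ the oscillatory integral operator
\[
A_h f(x)=h^{-(d-1)/2}\int_\Gamma e^{i|x-y|/h}\,a(x,y)\,f(y)\,dS(y),
\]
where $a$ is smooth off the diagonal with a controlled singularity on it. Parametrizing $\Gamma$ locally by $s\in\re^{d-1}$, the phase $|x(s)-x(t)|$ has a mixed Hessian that is degenerate along the diagonal, with the degeneracy order dictated by the second fundamental form of $\Gamma$. A dyadic decomposition $|x-y|\sim 2^{-k}$ together with H\"ormander's $L^2$ theorem for oscillatory integral operators yields in the general case $\|A_h\|\le Ch^{-1/2}$, hence $\|T\|\le Ch^{-1/4}$; when $\Gamma$ is curved, a Carleson--Sj\"olin/Airy-type refinement using the nondegeneracy of the second fundamental form sharpens this to $\|A_h\|\le Ch^{-1/3}$, and hence $\|T\|\le Ch^{-1/6}$.

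The principal technical obstacle will be organizing the dyadic summation uniformly: on the small scales $|x-y|\le Ch^{1/2}$ (respectively $Ch^{2/3}$ in the curved case) the phase has not yet oscillated enough to produce decay, so one must combine trivial $L^2$ estimates on these scales with oscillatory $L^2$ estimates on larger scales, and it is precisely the curvature of $\Gamma$ that allows the sum of these contributions to close at the claimed exponents rather than at the weaker Schur bound.
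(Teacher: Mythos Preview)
The paper does not prove this lemma at all: immediately after the statement it simply cites it as a known result, attributing the $L^2$ restriction bounds to Tataru and Greenleaf--Seeger, the $L^p$ generalizations to Burq--G\'erard--Tzvetkov, and the semiclassical versions to Tacy and Hassell--Tacy. Your proposal supplies an actual proof sketch, and it is essentially the standard argument underlying those references: semiclassical elliptic regularity to localize to $\{|\xi|\sim 1\}$, then $TT^*$ on the restriction operator, then stationary phase on the resulting oscillatory kernel on $\Gamma\times\Gamma$, with the curved/flat dichotomy coming from the degeneracy order of the mixed Hessian of $|x(s)-x(t)|$ along the diagonal. So your approach is correct and is in fact the content of the cited papers; the paper itself treats the lemma as a black box.
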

In the setting of smooth Riemannian manifolds with restriction to a submanifold, these estimates along with their $L^p$ generalizations appear in the work of Tataru \cite{Tat} who also notes that the $L^2$ bounds are a corollary of an estimate of Greenleaf and Seeger \cite{GreenSeeg}. Such $L^p$ generalizations were also studied by Burq, G\'erard and Tzvetkov in \cite{BGT}. Semiclassical analogues were proved by Tacy \cite{T} and Hassell-Tacy \cite{HTacy}. 

We also need the corresponding restriction estimates for normal derivatives.
\begin{lemma}
\label{lem:quasimodeNormalEstimates}
Let $U\Subset \re^d$ be open with $\Gamma \Subset U$ a $C^\infty$ embedded hypersurface. Suppose that $\|u\|_{L^2(U)}=1$ and 
$$(-h^2\Delta-1)u=\O{L^2}(h).$$
Then for $0<h<1$
\begin{equation} \label{eqn:restricEigNormal} \|\partial_\nu u\|_{L^2(\Gamma))}\leq Ch^{-1}
\end{equation}
where $\partial_\nu$ is a choice of normal derivative to $\Gamma$. 
\end{lemma}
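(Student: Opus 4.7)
The bound is obtained by combining a semiclassical Rellich-type identity on one side of $\Gamma$ with a sharp tangential Bernstein inequality coming from the energy-shell concentration of the quasimode.

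Work in local coordinates near a point of $\Gamma$ in which $\Gamma=\{x_{1}=0\}$ and $\partial_{\nu}=\partial_{x_{1}}$. Let $V$ be a bounded neighborhood of the point, set $V_{+}=V\cap\{x_{1}>0\}$, and let $\chi\in C_{c}^{\infty}(V)$ equal one in a smaller neighborhood. Write $f=(-h^{2}\Delta-1)u$, so that $\|f\|_{L^{2}}=O(h)$. The first step is the Rellich identity. Cauchy--Schwarz together with the standard energy estimate $\|\nabla u\|_{L^{2}(V)}=O(h^{-1})$ (obtained by testing the quasimode equation against $\chi u$) shows that $2\operatorname{Re}\langle f,\chi\,\partial_{x_{1}}u\rangle_{V_{+}}=O(1)$. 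Computing the same pairing via Green's identity, using that the outward normal to $V_{+}$ at $\Gamma$ is $-e_{1}$, and then integrating by parts in $x_{1}$ to reduce the bulk contributions to $\int\chi'(|\partial_{x_{1}}u|^{2}-|\nabla_{x'}u|^{2})$ and $\int\chi'|u|^{2}$, which are bounded by $O(1)$, one obtains after multiplying through by $h^{2}$ the identity
\[
h^{2}\|\partial_{\nu}u\|^{2}_{L^{2}(\Gamma)}=h^{2}\|\nabla_{x'}u\|^{2}_{L^{2}(\Gamma)}-\|u\|^{2}_{L^{2}(\Gamma)}+O(1).
\]

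The second step is a tangential Bernstein inequality for $u|_{\Gamma}$. Taking the semiclassical Fourier transform in the equation $(-h^{2}\Delta-1)u=f$ gives $(|h\xi|^{2}-1)\widehat{u}=-\widehat{f}$, and Plancherel yields $\int(|h\xi|^{2}-1)^{2}|\widehat{u}|^{2}\,d\xi=O(h^{2})$. Thus the semiclassical Fourier transform of $u$ is concentrated on the energy shell $|h\xi|=1$ with $L^{2}$-tail of width $O(\sqrt{h})$; in particular its tangential components satisfy $|h\xi'|\le1+O(\sqrt{h})$ in the same $L^{2}$ sense. Since the tangential Fourier transform of $u|_{\Gamma}$ is an average of $\widehat{u}$ along the normal fiber, its support inherits this constraint, and Plancherel on $\Gamma$ then gives
\[
h^{2}\|\nabla_{x'}u\|^{2}_{L^{2}(\Gamma)}\le\|u\|^{2}_{L^{2}(\Gamma)}+O(1),
\]
where the $O(1)$ error absorbs the $O(\sqrt{h})$ frequency spread multiplied by the $O(h^{-1/2})$ bound on $\|u\|^{2}_{L^{2}(\Gamma)}$ coming from Lemma~\ref{lem:quasimodeEstimates}.

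Substituting the Bernstein estimate into the Rellich identity, the $\|u\|^{2}_{L^{2}(\Gamma)}$ terms cancel and give $h^{2}\|\partial_{\nu}u\|^{2}_{L^{2}(\Gamma)}=O(1)$, hence the claimed bound. A finite partition of unity on $\Gamma$ reduces the global estimate to this local one.

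The main obstacle is justifying the tangential Bernstein inequality with the sharp leading constant $1$: it is precisely this sharp constant that makes the cancellation in the final step produce $h^{-1}$ rather than the weaker $h^{-5/4}$ one would obtain by naively treating $h\nabla_{x'}u$ as a generic $O(1)$-normalized quasimode and applying Lemma~\ref{lem:quasimodeEstimates} to it directly. Making the frequency-localization rigorous requires cutting $u$ off to its microlocal support on the energy shell by an $\oph(\psi(|\xi|))$ and controlling the $L^{2}$ tail in $(|h\xi|^{2}-1)^{2}|\widehat u|^{2}$ when restricting to $\Gamma$.
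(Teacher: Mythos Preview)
The paper does not prove this lemma; it cites Tataru, Christianson--Hassell--Toth, and Tacy for the result. So there is no in-paper proof to compare against, but your Rellich-identity strategy is indeed one of the routes used in that literature.

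Your step 1 (the Rellich identity) is correct; the curvature of $\Gamma$ only produces lower-order metric terms that sit in the $O(1)$ remainder.

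Step 2 is where the argument is incomplete, and the gap is real rather than cosmetic. The sentence ``the tangential Fourier transform of $u|_\Gamma$ is an average of $\hat u$ along the normal fiber, so its support inherits this constraint'' does not yield the sharp Bernstein inequality: integration in $\xi_1$ is an $L^1$-type operation, while the quasimode condition only controls the off-shell tail of $\hat u$ in $L^2$. If instead one microlocalizes $u$ to an $h^\delta$-neighbourhood of the energy shell (as you suggest at the end), the Bernstein error comes out as $O(h^\delta)\|u|_\Gamma\|^2=O(h^{\delta-1/2})$, which for $\delta<1/2$ gives only $\|\partial_\nu u\|_\Gamma=O(h^{-1-\epsilon})$ for every $\epsilon>0$, not the sharp $O(h^{-1})$. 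Also note that your Fourier-transform computation implicitly assumes $\Gamma$ is a hyperplane; for curved $\Gamma$ the identity $(|h\xi|^2-1)\widehat u=-\widehat f$ fails in the flattening coordinates.

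A clean way to close the gap, still in the spirit of your argument: in flat local coordinates the tangential multiplier $A=\oph\!\bigl((|\xi'|^2-1)_+^{1/2}\bigr)$ commutes with both restriction to $\Gamma$ and with $-h^2\Delta-1$. From the pointwise inequality $(|\xi'|^2-1)_+\le(|\xi|^2-1)_+$ and Cauchy--Schwarz one gets, after a harmless cutoff to $|\xi|\le 2$,
\[
\|Au\|_{H^1_h}^2\le C\int(|h\xi|^2-1)_+|\hat u|^2\le C\|f\|_{L^2}\|u\|_{L^2}=O(h),
\]
and then the crude semiclassical trace bound $\|\,\cdot\,\|_{L^2(\Gamma)}\le Ch^{-1/2}\|\,\cdot\,\|_{H^1_h}$ gives $\|A(u|_\Gamma)\|_{L^2(\Gamma)}^2=O(1)$, which is exactly the Bernstein inequality with $O(1)$ error. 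For curved $\Gamma$ one replaces the Fourier multipliers by pseudodifferential operators; the resulting commutators are lower order and contribute further $O(1)$ errors.
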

Estimates of this type first appear in the work of Tataru \cite{Tat} in the form of regularity estimates for restrictions of solutions to hyperbolic equations. Semiclassical analogs of this estimate were proved in  Christianson--Hassell--Toth \cite{christianson2014exterior} and Tacy \cite{T14}.

\begin{lemma}
\label{lem:FourierRestrict}
Let $\Gamma\Subset \re^d$ be a compact $C^{\infty}$ embedded hypersurface. Then estimate \eqref{eqn:fourierRestrictionEstimate2} holds with $\beta=1/4$ and for $L=\partial_{\nu}$ on $\Gamma$, estimate \eqref{eqn:fourierRestrictionEstimate1} holds with $\alpha=1$. Moreover, if $\Gamma$ is curved then \eqref{eqn:fourierRestrictionEstimate2} holds with $\beta =1/6.$  \end{lemma}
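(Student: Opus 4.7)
The plan is a duality argument that converts the Fourier restriction bounds \eqref{eqn:fourierRestrictionEstimate1}, \eqref{eqn:fourierRestrictionEstimate2} into the quasimode restriction bounds of Lemmas \ref{lem:quasimodeEstimates} and \ref{lem:quasimodeNormalEstimates}. The case $r\leq 1$ will follow immediately from $|\widehat{f\delta_\Gamma}(\xi)|\leq \|f\|_{L^1(\Gamma)}\leq C_\Gamma \|f\|_{L^2(\Gamma)}$ and $|\widehat{L^*(f\delta_\Gamma)}(\xi)|\leq |\xi|\,\|f\|_{L^1(\Gamma)}$, so I focus on $r\geq 1$ and set $h=r^{-1}$. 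Introducing the operators $T,T_1:L^2(\Gamma)\to L^2(S^{d-1}_r,d\sigma_r)$ defined by $Tf(\xi)=\widehat{f\delta_\Gamma}(\xi)$ and $T_1 f(\xi)=\widehat{L^*(f\delta_\Gamma)}(\xi)$, a direct integration by parts (using that $L=\partial_\nu$ on $\Gamma$) identifies the adjoints as $T^*g=u|_\Gamma$ and $T_1^*g=\partial_\nu u|_\Gamma$, where
$$u(x)=\int_{S^{d-1}_r} g(\xi)\,e^{ix\cdot\xi}\,d\sigma_r(\xi).$$
Since $u$ solves $(-\Delta-r^2)u=0$ on $\re^d$, equivalently $(-h^2\Delta-1)u=0$, it qualifies as a quasimode on any bounded open set.

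The main technical step is the uniform $L^2$ bound
$$\|u\|_{L^2(U)}^2\leq C\|g\|_{L^2(S^{d-1}_r,\,d\sigma_r)}^2$$
for a fixed bounded open $U\Supset\Gamma$, with $C$ independent of $r\geq 1$. To establish this I pick $\chi\in\Cc(\re^d)$ with $\chi\equiv 1$ on $\overline U$ and apply Plancherel to $\chi u$. Since $\hat u$ is (up to a constant) the distribution $g\,d\sigma_r$, a direct computation yields
$$\|\chi u\|_{L^2}^2=c_d\int_{S^{d-1}_r}\!\int_{S^{d-1}_r}\widehat{|\chi|^2}(\xi-\xi')\,g(\xi)\,\overline{g(\xi')}\,d\sigma_r(\xi)\,d\sigma_r(\xi').$$
Since $\widehat{|\chi|^2}$ is Schwartz, $|\widehat{|\chi|^2}(\xi-\xi')|\leq C_N\la\xi-\xi'\ra^{-N}$; parametrising $S^{d-1}_r$ by $S^{d-1}$ with Jacobian $r^{d-1}$, splitting into the regimes $|\theta-\theta'|\leq 1/r$ and $|\theta-\theta'|>1/r$, and choosing $N>d-1$ gives the uniform kernel bound
$$\sup_{\xi\in S^{d-1}_r}\int_{S^{d-1}_r}\la\xi-\xi'\ra^{-N}\,d\sigma_r(\xi')\leq C,\qquad r\geq 1.$$
Schur's test then yields the required bound on $\|u\|_{L^2(U)}$.

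With this uniform bound in hand, Lemma \ref{lem:quasimodeEstimates} applied to $u/\|u\|_{L^2(U)}$ (which is a quasimode, being an exact eigenfunction of $-h^2\Delta$) gives
$$\|T^*g\|_{L^2(\Gamma)}=\|u\|_{L^2(\Gamma)}\leq Ch^{-1/4}\|u\|_{L^2(U)}\leq Cr^{1/4}\|g\|_{L^2(S^{d-1}_r)},$$
and $Cr^{1/6}\|g\|_{L^2(S^{d-1}_r)}$ in the curved case. Lemma \ref{lem:quasimodeNormalEstimates} yields the analogous bound $\|T_1^*g\|_{L^2(\Gamma)}=\|\partial_\nu u\|_{L^2(\Gamma)}\leq Cr\,\|g\|_{L^2(S^{d-1}_r)}$. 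Dualising, $\|Tf\|_{L^2(S^{d-1}_r)}^2\leq Cr^{1/2}\|f\|_{L^2(\Gamma)}^2$ (resp.\ $Cr^{1/3}$) and $\|T_1f\|_{L^2(S^{d-1}_r)}^2\leq Cr^2\|f\|_{L^2(\Gamma)}^2$, which are precisely \eqref{eqn:fourierRestrictionEstimate2} with $\beta=1/4$ (resp.\ $1/6$) and \eqref{eqn:fourierRestrictionEstimate1} with $\alpha=1$. The main obstacle is the uniform $L^2(U)$ bound on $u$; once it is in hand, the quasimode bounds do the rest of the work.
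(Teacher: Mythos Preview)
Your proposal is correct and follows essentially the same approach as the paper: both arguments dualize to reduce \eqref{eqn:fourierRestrictionEstimate1}--\eqref{eqn:fourierRestrictionEstimate2} to restriction bounds on the extension operator $u(x)=\int_{S^{d-1}_r}g(\xi)e^{ix\cdot\xi}d\sigma_r(\xi)$, invoke the quasimode restriction Lemmas~\ref{lem:quasimodeEstimates} and~\ref{lem:quasimodeNormalEstimates}, and establish the uniform bound $\|\chi u\|_{L^2(\re^d)}\leq C\|g\|_{L^2(S^{d-1}_r)}$ via Schur's test applied to the kernel $\widehat{|\chi|^2}(\xi-\xi')$ on $S^{d-1}_r\times S^{d-1}_r$. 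The only cosmetic difference is that the paper packages the two cases via a single operator $A\in\{I,L\}$ and verifies explicitly that cutting off by $\chi$ produces an $\O{L^2}(h)$ quasimode error, whereas you observe directly that $u$ is an exact eigenfunction on all of $\re^d$.
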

\begin{proof}
Let $A:H^s(\re^d)\to H^{s-1}(\re^d)$. To estimate
$$\int |\widehat{A^*(f\delta_{\Gamma})}(\xi)|^2\delta(|\xi|-r),$$ 
write 
\begin{align*} 
\la \widehat{A^*(f\delta_{\Gamma})}(\xi)\delta(|\xi|-r),\phi(\xi)\ra &=\iint A^*(f(x)\delta_{\Gamma})\delta(|\xi|-r)\overline{\phi(\xi)e^{i\la x,\xi\ra}} dxd\xi\\
& =\int_{\Gamma}f AT_r\phi dx
\end{align*}
where 
\begin{equation}
\label{eqn:T}
T_r\phi:=\int \delta(|\xi|-r)\phi(\xi)e^{i\la x,\xi\ra }d\xi.
\end{equation}

For $\chi\in \Cc(\re^d)$, $\chi T_r\phi$ is a quasimode of the Laplacian with eigenvalue $\lambda=r$ in the sense of Lemma \ref{lem:quasimodeEstimates} with $h=r^{-1}$. To see this, observe that 
\begin{equation}
\label{eqn:cutoffQuasi}-\Delta \chi T_r\phi=\chi(-\Delta T_r\phi)+[-\Delta,\chi]T_r\phi=r^2\chi T_r\phi+v
\end{equation}
where $\|v\|_{L^2}\leq Cr\|T_r\phi\|_{L^2}$. Thus, we can use the restriction bounds for eigenfunctions to obtain estimates on $T_r\phi$. 

To prove \eqref{eqn:fourierRestrictionEstimate2}, let $A=I$. Then, by Lemma \ref{lem:quasimodeEstimates}
\begin{equation}
\label{eqn:restrictEig}
\|\chi T_r\phi\|_{L^2(\Gamma)}\leq r^{\frac 14}\|\chi T_r\phi\|_{L^2(\re^d)},
\end{equation}
and if $\Gamma$ is curved then
\begin{equation}
\label{eqn:restrictEigCurved}
\|\chi T_r\phi\|_{L^2(\Gamma)}\leq r^{\frac 16}\|\chi T_r\phi\|_{L^2(\re^d)}.
\end{equation}

Next, we take $A=L$ to obtain \eqref{eqn:fourierRestrictionEstimate1}. Observe that 
$$\chi LT_r\phi =L\chi T_r\phi +[\chi,L]T_r\phi$$
with $[\chi,L]\in \Cc(\re^d)$. Therefore, 
$[\chi,L]T_r\phi$ is a quasimode of the Laplacian with eigenvalue $r$ by \eqref{eqn:cutoffQuasi}. 

Hence, using the fact that $L=\partial_{\nu}$ on $\Gamma$ together with Lemma \ref{lem:quasimodeNormalEstimates}, we can estimate $LT_r\phi$.
\begin{equation}
\label{eqn:restrictEigNormal}\|\chi LT_r\phi\|_{L^2(\Gamma)}\leq  \|L\chi T_r\phi\|_{L^2(\Gamma)}+\|[L,\chi ]T_r\phi\|_{L^2(\Gamma)}\leq Cr\|\chi T_r\phi \|_{L^2(\re^d)}.
\end{equation}

To complete the proof of the Lemma, we estimate $\|\chi T_r\phi\|_{L^2(\re^d)}$ in terms of $\|\phi\|_{L^2(S^{d-1})}.$ To do this, we estimate 
$B:=(\chi T_r)^*\chi T_r:L^2S_r^{d-1}\to L^2S_r^{d-1}.$ This operator has kernel
$$B(\xi,\eta)=\int_{\re^d} \chi^2(y)e^{i\la y,\xi-\eta\ra}dy=\widehat{\chi^2}(\eta-\xi).$$
Now, for $\eta\in S_r^{d-1}$, and any $N>0$,
$$\int_{S_r^{d-1}}|\widehat{\chi^2}(\eta-\xi)|dS(\xi)\leq \int_{B(0,r/2)}\la \xi'\ra^{-N}\left[1-\frac{|\xi'|^2}{r^2}\right]^{-1/2}d\xi'+C\la r\ra^{-N}\leq C.$$
Thus, by Schur's inequality, $B$ is bounded on $L^2S_r^{d-1}$ uniformly in $r$. 
Therefore, 
\begin{align*}
\|\chi T_r\phi\|_{L^2(\re^d)}^2& \leq C \|\phi\|_{L^2(S_r^{d-1})}^2.
\end{align*}
Combining this with \eqref{eqn:restrictEig}, \eqref{eqn:restrictEigCurved} and \eqref{eqn:restrictEigNormal} completes the proof of the Lemma.
\end{proof}

Next, we obtain an estimate on the high frequency component of $\Dl$ and $\dDl$.  We start by analyzing the high frequency components of the free resolvent.
\begin{lemma}
\label{lem:freeHighFreq}
Suppose that $|z|\in [E-\delta,E+\delta]$
Let $\psi\in \Cc(\re)$ with $\psi\equiv 1$ on $[-2E^2,2E^2]$. Then for $\chi\in \Cc(\re^d).$
$$\chi R_0(z/h)\chi(1-\psi(|hD|))= B$$
where $B\in h^2\Ph{-2}{}(\re^d)$ with 
$$\sigma(B)=\frac{\chi^2h^2(1-\psi(|\xi|))}{|\xi|^2-z^2}.$$
If $\Im z>0$, then $\chi$ can be removed from all of the above statements.
\end{lemma}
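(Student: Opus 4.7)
The plan is to identify $R_{0}(z/h)(1-\psi(|hD|))$ as a Fourier multiplier with an explicit smooth symbol, then account for the spatial cutoffs by standard Weyl composition, and finally extend from $\Im z>0$ to the full annulus by analytic continuation.

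First I would work in the range $\Im z>0$, where $R_{0}(z/h)=h^{2}(-h^{2}\Delta-z^{2})^{-1}$ is a bounded Fourier multiplier with symbol $h^{2}(|\xi|^{2}-z^{2})^{-1}$. Since $\psi\equiv 1$ on $[-2E^{2},2E^{2}]$, the factor $1-\psi(|\xi|)$ is supported in $\{|\xi|\ge 2E^{2}\}$, and there $\bigl||\xi|^{2}-z^{2}\bigr|\ge c>0$ uniformly for $|z|\in[E-\delta,E+\delta]$ and $\delta$ small. Consequently $b(\xi):=h^{2}(1-\psi(|\xi|))/(|\xi|^{2}-z^{2})$ lies in $h^{2}S^{-2}$, and because the composition of two Fourier multipliers is the Fourier multiplier with the product symbol,
\[
R_{0}(z/h)(1-\psi(|hD|))=\weyl(b)\in h^{2}\Ph{-2}{}(\re^{d}).
\]
This already delivers the $\chi$-free conclusion asserted in the $\Im z>0$ case.

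To reinsert the cutoffs I would use the identity
\[
\chi R_{0}(z/h)\chi(1-\psi(|hD|))=\chi\bigl[R_{0}(z/h)(1-\psi(|hD|))\bigr]\chi-\chi R_{0}(z/h)[\chi,\psi(|hD|)].
\]
The first piece is $\chi\weyl(b)\chi\in h^{2}\Ph{-2}{}$, with principal symbol $\chi^{2}(x)b(\xi)$ by the Moyal calculus (multiplication by $\chi$ is $\weyl(\chi(x))$, and the product of a pure-$x$ and a pure-$\xi$ symbol is just the product modulo $h^{1-2\delta}S^{-3}$). The commutator $[\chi,\psi(|hD|)]$ belongs to $h\Ph{-\infty}{}$ with Weyl symbol microsupported in the compact set $\{\psi'(|\xi|)\neq 0\}\subset\{|\xi|\ge 2E^{2}\}$; on that set $R_{0}(z/h)$ is again a smooth Fourier multiplier of size $h^{2}$, so $\chi R_{0}(z/h)[\chi,\psi(|hD|)]\in h^{3}\Ph{-\infty}{}$, which is absorbed into the subprincipal remainder.

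Finally I would pass from $\Im z>0$ to the full annulus $|z|\in[E-\delta,E+\delta]$ by analytic continuation. The proposed pseudodifferential operator depends holomorphically on $z$ throughout the annulus, since $b$ is a smooth function of $z$ there; on the other hand, $\chi R_{0}(z/h)\chi$ extends meromorphically with no poles in this annulus for $h$ small (the only possible low-energy singularities of $R_{0}$ occur near $z/h=0$, far from $|z|\sim E$). The two operators agreed for $\Im z>0$, hence coincide on the entire annulus. The main obstacle is ensuring the commutator term behaves well under this continuation, but that is exactly what the $\xi$-microsupport argument in the previous paragraph buys: the commutator's output is concentrated in a frequency regime where $R_{0}(z/h)$ remains a bona fide bounded Fourier multiplier for every $z$ in the annulus.
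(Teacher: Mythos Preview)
Your approach differs genuinely from the paper's. The paper proceeds by an elliptic-parametrix iteration valid for every $z$ in the annulus at once: one builds $A_0\in h^2\Ph{-2}{}$ with $h^{-2}(-h^2\Delta-z^2)A_0=\varphi(|hD|)+\O{\Ph{-\infty}{}}(h^\infty)$ via the elliptic lemma, composes with the algebraic identity $h^{-2}\chi R_0\chi(-h^2\Delta-z^2)=\chi^2-\chi h^{-2}\chi_1R_0\chi_1[\chi,h^2\Delta]$, and then iterates with nested cutoffs $(\chi_n,\psi_n)$, gaining a power of $h$ at each step from the commutator $[\chi,h^2\Delta]h^{-2}A_0\in\O{\Ph{-1}{}}(h)$. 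This never appeals to the Fourier-multiplier representation of $R_0$, so no separate treatment of $\Im z\le 0$ is required. Your direct identification $R_0(1-\psi)=\weyl(b)$ for $\Im z>0$ is correct and is exactly what the paper records in its last sentence for that case.

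The gap is in your continuation step. For $\Im z<0$ the operator $R_0(z/h)$ is the analytically continued \emph{outgoing} resolvent, which is \emph{not} the Fourier multiplier $h^2/(|\xi|^2-z^2)$; that multiplier is the incoming resolvent $R_0(-z/h)$ there, and the two differ by a free wave at frequency $|z|$. Thus the sentence ``$R_0(z/h)$ remains a bona fide bounded Fourier multiplier for every $z$ in the annulus'' on the commutator's frequency support is precisely the statement you are trying to prove, and invoking it is circular. One can attempt a rescue by fixing an explicitly holomorphic candidate, say $B_1(z)=\chi\weyl(b)\chi-\chi\weyl\bigl(h^2(1-\psi_1)/(|\xi|^2-z^2)\bigr)[\chi,\psi]$ for an auxiliary cutoff $\psi_1$, and then arguing that $\chi R_0\chi(1-\psi)-B_1(z)$ is $O(h^\infty)$ by a maximum-principle argument; but the only a priori bound available on $\chi R_0\chi$ in the lower half of the annulus is $O\bigl(h\,e^{D_\chi(\Im z)_-/h}\bigr)$, which is exponentially large and defeats Phragm\'en--Lindel\"of on the full annulus. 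The paper's iteration avoids this because the error at each stage has the same structure $\chi_n R_0\chi_n\varphi_n(|hD|)\cdot\O{\Ph{-1}{}}(h)$, so the inductive hypothesis itself supplies the needed control rather than a crude operator-norm bound on the resolvent.
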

\begin{proof}
Let $\chi_0=\chi \in \Cc(\re^d)$ and $\chi_n\in \Cc(\re^d)$ have $\chi_n\equiv 1$ on $\supp \chi_{n-1}$ for $n\geq 1$. Let $\psi_0=\psi\in \Cc(\re)$ have $\psi \equiv 1$ on $[-2E^2,2E^2]$, let $\psi_n\in \Cc(\re)$ have $\psi_n\equiv 1$ on $[-3E^2/2,3E^2/2]$ and $\supp \psi_n\subset \{\psi_{n-1}\equiv 1\}$ for $n\geq 1$. Finally, let $\varphi_n=(1-\psi_n).$  
Then, 
\begin{align}
h^{-2}\chi R_0\chi (-h^2\Delta -z^2)&=(\chi^2-\chi h^{-2} \chi_1R_0\chi_1 [\chi,h^2\Delta])\label{eqn:ellipticResolve1}
\end{align}
Now, by Lemma \ref{lem:microlocalElliptic} there exists $A_0\in h^2\Ph{-2}{}(\re^d)$ with $\WFh(A_0)\subset\{\supp \varphi_0\}$, such that 
$$h^{-2}(-h^2\Delta-z)A_0=\varphi(|hD|)+\O{\Ph{-\infty}{}}(h^\infty)$$
and $A_0$ has 
$$\sigma(A_0)=\frac{h^2\varphi(|hD|)}{|\xi|^2-z^2}.$$
Composing \eqref{eqn:ellipticResolve1} on the right with $A_0$, we have 
\begin{align*}
\chi R_0\chi \varphi(|hD|)&=\chi^2 A_0-\chi \chi_1R_0\chi_1\varphi_1(|hD|)[\chi,h^2\Delta]h^{-2}A_0+\O{\Ph{-\infty}{}}(h^\infty)\\
&=\chi^2 A_0-\chi \chi_1R_0\chi_1\varphi_1(|hD|)\O{\Ph{-1}{}}(h)+\O{\Ph{-\infty}{}}(h^\infty)
\end{align*}
Now, applying the same arguments, there exists $A_n\in h^2\Ph{-2}{}(\re^d)$ such that
$$\chi_nR_0\chi_n \varphi_n(|hD|)=\chi_n^2A_n +\chi_{n+1}R_0\chi_{n+1}\varphi_{n+1}(|hD|)\O{\Ph{-1}{}}(h)+\O{\Ph{-\infty}{}}(h^\infty).$$
Hence, by induction
$$\chi R_0\chi \varphi(|hD|)=B\in h^2\Ph{-2}{}(\re^d),$$
with 
$$\sigma(B)=\frac{h^2\chi^2(1-\psi(|\xi|)}{|\xi|^2-z^2}$$
as desired.

Now, if $\Im z>0$, then 
$h^{-2}R_0(-h^2\Delta-z^2)=\Id_{L^2}$
and hence,
$$h^{-2}R_0\varphi(|hD|)=A_0+\O{\Ph{-\infty}{}}(h^\infty).$$
\end{proof}
Now, let $\gamma^{\pm}:H^s(\Omega^{\pm})\to H^{s-1/2}(\partial\Omega)$, $s>1/2$ denote the restriction map where $\Omega^+=\Omega$ and $\Omega^-=\re^d\setminus\overline{\Omega}$. Then we have
\begin{lemma}
\label{lem:outsideSphere}
Let $M>1$ and $\psi\in \Cc(\re)$ with $\psi\equiv 1$ for $|\xi|<M$. Suppose that $\partial\Omega$ is a compact embedded $C^\infty$ hypersurface. Then there exists $\lambda_0>0$ such that for $|\lambda|>\lambda_0$, and $\chi\in \Cc(\re^d)$
\begin{align}
\label{eqn:estHighSingle}
\gamma R_0(\lambda)\chi (1-\psi(|\lambda|^{-1}|D|))\gamma^*&=\O{L^2(\partial\Omega)\to L^2(\partial\Omega)}(|\lambda|^{-1}),\\
\label{eqn:estHighDouble}
\gamma^{\pm} R_0(\lambda)\chi (1-\psi(|\lambda|^{-1}|D|))L^*\gamma^*&=\O{L^2(\partial\Omega)\to L^2(\partial\Omega)}(1),\\
\label{eqn:estHighdDouble}
\gamma^{\pm} L R_0(\lambda)\chi (1-\psi(|\lambda|^{-1}|D|))L^*\gamma^*&=\O{H^1(\partial\Omega)\to L^2(\partial\Omega)}(|\lambda|).
\end{align}
\end{lemma}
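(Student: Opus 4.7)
The plan is to combine Lemma~\ref{lem:freeHighFreq} with the semiclassical trace inequality on $\partial\Omega$. Setting $h=|\lambda|^{-1}$ so that $z:=\lambda h$ has $|z|\sim 1$, I pick $\tilde\chi\in\Cc(\re^d)$ equal to $1$ on a neighborhood of $\overline{\Omega}\cup\supp\chi$. Lemma~\ref{lem:freeHighFreq} then gives
\[
B:=\tilde\chi R_0(\lambda)\tilde\chi(1-\psi(|hD|))\in h^2\Ph{-2}{}(\re^d).
\]
Since $\tilde\chi\equiv 1$ on $\partial\Omega$, one has $\gamma\tilde\chi=\gamma$ and $\tilde\chi\gamma^*=\gamma^*$; combining with $\tilde\chi\chi=\chi$ and the commutator estimate $[\chi,\,1-\psi(|hD|)]=-[\chi,\psi(|hD|)]\in h\Ph{-\infty}{}$, each of the three operators in the statement reduces, modulo an $\O{\Ph{-\infty}{}}(h^\infty)$ remainder, to $\gamma^{\pm}A\gamma^*$, where $A$ is $B\chi$, $B\chi L^*$, or $LB\chi L^*$ respectively.

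Writing $L=h^{-1}\tilde L$ and $L^*=h^{-1}\tilde L^*$ with $\tilde L,\tilde L^*\in\Ph{1}{}(\re^d)$, the semiclassical pseudodifferential calculus places these three choices of $A$ into $h^2\Ph{-2}{}$, $h\Ph{-1}{}$, and $\Ph{0}{}$ respectively. Combining with the semiclassical trace inequality
\[
\|\gamma u\|_{L^2(\partial\Omega)}\leq Ch^{-1/2}\|u\|_{H^1_h(\re^d)}
\]
(a consequence of $\|\gamma u\|^2_{L^2}\leq C\|u\|_{L^2}\|u\|_{H^1}$ and $\|u\|_{H^1}\leq h^{-1}\|u\|_{H^1_h}$) together with its dual $\|\gamma^*f\|_{H^{-1}_h(\re^d)}\leq Ch^{-1/2}\|f\|_{L^2(\partial\Omega)}$, the operator norms of $\gamma^{\pm}A\gamma^*$ count out as
\[
h^{-1/2}\cdot h^2\cdot h^{-1/2}=h,\qquad h^{-1/2}\cdot h\cdot h^{-1/2}=1,\qquad h^{-1/2}\cdot 1\cdot h^{-1/2}=h^{-1},
\]
matching the claimed $O(|\lambda|^{-1})$, $O(1)$, and $O(|\lambda|)$ bounds; the $H^1\to L^2$ formulation of \eqref{eqn:estHighdDouble} follows from the corresponding $L^2\to L^2$ bound since $\|f\|_{L^2}\leq\|f\|_{H^1}$.

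The hard part will be that for \eqref{eqn:estHighDouble} and \eqref{eqn:estHighdDouble} the middle operators $B\chi L^*\in h\Ph{-1}{}$ and $LB\chi L^*\in\Ph{0}{}$ do not map $H^{-1}_h$ into $H^1_h$, so the naive Sobolev bookkeeping above cannot be closed directly. To bypass this I plan to identify $\gamma^{\pm}A\gamma^*$ directly as a bounded operator on $L^2(\partial\Omega)$: Lemma~\ref{lem:freeHighFreq} exhibits $B$ with the explicit symbol $\tilde\chi^2h^2(1-\psi)/(|\xi|^2-z^2)$, and symbolic composition with $L$ and $L^*$ followed by restriction of the resulting Schwartz kernel to $\partial\Omega\times\partial\Omega$ yields an integral operator whose $L^2\to L^2$ norm is controlled by the count above. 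Equivalently, one may invoke the jump formulas of Lemma~\ref{lem:layer}: the cutoff $(1-\psi)$ strips off the principal $|\lambda|$-scaling contribution to the full double (respectively derivative double) layer potential, leaving a high-frequency remainder whose norm matches the stated bound.
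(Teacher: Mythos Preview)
Your treatment of \eqref{eqn:estHighSingle} is correct and matches the paper's. You also correctly identify the obstruction for \eqref{eqn:estHighDouble} and \eqref{eqn:estHighdDouble}: $B\chi L^*\in h\Ph{-1}{}$ and $LB\chi L^*\in\Ph{0}{}$ do not map $H^{-1}_h$ into $H^{1}_h$, so the trace bookkeeping fails by exactly one order. However, your proposed repair is a gap. ``Restriction of the resulting Schwartz kernel to $\partial\Omega\times\partial\Omega$'' is precisely the delicate point, since for $A\in\Ph{-1}{}$ or $\Ph{0}{}$ the kernel is singular on the full diagonal and the boundary diagonal sits inside it; making sense of $\gamma^{\pm}A\gamma^*$ and bounding it on $L^2$ requires a genuine argument (this is the content of Lemma~\ref{lem:diagPiece}, proved later). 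The alternative appeal to the jump formulas of Lemma~\ref{lem:layer} does not supply an $L^2$ bound either.

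The paper sidesteps the regularity deficit by comparison with a fixed-frequency resolvent. The resolvent identity gives
\[
A_h:=\chi\bigl(R_0(\lambda)-R_0(i)\bigr)\chi(1-\psi(|hD|))=(\lambda^2+1)\,\chi R_0(\lambda)R_0(i)\chi(1-\psi(|hD|)),
\]
and since both $R_0(\lambda)$ and $R_0(i)$ contribute $h^2\Ph{-2}{}$ on $\supp(1-\psi)$ while $\lambda^2+1\sim h^{-2}$, one finds $A_h\in h^2\Ph{-4}{}$ --- two extra orders of smoothing. Now $A_hL^*\in h\Ph{-3}{}$ and $LA_hL^*\in\Ph{-2}{}$ do map $H^{-1}_h\to H^{1}_h$, so the trace count closes: $\gamma A_hL^*\gamma^*=\O{L^2\to L^2}(1)$ and $\gamma LA_hL^*\gamma^*=\O{L^2\to L^2}(h^{-1})$. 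The remaining pieces $\gamma^{\pm}R_0(i)\chi(1-\psi)L^*\gamma^*$ and $\gamma^{\pm}LR_0(i)\chi(1-\psi)L^*\gamma^*$ are bounded $L^2\to L^2$ and $H^1\to L^2$ respectively by classical, $h$-independent boundary layer theory at the fixed frequency $\lambda=i$ (\cite[Section~7.11]{Taylor}). Summing the two contributions gives \eqref{eqn:estHighDouble} and \eqref{eqn:estHighdDouble}.
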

\begin{proof}
Let $h^{-1}=|\lambda|$ and $\chi \equiv 1$ on $\Omega$. For $\Im \lambda >0$, we take $\chi \equiv 1$ and for $\arg \lambda\in [-\pi,0]\cup [\pi ,2\pi]$, $\chi \in \Cc(\re^d)$. Then by Lemma \ref{lem:freeHighFreq} 
\begin{equation} \label{eqn:freePsiDO} \chi R_0(\lambda)\chi (1-\psi(hD)) \in h^2\Ph{-2}{}.
\end{equation}
Note that $\gamma$ is a semiclassical FIO and for $s>1/2$,
\begin{equation}\label{eqn:restrictBound}\gamma=\O{H_h^s(\re^d)\to H_h^{s-1/2}(\partial\Omega)}(h^{-1/2}).
\end{equation}
The bound \eqref{eqn:estHighSingle} and the corresponding bound in the lower half plane follow from \eqref{eqn:freePsiDO} and composition with $\gamma$ and $\gamma^*$.

The strategy for obtaining the bounds on $\Dl$ and $\dDl$ on $|\xi|\geq M\lambda $ is to compare them with the corresponding operators for $\lambda=i$.  Note that $\chi R_0(i)\chi (1-\psi(|hD|))\in h^2\Ph{-2}{}$ and for $\Im \lambda>0$, the $\chi$ factors are unnecessary.  We consider
\begin{align*} 
A_{h}&:=\chi(R_0(\lambda)-R_0(i))\chi(1-\psi(|hD|))\\
&=h^{-2}\left(h^2-\frac{\lambda^2}{|\lambda|^2}\right)\chi R_0(\lambda)R_0(i)\chi(1-\psi(|hD|)).
\end{align*} 
Hence, $A_{h}\in h^2\Ph{-4}{}$. Let
$$B_h:= \gamma A_hL^*\gamma^*,\quad \quad C_h:=\gamma L A_{h}L^*\gamma^*.$$
Then, using \eqref{eqn:restrictBound} and the fact that $L,L^*=\O{H_h^s\to H_h^{s-1}}(h^{-1})$, we have that $B_h=\O{L^2\to L^2}(1)$ and $C_h=\O{L^2\to L^2}(h^{-1})$.

Now, by \cite[Section 7.11]{Taylor}
\begin{align*} \gamma^{\pm}  R_0(i)\chi (1-\psi(|hD|))L^*\gamma:& H^s(\partial\Omega)\to H^{s}(\partial\Omega)\\
\gamma^{\pm} LR_0(i)\chi(1-\psi(|hD|)) L^*\gamma:& H^s(\partial\Omega)\to H^{s-1}(\partial\Omega)
\end{align*}
for $\partial\Omega$ a smooth hypersurface. Hence,
\begin{align*} 
\gamma^{\pm} R_0(\lambda)\chi (1-\psi(|hD|))L^*\gamma&=\gamma^{\pm}  R_0(i)\chi (1-\psi(|hD|))L^*\gamma +\gamma L B_hL^*\gamma^*\\
&=\O{L^2\to L^2}(1)\\
\gamma^{\pm} L R_0(\lambda)\chi (1-\psi(|hD|))L^*\gamma&=\gamma^{\pm} L  R_0(i)\chi (1-\psi(|hD|))L^*\gamma \\
&\quad\quad\quad+\gamma^{\pm} L C_hL^*\gamma^*\\
&=\O{H^1\to L^2}(h^{-1}).
\end{align*} 
\end{proof}

\noindent Taking $\partial\Omega=\bigcup_{i}\Gamma_i$ and applying Lemmas \ref{lem:Q} and Lemma \ref{lem:outsideSphere} finishes the proof of Theorem \ref{thm:optimal} for $\Im \lambda\geq 0$.

Our final task is to extend the estimates into the lower half plane. Let $\tilde{Q}_\lambda^1=\Qs$, $\tilde{Q}_\lambda^2=\Qa$, and $\tilde{Q}_\lambda^3=\Qdd$. Then let 
\begin{gather*} 
Q_\lambda^1(f,g):=\la \gamma R_0(\lambda)f\delta_\Gamma,g\ra,\quad\quad Q_\lambda^2(f,g):=\la \gamma^{\pm}R_0(\lambda) L^*(f\delta_\Gamma),g\ra,\\
 Q_\lambda^3(f,g):=\la\gamma^{\pm}LR_0(\lambda) L^*(f\delta_\Gamma),g\ra
\end{gather*}
\begin{lemma}
\label{lem:phragmen}
Suppose that for $|\lambda|\geq \lambda_0$, $\Im \lambda\geq 0$, 
$$|\tilde{Q}^i_\lambda(f,g)|\leq C\la\lambda\ra^\alpha  (\log \la\lambda\ra)^\beta\|f\|_{L^2(\Gamma)}\|g\|_{L^2(\Gamma)}.$$
Then for $|\lambda|\geq \lambda_0$ and $\Im \lambda \leq 0$, if $d$ is odd and for $\arg \lambda \in [-\pi,0]\cup [\pi,2\pi]$ if $d$ is even
$$|Q^i_{\lambda}(f,g)|\leq C\la\lambda\ra ^{\tilde{\alpha}} (\log \la \lambda\ra)^\beta e^{\LOmega \Im \lambda}\|f\|_{\mc{A}_i}\|g\|_{L^2(\Gamma)}$$
where $\LOmega$ is the diameter of $\Omega$, 
$$\tilde{\alpha}:=\begin{cases}\max(-1,\alpha)&i=1\\
\max(0,\alpha)&i=2\\
\max(1,\alpha)&i=3
\end{cases},$$
and $\mc{A}_3=H^1(\Gamma)$, $\mc{A}_i=L^2(\Gamma)$ for $i=1,2$.
\end{lemma}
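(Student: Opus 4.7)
The strategy is a Phragm\'en--Lindel\"of argument that extends the bound from $\Im \lambda \geq 0$ into the lower half plane (for $d$ odd) or into the sectors $\arg \lambda \in [-\pi,0]\cup[\pi,2\pi]$ (for $d$ even), picking up the exponential factor from the free resolvent kernel. The holomorphic function to which Phragm\'en--Lindel\"of will be applied is $F(\lambda) := e^{i\LOmega \lambda}\, Q^i_\lambda(f,g)$.

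First I would combine the hypothesis on $\tilde Q^i_\lambda$ with the high-frequency bounds of Lemma~\ref{lem:outsideSphere} to establish, for $\Im \lambda \geq 0$ and $|\lambda|\geq \lambda_0$, the bound
$$|Q^i_\lambda(f,g)| \le C \la \lambda \ra^{\tilde \alpha}(\log \la \lambda \ra)^\beta \|f\|_{\mc A_i} \|g\|_{L^2(\Gamma)}.$$
This follows by writing $Q^i_\lambda = \tilde Q^i_\lambda + (Q^i_\lambda - \tilde Q^i_\lambda)$; the second summand, containing the cutoff $1-\psi(\lambda^{-1}|D|)$, is controlled by Lemma~\ref{lem:outsideSphere} with operator norms of order $\la\lambda\ra^{-1}$, $\la\lambda\ra^0$, $\la\lambda\ra^1$ for $i = 1, 2, 3$ respectively. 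Taking the maximum against $\alpha$ (from the hypothesis) yields exactly the exponent $\tilde \alpha$.

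Next I would use the explicit Hankel-function representation of $R_0(\lambda)(x,y)$ to derive a crude a priori bound valid throughout the admissible domain. Since $|x-y| \leq \LOmega$ for $x,y \in \pO$, one has $|e^{i\lambda|x-y|}| \le e^{\LOmega (\Im \lambda)_-}$, and asymptotic expansions of $H^{(1)}_{(d-2)/2}(\lambda|x-y|)$ uniform in $\arg \lambda$ within the admissible sectors yield
$$|R_0(\lambda)(x,y)| \le C\la\lambda\ra^{(d-3)/2} e^{\LOmega(\Im \lambda)_-}\bigl(|x-y|^{-(d-2)} + 1\bigr).$$
Integrating against $f\delta_\Gamma$ (and differentiating once or twice in the normal direction for $i = 2, 3$), together with Cauchy--Schwarz, gives the crude estimate
$$|Q^i_\lambda(f,g)| \le C \la \lambda \ra^{N_i} e^{\LOmega (\Im \lambda)_-}\|f\|_{\mc A_i}\|g\|_{L^2(\Gamma)}$$
for some large $N_i > 0$, valid everywhere in the admissible domain.

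Finally, applying Phragm\'en--Lindel\"of to $F(\lambda)$: on the real axis, $|e^{i\LOmega \lambda}| = 1$ so the first step gives the bound $|F(\lambda)| \le C\la\lambda\ra^{\tilde \alpha}(\log\la\lambda\ra)^\beta$ on the boundary of each sector; in the interior, $|e^{i\LOmega \lambda}| = e^{\LOmega \Im \lambda}$ exactly cancels the factor $e^{\LOmega (\Im \lambda)_-}$ in the crude bound, so $|F(\lambda)| \le C\la\lambda\ra^{N_i}$ throughout. Since each sector has opening at most $\pi$ and the interior growth is polynomial (hence subexponential), Phragm\'en--Lindel\"of propagates the boundary bound inward, giving $|F(\lambda)| \le C\la\lambda\ra^{\tilde\alpha}(\log\la\lambda\ra)^\beta$ throughout each sector. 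Dividing out $|e^{i\LOmega \lambda}|$ produces the desired estimate with the exponential factor $e^{\LOmega(\Im\lambda)_-}$. The main technical obstacle is verifying the a priori kernel bound with the correct exponential behavior uniformly in the admissible sectors (particularly for even $d$ on the log cover) and handling the near-diagonal singularity when differentiating; the Phragm\'en--Lindel\"of application itself is then routine.
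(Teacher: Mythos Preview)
Your approach is essentially the paper's: a Phragm\'en--Lindel\"of argument applied to an analytic function built from $Q^i_\lambda(f,g)$, using Lemma~\ref{lem:outsideSphere} to control the high-frequency piece on the real axis and a crude a priori bound with exponential growth in the interior. Two points deserve attention.

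First, there is a sign slip in your weight. You write $|e^{i\LOmega\lambda}| = e^{\LOmega\Im\lambda}$, but in fact $|e^{i\LOmega\lambda}| = e^{-\LOmega\Im\lambda}$, so in the lower half plane this \emph{grows} and does not cancel $e^{\LOmega(\Im\lambda)_-}$. The correct weight is $e^{-i\LOmega\lambda}$, which is exactly what the paper uses; the paper also absorbs $\lambda^{-\tilde\alpha}(\log\lambda)^{-\beta}$ into $F$ so that the target bound on $\partial(\text{sector})$ is simply $|F|\le C$.

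Second, your route to the a priori interior bound differs from the paper's. You propose pointwise Hankel-function kernel estimates; this is workable for $i=1$ but becomes delicate for $i=3$, where the near-diagonal singularity is of order $|x-y|^{-d}$ and is not controlled by a Schur-type argument---you need the pseudodifferential structure to get an $H^1\to L^2$ bound, which you flag but do not carry out. The paper instead uses the standard cutoff resolvent estimate $\|\chi R_0(\lambda)\chi\|_{H^s\to H^s}\le C\langle\lambda\rangle^{-1}e^{D_\chi(\Im\lambda)_-}$ together with the smoothing of $\psi(|\lambda^{-1}D|)$ to get polynomial growth times $e^{D_\chi(\Im\lambda)_-}$, and then lets $\supp\chi\to\Omega$ so that $D_\chi\to\LOmega$. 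This sidesteps kernel singularities entirely. For even $d$ the paper also uses the reflection identity $R_0(\lambda e^{2\pi i})-R_0(\lambda e^{\pi i})=R_0(\lambda e^{\pi i})-R_0(\lambda)$ to transfer the real-axis bound to $\arg\lambda=2\pi$ before applying Phragm\'en--Lindel\"of on each sheet; your proposal identifies this as an obstacle but does not supply the mechanism.
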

\begin{proof}
We first consider $d$ odd. Let $\|f\|_{\mc{A_i}}=1$ and $\|g\|_{L^2}=1$. Let $\chi\equiv 1 $ on $\Omega$. Then consider 
$$F(\lambda)=e^{-i\LOmega \lambda}\lambda^{-\tilde{\alpha}}(\log \lambda)^{-\beta}Q_\lambda(f,g)\,,\quad |\lambda|\geq \lambda_0\,, \Im \lambda\geq 0$$
where $\log \lambda$ is defined for $\arg \lambda \in (\pi/2,5\pi/2)$. Lemma \ref{lem:outsideSphere} shows that 
\begin{gather*} \gamma R_0(\lambda)\chi (1-\psi(|\lambda|^{-1}|D|)) \gamma^*=\O{L^2\to L^2}(|\lambda|^{-1})\\
\gamma^{\pm} R_0(\lambda)\chi (1-\psi(|\lambda|^{-1}D)) L^*\gamma^*=\O{L^2\to L^2}(1)
\\\gamma^{\pm} LR_0(\lambda)\chi (1-\psi(|\lambda|^{-1}|D|))  L^*\gamma^*=\O{H^1\to L^2}(|\lambda|)
\end{gather*}
Hence, $|F(\lambda)|\leq C$ on $\re\setminus[-\lambda_0,\lambda_0]$. 

Now, for all $s$
$$\|\chi R_0(\lambda)\chi\|_{H^s\to H^s}\leq C\la \lambda\ra^{-1}e^{D_\chi(\Im \lambda)_-}$$
where $D_\chi=\diam(\supp\chi)$ is the diameter of $\supp \chi$. Moreover, 
$$\psi(|\lambda^{-1}D|):H^s\to H^{s+M}=\O{}(|\lambda|^M).$$
So, there exists $N>0$ such that 
\begin{multline*} \|\gamma^{\pm} L R_0(\lambda)\chi \psi(|\lambda|^{-1}D)L^*\gamma^*\|_{H^1\to L^2}+\|\gamma^{\pm} R_0(\lambda)\chi \psi(|\lambda|^{-1}D)L^*\gamma^*\|_{L^2\to L^2}\\
+\|\gamma R_0(\lambda)\chi \psi(|\lambda|^{-1}D)\gamma^*\|_{L^2\to L^2}\leq C\la \lambda\ra^{N}e^{D_\chi(\Im \lambda)_-}.
\end{multline*}
Letting $\supp \chi \to \Omega$, we see that $|F(\lambda)|$ has at most polynomical growth in the lower half plane. Thus, the Phragm\'en--Lindel\"of theorem shows that $|F(\lambda)|\leq C$. 

When $d$ is even, we note that the assumed bounds hold for $\arg \lambda=2\pi$ and $|\lambda|\geq \lambda_0$. This follows since $R_0(\lambda e^{\pi i})-R_0(\lambda)$ satisfies the same bounds as $R_0(\lambda)$ for $\arg \lambda=0$. Moreover, $R_0(\lambda e^{2\pi i})-R_0(\lambda e^{\pi i})=R_0(\lambda e^{\pi i})-R_0(\lambda).$ Thus, we apply the Phragm\'en--Lindel\"of theorem on the sheet $\pi \leq \arg z\leq 2\pi$. Using a similar argument, we can apply the Phragm\'{e}n--Lindel\"of theorem on $-\pi \leq \arg \lambda \leq 0.$
\end{proof}

Applying Lemma \ref{lem:phragmen} together with Lemmas \ref{lem:Q}, \ref{lem:FourierRestrict} and \ref{lem:outsideSphere} implies Theorem \ref{thm:optimal}.

\section{Microlocal description of the free resolvent}
\label{sec:decompose}
We have already analyzed the high frequency components of the free resolvent in Lemma \ref{lem:freeHighFreq}. In this section, we analyze the remaining kernel of the free resolvent as a semiclassical intersecting Lagrangian distribution (see Appendix \ref{ch:iLagrange}). In particular, we prove 
\begin{theorem}
\label{thm:freeResolve}
Suppose that $a,b>0$, $M>0$, $\gamma<1/2$,  and 
$$z\in [a,b]\times i[-Ch\log h^{-1},Mh^{1-\gamma}]$$
with $\Re z=E+\O{}(h^{1-\gamma})$. Then for $\chi\in \Cc$, the cut-off free resolvent, $\chi R_0(z/h)\chi$, is given by 
$$\chi R_0(z/h)\chi=K_R+K_{\Delta}+\O{\mc{D}'\to \Cc}(h^\infty),$$
where $K_R$ has kernel $K(x,y)\in h^{3/2}e^{\frac{1}{h}(\Im z)_-D_\chi}I_{\gamma}^{\comp}(\re^d; \Lambda_0,\Lambda_1)$
with $D_\chi=\diam(\supp \chi)$, 
$$\Lambda_0=\{(x,\xi,x,-\xi)\in T^*\re^d\times T^*\re^d\}\text{ and }\Lambda_1=\{\exp_t\Lambda_0\cap \{|\xi|=E\}:t\geq 0\}\,,$$
and $K_{\Delta}\in h^2\Psi^{-2}_{\gamma}.$
Moreover, for any $\chi_1\in \Cc$ with $\chi_1(\xi)\equiv 1$ on $|\xi|\leq 2E$ we can take
\begin{multline*}
\sigma\left(e^{\frac{\Im z|x-y|}{h}}\chi_1(hD)K\right)=\left(\frac{\chi_1(\xi)\chi^2(x)h^2}{|\xi|^2-E^2}|dx\wedge d\xi|^{1/2},\right.\\
\left.{}\frac{h^{3/2}e^{\frac{i}{h}(\Re z-E) |x-y|}E^{(d-3)/2}e^{(-d+3)\pi i/4}\pi^{1/2}}{2^{1/2}|x-y|^{(d-1)/2}}\chi(x)\chi(y)|dy\wedge dx|^{1/2}\right)
\end{multline*}
and
$$\sigma(K_\Delta)=(1-\chi_1(\xi))\chi^2(x)h^2(|\xi|^2-E^2)^{-1}.$$
\end{theorem}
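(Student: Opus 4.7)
The plan is to decompose $\chi R_0(z/h)\chi$ into high- and low-frequency parts using the Fourier multiplier $\chi_1(hD)$, handle the high-frequency part directly by Lemma \ref{lem:freeHighFreq}, and identify the low-frequency part as an intersecting Lagrangian distribution. Write
$$
\chi R_0(z/h)\chi = \chi R_0(z/h)\chi_1(hD)\chi + \chi R_0(z/h)(1-\chi_1(hD))\chi + \O{\mc{D}'\to\Cc}(h^\infty),
$$
where the remainder absorbs the commutators $[\chi,\chi_1(hD)]$ whose symbols have disjoint $x$- and $\xi$-supports and hence are $\O{\Ph{-\infty}{}}(h^\infty)$. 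The second term is $K_\Delta$ and lies in $h^2\Psi^{-2}_{\gamma}$ with the stated symbol by direct application of Lemma \ref{lem:freeHighFreq}; the replacement $z^2\mapsto E^2$ in the principal symbol is absorbed in the subprincipal order of the exotic class, using $|z^2-E^2|=\O{}(h^{1-\gamma})$.

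For the low-frequency piece $K_R := \chi R_0(z/h)\chi_1(hD)\chi$, begin with the semiclassical Fourier representation
$$
K_R(x,y) = (2\pi h)^{-d}\int \chi(x)\chi(y)\,\chi_1(\xi)\,\frac{h^2}{|\xi|^2-z^2}\,e^{i\la x-y,\xi\ra/h}\,d\xi.
$$
The only singularity of the amplitude is across the characteristic hypersurface $\{|\xi|=E\}$ of $|\xi|^2-E^2$, and the analytic continuation from $\Im z>0$ fixes the sign of the $i0$ prescription. Decomposing $(|\xi|^2-z^2)^{-1}$ into its principal value together with a distributional boundary contribution across this characteristic surface produces, respectively, the $\Lambda_0$-piece (whose symbol is the smooth limit $h^2\chi_1(\xi)\chi^2(x)/(|\xi|^2-E^2)$) and a Lagrangian distribution whose wavefront set is the forward Hamilton flowout of $\Lambda_0\cap\{|\xi|=E\}$ by $|\xi|^2-E^2$, which is exactly $\Lambda_1$. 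This fits directly into the semiclassical intersecting-Lagrangian framework of Appendix \ref{ch:iLagrange}. Factoring out the exponential weight $e^{(\Im z)_-D_\chi/h}$ absorbs the exponential growth of the kernel along the flowout in $|x-y|\le D_\chi$, so that the remaining amplitude lies uniformly in the exotic class built on $S^{\comp}_{\gamma}$, with the $\O{}(h^{1-\gamma})$ slack in $\Re z-E$ and the $\O{}(h\log h^{-1})$ allowance on $\Im z$ both absorbed.

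The symbol computations are carried out by two complementary methods that must be matched. Near the diagonal, the symbol on $\Lambda_0$ is read off the Kohn--Nirenberg amplitude at the critical set $\{x=y\}$, giving $\chi_1(\xi)\chi^2(x)h^2/(|\xi|^2-E^2)$ paired with the canonical half-density, which is automatically consistent with the symbol of $K_\Delta$ on the complementary region. Away from the diagonal the symbol on $\Lambda_1$ is computed most efficiently from the explicit Hankel-function representation
$$
R_0(z/h)(x,y) = \tfrac{i}{4}\bigl(\tfrac{z}{2\pi h|x-y|}\bigr)^{(d-2)/2} H^{(1)}_{(d-2)/2}(z|x-y|/h)
$$
together with the large-argument asymptotic $H^{(1)}_\nu(w)\sim(2/\pi w)^{1/2}e^{i(w-\pi\nu/2-\pi/4)}$; this yields the density $|x-y|^{-(d-1)/2}$, the Maslov factor $e^{(-d+3)\pi i/4}$, the energy-perturbation oscillation $e^{i(\Re z-E)|x-y|/h}$ relative to the Lagrangian phase $e^{iE|x-y|/h}$, and the overall prefactor $h^{3/2}$ when combined with the $(2\pi h)^{-d/2}$ normalization of a Lagrangian distribution on $\re^{2d}$ generated by a phase function with no oscillatory variables. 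The main technical obstacle is bookkeeping: reconciling the Kohn--Nirenberg normalization on $\Lambda_0$ with the generating-function normalization on $\Lambda_1$ inside the single intersecting-Lagrangian class of Appendix \ref{ch:iLagrange}, verifying that the two symbol prescriptions agree on the overlap neighborhood of $\Lambda_0\cap\Lambda_1$, and confirming that the exponent $3/2$ emerges consistently from the relative-order shift characteristic of intersecting Lagrangian distributions.
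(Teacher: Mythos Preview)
Your high-frequency treatment via Lemma \ref{lem:freeHighFreq} is correct, and your Hankel-function computation of the $\Lambda_1$ symbol is a valid and more elementary alternative to the paper's transport-equation approach. (One small slip: the commutator $[\chi,\chi_1(hD)]$ is $\O{}(h)$, not $\O{}(h^\infty)$; but this is harmless since you can simply write $\chi R_0\chi = \chi R_0\chi\,\chi_1(hD) + \chi R_0\chi\,(1-\chi_1(hD))$ exactly, with no commutator remainder.)

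There is, however, a genuine gap in the region $\Im z<0$. Your Fourier representation
\[
K_R(x,y)=(2\pi h)^{-d}\int \chi(x)\chi(y)\,\chi_1(\xi)\,\frac{h^2}{|\xi|^2-z^2}\,e^{i\la x-y,\xi\ra/h}\,d\xi
\]
is the kernel of $\chi R_0(z/h)\chi_1(hD)\chi$ only when $\Im z>0$; for $\Im z<0$ the multiplier $(|\xi|^2-z^2)^{-1}$ gives the \emph{incoming} resolvent, not the analytic continuation of the outgoing one. Your Sokhotski--Plemelj decomposition and the sentence about ``factoring out $e^{(\Im z)_- D_\chi/h}$'' therefore assert the conclusion rather than prove it. The paper handles this by a different mechanism: it first constructs, via Lemma \ref{lem:greenFunc}, an outgoing parametrix $U$ whose kernel lies in the intersecting-Lagrangian class $h^{3/2}I_\gamma^{\comp}(\Lambda_0,\Lambda_1)+h^2\Psi^{-2}_\gamma$ \emph{uniformly} for all $z$ in the stated range. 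For $\Im z\ge 0$ one checks $\chi U\chi=\chi R_0\chi+\O{}(h^\infty)$ directly from uniqueness of outgoing solutions. The extension to $-Ch\log h^{-1}\le \Im z\le 0$ is then obtained by a semiclassical maximum principle (Lemma \ref{lem:semMax}), applied to the holomorphic family $\chi(R_0-U)\chi$ using the $\O{}(h^\infty)$ bound on the upper edge and an a priori polynomial bound on the strip. This analytic-continuation step is the missing ingredient in your argument; without it, nothing you have written controls $K_R$ below the real axis.
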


\begin{proof}
We now prove Theorem \ref{thm:freeResolve}. Recall that in the context of Fourier integral operator relations we denote a point in $\overline{T}^*M\times \overline{T}^*M'$ by $(x,\xi,y,\eta)$.
 By Lemma \ref{lem:greenFunc}, for $Ch^{1-\gamma}\geq  \Im z\geq 0$, $|\Re z-E|\leq Ch^{1-\gamma}$, and each $M>0$ there exists an operator $U$ that is $z/h$ outgoing with kernel $K(x,y,z/h) \in h^{\frac{3}{2}}I_{\gamma}^{\comp}(\re^d;\Lambda_0,\Lambda_1)+h^{2}\Psi^{-2}_{\gamma}$
where 
\begin{gather*}\Lambda_0:=\{(x,\xi,x,-\xi):x\in \re^d,\,\xi\in T^*_x(\re^d)\}\,,\\
\Lambda_1:=\{(\exp_t(x,\xi), x,-\xi):x\in \re^d,\, |\xi|=E,t\geq 0\}
\end{gather*}
such that for all $\chi,\chi_1\in \Cc(B(0,M))$ with $\chi_1\equiv 1$ on $\supp \chi$,
\begin{gather*}\chi_1(-\Delta-(z/h)^2)U\chi=\chi+\O{\mc{D}'\to \Cc}(h^\infty)\,,\\ 
(-\Delta-(z/h)^2)\chi_1 U\chi =\chi + [-\Delta, \chi_1]U\chi+\O{\mc{D'}\to \Cc}(h^\infty)
\end{gather*}
Thus, 
$$\chi U\chi=\chi R_0(-\Delta-(z/h)^2)\chi_1U\chi=\chi R_0\chi +\chi R_0[-\Delta,\chi_1]U\chi+\O{\mc{D}'\to \Cc}(h^\infty).$$
But, since $\WFh(R_0),\,\WFh(U)\subset \Lambda_0\cup \Lambda_1$, 
$$\chi R_0[-\Delta,\chi_1]U\chi=\O{\mc{D'}\to \Cc}(h^\infty).$$

Hence, for $\Im z\geq 0$ and $\chi\in \Cc$ with $\supp\chi\subset B(0,M)$,
\m \chi U(z/h)\chi =\chi R_0(z/h)\chi+\O{\mc{D}'\to \Cc}(h^\infty).\m 

In order to prove Theorem \ref{thm:freeResolve} we need to compute the symbol of $\chi U\chi $. First, define $P:=-\Delta -z^2/h^2=\weyl(h^{-2}(|\xi|^2-z^2))$. Then, for any $\delta>0$, $P$ has principal symbol
 \m p:=\sigma(P)=h^{-2}(|\xi|^2-E^2)\,\,\m 
and sub-principal symbol
$$\sigma_1(P):=h^{-2}2(E-z):-h^{-2}2\omega_0$$
 as an operator in $\Psi^2_{\delta}$.

Then, by Lemma \ref{lem:greenFunc} we have that
\begin{align*} 
r_0(x,\xi,x,-\xi)&:=\sigma(U)|_{\Lambda_0\cap T^*B(0,R)}\\
&=p^{-1}\sigma(\delta)|dx\wedge d\xi|^{1/2}=h^2(|\xi|^2-E^2)^{-1}|dx\wedge d\xi|^{1/2}.\end{align*} 

\begin{remark} Moreover, we see that in any coordinates each term in the full symbol of $U|_{\Lambda_0}$ has the form
$$a(x,\xi)=\frac{\sum_{|\alpha|=0}^ma_\alpha(x)\xi^\alpha}{|\xi|^2-E^2}$$
where $a_\alpha(x)\in C^\infty$
\end{remark}

\noindent Next, we compute $r_1=\sigma(R_0)|_{\Lambda_1\cap T^*B(0,R)\times T^*B(0,R)}$. Again, by Lemma \ref{lem:greenFunc}, we need to solve 
$$\begin{cases}h H_pr_1+ip_1r_1=0\\
r_1|_{\partial\Lambda_1}=e^{\pi i/4}(2\pi)^{1/2}h^{-1/2}R(r_0)\end{cases}$$
where $H_p$ is the Hamiltonian flow of $p$. Using that $\exp(tH_p)(x,\xi)=\exp_{2th^{-2}}(x,\xi)$, we have 
$$r_1(\exp_{t}(x,\xi),x,-\xi)=e^{i\omega_0 tE/h}e^{\pi i/4}(2\pi )^{1/2}h^{-1/2}R(r_0)(x,\xi,x,-\xi).$$
So, all that remains is to determine $R(r_0)(x,\xi,x,-\xi).$
Taking $g=|\xi|^2-E^2$ and $f=\la x-y,\xi\ra$, gives
\m r_0=h^2(|2\xi_d|^{1/2}g)^{-1}|dx\wedge d\xi' \wedge dg|^{1/2}.\,\,\m
Hence, 
\begin{align*} 
Rr_0&=\frac{h^2}{|2\xi_d|^{1/2}}\{g,f\}^{-1/2}|dx\wedge d\xi' \wedge df|^{1/2}\\
&=\frac{h^2}{2|\xi_d|^{1/2}|\xi|}|dx \wedge d\xi '\wedge (\xi dx-\xi dy+(x-y)d\xi)|^{1/2}.
\end{align*} 

Now, parametrizing of $\Lambda_1$ near $\xi=(0,\ldots,0,E)$ by $(y,\xi',t)$ using the map
$$\Gamma(y,\xi',t)=\left(y+t\xi(\xi'),\xi(\xi'),y,-\xi(\xi')\right)$$
with $\xi(\xi')=\sqrt{E^2-|\xi'|^2}.$
gives 
\m dx\wedge d\xi' \wedge dy_i=\xi_i dy\wedge d\xi' \wedge dt.\,\,\m
Hence, using $\tfrac{E}{\sqrt{E^2-|\xi'|^2}}d\xi'\wedge dt=d\mu_{S_E^{d-1}}(\xi)\wedge dt$,
\begin{align*} R(r_0)(y,\xi',t)&=\frac{h^2}{2(E^2-|\xi'|^2)^{1/4}}|dy\wedge d\xi'\wedge dt|^{1/2}\\
&=\frac{h^2}{2}|E^{-1}dy\wedge d\mu_{S_E^{d-1}}(\xi)\wedge dt|^{1/2}.
\end{align*}

Thus,
$$r_1(y,\theta,t)=\recip{2}e^{\frac{i}{h}\omega_0 tE}e^{\pi i/4}(2\pi)^{1/2}h^{3/2}|E^{-1}dy\wedge d\mu_{S_E^{d-1}}(\xi)\wedge dt|^{1/2}$$
and parametrizing $\Lambda_1$ by $(y,x)$ (instead of $(y,\xi',t)$) for $y\neq x$ gives
$$r_1(x,y)=\frac{E^{(d-3)/2}}{2|x-y|^{(d-1)/2}}e^{\frac{i}{h}\omega_0 |x-y|}e^{(- d+3)\pi i/4}(2\pi)^{1/2}h^{3/2}|dy\wedge dx|^{1/2}.$$
Here, the extra $e^{-\pi (d-2) i/4}$ results from reparametrizing by $x$ instead of $\xi'$, $t$.

Now, taking $\chi\in \Cc(\re^d)$, we have that $R_\chi:=\chi R_0\chi:L^2\to L^2$ continues meromorphically to $\mathbb{C}$ for $d$ odd and to the logarithmic covering space of $\mathbb{C}\setminus \{0\}$ for $d$ even. We show that for $-Ch\log h^{-1}\leq \Im z$, 
\m R_\chi \in h^{3/2}e^{(\Im z)_-D_\chi/h}I^{\comp}_{\gamma}(\re^d;\Lambda_0,\Lambda_1)+h^2\Psi^{-2}+\O{\mc{D}'\to \Cc}(h^\infty).\m 
Moreover, we show that the principal symbol of $R_{\chi}$ is the analytic continuation of that for $\Im z\geq 0$.

To do this, we need the following analog of the three line lemma and semiclassical maximum principle (\cite[Lemma 4.2]{TangZw}, \cite[Lemma 5.1]{Stef}).
\begin{lemma}
\label{lem:semMax}
Suppose that $f(z,h)$ is analytic in 
$$D(h):=[E-5w(h),E+5w(h)]+i[-\alpha(h),\alpha(h)].$$
Let $S(h)=w(h)\alpha^{-1}.$ Assume that $S(h)\to \infty$ and suppose that $|f(z,h)|\leq M_0(h)$ on $$[E-5w(h),E+5w(h)]+i\alpha(h),$$
and $ |f(z,h)|\leq M_1(h)$ on $D(h)$
with $\log \max(M_0(h),M_1(h))=\o{}(S(h)^2).$
Then, 
$|f(z,h)|\leq C M_0^{\frac{\Im z+\alpha}{2\alpha}}M_1^{\frac{\alpha-\Im z}{2\alpha}}$ for $$z\in[E-2w(h),E+2w(h)]+i[-\alpha(h),\alpha(h)]=:\tilde{D}(h).$$
\end{lemma}
\begin{proof}
We follow the proof of \cite[Lemma 4.2]{TangZw}. First, define 
\begin{gather*} 
g(z,h)=(\pi \alpha^2)^{-1/2}\int e^{-\frac{(x-z)^2}{\alpha^2}}\psi_h(x)dx\\
 \psi_h(x)=\begin{cases} 0 &|x-E|\geq 3w(h)\\
1&|x-E|\leq 2w(h)\end{cases}.\end{gather*} 
Then $|g(z,h)|$ is holomorphic in $D(h),$ $|g(z,h)|\geq C$ in $\tilde{D}(h),$ 
$|g(z,h)|\leq C$ in $D(h),$
 and
\m |g(z,h)|\leq Ce^{-CS(h)^2}\text{ on }D(h)\cap |\Re z-E|\geq 4w(h).\,\,\m

Let 
\m F(z,h):=g(z,h)f(z,h)M_0^{-\frac{i(z-i\alpha)}{2\alpha}}M_1^{\frac{i(z+i\alpha)}{2\alpha}}.\,\,\m 
Then $|F(z,h)|\leq 1$ on $ \partial D$ by our assumptions.
By the maximum principle $|F(z,h)|\leq 1$ on $D$. Together with the properties of $g(z,h)$, this gives the result.
\end{proof}

Since for $\Im z\geq 0,$ we have that $\chi( R_0-U)\chi=\O{\mc{D}'\to \Cc}(h^\infty)$, in order to apply Lemma \ref{lem:semMax} to our situation, we need to bound $\chi(R_0-U)\chi$ for $\Im z\leq 0$. In particular, we show that for $\Im z\leq 0$, there exists $N>0$ such that 
\m \chi R_0-U\chi=\O{\mc{D}'\to \Cc}(h^{-N}e^{\frac{1}{h}D_\chi(\Im z)_-}).\m 

Let $\psi\in \Cc(\re)$ have $\psi\equiv 1$ on $|s|<2.$ Then by Lemmas \ref{lem:isecDecomposition} and \ref{lem:freeHighFreq},  
\m \chi (R_0-U)\chi (1-\psi(|hD|))\in h^\infty \Psi_\gamma^{-2} = \O{}(h^\infty)_{\mc{D}'\to \Cc}.\,\,\m 
Now, $\chi \psi(|hD|)U\chi \in h^{3/2}e^{1/hD_\chi (\Im z)_-}I^{\comp}_{\gamma}$. Thus, we see from the definition of an intersecting Lagrangian distribution (Definition \ref{def:isecLagDef}) that there exists $N>0$ such that 
\m \chi \psi(|hD|)U\chi=\O{\mc{D'}\to \Cc}(h^{-N} e^{\frac{1}{h}D_\chi(\Im z)_-}).\m 
This together with standard bounds on the free resolvent (see for example \cite[Theorem 1.2]{Burq}, \cite[Chapter 3]{ZwScat}) gives that 
$$ \chi ( R_0 - U ) \chi = \O{\mc{D'}\to \Cc} ( h^{-N}  e^{\frac{1}{h}D_\chi(\Im z)_-} ).$$

By Lemma \ref{lem:semMax} with 
$$\alpha(h)=Ch\log h^{-1}\,, \,\,\, w(h)=h^{1-\gamma}\,,\,\,\,  M_0=\O{}(h^\infty)\,, \,\,\, M_1=h^{-N}e^{D_\chi(\Im z)_-/h}\,,$$
we have that for $|\Im z|\leq Ch\log h^{-1}$
\begin{equation}
\label{eqn:semMax}\chi (R_0-U)\chi =\O{\mc{D}'\to \Cc}(h^\infty e^{D_\chi(\Im z)_-/h})=\O{\mc{D}'\to \Cc}(h^\infty).
\end{equation}
\end{proof}

\section{Boundary layer operators and potentials away from glancing}
\label{sec:decomposed2}
We now use Theorem \ref{thm:freeResolve} to give microlocal descriptions of the boundary layer operators. We start with the single layer operator. Since the geometry of the situation is the same for all of the boundary layer operators only the computation of symbols will need to be repeated for the $\Dl$ and $\dDl$.
\subsection{Decomposition of $G$}

Recall that $G(z)=\gamma R_0(z/h) \gamma^*$ where $\gamma$ denotes restriction to $\partial\Omega$ and $R_0(\lambda)$ denotes the free outgoing resolvent of $-\Delta -\lambda^2$. We have that $\gamma$ is a semiclassical Fourier integral operator of order $0$ associated to the relation $C\subset T^*\partial\Omega\times T^*\re^d$ given by
$$C=\{(x,\xi',x,\xi):x\in \partial\Omega,\,\xi=\xi_\nu+\xi'\text{ with }\xi_{\nu}\in N_x^*(\partial\Omega),\,\, \xi'\in T^*_x\partial\Omega\}.$$
Thus, $\gamma^*$ is a Fourier integral operator of order $1/4$ associated to the relation $C^{-1}\subset T^*\re^d\times T^*\partial\Omega$.
Then $\gamma\in h^{-1/4}(I^0(C)) $ and $\gamma^*\in h^{-1/4}(I^0(C^{-1}))$ have symbols given by 
$$\sigma(\gamma^*)|_{C^{-1}}=(2\pi h)^{-1/4}|dy\wedge d\xi|^{1/2}\,,\quad \text{ and }\quad \sigma(\gamma)|_C=(2\pi h)^{-1/4}|dx\wedge d\eta|^{1/2}.$$

When $\Omega$ has a smooth boundary, we decompose $G(z)$ into three parts: $G_\Delta$, $G_B$, and $G_g$. $G_\Delta$ is a mildly exotic pseudodifferential operator of order $-1$. $G_B$ is a semiclassical FIO of order $-1$ associated to the billiard ball relation. $G_g$ is an operator $G_g$ microsupported in an $h^\e$ neighborhood of $S^*\partial\Omega\times S^*\partial \Omega$ intersected with the diagonal of $T^*\partial\Omega\times T^*\partial\Omega$.

We now decompose $G$ as claimed above. We begin by showing that the compositions  $C\composed (\Lambda_i)'\composed C^{-1}$ are clean away from the diagonal or away from $S^*\partial\Omega$. First, consider $C\composed (\Lambda_0)'\composed C^{-1}$. We need only work locally, so we assume that $\partial\Omega=\{(x',\Gamma(x'):x'\in U\}.$ Then, 
\begin{align*} C=&\{(x',\xi, (x',\Gamma(x')),(\xi-\tau \nabla\Gamma(x')\, ,\,\nabla \Gamma(x')\cdot \xi+\tau))\,:\, \tau \in \re, x'\in U\}.\\
TC=&\{(\delta_{x'},\delta_\xi, (\delta_{x'}, \nabla\Gamma(x')\cdot \delta_{x'}),\\
&(\delta_\xi -\tau \partial^2\Gamma(x')\delta_{x'}-\delta_{\tau}\nabla\Gamma(x')\,,\,\nabla\Gamma(x')\cdot \delta_\xi +\delta_{x'}\cdot \partial^2\Gamma(x')\xi'+\delta_{\tau})\}
\end{align*}
and $C^{-1}$ and $TC^{-1}$ are obtained by reversing the roles of $T^*\partial\Omega$ and $T^*\re^d$. Then, it is easy to check that $C\composed \Lambda_0$ is clean (indeed, even transverse) and given by 
$C\composed \Lambda_0=C.$ Now, without loss of generality, we can assume that $\nabla \Gamma(y')=\Gamma(y')=0$. so 
\begin{gather*}\begin{aligned}  A&:=(C\times C^{-1})\cap (T^*\partial\Omega \times \Delta(T^*\re^d)\times T^*\partial\Omega)\\
&=\{(y',\eta,(y',0),(\eta,0)+\tau (0,1),y',\eta) \}
\end{aligned}\\
\begin{aligned}TA&=\{(\delta_{y'},\delta_\eta,(\delta_{y'},0), (\delta_\eta-\tau\partial^2\Gamma(x')\delta_{x'},\delta_{y'}\partial^2\Gamma \eta+\delta_{\tau})\}.
\end{aligned}
\end{gather*}

\begin{remark} 
Since we intersect with the diagonal in these formulae, we have suppresed one of the pairs in $T^*\re^d$. 
\end{remark}

\noindent On the other hand $B=TC\times TC^{-1}\cap (T(T^*\partial\Omega\times \Delta(T^*\re^d)\times T^*\partial\Omega))$ at $(y',\eta,(y',0), (\eta,0)+\sigma(0,1), y',\eta)$ is given by
$$B=\{(\delta_{x'},\delta_\xi, (\delta_{y'},0),(\delta_\eta-\sigma\partial^2\Gamma(y')\delta_{y'}\,,\,\delta_{y'}\partial^2\Gamma\eta +\delta_{\sigma}),\delta_{y'},\delta_\eta )\}$$
where
\begin{multline*} 
(\delta_\eta,0)+(0,\delta_{y'}\partial^2\Gamma\eta) +\sigma(-\partial^2\Gamma(y')\delta_{y'},0)+\delta_\sigma(0,1)\\=(\delta_\xi,0)+(0,\delta_{x'}\partial^2\Gamma\eta) +\sigma(-\partial^2\Gamma (y')\delta_{x'},0)+\delta_\tau(0,1)\end{multline*}
and
$\delta_{y'}=\delta_{x'}$. But, since $(0,1)$ is linearly independent from $T_y\partial\Omega $, this implies that $\delta_\eta=\delta_\xi$, $\delta_\tau=\delta_\sigma$ and hence the composition is clean.

Now, recall that 
\begin{gather*}
\Lambda_1=\{(x+t\xi,\xi,x,\xi)\,:\,\xi \in S^{d-1}\,,t\geq 0\}
\end{gather*}
we consider or $\xi\notin T^*\partial\Omega$ Thus,
\begin{gather*}
T\Lambda_1=\{(\delta_x+t\delta_{\xi}+\delta_t\xi,\delta_\xi,\delta_x,\delta_\xi)\,:\delta_{\xi}\in T_\xi S^{d-1}\} 
\end{gather*}
To see that $T(\Lambda_1\times C^{-1})\cap T(T^*\re^d\times \Delta(T^*\re^d)\times T^*\partial\Omega)$ is transverse at 
$$((y',\Gamma(y'))+t\xi, \xi,(y',\Gamma(y')),\xi,y',\eta)$$
 where $\xi-\eta \in N^*_{y'}\partial\Omega$, we choose $\delta_z=\alpha(-\nabla \Gamma(y'),1)$ and $\delta_\zeta=\beta \xi.$ Then for any $v\in \re^d$, $v=\beta \xi+\delta_\xi$ for some $\beta\in \re$ and $\delta_\xi\in T_\xi S^{d-1}$. Moreover, any $w\in \re^d$ can be written $w=\delta_{y'}+\alpha(-\nabla \Gamma(y'),1)$ for some $\delta_{y'}\in T_{y'}\partial\Omega$ and $\alpha\in \re$
Thus, 
$$T(\Lambda_1\times C^{-1})+T(T^*\re^d\times \Delta(T^*\re^d)\times T^*\partial\Omega)=T(T^*\re^d\times T^*\re^d\times T^*\re^d\times T^*\partial\Omega) $$ 
and the composition is transverse. Now
\begin{gather*}
\Lambda_1\composed C^{-1}=\{((y',\Gamma(y'))+t\xi,\xi, y',\eta)\,:\, t\geq 0,\, \xi\in S^{d-1},\, \xi-\eta \in N^*_{y'}\partial\Omega\} \\
T(\Lambda_1\composed C^{-1})=\left\{\begin{gathered}((\delta_{y'},\nabla\Gamma(y'))+\delta_t\xi+t\delta_\xi,\delta_\xi,\delta_{y'},\delta_\eta) \,:\,\\ \delta_\eta=d\pi \delta_\xi\,,\,\delta_{\xi}\in T_\xi S^{d-1} \end{gathered}\right\}
\end{gather*}
Now, if $t>0$, it is clear that any vector $w\in \re^d$ can be written $w=\delta_t\xi+t\delta_{\xi}$. On the other hand, if $t=0$, but $\xi\notin T_y^*\partial\Omega$, then we have that $w$ can be written as 
$$w=(\delta_{y'},\nabla\Gamma(y')\cdot \delta_{y'})+\delta_t\xi.$$
Moreover, parametrizing $\partial\Omega$ near a point $x$ in the intersection with $C$ by $(x',\Gamma_1(x'))$, $w$ can be written
$$w=\delta_{\zeta}+\tau(-\partial^2\Gamma_1\delta_{x'},0)+\delta_\tau(-\nabla \Gamma_1(x'),1)$$
for $\delta_\zeta \in TT^*_y\partial\Omega$. So, an identical analysis to that for the composition on the right by $C^{-1}$ gives that $C\composed \Lambda_1\composed C^{-1}$ is transverse away from the diagonal as well as at the diagonal, but away from $T^*\partial\Omega$.

Since $\partial\Omega\Subset \re^d$, we may take $\chi \equiv 1$ on $\Omega$ in Theorem \ref{thm:freeResolve}. Then by composing relations, using Lemma \ref{lem:fioIsectLagrangian}, and observing that the composition is transverse, we see that for $-3/2<s$,
\begin{equation}
\label{eqn:oneSideRestrict}
 R_\chi\gamma^* \in h^{5/4}I_\gamma^{\comp}( \re^d\times \partial\Omega; \Lambda_0\composed C^{-1}, \Lambda_1\composed C^{-1})+h^{7/4}I_\gamma^{-2}( \Lambda_0\composed C^{-1})+R_1
\end{equation}
where $R_1=\O{\mc{D}'(\partial\Omega)\to C^\infty(\re^d)}(h^\infty)$.
\begin{remark} This implies that the single layer potential has the above decomposition.
\end{remark}

We have that $C$ composes on the left with $\Lambda_1\composed C^{-1}$ transversally. However, $C$ composes on the left with $ \Lambda_0 \composed C^{-1}$ only cleanly. Thus, we cannot apply Lemma \ref{lem:fioIsectLagrangian} in this case to obtain $\gamma R_\chi \gamma^*=\gamma R_0\gamma^*$. Note also that Lemma \ref{lem:FIOcomp} (or rather its homogeneous analog) does not apply directly  to the composition forming $C\composed \Lambda_0\composed C^{-1}$ since  there exist $(x,\xi,y,\eta)\in \Lambda_0\composed C^{-1}$ such that $C(x,\xi)=(x,0)$. Instead we use the following lemma combined with more detailed analysis near fiber infinity.
\begin{lemma}
\label{lem:intersectionPieces}
Suppose that $\partial\Omega$ is smooth and 
$$A\in I^{\comp}(\re^d\times \partial\Omega ; \Lambda_0\composed C^{-1}, \Lambda_1\composed C^{-1}).$$
Then 
$\gamma A=A_1+A_2+R$
where $A_1\in h^{-1/4-\delta/2}I_\delta^{\comp}(C\composed \Lambda_1\composed C^{-1})$, $A_2\in h^{-1/4-\delta/2}I_\delta^{\comp}(C\composed \Lambda_0\composed C^{-1})$ and
 $R$ is microlocalized on an $h^\delta$ neighborhood of the intersection of $S_E^*\partial\Omega\times S_E^*\partial\Omega$ with the diagonal. Moreover, the symbol $A_2$ can be computed using Lemma \ref{lem:FIOcomp} in the sense that 
$$\sigma(A_2)=(2\pi h)^{-3/4}\int \sigma(A\psi(|hD|'))|_{\Lambda_0\composed C^{-1}}|d\la \nu_x,\xi\ra |^{1/2}$$
where $\psi$ is supported $h^\delta$ away from $|\xi'|=E$ and the integral is interpreted as a distributional pairing.
\end{lemma}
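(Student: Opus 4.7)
The plan is to use the intersecting-Lagrangian structure of $A$ (from Appendix \ref{ch:iLagrange}) to split it into pieces associated to each Lagrangian and then separately compose with $\gamma$, handling the glancing contribution with the tangential frequency cutoff from the statement. Using Lemma \ref{lem:isecDecomposition}, I would first write
$$A=A^{(0)}+A^{(1)}+\O{\mc{D}'\to C^\infty}(h^\infty),$$
where $A^{(0)}\in I^{\comp}(\Lambda_0\composed C^{-1})$ is an ordinary Lagrangian distribution and $A^{(1)}$ is a Lagrangian distribution for $\Lambda_1\composed C^{-1}$ whose symbol is controlled but possibly singular along the intersection with $\Lambda_0\composed C^{-1}$. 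Simultaneously I introduce the cutoff $\psi\in C^\infty(\re)$ of the statement, vanishing in an $h^\delta$-neighborhood of $|\xi'|=E$, and set $R:=\gamma A(1-\psi(|hD|'))$; by construction and because $\gamma$ restricts wavefront to $\partial\Omega$, this piece is microsupported in an $h^\delta$-tube about the diagonal of $S^*_E\partial\Omega\times S^*_E\partial\Omega$, matching the conclusion.

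I then treat the cutoff pieces separately. For $\gamma A^{(1)}\psi(|hD|')$, the computation preceding the lemma establishes that $C$ composes transversally with $\Lambda_1\composed C^{-1}$ away from the glancing sphere $|\xi'|=E$, so on the support of $\psi$ Lemma \ref{lem:FIOcomp} gives directly a Fourier integral operator in $h^{-1/4-\delta/2}I_\delta^{\comp}(C\composed\Lambda_1\composed C^{-1})$, which I designate $A_1$; the $h^{-1/4}$ reflects the order of $\gamma$ and the extra $h^{-\delta/2}$ records the $S_\delta$ quantization of $\psi$ on the $h^\delta$-scale. For $\gamma A^{(0)}\psi(|hD|')$ the composition $C\composed\Lambda_0\composed C^{-1}$ is clean with excess one, the clean direction being the normal momentum $\tau=\la\nu_x,\xi\ra$ that $C^{-1}$ freely introduces and $C$ then projects out. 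In local coordinates with $\partial\Omega=\{x_d=0\}$, $A^{(0)}$ has an oscillatory representation with phase $\la x'-y',\eta'\ra+x_d\tau$, and $\gamma$ acts by setting $x_d=0$; the clean-composition clause of Lemma \ref{lem:FIOcomp} then integrates the amplitude over $\tau$, producing $A_2\in h^{-1/4-\delta/2}I_\delta^{\comp}(C\composed\Lambda_0\composed C^{-1})$ with the principal-symbol formula
$$\sigma(A_2)=(2\pi h)^{-3/4}\int\sigma(A^{(0)}\psi(|hD|'))|_{\Lambda_0\composed C^{-1}}\,|d\la\nu_x,\xi\ra|^{1/2}.$$
Since $\sigma(A^{(1)})$ has support on $\Lambda_1\composed C^{-1}$, which coincides with $\Lambda_0\composed C^{-1}$ only on the intersection locus, $A^{(0)}$ may be replaced by $A$ on the right-hand side, matching the stated formula.

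The principal technical obstacle is the intersection $\Lambda_0\cap\Lambda_1$: there $\sigma(A^{(1)})$ is singular along $\Lambda_0\composed C^{-1}$ and the splitting $A=A^{(0)}+A^{(1)}$ is non-canonical. I would resolve this by invoking the formalism of intersecting Lagrangian distributions from Appendix \ref{ch:iLagrange}, which guarantees that after composition with $\gamma$ on the support of $\psi$, every singular contribution of $A^{(1)}$ is absorbed into $A_1\in I_\delta^{\comp}(C\composed\Lambda_1\composed C^{-1})$ without polluting $A_2$. The $h^{-\delta/2}$ appearing in both final orders is the standard $S_\delta$-calculus cost of differentiating the $h^\delta$-scale cutoff $\psi$, and the microsupport property of $R$ follows at once from the localization of $1-\psi$ together with the transport of wavefront through $\gamma$.
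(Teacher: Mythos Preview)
Your approach has a genuine gap at exactly the point you flag as the ``principal technical obstacle.'' You propose to split $A=A^{(0)}+A^{(1)}$ with $A^{(0)}\in I^{\comp}(\Lambda_0\composed C^{-1})$ and then apply the clean-composition clause of Lemma~\ref{lem:FIOcomp} to $\gamma A^{(0)}$. But Lemma~\ref{lem:isecDecomposition} does not furnish such a global splitting: it only tells you that $Bu$ is an ordinary Lagrangian distribution once $B$ cuts away from one of the two Lagrangians, and since $\partial\Lambda_1\subset\Lambda_0$ you cannot cut away $\Lambda_1\composed C^{-1}$ without also excising part of $\Lambda_0\composed C^{-1}$. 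Your fallback---``invoking the formalism of intersecting Lagrangian distributions from Appendix~\ref{ch:iLagrange}''---does not help either: the relevant FIO composition result there is Lemma~\ref{lem:fioIsectLagrangian}, which requires that the composition be \emph{transversal} for both Lagrangians, whereas the text immediately preceding the present lemma explains that $C\composed(\Lambda_0\composed C^{-1})$ is only clean, and that Lemma~\ref{lem:FIOcomp} does not apply directly because points with $C(x,\xi)=(x,0)$ occur. So neither the splitting nor the composition you invoke is available.

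The paper's proof takes a different, concrete route that you should adopt. After using Lemma~\ref{lem:isecDecomposition} to reduce to an $h^\delta$-neighborhood of the diagonal intersected with $S^*\re^d|_{\partial\Omega}\times S^*\re^d|_{\partial\Omega}$, it introduces a \emph{physical-space} two-scale cutoff in $|x-y|$ at scales $h^\gamma$ and $h^\delta$ with $1/2<\gamma<1-\delta$. The annular piece is shown by nonstationary phase (gradient $\gtrsim h^\delta$, cutoff derivatives cost $h^{-\gamma}$) to be microsupported at glancing and hence lands in $R$. For the very-near-diagonal piece $|x-y|\lesssim h^\gamma$, one writes $A$ via the singular-symbol representation of Lemma~\ref{lem:singSymbol} and then restricts. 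The crucial geometric input, which your argument never uses, is that $\la x-y,\nu_x\ra=\O{}(|x-y|^2)=\O{}(h^{2\gamma})$ on $\partial\Omega\times\partial\Omega$; since $2\gamma>1$, the factor $e^{\frac{i}{h}\la x-y,\nu_x\ra\xi_\nu}$ can be Taylor-expanded term by term, each term handled by integration by parts in the tangential variable and then integration in $\xi_\nu$ against the distributional symbol. This bypasses the failed clean/transversal composition entirely and simultaneously produces the symbol formula for $A_2$.
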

\begin{proof}
By Lemma \ref{lem:isecDecomposition}, we need only consider an $h^\delta$ neighborhood of the diagonal intersected with $S^*\re^d|_{\partial\Omega}\times S^*\re^d|_{\partial\Omega}$. 
Let $\chi\in \Cc(\re)$ with $\chi \equiv 1$ near $0$. Let \begin{gather*} A_1(x,y)=\chi(|x-y|/h^\gamma)\chi(|x-y|/h^\delta)A(x,y)\\
A_2(x,y)=(1-\chi(|x-y|/h^\gamma))\chi(|x-y|/h^\delta)A(x,y)
\end{gather*}
where $A(x,y)$ is the kernel of $A$. Then, we can write for $B\in \Ph{}{\delta}(\partial\Omega)$
$$B\gamma A_2(x,y) =\begin{aligned}h^{-M}\int_{\re^{2d-1}} e^{\frac{i}{h}(\la x-w,\eta\ra + E|w-y|)}(1-\chi(|w-y|/h^\gamma))&\\
 &\!\!\!\!\!\!\!\!\!\!\!\!\!\!\!\!\!\!\!\!\!\!\!\!\!\!\!\!\!\!\!\!\!b(x,\eta)a(w,y)dwd\eta.\end{aligned}$$
But, since $d_w|w-y|\to 1$ as $w\to y$, we have that the phase is nonstationary with gradient bounded below by $ch^\delta$ if $b$ is supported $h^\delta$ away from $|\eta|=E$. Hence, integrating by parts we lose at most $h^\gamma$ and gain $h^{1-\delta}$, so when $\gamma<1-\delta$, we obtain a kernel in $\O{C^\infty}(h^\infty).$ Similarly, we have the same result for $A_2B$. 

Next, consider $A_1$. Let $B$ be microlocalized $h^\delta$ away from $|\eta|=E$. That is, since $\eta\in T^*\pO$, $h^\delta$ away from the glancing set.
Then the kernel of $BA_1$ can be written 
$$B\gamma A_1(x,y)=\begin{aligned}(2\pi h)^{-d-1/4}\int_0^\infty \!\!\!\!\int e^{\frac{i}{h}\left(\la x-y,\eta+\xi_\nu\nu_x\ra-\frac{t}{2}(|\eta|^2+\xi_\nu^2-E)\right)}&\\&
\!\!\!\!\!\!\!\!\!\!\!\!\!\!\!\!\!\!\!\!\!\!\!\!\!\!\!\!\!\!\!\!\!b(x,\eta)a(x,\xi)d\xi d\eta  dt.\end{aligned}$$
Then, using Lemma \ref{lem:singSymbol} evaluating the $t$ integral as a distribution, we have
$$\gamma A_1(x,y)=\frac{-2i}{(2\pi h)^{d-3/4}} \int e^{\frac{i}{h}\left(\la x-y,\eta+\xi_\nu\nu_x\ra\right)}\frac{b(x,\eta)a(x,\eta+\xi_\nu\nu_x)}{|\eta|^2+\xi_\nu^2-E-i0}d\xi_\nu d\eta $$
$\eta\in T_x\partial\Omega$ and $\nu_x$ is the unit normal to $\partial\Omega$ at $x$. Note that since $||\eta|-E|\geq ch^\delta$ $(\xi_\nu^2+|\eta|^2-E-i0)^{-1}\in h^{-\delta/2}\mc{S}'$ as a distribution in $\xi_\nu$.  We are working in a small neighborhood of $|\xi|=E$, so we can assume that the integrand is compactly supported in $\xi_\nu$. Now, $\la x-y,\nu_y\ra =\O{}(|x-y|^2)$ and $|x-y|=\O{}(h^\gamma)$ with $\gamma>1/2$. So, we obtain an accurate representation using the Taylor expansion of $e^{\frac{i}{h}\la x-y,\nu_x\xi_\nu\ra}$. Then, a typical term is of the form 
$$\frac{-2i}{(2\pi h)^{d-3/4}} \int e^{\frac{i}{h}\la x-y,\eta\ra}\frac{(\la x-y,\nu_x\ra \xi_\nu)^j}{h^jj!}\frac{b(x,\eta)a(x,\eta+\xi_\nu\nu_x)}{|\eta|^2+\xi_\nu^2-E-i0}d\xi_\nu d\eta.$$
So, integrating by parts $2j$ times in $\eta$, we gain $h^{2j}$. Integrating in $\xi_\nu$ gives the result.
\end{proof}

Now, let $\psi \in \Cc(\re)$ with $\psi\equiv 1$ near $0$ and let $0\leq \e<1/2$. Then, writing $R_{\chi}(x,y)$ for the kernel of $R_\chi$, define
$$\begin{aligned}R_\chi(x,y)=&R_\chi(x,y)(1-\psi(h^{-\e}|x-y|))+R_\chi(x,y)\psi(h^{-\e}(|x-y|))\\
=:&R_1(x,y)+R_2(x,y).\end{aligned}$$
Then, recalling that $G:=\gamma R_0\gamma^*$,
\begin{equation}
\label{eqn:decompExact}
\begin{aligned}G=&\gamma R_1\gamma^*+\gamma R_2\gamma^*(1-\psi(h^{-\e}(|hD'|_g-E)))+\gamma R_2\gamma^*\psi(h^{-\e}(|hD'|_g-E))\\
=:&G_B+G_\Delta+G_g.\end{aligned}
\end{equation}
We will see that in spite of the difficulty at fiber infinity, $G_{\Delta}$ is still a pseudodifferential operator. As in Section \ref{sec:LayerPotential}, to interpret $G_{\Delta}$ appropriately, we must view $\gamma$ as one of two objects, $\gamma^{+}$ for the limit from inside $\Omega$ and $\gamma^{-}$ for that from outside $\Omega$. In Lemma \ref{lem:layer} we saw that $G$ is independent of the choice of $\gamma^{\pm}$, so we choose $\gamma^{+}$. 

\begin{lemma}
\label{lem:diagPiece}
Suppose $A\in \Ph{m}{\delta}(\re^d)$. Choose coordinates so that $\partial\Omega=\{x_d=0\}$ and let $a$ have $A=\opht{0}(a)$ for $a\in \Scl{m}{\delta}(T^*\re^d)$. Suppose that 
\begin{equation*} a(y,\xi)\sim \sum_{j=-\infty}^mC_{j,\pm}(y,\xi')|\xi_d|^j\quad \quad \xi_d\to \pm \infty \end{equation*}
and for $j>-2$, $C_{j,+}=(-1)^jC_{j,-}$. Then, $\gamma^{\pm} A\gamma^*\in h^{-1}\Ph{m+1}{\delta}(\partial\Omega)$.

Moreover, for such operators $A$, the symbol calculus contained in Lemma \ref{lem:FIOcomp} applies in the sense that 
$$\sigma(\gamma^{\pm}A\gamma^*)=(2\pi h)^{-1}\int \sigma(A)(x,\xi'-\nu_x\xi_d)d\xi_d$$
where $\nu_x$ is the outward unit normal to $\partial\Omega$ and the integral is interpreted as $\pm 2\pi i$ times the sum of residues in $\pm \Im \xi_d>0$ if $\sigma(A)$ is not integrable.
\end{lemma}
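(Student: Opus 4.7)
The proof proceeds by direct computation in local coordinates. A partition of unity in $x$ reduces to the case where $a$ is supported in a single chart with $\partial\Omega = \{x_d = 0\}$, $\Omega = \{x_d < 0\}$, and $\nu_x = \partial_{x_d}$. Applying $A = \opht{0}(a)$ to $\gamma^* f = f(y')\otimes\delta(y_d)$ and taking the trace as $x_d \to 0^\pm$ yields
\begin{equation*}
\gamma^{\pm} A \gamma^* f(x') = (2\pi h)^{-(d-1)}\int e^{i(x'-y')\cdot\xi'/h}\, q^\pm(x', \xi')\, f(y')\, dy'\, d\xi',
\end{equation*}
where the candidate symbol is
\begin{equation*}
q^\pm(x', \xi') = (2\pi h)^{-1}\lim_{x_d \to 0^\pm}\int_\re e^{i x_d \xi_d/h}\, a((x',x_d), (\xi', \xi_d))\, d\xi_d.
\end{equation*}
The proof reduces to showing that this limit exists as a symbol of order $m+1$ in $\xi'$ and identifying it with the residue expression in the statement.

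To handle the non-integrability in $\xi_d$, decompose $a = a_{\mathrm{poly}} + a_{\mathrm{res}}$ with
\begin{equation*}
a_{\mathrm{poly}}(y, \xi) = \chi(\xi_d)\sum_{j=-1}^{m} C_j(y, \xi')\,\xi_d^{j},
\end{equation*}
where $\chi\in C^\infty(\re)$ vanishes near $0$ and equals $1$ for $|\xi_d|\geq 2$, and $C_j$ is the single-valued coefficient defined via the matching condition $C_{j,+}=(-1)^j C_{j,-}$; this is precisely what makes $a_\mathrm{poly}$ a consistent global function in $\xi_d$ capturing the leading asymptotics of $a$ at both $\xi_d \to \pm\infty$, and it leaves $a_{\mathrm{res}} = a - a_{\mathrm{poly}}$ in the class $S^{-2}_{\delta}$ in $\xi_d$. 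For $a_{\mathrm{res}}$, the $\xi_d$ integral against $e^{ix_d\xi_d/h}$ converges absolutely and is continuous at $x_d = 0$, contributing a direct integral to $q^\pm$. For $a_{\mathrm{poly}}$, split $\chi(\xi_d)\xi_d^j = \xi_d^j - (1-\chi(\xi_d))\xi_d^j$: the first summand Fourier-transforms in $\xi_d$ to $2\pi h(-ih)^j\, \delta^{(j)}(x_d)$, which vanishes for $x_d \neq 0$, while the second is Schwartz in $x_d$ and contributes continuously at $x_d = 0$. Any $x_d$-dependence of $a$ itself is handled by Taylor expansion of $a((x',x_d), \xi)$ about $x_d = 0$, converting powers of $x_d$ into $\xi_d$-derivatives via integration by parts in $e^{ix_d\xi_d/h}$ and producing only lower-order terms.

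Summing the contributions, the cutoff-dependent pieces of $a_{\mathrm{poly}}$ and $a_{\mathrm{res}}$ cancel to give a cutoff-independent symbol $q^\pm(x',\xi')$ of order $m+1$, and differentiating through the decomposition yields the required seminorm estimates; this establishes $\gamma^{\pm}A\gamma^* = \opht{0}(q^\pm) \in h^{-1}\Ph{m+1}{\delta}(\partial\Omega)$.

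For the residue form, observe that whenever $\sigma(A)((x',0), (\xi',\cdot))$ extends meromorphically into $\{\pm\Im\xi_d > 0\}$ with adequate decay along a large semicircular arc, the limit as $x_d \to 0^\pm$ of the $\xi_d$ integral is obtained by closing the contour in that half-plane, producing $\pm 2\pi i$ times the sum of residues there (the exponential factor at each residue tending to $1$ in the limit). The matching hypothesis $C_{j,+} = (-1)^j C_{j,-}$ is precisely what aligns the polynomial behavior of the symbol at infinity on the two half-planes, so contour deformation is consistent modulo the absolutely convergent $a_{\mathrm{res}}$. The main obstacle is giving this residue formula rigorous content when $a$ is only $C^\infty$ in $\xi_d$; this is precisely what the decomposition $a = a_{\mathrm{poly}} + a_{\mathrm{res}}$ achieves.
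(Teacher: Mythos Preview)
Your overall strategy matches the paper's: split $a$ into a piece that carries the leading asymptotics in $\xi_d$ and an integrable remainder, then handle each piece separately. The gap is in your treatment of the $j=-1$ term. Your split $\chi(\xi_d)\xi_d^{-1}=\xi_d^{-1}-(1-\chi(\xi_d))\xi_d^{-1}$ introduces two functions singular at $\xi_d=0$, and your claim that the Fourier transform of $\xi_d^j$ is $2\pi h(-ih)^j\delta^{(j)}(x_d)$ and ``vanishes for $x_d\neq 0$'' is simply false for $j=-1$: the Fourier transform of (p.v.)\,$\xi_d^{-1}$ is proportional to $\sgn(x_d)$, which carries a jump across $x_d=0$. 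That jump is precisely the source of the $\pm$ in the one-sided limits and of the residue formula, so your argument as written loses exactly the content of the lemma that distinguishes $\gamma^+$ from $\gamma^-$.

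The paper avoids this by choosing a genuinely \emph{rational} function as the bad part,
\[
q(y,\xi)=\frac{\sum_{j=-1}^{m}C_{j,+}(y,\xi')\,\xi_d^{\,j+2}}{|\xi|^2+1},
\]
which has the same asymptotics as $a$ to order $-1$ in $\xi_d$ (so $r=a-q$ is $O(|\xi_d|^{-2})$ and absolutely integrable) but, being rational with poles at $\xi_d=\pm i\sqrt{|\xi'|^2+1}$, allows the one-sided limits to be computed by Jordan's lemma and residues in a single line. This simultaneously proves existence of the limits, identifies the symbol in $h^{-1}S_\delta^{m+1}$, and gives the residue interpretation in the statement without any appeal to a hypothetical meromorphic extension of $a$ itself. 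Replacing your cutoff polynomial $a_{\mathrm{poly}}$ by this rational $q$ repairs the argument and makes the last paragraph of your proposal unnecessary.
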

\begin{proof}
Let 
$$q(y,\xi)=\frac{\sum_{j=-1}^m C_{j,+}(y,\xi')\xi_d^{j+2}}{|\xi|^2+1}$$
and $r=a-q$. 
Then, $A\gamma^*$ has kernel 
$$(2\pi h)^{-d}\iint  e^{\frac{i}{h}(x_d\xi_d+\la x'-y',\xi'\ra)}(q(y',\xi)+r(y',\xi))d\xi_dd\xi'.$$
The integral in involving $r$ in $\xi_d$ is well defined at $x_d=0$ since $|r|\leq C|\xi_d|^{-2}$ for $|\xi_d|$ large. Moreover, since for $n<-1$, 
\begin{align*} 
\int \la \xi\ra^{n}d\xi_d\leq \la \xi'\ra^{n}\int \la \xi_d\la \xi'\ra^{-1}\ra^{n}d\xi_d\leq \la \xi'\ra^{n+1}
\end{align*}
$\gamma^* \opht{0}(r)\gamma \in h^{-1}\Ph{m+1}{\delta}(\partial\Omega)$. 

Now, consider $q$. In this case, we must take a limit as $x_d\to 0$ from above or below since the integral is not apriori well defined.
Consider 
$$u=(2\pi h)^{-1}\int e^{ix_d\xi_d}q(y',\xi')d\xi_d \in \mc{S}'(\re).$$
Let $f_{\pm}\in \Cc(\re_{\pm})$ and write
\begin{align} 
u(f)&=(2\pi h)^{-1}\iint e^{\frac{i}{h}x_d\xi_d}q(y',\xi)f(x_d)dx_d d\xi_d\nonumber\\
&=(2\pi h)^{-1}\iint e^{\frac{i}{h}x_d\xi_d}\la \xi_d\ra^{-2k}\left(1-(h\partial_{x_d})^2\right)^k(q(y',\xi)f(x_d))dx_dd\xi_d\nonumber\\
&=(2\pi h)^{-1}\iint e^{\frac{i}{h}x_d\xi_d}\la \xi_d\ra^{-2k}q(y',\xi)(1-h^2\partial^2_{x_d})^{k}f d\xi_ddx_d.\nonumber\end{align}
So, since $q(y,\xi)$ is rational in $\xi_d$, applying Jordan's lemma, we have
\begin{align*}
u(f)&=\frac{i}{h}\int e^{\mp\frac{x_d}{h}\sqrt{|\xi'|^2+1}}\la \sqrt{|\xi'|^2+1}\ra^{-2k}\\
&\quad\quad\quad\frac{\sum_{j=-1}^mC_{j,+}(y',\xi')(\pm i\sqrt{|\xi'|^2+1})^j}{\pm 2i\sqrt{|\xi'|^2+1}}(1-h^2\partial_{x_d}^2)^{k}fdx_d\label{eqn:Jordan}\\
&=\pm \frac{i}{h}\int e^{\mp\frac{x_d}{h}\sqrt{|\xi'|^2+1}} \frac{\sum_{j=-1}^mC_{j,+}(y',\xi')(\pm i\sqrt{|\xi'|^2+1})^j}{\pm 2i\sqrt{|\xi'|^2+1}}f(x_d)dx_d.\nonumber
\end{align*}
Now, let $f_n\to \delta_0$ $f_n\in \Cc(\re_\pm)$. Then, we have 
$$\lim_{\pm x_d\downarrow 0}u(x_d)=\pm \frac{i}{h}\frac{\sum_{j=-1}^mC_{j,+}(y',\xi')(\pm i\sqrt{|\xi'|^2+1})^j}{\pm 2i\sqrt{|\xi'|^2+1}}\in h^{-1}S^{m+1}_\delta.$$
So, we have that $\gamma^{\pm}\opht{0}(q)\gamma^*\in h^{-1}\Ph{m+1}{\delta}(\partial\Omega)$ as desired.
\end{proof}

Applying Lemmas \ref{lem:intersectionPieces} and \ref{lem:diagPiece}, we have that away from glancing or the diagonal, $\gamma R_0\gamma^*$ is composed of a Fourier integral operator, $G_B$, associated with the relation
\begin{equation*}
C_b:=\left\{\begin{gathered}(\pi(\exp_{t}(x,\xi)),x,\xi'_2)\\
:(x,\xi_2')\in B^*\partial\Omega, \, (x,\xi)\in\pi^{-1}(x,\xi_2),\,
t\geq 0,\, \exp_t(x,\xi)\in S_E^*\re^d|_{\partial\Omega}\end{gathered}\right\}.
\end{equation*}
and a pseudodifferential operator  $G_{\Delta}$.
Here $\pi$ is orthogonal projection $S_E^*\re^d|_{\partial\Omega}\to \overline{B_E^*\partial\Omega}$ and $S_E^*\re^d$ and $B_E^*\partial\Omega$ are respectively the cosphere and coball bundles of radius $E$.

\begin{remark} Note that in the case $\Omega$ is strictly convex, $C_b$ is parametrized by $\beta_E$ where 
\m \beta_E(x,\xi')=(\pi_x\composed \beta(x,\xi'/E),E\,\pi_\xi \composed \beta(x,\xi'/E)).\,\,\m
\end{remark}

\noindent Next, observe that by \eqref{eqn:oneSideRestrict}, when we compose $R_{\chi}\gamma^*$ on the left by $\gamma$, the remainder term is 
$\O{\mc{D}'\to C^\infty}(h^\infty)$ as desired.

Putting this together, we have
\m G(z):=G_{\Delta}(z)+G_B(z)+G_g(z) +\O{\mc{D}'\to C^\infty}(h^\infty)\,\,\m 
where $G_{\Delta}$ is pseudodifferential, $G_B$ is a Fourier integral operator associated with the relation $C_b$, and $G_g$ has $\MS(G_g)\subset U_h\times U_h\cap V_h$ where $U_h$ is an $h^{\e}$ neighborhood of $S_E^*\partial\Omega$ and $V_h$ is an $h^\e$ neighborhood of the diagonal of $\partial\Omega\times \partial\Omega$ lifted to $T^*\partial\Omega$. Moreover, if $\Omega$ is strictly convex, $G_B$ is associated to the billiard ball map.

Next, we compute the symbols of $G_{\Delta}$ and $G_B$. Using Lemmas \ref{lem:intersectionPieces}and \ref{lem:diagPiece} we have 
  \begin{align*} 
  \sigma(G_{\Delta})&=(2\pi h)^{-1/2}\sigma(\gamma)\int r_0 \sigma(\gamma^*)d\xi_d\\
  &=(2\pi h)^{-1}\int h^2\left(|\xi|^2-(E+i0)^2\right)^{-1}d\xi_d\\
  &=\frac{ih}{2}\left(E^2-|\xi'|^2_g\right)^{-1/2}\end{align*}
Here we take the branch of the square root such that $\sqrt{a}$ is positive on $a>0$. This choice is unambiguous since $\text{Arg}(E^2-|\xi'|_g^2)\in\{0,\pi\}.$
Thus,
$$\sigma(G_\Delta)=\sigma(G)|_{C\composed \Lambda_0\composed C^{-1}}=ih\left(2\sqrt{E^2-|\xi'|_g^2}\right)^{-1}.$$

\begin{remark} Note that the symbol of $G_\Delta$ is the same as that if we had naively applied Lemma \ref{lem:FIOcomp}.
\end{remark}

Note also that using the transversality of the intersection $C\composed \Lambda_1\composed C^{-1}$, Lemma \ref{lem:FIOcomp} gives that for $x,\,y\in \pO$,
\begin{equation} 
\label{eqn:symbNonConv}\sigma(e^{\frac{\Im z}{h}|x-y|}G_B)|_{C_b'}=\frac{hE^{(d-3)/2}e^{(-d+3)\pi i/4}e^{\frac{i}{h}\Re\omega_0|x-y|}}{2|x-y|^{(d-1)/2}}|dy\wedge dx|^{1/2}.\end{equation}
Then, assuming that $\Omega$ is strictly convex so that $C_b$ is parametrized by $\beta_E$, we have by composing symbols (see also \cite[Proposition 6.1]{HaZel}) that 
\begin{lemma}
\label{lem:billiardHalfDensityComputation}
Let $q=(y,\eta)\in B_y^*\partial\Omega $. Then $G_B$ has symbol
\begin{multline}\sigma(G_Be^{\frac{\Im z}{h}\oph(l(q,\beta_E(q))}\varphi(h^{-\e}(|hD'|_g-E))|_{C_b'}\\
=\frac{he^{\frac{i}{h}\Re \omega_0l(q,\beta_E(q))}\varphi(h^{-\e}(|\eta(q)|_g-E))}{2(E^2-|\eta( \beta_E (q))|_g^2)^{1/4}(E^2-|\eta(q)|_g^2)^{1/4}}dq^{1/2}.\label{eqn:symBilliard}\end{multline}
 $\chi\in C^\infty(\re)$ has $\chi\equiv 0 $ near 0 and $\varphi=1-\chi$.
 \end{lemma}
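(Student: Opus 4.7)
The plan is to apply Lemma \ref{lem:FIOcomp} to convert the already-obtained formula \eqref{eqn:symbNonConv}---which expresses $\sigma(e^{\Im z|x-y|/h}G_B)|_{C_b'}$ as a half-density in the $(y,x)\in\pO\times\pO$ parametrization of $C_b$---into the $q=(y,\eta)\in B^*\pO$ parametrization of $C_b$, which is available because strict convexity of $\Omega$ makes $C_b$ the graph of $\beta_E$. The pseudodifferential factor $\oph(l(q,\beta_E(q)))$ has symbol $l(q,\beta_E(q))=|x-y|$ on $C_b$ (where $x=\pi_x\beta_E(q)$), so multiplying $G_B$ on the right by $e^{\frac{\Im z}{h}\oph(l(q,\beta_E(q)))}$ is precisely what is needed to turn the factor $e^{-\Im\omega_0|x-y|/h}$ implicit in \eqref{eqn:symbNonConv} into $1$, leaving only $e^{\frac{i}{h}\Re\omega_0 l(q,\beta_E(q))}$ in the phase; the cutoff $\varphi(h^{-\e}(|hD'|_g-E))$ multiplies the symbol by $\varphi(h^{-\e}(|\eta(q)|_g-E))$ to leading order. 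Everything therefore reduces to computing the Jacobian between the two half-density parametrizations of $C_b$.

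The key observation is that $l(y,X):=E|X-y|$ is the generating function for $\beta_E$ (cf.\ \eqref{eqn:locBillRelation}); writing $\beta_E(y,\eta)=(X(y,\eta),H(y,\eta))$ one has $\eta=-d_y l|_{T_y\pO}$, and the implicit function theorem with $y$ fixed yields $\partial X/\partial\eta=-(d_Xd_yl)^{-1}$, so
\begin{equation*}
|dy\wedge dx|^{1/2}=|\det d_X d_y l|^{-1/2}\,|dq|^{1/2},
\end{equation*}
the determinant being taken of the bilinear form $T_X\pO\times T_y\pO\to\re$. A direct computation in $\re^d$ gives $d_Xd_yl=-\tfrac{E}{|X-y|}(I-\hat v\hat v^T)$ with $\hat v=(X-y)/|X-y|$, so its restriction to $T_X\pO\times T_y\pO$ represents, up to the scalar factor $-E/|X-y|$, the composition of orthogonal projections $T_y\pO\to\hat v^\perp\to T_X\pO$.

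For any two codimension-one subspaces of $\re^d$ with unit normals $\nu_1,\nu_2$, the absolute determinant of the orthogonal projection between them equals $|\nu_1\cdot\nu_2|$ (a one-line consequence of the identity $e_1\wedge\cdots\wedge e_{d-1}=*\nu$ for an orthonormal frame). Applying this to each factor in the composition above will yield $|\det(T^X)^T(I-\hat v\hat v^T)T^y|=|\nu_X\cdot\hat v|\,|\nu_y\cdot\hat v|$. Since $\eta(q)=E\,P_{T_y\pO}\hat v$ gives $(\nu_y\cdot\hat v)^2=1-|\eta(q)|_g^2/E^2$ and analogously at $\beta_E(q)$, this will evaluate to
\begin{equation*}
|\det d_X d_y l|=\frac{E^{d-3}}{|X-y|^{d-1}}\,\sqrt{(E^2-|\eta(q)|_g^2)(E^2-|\eta(\beta_E(q))|_g^2)}.
\end{equation*}

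Substituting $|\det d_X d_y l|^{-1/2}$ into \eqref{eqn:symbNonConv} will cancel the $|x-y|^{(d-1)/2}$ and $E^{(d-3)/2}$ prefactors exactly and place the two $(E^2-|\eta|_g^2)^{1/4}$ factors in the denominator, giving the stated formula up to the Maslov phase $e^{(-d+3)\pi i/4}$, which is suppressed per the convention set in Section \ref{s:prelim.lagrangian}. The only nontrivial step is the determinant calculation, and the main obstacle I anticipate is keeping the various identifications of tangent spaces, normal bundles, and half-densities bookkeeping-correct; once orthonormal frames are fixed and the geometric interpretation of $d_Xd_yl$ as a composition of codimension-one projections is in hand, the computation is elementary.
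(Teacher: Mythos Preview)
Your proposal is correct and follows the same overall strategy as the paper: both compute the Jacobian between the $(y,x)$ and $(y,\eta)$ parametrizations of $C_b$ by exploiting that $E|x-y|$ is a generating function for $\beta_E$, so that $|d\eta|$ is related to $|dx|$ through the mixed Hessian $d_x d_y|x-y|$.

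The execution of the determinant calculation differs. The paper fixes explicit orthonormal frames with $\nu_y=(1,0,\dots,0)$, $\nu_x=(\cos\beta,\sin\beta,0,\dots,0)$, $y=0$, $x=(r_1,r_2,r_3,0,\dots,0)$, writes out the $(d-1)\times(d-1)$ matrix $\partial^2_{s_it_j}|y+\sum s_ie_i-(x+\sum t_je'_j)|$ entrywise, and takes its determinant by brute force to obtain $|x-y|^{-d+1}|\partial_{\nu_x}|y-x||\,|\partial_{\nu_y}|x-y||$. Your route is more geometric: you identify the mixed Hessian (up to the scalar $-E/|x-y|$) as the composition of orthogonal projections $T_y\pO\to\hat v^\perp\to T_x\pO$ and invoke the general fact that the absolute determinant of the orthogonal projection between two hyperplanes equals the cosine of the angle between their normals. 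This avoids the explicit matrix computation entirely and makes the appearance of the two normal-angle factors $|\nu_x\cdot\hat v|$, $|\nu_y\cdot\hat v|$ transparent. Both yield the same Jacobian $E^{d-3}|x-y|^{-(d-1)}\sqrt{(E^2-|\eta(q)|_g^2)(E^2-|\eta(\beta_E(q))|_g^2)}$, and the substitution into \eqref{eqn:symbNonConv} then proceeds identically.
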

 \begin{proof}
 To convert from \eqref{eqn:symbNonConv} to \eqref{eqn:symBilliard}, we reparametrize by $(y,\eta )$. That is, we write $|d\xi'|$ in terms of $|dy|$. Observe that by \eqref{eqn:locBillRelation} $\eta=E d_y|y-x|$ on $C_\beta$. Thus, we compute
 $$|d\eta |=E^{d-1}\det\left(\left. \frac{\partial^2}{\partial s_i\partial t_j}\right|_{\substack{\mathbf{t}=0\\\mathbf{s}=0}}\left|y+\sum_is_ie_i-(x+\sum_it_ie_i')\right|\right)|dx|$$
 where $e_i$ and $e_i'$ ($i=2,\dots,d$) are respectively orthonormal bases for $T_y^*\partial\Omega$ and $T_x^*\partial\Omega$. 
 Without loss of generality, we assume that 
 \begin{equation*}
 \label{eqn:normAssumption}
 \begin{gathered}
 \nu_y=(1,0,0\dots, 0),\quad \quad \nu_x=(\cos \beta, \sin \beta ,0,\dots 0)\\
 y=(0,0,0,\dots ,0),\quad\quad x=(r_1,r_2,r_3,0,\dots 0)=\mathbf{r}
 \end{gathered}
 \end{equation*}
 Then we choose as our orthonormal bases $e_i=\mathbf{e}_i$ $i=2,\dots d$ where $\mathbf{e}_j$ is the standard basis and
 \begin{gather*}
 e_2'=(-\sin \beta, \cos \beta,0,\dots, 0)\,,\quad\quad e_i'=e_i\text{ for }i=3,\dots d
 \end{gather*}
 Next we compute derivatives of $w=w(s_2,\dots, s_{d},t_2,\dots,t_{d})$
 $$w=|(t_2\sin \beta-r_1,s_2-r_2-t_2\cos \beta, s_3-r_3-t_3,s_4-t_4,\dots, s_d-t_d)| $$
 A long but straightforward computation gives
 \begin{multline*}|\mathbf{r}|^3\left.\frac{\partial^2w}{\partial s_i\partial t_j}\right |_{\substack{\mathbf{t}=0\\\mathbf{s}=0}}=
\\
\begin{pmatrix} 
-|\mathbf{r}|^2\cos \beta+r_2(r_2\cos \beta-\sin \beta r_1)&r_3(r_2\cos \beta -\sin \beta r_1)&0\\
r_2r_3&-|\mathbf{r}|^2+r_3^2&0\\
0&0&-|\mathbf{r}|^2I
\end{pmatrix}
.\end{multline*} 
This matrix has 
\begin{align*} \left|\det \left(\left.\frac{\partial^2w}{\partial{s_i}\partial t_j}\right|_{\substack{\mathbf{t}=0\\\mathbf{s}=0}}\right)\right|&=|\mathbf{r}|^{-d+1}\left|\frac{r_1^2\cos \beta+r_1r_2\sin \beta}{|\mathbf{r}|^{2}}\right|\\
&=|\mathbf{r}|^{-d+1}\left |\partial_{\nu_x}|y-x|\right|\left|\partial_{\nu_y}|x-y|\right|\\
&=|x-y|^{-d+1}E^{-2}\sqrt{E^2-|\eta|_g^2}\sqrt{E^2-|\xi'(\beta_E(q))|_g^2}
\end{align*}
 \end{proof}
 
\begin{remark} If $\Omega$ is strictly convex, then the cutoff away from the diagaonal in $R_1$ causes $G_B$ to be microlocalized $h^\e$ away from $|\xi'|_g=E$ and hence $G_B\varphi(h^{-\e}(|hD'|_g-E))=G_B+\O{}(h^\infty)$.
\end{remark}

Now, to understand $\Dl=\gamma^+R_0L^*\gamma^*+\frac{1}{2}\Id$ and $\dDl=\gamma L R_0L^* \gamma^*$ microlocally away from glancing, we only need to compute the symbols of the various pieces since the geometry of the situation is identical to that for $G$. For $\dDl$, it is irrelevant whether we choose $\gamma^+$ or $\gamma^-$ since we have verified that there is no jump at $\partial\Omega$ in Lemma \ref{lem:layer}.
Write $\overline{\Dl}=\gamma^+R_0L^*\gamma^*$. Then for $\overline{\Dl}$, we write 
\begin{align} \overline{\Dl} &=\gamma R_1L^*\gamma^*+\gamma^+ R_2 L^*\gamma^*(1-\psi(h^{-\e}(|hD'|_g-E))\nonumber\\
&\quad\quad\quad+\gamma^+R_2L^*\gamma^*\psi(h^{-\e}|hD'|_g-E)\nonumber\\
\label{eqn:DLCancel}&=:\Dl_B+\Dl_\Delta+\Dl_g+\frac{1}{2}\Id
\end{align}
Also, write
\begin{align*} \dDl &=\gamma LR_1L^*\gamma^*+\gamma^+ L R_2 L^*\gamma^*(1-\psi(h^{-\e}(|hD'|_g-E))\\
&\quad\quad\quad+\gamma^+LR_2L^*\gamma^*\psi(h^{-\e}|hD'|_g-E)\\
&=:\dDl_B+\dDl_\Delta+\dDl_g
\end{align*}
The symbol of $\dDl_B$ is given by
\begin{multline*}\sigma(\Dl_Be^{\frac{\Im z}{h}|x-y|})=\\\frac{E^{(d+1)/2}e^{(-d+3)\pi i/4}e^{\frac{i}{h}\Re \omega_0|x-y|}}{2|x-y|^{(d-1)/2}}d_{\nu_y}|x-y||dy\wedge dx|^{1/2}\end{multline*}
and using the computations from Lemma \ref{lem:billiardHalfDensityComputation}
$$\sigma(\Dl_Be^{\frac{\Im z}{h}\oph(l(q,\beta_E(q)))})=\frac{e^{\frac{i}{h}\Re\omega_0l(q,\beta_E(q))}(E^2-|\xi'(q)|_g^2)^{1/4}}{2(E^2-|\xi'(\beta_E(q))|_g^2)^{-1/4}}dq^{1/2}.
$$
Then, the symbol of $\dDl_B$ is given by
\begin{multline*}\sigma(\dDl_Be^{\frac{\Im z}{h}|x-y|})=\\\frac{h^{-1}E^{(d+1)/2}e^{(-d+3)\pi i/4}e^{\frac{i}{h}\Re \omega_0|x-y|}}{2|x-y|^{(d-1)/2}}d_{\nu_x}|x-y|d_{\nu_y}|x-y||dy\wedge dx|^{1/2}\end{multline*}
and using the computations from Lemma \ref{lem:billiardHalfDensityComputation}
\begin{multline*}\sigma(\dDl_Be^{\frac{\Im z}{h}\oph(l(q,\beta_E(q)))})=\\\frac{h^{-1}e^{\frac{i}{h}\Re\omega_0l(q,\beta_E(q))}(E^2-|\xi'(\beta_E(q))|_g^2)^{1/4}(E^2-|\xi'(q)|_g^2)^{1/4}}{2}dq^{1/2}.
\end{multline*}

To analyze $\Dl_{\Delta}$ and $\dDl_\Delta$,  write 
\begin{multline*} R_2L^*\gamma^*(1-\psi(h^{-\e}(|hD'|-1)))(x,y)\\
=(2\pi h)^{-d}\int e^{\frac{i}{h}\la x-y,\xi\ra} \frac{-i\la \xi,\nu_y\ra +p_1(x,y,\xi)}{|\xi|^2-E^2-i0}(1-\psi(h^{-\e}(|\xi'|_g-E)))d\xi
\end{multline*}
and 
\begin{multline*} LR_2L^*\gamma^*(1-\psi(h^{-\e}(|hD'|-1)))(x,y)\\
=(2\pi h)^{-d}\int e^{\frac{i}{h}\la x-y,\xi\ra} \frac{\la\xi,\nu_x\ra \la \xi,\nu_y\ra +p_2(x,y,\xi)}{|\xi|^2-E^2-i0}(1-\psi(h^{-\e}(|\xi'|_g-E)))d\xi
\end{multline*}
where the $p_i$ are polynomial in $\xi$.
Then, in appropriate coordinates
\begin{multline*} R_2L^*\gamma^*(1-\psi(h^{-\e}(|hD'|-1)))(x,y)\\
=(2\pi h)^{-d}\int e^{\frac{i}{h}\la x-y,\xi\ra} \frac{-i\xi_d +p(x,y,\xi)}{|\xi|^2-E^2-i0}(1-\psi(h^{-\e}(|\xi'|_g-E)))d\xi
\end{multline*}
and
\begin{multline*} LR_2L^*\gamma^*(1-\psi(h^{-\e}(|hD'|-1)))(x,y)\\
=(2\pi h)^{-d}\int e^{\frac{i}{h}\la x-y,\xi\ra} \frac{\xi_d^2 +p(x,y,\xi)}{|\xi|^2-E^2-i0}(1-\psi(h^{-\e}(|\xi'|_g-E)))d\xi
\end{multline*}
Hence, the relevant parts of $R_2L^*\gamma^*$ and $LR_2L^*\gamma^*$ satisfy the requirements of Lemma \ref{lem:diagPiece}. When we compute the symbol of $\gamma^+R_2L^*\gamma^*$, we obtain $\frac{1}{2}$ which is exactly the $\frac{1}{2}\Id$ appearing in \eqref{eqn:DLCancel}. Hence, we can compute symbols to obtain:

For the case that $\Omega$ is strictly convex, we summarize the result of this decomposition in the following Lemma
\begin{lemma}
\label{lem:decompose}
\label{lem:decomposeDl}
\label{lem:decomposePrime}
Let $\Omega\subset \re^d$ be strictly convex with $\partial\Omega\in C^\infty$. Then for all $1/2>\e,\gamma >0$, and $z=E+\O{}(h^{1-\gamma})$ with $\Im z\geq -Ch\log h^{-1}$. Then\begin{gather*} 
G(z/h):=G_{\Delta}(z)+G_B(z)+G_g(z)+\O{\mc{D}'\to C^\infty}(h^\infty)\\
\Dl(z/h):=\Dl_{\Delta}(z)+\Dl_B(z)+\Dl_g(z)+\O{\mc{D}'\to C^\infty}(h^\infty)\\
\dDl(z/h):=\dDl_{\Delta}(z)+\dDl_B(z)+\dDl_g(z)+\O{\mc{D}'\to C^\infty}(h^\infty)
\end{gather*}
where $G_{\Delta}\in h^{1-\frac{\e}{2}}\Psi_{\e}^{-1}$, $\Dl_{\Delta}\in h^{1-2\e}\Ph{-1}{\e}$, $\dDl_{\Delta}\in h^{-1}\Ph{1}{\e}$, and  $G_B\in h^{1-\frac{\e}{2}}e^{(\Im z)_-d_{\Omega}/h}I^{comp}_{\delta}(C_b)$, $\Dl_B\in e^{(\Im z)_-d_{\Omega}/h}I^{\comp}_{\delta}(C_b)$,  and $\dDl_B\in h^{-1}e^{(\Im z)_-d_{\Omega}/h}I^{comp}_{\delta}(C_b)$ are FIOs associated to $\beta_E$ where $\delta=\max(\e,\gamma)$.
Moreover, 
\begin{gather*}{\MS}'((\cdot)_B)\subset \left\{\begin{gathered}(q,p)\in B_E^*\partial\Omega\times B_E^*\partial\Omega:\\\min(E-|\xi'(q)|_g,\,E-|\xi'(q)|_g\,,\,l(q,p))>ch^\e\end{gathered}\right\}\\
{\MS}'((\cdot)_g)\subset \left\{\begin{gathered}(q,p)\in T^*\partial\Omega\times T^*\partial\Omega:\\\max(|E-|\xi'(q)|_g|,\,|E-|\xi'(p)|_g|, l(q,p))<ch^\e\end{gathered}\right\}
\end{gather*}
\begin{gather*}
\sigma(G_{\Delta})=\frac{ih}{2\sqrt{E^2-|\xi'|_g^2}}\,,\quad \quad \sigma(\dDl_{\Delta})=\frac{ih^{-1}\sqrt{E^2-|\xi'|^2_g}}{2},\\
\sigma(G_Be^{\frac{\Im z}{h}\oph(l(q,\beta_E(q)))})=\frac{he^{\frac{i}{h}\Re\omega_0l(q,\beta_E(q))}}{2(E^2-|\xi'(\beta_E(q))|_g^2)^{1/4}(E^2-|\xi'(q)|_g^2)^{1/4}}dq^{1/2},\\
\sigma(\Dl_Be^{\frac{\Im z}{h}\oph(l(q,\beta_E(q)))})=\frac{e^{\frac{i}{h}\Re\omega_0l(q,\beta_E(q))}(E^2-|\xi'(q)|_g^2)^{1/4}}{2(E^2-|\xi'(\beta_E(q))|_g^2)^{1/4}}dq^{1/2},\\
\begin{aligned}\sigma(\dDl_Be^{\frac{\Im z}{h}\oph(l(q,\beta_E(q)))})=&\\
&\!\!\!\!\!\!\!\!\!\!\!\!\!\!\!\!\!\!\!\!\!\!\!\!\!\!\!\!\!\!\!\!\!\!\!\!\!\!\!\!\!\!\!\!\!\!\!\!\!\!\!\!\!\!\!\frac{h^{-1}e^{\frac{i}{h}\Re\omega_0l(q,\beta_E(q))}(E^2-|\xi'(\beta_E(q))|_g^2)^{1/4}(E^2-|\xi'(q)|_g^2)^{1/4}}{2}dq^{1/2}.\end{aligned}
\end{gather*}
where we take $\sqrt{z}=\sqrt{|z|}e^{\frac{1}{2}\Arg(z)}$ for $-\pi/2<\Arg(z)<3\pi /2$.
\end{lemma}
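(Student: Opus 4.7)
The plan is to assemble the decomposition piece by piece, starting from the intersecting Lagrangian description of $\chi R_0(z/h)\chi$ in Theorem \ref{thm:freeResolve} and then composing with the restriction operators $\gamma$, $\gamma^{*}$ and with the normal vector field $L$. Pick $\chi\in C_c^\infty(\re^d)$ with $\chi\equiv 1$ on a neighborhood of $\overline{\Omega}$ so that $\gamma\chi=\gamma$ and $\chi\gamma^{*}=\gamma^{*}$. Write $\chi R_0(z/h)\chi=K_R+K_\Delta+\O{\mc{D}'\to \Cc}(h^\infty)$ as in Theorem \ref{thm:freeResolve}, and split $K_R(x,y)$ with a cutoff in $|x-y|$: choose $\psi_0\in C_c^\infty(\re)$ with $\psi_0\equiv 1$ near $0$ and set
\[
R_1(x,y)=(1-\psi_0(h^{-\e}|x-y|))K_R(x,y)\,,\qquad R_2(x,y)=\psi_0(h^{-\e}|x-y|)K_R(x,y)\,.
\]
Then $R_1$ is compactly microlocalized on an $h^\e$-neighborhood of $\Lambda_1$ (the diagonal piece is cut out), while $R_2+K_\Delta$ picks up all the microlocal content near $\Lambda_0$.

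First I would treat the billiard pieces. Composing $R_1$ on each side with $\gamma^{*}$ and, when needed, with $L$ or $L^{*}$, gives kernels whose underlying Lagrangian is $C\composed \Lambda_1\composed C^{-1}=C_b'$. I verified in Section \ref{sec:decomposed2} that the compositions $\Lambda_1\composed C^{-1}$ and then $C\composed (\Lambda_1\composed C^{-1})$ are transverse away from $\pi(\partial\Omega_0)$; since the cutoff $1-\psi_0$ forces $|x-y|\geq c h^\e$, on the support of $R_1$ we stay a distance $\geq c h^\e$ from the glancing set (this uses strict convexity to identify $C_b$ as the graph of $\beta_E$). Lemma \ref{lem:FIOcomp} then gives $G_B,\Dl_B,\dDl_B\in I_{\delta}^{\comp}(C_b)$ with the appropriate prefactors in $h$ (an extra $h^{-1}$ per application of $L$ or $L^{*}$) and the microsupport statement on $(\cdot)_B$. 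The principal symbols are obtained by composing the half-density symbol of $K_R$ on $\Lambda_1$, which was computed in Theorem \ref{thm:freeResolve}, with the symbols of $\gamma$, $\gamma^{*}$, $L$, $L^{*}$; the geometric change of variables from $(y,\xi')$ to $(y,x)$ giving the Jacobian $|x-y|^{-d+1}E^{-2}(E^2-|\eta|_g^2)^{1/2}(E^2-|\eta(\beta_E(q))|_g^2)^{1/2}$ is the content of Lemma \ref{lem:billiardHalfDensityComputation}.

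Next I would handle the diagonal piece, namely $\gamma^{\pm}(R_2+K_\Delta)\gamma^{*}$, and similarly with $L$'s inserted, microlocally cut off away from glancing by $1-\psi(h^{-\e}(|hD'|_g-E))$. Near the diagonal the kernel is a sum of a genuine pseudodifferential piece (from $K_\Delta$) and the contribution of the $\Lambda_0$ leaf of the intersecting Lagrangian $K_R$. For the latter, Lemma \ref{lem:intersectionPieces} applies: once the cutoff away from glancing is inserted, the phase becomes nonstationary in the tangential frequency variable, and we may legitimately compute the symbol by integrating $\sigma(K_R)|_{\Lambda_0}$ along the fiber $d\xi_d$ as in Lemma \ref{lem:FIOcomp}. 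Because $L$ introduces a factor of $\xi_d$ (for $L^{*}$) or $\xi_d^2$ (for $L L^{*}$) in the symbol, the resulting integrand satisfies the hypotheses of Lemma \ref{lem:diagPiece} (the $\xi_d^{j}$ tails with $j=-1,0,1$ have the required parity $C_{j,+}=(-1)^jC_{j,-}$, which for $\dDl$ cancels the jump and for $\Dl$ produces exactly the $\tfrac12\Id$ that accounts for the $\tfrac12\Id$ in \eqref{eqn:DLCancel}). A contour computation as in Lemma \ref{lem:diagPiece} then yields
\[
\sigma(G_\Delta)=\frac{ih}{2\sqrt{E^2-|\xi'|_g^2}}\,,\qquad \sigma(\dDl_\Delta)=\frac{ih^{-1}\sqrt{E^2-|\xi'|_g^2}}{2}\,,
\]
with the branch fixed by causality ($-i0$). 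The orders $G_\Delta\in h^{1-\e/2}\Psi_\e^{-1}$, $\Dl_\Delta\in h^{1-2\e}\Psi_\e^{-1}$, $\dDl_\Delta\in h^{-1}\Psi_\e^{1}$ come from keeping track of the $\e$-loss caused by the cutoff $1-\psi(h^{-\e}(|hD'|_g-E))$.

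Finally, the glancing pieces $G_g,\Dl_g,\dDl_g$ are defined as the remainders, namely $\gamma R_2\gamma^{*}\psi(h^{-\e}(|hD'|_g-E))$ (and similarly with $L$'s). By construction they are microsupported in $|hD'|_g-E=\O{}(h^\e)$ and, since they contain the factor $\psi_0(h^{-\e}|x-y|)$, also within $h^\e$ of the diagonal; this is precisely the microsupport statement for $(\cdot)_g$ claimed in the lemma. The exponential weight $e^{(\Im z)_- d_\Omega/h}$ tracking the polynomial extension through the lower half plane is inherited verbatim from Theorem \ref{thm:freeResolve} and Lemma \ref{lem:semMax}, so no additional Phragmén--Lindel\"of argument is needed here. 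The main obstacle is bookkeeping near glancing: one must check that the cleanness failure of $C\composed \Lambda_0\composed C^{-1}$ is entirely absorbed into $(\cdot)_g$, which is the point of Lemma \ref{lem:intersectionPieces}, and that no fiber-infinity contribution (where $|\xi|\to\infty$ on $\Lambda_0$) leaks out of the pseudodifferential description; this is exactly what the arithmetic of the coefficients $C_{j,\pm}$ in Lemma \ref{lem:diagPiece} ensures.
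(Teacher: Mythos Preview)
Your proposal is correct and follows essentially the same route as the paper: splitting the intersecting Lagrangian kernel of Theorem \ref{thm:freeResolve} with an $h^\e$-cutoff in $|x-y|$, using transversality of $C\composed\Lambda_1\composed C^{-1}$ for the billiard piece, Lemmas \ref{lem:intersectionPieces} and \ref{lem:diagPiece} for the diagonal piece, and defining the glancing piece as the remainder. The symbol computations and the reference to Lemma \ref{lem:billiardHalfDensityComputation} for the half-density conversion match the paper's argument.
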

\begin{remarks} 
\item The change in this Lemma when $\Omega$ is only assumed to be convex is that we lose restriction on $|\xi'|_g$ and $|\eta'|_g$ in ${\MS}'(G_B)$ and thus must use \eqref{eqn:symbNonConv} for the symbol of $G_B$ near glancing points and away from the diagonal.
\item The microsupports of the various components of $G$ are shown graphically in Figure \ref{fig:waveFront}.
\end{remarks}

\begin{figure}
\centering
 \begin{tikzpicture}[scale=.35]
 \draw [thin](-4.05,1.2)to (-4.25,1.2)to  (-4.25,0) node[left] {$\mc{H}$}to (-4.25,-1.2)to (-4.05,-1.2);
 \draw [thin](-4.25,4)to  (-4.25,3.5)node[left]{$\mc{E}$}to (-4.25,1.8)to(-4.05,1.8);
 \draw [thin](-4.05,-1.8) to ( -4.25,-1.8)to (-4.25,-3.5) node[left]{$\mc{E}$}to (-4.25,-4);
 \draw [thin](-4.05,1.8)to (-4.25,1.8) to (-4.25,1.5)node[left]{$\mc{G}$}to(-4.25,1.2)to(-4.05,1.2);
  \draw [thin](-4.05,-1.8)to (-4.25,-1.8) to (-4.25,-1.5)node[left]{$\mc{G}$}to(-4.25,-1.2)to(-4.05,-1.2);
 \draw [thick] (-4,0) to [out=-15, in=-165](4,0) node[right]{$\partial\Omega$} ;
 \draw  (-4,4) to [out=-15, in=-165](4,4)node[right]{$T^*\partial\Omega$} ;
 \draw  (-4,-4) to [out=-15, in=-165](4,-4) ;
 \draw [ultra thin] (-4,1.5) to [out=-15, in=-165](4,1.5)node[right]{$S_E^*\partial\Omega$}  ;
 \draw [ultra thin] (-4,-1.5)  to [out=-15, in=-165](4,-1.5)node[right]{$S_E^*\partial\Omega$} ;
 \draw [dashed] (-4,1.8) to [out=-15, in=-165](4,1.8) ;
 \draw [dashed] (-4,-1.8) to [out=-15, in=-165](4,-1.8) ;
 \draw [dashed] (-4,1.2) to [out=-15, in=-165](4,1.2) ;
 \draw [dashed] (-4,-1.2) to [out=-15, in=-165](4,-1.2) ;
 \draw (-4,4) to (-4,-4) ;
 \draw (4,4)node[right]{$T^*\partial\Omega$} to (4,-4);
 \draw (0,-4.8)to (0,3.6)node[above]{$T^*_y\partial\Omega$};
 
 \draw [ultra thick, ->] (7,0)to(12,0);
 
 \begin{scope}[shift={(18,0)}]
  \draw [thin](-4.05,1.2)to (-4.25,1.2)to  (-4.25,0) node[left] {$\mc{H}$}to (-4.25,-1.2)to (-4.05,-1.2);
 \draw [thin](-4.25,4)to  (-4.25,3.5)node[left]{$\mc{E}$}to (-4.25,1.8)to(-4.05,1.8);
 \draw [thin](-4.05,-1.8) to ( -4.25,-1.8)to (-4.25,-3.5) node[left]{$\mc{E}$}to (-4.25,-4);
 \draw [thin](-4.05,1.8)to (-4.25,1.8) to (-4.25,1.5)node[left]{$\mc{G}$}to(-4.25,1.2)to(-4.05,1.2);
  \draw [thin](-4.05,-1.8)to (-4.25,-1.8) to (-4.25,-1.5)node[left]{$\mc{G}$}to(-4.25,-1.2)to(-4.05,-1.2);
 \draw [thick] (-4,0) to [out=-15, in=-165](4,0) node[right]{$\partial\Omega$} ;
 \draw  (-4,4) to [out=-15, in=-165](4,4)node[right]{$T^*\partial\Omega$} ;
 \draw  (-4,-4) to [out=-15, in=-165](4,-4) ;
 \draw [ultra thin] (-4,1.5) to [out=-15, in=-165](4,1.5)node[right]{$S_E^*\partial\Omega$}  ;
 \draw [ultra thin] (-4,-1.5)  to [out=-15, in=-165](4,-1.5)node[right]{$S_E^*\partial\Omega$} ;
 \draw [dashed] (-4,1.8) to [out=-15, in=-165](4,1.8) ;
 \draw [dashed] (-4,-1.8) to [out=-15, in=-165](4,-1.8) ;
 \draw [dashed] (-4,1.2) to [out=-15, in=-165](4,1.2) ;
 \draw [dashed] (-4,-1.2) to [out=-15, in=-165](4,-1.2) ;
 \draw (-4,4) to (-4,-4) ;
 \draw (4,4)node[right]{$T^*\partial\Omega$} to (4,-4);
 \draw (0,-4.8)to (0,3.6)node[above]{$T^*_y\partial\Omega$};
 \draw[ultra thin] (0.3,1.2)to [out=-179, in =-1](-.3,1.2)to(-.3,.6) to[out=-1,in = -179] (.3,.6)to (0.3,1.2);
\draw[ultra thin] (0.3,-1.8)to [out=-179, in =-1](-.3,-1.8)to(-.3,-2.4) to[out=-1,in = -179] (.3,-2.4)to (0.3,-1.8);
\draw [line width=1.2mm, red, opacity=1] (0.3,.6) [out=-35, in =125] to (2,0.3) to [out =-55, in =-180] (4,.2);
\draw [line width=1.2mm, red, opacity=1] (-0.3,-1.8) [out=135, in =15] to (-2,0.3) to [out =195, in =-35] (-4,-.2);
 \end{scope}
 \draw [line width=1.2mm, red, opacity=1] (0,-1.8) to (0,.6);
 \draw[ line width=1.2mm,red,opacity=1] (6.75,-5.25) to (7.25,-5.25);
 \draw (7.25,-5.25)node[right]{${\WFh}'(G_B)$};
 \end{tikzpicture}
 
  \begin{tikzpicture}[scale=.35]
 \draw [thin](-4.05,1.2)to (-4.25,1.2)to  (-4.25,0) node[left] {$\mc{H}$}to (-4.25,-1.2)to (-4.05,-1.2);
 \draw [thin](-4.25,4)to  (-4.25,3.5)node[left]{$\mc{E}$}to (-4.25,1.8)to(-4.05,1.8);
 \draw [thin](-4.05,-1.8) to ( -4.25,-1.8)to (-4.25,-3.5) node[left]{$\mc{E}$}to (-4.25,-4);
 \draw [thin](-4.05,1.8)to (-4.25,1.8) to (-4.25,1.5)node[left]{$\mc{G}$}to(-4.25,1.2)to(-4.05,1.2);
  \draw [thin](-4.05,-1.8)to (-4.25,-1.8) to (-4.25,-1.5)node[left]{$\mc{G}$}to(-4.25,-1.2)to(-4.05,-1.2);
 \draw [thick] (-4,0) to [out=-15, in=-165](4,0) node[right]{$\partial\Omega$} ;
 \draw  (-4,4) to [out=-15, in=-165](4,4)node[right]{$T^*\partial\Omega$} ;
 \draw  (-4,-4) to [out=-15, in=-165](4,-4) ;
 \draw [ultra thin] (-4,1.5) to [out=-15, in=-165](4,1.5)node[right]{$S_E^*\partial\Omega$}  ;
 \draw [ultra thin] (-4,-1.5)  to [out=-15, in=-165](4,-1.5)node[right]{$S_E^*\partial\Omega$} ;
 \draw [dashed] (-4,1.8) to [out=-15, in=-165](4,1.8) ;
 \draw [dashed] (-4,-1.8) to [out=-15, in=-165](4,-1.8) ;
 \draw [dashed] (-4,1.2) to [out=-15, in=-165](4,1.2) ;
 \draw [dashed] (-4,-1.2) to [out=-15, in=-165](4,-1.2) ;
 \draw (-4,4) to (-4,-4) ;
 \draw (4,4)node[right]{$T^*\partial\Omega$} to (4,-4);
 \draw (0,-4.8)to (0,3.6)node[above]{$T^*_y\partial\Omega$};
 
 \draw [ultra thick, ->] (7,0)to(12,0);
 
 \begin{scope}[shift={(18,0)}]
  \draw [thin](-4.05,1.2)to (-4.25,1.2)to  (-4.25,0) node[left] {$\mc{H}$}to (-4.25,-1.2)to (-4.05,-1.2);
 \draw [thin](-4.25,4)to  (-4.25,3.5)node[left]{$\mc{E}$}to (-4.25,1.8)to(-4.05,1.8);
 \draw [thin](-4.05,-1.8) to ( -4.25,-1.8)to (-4.25,-3.5) node[left]{$\mc{E}$}to (-4.25,-4);
 \draw [thin](-4.05,1.8)to (-4.25,1.8) to (-4.25,1.5)node[left]{$\mc{G}$}to(-4.25,1.2)to(-4.05,1.2);
  \draw [thin](-4.05,-1.8)to (-4.25,-1.8) to (-4.25,-1.5)node[left]{$\mc{G}$}to(-4.25,-1.2)to(-4.05,-1.2);
 \draw [thick] (-4,0) to [out=-15, in=-165](4,0) node[right]{$\partial\Omega$} ;
 \draw  (-4,4) to [out=-15, in=-165](4,4)node[right]{$T^*\partial\Omega$} ;
 \draw  (-4,-4) to [out=-15, in=-165](4,-4) ;
 \draw [ultra thin] (-4,1.5) to [out=-15, in=-165](4,1.5)node[right]{$S_E^*\partial\Omega$}  ;
 \draw [ultra thin] (-4,-1.5)  to [out=-15, in=-165](4,-1.5)node[right]{$S_E^*\partial\Omega$} ;
 \draw [dashed] (-4,1.8) to [out=-15, in=-165](4,1.8) ;
 \draw [dashed] (-4,-1.8) to [out=-15, in=-165](4,-1.8) ;
 \draw [dashed] (-4,1.2) to [out=-15, in=-165](4,1.2) ;
 \draw [dashed] (-4,-1.2) to [out=-15, in=-165](4,-1.2) ;
 \draw (-4,4) to (-4,-4) ;
 \draw (4,4)node[right]{$T^*\partial\Omega$} to (4,-4);
 \draw (0,-4.8)to (0,3.6)node[above]{$T^*_y\partial\Omega$};
 \filldraw[fill=red,opacity=1] (0.3,1.2)to [out=-179, in =-1](-.3,1.2)to(-.3,.6) to[out=-1,in = -179] (.3,.6)to (0.3,1.2);
 \filldraw[fill=red,opacity=1] (0.3,-1.8)to [out=-179, in =-1](-.3,-1.8)to(-.3,-2.4) to[out=-1,in = -179] (.3,-2.4)to (0.3,-1.8);
 \end{scope}
\draw[line width=1.2mm,red,opacity=1] (0,1.2)to (0,.6) ;
 \draw[line width=1.2mm, red,opacity=1] (0,-1.8)to (0,-2.4);
\draw[line width=1.2mm,red,opacity=1] (6.75,-5.25) to (7.25,-5.25);
 \draw (7.25,-5.25)node[right]{${\WFh}'(G_g)$};
 \end{tikzpicture}
 
  \begin{tikzpicture}[scale=.35]
 \draw [thin](-4.05,1.2)to (-4.25,1.2)to  (-4.25,0) node[left] {$\mc{H}$}to (-4.25,-1.2)to (-4.05,-1.2);
 \draw [thin](-4.25,4)to  (-4.25,3.5)node[left]{$\mc{E}$}to (-4.25,1.8)to(-4.05,1.8);
 \draw [thin](-4.05,-1.8) to ( -4.25,-1.8)to (-4.25,-3.5) node[left]{$\mc{E}$}to (-4.25,-4);
 \draw [thin](-4.05,1.8)to (-4.25,1.8) to (-4.25,1.5)node[left]{$\mc{G}$}to(-4.25,1.2)to(-4.05,1.2);
  \draw [thin](-4.05,-1.8)to (-4.25,-1.8) to (-4.25,-1.5)node[left]{$\mc{G}$}to(-4.25,-1.2)to(-4.05,-1.2);
 \draw [thick] (-4,0) to [out=-15, in=-165](4,0) node[right]{$\partial\Omega$} ;
 \draw  (-4,4) to [out=-15, in=-165](4,4)node[right]{$T^*\partial\Omega$} ;
 \draw  (-4,-4) to [out=-15, in=-165](4,-4) ;
 \draw [ultra thin] (-4,1.5) to [out=-15, in=-165](4,1.5)node[right]{$S_E^*\partial\Omega$}  ;
 \draw [ultra thin] (-4,-1.5)  to [out=-15, in=-165](4,-1.5)node[right]{$S_E^*\partial\Omega$} ;
 \draw [dashed] (-4,1.8) to [out=-15, in=-165](4,1.8) ;
 \draw [dashed] (-4,-1.8) to [out=-15, in=-165](4,-1.8) ;
 \draw [dashed] (-4,1.2) to [out=-15, in=-165](4,1.2) ;
 \draw [dashed] (-4,-1.2) to [out=-15, in=-165](4,-1.2) ;
 \draw (-4,4) to (-4,-4) ;
 \draw (4,4)node[right]{$T^*\partial\Omega$} to (4,-4);
 \draw (0,-4.8)to (0,3.6)node[above]{$T^*_y\partial\Omega$};
 
 \draw [ultra thick, ->] (7,0)to(12,0);
 
 \begin{scope}[shift={(18,0)}]
  \draw [thin](-4.05,1.2)to (-4.25,1.2)to  (-4.25,0) node[left] {$\mc{H}$}to (-4.25,-1.2)to (-4.05,-1.2);
 \draw [thin](-4.25,4)to  (-4.25,3.5)node[left]{$\mc{E}$}to (-4.25,1.8)to(-4.05,1.8);
 \draw [thin](-4.05,-1.8) to ( -4.25,-1.8)to (-4.25,-3.5) node[left]{$\mc{E}$}to (-4.25,-4);
 \draw [thin](-4.05,1.8)to (-4.25,1.8) to (-4.25,1.5)node[left]{$\mc{G}$}to(-4.25,1.2)to(-4.05,1.2);
  \draw [thin](-4.05,-1.8)to (-4.25,-1.8) to (-4.25,-1.5)node[left]{$\mc{G}$}to(-4.25,-1.2)to(-4.05,-1.2);
 \draw [thick] (-4,0) to [out=-15, in=-165](4,0) node[right]{$\partial\Omega$} ;
 \draw  (-4,4) to [out=-15, in=-165](4,4)node[right]{$T^*\partial\Omega$} ;
 \draw  (-4,-4) to [out=-15, in=-165](4,-4) ;
 \draw [ultra thin] (-4,1.5) to [out=-15, in=-165](4,1.5)node[right]{$S_E^*\partial\Omega$}  ;
 \draw [ultra thin] (-4,-1.5)  to [out=-15, in=-165](4,-1.5)node[right]{$S_E^*\partial\Omega$} ;
 \draw [dashed] (-4,1.8) to [out=-15, in=-165](4,1.8) ;
 \draw [dashed] (-4,-1.8) to [out=-15, in=-165](4,-1.8) ;
 \draw [dashed] (-4,1.2) to [out=-15, in=-165](4,1.2) ;
 \draw [dashed] (-4,-1.2) to [out=-15, in=-165](4,-1.2) ;
 \draw (-4,4) to (-4,-4) ;
 \draw (4,4)node[right]{$T^*\partial\Omega$} to (4,-4);
 \draw (0,-4.8)to (0,3.6)node[above]{$T^*_y\partial\Omega$};
 \draw [line width=1.2mm, red] (0,-4.6)to (0,-2.4) (0,-1.8) to (0,.6) (0,1.2) to (0,3.4);
 \end{scope}
 \draw [line width=1.2mm, red] (0,-4.6)to (0,-2.4) (0,-1.8) to (0,.6) (0,1.2) to (0,3.4);
 \draw [line width=1.2mm, red](6.75,-5.25)to (7.25,-5.25);
 \draw (7.25,-5.25)node[right]{${\WFh}'(G_\Delta)$};
 \end{tikzpicture}
 
\caption[Wavefront sets for the decomposition of boundary layer operators]{\label{fig:waveFront}We show the wavefront relation for each of the pieces in the decomposition of $G$ ($\Dl$ or $\dDl$). The formulae for these wavefront sets are contained in Lemmas \ref{lem:decompose}, \ref{lem:decomposeDl}, and \ref{lem:decomposePrime}. We label the elliptic, glancing, and hyperbolic regions by $\mc{E}$, $\mc{G}$, and $\mc{H}$ respectively. The top, middle, and bottom pictures correspond to $G_B$, $G_g$ and $G_{\Delta}$ respectively. In the left copy of $T^*\partial\Omega$, we show the wavefront set of each operator in the fiber over $y\in \partial\Omega$. The right copy of $T^*\partial\Omega$ shows how each operator maps the wavefront set in the fiber over $y$. Note that the curve shown in the right copy of $T^*\partial\Omega$ for ${\WFh}'(G_B)$ continues outside of the portion of $T^*\partial\Omega$ shown. }
\end{figure}

\section{Boundary layer operators and potentials near glancing}
\label{sec:BLONearGlance}
In this section, we complete the microlocal descriptions of the boundary layer operators using the Melrose--Taylor parametrix constructed in Appendx \ref{ch:semiclassicalDirichletParametrices}. 
\subsection{Estimates for a simple transmission problem}
We start by proving estimates for the following transmission problem. Let $\Omega_1=\Omega$, $\Omega_2=\re^d\setminus\overline{\Omega}$, and $u=u_11_{\Omega_1}\oplus u_21_{\Omega_2}.$ 
Suppose that $\chi \in \Cc(\re^d)$ with $\chi \equiv 1$ on $\Omega_1$ and
\begin{equation}
\label{eqn:simpleTransmit}
\begin{aligned}
(-h^2\Delta-z^2)u_i&=h^2\chi f_i&\text{in } \Omega_i\\
u_1-u_2&=g_1&\text{ on }\pO\\
\partial_{\nu_1}u_1+\partial_{\nu_2}u_2&=g_2&\text{ on }\pO\\
u_2&\text{ is $z/h$ outgoing}
\end{aligned}
\end{equation}
Then, it is easy to check that as a distribution, 
$$(-h^2\Delta-z^2)u=h^2(f+L^*\delta_{\pO} \otimes g_1+\delta_{\pO}\otimes g_2 )$$
where $f=1_{\Omega_1}f_1\oplus1_{\Omega_2}f_2$ and $L$ is a vector field with $L|_{\pO}=\partial_{\nu_1}$. Thus, applying $h^{-2}R_0(z/h)$ to this equation shows that for $z/h$ in the domain of $R_0(z/h)$, \eqref{eqn:simpleTransmit} has a unique solution given by
$$u=R_0\chi f+\S g_2+\D g_1.$$
Hence
\begin{equation}
\label{eqn:simpleTransmitSolnBV}
\begin{aligned} 
u_1|_{\pO}&=\gamma R_0 f+Gg_2-\frac{1}{2}g_1+\Dl g_1\\
u_2|_{\pO}&=\gamma R_0 f+Gg_2+\frac{1}{2}g_1+\Dl g_1\\
\partial_{\nu_1}u_1|_{\pO}&=\gamma \partial_{\nu_1} R_0f+\frac{1}{2}g_2+\Dl^\sharp g_2+\dDl g_1\\
\partial_{\nu_2}u_2|_{\pO}&=\gamma \partial_{\nu_2}R_0f+\frac{1}{2}g_2-\Dl^\sharp g_2-\dDl g_1
\end{aligned}
\end{equation}
To obtain an $L^2$ estimate on $u$, we simply apply standard resolvent estimates (see for example \cite[Chapter 3]{ZwScat}), 
\begin{equation}
\label{eqn:resolveStd}
\|\chi R_0(z/h)\chi\|_{H_h^s\to H_h^{s+2}}\leq Che^{\Lchi (\Im z)_-/h}.
\end{equation}
So
\begin{align*} 
\|\chi u\|_{L^2(\re^d)}&\leq Ce^{\Lchi(\Im z)_-}(h\|\chi f\|_{L^2(\re^d)}+h^{1/2}\|g_1\|_{L^2(\pO)}+h^{1/2}\|g_2\|_{L^2(\pO)}).
\end{align*} 

To upgrade this to estimates on $u_i$ in $H^k(\Omega_i)$, we observe that for $\chi_1\in \Cc(\re^d)$ with $\chi_1\equiv 1$ on $\chi$, and $\chi_2\in \Cc(\re^d)$ with $\chi_2\equiv 1$ on $\supp \chi_1$,
\begin{align*} 
(-h^2\Delta -z^2)\chi_1 u&=[\chi_1,h^2\Delta]u+h^2(\chi f+L^*\delta_{\pO} \otimes g_1+\delta_{\pO}\otimes g_2 )\\
\chi_1 u&=\chi_2 R_0(0)(h^{-2}([\chi_1,h^2\Delta] +z^2\chi_1 ) u+\chi f)\\
&\quad\quad\quad+\chi_2 \D(0)g_1+\chi_2 \S(0)g_2
\end{align*} 
and for $k\geq -1$, $\D(0):H^{k+3/2}(\pO)\to H^{k+2}(\Omega_1)\oplus H^{k+2}(\Omega_2)$ and $\S(0):H^{k+1/2}(\pO)\to H^{k+2}(\Omega_1)\oplus H^{k+2}(\Omega_2)$, $\chi R_0(0)\chi:H^k(\Omega_1)\oplus H^k(\Omega_2)\to H^{k+2}(\Omega_1)\oplus H^{k+2}(\Omega)_2$. (See \cite[Theorems 9, 10]{Epstein})
So,
\begin{multline*}
\|u_1\|_{H^{k+2}(\Omega_1)}+\|\chi_1 u_2\|_{H^{k+2}(\Omega_2)}\\
\leq h^{-2}((\|u_1\|_{H^{k}(\Omega_1)}+h\| u_1\|_{H^{k+1}(\Omega_1)})+(\|\chi_1 u_2\|_{H^k(\Omega)} +h\|\chi_2u_2\|_{H^{k+1}(\Omega_2)}))\\
+\|\chi f\|_{H^{k}(\re^d)}+\|g_1\|_{H^{k+1/2}(\pO)}+\|g_2\|_{H^{k+3/2}(\pO)}
\end{multline*} 

Using the description $G,\,\Dl,\,$ and $\dDl$ at high energy in Lemmas \ref{lem:decompose}, \ref{lem:decomposeDl} and \ref{lem:decomposePrime} as psuedodifferential operators, we have for $\psi\in \Cc(\re)$ with $\psi\equiv 1$ on $[-2E,2E]$,
\begin{align*} 
\|Gu\|_{H_h^k}&\leq\|(1-\psi(|hD|))Gu\|_{H_h^k}+\|\psi(|hD|)Gu\|_{H_h^k}\\
&\leq h\|u\|_{H_h^{k-1}}+\|Gu\|_{L^2}\\
\|\Dl u\|_{H_h^k}&\leq \|(1-\psi(|hD|))\Dl u\|_{H_h^k}+\|\psi(|hD|)\Dl u\|_{H_h^k}\\
&\leq \|u\|_{H_h^{k}}+\|\Dl u\|_{L^2}\\
\|\Dl^\sharp u\|_{H_h^k}&\leq \|(1-\psi(|hD|))\dDl^\sharp  u\|_{H_h^k}+\|\psi(|hD|)\dDl^\sharp  u\|_{H_h^k}\\
&\leq \|u\|_{H_h^{k}}+\|\Dl ^\sharp u\|_{L^2}\\
\|\dDl u\|_{H_h^k}&\leq \|(1-\psi(|hD|))\dDl u\|_{H_h^k}+\|\psi(|hD|)\dDl u\|_{H_h^k}\\
&\leq h^{-1}\|u\|_{H_h^{k+1}}+\|\dDl u\|_{L^2}.
\end{align*} 
 Together with \eqref{eqn:resolveStd} and Theorem \ref{thm:optimal}, this implies the estimates
\begin{lemma}
\label{lem:transmitEst}
Suppose that $z/h$ is in the domain of $R_0$, $\chi\in \Cc(\re^d)$ with $\chi \equiv 1 $ on $\Omega_1$ and $u\in L^2_{\loc}(\re^d)$ is the solution to \eqref{eqn:simpleTransmit}. Then 
$$u=R_0\chi f+\S g_2+\D g_1,$$
\eqref{eqn:simpleTransmitSolnBV} holds 
and for any $\e>0$, $k\geq -1/2$, $m\geq 0$, there exists $h_0>0,$ $C,\,N_k>0$ such that for $0<h<h_0$,
\begin{multline*}
\left(\begin{gathered}
\|u_1\|_{H_h^{k+2}(\Omega_1)}+\|\chi u_2\|_{H_h^{k+2}(\Omega_2)}+\|u_1\|_{H_h^{k+\frac{3}{2}}(\pO)}+\|u_2\|_{H_h^{k+\frac{3}{2}}(\pO)}\\
+\|\partial_{\nu_1}u_1\|_{H^{k+\frac{1}{2}}(\pO)}+\|\partial_{\nu_2}u_2\|_{H^{k+\frac{1}{2}}(\pO)}
\end{gathered}\right)\\
\leq Ch^{-N_k}e^{\frac{\Lchi(\Im z)_-}{h}}(\|\chi f\|_{H_h^{k}(\re^d)}+\|g_2\|_{H_h^{k+\frac{1}{2}}(\pO)}+\|g_1\|_{H_h^{k+\frac{3}{2}}(\pO)})
\end{multline*}
\end{lemma}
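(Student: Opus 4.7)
The representation formula and the boundary-value identities \eqref{eqn:simpleTransmitSolnBV} are essentially already derived in the discussion preceding the lemma, so the plan is to collect that derivation and then combine it with the pseudodifferential description of the layer operators to bootstrap regularity. First I would verify, using the transmission conditions in \eqref{eqn:simpleTransmit}, that
\[
(-h^2\Delta-z^2)u=h^2\bigl(f+L^*\delta_{\pO}\otimes g_1+\delta_{\pO}\otimes g_2\bigr)
\]
as a distribution on $\re^d$. Since $u_2$ is $z/h$-outgoing and $z/h$ lies in the domain of $R_0$, applying $h^{-2}R_0(z/h)$ gives $u=R_0\chi f+\S g_2+\D g_1$. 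The identities \eqref{eqn:simpleTransmitSolnBV} then follow by taking interior/exterior traces and normal derivatives and invoking the jump relations of Lemma \ref{lem:layer}.

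Next I would obtain the base $L^2$ estimate. From \eqref{eqn:resolveStd} we get $\|\chi R_0\chi f\|_{L^2}\le Che^{\Lchi(\Im z)_-/h}\|\chi f\|_{L^2}$, while $G,\,\Dl,\,\Dl^\sharp$ and $\dDl$ evaluated between appropriate Sobolev spaces are controlled by Theorem \ref{thm:optimal} and \eqref{eqn:optimal1d}, giving the same exponential factor times the claimed semiclassical orders. This bounds the right-hand sides of \eqref{eqn:simpleTransmitSolnBV} in $L^2$ of $\pO$ with the correct exponential weight.

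To upgrade to $H_h^{k+2}$ interior regularity, I would bootstrap via the $\lambda=0$ layer potentials as in the excerpt: write
\[
\chi_1 u=\chi_2 R_0(0)\bigl(h^{-2}([\chi_1,h^2\Delta]+z^2\chi_1)u+\chi f\bigr)+\chi_2\D(0)g_1+\chi_2\S(0)g_2,
\]
and use the classical mapping properties of $R_0(0)$, $\S(0)$, $\D(0)$ on one-sided Sobolev spaces together with the elliptic gain in the commutator term. Iterating this identity raises the Sobolev index by one at each stage, at the cost of a fixed negative power of $h$, producing the overall factor $h^{-N_k}e^{\Lchi(\Im z)_-/h}$. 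The boundary regularity for $u_i|_{\pO}$ and $\partial_{\nu_i}u_i|_{\pO}$ is then obtained from \eqref{eqn:simpleTransmitSolnBV} by combining the interior bounds with the semiclassical pseudodifferential estimates on $G$, $\Dl$, $\Dl^\sharp$, $\dDl$ from Lemmas \ref{lem:decompose}--\ref{lem:decomposePrime}, splitting each operator via a cutoff $\psi(|hD|)$ into a low-frequency piece governed by Theorem \ref{thm:optimal} and a high-frequency piece that is a genuine semiclassical pseudodifferential operator of the stated order.

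The main technical obstacle is the careful semiclassical bookkeeping: the classical ($\lambda=0$) layer potential mapping estimates carry no power of $h$, so every use of them must be paid for by a factor $h^{-2}$ from the rescaling $h^{-2}([\chi_1,h^2\Delta]+z^2\chi_1)$, and the gain of $h$ in elliptic regularity only partially compensates this. The non-semiclassical $\log\la \lambda\ra$ losses in Theorem \ref{thm:optimal} are harmless—they are absorbed into the implicit $h^{-N_k}$—but tracking which operator has order $h^{-1}$ (for $\dDl$) versus $h$ (for $G$) is essential for getting the stated Sobolev indices for $g_1$ and $g_2$ to come out correctly. Once this bookkeeping is carried out term by term on the right-hand sides of \eqref{eqn:simpleTransmitSolnBV}, the stated estimate follows.
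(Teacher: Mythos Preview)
Your proposal is correct and follows essentially the same approach as the paper: the representation formula and boundary identities are exactly the content of the paragraphs preceding the lemma, the base $L^2$ estimate comes from \eqref{eqn:resolveStd}, interior regularity is bootstrapped via $R_0(0)$ and the classical layer-potential mapping properties, and the boundary Sobolev bounds come from splitting $G$, $\Dl$, $\Dl^\sharp$, $\dDl$ with $\psi(|hD|)$ and using Lemmas \ref{lem:decompose}--\ref{lem:decomposePrime} at high frequency together with Theorem \ref{thm:optimal} at low frequency. The bookkeeping you flag is exactly what the paper carries out.
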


\subsection{Microlocal Description of $G$ and $\S$ near glancing}

Now, let $u$ solve \eqref{eqn:simpleTransmit} with $f_i\equiv 0$ and $g_1=0$ and $g_2=g$ microlocalized sufficiently close to a glancing point $(x',\xi')$ so that the parametrices from Appendix \ref{ch:semiclassicalDirichletParametrices} can be constructed. 

In particular, let $(y_0,\eta_0)\in S^*\pO$ and 
\begin{gather}
\label{eqn:psiCond}
\psi\equiv 1 \text{ on }\{|y-y_0|<\delta,\,|\eta-\eta_0|<\delta_1,\,\left||\eta|_g-1\right|<\gamma h^2 \e(h)^{-2}.\\
\supp \psi\subset\{|y-y_0|<2\delta,\,|\eta-\eta_0|<2\delta_1,\,\left||\eta|_g-1\right|<2\gamma h^2 \e(h)^{-2}
\end{gather}
and suppose that $g=\oph(\psi)g+\O{\Ph{-\infty}{}}(h^\infty)g$.

Recall that by Lemmas \ref{lem:extPara} and \eqref{eqn:DtoNMicrolocal} a microlocal description of the exterior Dirichlet to Neumann map, $\mc{N}_2$, is given by
\begin{equation} 
\label{eqn:dtongreens} \mc{N}_2g=J(h^{-2/3}C\Phi_-+B)J^{-1}g+\O{\Ph{-\infty}{}}(h^\infty)g\,\,\end{equation}
where $C\in \Ph{}{}$ is elliptic, $B\in \Ph{}{}$, $\Phi_-$ is the Fourier multiplier 
\begin{equation}
\label{eqn:NeumannFAIO}\Phi_-(u)=(2\pi h)^{-d+1}\int \frac{A_-'(h^{-2/3}\alpha)}{A_-(h^{-2/3}\alpha)}e^{\frac{i}{h}\la x-y,\xi'\ra}ud\xi'.
\end{equation}
where,
$$\alpha(\xi')=\xi_1+i\e(h)\quad \quad \text{ with }ch\leq \e(h)=\O{}(h\log h^{-1}).$$
Let $\mc{A}i\mc{A}_-$, $\mc{A}i'\mc{A}_-$, $\mc{A}i\mc{A}_-'$, and $\mc{A}i'\mc{A}_-'$ be the Fourier multiplies obtained by replacing $\frac{A_-'}{A_-}$ in \eqref{eqn:NeumannFAIO} by $AiA_-$, $Ai'A_-$, $AiA_-'$, and $Ai'A_-'$ respectively. 

Let $q_1=h^{2/3}\beta^{-1}JC^{-1}J^{-1}g$ and $q_2=h^{2/3}\beta^{-1}J\mc{A}i\mc{A}_-C^{-1}J^{-1}g$ where $\beta=\frac{e^{-\pi i/6}}{2\pi}.$
Then, let $w_1=A_{1,g}q_1$ where $A_1$ is as in Lemma \ref{lem:glideBLOPara} and $w_2=\mc{H}_dq_2$ where $\mc{H}_d$ is the solution operator to 
$$\begin{cases}(-h^2\Delta-z^2)\mc{H}_dq_2=0&\text{in } \re^d\setminus\overline{\Omega}\\
\mc{H}_dq_2|_{\pO}=q_2\\
\mc{H}_dq_2\text{ $z/h$ outgoing}
\end{cases}.$$
Then, by Lemma \ref{lem:glideBLOPara} and \eqref{eqn:dtongreens}, 
\begin{equation}\label{eqn:SLOMode} 
w_1|_{\pO}=w_2|_{\pO}=h^{2/3}\beta^{-1}J\mc{A}i\mc{A}_-C^{-1}J^{-1}g+\O{\Ph{-\infty}{}}(h^\infty).
\end{equation} 
and 
\begin{align} \partial_\nu w_1+\partial_{\nu_2} w_2&\\
&\!\!\!\!\!\!\!\!\!\!\!\!\!\!\!\!\!\!\!\!\!\!\!\!\!\!\!\!\!\!\!\!\!\!\!\!\!\!\!=\beta^{-1}J\left(C\mc{A}i\mc{A}_-'C^{-1}+B\mc{A}i\mc{A}_-C^{-1}-C\mc{A}i'\mc{A}_-C^{-1}-B\mc{A}i\mc{A}_-C^{-1}\right)J^{-1} g\nonumber\\
&+\O{\Ph{-\infty}{}}(h^\infty)g\nonumber\\
&=\beta^{-1}J(C(\mc{A}i'\mc{A}_--\mc{A}i\mc{A}_-')C^{-1})J^{-1}g+\O{\Ph{-\infty}{}}(h^\infty)g\label{eqn:wronskUse}\\
&=JCC^{-1}J^{-1}g+\O{\Ph{-\infty}{}}(h^\infty)g=g+\O{\Ph{-\infty}{}}(h^\infty)g\nonumber
\end{align}
where we have use the Wronskian for the Airy equation to reduce \eqref{eqn:wronskUse}.

Thus, $(u_1-w_1,u_2-w_2)$ solves \eqref{eqn:simpleTransmit} with 
$$\|\chi f\|_{H_h^N(\re^d)}+\|g_1\|_{H_h^N(\pO)}+\|g_2\|_{H_h^N(\pO)}=\O{}(h^\infty)\| g\|_{H^{-N}}$$
for any $N$. 
Hence, $u=w+\O{C^\infty{\loc}}(h^\infty)$
and we have that 
\begin{equation} 
\label{eqn:Gglance}Gg=J\beta^{-1} h^{2/3} \mc{A}i\mc{A}_-C^{-1}J^{-1}g+\O{\Ph{-\infty}{}}(h^\infty)g
\end{equation}
for any $\Im z=\O{}(h\log h^{-1})$. Moreover, 
\begin{equation}
\label{eqn:SLglance}
\S g|_{\Omega}=h^{2/3}\beta^{-1}A_{1,g}JC^{-1}J^{-1}g+\O{\mc{D}'(\pO)\to C^\infty(\Omega)}(h^\infty)g
\end{equation}

\begin{lemma}
\label{lem:nearGlanceEst}
Suppose that $\varphi\in L^2(\partial\Omega)$ and there exists $\e>0$ such that $\MS(\varphi)\subset \{|1-|\xi'|_g|\leq h^\e\},$ and $\Im z\geq -Mh \log h^{-1}$. Then,
$$\|G\varphi\|_{L^2}\leq Ch^{2/3}\|\varphi\|_{L^2}.$$
\end{lemma}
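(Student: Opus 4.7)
The plan is to apply the parametrix formula \eqref{eqn:Gglance} from the previous subsection, reducing the estimate to a uniform $L^\infty$ bound on an Airy-type symbol. First I would verify that the microlocalization hypothesis $\MS(\varphi)\subset\{|1-|\xi'|_g|\leq h^\e\}$ places $\varphi$ within the region where \eqref{eqn:Gglance} applies. Covering $S^*\pO$ by finitely many balls of radius $\delta_1$ as in \eqref{eqn:psiCond} and using a microlocal partition of unity, it suffices to treat $\varphi=\oph(\psi)\varphi+\O{L^2}(h^\infty)\|\varphi\|_{L^2}$ for a single cutoff $\psi$ satisfying \eqref{eqn:psiCond}. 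The compatibility with \eqref{eqn:psiCond} comes from the fact that $\e(h)=\O{}(h\log h^{-1})$ and $\e(h)\geq ch$, so $h^2\e(h)^{-2}$ is bounded below by a positive constant (up to logarithmic factors), which dominates $h^\e$ for any fixed $\e>0$ when $h$ is small.

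For such $\varphi$, the formula \eqref{eqn:Gglance} gives
$$G\varphi=h^{2/3}\beta^{-1}J\,\mc{A}i\mc{A}_-\,C^{-1}J^{-1}\varphi+\O{L^2\to L^2}(h^\infty)\|\varphi\|_{L^2}.$$
Since $J$ is a semiclassical FIO associated to a local diffeomorphism and $C\in\Ph{}{}$ is elliptic with $C^{-1}\in\Ph{}{}$, the operators $J^{\pm 1}$ and $C^{-1}$ are bounded on $L^2(\pO)$ uniformly in $h$. Therefore it suffices to show that the Fourier multiplier $\mc{A}i\mc{A}_-$ with symbol $Ai(h^{-2/3}\alpha(\xi'))\,A_-(h^{-2/3}\alpha(\xi'))$, where $\alpha=\xi_1+i\e(h)$, is bounded on $L^2(\re^{d-1})$ uniformly in $h$. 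By Plancherel's theorem this reduces to a uniform $L^\infty$ bound on the symbol.

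The main step is then to establish $|Ai(z)A_-(z)|\leq C$ uniformly for $z=h^{-2/3}\alpha(\xi')$. Since $\e(h)\geq ch$, the imaginary part of $z$ satisfies $\Im z\geq ch^{1/3}>0$, so $z$ lies strictly in the upper half plane, bounded away from the zero set of $A_-$ (which lies on the ray $\arg z=\pi/3$ in the lower half plane). Using $A_-(z)=Ai(e^{2\pi i/3}z)$ and the classical asymptotic expansions of the Airy function, one has the exponential cancellation
$$|Ai(z)|\leq C\la z\ra^{-1/4}\exp\bigl(-\tfrac{2}{3}\Re(z^{3/2})\bigr),\qquad |A_-(z)|\leq C\la z\ra^{-1/4}\exp\bigl(\tfrac{2}{3}\Re(z^{3/2})\bigr)$$
in any closed subsector of $\{\arg z\in(0,\pi)\}$, yielding $|Ai(z)A_-(z)|\leq C\la z\ra^{-1/2}$ for $|z|$ large, while in the transition region $|z|\leq 1$ both factors are bounded by continuity. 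Combining this with the $L^2$-boundedness of $J^{\pm 1}$ and $C^{-1}$ produces $\|G\varphi\|_{L^2}\leq Ch^{2/3}\|\varphi\|_{L^2}$, as claimed.

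The main obstacle is the uniform estimate on the Airy product: one must track exponential asymptotics carefully through the rotation by $e^{2\pi i/3}$ and confirm that the hypothesis $\Im z\geq ch^{1/3}$ keeps $z$ uniformly away from the zeros of $A_-$; this is precisely the role of the regularization parameter $\e(h)\geq ch$ built into $\alpha$, and is the only place in the argument where one uses that a nontrivial lower bound on $\e(h)$ was chosen when constructing the Melrose--Taylor parametrix in Appendix \ref{ch:semiclassicalDirichletParametrices}.
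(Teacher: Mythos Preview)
Your argument correctly isolates the key mechanism---reduce via \eqref{eqn:Gglance} to the uniform bound on the Airy multiplier (this is Lemma \ref{lem:airyOpEst})---but the assertion that $J^{\pm1}$ are bounded on $L^2(\pO)$ uniformly in $h$ is where your route diverges from the paper's, and as written it is a gap. The phase of $J$ is $\theta_b=\theta_{0,b}+\e(h)\theta'_b$ with $\theta'_b$ generically complex: its imaginary part enters through the transport equations \eqref{eqn:phaseCorrect} with complex boundary data $\rho_1|_{y=0}=i$. While $\Im\theta_1\geq0$ (Lemma \ref{lem:phase2}) keeps $\|J\|=O(1)$, the amplitude of $J^{-1}$ carries $e^{+\e(h)\Im\theta'_b/h}$, which on a fixed-size neighborhood is of size $h^{-\alpha}$ for some $\alpha>0$; the paper itself records that the $\zeta J$ terms have symbols in $h^{-\alpha}S_\delta$. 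Your factor-by-factor bound thus yields only $h^{2/3-\alpha}$.

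The paper takes a different route to avoid this: it localizes \emph{spatially} at scale $h^\delta$ into $\sim h^{-\delta(d-1)}$ pieces $\zeta_j$, obtains $\|\zeta_iG\zeta_jX\|\leq C_0h^{2/3}$ on each piece, and sums via the Cotlar--Stein lemma. The almost-orthogonality needed for Cotlar--Stein comes from the near-diagonal microsupport of $G$ near glancing (Lemma \ref{lem:decompose}), which gives $\zeta_2G\zeta_1X=G\zeta_1X+O(h^\infty)$ for nearby cutoffs and forces distant pieces to decouple. Your approach might be salvageable by analyzing the full composition $J\,\mc{A}i\mc{A}_-\,C^{-1}J^{-1}$ as a single FIO (using that near glancing the billiard displacement is $O(h^{\e/2})$, so the exponential factors nearly cancel in the composition) rather than bounding each factor separately, but that is not what you wrote.
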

\begin{proof}
Let $\chi_\e\in S_\e(T^*\partial\Omega)$ have $\chi_\e\equiv$ 1 on $\{|1-|\xi'|_g|\leq h^\e\}$ with $\supp\chi_\e\subset \{|1-|\xi'|_g|\leq 2h^\e\}$ and $X=\oph(\chi_\e).$ Then
$X\varphi=\varphi +\O{}(h^\infty)\varphi.$

By Lemma \ref{lem:decompose}, there exists $1/2>\delta>0$ such that for any $x_0\in \pO$ and $\beta>0$, if $\zeta_1\in S_\delta\cap \Cc(\pO)$ has $\supp \zeta_1\subset \{|x-x_0|\leq \beta h^\delta\}$ and $\zeta_2\in S_\delta\cap \Cc(\pO)$ has $\zeta_2\equiv 1$ on $\{|x-x_0|\leq 2\beta h^\delta\}$ then
\begin{gather*} 
\zeta_2G\zeta_1X\varphi=G\zeta_1X\varphi+\O{}(h^\infty)\varphi\\
\zeta_1G\zeta_2X\varphi=\zeta_1GX\varphi +\O{}(h^\infty)\varphi
\end{gather*} 
Now, by \eqref{eqn:Gglance}, 
\begin{gather*}
\label{eqn:cutoffi1}\zeta_i G\zeta_jX\varphi= \zeta_iJ\beta^{-1}h^{2/3}\mc{A}i\mc{A}_-C^{-1}J^{-1}\zeta_jX\varphi +\O{}(h^\infty)\varphi 
\end{gather*}
So, since $\mc{A}i\mc{A}_-=\O{L^2\to L^2}(1)$, and the  $\zeta J$ terms are elliptic semiclassical FIO's, with symbols in $h^{-\alpha}S_\delta$ for some $\alpha>0$, we have 
$$\|\zeta_iG\zeta_jX\varphi\|_{L^2}\leq C_0h^{2/3}\|\zeta_jX\varphi\|_{L^2}$$
where $C_0$ is a constant depending only on $\Omega$.

Let ${x_i}_{i=1}^{R(\e)}$ have $\pO\subset \bigcup_{i=1}^{R(\e)}B(x_i,\e)$ be such that for all $0<\e<1$, 
$$\sup_{x\in \pO}\#\{i\,:\,x\in B(x_i,10\e)\}\leq M_{\Omega}.$$
To see that this is possible, see for example \cite[Lemma 2]{Minicozzi}. Then, $R(\e)\leq c\e^{-d+1}$. Now, let $\{\zeta_{i,\beta}\}_{i=1}^{R(\beta h^{\delta})}$ be a partition of unity with $\supp \zeta_{i,\beta} \subset B(x_i,2\beta h^\delta)$ and $\zeta_{i,\beta}\equiv 1$ on $B(x_i,\beta h^\delta)$.

\begin{gather*} 
\sum_{j=1}^{R(\beta h^\delta)}\|G\zeta_{i,\beta}XX^*\zeta_{j,\beta}G^*\|^{1/2}\leq CM_{\Omega}h^{2/3}+\O{}(h^\infty)\leq C_{\Omega}h^{2/3}\\
\begin{aligned}\sum_{j=1}^{R(\beta h^{\delta})}\|X^*\zeta_{i,\beta}G^*G\zeta_{j,\beta}X\|^{1/2}&=\sum_{i=1}^{Ch^{-\e}}\|X^*\zeta_{i,\beta}G^*\zeta_{i,4\beta}\zeta_{j,4\beta}G\zeta_jX\|^{1/2}+\O{}(h^\infty)\\
&\leq C_{\Omega}h^{2/3}\end{aligned}
\end{gather*}
Hence, by the Cotlar-Stein Lemma (see for example \cite[Theorem C.5]{EZB}),
$$\|GX\|_{L^2\to L^2}=\left\|\sum_jG\zeta_{j,\beta}X\right\|_{L^2\to L^2}\leq C_{\Omega}h^{2/3}.$$
\end{proof}

Combining Lemma \ref{lem:nearGlanceEst} with Lemma \ref{lem:decompose}, the $L^2$ boundedness of semiclassical FIOs associated to canonical graphs gives, and Lemma \ref{lem:phragmen} gives the following improvement of Theorem \ref{thm:optimal} in the case that $\Omega$ is strictly convex with smooth boundary
\begin{theorem}
\label{thm:GEstImproved}
Let $\Omega\subset \re^d$ be strictly convex with smooth boundary. Then there exists $\lambda_0>0$ such that for some $C$ and all $|\lambda|>\lambda_0$ the following estimate holds
\begin{equation}
\nonumber
\|G(\lambda)\|_{L^2(\partial\Omega)\to L^2(\partial\Omega)} \; \leq \; 
C\,\la\lambda\ra^{-\frac 23}\,e^{\LO(\Im \lambda)_-}.
\end{equation}
\end{theorem}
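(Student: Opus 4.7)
The plan is to combine the microlocal decomposition of $G$ from Lemma \ref{lem:decompose} with the near-glancing estimate of Lemma \ref{lem:nearGlanceEst}, giving the sharp bound in the upper half plane, and then extend it down using the Phragm\'en--Lindel\"of argument of Lemma \ref{lem:phragmen}. Concretely, set $h = |\lambda|^{-1}$ and write $\lambda = z/h$ with $|z| = 1$; by Lemma \ref{lem:outsideSphere} the high-frequency part of $G$ contributes only $\mathcal{O}(|\lambda|^{-1})$, so it suffices to show $\|G(z/h)\|_{L^2 \to L^2} \leq C h^{2/3}$ for $\Im z \geq 0$ and $z$ bounded away from $0$. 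Fix any $\varepsilon \in (0, 2/3)$ and apply Lemma \ref{lem:decompose} to write
\[
G(z/h) = G_\Delta + G_B + G_g + \mathcal{O}_{L^2 \to L^2}(h^\infty).
\]

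Next, I would estimate each piece. The operator $G_\Delta \in h^{1-\varepsilon/2}\Psi_\varepsilon^{-1}$ is microsupported where $\bigl||\xi'|_g - E\bigr| \geq c h^\varepsilon$, and its principal symbol $ih/(2\sqrt{E^2 - |\xi'|_g^2})$ is bounded by $C h^{1-\varepsilon/2}$ on this set, so Lemma \ref{lem:goodL2Bound} gives $\|G_\Delta\|_{L^2 \to L^2} \leq C h^{1-\varepsilon/2}$. The operator $G_B$ is a semiclassical FIO associated to the billiard ball relation $C_b$, which on a strictly convex domain is parametrized by the symplectic billiard map $\beta_E$ and is therefore a canonical graph; the standard $L^2$-boundedness of such FIOs yields $\|G_B\|_{L^2 \to L^2} \leq C h^{1-\varepsilon/2}$. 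For $G_g$, let $\psi_g = \psi(h^{-\varepsilon}(|hD'|_g - E))$ and split any $v \in L^2(\pO)$ as $v = \psi_g v + (1-\psi_g) v$. On the far-from-glancing piece, the microsupport of $G_g$ and of $(1-\psi_g)v$ are disjoint, so $G_g(1-\psi_g)v = \mathcal{O}_{L^2}(h^\infty)\|v\|_{L^2}$; on the near-glancing piece $\varphi = \psi_g v$, Lemma \ref{lem:nearGlanceEst} directly gives $\|G \psi_g v\|_{L^2} \leq C h^{2/3}\|v\|_{L^2}$, and subtracting the (even smaller) $G_\Delta + G_B$ contribution on this piece gives the same bound for $G_g \psi_g v$. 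Summing, $\|G(z/h)\|_{L^2 \to L^2} \leq C(h^{2/3} + h^{1-\varepsilon/2}) \leq C h^{2/3}$ since $\varepsilon < 2/3$.

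Finally, to extend the estimate from $\Im \lambda \geq 0$ to the whole admissible range of $\lambda$, I would invoke Lemma \ref{lem:phragmen} with $i = 1$, $\alpha = -2/3$, $\beta = 0$. The lemma gives $\tilde{\alpha} = \max(-1, -2/3) = -2/3$, and hence
\[
|Q^1_\lambda(f,g)| \leq C \langle \lambda \rangle^{-2/3} e^{\LO (\Im \lambda)_-} \|f\|_{L^2(\pO)}\|g\|_{L^2(\pO)},
\]
which is equivalent to the desired operator norm bound on $G(\lambda)$; the reduction from $G$ to the $\tilde Q^1_\lambda$ used in the Phragm\'en--Lindel\"of step again proceeds through Lemma \ref{lem:outsideSphere}, whose $\mathcal{O}(|\lambda|^{-1})$ contribution is absorbed into the $\max(-1, -2/3)$ appearing in $\tilde \alpha$.

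The substantive step is the near-glancing bound, where the gain from $h^{1/2}$ (the naive restriction estimate underlying Theorem \ref{thm:optimal}) to $h^{2/3}$ comes from the Airy-type structure of the outgoing Dirichlet-to-Neumann map on a convex domain: after conjugation by the Melrose--Taylor FIO $J$, the near-glancing symbol of $G$ is $h^{2/3}\beta^{-1}\mathcal{A}i\mathcal{A}_- C^{-1}$ (see \eqref{eqn:Gglance}), and the ratio $A_-'/A_-$ controlling the reflection coefficient is regularized at glancing precisely at scale $h^{2/3}$. This mechanism has already been extracted from the parametrix in the proof of Lemma \ref{lem:nearGlanceEst} via a Cotlar--Stein almost-orthogonality argument on a fine cover of $\pO$, so here there is no further work; all the technical difficulty has been front-loaded into the construction of the semiclassical Melrose--Taylor parametrix in Appendix \ref{ch:semiclassicalDirichletParametrices}.
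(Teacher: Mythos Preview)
Your proposal is correct and follows essentially the same approach as the paper: the paper's proof is literally the one-line remark ``Combining Lemma \ref{lem:nearGlanceEst} with Lemma \ref{lem:decompose}, the $L^2$ boundedness of semiclassical FIOs associated to canonical graphs, and Lemma \ref{lem:phragmen} gives\ldots,'' and you have accurately expanded each of these ingredients. Your handling of $G_g$ by bounding $G\psi_g$ via Lemma \ref{lem:nearGlanceEst} and then subtracting the already-controlled $G_\Delta + G_B$ pieces is a clean way to close the argument.
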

\begin{remark}
The improvement from Theorem \ref{thm:optimal} is that we have removed the $\log\lambda$ from the right hand side of \eqref{eqn:optimalConvex}
\end{remark}


\subsection{Microlocal description of $\Dl$ near glancing}
To obtain a microlocal description of $\Dl$ near glancing, we combine Proposition \ref{prop:layerInverse} with the microlocal decomposition of $G$ and the microlocal parametrix for $\mc{N}_e$ constructed in Appendix \ref{ch:semiclassicalDirichletParametrices}. In particular, for $g$ microlocalized near glancing point $(y_0,\eta_0)$,
\begin{gather*} 
Gg=J\beta^{-1}h^{2/3}\mc{A}i\mc{A}_-C^{-1}J^{-1}+\O{}(h^\infty)g\label{eqn:modelG}\\
\mc{N}_2g=J(h^{-2/3}C\Phi_{-}+B)J^{-1}g+\O{}(h^\infty)g
\end{gather*}
where $\mc{N}_2$ as denotes the exterior Dirichlet to Neumann map.
Now, $\mc{N}_2$ has microsupport contained in an $h^\e$ neighborhood of the diagonal and hence $\mc{N}_2g$ remains microlocalized near glancing and we can use the microlocal model \eqref{eqn:modelG} in the composition $G\mc{N}_2$.
Proposition \ref{prop:layerInverse} implies that 
\begin{align*} \Dl g&=\frac{1}{2}g-G\mc{N}_2g\\
&=\frac{1}{2}g-\beta^{-1}J(\mc{A}i\mc{A}_-'+h^{2/3}\mc{A}i\mc{A}_-C^{-1}B)J^{-1}g+\O{\Ph{-\infty}{}}(h^\infty)g\\
&=-\beta^{-1}J\left(\frac{1}{2}\left(\mc{A}i'\mc{A}_-+\mc{A}i\mc{A}_-'\right)\right.\\
&\quad\quad\quad+h^{2/3}\mc{A}_-\mc{A}iC^{-1}B)J^{-1}g+\O{\Ph{-\infty}{}}(h^\infty)g\\
&=-\beta^{-1}J(\mc{A}i'\mc{A}_-+h^{2/3}\mc{A}i\mc{A}_-C^{-1}B)J^{-1}g-\frac{1}{2}g+\O{\Ph{-\infty}{}}(h^\infty)g
\end{align*}
Hence, for $g$ microlocalized near glancing
\begin{equation}
\label{eqn:dlglance}
\Dl g=\frac{1}{2}g-\beta^{-1}J(\mc{A}i\mc{A}_-'+h^{2/3}\mc{A}i\mc{A}_-C^{-1}B)J^{-1}g+\O{\Ph{-\infty}{}}(h^\infty)g
\end{equation}

So, by analogous arguments to those in Lemma \ref{lem:nearGlanceEst}, we have 
\begin{lemma}
\label{lem:nearGlanceEstDl}
Suppose that $\varphi\in L^2(\partial\Omega)$, and there exists $\e>0$ such that $\MS(\varphi)\subset \{|1-|\xi'|_g|\leq h^\e\},$ and $\Im z\geq -Mh \log h^{-1}$. Then,
$$\|\Dl\varphi\|_{L^2}\leq C\|\varphi\|_{L^2}.$$
\end{lemma}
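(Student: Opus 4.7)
The plan is to mimic the proof of Lemma \ref{lem:nearGlanceEst} verbatim, substituting the microlocal model \eqref{eqn:dlglance} for the single layer model \eqref{eqn:Gglance}, and using that the relevant Airy-type Fourier multipliers are $L^2$-bounded with operator norm $\O{}(1)$ rather than $\O{}(h^{2/3})$. This accounts for the loss of the $h^{2/3}$ gain.

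More concretely, I would first fix $\e>0$ as in the statement and let $X=\oph(\chi_\e)\in \Psi_\e^{\comp}$ with $\chi_\e$ supported in $\{|1-|\xi'|_g|\leq 2h^\e\}$ and equal to $1$ on the microsupport of $\varphi$, so that $X\varphi=\varphi+\O{}(h^\infty)\varphi$. Appealing to Lemma \ref{lem:decomposeDl} I would choose $\delta\in(0,1/2)$ small enough that the far-from-diagonal part of $\Dl$ contributes only $\O{}(h^\infty)$ to $\Dl X\varphi$, i.e. for cutoffs $\zeta_1,\zeta_2\in S_\delta\cap \Cc(\pO)$ supported in balls of radius $\beta h^\delta$, one has $\zeta_2\Dl\zeta_1 X\varphi=\Dl\zeta_1 X\varphi+\O{}(h^\infty)\varphi$, and similarly on the other side. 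On these localized pieces the microlocal model \eqref{eqn:dlglance} applies, so
\[
\zeta_i\Dl\zeta_jX\varphi=\tfrac{1}{2}\zeta_i\zeta_jX\varphi-\beta^{-1}\zeta_iJ\bigl(\mc{A}i\mc{A}_-'+h^{2/3}\mc{A}i\mc{A}_-C^{-1}B\bigr)J^{-1}\zeta_jX\varphi+\O{}(h^\infty)\varphi.
\]

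The main point is then an $L^2$-boundedness estimate for the Airy Fourier multipliers $\mc{A}i\mc{A}_-'$ and $\mc{A}i\mc{A}_-$ appearing in \eqref{eqn:dlglance}. Both have symbols which, by the asymptotics of $Ai$, $A_-=Ai-e^{-2\pi i/3}Ai(e^{-2\pi i/3}\cdot)$ and their derivatives (together with the $\e(h)=\O{}(h\log h^{-1})$ shift into the lower half plane guaranteeing control of $A_-$ in the relevant strip) are bounded uniformly in $h$ on $\re$; hence each multiplier is $\O{L^2\to L^2}(1)$. Combined with the ellipticity of $C$ and the fact that $J$, $J^{-1}$ are semiclassical FIOs associated to canonical graphs with symbols in $h^{-\alpha}S_\delta$, this yields on each piece
\[
\|\zeta_i\Dl\zeta_j X\varphi\|_{L^2}\leq C_0\|\zeta_j X\varphi\|_{L^2},
\]
for a constant $C_0$ independent of $h$ and of the center of the cutoffs.

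To assemble a global estimate, I would take a partition of unity $\{\zeta_{i,\beta}\}_{i=1}^{R(\beta h^\delta)}$ with bounded overlap as in Lemma \ref{lem:nearGlanceEst}, and apply the Cotlar--Stein lemma to $\sum_j \Dl \zeta_{j,\beta}X$. Using the almost-orthogonality afforded by the localization in $x$ (products $\zeta_{i,\beta}\Dl^*\Dl\zeta_{j,\beta}$ and $\Dl\zeta_{i,\beta}X X^*\zeta_{j,\beta}\Dl^*$ are $\O{}(h^\infty)$ unless the supports are near one another), the almost-orthogonal sums are each $\O{}(1)$, so $\|\Dl X\|_{L^2\to L^2}\leq C$ and hence $\|\Dl\varphi\|_{L^2}\leq C\|\varphi\|_{L^2}$.

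The step I expect to require most care is the uniform $L^2$-boundedness of the Airy multipliers in the complex-energy regime $\Im z\geq -Mh\log h^{-1}$, since the defining quotient symbols involve $A_-$ evaluated at arguments with imaginary part of size $\e(h)\sim h\log h^{-1}$; but this is exactly the regime in which the parametrix of Appendix \ref{ch:semiclassicalDirichletParametrices} was designed to work, and the uniform bounds follow from the standard Airy asymptotics once one verifies $A_-$ does not vanish in the relevant strip.
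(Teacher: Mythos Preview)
Your proposal is correct and follows exactly the route the paper takes: the paper's entire proof is the sentence ``by analogous arguments to those in Lemma~\ref{lem:nearGlanceEst},'' and you have simply written out what those analogous arguments are, substituting the model \eqref{eqn:dlglance} for \eqref{eqn:Gglance} and observing that the relevant Airy multiplier $\mc{A}i\mc{A}_-'$ (together with the lower-order $h^{2/3}\mc{A}i\mc{A}_-C^{-1}B$ term) is $\O{L^2\to L^2}(1)$. The only point worth noting is that the bound $|Ai(\alpha_h)A_-'(\alpha_h)|\leq C$ is not stated separately in Lemma~\ref{lem:airyBounds} or Lemma~\ref{lem:airyOpEst}, but it follows immediately from the asymptotics \eqref{eqn:airyFormula}, \eqref{eqn:AiAsympPos}, \eqref{eqn:AiAsympNeg}: on the hyperbolic side $|Ai(\alpha_h)|\leq C\langle\alpha_h\rangle^{-1/4}$ and $|A_-'(\alpha_h)|\leq C\langle\alpha_h\rangle^{1/4}$, while on the elliptic side the exponential decay of $Ai$ exactly cancels the growth of $A_-'$.
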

Combining Lemma \ref{lem:nearGlanceEstDl} with Lemma \ref{lem:decomposeDl}, the $L^2$ boundedness of semiclassical FIOs associated to canonical graphs, and Lemma \ref{lem:phragmen} gives the following improvement of Theorem \ref{thm:optimal} in the case that $\Omega$ is strictly convex with smooth boundary
\begin{theorem}
Let $\Omega \subset \re^d$ be strictly convex with smooth boundary. Then there exists $\lambda_0>0$ such that for some $C$ and all $|\lambda|>\lambda_0$ the following estimate holds
$$\|\Dl \|_{L^2(\pO)\to L^2(\pO)}\leq Ce^{\LO(\Im \lambda)_-}.$$
\end{theorem}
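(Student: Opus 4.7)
The plan is to follow the recipe sketched in the two lines immediately preceding the theorem: decompose $\Dl$ using Lemma \ref{lem:decomposeDl}, bound each piece in $L^2(\pO)$, and then use Lemma \ref{lem:phragmen} to extend the bound from the upper half plane into the lower half plane with the exponential factor $e^{\LO(\Im\lambda)_-}$.

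First I would fix $z\in[a,b]+i[-Ch\log h^{-1},Mh^{1-\gamma}]$ with $h=|\lambda|^{-1}$ and write
\[
\Dl(z/h)=\Dl_\Delta(z)+\Dl_B(z)+\Dl_g(z)+\O{\mc{D}'\to C^\infty}(h^\infty),
\]
as in Lemma \ref{lem:decomposeDl}. For the diagonal piece, $\Dl_\Delta\in h^{1-2\e}\Ph{-1}{\e}$ and its principal symbol is uniformly bounded (one may also cut off in frequency using a symbol $\psi$ supported where $|\xi'|_g\le 2E$ to reduce to a compactly microlocalized operator, handling the high-frequency tail by the pseudodifferential estimate in Theorem \ref{thm:optimal}); Lemma \ref{lem:goodL2Bound} then yields $\|\Dl_\Delta\|_{L^2\to L^2}=\O{}(1)$. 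For the billiard piece, $\Dl_B$ is, up to a factor of $e^{(\Im z)_- D_\Omega/h}$, a compactly microlocalized semiclassical FIO associated to the canonical graph of $\beta_E$, so the standard $L^2$ boundedness of FIOs associated to symplectomorphisms (Lemma \ref{lem:FIOcomp} together with the mass-zero principal symbol computation in Lemma \ref{lem:decomposeDl}) gives $\|\Dl_B\|_{L^2\to L^2}\le C e^{(\Im z)_- D_\Omega/h}$.

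For the glancing piece, I would apply Lemma \ref{lem:nearGlanceEstDl}: since $\MS(\Dl_g)$ is supported in an $h^\e$-neighborhood of $S_E^*\pO\times S_E^*\pO$ intersected with the diagonal, we can pre- and post-compose with cutoffs $X=\oph(\chi_\e)$ localizing to $|1-|\xi'|_g|\le h^\e$, and by the same Cotlar–Stein partition argument employed in Lemma \ref{lem:nearGlanceEst} (but now using the representation \eqref{eqn:dlglance} of $\Dl g$ near glancing in terms of $\mc{A}i\mc{A}_-'$ and the error $h^{2/3}\mc{A}i\mc{A}_-C^{-1}B$) one gets $\|\Dl_g\|_{L^2\to L^2}=\O{}(1)$. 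Summing the three contributions yields
\[
\|\Dl(z/h)\|_{L^2\to L^2}\le C e^{(\Im z)_- D_\Omega /h}
\]
for $\Im z\ge -Mh\log h^{-1}$, which in the $\lambda$ variable is the claimed bound $C e^{\LO(\Im\lambda)_-}$ in a horizontal strip including the real axis.

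Finally, to extend the estimate to all $\Im\lambda\le 0$ (respectively $\arg\lambda\in[-\pi,0]\cup[\pi,2\pi]$ if $d$ is even), I invoke Lemma \ref{lem:phragmen} applied to the sesquilinear form $Q_\lambda^2(f,g)=\la \gamma^+ R_0(\lambda)L^*(f\delta_{\pO}),g\ra$ with $i=2$, $\alpha=0$, $\beta=0$: since the above argument yields a uniform bound independent of $\log\la\lambda\ra$ for $\Im\lambda\ge 0$, Lemma \ref{lem:phragmen} upgrades this to
\[
|Q_\lambda^2(f,g)|\le C\la\lambda\ra^{0}e^{\LO(\Im\lambda)_-}\|f\|_{L^2(\pO)}\|g\|_{L^2(\pO)}
\]
on the entire lower half plane, which is precisely the desired operator norm bound. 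The only step requiring real work is the glancing estimate, which is already encapsulated in Lemma \ref{lem:nearGlanceEstDl}; the rest is bookkeeping combining the microlocal decomposition with standard FIO $L^2$ boundedness and the Phragmén–Lindelöf argument.
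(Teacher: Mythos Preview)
Your proposal is correct and follows essentially the same route as the paper: the paper's proof is precisely the one-line combination ``Lemma~\ref{lem:nearGlanceEstDl} + Lemma~\ref{lem:decomposeDl} + $L^2$-boundedness of FIOs on canonical graphs + Lemma~\ref{lem:phragmen}'', and you have unpacked exactly these ingredients. One small point of phrasing: the microlocal decomposition of Lemma~\ref{lem:decomposeDl} is only stated for $\Im z$ in a strip of width $\O{}(h\log h^{-1})$, not for all $\Im\lambda\ge 0$, but this is harmless since the proof of Lemma~\ref{lem:phragmen} only uses the hypothesis on the real axis together with the crude polynomial-times-exponential bound in the lower half plane, both of which you have.
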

\begin{remark} 
This theorem improves the estimate for $\Dl$ in Theorem \ref{thm:optimal} by removing the factor $\la \lambda\ra^{\frac{1}{6}}\log \la \lambda\ra$. The improved estimate is sharp in the case of a strictly convex domain as can be seen by taking Neumann eigenfunctions on the ball.
\end{remark}

\subsubsection{Microlocal description of $\dDl $ and $\D$ near glancing}

Now, let $u$ solve \eqref{eqn:simpleTransmit} with $f_i\equiv 0$ and $g_2=0$ and $g_1=g$ microlocalized sufficiently close to a glancing point $(y_0,\eta_0)$ so that the parametrices from Appendix \ref{ch:semiclassicalDirichletParametrices} can be constructed. In particular, let $\psi$ be as in \eqref{eqn:psiCond} and assume the $\oph(\psi)g=g+\O{\Ph{-\infty}{}}(h^\infty)g$.  

We know that $u=\D g$ and so by Lemma \ref{lem:layer}  $u_1|_{\pO}=-\frac{1}{2}g+\Dl g$, $u_2|_{\pO}=\frac{1}{2}g+\Dl g$. 
Motivated by this and \eqref{eqn:dlglance}, let 
\begin{equation*} 
\begin{aligned} 
w_1&=-\beta^{-1}A_{2,g}g-\beta^{-1}h^{2/3}A_{1,g}J C^{-1}BJ^{-1}g\\
w_2&=-\beta^{-1}\mc{H}_dJ(\mc{A}i'\mc{A}_-+h^{2/3}\mc{A}_-\mc{A}iC^{-1}BJ^{-1}g
\end{aligned}
\end{equation*}
where $A_{i,g}$ are as in Lemma \ref{lem:glideBLOPara}. 
Then,
\begin{equation}
\label{eqn:prelimBVs}
\begin{aligned}
w_1|_{\pO}&=-\beta^{-1}J(\mc{A}i\mc{A}_-'+h^{2/3}\mc{A}i\mc{A}_-C^{-1}B)J^{-1}g+\O{\Ph{-\infty}{}}(h^\infty)g\\
w_2|_{\pO}&=-\beta^{-1}J(\mc{A}i'\mc{A}_-+h^{2/3}\mc{A}i\mc{A}_-C^{-1}B)J^{-1}g+\O{\Ph{-\infty}{}}(h^\infty)g\\
\partial_{\nu_1}w_1|_{\pO}&=\beta^{-1}J(h^{-2/3}C\mc{A}i'\mc{A}_-'+B\mc{A}i\mc{A}_-'+C\mc{A}i'\mc{A}_-C^{-1}BJ^{-1}g\\
&\quad\quad+h^{2/3}B\mc{A}i\mc{A}_-C^{-1}B)J^{-1}g+\O{\Ph{-\infty}{}}(h^\infty)g\\
\partial_{\nu_2}w_2|_{\pO}&
= -\beta^{-1}J(h^{-2/3}C\mc{A}i'\mc{A}_-'+C\mc{A}i\mc{A}_-'C^{-1}B+B\mc{A}i'\mc{A}_-)J^{-1}g\\
&\quad\quad-h^{2/3}\beta^{-1}JB\mc{A}i\mc{A}_-C^{-1}B)J^{-1}g+\O{\Ph{-\infty}{}}(h^\infty)g
\end{aligned}
\end{equation}
Thus, 
\begin{equation*} 
\begin{aligned} 
\partial_{\nu_1}w_1|_{\pO}+\partial_{\nu_2}w_2|_{\pO}&=\O{\Ph{-\infty}{}}(h^\infty)g\\
w_1-w_2&=g+\O{\Ph{-\infty}{}}(h^\infty)g
\end{aligned}
\end{equation*}
where we have used the Wronksian for the Airy equation in simplifying the expressions in \eqref{eqn:prelimBVs}.

Thus, $(u_1-w_1,u_2-w_2)$ solves \eqref{eqn:simpleTransmit} with 
$$\|\chi f\|_{H_h^N(\re^d)}+\|g_1\|_{H_h^N(\pO)}+\|g_2\|_{H_h^N(\pO)}=\O{}(h^\infty)\|g\|_{H^{-N}}$$
for any $N>0$.
This gives
$$u_i=w_i+\O{\Ph{-\infty}{}}(h^\infty)g.$$
So we have that 
\begin{equation} 
\label{eqn:GglancePrime}
\begin{aligned} \dDl g&=\beta^{-1}J(h^{-2/3}C\mc{A}i'\mc{A}_-'+B\mc{A}i\mc{A}_-'+C\mc{A}i'\mc{A}_-C^{-1}B)Jg\\
&\quad\quad+h^{2/3}\beta^{-1}JB\mc{A}i\mc{A}_-C^{-1}B)J^{-1}g+\O{\Ph{-\infty}{}}(h^\infty)g\\
&=\beta^{-1}h^{-2/3}JC\mc{A}i'\mc{A}_-'J^{-1}g+J\O{H_h^s\to H_h^s}(1)J^{-1}g
\end{aligned}
\end{equation}
and 
\begin{equation}
\label{eqn:dLIn}
\D g|_{\Omega_1}=-\beta^{-1}A_{2,g}g-\beta^{-1}h^{2/3}A_{1,g}JC^{-1}BJ^{-1}g+\O{\mc{D}'(\pO)\to C^\infty(\Omega_1)}(h^\infty)
\end{equation}
for any $\Im z=\O{}(h\log h^{-1}).$

\begin{lemma}
\label{lem:nearGlanceEstPrime}
Suppose that $\varphi\in L^2(\partial\Omega)$ and there exists $\e>0$ such that $\MS(\varphi)\subset \{|1-|\xi'|_g|\leq h^\e\},$ and $\Im z\geq -Mh \log h^{-1}$. Then,
$$\|\dDl\varphi\|_{L^2}\leq Ch^{1-\e/2}\|\varphi\|_{L^2}.$$
\end{lemma}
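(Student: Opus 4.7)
The plan is to follow the strategy used for Lemmas \ref{lem:nearGlanceEst} and \ref{lem:nearGlanceEstDl}: microlocalize to the $h^\e$-neighborhood of glancing, apply the semiclassical Melrose--Taylor parametrix \eqref{eqn:GglancePrime} on a spatial partition of unity, estimate the resulting Airy multiplier on each piece, and assemble via the Cotlar--Stein lemma.

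First I would introduce a symbolic cutoff $X=\oph(\chi_\e)$ with $\chi_\e\in S_\e(T^*\pO)$, $\chi_\e\equiv 1$ on $\{|1-|\xi'|_g|\le h^\e\}$, and $\supp\chi_\e\subset\{|1-|\xi'|_g|\le 2h^\e\}$, so that $X\varphi=\varphi+\O{L^2}(h^\infty)\|\varphi\|_{L^2}$. Next, select $\delta\in(0,1/2)$ small enough that Lemma~\ref{lem:decomposePrime} and the parametrix construction of Appendix~\ref{ch:semiclassicalDirichletParametrices} both apply on balls of radius $\beta h^\delta$. As in the proof of Lemma~\ref{lem:nearGlanceEst}, choose a spatial partition of unity $\{\zeta_{j,\beta}\}$ subordinate to a uniformly finite-overlap cover $\{B(x_j,\beta h^\delta)\}$ of $\pO$ with overlap bound $M_\Omega$ independent of $h$. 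Using the microsupport properties from Lemma~\ref{lem:decomposePrime}, one has the localization identities
$$\zeta_{i,2\beta}\,\dDl\,\zeta_{j,\beta}X\varphi=\dDl\,\zeta_{j,\beta}X\varphi+\O{}(h^\infty)\varphi,\qquad \zeta_{j,\beta}\,\dDl\,\zeta_{i,2\beta}X\varphi=\zeta_{j,\beta}\dDl X\varphi+\O{}(h^\infty)\varphi,$$
which reduce the analysis to a patchwise bound.

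On each patch the parametrix \eqref{eqn:GglancePrime} yields
$$\zeta_i\,\dDl\,\zeta_jX\varphi=\beta^{-1}h^{-2/3}\zeta_iJC\,\mc{A}i'\mc{A}_-'\,J^{-1}\zeta_jX\varphi+\zeta_iJ\,\O{H_h^s\to H_h^s}(1)\,J^{-1}\zeta_jX\varphi+\O{}(h^\infty)\varphi.$$
The bounded remainder already satisfies the claimed bound after Cotlar--Stein, so the essential step is the Airy multiplier. Here the support condition carried by $X$, transported under the Melrose--Taylor diffeomorphism $J$, confines the effective Airy variable to $|h^{-2/3}\alpha|\lesssim h^{\e-2/3}$. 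On this range the uniform asymptotics of $Ai'$ and $A_-'$ (exponential decay on the shadow side, and $|t|^{1/4}$-type oscillation on the illuminated side) provide a symbol-class bound for $Ai'(h^{-2/3}\alpha)A_-'(h^{-2/3}\alpha)$ that, together with the prefactor $h^{-2/3}$, the $L^2$-boundedness of $J$ and the ellipticity of $C$, yields the patchwise operator norm $C_\Omega h^{1-\e/2}$.

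The Cotlar--Stein assembly then proceeds exactly as in Lemma~\ref{lem:nearGlanceEst}: the finite overlap of $\{\zeta_{j,\beta}\}$ and the fact that $\dDl^*\dDl$ is microlocalized within an $h^\delta$-neighborhood of the diagonal (by Lemma~\ref{lem:decomposePrime} together with \eqref{eqn:GglancePrime}) give
$$\sum_{j}\bigl\|\dDl\,\zeta_{j,\beta}XX^*\zeta_{i,\beta}\dDl^*\bigr\|^{1/2}+\sum_{j}\bigl\|X^*\zeta_{i,\beta}\dDl^*\dDl\,\zeta_{j,\beta}X\bigr\|^{1/2}\le C_\Omega h^{1-\e/2},$$
from which $\|\dDl X\|_{L^2\to L^2}\le Ch^{1-\e/2}$ follows, and hence the stated bound on $\dDl\varphi$.

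The main obstacle is the Airy multiplier analysis, which is more delicate than in the proofs of Lemmas~\ref{lem:nearGlanceEst} and~\ref{lem:nearGlanceEstDl}. In those cases the corresponding Fourier multipliers $\mc{A}i\mc{A}_-C^{-1}$ and $\mc{A}i\mc{A}_-'+h^{2/3}\mc{A}i\mc{A}_-C^{-1}B$ could be handled by uniform $L^2$-boundedness, but for $\dDl$ one has the extra $h^{-2/3}$ prefactor and the derivative-derivative combination $\mc{A}i'\mc{A}_-'$, so the gain must come entirely from the $h^\e$-restriction imposed by $X$ and the quantitative behaviour of the Airy functions on $|h^{-2/3}\alpha|\lesssim h^{\e-2/3}$.
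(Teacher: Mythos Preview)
Your approach is essentially identical to the paper's: introduce $X=\oph(\chi_\e)$, use a spatial partition $\{\zeta_{j,\beta}\}$ with bounded overlap, invoke the parametrix formula \eqref{eqn:GglancePrime} on each patch, estimate the Airy multiplier $h^{-2/3}C\,\mc{A}i'\mc{A}_-'$, and reassemble by Cotlar--Stein. The paper does exactly this, the only difference being that it makes the Airy step completely explicit by citing the pointwise bound $|Ai'(z)A_-'(z)|\le C\la z\ra^{1/2}$ valid for $|\Im z|\le |\Re z|^{-1/2}$.

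There is, however, a genuine quantitative slip. The stated inequality $\|\dDl\varphi\|_{L^2}\le Ch^{1-\e/2}\|\varphi\|$ is a typo in the paper; the exponent should be $-1+\e/2$, and indeed the paper's own proof derives $\|\zeta_2 Jh^{-2/3}\mc{A}_-'\mc{A}i'CJ^{-1}\zeta_1X\varphi\|\le C_0h^{-1+\e/2}\|\zeta_1X\varphi\|$, and this is the bound actually used downstream in Section~\ref{sec:glancingPointPrime}. Your Airy discussion, carried out honestly, gives the same thing: on $\MS(X)$ one has $|\alpha_0|\lesssim h^\e$, hence $|h^{-2/3}\alpha|\lesssim h^{\e-2/3}$, so $|Ai'A_-'(h^{-2/3}\alpha)|\lesssim \la h^{\e-2/3}\ra^{1/2}\sim h^{\e/2-1/3}$, and combined with the $h^{-2/3}$ prefactor this yields $h^{-1+\e/2}$, not $h^{1-\e/2}$. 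So your method is correct, but the output you claim for the patchwise norm is off by a sign in the exponent; you should arrive at $C_\Omega h^{-1+\e/2}$, matching the paper's proof and its subsequent application.
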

\begin{proof}
Let $\chi_\e\in S_\e(T^*\partial\Omega)$ have $\chi_\e\equiv$ 1 on $\{|1-|\xi'|_g|\leq h^\e\}$ with $\supp\chi_\e\subset \{|1-|\xi'|_g|\leq 2h^\e\}$ and $X=\oph(\chi_\e).$ Then
$X\varphi=\varphi +\O{}(h^\infty)\varphi.$
Fix $0<\e_2<\e_1=\e$. Then let $x_0\in \partial\Omega$ and $\zeta_1, \zeta_2\in \Cc(\partial\Omega)\cap S_\e$ such that $\zeta_i\equiv 1$ on $\{|x-x_0|<Ch^{\e_i}\}$ and $\supp \zeta_1\subset \{|x-x_0|<2Ch^{\e_i}\}.$ Then by Lemma \ref{lem:decomposePrime}, 
$$\zeta_2 \dDl \zeta_1X\varphi =\dDl\zeta_1X\varphi +\O{}(h^\infty)\varphi.$$
Now, by \eqref{eqn:Gglance},
\begin{multline*}
\zeta_2 \dDl\zeta_1X\varphi= -\zeta_2 J\beta^{-1}h^{-2/3}\mc{A}'_-\mc{A}i'CJ^{-1}J(1+\O{L^2\to L^2}(h^{2/3}))J^{-1}\zeta_1X\varphi \\
+\O{L^2\to L^2}(h^\infty)\varphi .\end{multline*}
Next, observe that on for $|\Im z|\leq |\Re z|^{-1/2}$,
$$|Ai'(z)A_-'(z)|\leq C\la z\ra ^{1/2}$$
and $\zeta_iJ$ are elliptic semiclassical FIO's, with symbol in $h^{-\alpha}S_\delta$ for some $\alpha>0$. Therefore,
$$\|\zeta_2 Jh^{-2/3}\mc{A}_-'\mc{A}i'CJ^{-1}\zeta_1X\varphi\|_{L^2}\leq C_0 h^{-1+\e/2}\|\zeta_1X\varphi\|$$
where $C_0$ is a constant depending only on $\Omega$. Taking a partitions of unity as in Lemma \ref{lem:nearGlanceEst} completes the proof.
\end{proof}

Combining Lemma \ref{lem:nearGlanceEstPrime} with Lemma \ref{lem:decomposePrime}, the $L^2$ boundedness of semiclassical FIOs associated to canonical graphs gives, and Lemma \ref{lem:phragmen} gives the following improvement of Theorem \ref{thm:optimal} 
\begin{theorem}
\label{thm:dDlEstImproved}
Let $\Omega\subset \re^d$ be strictly convex with smooth boundary. Then there exists $\lambda_0>0$ such that for some $C$ and all $|\lambda|>\lambda_0$ the following estimate holds
\begin{equation}\nonumber
\|\dDl(\lambda)\|_{L^2(\partial\Omega)\to L^2(\partial\Omega)} \; \leq \; 
C\,\la\lambda\ra\,e^{\LO(\Im \lambda)_-}.
\end{equation}
\end{theorem}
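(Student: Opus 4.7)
The plan is to mirror the proofs of the two preceding theorems for $G$ and $\Dl$. Set $h=|\lambda|^{-1}$. First I would establish the bound in the strip $0 \leq \Im \lambda \leq Ch\log h^{-1}$ (where the microlocal parametrices of Appendix~\ref{ch:semiclassicalDirichletParametrices} are valid) by applying the microlocal decomposition of Lemma~\ref{lem:decomposePrime}, which writes $\dDl = \dDl_\Delta + \dDl_B + \dDl_g$ modulo $\O{\mc{D}' \to C^\infty}(h^\infty)$. Each of the three summands will be bounded by $Ch^{-1} = C|\lambda|$; propagation into the lower half plane with the correct exponential factor $e^{\LO(\Im\lambda)_-}$ will then come from Lemma~\ref{lem:phragmen}.

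Each summand requires a different tool. For $\dDl_\Delta \in h^{-1}\Ph{1}{\e}$ with principal symbol $\frac{i}{2}h^{-1}\sqrt{E^2-|\xi'|^2_g}$, Lemma~\ref{lem:goodL2Bound} applied after a low-frequency cutoff produces the bound $Ch^{-1}$, while the frequencies $|\xi'|_g \gg |\lambda|$ are handled by Lemma~\ref{lem:outsideSphere}. For $\dDl_B$, which is $h^{-1}$ times a semiclassical Fourier integral operator associated to the canonical graph of $\beta_E$ (strict convexity is essential here so that $C_b$ really is a graph), the standard $L^2$-boundedness of such FIOs yields $\|\dDl_B\|_{L^2 \to L^2} \leq Ch^{-1} e^{\LO(\Im z)_-/h}$. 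For $\dDl_g$, Lemma~\ref{lem:nearGlanceEstPrime} delivers the near-glancing bound $Ch^{-1+\e/2}$; the exponent in the statement of that lemma should read $-1+\e/2$ rather than $1-\e/2$, matching the $|Ai' A_-'| \lesssim \la z\ra^{1/2}$ estimate used in its proof together with the $h^{-2/3}$ scaling in the parametrix~\eqref{eqn:GglancePrime}. Summing the three bounds gives $\|\dDl(\lambda)\|_{L^2 \to L^2} \leq C|\lambda|$ for $\Im\lambda \geq 0$ and $|\lambda| \geq \lambda_0$.

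To extend this to $\Im\lambda<0$, I would apply Lemma~\ref{lem:phragmen} with $i=3$: the bound just obtained serves as the upper-half-plane input, while the crude polynomial lower-half-plane estimate needed as the second input follows from the standard free-resolvent bound $\|\chi R_0(\lambda)\chi\|_{H^s \to H^s} \leq C\la\lambda\ra^{-1}e^{D_\chi(\Im\lambda)_-}$ combined with Lemma~\ref{lem:outsideSphere}. Phragm\'en--Lindel\"of interpolation between the two bounds then produces the claimed $C\la\lambda\ra e^{\LO(\Im\lambda)_-}$.

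The principal obstacle is the near-glancing piece. Unlike for $G$, whose glancing contribution gains a factor $h^{2/3}$ from the decay $|AiA_-| \lesssim \la z\ra^{-1/2}$, or $\Dl$, which is uniformly bounded at glancing, the Airy factor appearing for $\dDl$ is $|Ai' A_-'|$, which \emph{grows} like $\la z\ra^{1/2}$. The best bound this produces on $\dDl_g$ is therefore $Ch^{-1+\e/2}$, precisely of the same order as $\dDl_\Delta$. Thus the $|\lambda|$ rate is saturated but \emph{not} lost to a logarithm; the rate matches the sharp behaviour realized by Neumann quasimodes on $B(0,1)\subset\re^2$, and confirms that the $\log \la\lambda\ra$ factor in Theorem~\ref{thm:optimal} was an artifact of the restriction-bound approach rather than a genuine feature of $\dDl$.
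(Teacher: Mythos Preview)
Your proposal is correct and follows exactly the approach the paper uses: combine the decomposition of Lemma~\ref{lem:decomposePrime}, the $L^2$-boundedness of FIOs associated to canonical graphs for $\dDl_B$, Lemma~\ref{lem:nearGlanceEstPrime} for $\dDl_g$, and then Lemma~\ref{lem:phragmen} to push into the lower half-plane. You also correctly flag that the exponent in the statement of Lemma~\ref{lem:nearGlanceEstPrime} should be $-1+\e/2$ (as its proof shows) rather than $1-\e/2$.
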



\chapter{Dynamical Resonance Free Regions}
\label{ch:resFree}
 In the early 1900s, Sabine \cite{Sabine} postulated that the decay rate of acoustic waves in a region with leaky walls is determined by the average decay over billiards trajectories. Such a Sabine type law incorporates the detailed properties of both the potential and the domain and has been suggested as a way to study resonances in quantum corrals \cite{Heller} and to study propagation of cellular signals in indoor environments \cite{Cellular}. Our main theorem will give a Sabine type law for the size of the resonance free region for the operators $-\Deltad{\pO}$ and $-\Deltap$ when $\partial\Omega\in C^\infty$ is strictly convex and $V$ is a pseudodifferential operator.
 
 \subsection{Results for $-\Deltad{\pO}$}

Denote the set of rescaled resonances and the set of rescaled resonances that are logarithmically close to the real axis by
\begin{equation}
\label{def:lambda}
\Lambda(h):=\{z\in \complex: z/h \text{ is a resonance of } -\Deltad{\partial\Omega}\}
\end{equation}
and \m\Lambda_{\log}(h):=\{z\in \Lambda(h) :z\in [1-Ch,1+Ch]+i[-Mh\log h^{-1},0]\}\,\,\m respectively.

\begin{remark}
Notice that by rescaling $h\to hE$, we can replace $1$ in the definition of $\Lambda_{\log}$ by $E>0$. Therefore, we restrict our attention to $\Re z$ near 1
\end{remark}
 
The following theorem is a consequence of the much finer Theorem \ref{thm:resFree}
\begin{theorem}
\label{thm:resFreeCompute}
Let $\Omega\subset \re^d$ be a strictly convex domain with $C^\infty$ boundary, $V\in \Ph{}{}(\partial\Omega)$ with $|\sigma(V)|>c>0$. Suppose that $z\in\Lambda_{\log}$. Then for every $\e>0$ there is an $h_0>0$ such that for $0<h<h_0$ 
$$-\frac{\Im z}{h}\geq \frac{1}{d_{\Omega}}\left[\log h^{-1}-\frac{1}{2}\sup_{(a,b)\in\mc{A}}\log\left(\frac{|\sigma(V)(a,0)\sigma(V)(b,0)|}{4}\right)\right]-\e$$
where $d_{\Omega}$ is the diameter of $\Omega$ and 
\m\mc{A}=\{(x,y)\in \partial\Omega\times \partial\Omega: |x-y|=d_{\Omega}\}.\m
\end{theorem}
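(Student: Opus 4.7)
The plan is to convert the spectral problem into a dynamical one via the boundary reduction of \cite{GS} (recalled in Chapter \ref{ch:mer}): $\lambda=z/h$ is a resonance of $-\Deltad{\pO}$ iff there exists a nonzero $\varphi\in L^2(\pO)$ with $(I+G(\lambda)V)\varphi=0$. Iterating gives $\varphi=(-G(\lambda)V)^N\varphi$ for every $N\geq 1$, whence
\[
\|(G(\lambda)V)^N\|_{L^2(\pO)\to L^2(\pO)}\;\geq\; 1.
\]
I would take $N$ of order $\log h^{-1}$ and extract a dynamical lower bound on $-\Im z/h$ from this norm inequality.

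By Lemma \ref{lem:decompose}, $G(\lambda)=G_B+G_\Delta+G_g+O_{\mc D'\to C^\infty}(h^\infty)$, with $G_\Delta\in h^{1-\e/2}\Psi_\e^{-1}$ and $G_g$ microsupported in an $h^\e$-neighborhood of the glancing set and satisfying $\|G_g\|_{L^2\to L^2}=O(h^{2/3})$ by Lemma \ref{lem:nearGlanceEst}. I would microlocalize onto the elliptic region, which contains $\mc A$ since the covectors $(a,0),(b,0)$ with $\xi'=0$ are maximally far from the glancing set $|\xi'|_g=E$. By Lemma \ref{lem:iteratedStability}, the complement of any fixed $h^\e$-glancing neighborhood is $\beta$-invariant, so when $(G(\lambda)V)^N$ is expanded, any cross term involving a factor of $G_g$ stays microlocalized near glancing and is controlled by the $O(h^{2/3})$ bound; similarly, each factor of $G_\Delta V$ contributes an $O(h^{1-\e/2})$ gain. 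Both contribute only an $\epsilon$-loss, and the problem reduces to bounding $\|(G_BV)^N\|$ on the elliptic region, with $G_B$ a semiclassical FIO associated to $\beta_E$.

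Repeated application of the FIO composition formula (Lemma \ref{lem:FIOcomp}) combined with the symbol formula of Lemma \ref{lem:decompose} for $G_B$ yields
\[
|\tilde\sigma((G_BV)^N)(q)|=(1+o(1))\prod_{i=0}^{N-1}\frac{h\,|\sigma(V)(\beta^iq)|\,e^{(\Im z)_-l(\beta^iq,\beta^{i+1}q)/h}}{2(E^2-|\xi'(\beta^iq)|_g^2)^{1/4}(E^2-|\xi'(\beta^{i+1}q)|_g^2)^{1/4}}.
\]
The interior glancing denominators at $\beta^iq$ for $0<i<N$ fuse into $(E^2-|\xi'(\beta^iq)|_g^2)^{-1/2}$, uniformly bounded on the elliptic region. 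By Lemma \ref{lem:goodL2Bound}, $\|(G_BV)^N\|_{L^2\to L^2}\leq \sup_q|\tilde\sigma((G_BV)^N)(q)|+o(1)$, so $\|(G_BV)^N\|\geq 1$ forces the existence of $q\in B^*\pO$ with
\[
(\Im z)_-\cdot L_N(q)/h \;\geq\; N\log(2/h)\;-\;\textstyle\sum_{i=0}^{N-1}\log|\sigma(V)(\beta^iq)|\;-\;o(N),
\]
where $L_N(q)=\sum l(\beta^iq,\beta^{i+1}q)$ and $\bar l_N(q)=L_N(q)/N$; equivalently $(\Im z)_-/h\geq F_N(q)-o(1)$ with $F_N(q):=[\log(2/h)-\frac{1}{N}\sum_i\log|\sigma(V)(\beta^iq)|]/\bar l_N(q)$.

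To finish, $\bar l_N(q)\leq d_\Omega$ always, with equality iff every chord realizes the diameter, which by the generating function \eqref{eqn:locBillRelation} of $\beta$ happens precisely on period-$2$ orbits $\{(a,0),(b,0)\}$ with $(a,b)\in\mc A$. Since $\log(2/h)\to+\infty$ while the orbit-average of $\log|\sigma(V)|$ is uniformly bounded, $F_N(q)$ is minimized, for small $h$, at points with $\bar l_N(q)=d_\Omega$, i.e.\ on $\mc A$. At such a $q=(a,0)$ one has $F_N(q)=[\log(2/h)-\tfrac{1}{2}\log|\sigma(V)(a,0)\sigma(V)(b,0)|]/d_\Omega=[\log h^{-1}-\tfrac{1}{2}\log(|\sigma(V)(a,0)\sigma(V)(b,0)|/4)]/d_\Omega$; taking the infimum over $\mc A$ (equivalently, the supremum of the subtracted term) produces the stated bound. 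The main obstacle will be controlling the accumulated subprincipal errors in the $N$-fold FIO composition, since remainders of size $h^{1-2\delta}$ pile up at each iteration and $N\sim\log h^{-1}$; balancing the exotic order $\delta$ against $N$ so that $Nh^{1-2\delta}=o(1)$ is precisely what forces the $\epsilon$ in the statement. A secondary difficulty is the glancing cross-term analysis, where the $\beta$-invariance of $h^\e$-glancing neighborhoods (Lemma \ref{lem:iteratedStability}) prevents mixing between glancing and elliptic regions that would otherwise spoil the sup-norm estimate.
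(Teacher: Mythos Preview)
Your overall strategy—iterating the boundary equation and extracting the billiard-FIO contribution—is the paper's as well, and the formula you compute at the diameter orbit is correct. But the argument as written has real gaps, and the paper's route differs in one algebraic step that removes them.

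First, a terminology slip: the region containing $(a,0),(b,0)$ is the \emph{hyperbolic} region $\{|\xi'|_g<E\}$, not the elliptic one. In the paper's nomenclature the elliptic region is $|\xi'|_g>E$, where $G$ reduces to the pseudodifferential piece $G_\Delta$ alone.

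Second, taking $N\sim\log h^{-1}$ is both unnecessary and harmful. With $N$ that large the $\binom{N}{k}$ cross terms and the accumulated subprincipal errors $O(Nh^{1-2\delta})$ need not be small, and the product of ``uniformly bounded'' factors $(E^2-|\xi'(\beta^iq)|_g^2)^{-1/2}$ can grow like $C^N=h^{-c}$. In fact a \emph{fixed} $N$ suffices: on the period-2 diameter orbit the orbit average stabilizes after two steps, while for any other $q$ the gap $d_\Omega-\bar l_N(q)>0$ multiplied by $\log h^{-1}$ dominates all bounded corrections as $h\to 0$. Theorem~\ref{thm:resFree} is stated precisely with $\sup_{N<N_1}$ for finite $N_1$.

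Third, Lemma~\ref{lem:goodL2Bound} is a bound for pseudodifferential operators, not for FIOs associated to $\beta^N$. You must pass to the pseudodifferential operator $((G_BV)^N)^*(G_BV)^N$; this is exactly what the paper does with $(R_\delta T)_N$ in \eqref{eqn:appEgorov}.

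The paper's key step, which your proposal skips, is algebraic: rather than expanding $(GV)^N$ and chasing cross terms on distinct Lagrangians $C_{\beta^{N-k}}$, it rewrites $(I+GV)\psi=0$ on $\mc H$ as $\varphi=R_\delta T\varphi$, with $R_\delta$ a pseudo and $T=G_\Delta^{-1/2}G_BG_\Delta^{-1/2}$ a single FIO associated to $\beta$, so that $(R_\delta T)^N$ lives on one Lagrangian and the norm bound via $(R_\delta T)_N\in\Psi_\e$ is immediate. Your direct expansion can be salvaged for fixed $N$ (the cross terms with $k$ copies of $G_\Delta V$ are $O(\binom{N}{k}h^k)$ once one assumes, for contradiction, that $\|G_BV\|$ is bounded), but the $R_\delta T$ reformulation is cleaner and yields the sharper dynamical Theorem~\ref{thm:resFree}, from which the present statement follows by evaluating the infimum $\inf_q(-l_N^{-1}r_N)$ at the diameter orbit.
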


Let $\mc{D}$ be the domain of $-\Delta_{V,\partial\Omega}$ (see Section \ref{sec:formalDefinition}). Then, as discussed in Chapter \ref{ch:mer},
$\lambda$ is a resonance of the system if and only if there is a nontrivial $\lambda$-outgoing solution $u\in\Dloc$ to the equation
\begin{equation}
\label{eqn:mainDelta}(-\Delta -\lambda^2+V\otimes\delta_{\partial\Omega})u=0\,.
\end{equation}
If $V:H^{1/2}(\partial\Omega)\to H^{1/2}(\partial\Omega),$ the author and Smith showed in \cite[Section 5]{GS} that this is equivalent to solving the following transmission problem 
\begin{equation}
\label{eqn:main}
\begin{cases}(-\Delta -\lambda^2)u_1=0&\text{ in }\Omega\\
(-\Delta -\lambda^2)u_2=0&\text{ in }\re^d\setminus \overline{\Omega}\\
u_1=u_2\,,\;\partial_\nu u_1+\partial_{\nu '}u_2+Vu_1=0&\text{ on }\partial \Omega\\
u_2\;\lambda\text{-outgoing}
\end{cases}
\end{equation}
where we set $u|_{\Omega}=u_1$ and $u|_{\re^d\setminus\overline{\Omega}}=u_2$. Here, we say that $u_2$ is $\lambda$-\emph{outgoing} if there exists $R<\infty$ and 
$\varphi \in L^2_{\comp}(\re^d)$ such that 
$u_2(x)=\bigl(R_0(\lambda)\varphi\bigr)(x)$ for $|x|\ge R\,,$
where $R_0(\lambda)$ is the analytic continuation of the free resolvent $(-\Delta-\lambda^2)^{-1}$, defined initially for $\Im\lambda>0$. In odd dimensions, we take $\lambda\in\complex$ for the above meromorphic continuation, but for even dimensions, we need to consider $\lambda$ as an element of the logarithmic covering of $\complex\setminus\{0\}$.

We now introduce the dynamical and microlocal objects for the finer version of Theorem \ref{thm:resFreeCompute}. Let $\pi:T^*\re^d\to \re^d$ denote projection to the base, $B^*\partial \Omega$ be the coball bundle of the boundary, and $g$ be the induced metric on $\partial\Omega$. Define also 
$$B^*\partial\Omega_r:=\{q\in T^*\partial\Omega\,:\,|\xi'(q)|_g< r\}$$
so that $B^*\partial\Omega=B^*\partial\Omega_1$. 
 Then we denote the billiard ball map (see Section \ref{sec:billiard}) by $\beta:B^*\partial\Omega\to \overline{B^*\partial \Omega}.$
We also denote for $A\subset B^*\partial\Omega$,  $\beta_{-N}(A)=\bigcap_{i=1}^N\beta^{-i}(A).$

Let $l:T^*\partial\Omega\times T^*\partial\Omega\to \re$ be given by $l(q,q'):=|\pi(q)-\pi(q')|$ and write $l_N:B^*\partial\Omega\to \re$ where
\begin{equation}
\label{def:averageLength}
l_N(q):=\frac{1}{N}\sum\limits_{n=0}^{N-1}l(\beta^n(q),\beta^{n+1}(q))
\end{equation}
is the average length between the first $N$ iterates of the billiard ball map originating at $q$.

Recall that for $z\in \Lambda_{\log}$, the single layer operator, $G$, has
\begin{equation}
\label{eqn:G}
G(z/h)=G_{\Delta}(z;h)+G_B(z;h)+G_g(z;h)+\O{L^2\to C^\infty}(h^\infty)
\end{equation}
where $G_{\Delta}$ is a pseudodifferential operator and $G_B$ is a semiclassical Fourier integral operator associated to $\beta$ and $G_g$ is microlocalized near $|\xi'|_g=1$ and the diagonal (see Lemma \ref{lem:decompose}).

Next, let $\chi\in C^\infty(\re)$ with $\chi\equiv 1$ for $x>2C$ and $\chi \equiv 0$ for $x<C$. Then fix $\e>0$ and let 
\begin{equation}
\label{eqn:reflectionOperator}
R_{\delta}(z):=-(I+G_{\Delta}^{1/2}VG_{\Delta}^{1/2})^{-1}G_{\Delta}^{1/2}VG_{\Delta}^{1/2}\chi\left(\frac{1-|hD'|_g}{h^\e}\right)\in h^{1-\frac{\e}{2}}\Psi_{\e}^{-1}.
\end{equation}
The order of $R_{\delta}(z)$ in $h$ may vary from point to point in $B^*\Omega$. In Section \ref{sec:shymbol}, we developed the notion of the shymbol of a pseudodifferential operator, a notion of symbol which is sensitive to local changes of order. Using this idea, we have that the compressed shymbol of $R_{\delta}$ ),
$$\tilde\sigma(R_{\delta}(z))=\frac{h\tilde\sigma(V)}{2i\sqrt{1-|\xi'|_g^2}-h\tilde{\sigma}(V)}\chi\left(\frac{1-|\xi'|_g}{h^\e}\right)$$ 
is the reflection coefficient at the point $(x',\xi')\in B^*(\partial\Omega)$. We call $R_{\delta}$ the reflection operator. 

\begin{remark} The compressed shymbol of the reflection operator agrees, up to lower order terms, with the reflection coefficient found when a plane wave with tangential frequency $\xi'$ interacts with a delta function potential of constant amplitude $\tilde{\sigma}(V)$ on a hyperplane (See \eqref{eqn:reflect}).
\end{remark}

Let $T(z):=G_{\Delta}^{-1/2}(z)G_B(z)G_{\Delta}^{-1/2}(z)$ where $G_B$ is the Fourier integral operator component of  $G(z)$. Then define $r_N(z): B^*(\partial\Omega)\to \re$, the logarithmic average of the reflectivity at successive iterates of the billiard map, by
\begin{equation}
\label{eqn:defineAverageReflection}
r_N(z,q):=\frac{\Im z}{h}l_N(q)+\frac{1}{2N}\log\tilde{\sigma}(((R_{\delta}T(z))^*)^N(R_{\delta}T(z))^N)(q).
\end{equation}
The term $\tfrac{\Im z}{h}l_N$ in \eqref{eqn:defineAverageReflection} serves to cancel the growth of $T(z)$ in the right hand term. In fact,
for $0<N$ independent of $h$ we have
\begin{equation}
\label{def:averageReflection}
r_N(z,q)=\frac{1}{2N}\sum_{n=1}^N\log\left|\left(\tilde{\sigma}(R_{\delta})\composed \beta^n(q) +\O{}(h^{I_{R_\delta}(q)+1-2\e})\right)\right|^2
\end{equation}
where $I_{R_\delta}(q)$ is the local order of $R_{\delta}$ at $q$ (see Section \ref{sec:shymbol}). The expression \eqref{def:averageReflection} illustrates that $r_N$ is the logarithmic average reflectivity over $N$ iterations of the billiard ball map.  Moreover, $r_N(z,q)$ is independent of $z\in \Lambda_{\log}$, so we suppress the dependence on $z$.

 \noindent Note that, if for any $1\leq i \leq N$, $\beta^{i}(q)\notin \WFh(V)$, then for all $M>0$ there exists $h_0$ such that for $0<h<h_0$, 
\begin{equation}\label{eqn:leaveWF}r_N(q)\leq -M\log h^{-1}.\end{equation}

Using Lemma \ref{lem:dynStrictlyConvex} and \ref{lem:iteratedStability}, we have for $h$ small enough, $\e< 1/2$, and $V(h)\in h^{-2/3}\Psi(\partial\Omega),$
\begin{align*}
\inf_{1-\delta\leq |\xi'|_g\leq 1-h^\e}\frac{\log |\tilde{\sigma}(R_\delta)(\beta(q))|^2}{2l(q,\beta(q))}&\\
&\!\!\!\!\!\!\!\!\!\!\!\!\!\!\!\!\!\!\!\!\!\!\!\!\!\!\!\!\!\!\!\!\!\!\!\!\!\!\!\!\leq \inf_{Ch^{\e/2}\leq r\leq \delta^{1/2}}\frac{1+\O{}(r)}{2Cr} \log\left(\frac{\O{}(h^{2/3})}{4r^2+\O{}(r^3)+\O{}(h^{2/3})}\right)\\
&\leq -C\delta^{-{1/2}}\log h^{-1}\end{align*}
where $\tilde{\sigma}(R_\delta )$ denotes the shymbol of $R_\delta$ (see Section \ref{sec:shymbol}).

\noindent Thus, we see that for all strictly convex domains $\Omega$, $0<\e<1/2$, $N_1>0$, and $V\in h^{-2/3}\Ph{}{}$ there exists $\delta_1>0$ such that
\begin{equation}
\label{eqn:awayFromGlancing}
\sup_{N<N_1}\inf_{B^*\partial\Omega_{1-ch^\e}}-l_N^{-1}r_N=\sup_{N<N_1}\inf_{B^*\partial\Omega_{1-\delta_1}}-l_N^{-1}r_N\end{equation}
for some $\delta_1>0$ small enough. That is, the slowest decay rates are those at least a fixed distance away from the glancing region.

With these definitions in hand, we state our main result. 
\begin{theorem}
\label{thm:resFree}
Let $\Omega\subset \re^d$ be a strictly convex domain with $C^\infty$ boundary. Then there exists $\e_{\Omega}>0$ such that for all $V\in h^{-2/3}\Psi(\partial\Omega)$ with $\|\sigma(V)\|_{L^\infty}< \e_{\Omega}h^{-2/3}$, the following holds. For $z\in\Lambda_{\log}$ there exists $\delta_1>0$ such that for every $\e>0$ and $N_1>0$, there is an $h_0>0$ such that for $0<h<h_0$ 
\begin{equation}
-\frac{\Im z}{h}\geq \sup_{N<N_1}\inf_{B^*\pO_{1-\delta_1}}\,-l_N^{-1}(q)r_N(q)-\e.\label{eqn:resFree}
\end{equation}
\end{theorem}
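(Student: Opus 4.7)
The plan is to reduce the resonance condition to a boundary integral equation, factor off the pseudodifferential part of the single layer operator to expose the quantum reflection operator, and then iterate and apply the shymbol Egorov lemma to bound the resulting operator in terms of $r_N$.

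First I would use the characterization of resonances from Chapter \ref{ch:mer}: $z/h\in\Lambda_{\log}$ is a resonance of $-\Deltad{\pO}$ exactly when there exists a nontrivial $\varphi\in L^2(\pO)$ with $(I+G(z/h)V)\varphi=0$. Writing $G=G_\Delta+G_B+G_g+\O{\mc D'\to\Cc}(h^\infty)$ as in Lemma \ref{lem:decompose} and using that $I+G_\Delta V$ is elliptic in $\Ph{0}{\e}$ on the relevant $h^\e$-neighborhood of the hyperbolic region (invertibility follows from the smallness assumption on $\sigma(V)$ together with the formula for $\sigma(G_\Delta)$), I would write
\[
\varphi=-(I+G_\Delta V)^{-1}(G_B+G_g)V\,\varphi=:M\varphi.
\]
Conjugating by $G_\Delta^{1/2}$ and using $T=G_\Delta^{-1/2}G_BG_\Delta^{-1/2}$, the definition \eqref{eqn:reflectionOperator} of $R_\delta$, and the shymbol calculus of Section \ref{sec:shymbol}, I would identify $M$ microlocally away from glancing as $R_\delta T$ up to a pseudodifferential factor of compressed shymbol $1+\O{}(h^{1-2\e})$, and microlocally near glancing as an operator controlled by the estimates of Section \ref{sec:BLONearGlance} (in particular Lemma \ref{lem:nearGlanceEst}, which gives $\|GV\chi_{\mathrm{glanc}}\|_{L^2\to L^2}=\O{}(h^{2/3-\e/2}\cdot h^{-2/3})$).

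Since $\varphi=M\varphi$ for every $N$, iterating gives $\varphi=M^N\varphi$, so it suffices to show $\|M^N\|_{L^2\to L^2}<1$ whenever $-\Im z/h<\sup_{N<N_1}\inf_{B^*\pO_{1-\delta_1}}(-l_N^{-1}r_N)-\e$. To handle the norm, I would decompose the identity on $L^2(\pO)$ via a microlocal partition: $\Id=X_g+X_h$, where $X_g\in\Ph{0}{\e}$ has wavefront in an $h^\e$-neighborhood of $S^*\pO$ and $X_h$ is supported in $B^*\pO_{1-\delta_1/2}$, with $\delta_1$ chosen as in \eqref{eqn:awayFromGlancing}. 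On the glancing piece, Lemma \ref{lem:iteratedStability} shows that the glancing strip is invariant under $\beta^N$ for $N\leq Ch^{-\e}$, so applying Lemma \ref{lem:nearGlanceEst} repeatedly yields $\|X_g M^N\|=\O{}(h^{cN})$ with $c>0$; this is easily smaller than $1$. On the hyperbolic piece, I would apply the shymbol Egorov lemma (Lemma \ref{lem:EgorovSheaf}) to the positive operator $(X_h M^N)^*(X_h M^N)$: its compressed shymbol is, modulo lower order,
\[
\prod_{i=1}^{N}\bigl(|\tilde\sigma(R_\delta)t|^2\composed\beta^i(q)+\O{}(h^{I_{R_\delta}(\beta^i(q))+1-2\e})\bigr),
\]
whose logarithm divided by $2N$ is exactly $r_N(q)-(\Im z/h)l_N(q)$ by \eqref{eqn:defineAverageReflection} after absorbing the normalization factor $e^{\Im z\,l/h}$ from the symbol of $T$ in Lemma \ref{lem:decompose}. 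The sharp $L^2$ norm bound Lemma \ref{lem:goodL2Bound} (together with the $L^2$ boundedness of semiclassical FIOs associated to the canonical graph of $\beta^N$) then yields
\[
\|X_h M^N\|_{L^2\to L^2}\leq \exp\Bigl(N\sup_{B^*\pO_{1-\delta_1}}\bigl(r_N(q)+(\Im z/h)l_N(q)\bigr)\Bigr)+o(1).
\]

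Combining both pieces, if $-\Im z/h<\inf_{B^*\pO_{1-\delta_1}}(-l_N^{-1}r_N)-\e$ for some fixed $N$, then the exponent above is negative and bounded away from zero uniformly in $h$, so $\|M^N\|<1$ for $h$ small, contradicting the existence of the nonzero fixed point $\varphi$. Since $N<N_1$ is arbitrary this yields \eqref{eqn:resFree}. The main technical obstacle will be the glancing contribution: one must verify that the full operator $M$, which near glancing is \emph{not} of the clean product form $R_\delta T$, still produces a contribution to $\|M^N\|$ that is $o(1)$, and this is where the strict convexity (through Lemmas \ref{lem:dynStrictlyConvex}--\ref{lem:iteratedStability}) and the refined near-glancing description from Section \ref{sec:BLONearGlance} enter essentially. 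The secondary obstacle is the variable semiclassical order of $R_\delta$, which is precisely why the shymbol formalism of Section \ref{sec:shymbol} and Lemma \ref{lem:EgorovSheaf} were developed.
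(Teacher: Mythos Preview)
Your overall strategy---boundary reduction, factoring off $G_\Delta$ to expose $R_\delta T$, iterating, and invoking the shymbol Egorov lemma---is the paper's strategy. But the glancing step contains a genuine error, and the hyperbolic step is organized differently from the paper in a way that matters.

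\textbf{Glancing.} Lemma~\ref{lem:nearGlanceEst} gives $\|G\varphi\|\le C_\Omega h^{2/3}\|\varphi\|$ for $\varphi$ microlocalized near glancing, with no $h^{-\e/2}$ loss. Hence $\|GV\chi_{\mathrm{glanc}}\|\le C_\Omega h^{2/3}\sup|\sigma(V)|\le C_\Omega\e_\Omega$, a constant---not $\O{}(h^c)$. (Your own product $h^{2/3-\e/2}\cdot h^{-2/3}=h^{-\e/2}$ is not even bounded.) So $\|X_gM^N\|$ is at best $(C_\Omega\e_\Omega)^N$, not $\O{}(h^{cN})$. The only reason this is $<1$ is the hypothesis $\|\sigma(V)\|_{L^\infty}<\e_\Omega h^{-2/3}$ with $\e_\Omega$ chosen small; this is exactly where $\e_\Omega$ enters the theorem, and you have missed that mechanism.

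\textbf{Hyperbolic.} The paper does not try to bound $\|M^N\|<1$ globally. Instead it localizes first: with $X_1$ supported in $\{|\xi'|_g\le 1-2Ch^\e\}$ one has $(I+GV)X_1\psi=-[X_1,GV]\psi=:f$, and $f$ is microlocalized in a thin shell near $|\xi'|_g=1$. After the algebra this becomes $(I-(R_\delta T)^N)\varphi=$ (terms microlocalized in that shell), and one then inverts $I-(R_\delta T)^N$ via a dedicated microlocal parametrix, Lemma~\ref{lem:parametrixMS}, which also preserves the shell microlocalization. Applying a deeper cutoff $X_2$ then kills the right-hand side and gives $X_2\varphi=\O{}(h^\infty)\psi$. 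This sidesteps entirely the cross-talk between hyperbolic and glancing inside $M^N$ that you flag as ``the main technical obstacle'' but do not resolve. Your norm-bound route can in principle be completed using the $\beta$-invariance of the glancing strip (Lemma~\ref{lem:iteratedStability}), but the argument you wrote does not do this.

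Finally, you omit the elliptic region $\{|\xi'|_g>1+h^\e\}$; it is easy (ellipticity of $I+G_\Delta V$ there, Section~\ref{sec:elliptic}) but should be mentioned since your partition $X_g+X_h$ as described does not cover it.
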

\begin{remarks}

\item The proof of Theorem \ref{thm:resFree} also shows that for each $0<\e$ small enough, and $\e_1>0$,
\begin{align*}-\frac{\Im z}{h}&\leq \inf_{N<N_1}\sup_{B^*\pO_{1-ch^\e}}-l_N^{-1}(q)r_N(q)+\e_1.\end{align*}
However, in strictly convex domains with $V\in h^{-2/3}\Psi(\partial\Omega)$, the quantity on the left goes to infinity for $|\xi'|_g\sim 1-ch^\e$ for $\e>0$ small enough. 

\item Theorem \ref{thm:resFree} is sharp in the case of the unit disk in two dimensions with potential $V\equiv h^{-\alpha}$ (See Appendix \ref{ch:model}). 

\item In typical physical systems, the strength of the interaction between a wave and a potential is a function of the frequency of the waves. This corresponds to considering $h$-dependent $V$. The requirement $\|\sigma(V)\|_{L^\infty}<\e_{\Omega}h^{-2/3}$ comes from the construction of a parametrix for \eqref{eqn:main} near glancing in Section \ref{sec:glancingPoint}. 

However, this is not the natural bound for there to be glancing effects. In fact, the scaling of the problem near glancing dictates that the closest particles can concentrate to glancing is $h^{2/3}$ (i.e.  $|\xi'|_g-1\sim h^{2/3}$). Under this restriction and naively intepreting $r_N$ as the expression \eqref{def:averageReflection}, $|\sigma(V)|=C h^{-5/6}$ coincides with the first time that $|r_N/l_N|\geq c$. Hence, when $|\sigma(V)|\geq ch^{-5/6}$, we expect nontrivial effects from glancing points. In \cite{GalkCircle}, we verify this for  $\Omega$ the unit disk in $\re^2$ and $V\equiv h^{-\alpha}$.
\end{remarks}

To see that the characterization of the resonance free region \eqref{eqn:resFree} can be thought of as a time-averaged Sabine type law, observe that if a wave packet intersecting the boundary for the first time at $q\in B^*\partial\Omega$ starts with energy $E$, then the energy remaining in $\Omega$ after $N$ reflections is given by $$\prod\limits_{n=1}^N|\sigma(R_{\delta})(\beta^n(q))|^2E=\exp\left(2Nr_N(q)\right)E.$$
Thus $-r_N$ is the average exponential rate of decay of the $L^2$ norm over $N$ reflections. Moreover, during the $N$ reflections it takes an average of time $l_N(q)$ to undergo each reflection. Hence, the $L^2$ time rate of decay is $-l_N^{-1}r_N$. Together with the resonance expansions from \cite{GS}, this characterization is a step towards mathematically justifying the use of Sabine laws in the analysis of quantum corrals \cite{Heller}, as well as propagation of cellular signals in indoor environments \cite{Cellular}.

When there is no potential at a point in $B^*\partial\Omega$, the Sabine Law also predicts that wave packets will leave without reflection. Hence, there will be an arbitrarily large exponential rate of decay if every trajectory eventually intersects a point outside of the potential's support.  Theorem \ref{thm:resFree} combined with \eqref{eqn:leaveWF} shows that any trajectory which leaves $\WFh(V)$ has 
$$-l_N^{-1}(q)r_N(q)\geq M\log h^{-1}$$ 
for all $M>0$. Hence, the infimum in Theorem \ref{thm:resFree} effectively excludes trajectories that leave $\WFh(V(\Re zh))$. Moreover, if every trajectory at least $\delta_1$ from glancing eventually leaves $\WFh(V(\Re zh))$, then there is an arbitrarily large logarithmic resonance free strip, verifying the predictions of the Sabine Law.

Theorem \ref{thm:resFree} immediately gives us the following corollary:

\begin{corol}
\label{cor:resFree}
Let $\Omega\subset \re^d$ be a strictly convex domain with $C^\infty$ boundary, $V\in h^{-\alpha}\Psi(\partial\Omega)$ with $\alpha <2/3$. Suppose that $z\in\Lambda_{\log}$. Then, for every $\e>0$, there is an $h_0>0$ such that for $0<h<h_0$ 
\begin{equation}
\label{eqn:resFree2}
-\frac{\Im z}{h\log h^{-1}}\geq \sup_{N>0}\,\,\,\inf_{|\xi'|_g<1}\left\{l_N^{-1}(q)(1-\alpha)-\e\,:\, q\in \beta_{-N}(\WFh (V))\right\}.
\end{equation}
\end{corol}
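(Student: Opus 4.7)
The plan is to specialize Theorem \ref{thm:resFree} to the hypothesis $V \in h^{-\alpha}\Psi(\partial\Omega)$ with $\alpha < 2/3$, evaluating the average reflectivity $r_N$ explicitly and then dividing by $\log h^{-1}$. First I will check that Theorem \ref{thm:resFree} applies: for $h$ small enough we have $\|\sigma(V)\|_{L^\infty} \leq Ch^{-\alpha} = Ch^{2/3-\alpha} \cdot h^{-2/3} < \e_\Omega h^{-2/3}$, so the theorem yields
$$-\frac{\Im z}{h} \geq \sup_{N<N_1}\,\inf_{q \in B^*\pO_{1-\delta_1}}\bigl(-l_N^{-1}(q)\,r_N(q)\bigr) - \e.$$

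Next I will compute $r_N(q)$ at points $q \in \beta_{-N}(\WFh(V)) \cap B^*\pO_{1-\delta_1}$. On this region $\sqrt{1-|\xi'|_g^2}$ is bounded below by a positive constant while $h\tilde\sigma(V) = O(h^{1-\alpha}) \to 0$, so the shymbol formula
$$\tilde\sigma(R_\delta)(q) = \frac{h\tilde\sigma(V)(q)}{2i\sqrt{1-|\xi'|_g^2} - h\tilde\sigma(V)(q)}$$
gives $|\tilde\sigma(R_\delta)(q)| = \tfrac{h^{1-\alpha}}{2\sqrt{1-|\xi'|_g^2}}\,|\sigma(h^\alpha V)(q)|\bigl(1 + O(h^{1-\alpha})\bigr)$. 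Lemma \ref{lem:dynStrictlyConvex} ensures, after shrinking $\delta_1$ if necessary, that each iterate $\beta^n(q)$ remains in $B^*\pO_{1-\delta_1'}$, and the hypothesis $\beta^n(q) \in \WFh(V)$ keeps $|\sigma(h^\alpha V)|$ bounded below on the resulting compact set. Substituting into \eqref{def:averageReflection} yields
$$r_N(q) = -(1-\alpha)\log h^{-1} + O(1)$$
uniformly, and hence $-l_N^{-1}(q)\,r_N(q) \geq l_N^{-1}(q)(1-\alpha)\log h^{-1} - C$, using that $l_N \leq N\diam\Omega$.

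For $q \notin \beta_{-N}(\WFh(V))$, the bound \eqref{eqn:leaveWF} makes $-l_N^{-1}(q)\,r_N(q)$ exceed any fixed multiple of $\log h^{-1}$, so such points cannot achieve the infimum. Plugging back into Theorem \ref{thm:resFree} and dividing by $\log h^{-1}$,
$$-\frac{\Im z}{h\log h^{-1}} \geq \sup_{N<N_1}\,\inf\bigl\{\,l_N^{-1}(q)(1-\alpha) : q \in \beta_{-N}(\WFh(V)),\ |\xi'|_g < 1\,\bigr\} - \frac{C+\e}{\log h^{-1}},$$
and since $(C+\e)/\log h^{-1} \to 0$, taking $h$ small and relabeling $\e$ yields \eqref{eqn:resFree2}.

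The one subtle point—reducing the infimum over $B^*\pO_{1-\delta_1}$ provided by Theorem \ref{thm:resFree} to the infimum over $|\xi'|_g<1$ stated in the corollary—is handled by \eqref{eqn:awayFromGlancing}: the cutoff $\chi\bigl((1-|hD'|_g)/h^\e\bigr)$ in the definition of $R_\delta$ forces $\tilde\sigma(R_\delta)$ to vanish near glancing, so $-l_N^{-1}r_N$ is arbitrarily large in that region and near-glancing trajectories never attain the infimum. Thus the whole argument is essentially bookkeeping around Theorem \ref{thm:resFree}, and the main content is the uniform asymptotic $r_N(q) = -(1-\alpha)\log h^{-1} + O(1)$ derived from the shymbol of $R_\delta$.
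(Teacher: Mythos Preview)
Your approach is essentially the same as the paper's: apply Theorem~\ref{thm:resFree}, use that $|\tilde\sigma(R_\delta)| = O(h^{1-\alpha})$ on $|\xi'|_g<1-\delta_1$ to bound $-r_N \geq (1-\alpha)\log h^{-1} - C$, exclude trajectories leaving $\WFh(V)$ via \eqref{eqn:leaveWF}, and absorb the $O(1)$ into $\e\log h^{-1}$. Two small inaccuracies worth noting: the claim that $\beta^n(q)\in\WFh(V)$ forces $|\sigma(h^\alpha V)|$ to be bounded below is false (wavefront set membership does not imply nonvanishing symbol), but you only need the upper bound $|\tilde\sigma(R_\delta)|\leq Ch^{1-\alpha}$, so the two-sided equality $r_N = -(1-\alpha)\log h^{-1}+O(1)$ should be the inequality $r_N \leq -(1-\alpha)\log h^{-1}+O(1)$; and the passage from $\sup_{N<N_1}$ to $\sup_{N>0}$ needs one line (choose $N_1$ to approximate the sup within $\e/2$, as the paper does).
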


\begin{remark} Unlike Theorem \ref{thm:resFree}, Corollary \ref{cor:resFree} does not provide information about $C_2$ in $-\Im z/h\geq - C_1\log h^{-1}+C_2$. However, the dynamical quantities are easier to compute than those in Theorem \ref{thm:resFree}.
\end{remark}

\begin{figure}
\centering
\includegraphics[width=4.7in]{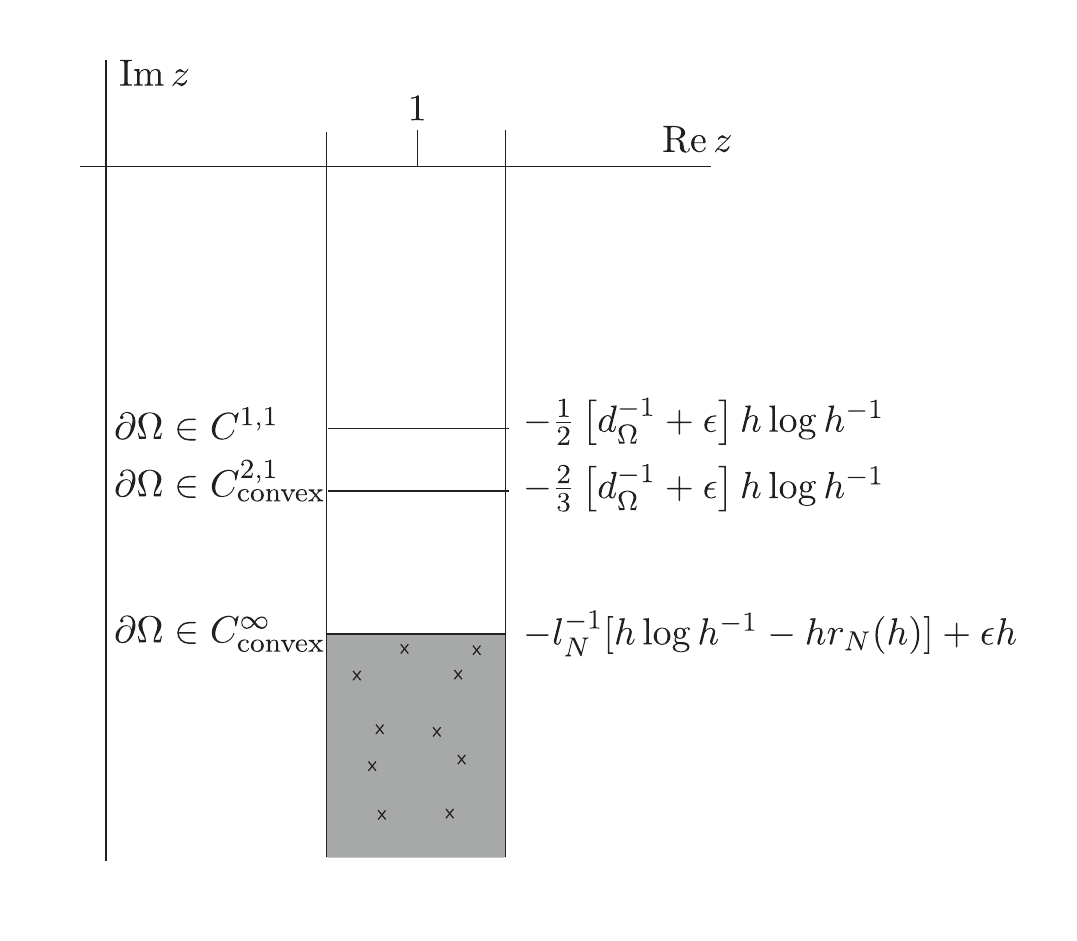}
\hspace{-.5cm}
\vspace{-1cm}
\caption[Resonance free regions for $-\Deltad{\Omega}$]{\label{f:resFree}
We show the various resonance free regions known for $-\Deltad{\Omega}$ when $V\in \Ph{}{}(\partial\Omega)$. The top two lines show the bounds from Chapter \cite{GS}. The lowest is the resonance free region bound from Theorem \ref{thm:resFree}. Since $l_N^{-1}\geq d_{\Omega}^{-1}$, the gap between the bounds from \cite{GS} and Theorem \ref{thm:resFree} is at least $\tfrac{1}{3}d_{\Omega}^{-1}h\log h^{-1}$. If $\sigma(V)=0$ at some points in $B^*\partial\Omega$, then the resonance free region given by Theorem \ref{thm:resFree} can be much larger, while those from \cite{GS} will not change. }
\end{figure}

\subsection{Results for $-\Deltap$}

Here we consider resonances for the operator $-\Deltap$. Recall that resonances are defined as poles of the meromorphic continuation of the resolvent 
$$
R_V(\lambda)=(-\Deltap-\lambda^2)^{-1}\,,\quad\quad \Im \lambda \gg 1,
$$
 and $-\Deltap$ is the unbounded operator
$$
-\Deltap:=-\Delta +\delta'_{\partial\Omega}(V(\lambda)\partial_\nu|_{\partial\Omega}).
\,\,$$

\noindent (See Section \ref{sec:formalDefinition} for the formal definition of $-\Deltap$.) 

We now assume that $\Omega$ is strictly convex and $\partial\Omega\subset \re^d$ is a smooth hypersurface and take $V=V(h)\in h^\alpha \Ph{}{}(\partial\Omega)$ an elliptic (semiclassical) pseudodifferential operator with semiclassical parameter $h=\Re \lambda^{-1}$ and $\alpha>5/6$.  

Denote the set of rescaled resonances and the set of rescaled resonances that are $h\log h^{-1}$ close to the real axis by
\begin{equation}
\label{def:lambdaPrime}
\Lambda^{\delta'}(h):=\{z\in \complex: z/h \text{ is a resonance of } -\Deltap\}
\end{equation}
and \m\Lambda^{\delta'}_{\log }(h):=\{z\in \Lambda^{\delta'
}(h) :z\in [1-Ch,1+Ch]+i[-Mh\log h^{-1},0]\}\,\,\m respectively.
\begin{remark}
As for the $\delta$ potential, it is possible to rescale in $h$ to obtain $\Re z\sim E$ for any $E>0$
\end{remark}

Then the following theorem is a consequence of the much finer Theorem \ref{thm:resFreePrime}
\begin{theorem}
\label{thm:resFreeComputePrime}
Let $\Omega\subset \re^d$ be a strictly convex domain with $C^\infty$ boundary, $V\in h^\alpha C^\infty(\partial\Omega)$ with $V>ch^\alpha>0$, and $\alpha>5/6$. Then there exists a constant $C_{V,\Omega}$ such that for every $\e>0$ there exists $h_0>0$ such that for $0<h<h_0$ and $z\in\Lambda^{\delta'}_{\log}(h)$
$$-\Im z\geq  (C_{V,\Omega}-\e)\begin{cases} h^{3-2\alpha}&5/6<\alpha\leq 1\\
h\log h^{-1}&\alpha>1\end{cases}.$$
\end{theorem}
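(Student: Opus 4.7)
The plan is to deduce Theorem \ref{thm:resFreeComputePrime} from the microlocal quantum Sabine law Theorem \ref{thm:resFreePrime} (the $\delta'$ analogue of Theorem \ref{thm:resFree}) by evaluating the dynamical infimum explicitly for the special class of scalar potentials $V = h^\alpha \tilde V$ with $\tilde V \geq c > 0$. The principal-symbol computation from Section \ref{sec:decomposed2}, combined with the one-dimensional model \eqref{eqn:reflectPrime}, means that on any non-glancing slice $B^*\partial\Omega_{1-\delta_1}$ the compressed shymbol of the $\delta'$ reflection operator is, modulo lower-order errors,
\begin{equation*}
\tilde{\sigma}(R_{\delta'})(q) \;=\; \frac{i\sigma(V)(q)\sqrt{1-|\xi'(q)|_g^{2}}}{i\sigma(V)(q)\sqrt{1-|\xi'(q)|_g^{2}}\;-\;2h}\,,
\end{equation*}
so that $|\tilde\sigma(R_{\delta'})(q)|^{2} = h^{2\alpha}\tilde V(q)^{2}(1-|\xi'(q)|_g^{2})\bigl[h^{2\alpha}\tilde V(q)^{2}(1-|\xi'(q)|_g^{2})+4h^{2}\bigr]^{-1}$.

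The key asymptotic is then elementary. For $5/6 < \alpha \leq 1$ the $h^{2\alpha}$ term dominates the denominator, and using $-\log(1-x)=x+O(x^{2})$ gives
\begin{equation*}
-\log|\tilde\sigma(R_{\delta'})(q)|^{2} \;=\; \frac{4h^{2-2\alpha}}{\tilde V(q)^{2}(1-|\xi'(q)|_g^{2})}\bigl(1+o(1)\bigr),
\end{equation*}
whereas for $\alpha>1$ the $4h^{2}$ term dominates and $-\log|\tilde\sigma(R_{\delta'})(q)|^{2} = (2\alpha-2)\log h^{-1}+O(1)$. Substituting these into the definition of $r_N^{\delta'}$ analogous to \eqref{def:averageReflection}, and using that on $B^*\partial\Omega_{1-\delta_1}$ the quantities $l_N(q)$, $\tilde V(\beta^{n}(q))$ and $1-|\xi'(\beta^{n}(q))|_g^{2}$ are bounded away from $0$ and $\infty$ uniformly (by strict convexity and compactness), the infimum over $B^*\partial\Omega_{1-\delta_1}$ of $-l_N^{-1}(q)r_N^{\delta'}(q)$ is bounded below by a positive constant $C_{V,\Omega}$ times $h^{2-2\alpha}$ in the first case and times $\log h^{-1}$ in the second. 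Multiplying by $h$ and applying Theorem \ref{thm:resFreePrime} yields the two cases of the stated bound.

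The hard part is not in the present theorem but in establishing Theorem \ref{thm:resFreePrime} itself: one must iterate the boundary equation $(I-\dDl(\lambda)V)\varphi=0$ from Theorem \ref{thm:ResDomainPrime} using the microlocal decomposition $\dDl=\dDl_\Delta+\dDl_B+\dDl_g+\mc{O}(h^{\infty})$ of Lemma \ref{lem:decomposePrime}, factor the iterated operator into a pseudodifferential reflection piece $R_{\delta'}$ and an FIO piece $T(z)$ associated to $\beta$, and control the glancing contribution $\dDl_g$ via the Melrose--Taylor parametrix of Section \ref{sec:BLONearGlance}. The restriction $\alpha>5/6$ is precisely what makes the glancing contribution subdominant: near glancing, $1-|\xi'|_g^{2}=O(h^{2/3})$ so the reflectivity shymbol is $O(h^{2\alpha-5/3})$, which is $o(1)$ exactly when $\alpha>5/6$. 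Once Theorem \ref{thm:resFreePrime} is granted, Theorem \ref{thm:resFreeComputePrime} is purely the asymptotic book-keeping described above.
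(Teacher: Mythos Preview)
Your proposal is correct and follows the paper's approach exactly: the paper states that Theorem \ref{thm:resFreeComputePrime} ``is a consequence of the much finer Theorem \ref{thm:resFreePrime}'' without spelling out the deduction, and your asymptotic computation of $-\log|\tilde\sigma(R_{\delta'})|^2$ on $B^*\partial\Omega_{1-\delta_1}$ is precisely what is needed (and is essentially the content of the displayed estimate preceding \eqref{eqn:awayFromGlancingPrime}). One minor correction to your closing heuristic: the threshold $\alpha>5/6$ in the paper does not come from $1-|\xi'|_g^2=O(h^{2/3})$ making the reflectivity $O(h^{2\alpha-5/3})$, but rather from the requirement in Section \ref{sec:glancingPointPrime} that one can choose $\e$ with $2-2\alpha<\e<\min(\tfrac12,\alpha-\tfrac12)$, which forces $2-2\alpha<\alpha-\tfrac12$; this does not affect the validity of your deduction.
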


\begin{remark}
The power $5/6$ is not optimal as can be seen, for example in \cite[Chapter 2]{thesis} in the case of the circle. However, the arguments we rely on in the hyperbolic region require that $\alpha>5/6$. 
\end{remark}

As for the $\delta$ potential, we are able to rescale to the case that $\Re z=1$. We now introduce the dynamical and microlocal objects for the finer version of Theorem \ref{thm:resFreeCompute}. Let $\pi$, $B^*\partial\Omega$, $\beta$, $l$, and $l_N$ be as above. Furthermore, recall that
\begin{equation}
\label{eqn:GPrime}
\dDl (z/h):=\dDl _{\Delta}(z;h)+\dDl _B(z;h)+\dDl _g(z;h)+\O{L^2\to C^\infty}(h^\infty)
\end{equation}
where $\dDl _{\Delta}$ is a pseudodifferential operator, $\dDl _B$ is a semiclassical Fourier integral operator associated to $\beta$ and $D_g$ is microlocalized near $|\xi'|_g=1$ and the diagonal (see Lemma \ref{lem:decompose}). 

We now suppose $V\in h^\alpha \Ph{}{}(\partial\Omega)$ for $\alpha>5/6$ and is self-adjoint with $\sigma(V)>ch^\alpha>0$. Let $\chi\in C^\infty(\re)$ with $\chi\equiv 1$ for $x>2C$ and $\chi \equiv 0$ for $x<C$. Then fix $\e>0$ and let 
\begin{multline}
\label{eqn:reflectionOperatorPrime}
R_{\delta'}(z):=\\(I-\dDl _{\Delta}^{1/2}V\dDl _{\Delta}^{1/2})^{-1}\dDl _{\Delta}^{1/2}V\dDl _{\Delta}^{1/2}\chi\left(\frac{1-|hD'|_g}{h^\e}\right)\in \Ph{1}{\e},
\end{multline}
Then
$$\sigma(R_{\delta'}(z))=\frac{i\sigma(V)\sqrt{1-|\xi'|_g^2}}{i\sigma(V)\sqrt{1-|\xi'|_g^2}-2h}\chi\left(\frac{1-|\xi'|_g}{h^\e }\right)$$ 
is the reflection coefficient at the point $(x',\xi')\in B^*(\partial\Omega)$. We call $R_{\delta'}$ the reflection operator.

\begin{remark} The symbol of the reflection operator agrees up to lower order terms, with the reflection coefficient found when a plane wave with tangential frequency $\xi'$ interacts with a derivative delta function potential of constant amplitude $\sigma(V)$ on a hyperplane. 
\end{remark}

Let $T_{\delta'}(z):=\dDl _{\Delta}^{-1/2}(z)\dDl _B(z)\dDl_{\Delta}^{-1/2}(z)$ where $\dDl _B$ is the Fourier integral operator component of  $\dDl(z)$. Then, using the notion of shymbol defined in Section \ref{sec:shymbol}, we define $r_N^{\delta'}(z): B^*(\partial\Omega)\to \re$, the logarithmic average of the reflectivity at successive iterates of the billiard map, by
\begin{equation}
\label{eqn:defineAverageReflectionPrime}
r^{\delta'}_N(z,q):=\frac{\Im z}{h}l_N(q)+\frac{1}{2N}\log\tilde{\sigma}(((R_{\delta'}T_{\delta'}(z))^*)^N(R_{\delta'}T_{\delta'}(z))^N)(q).
\end{equation}
The term $\tfrac{\Im z}{h}l_N$ in \eqref{eqn:defineAverageReflectionPrime} serves to cancel the growth of $T_{\delta'}(z)$ in the right hand term.  In fact,
for $0<N$ independent of $h$ we have
\begin{equation}
\label{def:averageReflectionPrime}
r^{\delta'}_N(z,q)=\frac{1}{2N}\sum_{n=1}^N\log\left|\left(\sigma(R_{\delta'})\composed \beta^n(q) +\O{}(h^{I_{R_{\delta'}}(\beta^n(q))+1-2\e}))\right)\right|^2.
\end{equation}
The expression \eqref{def:averageReflectionPrime} illustrates that $r^{\delta'}_N$ is the logarithmic average reflectivity over $N$ iterations of the billiard ball map. Moreover, $r^{\delta'}_N(z,q)$ is independent of $z\in \Lambda_{\log}^{\delta'}$, so we suppress the dependence on $z$.

Using Lemma \ref{lem:dynStrictlyConvex} and \ref{lem:iteratedStability}, we have for $h$ small enough and $\e< 1/2$ with $V\in h^{\alpha}\Psi(\partial\Omega),$
\begin{align*}
\inf_{1-\delta\leq |\xi'|_g\leq 1-h^\e}-\frac{\log |\sigma(R_{\delta'})(\beta(q))|^2}{2l(q,\beta(q))}&\\
&\!\!\!\!\!\!\!\!\!\!\!\!\!\!\!\!\!\!\!\!\!\!\!\!\!\!\!\!\!\!\!\!\!\!\!\!\!\!\!\!=\inf_{Ch^{\e/2}\leq r\leq \delta^{1/2}}-\recip{2Cr+\mc O(r^2)} \log\left(\frac{\O{}(h^{-2(\alpha-1)}r^2)}{4+\O{}(h^{-2(\alpha-1)}r^2})\right)\\
&\geq C\delta^{-{1/2}}\min(\log h^{-1}, h^{3-2\alpha}).\end{align*}

\noindent Thus, we see that for all strictly convex domains $\Omega$, $0<\e<1/2$, $N_1>0$, and $V=\O{}(h^\alpha)$
\begin{equation}
\label{eqn:awayFromGlancingPrime}
\sup_{N<N_1}\inf_{|\xi'|_g\leq 1-ch^\e}-l_N^{-1}r^{\delta'}_N=\sup_{N<N_1}\inf_{|\xi'|_g\leq 1-\delta_1}-l_N^{-1}r_N^{\delta'}\end{equation}
for some $\delta_1>0$ small enough. That is, the slowest decay rates are those at least a fixed distance away from the glancing region.

With these definitions in hand, we state our main result. 
\begin{theorem}
\label{thm:resFreePrime}
Let $\Omega\subset \re^d$ be a strictly convex domain with $C^\infty$ boundary, $\alpha>5/6$. Then for all $V\in h^\alpha\Ph{}{}(\partial\Omega)$ self-adjoint and elliptic the following holds. There exists $\delta_1>0$ such that for every $\e>0$ and $N_1>0$, there is an $h_0>0$ such that for $z\in\Lambda^{\delta'}_{\log}(h)$ and  $0<h<h_0$ 
\begin{equation}
\begin{gathered}-\frac{\Im z}{h}\geq \sup_{N<N_1}\inf_{q\in B^*\partial\Omega_{1-\delta_1}}-l_N^{-1}(q)r^{\delta'}_N(q)(1-\e)\label{eqn:resFreePrime}.
\end{gathered}
\end{equation}
\end{theorem}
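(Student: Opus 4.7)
The plan is to adapt the proof of the $\delta$-version Theorem \ref{thm:resFree} to the $\delta'$-setting, replacing the single layer operator $G$ by the derivative double layer $\dDl$ and the $\delta$-reflection coefficient by $R_{\delta'}$ of \eqref{eqn:reflectionOperatorPrime}. Suppose $\lambda = z/h \in \Lambda^{\delta'}_{\log}(h)$ is a resonance; by Theorem \ref{thm:ResDomainPrime} there is nonzero $\varphi \in L^2(\partial\Omega)$ with $(I - \dDl(z/h)V)\varphi = 0$, normalized so $\|\varphi\|_{L^2} = 1$. Using the microlocal decomposition $\dDl = \dDl_\Delta + \dDl_B + \dDl_g + O(h^\infty)$ from Lemma \ref{lem:decomposePrime}, the fixed-point equation rewrites as
$$(I - \dDl_\Delta V)\varphi = (\dDl_B + \dDl_g)V\varphi + O(h^\infty)\varphi.$$

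First I would microlocally cut $\varphi$ into its hyperbolic and glancing parts using a cutoff of the form $\chi(h^{-\epsilon}(1-|hD'|_g))$ and argue that the mass of $\varphi$ in the $h^\epsilon$-neighborhood of glancing cannot support a resonance decaying slower than the infimum in the theorem. Using Lemma \ref{lem:nearGlanceEstPrime} and the Melrose--Taylor parametrix of Appendix \ref{ch:semiclassicalDirichletParametrices}, one shows $\|\dDl_g V\|_{L^2 \to L^2} = O(h^{\alpha + 1 - \epsilon/2})$, and that iterating the boundary equation for the near-glancing component of $\varphi$ produces a strictly stronger decay bound than the infimum over the hyperbolic region. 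Combined with \eqref{eqn:awayFromGlancingPrime}, this reduces the problem to the region $B^*\partial\Omega_{1-\delta_1}$.

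Next I would invert $I - \dDl_\Delta V$ microlocally on this region. Its shymbol is $1 - \tfrac{i\sigma(V)\sqrt{1 - |\xi'|_g^2}}{2h}$, which has modulus bounded below by $1$, so the shymbol calculus of Section \ref{sec:shymbol} yields a microlocal inverse. A symmetrization using the factorization \eqref{eqn:reflectionOperatorPrime} identifies the shymbol of $(I - \dDl_\Delta V)^{-1}\dDl_B V$ with that of $R_{\delta'} T_{\delta'}$ up to lower-order, so that $\varphi = R_{\delta'} T_{\delta'} \varphi + E\varphi$ with $E\varphi = o(1)$ in $L^2$. Iterating gives $\varphi = (R_{\delta'}T_{\delta'})^N\varphi + \sum_{j=0}^{N-1}(R_{\delta'}T_{\delta'})^j E\varphi$, and since this error sum is $o(1)$ uniformly in $N < N_1$, we conclude $\|(R_{\delta'}T_{\delta'})^N\|_{L^2 \to L^2} \geq 1 - \epsilon$ for $h$ small.

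Finally I would apply Lemma \ref{lem:EgorovSheaf} to $((R_{\delta'}T_{\delta'})^*)^N(R_{\delta'}T_{\delta'})^N$, whose compressed shymbol factors as a product over $\beta^n(q)$ of $|\sigma(R_{\delta'})|^2|\sigma(T_{\delta'})|^2$, with the exponential growth of $|\sigma(T_{\delta'})|^2 = e^{-2\Im z\, l/h}(1 + o(1))$ assembling to $e^{-2N \Im z l_N(q)/h}\prod_n|\sigma(R_{\delta'})|^2\circ\beta^n(q)$. By \eqref{eqn:defineAverageReflectionPrime} this equals $e^{2N(r^{\delta'}_N(q) - \Im z\, l_N(q)/h)}$ up to lower order; the sharp $L^2$ bound of Lemma \ref{lem:goodL2Bound} then forces the existence of $q^* \in B^*\partial\Omega_{1-\delta_1}$ with $r^{\delta'}_N(q^*) - \Im z\, l_N(q^*)/h \geq -\epsilon/N$. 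Dividing by $l_N(q^*) > 0$ and bounding $(-l_N^{-1} r^{\delta'}_N)(q^*) \geq \inf_q (-l_N^{-1} r^{\delta'}_N)$ gives \eqref{eqn:resFreePrime} after taking the supremum over $N$. The main obstacle is the glancing analysis: the hypothesis $\alpha > 5/6$ is required because $|\sigma(R_{\delta'})|$ becomes comparable to $1$ in the transition band $|\xi'|_g \sim 1 - Ch^{2/3}$ when $\alpha \leq 5/6$, at which point near-glancing trajectories could support arbitrarily slowly decaying resonant states and the reduction to the hyperbolic region would fail.
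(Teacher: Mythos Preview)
Your overall strategy matches the paper's: decompose $\dDl$, handle the glancing band via the Melrose--Taylor parametrix estimate, iterate $(R_{\delta'}T_{\delta'})^N$ in the hyperbolic region, and extract the dynamical bound from the shymbol calculus. However, there are a few genuine issues.

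First, a sign error: Lemma \ref{lem:nearGlanceEstPrime} gives $\|\dDl\varphi\| \lesssim h^{-1+\e/2}\|\varphi\|$ for $\varphi$ microlocalized $h^\e$-near glancing, so $\|\dDl V\| \lesssim h^{\alpha-1+\e/2}$, not $h^{\alpha+1-\e/2}$. For this to be $o(1)$ one needs $\e > 2-2\alpha$.

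Second, and more seriously, you gloss over what the paper isolates as Lemma \ref{lem:nearGlance}. Between the truly glancing set $\{|1-|\xi'|_g| \lesssim h^\e\}$ (where the parametrix estimate applies) and the fixed-distance region $\{|\xi'|_g < 1-\delta_1\}$ (where the dynamical infimum is taken), there is a near-glancing hyperbolic band $\mc{NG}=\{1-Ch^{\e}\leq |\xi'|_g \leq 1-ch^{\e}\}$. The commutator $[X,\dDl V]\psi$ that feeds your iterated equation lives precisely there, and one cannot simply assert that iterating gives a stronger bound. The paper handles $\mc{NG}$ separately by exploiting $|\sigma(R_{\delta'})|^2 \leq 1 - ch^{2-2\alpha-\e}$ on this band together with a parametrix for $I-(R_{\delta'}T)^N$ (Lemma \ref{lem:parametrixMSprime}); this imposes the additional constraint $2-2\alpha-\e < \min(\e/2,\,1/2-\e)$, i.e.\ $\e > \tfrac{2}{3}(2-2\alpha)$.

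Third, your explanation of $\alpha>5/6$ is incorrect. In the transition band $1-|\xi'|_g \sim h^{2/3}$ one has $|\sigma(R_{\delta'})| \sim h^{\alpha-2/3}$, which is small for all $\alpha>2/3$, not just $\alpha>5/6$. The actual origin of $5/6$ is the compatibility of the two $\e$-windows above: the glancing estimate requires $\e > 2-2\alpha$, while for the hyperbolic iteration to say anything (the leading term $r^{\delta'}_N \sim h^{2-2\alpha}$ must dominate the $h^{1-2\e}$ errors) one needs $\e < \min(1/2,\,\alpha-1/2)$. Hence $2-2\alpha < \alpha-1/2$, i.e.\ $\alpha>5/6$.

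Finally, you omit the elliptic region $\{|\xi'|_g > 1+h^\e\}$ entirely; the paper treats it in Section \ref{sec:ellipticPrime} via ellipticity of $I-\dDl_\Delta V$ (straightforward since $\sigma(\dDl_\Delta)$ is purely imaginary there and $V$ is self-adjoint).
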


\section{Conjectures and numerical computation of resonances}
\label{sec:numerical}

We conjecture that the conclusions of Theorem \ref{thm:resFree} hold for much more general domains $\Omega$. In particular, we conjecture that the results hold for convex domains $\Omega$ with piecewise smooth, $C^{1,1}$ boundary.

\begin{figure}[htbp]
\begin{subfigure}[t]{\textwidth}
\centering
\begin{tabular}{ll}
\raisebox{-.075cm}{$\Im \lambda$}\\
\includegraphics[width=4.25in]{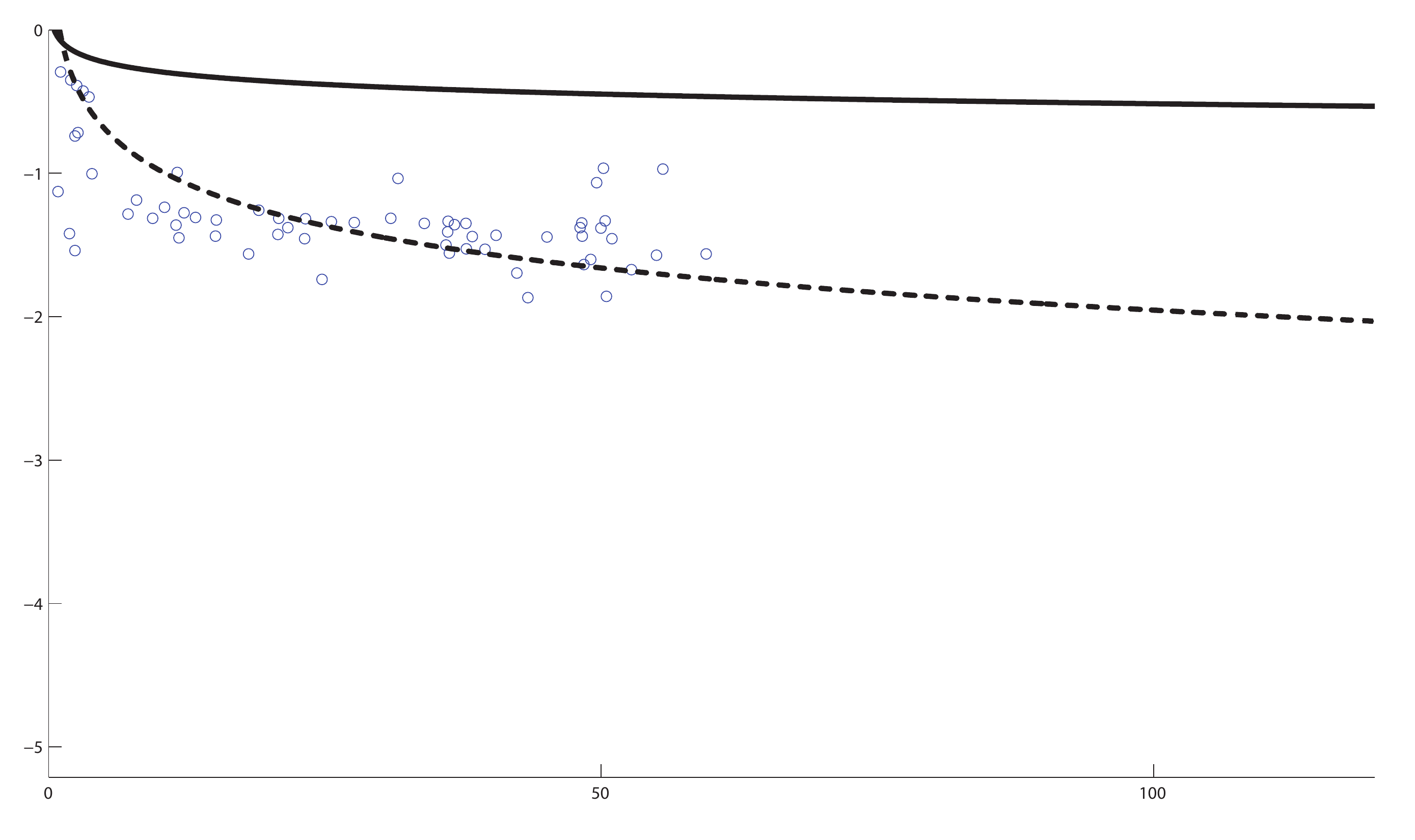}&\raisebox{.45cm}{$\Re \lambda$}
\end{tabular}
\end{subfigure}
\begin{subfigure}[b]{\textwidth}
\centering
\begin{tabular}{ll}
\raisebox{-.25cm}{$\Im \lambda$}\\
$\!\!\!$\includegraphics[width=4.4in]{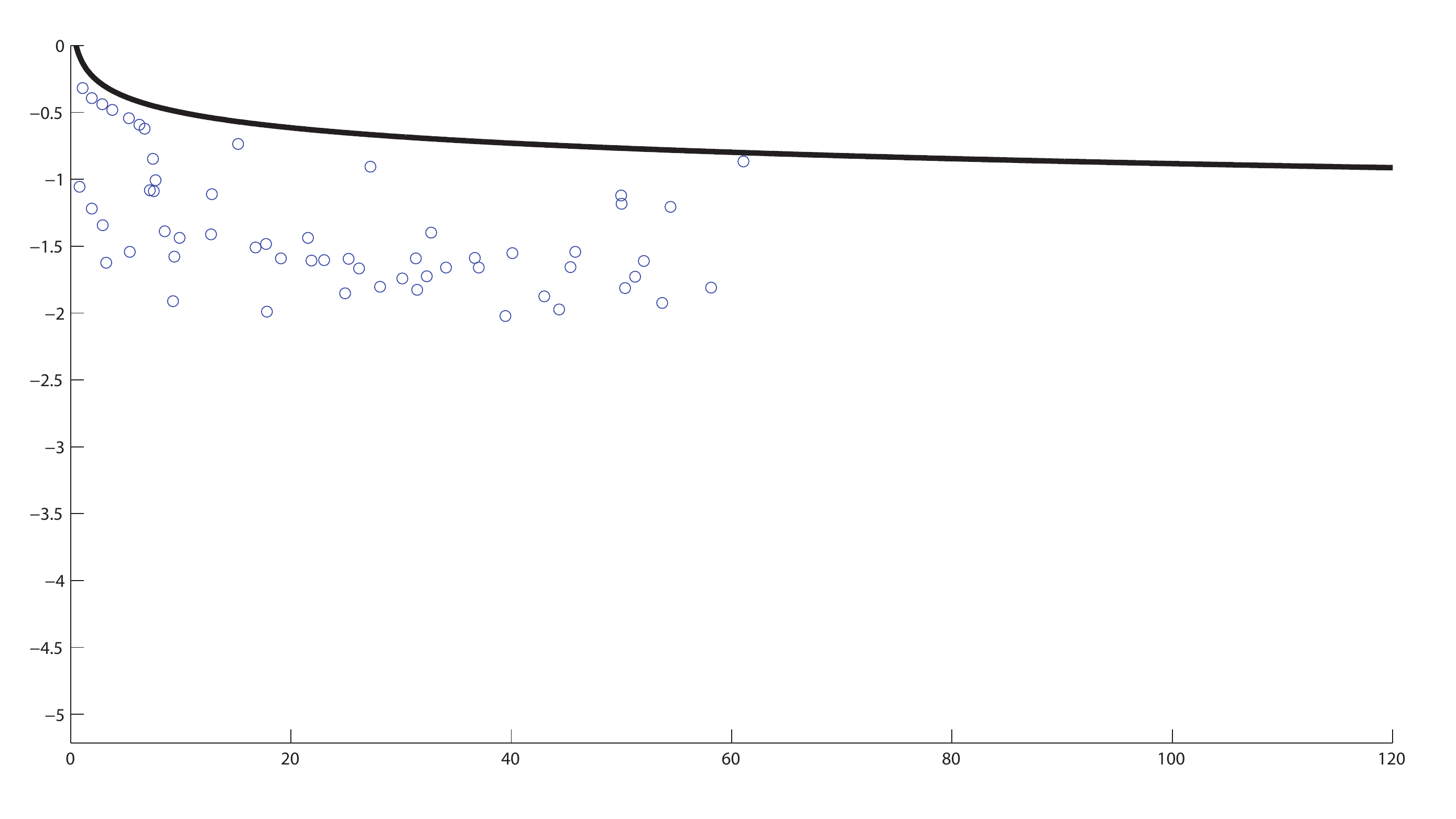}&\raisebox{.6cm}{$\!\!\!\Re \lambda$}
\end{tabular}
\end{subfigure}
\caption[Numerically computed resonance free regions for the ellipse and Bunimovich stadium]{The figure shows resonances for $-\Deltad{\Omega}$ with $\Omega$ the Bunimovich and ellipse with $V=I$ on the top and bottom respectively. The solid lines show the bound of Theorem \ref{thm:resFree}. On the left, the dashed line is that of Conjecture \ref{conj:lowDensity}. As predicted by the conjecture, the resonances appear to cluster around the dashed line for the Bunimovich stadium. On the right, observe that there are many resonances close to the solid line. This gives evidence that the conclusions of Theorem \ref{thm:lowerBoundNumRes} are valid for the ellipse.}
\label{fig:numRes}
\end{figure}

Moreover, we conjecture that 
\begin{conjecture}
\nonumber
\label{conj:lowDensity}
Let $\Omega\subset \re^d$ be a convex domain with piecewise smooth boundary, $V\in h^{-\alpha}\Psi(\partial\Omega)$ for $\alpha<2/3$. Then, for every $\e>0$,
\begin{multline*}
\label{eqn:conj1}
\#\left\{z\in \Lambda_{\log}:-\tfrac{\Im z}{h\log h^{-1}}\leq \essinf_{B^*_1\pO\cap\beta_{-N}(\WFh(V))}\limsup_{N>0}l_N^{-1}(q)(1-\alpha)-\e\right\}\\
=\o{}(h^{-d}) .
\end{multline*}
If, moreover $|\sigma(V)|>ch^{-\alpha}$, then
\begin{equation*}
\#\left\{z\in \Lambda_{\log}:-\tfrac{\Im z}{h\log h^{-1}}\geq \esssup_{|\xi'|_g<1}\liminf_{N>0}l_N^{-1}(1-\alpha)+\e\right\}=\o{}(h^{-d}).
\end{equation*}
\end{conjecture}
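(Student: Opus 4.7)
\textbf{Proof Proposal for Conjecture \ref{conj:lowDensity}.}

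The plan is to reduce both density statements to a semiclassical Weyl-type counting law for the powers of the composition $R_\delta(z)\,T(z)$ on $L^2(\partial\Omega)$, and then to translate ergodic-theoretic information about the time averages $l_N^{-1} r_N$ into spectral information about these powers.

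\emph{Step 1: Boundary reduction and factorization.} By Theorem \ref{thm:ResDomainPrime} and the analysis of Chapter \ref{ch:mer}, $z/h \in \Lambda_{\log}(h)$ is a resonance of $-\Delta_{\partial\Omega,\delta}$ if and only if $I+G(z/h)V$ has a nontrivial kernel. First I would use the decomposition $G = G_\Delta + G_B + G_g + \mathcal{O}(h^\infty)$ from Lemma \ref{lem:decompose} to write, microlocally away from an $h^\epsilon$-neighborhood of $S^*\partial\Omega$,
\[
I+G(z/h)V = G_\Delta^{1/2}\bigl(I - R_\delta(z)\,T(z)\bigr)G_\Delta^{-1/2}\cdot(\text{invertible factor}) + \mathcal{O}(h^\infty),
\]
where $R_\delta \in h^{1-\epsilon/2}\Psi_\epsilon^{-1}$ is the reflection operator \eqref{eqn:reflectionOperator} and $T(z)$ is an FIO associated to the billiard ball map $\beta$. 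After dealing with the glancing piece by the bound \eqref{eqn:awayFromGlancing} (which shows it only makes matters worse, hence does not obstruct either density bound when $\alpha<2/3$), the problem becomes one of counting $z\in \Lambda_{\log}$ for which $1$ is an eigenvalue of $R_\delta(z)T(z)$.

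\emph{Step 2: Semiclassical Weyl law for $(R_\delta T)^N$.} By Lemma \ref{lem:EgorovSheaf} (the shymbol version of Egorov), the $N$-fold composition $(R_\delta T)^N(R_\delta T)^{*N}$ is, up to subprincipal errors, a pseudodifferential operator whose compressed shymbol is $\exp(2N\,r_N(q))$ in the notation \eqref{def:averageReflection}, multiplied by an $h$-power determined by $l_N$. Combining Lemma \ref{lem:goodL2Bound} for upper bounds with a microlocal lower bound obtained by quasimode construction on phase-space cells of volume $\gtrsim h^{d-1}$, one obtains for any fixed $r\in \mathbb{R}$ the singular-value counting estimate
\[
\#\{\sigma_j((R_\delta T)^N) \geq e^{Nr}\} \;=\; h^{-d+1}\bigl(\mathrm{vol}\{q\in B^*\partial\Omega : r_N(q)\geq r\} + o(1)\bigr),
\]
and the analogous statement from below. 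A Jensen-type inequality applied to $\log\det(I - R_\delta T)$ then converts singular-value counts into counts of $z$ with $-\Im z/h$ in prescribed intervals, yielding a bound of the form $\#\{z\in\Lambda_{\log} : a\leq -\tfrac{\Im z}{h\log h^{-1}}\leq b\} = h^{-d}(\mathrm{vol}\,\{q : a \leq -l_N^{-1}r_N \leq b\} + o(1))$.

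\emph{Step 3: Passage $N\to\infty$ and dynamical reformulation.} For the first assertion of the conjecture, the set in question corresponds to $q$ with $\limsup_N l_N^{-1}(q)(1-\alpha) > -\Im z/(h\log h^{-1})+\epsilon$ for all $N$, i.e.\ $q$ outside $\beta_{-N}(\mathrm{WF}_h(V))$ infinitely often or with slowly growing $l_N$; by a Fatou-type argument and the volume estimate in Step 2 this set has $o(1)$ measure in $B^*\partial\Omega$ as $N\to\infty$, yielding the $o(h^{-d})$ bound. For the second assertion, ellipticity $|\sigma(V)|\geq ch^{-\alpha}$ forces $r_N(q)\to (1-\alpha)\log h$ uniformly in $q$ and $N$ away from glancing; combining with the $\liminf$ hypothesis on $l_N$ and applying Step 2 in the opposite direction gives the matching upper density bound.

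\emph{Main obstacle.} The principal difficulty is Step 2: establishing a genuine two-sided Weyl law for the singular values of $(R_\delta T)^N$ uniformly in $N$ as $h\to 0$. The quantization $R_\delta T$ is a non-normal semiclassical FIO whose symbol degenerates at glancing, and the relevant symbol lives in the exotic class $S_\epsilon$ where the standard functional calculus is delicate; controlling the remainders well enough to take $N$ growing slowly with $h$ (needed to access $\limsup_N$ and $\liminf_N$ in the dynamical formulation) is the crux. A secondary obstacle is that for general piecewise smooth convex $\Omega$ the billiard need not be ergodic, so the dynamical quantities in the conjecture genuinely depend on $q$ and one cannot replace $\essinf/\esssup$ by a single global constant; the Fatou-type step above must therefore be carried out on the measurable level rather than by invoking Birkhoff directly.
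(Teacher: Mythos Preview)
This statement is a \emph{conjecture}, not a theorem: the paper does not prove it and does not claim to. Your proposal is therefore not to be compared against a proof in the paper but assessed on its own merits as an attempted proof of an open problem.

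There are two structural gaps that prevent your outline from closing.

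First, your Step~1 invokes Lemma~\ref{lem:decompose} for the decomposition $G=G_\Delta+G_B+G_g$, but that lemma is proved only for $\Omega$ strictly convex with $C^\infty$ boundary, whereas the conjecture concerns convex domains with \emph{piecewise smooth} boundary. None of the microlocal machinery in Chapter~\ref{ch:layer}---the intersecting-Lagrangian description of the resolvent, the Melrose--Taylor parametrix near glancing, the FIO structure of $G_B$---is available in the piecewise-smooth setting. Extending it there is a substantial open problem in its own right, and until that is done the operator $R_\delta T$ you want to analyze does not even exist as stated.

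Second, and more fundamentally, Step~2 is not a step but the whole conjecture. You assert a two-sided Weyl law for the singular values of $(R_\delta T)^N$ and then a Jensen-formula passage from singular-value counts to resonance counts, but neither is established. The Jensen/determinant approach (set up a Grushin problem so that $E_{-+}=I-R_\delta T+\text{error}$, conjugate by microlocal weights to average the symbol over $\beta$-orbits, estimate $\|R_\delta T\|_{\mathrm{tr}}$, and apply Jensen) does give \emph{some} density information---indeed this is a natural strategy---but carried through it yields bounds of the form $\mathcal{O}(h^{-d+\delta})$ in shrinking logarithmic windows, not the $o(h^{-d})$ statement of the conjecture. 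The gap between $\mathcal{O}(h^{-d+\delta})$ and $o(h^{-d})$ is exactly the difference between controlling the symbol at fixed $N$ and controlling $\essinf_q\limsup_N$ (resp.\ $\esssup_q\liminf_N$); the latter is genuine ergodic-theoretic content about the billiard that your ``Fatou-type argument'' does not supply, and for non-ergodic convex billiards there is no reason to expect the relevant exceptional sets to have measure zero without further hypotheses.

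In short: the approach you sketch is a reasonable line of attack for a \emph{weaker} statement on smooth strictly convex domains, but it does not address the piecewise-smooth hypothesis and does not bridge the gap to the $o(h^{-d})$ claim involving $\limsup_N/\liminf_N$. Those are precisely why the statement is labeled a conjecture.
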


This conjecture is a mathematical statement of the space-averaged Sabine law - for ergodic billiards, the exponential decay rate of waves is given by the reciprocal of the average chord length of billiards trajectories. In Figure \ref{fig:numRes}, we can see that the resonances cluster around the line given by the space-averaged Sabine law. The authors of \cite{Heller} numerically compute resonances for scattering by quantum corrals on various domains with ergodic billiard flow. They observe that the resulting resonances cluster around the logarithmic line given by the space-averaged Sabine law. 

In order to compute the resonances of \eqref{eqn:main} in some example domains $\Omega\subset \re^2$, we consider the boundary problem \eqref{eqn:boundaryPrelim}. We discretize $\partial\Omega$ in steps of equal length. After this process, \eqref{eqn:boundaryPrelim} reduces to a matrix equation. We then use a maximum searching algorithm to maximize the condition number of the resulting matrix.

\section{Outline of the proof and organization of the chapter}
\label{sec:outline}

The starting point for the proofs of Theorem \ref{thm:resFree} is the reduction of the solution of \eqref{eqn:main} to the solution of the boundary problems
\begin{equation}\label{eqn:boundaryPrelim}(N_1+N_2+V)\psi=0 \quad\quad\Leftrightarrow
\quad \quad  G(N_1+N_2+V)\psi =(I+GV)\psi=0\end{equation}
where $G$ is as in \eqref{eqn:G}
and $N_1$ and $N_2$ are the Dirichlet to Neumann maps on $\Omega$ and $\re^d\setminus\overline{ \Omega}$ respectively (see Chapter \ref{ch:mer}). The second equality above follows from Section \ref{sec:LayerPotential} or \cite[Section 7.11]{Taylor}.

The strategy for proving Theorem \ref{thm:resFree} is to microlocally decompose the boundary and treat each region separately. The hyperbolic, glancing, and elliptic regions ($\mathcal{H}$, $\mathcal{G}$, and $\mathcal{E}$ respectively) have the property that, letting $U'$ denote a slightly enlarged version of $U$, 
\begin{equation} \label{eqn:stableMicrolocal}(I-\chi_{\mc{H}'})(I+GV)\chi_{\mc{H}}=(I-\chi_{\mc{G}'})(I+GV)\chi_{\mc{G}}=(I-\chi_{\mc{E}'})(I+GV)\chi_{\mc{E}}\equiv 0
\end{equation}
microlocally. Thus, the invertibility of $I+GV$ can be treated separately on each region. 

The first step (see Section \ref{sec:decompose}) in the proof is to use Theorem \ref{thm:freeResolve} to microlocally decompose $G$ into a Fourier integral operator associated with the billiard ball map, a pseudodifferential operator, and an operator microsupported in an $h^\e$ small neighborhood of the diagonal of $S^*\partial\Omega\times S^*\partial\Omega$ (that is, in a small neighborhood of glancing). We denote these operators by $G_B$, $G_\Delta,$ and $G_g$ respectively. 

Section \ref{sec:appDynamics} examines the hyperbolic region, $\mc{H}$. Let $\psi=u|_{\partial\Omega}$ where $u$ is a solution to \eqref{eqn:main}. After some algebraic manipulation of \eqref{eqn:boundaryPrelim}, we arrive at the equation 
\begin{equation}
\label{eqn:reflectionCondition}
(I-(R_{\delta}T)^N)G_{\Delta}^{1/2}V\psi =0,\quad \text{microlocally in } \mathcal{H}
\end{equation}
where $T= G_{\Delta}^{-1/2}G_BG_{\Delta}^{-1/2}$ and $R_{\delta}$ is the reflection operator described in \eqref{eqn:reflectionOperator}. The restrictions on resonances in Theorem \ref{thm:resFree} appear as a consequence of \eqref{eqn:reflectionCondition}. The crucial fact that leads to a logarithmic resonance free region is that $R_{\delta}$ has semiclassical order $<0$. 

In Section \ref{sec:elliptic}, we use the fact that $G_\Delta=\O{L^2\to L^2}(h^{1-\e/2})$ to show that $\MS(\psi)\cap \mathcal{E}=\emptyset$. Finally, we show that $\MS(\psi)\cap \mc{G}=\emptyset$ and hence that $\MS(\psi)=\emptyset$. To do this, we use the microlocal description of $G$ near glancing given in Section \ref{sec:glancingPoint}.

\begin{remark} If one assumes that $V\in h^{-\alpha}\Psi(\partial\Omega)$ for $\alpha<2/3$, then one can avoid the use of the Melrose--Taylor parametrix. We outline this proof in Section \ref{sec:altProof}. 
\end{remark}

The proof of Theorem \ref{thm:resFreePrime} is similar to that of Theorem \ref{thm:resFree}. In particular in Theorem \eqref{thm:ResDomainPrime} we saw that $\lambda$ is a resonance of $-\Deltap$ if and only if there exists a nonzero solution $\psi\in L^2(\pO)$ to
\begin{gather}
 \label{eqn:boundaryReduced} (I-\dDl V)\partial_\nu \psi=0
 \end{gather}
 In particular, \eqref{eqn:bveqn} shows that if $u$ is a $\lambda$-outgoing solution to $(-\Deltap-\lambda^2)u=0$, then
 \begin{gather*} 
 u=R_0(\lambda)\gamma_1^*V\partial_{\nu }u\\
 (I-\dDl(\lambda)V)\partial_\nu u=0
 \end{gather*}
Because of this, the analysis reduces to an analysis of $I-\dDl V$

We decompose $(I-\dDl V)$ microlocally into the hyperbolic, elliptic, and glancing regions as above. These regions have the property given in \eqref{eqn:stableMicrolocal} with $G$ replaced by $\dDl$. Thus, the invertibility of $I-\dDl V$ can be treated separately on each region. The analysis of $\mc{H}$ and $\mc{E}$ is nearly identical to that of $I+GV$, however unlike $G$, the operator $\dDl$ has a smaller operator norm when microlocally restricted near glancing. This allows us to complete the proof of Theorem \ref{thm:resFreePrime}. 

\begin{remark}
Note that the proof of Theorem \ref{thm:resFreePrime} will use the microlocal structure of $\dDl$ near glancing in an essential way for $\alpha\leq 1$.
\end{remark}

\section{Resonance free regions -- delta Potential}
\label{sec:resFree}
We let $z=1+i\omega_0$ with and  $\omega_0\in[-Ch\log h^{-1}, Ch\log h^{-1}]$.

\subsection{Hyperbolic Region: Appearance of the Dynamics}
\label{sec:appDynamics}
Recall from Lemma \ref{lem:decompose} that \m G=G_\Delta+G_B+G_g+\O{L^2\to C^\infty}(h^\infty).\m 

In order to obtain the dynamical restriction on $\Im z$, we localize away from an $h^\e$ neighborhood of $S^*\partial \Omega$. For $k=1,2$, let $\chi_k\in S_{\e}$ with $\chi_k\equiv 1$ on $\{|\xi'|_g\leq 1-(2k+1)Ch^{\e}\}$ and $\supp \chi_k\subset \{|\xi'|_g\leq 1-2kCh^{\e}\}.$ Let $X_k=\oph(\chi_k).$ 
Then, suppose that 
\m (I+GV)X_1\psi=f\,\,\m 
and let $G_{\Delta}^{-1/2}$ be a microlocal inverse for $G_{\Delta}^{1/2}$ on 
$$\mc{H}:=\{|\xi'|_g\leq 1-r_{\mc{H}}h^{\e}\}.$$
Then
\begin{align*}(I+GV)X_1\psi&=(I+(G_\Delta+G_B)V)X_1\psi +\O{}(h^\infty)\psi \\
&=(I+G_\Delta^{1/2}(I+G_{\Delta}^{-1/2}G_BG_{\Delta}^{-1/2})G_{\Delta}^{1/2}V)X_1\psi+\O{}(h^\infty)\psi=f.
\end{align*}
Thus, $f$ is microlocalized on $\mc{H}$ and, following the formal algebra in \cite[Section 2]{Zaletel} multiplying by $G_\Delta^{1/2}V$ and writing $\varphi=G_{\Delta}^{1/2}VX_1\psi$, $T=G_{\Delta}^{-1/2}G_BG_{\Delta}^{-1/2}$, we have 
$$\varphi=-G_\Delta^{1/2}VG_\Delta^{1/2}(I+T)\varphi+\O{}(h^\infty)\psi+G_{\Delta}^{1/2}Vf.$$

\begin{remark} By Lemma \ref{lem:iteratedStability}, a microlocal inverse on $\mc{H}$ will be a microlocal inverse on $\MS(G_BX_1)$.
\end{remark}
Hence, letting 
\m R_{\delta}:=-(I+G_{\Delta}^{1/2}VG_{\Delta}^{1/2})^{-1}G_{\Delta}^{1/2}VG_{\Delta}^{1/2},\,\,\m 
we have 
\m \varphi=R_{\delta}T\varphi+\O{}(h^\infty)\psi-R_{\delta}G_{\Delta}^{-1/2}f.\,\,\m 
Here, $T$ is an FIO associated to the billiard map such that
\begin{multline*}\sigma(\exp\left(\frac{\Im z}{h}\oph (l(q,\beta_E(q)))\right)T)(\beta(q),q)\\
=e^{-i \pi/4} dq^{1/2}\in S\end{multline*}
and $R_{\delta}\in h^{1-\frac{\e}{2}}\Psi_{\e}$ is as in \eqref{eqn:reflectionOperator}.

Thus by standard composition formulae for FIOs, we have for $0<N$ independent of $h$,
\begin{equation}\label{eqn:nonGlance}
(I-(R_{\delta}T)^N)\varphi=\O{}(h^\infty)\psi-\sum_{m=0}^{N-1}(R_{\delta}T)^mR_{\delta}G_{\Delta}^{-1/2}f.\end{equation}

We have that 
\begin{equation}
\label{eqn:appEgorov}
(R_{\delta}T)_N:=((R_{\delta}T)^*)^N(R_{\delta}T)^N=\oph(a_N)+\O{\Ph{-\infty}{}}(h^{\infty})
\end{equation}
where $a_N\in S_{\e}$ and, moreover, for $u$ with $\MS(u)\subset\mc{H}$, by the  Sharp G$\mathring{\text{a}}$rding inequality and \cite[Theorem 13.13]{EZB},
\begin{gather*} 
 \inf_{\mc{H}} \left(|\tilde{\sigma}((R_{\delta}T)_N)(q)|+\O{}(h^{I_{(R_{\delta}T)_N}(q)+1-2\e})\right)\|u\|_{L^2}\leq \|(R_\delta T)^Nu\|_{L^2}^2\\
\|(R_{\delta}T)^Nu\|^2\leq \sup_{\mc{H}} \left(|\tilde{\sigma}((R_{\delta}T)_N)(q)|+\O{}(h^{I_{(R_{\delta}T)_N}(q)+1-2\e})\right)\|u\|_{L^2}.
\end{gather*} 
Let 
$$\beta_1:=1-\sqrt{\sup_{\mc{H}}\tilde\sigma((R_\delta T)_N)}\quad\quad \beta_2:=\sqrt{\inf_{\mc{H}}\tilde\sigma((R_\delta T)_N)}-1.$$
Finally, let $\beta=\max(\beta_1,\beta_2)$. Then, we have 

\begin{lemma}
\label{lem:parametrixMS}
Suppose that $\beta>h^{\gamma_1}$ where $\gamma_1<\min(\e/2,1/2-\e).$ Let $c>r_{\mc{H}}$ and $g\in L^2$ have $\MS(g)\subset \{1-Ch^{\e}\leq |\xi'|_g\leq 1-ch^{\e}\}. $ If 
$$(I-(R_{\delta}T)^N)u=g,$$
then for any $\delta>0$,
 $$\MS(u)\subset \{1-(C+\delta)h^{\e}\leq |\xi'|_g\leq 1-(c-\delta)h^{\e}\}.$$ 
 In particular, there exists an operator $A$ with $\|A\|_{L^2\to L^2}\leq 2\beta^{-1}$, 
$$A(I-(R_\delta T)^N)=I\text{ microlocally on }\mc{H}$$
and if $\MS(g)\subset \{1-Ch^{\e}\leq |\xi'|_g\leq 1-ch^{\e}\}$, then 
$$\MS(Ag)\subset \{1-(C+\delta)h^{\e}\leq |\xi'|_g\leq 1-(c-\delta)h^{\e}\}.$$
\end{lemma}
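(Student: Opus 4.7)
The plan is to split into two cases depending on whether $\beta=\beta_1$ (contraction) or $\beta=\beta_2$ (expansion), and in each case invert $I-(R_\delta T)^N$ by a Neumann series. The key input is that $(R_\delta T)_N := ((R_\delta T)^*)^N(R_\delta T)^N \in \Ph{}{\e}$ from \eqref{eqn:appEgorov}, so Gårding's inequality and the $L^2$ operator-norm estimate of Lemma \ref{lem:goodL2Bound} give, for $u$ microlocalized in $\mc{H}$,
\begin{gather*}
\|(R_\delta T)^N u\|_{L^2}^2 \leq \sup_{\mc{H}}\bigl(|\tilde{\sigma}((R_\delta T)_N)| + O(h^{1-2\e})\bigr)\|u\|_{L^2}^2,\\
\|(R_\delta T)^N u\|_{L^2}^2 \geq \inf_{\mc{H}}\bigl(|\tilde{\sigma}((R_\delta T)_N)| + O(h^{1-2\e})\bigr)\|u\|_{L^2}^2.
\end{gather*}
Since $\gamma_1 < 1/2-\e$, the hypothesis $\beta > h^{\gamma_1}$ dominates the $O(h^{(1-2\e)/2})$ error arising after taking square roots, so microlocally on $\mc{H}$ exactly one of the two alternatives holds: either $\|(R_\delta T)^N\|_{L^2\to L^2,\mc{H}} \leq 1 - \beta/2$, or $\|(R_\delta T)^N u\|_{L^2} \geq (1+\beta/2)\|u\|_{L^2}$.

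In the contraction case I would set $A := \sum_{k\ge 0}(R_\delta T)^{Nk}$, which converges geometrically in operator norm with $\|A\|_{L^2\to L^2} \leq 2/\beta$ and satisfies $A(I-(R_\delta T)^N) = I$ microlocally on $\mc{H}$. In the expansion case the pseudodifferential operator $(R_\delta T)_N$ is microlocally elliptic on $\mc{H}$, so $(R_\delta T)^N$ admits a microlocal left inverse $(R_\delta T)^{-N}$ with $L^2$ norm $\leq (1+\beta/2)^{-1}$; one then factors $I-(R_\delta T)^N = -(R_\delta T)^N\bigl(I-(R_\delta T)^{-N}\bigr)$ and inverts the second factor by a geometric Neumann series, again obtaining an $A$ with $\|A\|_{L^2\to L^2} \leq 2/\beta$.

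For the microsupport statement I would truncate the Neumann series at $M = \lceil C\beta^{-1}\log h^{-1}\rceil$ terms, whose tail is $O(h^\infty)$ in $L^2$ by the geometric decay established above. Each retained term $(R_\delta T)^{Nk}g$ is an FIO associated to $\beta^{Nk}$ applied to $g$; by the iterated stability result Lemma \ref{lem:iteratedStability}, valid for up to $O(h^{-\e})$ iterations, together with the quantitative dynamical estimate of Lemma \ref{lem:dynStrictlyConvex}, the microsupport of $(R_\delta T)^{Nk}g$ is contained in an $O(h^\e)$-neighborhood of $\beta^{Nk}(\MS(g))$, which remains inside the annular band $\{1-(C+\delta)h^\e \leq |\xi'|_g \leq 1-(c-\delta)h^\e\}$ uniformly in $k\leq M$. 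The assumption $\gamma_1 < \e/2$ ensures $NM = O(h^{-\gamma_1}\log h^{-1}) \ll h^{-\e}$, putting us in the regime where Lemma \ref{lem:iteratedStability} applies.

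The main obstacle is making the microsupport statement uniform in the number of Neumann iterations: one must balance the $O(\beta^{-1}\log h^{-1})$ terms in the truncated series against the fact that the dynamical invariance of a thin annular band near glancing only holds for $O(h^{-\e})$ iterations of $\beta$. The conditions $\gamma_1 < 1/2-\e$ and $\gamma_1 < \e/2$ are precisely what is needed to reconcile these two competing scales — the former makes the symbolic bounds on $(R_\delta T)_N$ robust enough for invertibility, and the latter gives just enough room to fit all Neumann terms inside the regime of Lemma \ref{lem:iteratedStability}.
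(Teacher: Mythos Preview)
Your invertibility argument (the two-case Neumann series with the factorization $-(R_\delta T)^N(I-(R_\delta T)^{-N})$ in the expansive case) matches the paper. The gap is in the microsupport statement.

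You propose to track microsupport through the truncated Neumann series $\sum_{k\le M}(R_\delta T)^{Nk}g$ with $M\sim h^{-\gamma_1}\log h^{-1}$, invoking Lemma~\ref{lem:iteratedStability} to say that $\beta^{Nk}$ keeps $\MS(g)$ in the band. But Lemma~\ref{lem:iteratedStability} is a purely classical statement about $\beta$; the assertion that ``the microsupport of $(R_\delta T)^{Nk}g$ is contained in an $O(h^\e)$-neighborhood of $\beta^{Nk}(\MS(g))$'' for $k$ up to $h^{-\gamma_1}\log h^{-1}$ is a long-time Egorov statement well beyond the Ehrenfest time, and you have not supplied it. The FIO calculus in $S_\e$ produces remainders of order $h^{1-2\e}$ per composition, so after $M$ iterations one only controls errors of size $h^{1-2\e-\gamma_1}$ --- small, but not $O(h^\infty)$, so one cannot simply read off $h^\e$-scale microsupport from the classical picture.

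The paper avoids iterating the FIO altogether. It runs a commutator scheme: take nested cutoffs $X_k=\oph(\chi_k)$ with $\chi_k=\chi_k(|\xi'|_g)$ supported in the target band, and successively solve $(I-(R_\delta T)^N)u_k=-g_{k-1}$ where $g_{k-1}=[X_{k-1},(R_\delta T)^N]X_\infty u_{k-1}$. The key estimate is $[X_k,T]=T(h^\e A+h^{1-2\e}B)$ with $A,B\in\Ph{}{\e}$: Egorov for the \emph{fixed} iterate $T$ gives $T^{-1}X_kT=\oph(\chi_k\circ\beta)+O_{\Ph{}{\e}}(h^{1-2\e})$, and since $\chi_k$ depends only on $|\xi'|_g$ while $|\xi'(\beta(q))|_g-|\xi'(q)|_g=O(h^\e)$ on the band (Lemma~\ref{lem:dynStrictlyConvex}), one has $\chi_k\circ\beta-\chi_k\in h^\e S_\e$. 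This yields $\|u_k\|\le C^k\beta^{-2k}(h^{k\e}+h^{k(1-2\e)})\|g\|$, summable exactly when $\gamma_1<\min(\e/2,\,1/2-\e)$. The localized solution $\tilde u\sim\sum_k X_ku_k$ then has the required microsupport by construction, since each $X_k$ is supported in the band. Both hypotheses on $\gamma_1$ enter through this commutator gain, not through counting Neumann iterations.
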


\begin{proof}
In the case that $\beta_2>h^{\gamma_1}$, we write 
$$(I-(R_\delta T)^N)=-(R_\delta T)^N(I-(R_\delta T)^{-N})$$
microlocally on $\mc{H}$ and invert by Neumann series to see that for any $g$, $(I-(RT)^N)u=g$ has a unique solution modulo $h^\infty$ with $\|u\|\leq \beta^{-1}\|g\|$. On the other hand, if $\beta_1>h^{\gamma_1}$, $\|(R_\delta T)^N\|\leq 1-\beta_1$, and we have that for any $g$, $(I-(R_\delta T)^N)u=g$ has a unique solution with $\|u\|\leq \beta_1^{-1}\|g\|.$

We consider the case of $\beta_1>h^{\gamma_1}$, the case of $\beta_2<h^{\gamma_1}$ being similar with $(R_\delta T)^N$ replace by $(R_\delta T)^{-N}$. Suppose
$(I-(R_\delta T)^N)u_1=g.$ Then, $\|u_1\|\leq \beta ^{-1} \|g\|.$ 

For $k\geq1$, let $\chi_k=\chi_k(|\xi'|_g)$ with $\chi_{k+1}\equiv 1$ on $\supp \chi_k$ and $\chi_1\equiv 1$ on $\MS(g)$ so that 
$$\supp \chi_k\subset \{1-(C+\delta)h^\e\leq |\xi'|_g\leq 1-(c-\delta)h^\e\}.$$ 
Let $X_k=\oph(\chi_k)$. Finally, let $\chi_\infty\in S_\e$ with $\chi_\infty\equiv 1$ on $\bigcup\limits_k\supp \chi_k$ and 
$$ \supp \chi_\infty \subset\{1-(C+2\delta)h^\e\leq |\xi'|_g\leq 1-(c-2\delta)h^\e\}. $$
 Then, 
$$(I-(R_\delta T)^N)X_1u_1=g+\O{}(h^\infty)g+[X_1,(R_\delta T)^N]X_\infty u_1=:g+g_1.$$
 Then by Lemma \ref{lem:iteratedStability} together with the fact that $\chi_1$ depends only on $|\xi'|_g$,
$$[X_1,T]=T(T^{-1}X_1T-X_1)=T(h^{\e}A+h^{1-2\e}B)$$
with $A, B\in \Ph{}{\e}$. 
In fact, 
\begin{equation}
\label{eqn:commuteT} 
T^{-1}X_1T=\oph(\chi_1(\beta(q))+\O{\Ph{}{\delta}}(h^{1-2\e})
\end{equation}
and 
\begin{multline*} 
\chi_1(\beta(q))-\chi_1(q)\\
=\int_0^1\chi_1'((1-t)|\xi'(q)|_g+t|\xi'|_g(\beta(q)))(|\xi'(\beta(q))|_g-|\xi'(q)|_g)dt\in h^{\e}S_\e.
\end{multline*} 
Hence, since $X_\infty u$ is microlocalized $h^\e$ close to glancing, 
$$\MS([X_1,(R_\delta T)^N]X_\infty u_1) \subset \{\chi_2\equiv 1\}$$
and 
$g_1:=[X_1,(R_\delta T)^N]X_\infty u_1$ has 
$$\|g_1\|\leq C(h^{\e}+h^{1-2\e})\beta^{-1}\|g\|_{L^2}.$$
 Then, there exists $u_2$ such that 
 \begin{gather*} (I-(R_\delta T)^N)u_2=-g_1\\
 \|u_2\|\leq \beta^{-1}\|g\|_1\leq C(h^{\e}+h^{1-2\e})\beta^{-2}\|g\|
 \end{gather*}
 So,
$$(I-(R_\delta T)^N)(X_1u+u_2)=g+\O{}(h^\infty)g.$$
Continuing in this way, let 
$$(I-(R_\delta T)^N)u_k=-g_{k-1}\,,\quad g_{k-1}=[X_{k-1},(RT)^N]X_\infty u_{k-1}.$$
Then, 
$$\|u_k\|\leq  \beta^{-2k}(h^{k\e}+h^{k(1-2\e)})\|g\|_{L^2}.$$
Moreover, letting $\tilde{u}\sim \sum_k X_ku_k$, we have $X_\infty \tilde{u}=\tilde{u}+\O{}(h^\infty)\tilde{u}$ and  
$$(I-(R_\delta T)^N)\tilde{u}=g+\O{}(h^\infty)g$$
which implies $\tilde{u}-u=\O{}(h^\infty).$
\end{proof}

Now, assume that $\psi$ solves \eqref{eqn:boundaryPrelim}. Then
$$(I+GV)X_1 \psi =-[X_1,GV]\psi=:f$$ and by Lemmas \ref{lem:iteratedStability} and \ref{lem:decompose}
\m \MS(f)\subset \mc{H}\cap\{|\xi'|_g\geq 1-\tfrac{3}{2}Ch^{\e}\}.\,\,\m
Hence, using \eqref{eqn:nonGlance} and Lemmas \ref{lem:iteratedStability}, and \ref{lem:parametrixMS}, provided that $\beta>h^{\gamma_1}$ for some $\gamma_1<\min(\e/2,1/2-\e)$, 
\begin{equation}
\label{eqn:nonGlanceWithBoundaryEqn}
X_2\varphi =\O{}(h^\infty)\psi.
\end{equation}

We now examine when $\beta\ll h^{\gamma_1}$. For this to occur,
$$\liminf_{h\to 0} \frac{\inf \left||\tilde\sigma((R_{\delta}T)_N)(q)| -1\right|}{h^{\gamma_1}}=0.\,\,\m 
So, let 
$$|\tilde{\sigma}(R_\delta T)_N(q)|=e^{e(q)}.$$
Taking logs and renormalizing we have
$$\frac{2\Im z}{h}Nl_N(q)-\frac{2\Im z}{h}Nl_N(q) +\log |\tilde{\sigma}((R_{\delta}T)_N)(q)|=e(q).$$
So,
\begin{align*}
-\frac{\Im z}{h}&=-l_N^{-1}(q)\left[\frac{\Im z}{h}l_N(q)+\frac{1}{2N}\log |\tilde{\sigma}((R_{\delta}T)_N)(q)|+e(q)\right]\\
&=-l_N^{-1}(q)(r^\delta_N(q)+e(q)).
\end{align*}
where $r_N^{\delta}$ as in \eqref{eqn:defineAverageReflection}.
Thus, if $X_2\varphi\neq \O{L^2}(h^\infty)$, for any $c>0$,
\begin{equation}
\nonumber
\inf_{\mc{H}}-l_N^{-1}(r_N^\delta+ch^{\gamma_1})\leq -\frac{\Im z}{h}\leq \sup_{\mc{H}}-l_N^{-1}(r_N^\delta-ch^{\gamma_1}).
\end{equation}

Now, writing 
\begin{multline*}R_{\delta}T=\left[R_{\delta}\exp\left(-\frac{\Im z}{h}\oph (l(q),\beta(q))\right)\right]\\\left[\exp\left(\frac{\Im z}{h}\oph (l(q),\beta(q))\right)T\right]\end{multline*}
and applying Lemma \ref{lem:EgorovSheaf} shows that
\begin{multline*}\tilde{\sigma}((R_{\delta}T)_N)(q)
=\exp\left(-\frac{2\Im z}{h}\sum\limits_{n=0}^{N-1}l(\beta^n(q),\beta^{n+1}(q))\right)\\\prod\limits_{i=1}^N\left(|\tilde{\sigma}(R_{\delta})(\beta^{i}(q))|^2+\O{}(h^{I_{R_\delta}(\beta^i(q))+1-2\e})\right).\end{multline*}
Since we have assumed $z\in \Lambda_{\log}$, this implies that if $\beta^i(q)\notin \WFh(V)$ for some $0<i\leq N$ then $r_N^{\delta}(q)\leq -M\log h^{-1}$ for all $M$. Hence,
\begin{equation*}
\inf_{\mc{H}\cap \beta_{-N}(\WFh V)}-l_N^{-1}(r^\delta_N+ch^{\gamma_1})\leq -\frac{\Im z}{h}\leq \sup_{\mc{H}}-l_N^{-1}(r^\delta_N-ch^{\gamma_1}).
\end{equation*}

 Now, suppose that $X_2\varphi=\O{L^2}(h^\infty).$ We have by Theorem \ref{thm:optimal} that 
 $$\|G\|_{L^2\to L^2}\leq Ch^{2/3}\log h^{-1}e^{C\frac{(\Im z)_-}{h}}.$$
 \begin{remark} We can also use Theorem \ref{thm:GEstImproved} to remove the $\log h^{-1}$ from the above expression.
 \end{remark} 
 Therefore, 
\m X_3(I +GV)X_1 \psi =X_3\psi +\O{}(h^\infty)\psi=\O{}(h^\infty)\psi.\,\m
We summarize the discussion above in the following lemma.
\begin{lemma}
\label{lem:dynamicalRestriction}
Let $0<\e<1/2$. If for some $\gamma_1<\min(\e/2,1/2-\e)$, and $c>0$
\begin{equation*}
-\frac{\Im z}{h}<\inf_{\mc{H}\cap\beta_{-N}(\WFh(V))}-l_N^{-1}(r_N+ch^{\gamma_1})
\text{  or  }
-\frac{\Im z}{h}>\sup_{\mc{H}}-l_N^{-1}(r_N-ch^{\gamma_1})),
\end{equation*}
where $l_N$ and $r_N$ are as in \eqref{def:averageLength} and \eqref{eqn:defineAverageReflection} respectively, then
\begin{equation*}
\MS (\psi)\subset \{|\xi'|_g\geq 1- ch^\e\}.
\end{equation*}
\end{lemma}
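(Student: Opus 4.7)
The proof is essentially a clean packaging of the discussion immediately preceding the statement of the lemma; the plan is to make precise the two alternatives and convert them into a single microlocal conclusion on $\psi$.

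First, I would start from the identity \eqref{eqn:nonGlanceWithBoundaryEqn} and its derivation: writing $\varphi=G_\Delta^{1/2}VX_1\psi$, $T=G_\Delta^{-1/2}G_BG_\Delta^{-1/2}$, and commuting $X_1$ through $(I+GV)$, one obtains
\[
(I-(R_\delta T)^N)\varphi=\O{L^2}(h^\infty)\psi-\sum_{m=0}^{N-1}(R_\delta T)^mR_\delta G_\Delta^{-1/2}[X_1,GV]\psi,
\]
where $[X_1,GV]\psi$ is microsupported in the thin shell $\{1-\tfrac{3}{2}Ch^\e\leq|\xi'|_g\leq 1-Ch^\e\}$ inside $\mc{H}$ (by Lemmas~\ref{lem:iteratedStability} and \ref{lem:decompose}). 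The key point is that if the invertibility parameter $\beta=\max(\beta_1,\beta_2)$ from Lemma~\ref{lem:parametrixMS} satisfies $\beta>h^{\gamma_1}$, then Lemma~\ref{lem:parametrixMS} provides a microlocal parametrix $A$ for $I-(R_\delta T)^N$ on $\mc{H}$ with $\|A\|_{L^2\to L^2}\le 2\beta^{-1}=\O{}(h^{-\gamma_1})$ that preserves microsupport up to an arbitrarily small shrinking. Applying $A$ gives $X_2\varphi=\O{L^2}(h^\infty)\psi$, and since $G_\Delta^{1/2}V$ is elliptic on the slightly enlarged set where $X_2\equiv 1$ (with a microlocal inverse of polynomial size in $h$), I can peel off $G_\Delta^{1/2}V$ to obtain $X_2\psi=\O{L^2}(h^\infty)\psi$, i.e.\ $\MS(\psi)\subset\{|\xi'|_g\geq 1-ch^\e\}$ for the appropriate $c$.

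Next, I would verify that each of the two hypotheses of the lemma forces $\beta>h^{\gamma_1}$. Using Lemma~\ref{lem:EgorovSheaf} together with the symbol computation of $T$ from Section~\ref{sec:decomposed2}, the compressed shymbol of $(R_\delta T)_N=((R_\delta T)^*)^N(R_\delta T)^N$ is
\[
\tilde\sigma((R_\delta T)_N)(q)=\exp\!\left(-\tfrac{2\Im z}{h}\,Nl_N(q)\right)\prod_{i=1}^N\left(|\tilde\sigma(R_\delta)(\beta^i(q))|^2+\O{}(h^{I_{R_\delta}(\beta^i(q))+1-2\e})\right),
\]
whose logarithm divided by $2N$ equals precisely $-\tfrac{\Im z}{h}\,l_N-(-l_N^{-1}r_N)\cdot l_N$, i.e.\ $r_N(q)$ shifted by $-\tfrac{\Im z}{h}l_N(q)$. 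Consequently $|\tilde\sigma((R_\delta T)_N)(q)|-1$ is, to leading order, $\exp(2N(-\tfrac{\Im z}{h}l_N(q)+r_N(q)))-1$, and a lower bound of size $h^{\gamma_1}$ on $\beta_1$ or $\beta_2$ is equivalent to $-\tfrac{\Im z}{h}+l_N^{-1}r_N$ being either uniformly $<-ch^{\gamma_1}$ on $\mc{H}$ (to force $\beta_1>h^{\gamma_1}$ via the upper bound on $|\tilde\sigma|$) or $>ch^{\gamma_1}$ on the subset $\mc{H}\cap\beta_{-N}(\WFh(V))$ (to force $\beta_2>h^{\gamma_1}$). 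On the complement of $\beta_{-N}(\WFh(V))$ the argument \eqref{eqn:leaveWF} makes the product terms $\O{}(h^\infty)$, so those points do not obstruct the lower bound. This matches exactly the two conditions in the statement of the lemma.

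The only subtlety—and the one to be careful about—is controlling the growth of the parametrix $A$: since $\|A\|=\O{}(h^{-\gamma_1})$ and the shell containing $[X_1,GV]\psi$ is of $h^\e$-width, the restriction $\gamma_1<\min(\e/2,1/2-\e)$ ensures both that Lemma~\ref{lem:parametrixMS} applies (the commutator $[X_1,(R_\delta T)^N]$ in its proof contributes $\O{}(h^\e+h^{1-2\e})$ which must beat $\beta^{-1}$) and that the iteration closes with a genuine $\O{L^2}(h^\infty)\psi$ remainder. Modulo this bookkeeping, the lemma then follows by taking the contrapositive of what was shown: whenever the dynamical bound fails in either direction, $X_2\psi=\O{L^2}(h^\infty)\psi$ and hence $\MS(\psi)\subset\{|\xi'|_g\geq 1-ch^\e\}$.
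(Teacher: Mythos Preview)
Your overall strategy is exactly the paper's: reduce to \eqref{eqn:nonGlance}, invoke Lemma~\ref{lem:parametrixMS} once $\beta>h^{\gamma_1}$, and then translate the two hypotheses of the lemma into $\beta_1>h^{\gamma_1}$ or $\beta_2>h^{\gamma_1}$. However, the step where you pass from $X_2\varphi=\O{L^2}(h^\infty)\psi$ to $X_2\psi=\O{L^2}(h^\infty)\psi$ by ``peeling off $G_\Delta^{1/2}V$'' has a genuine gap. The lemma is stated (and used in Theorem~\ref{thm:resFree}) with no ellipticity hypothesis on $V$; indeed the discussion just before the lemma explicitly treats points $q$ with $\beta^i(q)\notin\WFh(V)$. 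At such points $\sigma(V)$ vanishes, $G_\Delta^{1/2}V$ has no microlocal inverse, and knowing $G_\Delta^{1/2}VX_1\psi$ is $\O{}(h^\infty)$ near $q$ tells you nothing about $X_1\psi$ there. The paper avoids this by going back to the original equation: it writes $X_3(I+GV)X_1\psi=X_3\psi+X_3GVX_1\psi+\O{}(h^\infty)\psi$ and observes that $GVX_1\psi=G_\Delta^{1/2}(I+T)\varphi+G_gVX_1\psi$ factors through $\varphi$ (no inversion of $V$ is needed). Since $G_\Delta^{1/2}$ is pseudodifferential and $T$ is an FIO associated to $\beta$, on $\supp X_3$ the expression $G_\Delta^{1/2}(I+T)\varphi$ only sees $\varphi$ where $X_2\equiv 1$, hence is $\O{}(h^\infty)\psi$; together with $X_3f=\O{}(h^\infty)\psi$ this yields $X_3\psi=\O{}(h^\infty)\psi$.

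A smaller point: your pairing of the two dynamical conditions with $\beta_1$ and $\beta_2$ is swapped. On the complement of $\beta_{-N}(\WFh(V))$ the product $\tilde\sigma((R_\delta T)_N)$ is $\O{}(h^\infty)$, which \emph{helps} the upper bound (so these points can be dropped from the first condition, giving the infimum over $\mc{H}\cap\beta_{-N}(\WFh(V))$ that forces $\beta_1>h^{\gamma_1}$) but \emph{obstructs} the lower bound needed for $\beta_2$; accordingly the second condition must hold on all of $\mc{H}$, not just the subset. This matches the lemma's asymmetry between the two alternatives.
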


\subsection{Elliptic Region}
\label{sec:elliptic}
Next, we show that solutions to \eqref{eqn:boundaryPrelim} cannot concentrate in the elliptic region $\mc{E}:=\{|\xi'|_g\geq 1+ch^\e\}$ for some $\e>0$.

Fix $\e<1/2$. Let $\chi_1\in S_{\e}$ have $\chi_1 \equiv 1 $ on $|\xi'|_g\geq 1+2Ch^{\e}$ and $\supp \chi_1\subset |\xi'|_g\geq 1+Ch^{\e}$. Also, let $\chi_2\in S_\e$ have $\supp \chi_2\subset |\xi'|_g\geq 1+3Ch^{\e}$ and $ \chi_2\equiv 1$ on $|\xi'|_g\geq 1+4Ch^{\e}$. Let $X_1=\oph(\chi_1)$ and $X_2=\oph(\chi_2).$

Let $\psi$ solve \eqref{eqn:boundaryPrelim}. Then, we have 
\m X_2(I+GV)X_1\psi= -X_2(I+GV)(1-X_1)\psi.\,\,\m 
Now, by Lemma \ref{lem:decompose}
\m GV(1-X_1)=(G_B +G_{\Delta}+G_g)V(1-X_1)+\O{L^2\to L^2}(h^\infty).\,\,\m
But, $X_2(G_B+G_{\Delta}+G_g)V(1-X_1)=\O{L^2\to L^2}(h^\infty)$ since 
\begin{gather*} 
{\MS}'(G_g)\composed \supp(1-\chi_1)\subset \{||\xi'|_g-1|\leq ch^\e\},\\
{\MS(G_B)}'\composed \supp(1-\chi_1)\subset \{|\xi'|_g<1\}.
\end{gather*}
By similar arguments
\m X_2 GVX_1=X_2G_\Delta VX_1+\O{L^2\to L^2}(h^\infty).\,\,\m
Thus, 
\m X_2(I+G_{\Delta}V)X_1\psi =O(h^\infty)\psi.\,\, \m 
Since $\sigma(X_2G_\Delta)=\O{S^{-1}_\e}(h^{1-\frac{\e}{2}})$ and $V\in h^{-2/3}\Psi(\partial\Omega)$, $|\sigma(I+G_{\Delta}V)|>c>0$, this implies
\m \MS (\psi)\cap \{|\xi'|_g\geq 1+2h^{\e}\}=\emptyset.\,\,\m

We also need an elliptic estimate. Let $\supp \chi_3\subset \{|\xi'|_g\geq E+c\}$ and $X_3=\oph(\chi_3)$. Using the fact  
\m \oph(q):=C(1-X_3)+X_2(I+G_{\Delta}V)X_1\,\,\m 
is elliptic and has 
\m \oph(q)\psi=C(1-X_3)\psi +\O{}(h^\infty)\psi,\,\,\m
we have
\m
\|X_1\psi \|_{L^2}\leq C\|(1-X_3)\psi\|_{L^2}+\O{}(h^\infty).\m

Summarizing,
\begin{lemma}
\label{lem:onlyGlancing}
Fix $1/2>\e>0$. Suppose that $|\Im z|\leq Ch\log h^{-1}$ and $\psi=u|_{\partial\Omega}$ where $u$ solves \eqref{eqn:main}. Then
\m \MS(\psi)\cap \{|\xi'|_g\geq 1+h^\e\}=\emptyset.\,\,\m
Moreover, for $\chi\in S$ with $\supp \chi \subset\{ |\xi'|_g\geq 1+c\}$, 
\begin{equation}
\label{eqn:ellpticEstimate}
\|X_1\psi \|_{L^2}\leq C\|(1-\oph(\chi))\psi\|_{L^2}+\O{}(h^\infty).\end{equation}
If, in addition, the hypotheses of Lemma \ref{lem:dynamicalRestriction} hold, then
\m \MS(\psi)\subset \{(x',\xi ')\in T^*\partial\Omega:||\xi '|_g-1|\leq ch^{\e}\}.\m
\end{lemma}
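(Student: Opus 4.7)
My plan is to localize the boundary equation $(I+GV)\psi=0$ to the elliptic region and reduce it to an elliptic pseudodifferential equation by discarding the Fourier integral and near-glancing components of $G$. Fix $\e<1/2$ and choose nested cutoffs $\chi_1,\chi_2\in S_\e$ supported in $\{|\xi'|_g\geq 1+Ch^\e\}$ with $\chi_1\equiv 1$ on $\{|\xi'|_g\geq 1+2Ch^\e\}$, and $\chi_2$ supported slightly further into the elliptic region with $\chi_2\equiv 1$ on $\supp\chi_1\cap\{|\xi'|_g\geq 1+4Ch^\e\}$. Applying $X_2:=\oph(\chi_2)$ and $X_1:=\oph(\chi_1)$ to the boundary equation gives
\[
X_2(I+GV)X_1\psi = -X_2(I+GV)(1-X_1)\psi + \O{}(h^\infty)\psi.
\]

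The first key step is to use Lemma \ref{lem:decompose} to write $G=G_\Delta+G_B+G_g+\O{L^2\to C^\infty}(h^\infty)$. Because the wavefront relations of $G_B$ and $G_g$ are disjoint from $\supp\chi_2\times\supp(1-\chi_1)$ and from $\supp\chi_2\times\supp\chi_1$ (the former maps to $|\xi'|_g<1$ under the billiard relation, and the latter is supported near glancing), the right-hand side reduces modulo $\O{L^2\to L^2}(h^\infty)$ to an equation involving only $G_\Delta$, leaving
\[
X_2(I+G_\Delta V)X_1\psi = \O{L^2}(h^\infty)\psi.
\]

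Now I would exploit ellipticity. Since $V\in h^{-2/3}\Psi(\partial\Omega)$ and $\sigma(G_\Delta)=\frac{ih}{2\sqrt{1-|\xi'|_g^2}}$, on $\supp\chi_2$ one has $|\sigma(G_\Delta)|\leq Ch^{1-\e/2}$, hence $|\sigma(G_\Delta V)|\leq Ch^{1/3-\e/2}\ll 1$ for $\e$ small, so $I+G_\Delta V$ is elliptic there. Picking an auxiliary cutoff $\chi_3$ with $\supp\chi_3\subset\{|\xi'|_g\geq 1+c\}$, the operator $\oph(q):=C(1-X_3)+X_2(I+G_\Delta V)X_1$ is globally elliptic; standard elliptic parametrix construction (Lemma \ref{lem:microlocalElliptic}) then yields both the microlocal statement $\MS(\psi)\cap\{|\xi'|_g\geq 1+h^\e\}=\emptyset$ and the quantitative estimate \eqref{eqn:ellpticEstimate}.

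The final assertion combines this with Lemma \ref{lem:dynamicalRestriction}: under its hypotheses, the hyperbolic region contributes no wavefront set, and the elliptic conclusion just proved removes the elliptic region, leaving $\MS(\psi)$ trapped in the $h^\e$-collar $\{|\,|\xi'|_g-1\,|\leq ch^\e\}$ around glancing. The only subtlety, and what I expect to be the main bookkeeping obstacle, is making sure the exotic symbol calculus with $\delta=\e$ gives enough room so that compositions like $X_2 G_B V(1-X_1)$ really are $\O{L^2\to L^2}(h^\infty)$ rather than merely $\O{}(h^{N\e})$ for finite $N$; this requires using the iterative stability of microsupports (Lemma \ref{lem:iteratedStability}) and choosing the bump separations $Ch^\e,2Ch^\e,3Ch^\e,4Ch^\e$ with genuine $h$-independent gaps in $C$, so that the wavefront separation survives in the $h^\delta$-neighborhood sense of Definition \ref{d-microlocal-vanishing}.
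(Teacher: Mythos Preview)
Your proposal is correct and follows essentially the same approach as the paper: you localize the boundary equation with nested $S_\e$ cutoffs, use the microsupport statements of Lemma~\ref{lem:decompose} to discard $G_B$ and $G_g$ in the elliptic region, and then invoke ellipticity of $I+G_\Delta V$ together with a standard parametrix to conclude. One small remark: you write that $|\sigma(G_\Delta V)|\leq Ch^{1/3-\e/2}\ll 1$ ``for $\e$ small,'' but in fact $1/3-\e/2>0$ for every $\e<1/2$, so no extra smallness on $\e$ is needed. Also, Lemma~\ref{lem:iteratedStability} is not actually used here (it is needed in the hyperbolic region); in the elliptic region the disjointness of microsupports follows directly from the relations in Lemma~\ref{lem:decompose}, and since the gaps $Ch^\e,2Ch^\e,\dots$ differ by $h$-independent multiples of $h^\e$ while all operators live in the $S_\e$ calculus, the vanishing is genuinely $\O{}(h^\infty)$ by Definition~\ref{d-microlocal-vanishing}.
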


\subsection{Glancing Points}
\label{sec:glancingPoint}

Now, we consider $I+GV$ microlocally near a glancing point.  We use the estimate from Lemma \ref{lem:nearGlanceEst}.

Fix $\e<1/2$. Let $\chi\in S_{\e}$ have $\chi\equiv 1$ on $\{|1-|\xi'|_g|\leq h^\e\}$ and $\supp \chi\subset \{|1-|\xi'|_g|\leq 2h^\e\}$ with $X=\oph(\chi)$. Then suppose that $\psi$ solves \eqref{eqn:boundaryPrelim} and the hypotheses of Lemma \ref{lem:dynamicalRestriction} hold. Then by Lemma \ref{lem:onlyGlancing} and \eqref{eqn:ellpticEstimate},
$X\psi =\psi +\O{}(h^\infty)\psi.$
Therefore, 
\m (I+GV)X \psi =\O{}(h^\infty)\psi.\m
Now, by Lemma \ref{lem:nearGlanceEst},
\begin{align*} 
\|GVX\psi\|_{L^2}&\leq C_{\Omega}h^{2/3}\|VX\psi\|_{L^2}\\
&\leq C_{\Omega}h^{2/3}\left(\sup_{1-Ch^\e\leq |\xi'|_g\leq 1+Ch^\e} |\sigma(V)|+\O{}(h^{1-\alpha-2\e})\right)\|X\psi\|_{L^2}.
\end{align*}
So, provided that $|\sigma(V)|\leq \frac{1}{2C_{\Omega}}h^{-2/3}$, 
$X\psi =\O{L^2}(h^\infty)$
and hence $\psi=\O{L^2}(h^\infty)$, a contradiction. 

\subsection{Sketch of an Alternate Proof Near Glancing}
\label{sec:altProof}
For $V \in h^{-\alpha}\Psi(\partial\Omega)$ with $\alpha<2/3$ one can give an alternate proof avoiding the use of the Melrose--Taylor parametrix and instead use the estimates from Theorem \ref{thm:optimal} on $G$ and that, by Lemma \ref{lem:decompose}, $G_g$ is microlocalized near the diagonal. In particular, note that for $\psi$ microlocalized to a $\delta$ neighborhood of glancing, and $\chi \in \Cc(\partial\Omega)$ with $\chi_1 \equiv 1$ near $x_0$, $\supp \chi_1\subset B(x_0,\delta),$ and $\chi\in \Cc(\partial\Omega)$ with $\chi \equiv 1 $ on $\supp\chi_1$, $\chi_1(I+GV)\chi \psi =\O{}(h^\infty)\psi$ and hence $\chi_1\psi=\o{}(1)\psi$ by Theorem \ref{thm:optimal} together with the fact that $\supp \chi \subset B(x_0,\delta).$

The improvement given by the Melrose-Taylor parametrix comes from the fact that the microlocal model for $G$ gives estimates $\|G\|\leq Ch^{-2/3}$ in $h^\e$ neighborhoods of the diagonal, while those in Theorem \ref{thm:optimal} are of the form $\|G\|\leq Ch^{-2/3}\log h^{-1}$. We also expect that a more detailed analysis of the microlocal model for $G$ near glancing will allow the analysis of potentials $V\in h^{-\alpha}\Psi(\partial\Omega)$ for $\alpha>2/3$.

\subsection{Proof of Corollary \ref{cor:resFree}}
We deduce Corollary \ref{cor:resFree} from Theorem \ref{thm:resFree}.
\begin{proof}
Suppose $V\in h^{-\alpha}\Psi(\partial\Omega)$ and fix $\e>0$. Let 
\m \mc{H}_{\delta_1}:=\{|\xi'|_g< 1-\delta_1\}\cap \beta_{-N}(\WFh V).\,\,\m
First observe that if for some $0<i\leq N$, $\beta^i(q)\notin \WFh(V)$, then for all $M>0$, there exists $h_0$ such that for $0<h<h_0$, $r_N\leq -M\log h^{-1}$. Together with the fact that we assume $\Im z=\O{}(h\log h^{-1})$, this shows that the infimum in
 $$-\frac{\Im z}{h}\geq \inf_{|\xi'|_g<1-\delta_1}-l_N^{-1}(q)r_N(q)$$
excludes trajectories leaving $\beta_{-N}(\WFh V)$ (see also \eqref{eqn:resFree}). Hence, we may reduce to taking an infimum over $\mc{H}_{\delta_1}.$
Observe that 
\begin{align*}
\inf_{\mc{H}_{\delta_1}}-\frac{r_N}{l_N}
&\geq \inf_{\mc{H}_{\delta_1}}l_N^{-1}\left(\log h^{\alpha -1}+\inf_{\mc{H}_{\delta_1}}-(r_N+\log h^{\alpha-1})\right)\\
&\geq \inf_{\mc{H}_{\delta_1}}l_N^{-1}(1-\alpha)\log h^{-1} -C\geq \inf_{\mc{H}_0}l_N^{-1}(1-\alpha)\log h^{-1} -C
\end{align*}
\noindent since $h^{2\alpha}|\tilde\sigma(h^{-1}R_{\delta})|^2\leq C$ on $|\xi'|_g\leq 1-\delta_1$. 

Now, fix $N_1>0$ such that 
\m
\sup_{N>0}\inf_{\mc{H}_0}l_N^{-1}-\tfrac{\e}{2}\leq \inf_{\mc{H}_0}l_{N_1}^{-1}.
\,\,\m
Then, apply Theorem \ref{thm:resFree} and observe that for $h$ small enough, we can absorb $C$ into the first term increasing the factor of $\log h^{-1}$ by at most $\e/2$.
\end{proof}

\section{Resonance free regions -- delta prime potential}
\label{sec:resFreePrime}
Let $V\in h^\alpha \Ph{}{}(\partial\Omega)$, $\alpha>5/6$ be a positive definite self-adjoint operator that has $\sigma(V)\geq Ch^\alpha>0$. In this case, $V$ is invertible for $h$ small enough and $-\Deltap$ can be defined as in Section \ref{sec:defDeltap}. Next, let $z=1+i\omega_0$ with
$$\omega_0\in[-Mh\log h^{-1}, Ch].$$

Recall also that $z/h$ ($z\neq 0$ or $d\neq 1$) is a resonance if and only if there is a nontrivial solution $\psi$ to 
\begin{equation}
\label{eqn:boundaryPrelimPrime}
(I-\dDl V)\psi=0.
\end{equation}
\subsection{Hyperbolic Region: Appearance of the Dynamics}
\label{sec:appDynamicsPrime}
Recall from Lemma \ref{lem:decompose} that 
$$\dDl(z)=\dDl_\Delta+\dDl_B+\dDl_g+\O{L^2\to C^\infty}(h^\infty).$$ 

Let $0<\e<1/2$. Then, suppose that $\chi \in S_\e$ has $\supp \chi \subset \{|\xi'|\leq 1-2h^\e\}$ and let $X=\oph(\chi)$. Finally, suppose that
\m (I-\dDl V)X\psi=f\,\,\m 
and let $\dDl _{\Delta}^{-1/2}$ be a microlocal inverse for $\dDl _{\Delta}^{1/2}$ on 
$$\mc{H}:=\{|\xi'|_g\leq 1-r_{\mc{H}}h^{\e}\}.$$
 Then, following the same process used in section \ref{sec:resFree} for $-\Deltad{\pO}$, we have,
writing $\varphi=\dDl _{\Delta}^{1/2}VX \psi$ and $T=\dDl _{\Delta}^{-1/2}\dDl _B\dDl _{\Delta}^{-1/2}$,
$$(I-\dDl _{\Delta}^{1/2}V\dDl _{\Delta}^{1/2})\varphi=\dDl _\Delta^{1/2}V\dDl _{\Delta}^{1/2}T\varphi+\O{}(h^\infty)\psi+\dDl _{\Delta}^{1/2}Vf.$$

\begin{lemma}
\label{lem:inversePseudo}
The operator $I-\dDl_{\Delta}^{1/2}V\dDl_{\Delta}^{1/2}$ has a microlocal inverse, 
$$(I-\dDl_{\Delta}^{1/2}V\dDl_{\Delta}^{1/2})^{-1}\in \min(h^{1-\alpha-\e/2},1)\Ph{}{\e}(\pO)$$
on $\mc{H}$. 
\end{lemma}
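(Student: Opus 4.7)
The plan is to produce the microlocal inverse by the standard symbolic parametrix construction in the exotic class $\Ph{}{\e}$, exploiting the fact that self-adjointness of $V$ forces the principal symbol of $\dDl_{\Delta}^{1/2}V\dDl_{\Delta}^{1/2}$ to be purely imaginary, which in turn guarantees the scalar ellipticity of $I-\dDl_{\Delta}^{1/2}V\dDl_{\Delta}^{1/2}$.

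First I would compute the principal symbol on $\mc{H}$. From Lemma~\ref{lem:decomposePrime},
$$\sigma(\dDl_{\Delta}) \;=\; \tfrac{i}{2}h^{-1}\sqrt{1-|\xi'|_g^2} \quad \text{modulo }h^{\e}S_{\e}^{0},$$
so after taking the square root in the elliptic region $|\xi'|_g \leq 1-r_{\mc{H}}h^{\e}$ and using that $\sigma(V)$ is real (as $V=V^{*}$),
$$\sigma\!\bigl(\dDl_{\Delta}^{1/2}V\dDl_{\Delta}^{1/2}\bigr)(q) \;=\; i\,a(q;h), \qquad a(q;h) := \tfrac{\sigma(V)(q)\sqrt{1-|\xi'|_g^{2}}}{2h},$$
with $a$ real and positive. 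Then $b := 1 - ia$ satisfies $|b|^{2}=1+a^{2}\geq\max(1,a^{2})$, and in particular
$$|b|^{-1} \;\leq\; \min\!\bigl(1,a^{-1}\bigr) \;\leq\; C\,\min\!\bigl(1,h^{1-\alpha-\e/2}\bigr) \quad \text{on }\mc{H},$$
using $\sigma(V)\geq ch^{\alpha}$ and $\sqrt{1-|\xi'|_g^{2}}\geq ch^{\e/2}$ on $\mc{H}$.

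Second, I would verify that $1/b$ defines a legitimate exotic symbol. Each $(x',\xi')$-derivative of $\sqrt{1-|\xi'|_g^{2}}$ loses at most $h^{-\e/2}$ on $\mc{H}$; combined with $V\in h^{\alpha}\Ph{}{}$, this places $a$ in $h^{\alpha-1+\e/2}S_{\e/2}$ after inserting a cutoff to $\mc{H}$. The quotient identity $1/b = (1+ia)/(1+a^{2})$ and the Leibniz rule then yield, using the pointwise bound above, that $1/b \in \min(1,h^{1-\alpha-\e/2})\,S_{\e}$: derivatives hitting $1+ia$ contribute at worst the size of $a^{-1}$, while derivatives of $(1+a^{2})^{-1}$ are controlled by $(1+a^{2})^{-1}$ times powers of $h^{-\e}$. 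Hence $E_{0} := \oph(1/b) \in \min(1,h^{1-\alpha-\e/2})\,\Ph{}{\e}(\pO)$.

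Third, I would close the parametrix construction by the usual iteration. The exotic composition formula (coming from the expansion used in Lemma~\ref{lem:FIOcomp} or the standard semiclassical calculus with gain $h^{1-2\e}$) gives
$$E_{0}\bigl(I-\dDl_{\Delta}^{1/2}V\dDl_{\Delta}^{1/2}\bigr) \;=\; I + h^{1-2\e}R_{0} \quad \text{microlocally on }\mc{H},$$
with $R_{0}\in\Ph{}{\e}$ whose symbol is a bounded symbolic multiple of $1/b$, hence of the same order $\min(1,h^{1-\alpha-\e/2})$. Summing the asymptotic Neumann series $E \sim \sum_{k\geq 0}(-h^{1-2\e}R_{0})^{k}E_{0}$, which converges in $\Ph{}{\e}$ because $1-2\e>0$, produces a left microlocal inverse of the required order, and the analogous argument on the right gives a two-sided microlocal inverse modulo $\O{\Ph{-\infty}{}}(h^{\infty})$.

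The main obstacle is verifying that the symbol $1/b$ truly lies in the class $S_{\e}$ with the claimed uniform bound $\min(1,h^{1-\alpha-\e/2})$, rather than only in a coarser class $S_{\e/2}$ forced by the $h^{-\e/2}$ loss per derivative of $\sqrt{1-|\xi'|_g^{2}}$. This is handled by observing that every derivative of $1/b$ can be re-expressed so that an extra factor of $|b|^{-1}$ combines with the derivative of $a$ to produce, after cancellations, a symbol whose derivatives decay in the $S_{\e}$ sense — essentially because the singular behavior of $a$ near glancing is quantitatively matched by the growth of $|b|$, and one never needs more than $|b|^{-1}$ of prefactor. Once this bookkeeping is carried out, the remaining steps are routine applications of the exotic pseudodifferential calculus already set up in Section~\ref{sec:semiclassicalPreliminaries}.
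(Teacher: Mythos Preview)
Your approach is correct and genuinely different from the paper's. The key observation you exploit---that $\sigma(B)=ia$ is purely imaginary, so $|1-ia|^{2}=1+a^{2}\geq\max(1,a^{2})$ and hence $I-B$ is uniformly elliptic on all of $\mc{H}$---is a clean way to avoid any region splitting. The paper instead divides $\mc{H}$ at the threshold $\e_{1}=2-2\alpha$: on $|\xi'|_{g}\leq 1-Ch^{\e_{1}}$ the operator $B$ is elliptic (since $a\gtrsim 1$ there), so one factors $(I-B)^{-1}=-(I-B^{-1})^{-1}B^{-1}$ and inverts $I-B^{-1}$ directly; on the complementary strip $1-2Ch^{\e_{1}}\leq|\xi'|_{g}\leq 1-r_{\mc H}h^{\e}$, $B$ is bounded and $I-B$ is elliptic in the ordinary sense. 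Your single-step argument is more elegant; the paper's buys simpler symbol bookkeeping at the cost of two separate cases.

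That said, some of your intermediate symbol-class claims are stated incorrectly and would need repair. The assertion that each derivative of $r=\sqrt{1-|\xi'|_{g}^{2}}$ loses at most $h^{-\e/2}$ is false for higher derivatives: one has $|\partial^{k}r|\lesssim r^{1-2k}\leq h^{(\e/2)(1-2k)}$, so two derivatives lose $h^{-3\e/2}$, not $h^{-\e}$. Likewise, $a\in h^{\alpha-1+\e/2}S_{\e/2}$ cannot be right globally on $\mc H$, since away from glancing $a\sim h^{\alpha-1}\gg h^{\alpha-1+\e/2}$. The saving grace is exactly what you identify at the end: when estimating $\partial^{\gamma}(1/b)$, every occurrence of $|\partial a|$ is accompanied by a factor $|b|^{-2}\leq(1+a^{2})^{-1}$, and using $a\gtrsim h^{\alpha-1}r$ together with $|\partial^{k}r|\lesssim r^{1-2k}$ one checks pointwise that the $S_{\e}$ bounds with prefactor $\min(1,h^{1-\alpha-\e/2})$ do hold. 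Once that computation is written out carefully (it is not difficult, but does require tracking powers of $r$ rather than just $h^{\e/2}$), your argument closes.
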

\begin{proof}
Let $B:=\dDl_{\Delta}^{1/2}V\dDl_{\Delta}^{1/2}$. We show that $I-B$ is microlocally invertible. To see this, let $\e_1=2-2\alpha<\frac{1}{2}$. Then for $\tilde{\chi}$ supported on $|\xi'|_g\leq 1-Ch^{\e_1}$, $B^{-1}\tilde{\chi} \in \Ph{}{\e}$ and on $|\tilde{\chi}|>c>0$,
$|\sigma((I-B^{-1})\oph(\tilde{\chi}))|\geq c>0$. Therefore, we can write, microlocally on $|\xi'|_g\leq 1-Ch^{\e_1}$, 
$$(I-B)^{-1}=-(B^{-1}(I-B^{-1}))^{-1}=-(I-B^{-1})^{-1}B.$$
On the other hand, for $\tilde{\chi}$ supported on  $1-2Ch^{\e_1}\leq |\xi'|_g\leq 1-r_{\mc{H}}h^\e$, $B\tilde{\chi}\in \Ph{}{\e}$ and on $|\tilde{\chi}|>c>0$, 
$|\sigma((I-B)\oph(\tilde{\chi}))|>c>0$. Therefore, $(I-B)^{-1}$ exists microlocally on $1-2Ch^{\e_1}\leq |\xi'|_g\leq 1-r_{\mc{H}}h^\e.$ 

Combining these two statements, we see that $(I-\dDl_{\Delta}^{1/2}V\dDl_{\Delta}^{1/2})^{-1}$ exists microlocally on $\mc{H}$ and has the required property.
\end{proof}

\noindent Letting 
\begin{align*} R_{\delta'}&:=(I-\dDl _{\Delta}^{1/2}V\dDl _{\Delta}^{1/2})^{-1}\dDl _{\Delta}^{1/2}V\dDl _{\Delta}^{1/2}\\
&=-I+(I-\dDl _{\Delta}^{1/2}V\dDl _{\Delta}^{1/2})^{-1}\in \min(1,h^{\alpha -1+\e/2})\Ph{}{\e}(\partial\Omega)
\end{align*}
we have 
\m \varphi=R_{\delta'}T\varphi+\O{}(h^\infty)\psi-R_{\delta'}\dDl _{\Delta}^{-1/2}f.\,\,\m 
Here, $T$ is an FIO associated to the billiard map such that
$$\sigma(e^{\frac{\Im z}{h}\oph (l(q,\beta_E(q)))}T)(\beta_E(q),q)=e^{-i \pi/4} dq^{1/2}\in S$$
and $R_{\delta'}$ is as in \eqref{eqn:reflectionOperatorPrime}.

Thus by standard composition formulae for FIOs, we have for $0<N$ independent of $h$,
\begin{equation}\label{eqn:nonGlancePrime}
(I-(R_{\delta'}T)^N)\varphi=\O{}(h^\infty)\psi-\sum_{m=0}^{N-1}(R_{\delta'}T)^mR_{\delta'}\dDl_{\Delta}^{-1/2}f.\end{equation}

We also have that 
\begin{equation}
\label{eqn:appEgorovPrime}
(R_{\delta'}T)_N:=((R_{\delta'}T)^*)^N(R_{\delta'}T)^N=\oph(a_N)+\O{\Ph{-\infty}{}}(h^{\infty})
\end{equation}
where $a_N\in \min (1,h^{N(\alpha-1+\e)})S_{\e}(T^*\partial\Omega)$.

We  now analyze the case that $\psi$ solves \eqref{eqn:boundaryPrelimPrime}.  We start by showing that under a dynamical condition on $\Im z$, there is an $1/2>\e>0$ so that if $\chi_0=\chi_0(|\xi'|_g) \in S_{\e}$ with $\supp \chi_0\subset \{1-Ch^{\e}<|\xi'|_g<1-2h^{\e}\}$
\begin{equation} 
\label{eqn:nearGlanceGoal} \|\oph(\chi_0)\psi\|=\O{}(h^\infty)\psi.
\end{equation}
We then let $\chi_1=\chi_1(|\xi'|_g)\in S_{\e}$ with  $\chi_1\equiv 1$ on $\{|\xi'|_g\leq 1-2h^{\e}\}$ and $\supp \chi_1\subset \{|\xi'|_g\leq E-h^{\e}\}$ and show that there exists $\e>0$ such that both
$$\|\oph(\chi_1)\psi\|\leq (\|\oph(\chi_{0})\psi\|)+\O{}(h^\infty)\|\psi\|$$
and \eqref{eqn:nearGlanceGoal} hold.

To simplify notation, let $X_1=\oph(\chi_1).$ For $i=1,2$, let  $\chi^{(i)}_0=\chi^{(i)}_0(|\xi'|_g)\in S_{\e}$ with $\chi^{(i)}_0\equiv 1$ on $\supp \chi^{(i-1)}_0$ and $\supp \chi_0^{(i)}\subset \{1-(i+1)Ch^{\e}<|\xi'|_g<1-(2-\frac{i}{2})h^{\e}\}.$ Here, $\chi^{(0)}_0=\chi_0$. Finally, let $X^{(i)}_0=\oph(\chi^{(i)}_0).$
We have that 
$$(I-\dDl V)X^{(1)}_0\psi=[X^{(1)}_0,\dDl V]\psi.$$
So, by \eqref{eqn:nonGlancePrime}
$$
(I-(R_{\delta'}T)^N)\varphi=\O{}(h^\infty)\psi-\sum_{m=0}^{N-1}(R_{\delta'}T)^mR_{\delta'}\dDl_{\Delta}^{-1/2}[X_0,\dDl V]\psi.
$$
with $\varphi=\dDl_{\Delta}^{1/2}VX^{(1)}_0\psi.$
Moreover, since $\chi^{(2)}_0\equiv 1$ on $\supp \chi^{(1)}_0$, 
\begin{equation}
\label{eqn:reflectEqn0}
(I-(R_{\delta'}T)^NX^{(2)}_0)\varphi=\O{}(h^\infty)\psi-\sum_{m=0}^{N-1}(R_{\delta'}T)^mR_{\delta'}\dDl_{\Delta}^{-1/2}[X_0,\dDl V]\psi
\end{equation}
Now, let 
$$\mc{N}\mc{G}:=\{1-2Ch^{\e}\leq|\xi'|_g\leq 1-\frac{1}{100}h^{\e}\}.$$
Then 
\begin{multline*}\|(R_{\delta'}T)^NX^{(2)}_0 u\|^2\leq \\\sup_{\mc{NG}} \left(|\tilde{\sigma}((R_{\delta'}T)_N)(q)|^2 +\O{}(\min(1,h^{N(2\alpha-2+\e)})h^{1-2\e}\right)\|u\|^2_{L^2}.\end{multline*}
Let 
$$\beta_0:=1-\sqrt{\sup_{\mc{NG}}\sigma((R_{\delta'}T)_N)}$$

Then the proof of the following lemma is nearly identical to that of Lemma \ref{lem:parametrixMS}.  
\begin{lemma}
\label{lem:parametrixMSprime}
Suppose that $\beta_0>h^{\gamma_1}$ where $\gamma_1<\min(\e/2,1/2-\e).$ Let $c>r_{\mc{H}}$ and $g\in L^2$ have $\MS(g)\subset \{1-Ch^{\e}\leq |\xi'|_g\leq 1-ch^{\e}\}. $ Then if 
$$(I-(R_{\delta'}T)^NX^{(2)}_0)u=g,$$
for any $\delta>0$,
 $$\MS(u)\subset \{1-(C+\delta)h^{\e}\leq |\xi'|_g\leq 1-(c-\delta)h^{\e}\}.$$ In particular, there exists an operator $A$ with $\|A\|_{L^2\to L^2}\leq 2\beta_0^{-1}$, 
$$A(I-(R_\delta T)^N)=I\text{ microlocally on }\mc{NG}$$
and if $\MS(g)\subset \{1-Ch^{\e}\leq |\xi'|_g\leq 1-ch^{\e}\}$, then 
$$\MS(Ag)\subset \{1-(C+\delta)h^{\e}\leq |\xi'|_g\leq 1-(c-\delta)h^{\e}\}.$$
\end{lemma}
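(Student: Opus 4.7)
The plan is to follow the proof of Lemma \ref{lem:parametrixMS} essentially verbatim, modified only by the presence of the extra cutoff $X_0^{(2)}$ inside the Neumann iteration and by using only the upper bound $\beta_0$ (rather than $\beta=\max(\beta_1,\beta_2)$). Because $X_0^{(2)}$ is microsupported in $\mc{NG}$, the cutoff automatically forces the localization of the resolvent to this region, playing the role that was played in Lemma \ref{lem:parametrixMS} by the a priori localization to $\mc{H}$.

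First I would establish the operator norm bound. By the sharp G\r{a}rding inequality together with \eqref{eqn:appEgorovPrime} and the fact that $a_N\in\min(1,h^{N(\alpha-1+\e)})S_\e$,
\[
\|(R_{\delta'}T)^N X_0^{(2)}\|_{L^2\to L^2}^2\leq \sup_{\mc{NG}}|\tilde{\sigma}((R_{\delta'}T)_N)|+O(h^{1-2\e}).
\]
Under the hypothesis $\beta_0>h^{\gamma_1}$ with $\gamma_1<\min(\e/2,1/2-\e)$, the error is negligible compared to $\beta_0$, so $\|(R_{\delta'}T)^N X_0^{(2)}\|\leq 1-\beta_0/2$. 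A Neumann series then yields a microlocal left inverse $A=\sum_k ((R_{\delta'}T)^N X_0^{(2)})^k$ satisfying $\|A\|_{L^2\to L^2}\leq 2\beta_0^{-1}$ and $A(I-(R_{\delta'}T)^N X_0^{(2)})=I$ microlocally on $\mc{NG}$.

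Next I would establish the microsupport statement by the same iterative cutoff argument used in the proof of Lemma \ref{lem:parametrixMS}. Given $g$ with $\MS(g)\subset\{1-Ch^\e\leq|\xi'|_g\leq 1-ch^\e\}$, choose a nested family $\chi_k\in S_\e$ depending only on $|\xi'|_g$ with $\chi_{k+1}\equiv 1$ on $\supp\chi_k$ and $\supp\chi_k$ shrinking to the desired $(\delta$-enlarged) set. Writing $X_k=\oph(\chi_k)$ and $u_1=Ag$, one defines $u_{k+1}=-A g_k$ where
\[
g_k=[X_k,(R_{\delta'}T)^N X_0^{(2)}]X_\infty u_k.
\]
The key commutator estimate \eqref{eqn:commuteT}, based on Lemma \ref{lem:iteratedStability} and the fact that $\chi_k$ depends only on $|\xi'|_g$ (which changes by $O(h^\e)$ under $\beta$ in the relevant region by Lemma \ref{lem:dynStrictlyConvex}), gives
\[
[X_k,(R_{\delta'}T)^N X_0^{(2)}]=(R_{\delta'}T)^N X_0^{(2)}\bigl(h^\e A_k+h^{1-2\e}B_k\bigr),\qquad A_k,B_k\in\Ph{}{\e},
\]
so $\|g_k\|\leq C(h^\e+h^{1-2\e})\|u_k\|$ and $\|u_{k+1}\|\leq \beta_0^{-2k}(h^{k\e}+h^{k(1-2\e)})\|g\|$. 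Borel summation $\tilde u\sim\sum_k X_k u_k$ produces a solution with $X_\infty\tilde u=\tilde u+O(h^\infty)$, whose microsupport lies inside $\{1-(C+\delta)h^\e\leq|\xi'|_g\leq 1-(c-\delta)h^\e\}$, and uniqueness modulo $O(h^\infty)$ forces $\tilde u-u=O(h^\infty)$.

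The one step that requires a small amount of care is verifying that the extra cutoff $X_0^{(2)}$ does not disturb the commutator identity \eqref{eqn:commuteT}: since $\chi_0^{(2)}$ also depends only on $|\xi'|_g$, the same Egorov computation shows $[X_k,X_0^{(2)}]=O_{\Ph{}{\e}}(h^{1-2\e})$ and $T^{-1}X_0^{(2)}T=\oph(\chi_0^{(2)}\circ\beta)+O(h^{1-2\e})$, so all commutator bounds are uniform and the iteration closes as in Lemma \ref{lem:parametrixMS}. This is the only nontrivial point in the adaptation; everything else proceeds verbatim.
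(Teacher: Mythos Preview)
Your proposal is correct and follows exactly the approach the paper intends: the paper states only that the proof is ``nearly identical to that of Lemma \ref{lem:parametrixMS},'' and you have supplied precisely that adaptation, using only the upper bound $\beta_0$ (since there is no $\beta_2$ case here) and correctly handling the one new feature, the extra cutoff $X_0^{(2)}$, via the observation that it commutes with the radial cutoffs $X_k$ modulo $O_{\Ph{}{\e}}(h^{1-2\e})$.
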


Writing 
\m R_{\delta'}T=(R_{\delta'}e^{-\frac{\Im z}{h}\oph (l(q),\beta_E(q))})(e^{\frac{\Im z}{h}\oph (l(q),\beta_E(q))}T)\,\,\m
and applying Lemma \ref{lem:EgorovSheaf} shows that
\begin{multline*}\tilde{\sigma}((R_{\delta'}T)_N)(q)=\exp\left(-\frac{2\Im z}{h}\sum\limits_{n=0}^{N-1}l(\beta^n(q),\beta^{n+1}(q))\right)\\\prod\limits_{i=1}^N\left(|\tilde{\sigma}(R_{\delta'})(\beta^{i}(q))|^2+\O{}(h^{I_{R_{\delta'}}(\beta^i(q))+1-2\e})\right).\end{multline*}
Now, on $\mc{NG}$,
$$|\sigma(R_{\delta'})|^2\leq1-Ch^{2-2\alpha-\e}$$
so, using Lemma \ref{lem:iteratedStability}, we have that on $\mc{NG}$
$$ |\sigma(R_{\delta'}T)_N|\leq 1-ch^{2-2\alpha-\e}+C\frac{\Im z}{h}h^{\frac{\e}{2}}.$$ 
Hence, for $\Im z\geq -Mh^{3-2\alpha-\frac{\e}{4}}$, 
$$ |\sigma(R_{\delta'}T)_N|\leq 1-ch^{2-2\alpha-\e}\quad\quad \imply\quad\quad \beta>h^{2-2\alpha-\e}.$$
So, using that $V$ is invertible and applying $X_0 V^{-1}\dDl^{-1/2}A$ to \eqref{eqn:reflectEqn0} gives
\begin{lemma}
\label{lem:nearGlance}
Fix $M>0$ and suppose that 
$$\Im z\geq -M\min(h^{3-2\alpha-\frac{\e}{4}},h\log h^{-1})$$
and $2-2\alpha-\e<\min(\frac{\e}{2},\frac{1}{2}-\e)$. Then 
$$\|X_0\psi\|=\O{}(h^\infty)\|\psi\|.$$
In particular, the estimate holds when $\frac{2}{3}(2-2\alpha)<\e<\frac{1}{2}.$ 
\end{lemma}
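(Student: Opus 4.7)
Under the $\Im z$ hypothesis, the computation displayed just before the statement yields $|\tilde\sigma((R_{\delta'}T)_N)(q)|\leq 1-ch^{2-2\alpha-\e}$ on $\mc{NG}$, so $\beta_0\geq ch^{2-2\alpha-\e}$. The side condition $2-2\alpha-\e<\min(\e/2,1/2-\e)$ is precisely what Lemma \ref{lem:parametrixMSprime} demands, taking $\gamma_1=2-2\alpha-\e$. That lemma then produces a microlocal inverse $A$ of $I-(R_{\delta'}T)^N X_0^{(2)}$ with $\|A\|_{L^2\to L^2}\leq Ch^{-(2-2\alpha-\e)}$ together with the stated microsupport control.

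My plan is to apply $A$ to equation \eqref{eqn:reflectEqn0} and then bootstrap via an iterated commutator argument. Setting $\varphi=\dDl_\Delta^{1/2}VX_0^{(1)}\psi$, the identity \eqref{eqn:reflectEqn0} becomes, after multiplying on the left by $A$,
\[
\varphi=-A\sum_{m=0}^{N-1}(R_{\delta'}T)^m R_{\delta'}\dDl_\Delta^{-1/2}[X_0^{(1)},\dDl V]\psi+\O{L^2}(h^\infty)\|\psi\|_{L^2}.
\]
The commutator $[X_0^{(1)},\dDl V]$ is microsupported on $\supp d\chi_0^{(1)}$, which lies strictly inside $\{\chi_0^{(2)}\equiv 1\}$, and as the commutator of an $S_\e$-symbol with a pseudodifferential operator of semiclassical order $\alpha-1$, it gains a factor $h^{1-2\e}$ over $\dDl V$ in operator norm. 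I would then proceed exactly as in the proof of Lemma \ref{lem:parametrixMS}: introduce a nested sequence of cutoffs $\chi_0^{(1,k)}$ with $\supp\chi_0\subset\supp\chi_0^{(1,k+1)}\subset\supp\chi_0^{(1,k)}\subset\supp\chi_0^{(1)}$, reapply the identity with $\chi_0^{(1,k)}$ in place of $\chi_0^{(1)}$, and absorb the successive commutator terms into the next iterate modulo $\O{}(h^\infty)\|\psi\|_{L^2}$. Each step costs $h^{-(2-2\alpha-\e)}$ from $A$ but gains $h^{1-2\e}$ from the commutator, for a net factor of $h^{2\alpha-1-\e}$; since $\alpha>5/6$ and $\e<1/2$ give $2\alpha-1-\e>1/6$, after finitely many iterations the remainder is smaller than any prescribed power of $h$, which yields $\|\varphi\|_{L^2}=\O{}(h^\infty)\|\psi\|_{L^2}$. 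Microlocal ellipticity of $\dDl_\Delta^{1/2}V$ on $\supp\chi_0\subset\mc{H}$ (with $V$ invertible and $\sigma(\dDl_\Delta^{1/2})$ bounded below by $ch^{-1/2+\e/4}$ there) then recovers $\|X_0\psi\|_{L^2}=\O{}(h^\infty)\|\psi\|_{L^2}$.

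The main obstacle is controlling the microsupports of the iterates: each $\chi_0^{(1,k+1)}$ must be chosen so that its transition region sits strictly inside that of $\chi_0^{(1,k)}$ while still remaining inside $\{\chi_0^{(2)}\equiv 1\}$, so that the conclusion of Lemma \ref{lem:parametrixMSprime} applies uniformly in $k$ and the $\delta$-losses at each step can be summed. Once this bookkeeping is in place, the positive net gain $h^{2\alpha-1-\e}$ per iteration closes the argument.

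For the final assertion, the choice $\tfrac{2}{3}(2-2\alpha)<\e<\tfrac{1}{2}$ gives $2-2\alpha-\e<\e/2$ directly from the lower bound, and $2-2\alpha<1/3<1/2-\e$ follows from $\alpha>5/6$ together with $\e<1/2$, so $2-2\alpha-\e<1/2-\e$ as well. The interval is nonempty because $\alpha>5/6>5/8$ forces $\tfrac{2}{3}(2-2\alpha)<\tfrac{1}{2}$.
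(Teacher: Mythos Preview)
Your approach is on the right track, but it is considerably more elaborate than what the paper does, and you overlook the key shortcut. The paper's argument is essentially one line: having established $\beta_0\geq ch^{2-2\alpha-\e}$ and checked the hypothesis $2-2\alpha-\e<\min(\e/2,1/2-\e)$ of Lemma~\ref{lem:parametrixMSprime}, one simply applies $X_0 V^{-1}\dDl_\Delta^{-1/2}A$ to \eqref{eqn:reflectEqn0}. The point is that Lemma~\ref{lem:parametrixMSprime} already contains a \emph{microsupport} conclusion: if $g$ is microsupported in an annulus $\{1-Ch^\e\leq|\xi'|_g\leq 1-ch^\e\}$ then $Ag$ is microsupported in an arbitrarily slightly larger annulus. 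The right-hand side of \eqref{eqn:reflectEqn0} is microsupported where $d\chi_0^{(1)}\neq 0$ (since the commutator $[X_0^{(1)},\dDl V]$ lives there, and the factors $(R_{\delta'}T)^m$ preserve $|\xi'|_g$ up to $\O{}(h^\e)$ by Lemma~\ref{lem:iteratedStability}), which is disjoint from $\supp\chi_0$. Hence $\varphi=Ag$ is microsupported away from $\supp\chi_0$ modulo $\O{}(h^\infty)\|\psi\|$, and applying $X_0 V^{-1}\dDl_\Delta^{-1/2}$ gives $X_0\psi=\O{}(h^\infty)\|\psi\|$ directly. No iteration is needed.

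Your iterated-commutator scheme essentially reproves the microsupport statement of Lemma~\ref{lem:parametrixMSprime} by hand, which is redundant. There are also two imprecisions worth noting. First, your claimed commutator gain of $h^{1-2\e}$ accounts only for the pseudodifferential part of $\dDl V$; the FIO piece $\dDl_B V$ contributes a term of size $h^\e$ (exactly as in the $[X_1,T]=T(h^\e A+h^{1-2\e}B)$ computation in the proof of Lemma~\ref{lem:parametrixMS}), so the true gain is $h^\e+h^{1-2\e}$. When $\e<1/3$ the $h^\e$ term dominates and your stated net exponent $h^{2\alpha-1-\e}$ is wrong; the correct worst-case net is $h^{2\e-(2-2\alpha)}$, which is still positive under $\e>\tfrac{2}{3}(2-2\alpha)$, so the iteration would close once this is accounted for. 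Second, your description of the iteration is ambiguous: ``reapply the identity with $\chi_0^{(1,k)}$ in place of $\chi_0^{(1)}$'' produces an equation for a \emph{different} function $\varphi_k=\dDl_\Delta^{1/2}VX_0^{(1,k)}\psi$, not a refinement of the same $\varphi$, so the step ``absorb the successive commutator terms into the next iterate'' needs to be spelled out more carefully before the geometric decay is visible.
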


Now, we obtain an estimates on $X_1\psi$. Following the same argument used to get \eqref{eqn:reflectEqn0}, we have
$$(I-(R_{\delta'}T)^N)\varphi_1=\O{}(h^\infty)\psi-\sum_{m=0}^{N-1}(R_{\delta'}T)^mR_{\delta'}\dDl_{\Delta}^{-1/2}[X_1,\dDl V]\psi$$
where $\varphi_1=\dDl_{\Delta}^{1/2}VX_1\psi.$ 

Next, by \cite[Theorem 13.13]{EZB}
\begin{gather*}
\|(R_{\delta'}T)^N\varphi\|^2\leq \sup_{\mc{H}} \left(|\tilde{\sigma}((R_{\delta'}T)_N)(q)|^2 +\O{}(h^{I_{(R_{\delta'}T)_N}(q)+1-2\e})\right)\|\varphi\|^2_{L^2}.\end{gather*}
Define
$$\beta_1:=\min\left(\frac{1}{2},1-\sqrt{\sup_{\mc{H}}\sigma((R_{\delta'}T)_N)}\right).$$
Now, if $\beta_1\geq 0$, then $I_{(R_{\delta'}T)_N}\geq 0$ on $\mc{H}$ and hence
\begin{equation}
\label{eqn:estimateAway}
\begin{aligned} 
\left(\beta_1 -Ch^{1-2\e}\right)\|\varphi_1\|_{L^2}&\leq \|(I-(R_{\delta'}T)^N)\varphi_1 \|_{L^2}\\
&=\left\|\sum_{m=0}^{N-1}(R_{\delta'}T)^mR_{\delta'}\dDl_{\Delta}^{-1/2}[X_1,\dDl V]\psi\right\|\\
&\quad\quad\quad+\O{}(h^\infty)\|\psi\|
\end{aligned} 
\end{equation}

But, by Lemma \ref{lem:nearGlance}, if $\Im z\geq -\min(h^{3\alpha-2-\frac{\e}{4}},h\log h^{-1})$, then 
$[X_1,\dDl V]\psi=\O{}(h^\infty)\psi.$ So, provided that $\beta_1\gg h^{1-2\e}$, 
$$\|\varphi_1\|=\O{}(h^\infty)\|\psi\|$$
and hence, since $X_1\psi=V^{-1}\dDl^{-1/2}\varphi$, 
$$\|X_1\psi\|=\O{}(h^\infty)\|\psi\|.$$

Thus, in order for \eqref{eqn:nonGlancePrime} to hold with $\MS(\psi)\cap \mc{H}\neq \emptyset$, and $z\in \Lambda_{\log}$, for any $\gamma_1<1-2\e$,
\begin{equation}
\label{eqn:symbolRequirementPrime}
\limsup_{h\to 0}\frac{\sup \tilde\sigma((R_{\delta'}T)_N)(q)-1}{h^{\gamma_1}} \geq 0.
\end{equation}
Let
$$|\tilde{\sigma((R_{\delta'}T)_N)(q)}|=e^{e(q)}.$$

Taking logs and renormalizing in \eqref{eqn:symbolRequirementPrime}, we have
$$\frac{2\Im z}{h}Nl_N(q)-\frac{2\Im z}{h}Nl_N(q) +\log |\tilde{\sigma}((R_{\delta'}T)_N)(q)|=e(q)$$
and hence 
\begin{align*}
-\frac{\Im z}{h}&= l_N^{-1}(q)\left[-\left(\frac{\Im z}{h}l_N(q)+\frac{1}{2N}\log |\tilde{\sigma}((R_{\delta'}T)_N)(q)|\right)+e(q)\right]\\
&=l_N^{-1}(q)\left[-r_N(q)+e(q)\right].
\end{align*}
where $r_N$ as in \eqref{eqn:defineAverageReflectionPrime}.
Thus, if $\MS(\psi)\cap \mc{H}\neq \emptyset$, for any $c>0$,
\begin{equation}
\label{eqn:restrict1Prime}
\inf_{\mc{H}}-l_N^{-1}\left[r_N+ch^{\gamma_1}\right]\leq -\frac{\Im z}{h}.
\end{equation}
Notice that when $\Im z=0$ and $|\xi'|_g<1-c$ for some $c>0$, 
$$-r_N\sim \min(h^{2-2\alpha},h\log h^{-1}).$$ This implies that \eqref{eqn:restrict1Prime} provides information about $\Im z$ when $2-2\alpha<1-2\e$. However, by Lemma \ref{lem:nearGlance}, we also need that $\frac{2}{3}(2-2\alpha)<\e$.  Since we have assumed that $\gamma_1<\min(2\alpha-\frac{3}{2},\frac{1}{2})$, we can choose such an $\e$ when $\alpha>11/14$.

Summarizing, we have the following lemma 
\begin{lemma}
\label{lem:dynamicalRestrictionPrime}
Fix $c>0$ and $\frac{2}{3}(2-2\alpha)<\e<\min(\frac{1}{2},\alpha-\frac{1}{2})$. Let $\gamma_1<1-2\e$. If 
\begin{gather}
\label{eqn:prelimResFreePrime}
-\frac{\Im z}{h}<\inf_{\{|\xi'|_g<1-Ch^\e\}\cap\beta_{-N}(\WFh(V))}-l_N^{-1}\left[r_N+ch^{\gamma_1}\right]
\end{gather}
where $l_N$ and $r_N$ are as in \eqref{def:averageLength} and \eqref{eqn:defineAverageReflectionPrime} respectively, and $\psi$ solves \eqref{eqn:boundaryPrelimPrime} then
\begin{equation}
\label{eqn:dynamicsRestrictPrime}
\MS (\psi)\subset \{|\xi'|_g\geq 1- Ch^{\e}\}.
\end{equation}
\end{lemma}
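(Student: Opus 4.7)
The plan is to read off the conclusion from the analysis already developed in Section~\ref{sec:appDynamicsPrime}, with all the hard microlocal work (Lemmas \ref{lem:inversePseudo}, \ref{lem:parametrixMSprime}, \ref{lem:nearGlance}, and estimate \eqref{eqn:estimateAway}) doing the heavy lifting. First I would take a cutoff $\chi_1 = \chi_1(|\xi'|_g) \in S_\e$ with $\chi_1 \equiv 1$ on $\{|\xi'|_g \leq 1 - 2h^\e\}$ and $\supp \chi_1 \subset \{|\xi'|_g \leq 1 - h^\e\}$, set $X_1 = \oph(\chi_1)$, and apply $X_1$ to the boundary equation $(I - \dDl V)\psi = 0$ to obtain
$$
(I - \dDl V)X_1\psi = [X_1, \dDl V]\psi =: f,
$$
where $\MS(f)$ lies in an $h^\e$-collar of $|\xi'|_g = 1 - h^\e$ by Lemmas \ref{lem:iteratedStability} and \ref{lem:decomposePrime}. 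Composing with $\dDl_\Delta^{1/2}V$ on the left and using the microlocal inverse $(I - \dDl_\Delta^{1/2}V\dDl_\Delta^{1/2})^{-1}$ from Lemma~\ref{lem:inversePseudo} converts this into the reflection equation
$$
(I - (R_{\delta'}T)^N)\varphi_1 = \O{L^2}(h^\infty)\|\psi\| - \sum_{m=0}^{N-1}(R_{\delta'}T)^m R_{\delta'}\dDl_\Delta^{-1/2}[X_1,\dDl V]\psi,
$$
with $\varphi_1 = \dDl_\Delta^{1/2}V X_1 \psi$, exactly as in the derivation of \eqref{eqn:estimateAway}.

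Next I would use the dynamical hypothesis \eqref{eqn:prelimResFreePrime} to gain two things simultaneously. Since $r_N$ is bounded above by $-ch^{\gamma_1}l_N + h\Im z^{-1}l_N\cdot(\text{bounded})$, and away from glancing $l_N$ is bounded below, the bound forces $-\Im z/h \geq -M\min(h^{3-2\alpha-\e/4}, h\log h^{-1})$ once $\e$ is chosen in the indicated window $\tfrac{2}{3}(2-2\alpha) < \e < \min(\tfrac12, \alpha - \tfrac12)$; this is precisely the hypothesis of Lemma~\ref{lem:nearGlance}. Applying that lemma to the factor $\chi_0$ near glancing kills $X_0 \psi$ up to $\O{}(h^\infty)\psi$, which in turn renders the commutator source $[X_1,\dDl V]\psi = \O{L^2}(h^\infty)\|\psi\|$ because $[X_1,\dDl V]$ is microsupported in the annulus handled by $X_0$.

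Then I would extract the gain from the reflection coefficient in the strictly hyperbolic region. The Egorov-type identity \eqref{eqn:appEgorovPrime} combined with Lemma~\ref{lem:EgorovSheaf} gives
$$
\tilde\sigma((R_{\delta'}T)_N)(q) = \exp\!\Big(-\tfrac{2\Im z}{h}Nl_N(q)\Big)\prod_{i=1}^N\Big(|\tilde\sigma(R_{\delta'})\circ\beta^i(q)|^2 + \O{}(h^{I_{R_{\delta'}}(\beta^i q) + 1 - 2\e})\Big),
$$
so that taking logarithms identifies $\tfrac{1}{2N}\log\tilde\sigma((R_{\delta'}T)_N) + \tfrac{\Im z}{h}l_N = r_N$. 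The hypothesis \eqref{eqn:prelimResFreePrime} therefore translates into $\sup_{\mc H} \tilde\sigma((R_{\delta'}T)_N) \leq 1 - ch^{\gamma_1}$ for some positive $c$, so the sharp G{\aa}rding constant $\beta_1$ satisfies $\beta_1 \geq \tfrac12 c h^{\gamma_1}$. Since $\gamma_1 < 1 - 2\e$, this dominates the $Ch^{1-2\e}$ error in \eqref{eqn:estimateAway}, and we conclude $\|\varphi_1\|_{L^2} = \O{}(h^\infty)\|\psi\|$. Because $V$ is elliptic with $\sigma(V) \geq ch^\alpha$ and $\dDl_\Delta^{1/2}$ has a microlocal inverse on $\mc H$, this gives $\|X_1 \psi\| = \O{}(h^\infty)\|\psi\|$, i.e.\ \eqref{eqn:dynamicsRestrictPrime}.

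The main obstacle is the coupling of the two regimes: the near-glancing estimate (Lemma~\ref{lem:nearGlance}) is needed to kill the commutator source, but it in turn requires the lower bound on $\Im z$ derived from the dynamical hypothesis. One has to verify that the allowed range of $\e$ for Lemma~\ref{lem:nearGlance}, namely $\tfrac{2}{3}(2-2\alpha) < \e < \tfrac12$, is compatible with $\gamma_1 < 1-2\e$ and $\e < \alpha - \tfrac12$, which is exactly why the statement requires $\alpha > 5/6$ (so that $\tfrac{2}{3}(2-2\alpha) < \alpha - \tfrac12$). The remaining bookkeeping is a standard iteration on nested cutoffs $\chi_0^{(i)}$ of the type carried out in Lemma~\ref{lem:parametrixMSprime}, propagating the near-glancing information into the interior of $\mc H$; once this compatibility is checked no further new ideas are required.
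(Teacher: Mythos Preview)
Your proof follows the same strategy as the paper: the lemma is a summary of the analysis in Section~\ref{sec:appDynamicsPrime}, and you have correctly identified how the dynamical hypothesis \eqref{eqn:prelimResFreePrime} serves double duty---implying the bound on $\Im z$ needed for Lemma~\ref{lem:nearGlance} (so the commutator source vanishes) and forcing $\beta_1 \geq ch^{\gamma_1}$ so that \eqref{eqn:estimateAway} closes. Your write-up has minor sign and expression errors to clean up: the displayed bound ``$r_N$ is bounded above by $-ch^{\gamma_1}l_N + h\Im z^{-1}l_N\cdot(\text{bounded})$'' is garbled (what you need is $-l_N^{-1}r_N \leq C\min(h^{2-2\alpha},\log h^{-1})$ on $\{|\xi'|_g<1-\delta_1\}$), and the inequality from Lemma~\ref{lem:nearGlance} should read $\Im z \geq -M\min(h^{3-2\alpha-\e/4}, h\log h^{-1})$, i.e.\ an \emph{upper} bound on $-\Im z/h$---but the logical structure is sound and matches the paper.
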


\subsection{Elliptic Region}
\label{sec:ellipticPrime}
Next, we show that solutions to \eqref{eqn:boundaryPrelimPrime} cannot concentrate in the elliptic region $\mc{E}:=\{|\xi'|_g\geq 1+ch^\e\}$ for any $\e<\frac{1}{2}$.

Fix $\e<\frac{1}{2}$. Let $\chi_1\in S_\e$ have $\chi_1 \equiv 1 $ on $|\xi'|_g\geq 1+2Ch^\e$ and $\supp \chi_1\subset |\xi'|_g\geq E+Ch^\e$. Also, let $\chi_2\in S_\e$ have $\supp \chi_2\subset |\xi'|_g\geq 1+3Ch^\e$ and $ \chi_2\equiv 1$ on $|\xi'|_g\geq 1+4Ch^\e$.  Finally, define $X_i:=\oph(\chi_i)$ $i=1,2$. 

Let $\psi$ solve \eqref{eqn:boundaryPrelimPrime}. Then, we have 
\m (I-\dDl V)X_1\psi =[X_1,\dDl V]\psi\,\,\m 
and by Lemma \ref{lem:decompose}
\begin{gather*}  \dDl VX_1=\dDl _{\Delta}VX_1+\O{L^2\to L^2}(h^\infty),\\
X_1\dDl V=X_1\dDl _{\Delta}V+\O{L^2\to L^2}(h^\infty)
\end{gather*}
Observe that the ellipticity of $V$, $\sigma(V)\geq 0$, $\sigma(\dDl_\Delta)\leq 0$ and arguments similar to those giving Lemma \ref{lem:inversePseudo}  show that microlocally on $|\xi'|_g\geq 1+Ch^\e$, 
$$(I-\dDl_\Delta V)^{-1}\in \min(h^{1-\alpha-\e/2},1)\Ph{-1}{\e}(\partial\Omega).$$
Hence
$$X_2\psi=X_2(I-\dDl_\Delta V)^{-1}[X_1,\dDl V]\psi+\O{}(h^\infty)\psi=\O{}(h^\infty)\psi$$
which implies
\m \MS (\psi)\cap \{|\xi'|_g\geq 1+2h^\e\}=\emptyset.\,\,\m

We also need an elliptic estimate. Let $\supp \chi_3\subset \{|\xi'|_g\geq E+c\}$ and $X_3=\oph(\chi_3)$. Using the fact  
\m \oph(q):=C(1-X_3)+X_2(I-\dDl_{\Delta}V)X_1\,\,\m 
is elliptic and has 
\m \oph(q)\psi=C(1-X_3)\psi +\O{}(h^\infty)\psi,\,\,\m
we have
\m
\|X_1\psi \|_{L^2}\leq C\|(1-X_3)\psi\|_{L^2}+\O{}(h^\infty).\m

Summarizing,
\begin{lemma}
\label{lem:onlyGlancingPrime}
Suppose that $|\Im z|\leq Ch\log h^{-1}$ and $\psi=u|_{\partial\Omega}$ where $u$ solves \eqref{eqn:mainPrime} and
$$\frac{2}{3}(2-2\alpha)<\e<\min(\frac{1}{2},\alpha-\frac{1}{2}).$$ Then
\m \MS(\psi)\cap \{|\xi'|_g\geq 1+h^\e\}=\emptyset.\,\,\m
Moreover, for $\chi\in S$ with $\supp \chi \subset\{ |\xi'|_g\geq 1+c\}$, 
\begin{equation}
\label{eqn:ellpticEstimatePrime}
\|X_1\psi \|_{L^2}\leq C\|(1-\oph(\chi))\psi\|_{L^2}+\O{}(h^\infty).\end{equation}
If, in addition, the hypotheses of Lemma \ref{lem:dynamicalRestriction} hold, then for $\e<\min(2\alpha-1,1/2)$,
\m \MS(\psi)\subset \{(x',\xi ')\in T^*\partial\Omega:||\xi '|_g-E|\leq ch^\e\}.\m
\end{lemma}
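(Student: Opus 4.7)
The plan is to follow the microlocal strategy already sketched in Section \ref{sec:ellipticPrime}, carefully tracking the exotic symbol class forced by cutoffs at scale $h^{\e}$ and the $h^{\alpha}$-size of the potential. The key input is the decomposition $\dDl = \dDl_\Delta + \dDl_B + \dDl_g + \O{L^2\to C^\infty}(h^\infty)$ from Lemma \ref{lem:decomposePrime}, together with the structure of $\sigma(\dDl_\Delta)$ in the elliptic region.

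First I would introduce cutoffs $\chi_1,\chi_2\in S_{\e}(T^*\partial\Omega)$ depending only on $|\xi'|_g$, with $\chi_1\equiv 1$ on $\{|\xi'|_g\geq 1+2Ch^\e\}$, $\supp\chi_1\subset\{|\xi'|_g\geq 1+Ch^\e\}$, and $\chi_2\equiv 1$ on $\supp\chi_1$ with $\supp\chi_2\subset\{|\xi'|_g\geq 1+3Ch^\e\}$. Setting $X_i=\oph(\chi_i)$ and applying the equation $(I-\dDl V)\psi=0$, I would obtain $(I-\dDl V)X_1\psi=[X_1,\dDl V]\psi$. Since ${\MS}'(\dDl_B)$ is contained in $\{|\xi'|_g\leq 1\}$ and ${\MS}'(\dDl_g)$ lives in an $h^\e$-neighborhood of $S^*\partial\Omega$, both $X_2\dDl_BVX_1$ and $X_2\dDl_gVX_1$ (and their $X_1$-analogues on the left) are $\O{L^2\to L^2}(h^\infty)$. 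This reduces the problem to $X_2(I-\dDl_\Delta V)X_1\psi=\O{}(h^\infty)\psi$.

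Next I would establish that $I-\dDl_\Delta V$ admits a microlocal parametrix on the elliptic region. By Lemma \ref{lem:decomposePrime}, $\sigma(\dDl_\Delta)=\tfrac{ih^{-1}}{2}\sqrt{1-|\xi'|_g^2}$; for $|\xi'|_g>1$ the chosen branch of the square root makes this real and non-positive, while $\sigma(V)\geq ch^\alpha>0$, so $\sigma(I-\dDl_\Delta V)\geq 1$ uniformly on $\{|\xi'|_g\geq 1+Ch^\e\}$. An argument completely parallel to Lemma \ref{lem:inversePseudo}, handling separately the subregime $1+Ch^\e\leq |\xi'|_g\leq 1+Ch^{\e_1}$ (where the identity dominates and a Neumann series converges) and the subregime where $\dDl_\Delta V$ dominates (where one inverts via $-(\dDl_\Delta V)^{-1}(I-(\dDl_\Delta V)^{-1})^{-1}$), produces $(I-\dDl_\Delta V)^{-1}\in \min(1,h^{1-\alpha-\e/2})\Ph{-1}{\e}$ microlocally on the elliptic region. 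Applying this parametrix gives $X_2\psi=\O{}(h^\infty)\psi$, which immediately yields $\MS(\psi)\cap\{|\xi'|_g\geq 1+h^\e\}=\emptyset$.

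For the elliptic estimate \eqref{eqn:ellpticEstimatePrime}, I would form the globally elliptic operator $\oph(q):=C(1-X_3)+X_2(I-\dDl_\Delta V)X_1$ with $\chi_3\in S$ supported in $\{|\xi'|_g\geq 1+c\}$ and $C$ chosen so that $|\sigma(q)|\geq c'>0$ on $T^*\partial\Omega$. A standard semiclassical elliptic estimate (Lemma \ref{lem:microlocalElliptic}) applied to $\oph(q)\psi=C(1-X_3)\psi+\O{}(h^\infty)\psi$ then yields $\|X_1\psi\|_{L^2}\leq C\|(1-X_3)\psi\|_{L^2}+\O{}(h^\infty)$. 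Finally, the last statement follows by combining what we have just proved (which excludes $\MS(\psi)\cap\{|\xi'|_g\geq 1+h^\e\}$) with Lemma \ref{lem:dynamicalRestrictionPrime} (which, under its dynamical hypothesis, excludes $\MS(\psi)\cap\{|\xi'|_g\leq 1-Ch^\e\}$); the overlap of the admissible ranges of $\e$ in the two lemmas gives precisely $\e<\min(2\alpha-1,1/2)$. The main technical obstacle is controlling the parametrix $(I-\dDl_\Delta V)^{-1}$ in the exotic calculus $S_\e$ uniformly as $\e$ approaches $1/2$, which is why positivity of $V$ is used essentially rather than mere ellipticity.
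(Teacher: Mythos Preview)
Your proposal is correct and follows essentially the same argument as the paper: reduce to $\dDl_\Delta$ in the elliptic region via the wavefront bounds on $\dDl_B,\dDl_g$, invert $I-\dDl_\Delta V$ microlocally using the sign $\sigma(\dDl_\Delta)\leq 0$ and $\sigma(V)>0$ (exactly as in Lemma~\ref{lem:inversePseudo}), form the globally elliptic operator $\oph(q)$ for the estimate, and combine with Lemma~\ref{lem:dynamicalRestrictionPrime} for the final statement. One small slip: your description of $\chi_2$ is inconsistent (you cannot have both $\chi_2\equiv 1$ on $\supp\chi_1\subset\{|\xi'|_g\geq 1+Ch^\e\}$ and $\supp\chi_2\subset\{|\xi'|_g\geq 1+3Ch^\e\}$); the paper takes $\chi_2$ supported strictly further from glancing than $\chi_1$, so that $X_2$ annihilates the commutator $[X_1,\dDl V]\psi$ after applying the pseudodifferential parametrix.
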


\subsection{Glancing Points}
\label{sec:glancingPointPrime}

Now, we consider $I-\dDl V$ microlocally near a glancing point.  We use the estimate from Lemma \ref{lem:nearGlanceEstPrime}.

Suppose that $\varphi$ solves \eqref{eqn:boundaryPrelimPrime}, then by Lemma \ref{lem:onlyGlancing}, if $\Im z$ satisfies \eqref{eqn:prelimResFreePrime}, 
\begin{equation} \MS{\varphi}\subset \{|1-|\xi'|_g|\leq \delta h^\e\}\,,\quad \frac{2}{3}(2-2\alpha)<\e<\min(\frac{1}{2},\alpha-\frac{1}{2}).
\label{eqn:microsupportNearGlancingPrime}
\end{equation}
So, let $\chi \in S_\e$ have $\chi\equiv 1$ on $\{|1-|\xi'|_g|\leq h^\e\}$ and $\supp \chi \subset \{|1-|\xi'|_g|\leq 2h^\e\}$ with $X=\oph(\chi).$ Then $X\varphi=\varphi+\O{}(h^\infty) \varphi$. Therefore, 
$$(I-\dDl V)X\varphi =\O{}(h^\infty)\varphi.$$
Then, by Lemma \ref{lem:nearGlanceEstPrime} 
\begin{align*} \|\dDl VX\varphi\|_{L^2}&\leq C_{\Omega} h^{-1+\e/2}\|VX\varphi\|_{L^2}\\
&\leq C_{\Omega,V} h^{-1+\e/2+\alpha}\|X\varphi\|_{L^2}.
\end{align*}
Since $\alpha>5/6$, we can take $2-2\alpha <\e<\min(\alpha-\frac{1}{2},\frac{1}{2})$, and we obtain 
$X\varphi=\O{}(h^\infty)\varphi$ and hence $\varphi =\O{L^2}(h^\infty)$, a contradiction.

\chapter{Existence Resonances for the Delta Potential}
\label{ch:lowerBound}

In this chapter we show that the resonance free region given by Corollary \ref{cor:resFree} is generically optimal for $V\in C^\infty(\partial\Omega)$. In particular, for every periodic billiards trajectory with $M$ reflections whose intersection with $T^*\partial\Omega$ does not leave $\{V\neq 0\}$, there are infinitely many resonances with 
\m-\Im z\leq (l_M^{-1}(q)+\e)h\log h^{-1}\m 
where $q$ is a point in the billiards trajectory.
\begin{theorem}
\label{thm:lowerBoundNumRes}
There exists an open dense collection 
$$\mc{A}\subset \{\Omega\subset \re^d:\partial\Omega\in C^\infty\text{ and } \Omega\text{ is strictly convex}\}$$
such that for all $\Omega \in \mc{A}$ the following statement holds. Suppose that there exists $q\in B^*\partial\Omega$, $M\in \ints^+$ such that $\beta^M(q)=q$. Then for $V\in C^\infty(\partial\Omega)$, if $V(\pi \composed \beta^i(q))\neq 0$ for $0\leq i<M$, we have that for all $\delta>0$ and $\rho>l_M^{-1}(q)$ there exists $h_0>0$ such that for $0<h<h_0$,
$$\#\left\{z\in \Lambda:|z|\leq 1,\,\Im z>-\rho h\log \left(\Re zh^{-1}\right)\right\}\geq ch^{-1+\delta}.$$
\end{theorem}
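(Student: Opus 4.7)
The plan is to construct, for each large integer $n$, a semiclassical quasimode of the boundary integral equation $(I+G(\lambda)V)\varphi = 0$ concentrated on the closed billiard orbit $\{q,\beta(q),\ldots,\beta^{M-1}(q)\}$, at an energy $\lambda_n = z_n/h$ whose imaginary part matches the Sabine-type bound of Corollary~\ref{cor:resFree} up to $O(h)$, and then to deform these quasimodes into honest resonances by a Tang--Zworski/Stefanov type argument. I would take $\mc{A}$ to be the collection of strictly convex smooth domains for which every periodic billiard orbit is non-degenerate in the sense that $\det(I-d\beta^M|_{q})\neq 0$ at every periodic point; by the standard Petkov--Stoyanov transversality perturbation this condition is open and dense. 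This non-degeneracy is exactly what is required to build a clean Gaussian beam on a closed orbit.

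Since the orbit lies strictly inside $B^*\partial\Omega$, the hyperbolic-region decomposition of Chapter~\ref{ch:resFree} applies in a microlocal neighborhood of the orbit, and the equation $(I+G(\lambda)V)\varphi = 0$ reduces to the monodromy equation
\begin{equation*}
\bigl(I - (R_\delta T)^M\bigr)\psi = \O{L^2}(h^\infty),
\end{equation*}
where $T$ is the semiclassical FIO quantizing $\beta$ (with the $e^{\Im z\,l/h}$ twist) and $R_\delta$ is the reflection operator of \eqref{eqn:reflectionOperator}, whose principal symbol equals $hV/(2i\sqrt{1-|\xi'|_g^2}-hV)$ and whose modulus at the non-glancing point $q_i$ is $\sim c_i(q)\,h$ with $c_i(q)\ne 0$ whenever $V(\pi q_i)\ne 0$. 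Using the classical coherent-state/Birkhoff normal form construction of Babich--Buldyrev and Ralston for non-degenerate closed orbits, adapted to the product monodromy operator $(R_\delta T)^M$ along the orbit, I would produce for each large $n$ a normalized microlocal state $\psi_{n,h}$ concentrated at $q$ satisfying
\begin{equation*}
(R_\delta T)^M\psi_{n,h} = \mu_n(h)\,\psi_{n,h} + \O{L^2}(h^\infty),
\end{equation*}
with
\begin{equation*}
\mu_n(h) = \exp\!\Bigl(\tfrac{i}{h}\Phi(z_n) + 2\pi i n + i\vartheta(h)\Bigr)\prod_{i=0}^{M-1}\sigma(R_\delta)(q_i,h)\,\bigl(1+O(h^{1/2})\bigr),
\end{equation*}
where $\Phi$ is the classical action along the orbit and $\vartheta(h)\in\re + O(h)$ encodes the Maslov index and the Floquet correction from $d\beta^M|_q$. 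The quantization condition $\mu_n(h)=1$ splits into an argument equation, which selects a Bohr--Sommerfeld progression $\Re z_n = hE_n$ of spacing $2\pi/(Ml_M(q))$ in $[c,1]$, and a modulus equation, which, since $\prod_i|\sigma(R_\delta)(q_i,h)|\sim c_0(q)\,h^M$, forces
\begin{equation*}
-\Im z_n = l_M^{-1}(q)\,h\log h^{-1} + O(h).
\end{equation*}
Thus each $z_n$ lies in the prescribed region $-\Im z < \rho\,h\log(\Re z\,h^{-1})$ for any $\rho > l_M^{-1}(q)$ once $h$ is small enough.

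The final step is to upgrade the quasimodes to resonances via a Gohberg--Sigal/Tang--Zworski argument: the bound $\|(I+G(\lambda_n)V)\psi_{n,h}\| = \O{}(h^\infty)$ with $\|\psi_{n,h}\|=1$ forces $\|(I+G(\lambda)V)^{-1}\|\gtrsim h^{-N}$ near $z_n$ whenever the inverse exists, and, combined with a polynomial a priori upper bound on $\|(I+G(\lambda)V)^{-1}\|$ in a larger complex neighborhood (obtained from Theorem~\ref{thm:layerEstimates} together with Burq--Vodev type exterior resolvent estimates) and the Jensen formula applied to the Fredholm determinant $\det(I+G(\lambda)V)$, one gets a true resonance within a disc of radius $\O{}(h^K)$ of each $z_n$, for any fixed $K$. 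The Bohr--Sommerfeld ladder produces $\sim h^{-1}$ candidates $z_n$ with $\Re z_n\in[c,1]$; after discarding at most an $h^\delta$-fraction to ensure the discs are disjoint, one retains at least $c h^{-1+\delta}$ distinct resonances. The principal obstacle is the Gaussian beam construction in Step~2 to infinite order in $h$: one must simultaneously track the transverse Gaussian spreading controlled by $d\beta^M|_q$, the Maslov contributions at each of the $M$ reflections, the subprincipal symbols of $T$ and $R_\delta$ along each leg, and verify that the beam's microlocal support stays uniformly away from the glancing set so that the FIO/pseudodifferential calculus of Lemma~\ref{lem:decompose} applies throughout the iteration.
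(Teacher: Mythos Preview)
Your approach is plausible but genuinely different from the paper's. The paper does not construct quasimodes at all; instead it analyzes the singularities of the renormalized wave trace $\sigma(t)=\tr(U(t)-U_0(t))$ for the transmission wave equation. Using Safarov's branching parametrix for non-glancing trajectories together with the Melrose--Taylor parametrix near glancing, the paper shows that at a period $T$ of a closed billiard orbit with $N$ reflections (all hitting $\{V\neq 0\}$), $|\widehat{\psi_{\e,T}\sigma}(\tau)|\geq c\tau^{-N}$; the key mechanism is that each reflection lowers the order of the propagator FIO by one, exactly because $\sigma(R_\delta)\sim h$. One then invokes the Poisson formula of Sj\"ostrand--Zworski/Zworski and the black-box lower bound of \cite{SjoZw}, which converts a trace singularity of this strength into $\geq ch^{-1+\delta}$ resonances in $\{-\Im z<\rho h\log(\Re z/h)\}$ for any $\rho>(d+N)/T$; passing to the $n$-th iterate $nT,\ nN$ pushes the threshold down to $N/T=l_M^{-1}(q)$. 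The generic set $\mc{A}$ is chosen so that the length spectrum is simple and periods are isolated with clean fixed-point sets, not your non-degeneracy condition $\det(I-d\beta^M)\neq 0$.

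Your route via Gaussian beams on the orbit plus a Stefanov/Tang--Zworski quasimode-to-resonance step is the natural alternative paradigm, and would in principle yield more: actual approximate locations of individual resonances, not just a count. But two points in your sketch are thinner than they look. First, the monodromy $(R_\delta T)^M$ is genuinely non-self-adjoint with $|\mu_n(h)|\sim h^M$, so the Gaussian-beam normal form has to be carried out for a contracting complex cocycle and you must check that the transverse Gaussian stays $L^2$-normalized uniformly as $\Im z$ moves to $-l_M^{-1}(q)h\log h^{-1}$; this is doable but not quite the textbook Babich--Ralston construction. Second, the a priori bound you need for the quasimode-to-resonance step is not on $G$ (which is what Theorem~\ref{thm:layerEstimates} gives) but on $\|(I+GV)^{-1}\|$ in a box straddling the resonance-free threshold; one gets this from the Weyl upper bound $\#\{\text{res}\}=\O{}(h^{-d})$ (cf.\ \cite[Lemma 7.1]{GS} and \cite{SjoDist}) together with a semiclassical maximum principle, but it is not immediate from the estimates you cite. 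The paper's trace approach sidesteps both issues by outsourcing them to the \cite{SjoZw} black box.
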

\begin{remark} The bound on $\rho$ in Theorem \ref{thm:lowerBoundNumRes} matches that in Corollary \ref{cor:resFree}. Hence, the bounds from Corollary \ref{cor:resFree} are generically sharp up to $o(h\log h^{-1})$ corrections. However, one must note that in Theorem \ref{thm:lowerBoundNumRes}, $V\in C^\infty(\partial\Omega)$ is a multiplication operator.
\end{remark}

\subsection{Outline of the Proofs}

Theorem \ref{thm:lowerBoundNumRes} is proved in Section \ref{sec:lowerBound}. The main component of the proof is to describe the singularities of the wave trace, $\sigma(t)$, at times $t>0$ for the problem 
\begin{equation}
\label{eqn:waveProblem}
(\partial_t^2-\Delta +V(x)\otimes \delta_{\partial \Omega})u=0.
\end{equation}
As in \cite{Chaz} and \cite{DuiGui}, we first show that the singularities occur at times $T$ such that $T$ is the length of a closed billiard trajectory. To examine contributions from non-glancing trajectories, we follow \cite{DuiGui}, using the parametrix for \eqref{eqn:waveProblem} constructed in \cite{Saf1} (see also \cite{SafRaySplit}) along with a finer analysis near the boundary.  In particular, we show that 
\m
|\widehat{\psi_{\e,T}\sigma_{\beta}}(\tau)|\geq c\tau^{-N}
\m
where $\sigma_{\beta}(t)$ denotes the wave trace microlocalized near a periodic trajectory of length $T$, $\psi_{\e,T}\in \Cc(\re)$ is a cutoff function near $T$, and $N$ is the number of times the trajectory intersects the boundary. 
Finally, we use the Melrose Taylor parametrix \cite{MelTayl} (see Appendix \ref{sec:waveParametrices}) to show that contributions from trajectories sufficiently close to glancing can be neglected. Moreover, we show that, generically, the wave trace is smooth at accumulation points of the length spectrum. In particular, we have the following consequence of \eqref{eqn:traceEstGlancing}
\begin{prop}
\label{prop:accumulation}
For a generic strictly convex domain $\Omega$ we have that for any closed geodesic $\gamma\in \partial\Omega$, there exists a neighborhood, $U_M\ni T_\gamma$, such that $\sigma(t)$ is $C^M$ on $U_M$. In particular, $\sigma(t)$ is smooth at $T_{\gamma}$.
\end{prop}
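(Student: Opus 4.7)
The plan is to exploit the Melrose--Taylor parametrix from Appendix \ref{ch:semiclassicalDirichletParametrices} applied to the near-glancing contributions to the wave trace. Singularities of $\sigma(t)$ at times $t$ close to $T_\gamma$ can only come from closed billiard trajectories whose length lies near $T_\gamma$, and since $\gamma$ itself is a closed geodesic on $\partial\Omega$ (a glancing orbit), such near-glancing billiard orbits must reflect many times and hit $\partial\Omega$ at angles close to tangent. The key microlocal input is the estimate \eqref{eqn:traceEstGlancing}: for a closed billiard trajectory with $N$ reflections at angles whose glancing deficit is $\sim r_j$, the reflection coefficient $\tilde\sigma(R_\delta)$ at the $j$th bounce contributes a factor $\mathcal O(r_j^{-1}h^{2/3})$ (cf.\ the Airy-quotient model in Section \ref{sec:BLONearGlance}), so the contribution of the trajectory to $\widehat{\psi_{\e,T_\gamma}\sigma}(\tau)$ is bounded, after pairing with $\psi_{\e,T_\gamma}$, by $C\tau^{-N}$ times a product of reflection factors.

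First, I would localize in time by a cutoff $\psi_{\e,T_\gamma}\in \Cc(\re)$ supported in a small neighborhood of $T_\gamma$ and microlocalize the wave trace away from and near the glancing set using the decomposition of the single layer operator in Lemma \ref{lem:decompose}. Non-glancing contributions give at most finitely many isolated singularities in this window; by shrinking $\e$ (and using that the non-glancing length spectrum is discrete near $T_\gamma$) these are pushed out of $U_M$.

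Second, for the near-glancing contributions, I would organize periodic billiard trajectories by reflection number $N$. For each fixed $N$, the set of closed trajectories of length within $\e$ of $T_\gamma$ is finite, and by the Melrose--Taylor parametrix their contribution to $\psi_{\e,T_\gamma}\sigma(t)$ is a classical Lagrangian distribution whose symbol is the product $\prod_{j=1}^N \tilde\sigma(R_\delta)(\beta^j(q_N))$ against a smooth factor. As $N\to\infty$ the glancing deficits $r_j$ along such trajectories scale like $N^{-2}$ (this is the standard scaling for near-gliding billiards in a strictly convex domain, consistent with Lemma \ref{lem:dynStrictlyConvex}), so the reflection product decays superpolynomially in $N$. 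Summing over $N$ then converges in $C^M$ on $U_M$, provided $U_M$ excludes those finitely many $N$ whose trajectory length lies outside $U_M$.

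Finally, for the genericity assertion, I would take $\mathcal A$ to be the set of strictly convex smooth $\Omega$ for which every closed geodesic on $\partial\Omega$ is non-degenerate and for which the induced Poincar\'e map has no accumulation of closed billiard orbits with anomalously slow length convergence to $T_\gamma$; this set is open and dense by a standard transversality/Sard argument on the length functional restricted to geodesics on $\partial\Omega$. The non-degeneracy is used to guarantee the uniform $r_j\gtrsim N^{-2}$ bound underlying the previous step. The main technical obstacle will be making the summation over near-glancing trajectories uniform: one has to track, simultaneously in $N$, both the clustering of the singular times $T_N^{(j)}\to T_\gamma$ and the decay of the reflection amplitudes produced by the Airy-quotient factors in \eqref{eqn:Gglance}, and show these decay rates dominate the potential growth in $N$ of the number of distinct periodic trajectories with $N$ reflections. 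The generic hypothesis on $\Omega$ is exactly what converts the pointwise glancing estimates from the parametrix into the uniform sum required for $C^M$ regularity of $\sigma$ at $T_\gamma$.
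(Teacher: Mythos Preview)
Your approach differs substantially from the paper's, which is considerably more direct. The paper does not track reflection coefficients $\tilde\sigma(R_\delta)$ along individual billiard orbits; that machinery is from Chapter~\ref{ch:resFree} and is tailored to the semiclassical resonance problem, not the wave trace. Instead, the argument in Section~\ref{sec:glanceTraj} works entirely with the homogeneous Melrose--Taylor parametrix (Appendix~\ref{sec:waveParametrices}): near glancing the boundary data satisfies $(I+J\beta^{-1}\mc{A}i\mc{A}_-J^{-1}B)u|_{\pO}=-VU_0(t)u_0|_{\pO}$ with $B\in\PsiHom^{-2/3}$, so one inverts by the Neumann series \eqref{eqn:AirySum}. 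Each term of the series gains a fixed $2/3$ of an order, so truncating at level $K$ leaves a remainder contributing $\O{}(\tau^{-M})$ to the trace for $K$ large, while the first $K$ terms carry wavefront only along billiard trajectories with at most $K$ reflections (Lemma~\ref{lem:waveFrontGlance}). In a strictly convex domain any closed billiard orbit sufficiently close to glancing has more than $K$ reflections, so the first $K$ terms contribute no singularities there. This yields \eqref{eqn:traceEstGlancing} with no summation over orbits and no tracking of angles $r_j$. The genericity invoked (from \cite[Section~7.4]{Pet}) is that the length spectrum is simple with its only accumulation points at lengths of closed boundary geodesics; it is used so that no non-glancing closed orbit has length in a small $U_M\ni T_\gamma$, reducing the whole window to the glancing estimate.

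There are also gaps in your sketch as written. The factor $\O{}(r_j^{-1}h^{2/3})$ you attribute to each reflection does not match any object in Section~\ref{sec:BLONearGlance}: the semiclassical reflection coefficient has $\tilde\sigma(R_\delta)\sim hV/(2ir_j)$ away from glancing (order gain $h$, penalty $r_j^{-1}$), while the Airy model in \eqref{eqn:Gglance} gives a uniform $h^{2/3}$ gain with no $r_j^{-1}$ penalty; these two mechanisms cannot be combined as you do. It is precisely because the $r_j^{-1}$ factor in the non-glancing FIO calculus (cf.\ $F_{N+1}=-V/(2i\xi_1)$ after Lemma~\ref{lem:propSymbols}) blows up near glancing that the paper abandons that calculus and switches to the Melrose--Taylor parametrix. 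Your genericity hypothesis (non-degeneracy of boundary geodesics controlling the $r_j$ scaling) is also not the one the paper uses, and the uniform-in-$N$ summation you flag as the main technical obstacle is simply absent from the paper's order-counting argument.
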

\begin{remark}
There has been interest in the singularities of wave traces near accumulation points in the length spectrum. In \cite{CDV}, the authors show that the wave trace is smooth at such points for the Dirichlet Laplacian inside the unit disk in $\re^2$. Proposition \ref{prop:accumulation} gives an analog of such a result in our setting. Generically, the only accumulation points in $L_{\Omega}$ are the lengths of closed geodesics, $\gamma\in \partial\Omega$, \cite[Section 7.4]{Pet}. 
\end{remark}

Next, the Poisson formula of \cite{ZwPoisson} shows that the wave trace $\sigma$ is a distribution of the form
\m\sigma(t)=\sum_{\lambda\in \Lambda}e^{-it\lambda}.\,\,\m
Hence, we are able to use the estimate on the singularities of the wave trace along with \cite[Theorem 1]{SjoZw} to obtain Theorem \ref{thm:lowerBoundNumRes}.

\section{Existence for generic domains and potentials}
\label{sec:lowerBound}
We will use \cite{SjoZw} to establish a lower bound on the number of resonances in a logarithmic region. In particular, letting $\Lambda(h)$ be as in \eqref{def:lambda}, we prove
\begin{lemma}
\label{lem:resLowerBound}
Let $\Omega\subset \re^d$ be strictly convex with smooth boundary and $V\in C^\infty (\partial \Omega)$. Suppose that the length spectrum of the billiard trajectories, $L_{\Omega}$, is simple, that the length of all periodic billiards trajectories are isolated inside $L_{\Omega}$, and that all periodic billiards trajectories are clean. Finally, suppose that there exists $q\in B^*\partial\Omega$ and $M\in \ints^+$ such that $\beta^M(q)=q$ and $V(\pi\composed \beta^i(q))\neq 0$ for all $0\leq i<M$. Then, for all $\rho>l^{-1}_M(q)$ and $\delta >0$, there exist $h_0>0$ and $c>0$ such that for $0<h<h_0$, 
$$\#\left\{z\in \Lambda(h):|z|\leq 1,\, \Im z\geq -\rho h \log\left( \Re zh^{-1}\right)\right\}\geq ch^{-1+\delta}.$$
\end{lemma}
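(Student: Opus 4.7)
The plan is to relate the distribution of resonances to the singularities of the wave trace $\sigma(t)$ for the operator $-\Deltad{\pO}$, and then apply the quantitative existence theorem of Sj\"ostrand--Zworski \cite{SjoZw}. By the Poisson formula of \cite{ZwPoisson}, we have $\sigma(t)=\sum_{\lambda\in\Lambda}e^{-it\lambda}$ as a distribution on $t>0$, so control over the singular support of $\sigma$ translates into information about the resonance set.

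First I would localize the wave trace near a fixed closed billiard orbit. Let $\gamma$ be the periodic orbit through $q$ with period $M$ and length $T_\gamma=M l_M(q)$. Using the $\e$-dependent cutoff $\psi_{\e,T_\gamma}\in\Cc(\re)$ supported in a small neighborhood of $T_\gamma$ and equal to $1$ near $T_\gamma$, together with pseudodifferential cutoffs localizing the trace near a neighborhood of $\gamma$ in phase space, produce $\sigma_\beta(t)=\psi_{\e,T_\gamma}\sigma(t)$ with support near $T_\gamma$. By the assumptions (simplicity of $L_\Omega$, isolation of periodic lengths, and cleanness of periodic orbits), one may arrange that the only periodic trajectory contributing to $\sigma_\beta$ is $\gamma$ itself. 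The parametrix of Safarov \cite{Saf1} combined with the microlocal description of the boundary layer operators from Lemma \ref{lem:decompose} and the propagation picture associated to $G_B$ gives a representation of $\sigma_\beta$ in which each of the $M$ reflections contributes a factor of the reflection coefficient $\tilde\sigma(R_\delta)$ along $\gamma$. Since $V$ is nonvanishing along $\gamma$, each such factor is bounded below by a positive constant times $h$, and one obtains a stationary phase expansion whose leading coefficient is nonzero. Combining with the resonance-free region from Theorem \ref{thm:resFree} (which rules out contributions beyond the expected strip) and the smoothness at glancing-accumulation points (Proposition \ref{prop:accumulation} for the neglected glancing trajectories), this yields an estimate of the form
\begin{equation}
\label{eqn:FTLowerBound}
\bigl|\widehat{\psi_{\e,T_\gamma}\sigma_\beta}(\tau)\bigr|\geq c\,\tau^{-N_\gamma}
\end{equation}
for $\tau\in[\tau_0,2\tau_0]$ with $\tau_0$ large, where $N_\gamma$ depends on $M$.

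Next I would apply \cite[Theorem 1]{SjoZw}, which says that if the Fourier transform of a trace of the form $\sum e^{-it\lambda}$ admits a lower bound of the type \eqref{eqn:FTLowerBound} on an interval, then the number of resonances in the logarithmic strip $\{|z|\le 1,\ \Im z\ge -\rho h\log(\Re z/h)\}$ is at least $ch^{-1+\delta}$, provided $\rho$ is chosen larger than the slope dictated by $T_\gamma$ and the averaged reflectivity along $\gamma$. The rate of decay in $\tau$ translates, via the theorem of \cite{SjoZw}, into the size of the logarithmic strip that contains the resonances. Matching $T_\gamma=Ml_M(q)$ and the reflection factors against the heuristic from \eqref{eqn:heurRes} gives exactly the slope $l_M^{-1}(q)$, so any $\rho>l_M^{-1}(q)$ works.

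The main obstacle is step one: establishing the lower bound \eqref{eqn:FTLowerBound} for $\widehat{\sigma_\beta}$. This requires a careful non-degenerate stationary phase analysis at the periodic orbit, in which one must track the symbols through $M$ compositions of the billiard FIO $T$ and the reflection operator $R_\delta$, using the cleanness hypothesis to ensure the relevant Hessian is non-degenerate, and using the nonvanishing of $V$ along $\gamma$ to ensure the leading amplitude is nonzero. A subsidiary difficulty is excluding nearby non-isolated contributions and glancing trajectories; the first is handled by the isolation hypothesis on $L_\Omega$, while the second follows from the smoothing statement of Proposition \ref{prop:accumulation}, valid generically. Once \eqref{eqn:FTLowerBound} is in place, the application of \cite[Theorem 1]{SjoZw} is essentially a black box, and the quantitative lower bound $ch^{-1+\delta}$ follows directly.
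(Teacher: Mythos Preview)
Your overall architecture is correct and matches the paper: establish a lower bound on the Fourier transform of the (localized) wave trace at the length of the periodic orbit, then invoke the Poisson formula of \cite{ZwPoisson} together with \cite[Theorem~1]{SjoZw}. However, there are two genuine gaps in how you convert the trace estimate into the stated threshold $\rho>l_M^{-1}(q)$.

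First, to apply \cite[Theorem~1]{SjoZw} you need an a priori polynomial upper bound on the resonance counting function in the strip. The paper obtains this from \cite[Lemma~7.1]{GS} together with \cite{SjoDist,Vod1,Vod2,Vod3}, yielding $\#\{|z_j|\le 1,\ \Im z_j\ge -\gamma/h\}=\O{}(h^{-d})$. You never state or invoke such an upper bound, and \cite{SjoZw} will not produce anything without it.

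Second, and more seriously, your final paragraph misreads what \cite{SjoZw} outputs. With the trace lower bound $|\widehat{\psi_{\e,T}\sigma}(\tau)|\ge c\,\tau^{-M}$ (here $N_\gamma=M$, the number of reflections, which you should make explicit) and the $\O{}(h^{-d})$ upper bound, \cite[Theorem~1]{SjoZw} gives the conclusion only for $\rho>\frac{d+M}{T}$, \emph{not} for $\rho>M/T=l_M^{-1}(q)$. Your claim that ``matching $T_\gamma=Ml_M(q)$ and the reflection factors against the heuristic from \eqref{eqn:heurRes} gives exactly the slope $l_M^{-1}(q)$'' is not an argument; the dimension $d$ genuinely appears in the Sj\"ostrand--Zworski conclusion. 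The paper closes this gap with a multiple-traversal trick: the $n$-fold iterate of the orbit has $nM$ reflections and length $nT$, so applying the argument to this iterate gives the result for $\rho>\frac{d+nM}{nT}\to \frac{M}{T}$ as $n\to\infty$. Without this observation your proof does not reach the stated threshold.

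A smaller point: you appeal to Theorem~\ref{thm:resFree} and to the semiclassical decomposition of $G$ from Lemma~\ref{lem:decompose} in the trace analysis. Neither is needed or used in the paper here; the trace singularity computation is classical (Safarov's parametrix \cite{Saf1} plus the Melrose--Taylor parametrix near glancing), and the resonance-free region plays no role in establishing the lower bound on resonances.
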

\begin{remark} Our convention is not to include closed geodesics in the boundary in the set of periodic billiards trajectories. We do, however, include the length of such geodesics in $L_{\Omega}.$
\end{remark}

To do this, we describe the singularities of the wave trace for our problem. We then apply the Poisson formula of \cite{SjZwJFA,ZwPoisson} to see that for $t>0$ and andy $k>0$, the wave trace is of the form 
\m \sum_{\lambda\in \Lambda_\gamma} m(\lambda)e^{-i\lambda|t|}+\O{C^\infty}(1)\,,\,\,\m
 where 
 $$
\Lambda_{\gamma}=\{\lambda\text{ a resonance for }-\Delta_{V,\partial\Omega}:\Im \lambda\geq -\gamma | \lambda|\}.$$
Last, the results of \cite{SjoZw} can be applied to yield the lemma.

Let $U_0$ denote the forward free wave propagator. Then for all $T>0$, there exist $M>0$ such that, for $t\leq T$ and $|x|>M$, $U(t,x,x)=U_0(t,x,x)$. Hence, letting $\chi\in \Cc$, $\chi\equiv 1$ on $B(0,M)$, we have 
\begin{align*}\sigma(t)&=\int U(t,x,x)-U_0(t,x,x)dx\\
&=\int \chi U(t,x,x)-\chi U_0(t,x,x)dx=:\sigma_1(t)+\sigma_2(t).
\end{align*}
But, the singularities of $\sigma_2(t)$ occur only at times $t$ for which there exist a periodic geodesic on $\re^d$ with period $t$ \cite{Chaz}. Thus, $\sigma_2\in C^\infty((0,T])$ and we only need to consider singularities of $\sigma_1(t)$ as a distribution in $t$. We denote
$$\sigma_{1,\alpha}(t)=\int \chi(x) (U\composed \alpha)(t,x,x) dx,$$
where $\alpha$ is a microlocal cutoff. 

\subsection{A non-glancing parametrix} 

Safarov \cite[Section 3]{Saf1} (see also \cite[Appendix B]{SafRaySplit}) constructs a local parametrix for the wave transmission problem associated to \eqref{eqn:waveProblem}. We recall the results of the construction in Lemmas \ref{lem:interiorWaveParametrix} and \ref{lem:propSymbols}. 

Let $x=(x_1,x')$ be coordinates near $\partial\Omega$ where $x_1$ is the signed distance from the point to $\partial \Omega$ and $x'$ are coordinates on $\partial \Omega$. (Here $x_1>0$ in $\Omega$ and $x_1<0$ in $\re^d\setminus\overline{\Omega}).$ Then, let $\{g^{ij}\}$ be the inverse metric tensor and $a(x,\xi)$ the Riemannian quadratic form. Finally, let $g'$ and $a'$ be the restrictions of $g$ and $a$ to $T^*\partial\Omega$. In $(x_1,x')$ coordinates,
\begin{gather*}g'(x')=g(0,x'),\quad a(x_1,x',\xi_1,\xi')=\xi_1^2+\tilde{a}(x_1,x',\xi'),\\ a'(x',\xi')=\tilde{a}(0,x',\xi').\end{gather*}

Let $\alpha$ be a pseudodifferential operator. We seek operators $U_\alpha(t)$ with kernel $U_\alpha(t,x,y)$ such that, writing $|_{x_1^+=0}$ for restriction from $\Omega$ and $|_{x_1^-=0}$ for restriction from $\re^d{\setminus \overline{\Omega}}$, we have  
\begin{equation}
\label{eqn:wave}
\begin{cases}
(\partial_t^2-\Delta_x)U_\alpha=0\,,&\text{ for }x\in \re^d\setminus\partial \Omega\\
U_{\alpha}|_{x_1^+=0}=U_{\alpha}|_{x_1^-=0}\,,\\
\partial_{x_1} U_{\alpha}|_{x_1^-=0}-\partial_{x_1}U_{\alpha}|_{x_1^+=0}+V(x)U_{\alpha}=0&\text{ on }\partial\Omega
\\
U_{\alpha}|_{t=0}=\alpha(x,y)\,,\\
 U_\alpha(t)\in C^\infty \,, &\text{ for }t\ll 0.
\end{cases}
\end{equation}
Recall that $G_k^t\,$, the billiards flow of type $k$, is defined as in \eqref{eqn:billiardFlow}, \eqref{eqn:billiardFlow2}, and $\mc{O}_T$; the glancing set is defined as in \eqref{eqn:glancingSet}.

When $t$ is small enough so that no geodesics starting in $\supp \alpha$ hit the boundary, then $U_\alpha$ is, modulo $C^\infty$, the solution to the free wave equation on $\re^d$. Hence it is a homogeneous FIO associated to $G_0^{t}$ \cite[Section 3]{Saf1}. Also, if $\alpha_0$ is a pseudodifferential operator such that $\alpha_0=1$ in a neighborhood of $G_0^{t_1}(\supp \alpha)$, then 
\m U_{\alpha_0}(t)U_{\alpha}(t_1)=U_{\alpha}(t+t_1)\,\,\m
modulo a smoothing operator. 

Thus, to construct $U_\alpha$ we only need to consider the case when geodesics from $\supp \alpha$ hit the boundary in short times. We do this using the ansatz 
\begin{equation}
\label{eqn:boundaryPropForm}
U_{\alpha}(x,y,t)=\sum_{j=1}^3 \int e^{i\varphi_j(x,y,t,\theta)}b_j(x,y,t,\theta)d\theta\end{equation}
with terms in the sum corresponding to the incident ($j=1$), reflected ($j=2$) and transmitted ($j=3$) components. (Here $x_1\leq 0$ for $j=1,2$ and $x_1\geq 0$ for $j=3$.) The phase functions $\varphi_j$ coincide when $x\in \partial \Omega$ and satisfy the eikonal equations 
\begin{equation}
\label{eqn:eikonalWave}
\begin{cases}
\partial_{x_1}\varphi_1+\left[(\partial_t\varphi_1)^2-\tilde{a}(x_1,x',\nabla_{x'}\varphi_1)\right]^{1/2}=0,\\
\partial_{x_1}\varphi_3+\left[(\partial_t\varphi_3)^2-\tilde{a}(x_1,x',\nabla_{x'}\varphi_3)\right]^{1/2}=0,\\
\partial_{x_1}\varphi_2-\left[(\partial_t\varphi_2)^2-\tilde{a}(x_1,x',\nabla_{x'}\varphi_2)\right]^{1/2}=0,\\
\varphi_1|_{x_1^+=0}=\varphi_2|_{x_1^+=0}=\varphi_3|_{x_1^-=0}=0.\end{cases}
\end{equation}

Let the amplitudes $b_j\sim\sum_{n=0}^\infty b_j^n$ where $b_j^n$ is homogeneous in $\theta$ of degree $-n$. The functions $b_j^n$ can be found using the transport equations
\begin{equation}
\label{eqn:transportWave}
2i(\partial_tb_k^j\partial_t\varphi_k-\nabla b_k^j\cdot \nabla\varphi_k)+i(\partial_t^2\varphi_k-\Delta\varphi_k)b_k^j=(\partial_t^2-\Delta)b_k^{j-1}
\end{equation}
once boundary conditions are imposed. These boundary conditions follow from \eqref{eqn:wave} and are given by the equations
\begin{equation}
\label{eqn:FIOSymbolsBoundary1}
ib_1^j\partial_{x_1}\varphi_1+ib_2^j\partial_{x_1}\varphi_2-ib_3^j\partial_{x_1}\varphi_3+\partial_{x_1}(b_1^{j-1}+b_2^{j-1}-b_3^{j-1})
+V(x')b_3^{j-1}=0
\end{equation}

\begin{remark} Equation \eqref{eqn:FIOSymbolsBoundary1} is the only place where we require that $V\in C^\infty$ rather than $V\in \PsiHom(\partial\Omega)$. This is due to the fact that the zero frequency would otherwise appear in the symbol of $V$.
\end{remark}
\begin{equation}
\label{eqn:FIOSymbolsBoundary2}
b_1^j+b_2^j=b_3^j
\end{equation}
at $x_1=0$.  We use the convention that $b_k^{-1}\equiv 0.$

\begin{remark} Notice that $\varphi_1$ and $\varphi_3$ solve the same eikonal equation and hence there is a $C^\infty$ function $\varphi'$ that has $\varphi'|_{x_1\leq 0}=\varphi_3$ and $\varphi'|_{x_1>0}=\varphi_1.$
\end{remark}

Now, combining \cite[Lemma 1.3.17]{SafVas} with \cite[Propositions 3.2 and 3.3]{Saf1} gives that in the case that $\Omega$ is strictly convex

\begin{lemma}

\label{lem:interiorWaveParametrix}
For $t_0<T$, $\nu\in S^*\re^d\setminus (S^*\re^d|_{\partial\Omega}\cup\mc{O}_T)$ there is a conical neighborhood, $W$ of $\nu$ such that for every $\alpha$ with $\WF(\alpha)\subset W$, and $G_k^{t_0}\nu \notin S^*\re^d|_{\partial\Omega}$ for all $k\in K$ 
\m U_{\alpha}(t_0)=\sum_{k}U_{\alpha,k}(t_0)\,\,\m
where $U_{\alpha,k}(t_0)$ is an FIO associated to the canonical relation $G_k^{t_0}$. 
\end{lemma}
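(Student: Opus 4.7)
My plan is to construct $U_\alpha(t_0)$ explicitly by iterating the three-term ansatz \eqref{eqn:boundaryPropForm} across successive boundary encounters. Since $\nu\notin \mc O_T$ and $G_k^{t_0}\nu \notin S^*\re^d|_{\partial\Omega}$ for all $k$, I first shrink $W$ so that every bicharacteristic through a point of $W$ meets $\partial\Omega$ transversally at each of the finitely many times it arrives at the boundary in $[0,t_0]$, and so that time $t_0$ itself is not a boundary hit. Using the composition property $U_{\alpha_0}(t)U_\alpha(t_1) = U_\alpha(t+t_1)$ modulo smoothing, I split $[0,t_0]$ into subintervals on each of which bicharacteristics from $\supp\alpha$ hit $\partial\Omega$ at most once; it then suffices to build the propagator over one such subinterval with a single boundary hit.

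For that single-hit step, I solve the eikonal equations \eqref{eqn:eikonalWave} near the incoming bicharacteristic: the non-glancing hypothesis makes $(\partial_t\varphi_j)^2 - \tilde a(x_1,x',\nabla_{x'}\varphi_j) \ge c > 0$ at the boundary, so the square roots in \eqref{eqn:eikonalWave} are smooth, and $\varphi_1,\varphi_2,\varphi_3$ are obtained by the method of characteristics with common boundary value. The resulting Lagrangians $\Lambda_{\varphi_1}$, $\Lambda_{\varphi_2}$, $\Lambda_{\varphi_3}$ are, respectively, the graph of free transport through the boundary, of the classical reflection, and of transmission, each coinciding microlocally with one step of $G_0^t$ or $G_1^t$ in the notation of \eqref{eqn:billiardFlow}. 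The amplitudes $b_j^n$ are then built inductively: at order $0$ the two linear boundary conditions \eqref{eqn:FIOSymbolsBoundary1}--\eqref{eqn:FIOSymbolsBoundary2} determine $b_2^0, b_3^0$ from the incident amplitude $b_1^0$ (fixed by $\alpha$), and their nondegeneracy is precisely the transversality statement; at higher orders the same $2\times 2$ linear system determines $b_2^n, b_3^n$ from $b_j^{n-1}$ and $b_1^n$, while $b_1^n$ propagates along the bicharacteristics via the transport equation \eqref{eqn:transportWave}.

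Iterating, after each boundary hit I treat the reflected and the transmitted pieces as new initial data and repeat the construction. Each hit offers two choices, reflect or transmit, and the resulting binary sequence is exactly the symbol $k\in K$ appearing in the definition \eqref{eqn:billiardFlow}--\eqref{eqn:billiardFlow2} of $G_k^t$; the corresponding piece $U_{\alpha,k}(t_0)$ is an FIO whose canonical relation is, by the eikonal construction, $G_k^{t_0}$. The sum over $k$ is effectively finite because a bicharacteristic in $W$ can only encounter $\partial\Omega$ finitely many times in time $t_0$, again by non-glancing and compactness of $\supp\alpha$.

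The main obstacle is verifying that the non-glancing property persists along every branch of $G_k$ and that the successive amplitude problems close up at each order without producing uncontrolled boundary remainders; both points rest on the hypothesis $G_k^{t_0}\nu \notin S^*\re^d|_{\partial\Omega}$ holding simultaneously for all $k\in K$, so that the $2\times 2$ systems \eqref{eqn:FIOSymbolsBoundary1}--\eqref{eqn:FIOSymbolsBoundary2} remain uniformly invertible and the resulting sum solves \eqref{eqn:wave} modulo $C^\infty$. The bookkeeping for the amplitude transport through each reflection is exactly what is carried out in \cite[Propositions~3.2, 3.3]{Saf1} together with \cite[Lemma~1.3.17]{SafVas}, and I invoke those statements to finish.
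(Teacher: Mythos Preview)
Your proposal is correct and follows essentially the same approach as the paper, which does not give its own proof but simply states that the lemma follows by combining \cite[Lemma~1.3.17]{SafVas} with \cite[Propositions~3.2 and 3.3]{Saf1}; your sketch is precisely the content of those references, and you invoke them yourself at the end.
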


In order to describe the singularities of the wave trace, we need to determine the symbols of the $U_{\alpha,k}$. We again follow \cite{Saf1} to do this in our special case. 

\begin{remark} We note that although our case falls into the framework of \cite{Saf1}, unlike in the cases explicitly considered there, the FIO associated to a reflected geodesic will decrease in order and hence become smoother with increasing numbers of reflections. 
\end{remark}

By conjugating by $(\det g^{ij})^{1/2}$, we can associate operators $\alpha'=:\gamma$, $U_\alpha'=V_\gamma$, and $U_{\alpha,k}=:V_{\gamma,k}$, to $\alpha$, $U_\alpha$, and $U_{\alpha,k}$ that act on half densities instead of functions. Let $C_k\subset \re_+\times S^*\re^d\times S^*\re^d$ be the graph of $G_k^t$ with the points $(t+0,G_k^{t+0}\nu,\nu)$ and $(t-0,G_k^{t-0}\nu,\nu)$ sewn together when $G_k^{t}\nu\in S^*\re^d|_{\partial 
\Omega}.$ Then, one can use $(t,y,\eta)$ as coordinates on $C_k$.

Now, we compute the half density component of the symbol of $V_{\gamma,k}(t)$, as in \cite{Saf1}. Define the section $E_k$, by 
\begin{itemize} 
\item[-] $E_k(t,y_0,\eta_0)$ is right continuous for fixed $(y_0,\eta_0)$
\item[-] $E_k|_{t=0}=\exp(i\pi d/4)$
\item[-] $E_k(t,y_0,\eta_0)$ is locally constant for $(y_0,\eta_0)$ fixed  and has discontinuities at the points $t_n$ where the geodesic of type $k$ starting at $(y_0,\eta_0)$ hits the boundary. 
\item[-] At discontinuity points, $E_k(t_n+0,y_0,\eta_0)=F_nE_k(t_n-0,y_0,\eta_0).$
\end{itemize}

\begin{lemma}
\label{lem:propSymbols}
Let $\gamma,\,\gamma_0\in\PsiHom^0(\partial\Omega;\Omega^{1/2})$ be pseudodifferential operators with 
$$ (\WF(\gamma)\cup\WF(\gamma_0))\cap T^*\partial\Omega=\emptyset\,,\quad \text{ and }\quad \WF(\gamma)\cap\mc{O}_T=\emptyset$$
acting on half densities. Then, for any $t_0<T$, $\gamma_0^0E_k(t_0)\gamma^0$ is the symbol of $\gamma_0^0 V_{\gamma,k}$.  Furthermore, the order of the FIO $V_{\gamma,k}$ decreases by 1 after each reflection. 
\end{lemma}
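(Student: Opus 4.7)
The plan is to read off the symbol of $V_{\gamma,k}(t_0)$ directly from the Safarov ansatz \eqref{eqn:boundaryPropForm}, following the eikonal/transport/boundary hierarchy \eqref{eqn:eikonalWave}--\eqref{eqn:FIOSymbolsBoundary2}. Away from $\partial\Omega$, all three pieces $\varphi_j$, $b_j^n$ coincide with the standard half-density parametrix for the free wave equation, so at $t=0$ the half-density symbol of $\gamma_0^0 V_{\gamma,k}$ is $\gamma_0^0 \cdot \exp(i\pi d/4)\cdot \gamma^0$, matching the normalization $E_k|_{t=0}=\exp(i\pi d/4)$. Along a geodesic segment not touching $\partial\Omega$, the homogeneous transport equation at order zero for $b_j^0$ is exactly the equation that forces the half-density symbol to be parallel-transported along the bicharacteristic, so the symbol is constant along the flow $\exp_t$; in the coordinates $(t,y,\eta)$ on $C_k$ this means $E_k$ is locally constant in $t$ between boundary hits.

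The content is concentrated at the reflection/transmission times. At such a time $t_n$, apply \eqref{eqn:FIOSymbolsBoundary1}--\eqref{eqn:FIOSymbolsBoundary2} at the principal level ($j=0$, so the derivative term and the $V$ term are absent because $b_k^{-1}\equiv 0$). Since \eqref{eqn:eikonalWave} gives $\partial_{x_1}\varphi_1=\partial_{x_1}\varphi_3=-\partial_{x_1}\varphi_2$ on $\partial\Omega$, the $2\times 2$ system
\[
b_1^0+b_2^0=b_3^0,\qquad b_1^0-b_2^0-b_3^0=0
\]
forces $b_2^0=0$, $b_3^0=b_1^0$: the principal symbol transmits with factor $1$ and the reflected piece vanishes to leading order. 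To extract the reflection coefficient, pass to order $j=1$: since $b_1^0+b_2^0-b_3^0\equiv 0$ at $x_1=0$ its $x_1$-derivative drops out (one uses the transport equation to rewrite $\partial_{x_1}$-derivatives in terms of tangential derivatives that vanish by the previous identity), and one obtains
\[
b_2^1=\frac{V(x')}{2i\,\partial_{x_1}\varphi_1}\, b_1^0,\qquad b_3^1=b_1^1+b_2^1.
\]
This is the jump factor $F_n$: for a transmission step ($k_i=1$) the leading symbol is multiplied by $1$, while for a reflection step ($k_i=0$) the new leading symbol of the reflected piece is the order $-1$ expression $V(x')/(2i\,\partial_{x_1}\varphi_1)$ times the incoming leading symbol, which accounts both for the right-continuous jump $E_k(t_n+0)=F_n E_k(t_n-0)$ and for the drop in order by one.

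With the one-step picture in hand, the full claim follows by iterating the Safarov construction, patching $V_{\gamma,k}$ across successive boundary hits by composing with a microlocal partition of unity adapted to $G_k^t$, which is licit on the intervals $(t_{j-1},t_j)$ because $\WF(\gamma)\cap \mc{O}_T=\emptyset$ and $\WF(\gamma_0)\cap T^*\partial\Omega=\emptyset$ keep us away from glancing and tangent directions throughout the construction. Composing the free half-density propagation (which preserves the symbol) with the jump factors $F_n$ at each reflection and a factor $1$ at each transmission yields precisely $\gamma_0^0\,E_k(t_0)\,\gamma^0$ as the symbol of $\gamma_0^0 V_{\gamma,k}(t_0)$, and the order decreases by exactly one per reflection because each $F_n$ is in $S^{-1}_{\hom}$.

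The principal obstacle will be the bookkeeping at the patching step, in particular verifying that the half-density pieces glue across reflections with the correct Maslov contribution and without creating spurious boundary singularities. This is handled by noting that, under the standing assumption that the reflection point lies in the hyperbolic region (a consequence of $\WF(\gamma)\cap \mc{O}_T=\emptyset$), the phase $\varphi'$ obtained by gluing $\varphi_1$ and $\varphi_3$ is $C^\infty$, so no new Maslov factor arises at transmission, while at reflection the Maslov contribution is already included in $E_k$ via its right-continuity convention and the sign convention $\partial_{x_1}\varphi_2=-\partial_{x_1}\varphi_1$. With these conventions fixed, the equality $\sigma(\gamma_0^0 V_{\gamma,k}(t_0))=\gamma_0^0 E_k(t_0)\gamma^0$ and the claimed order reduction follow from the one-step computation above applied inductively.
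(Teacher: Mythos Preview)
Your proof is correct and follows the same route as the paper. The paper itself does not give a self-contained argument: it cites Safarov \cite[Proposition 3.3]{Saf1} and \cite[Appendix B]{SafRaySplit} for the symbol statement, and then computes the jump factors $F_n$ from \eqref{eqn:FIOSymbolsBoundary1}--\eqref{eqn:FIOSymbolsBoundary2} exactly as you do, obtaining $F_N=1$ for transmission and $F_N=0$, $F_{N+1}=-V/(2i\xi_1)$ for reflection. Your write-up fills in the steps (free propagation preserves the half-density symbol, the $j=0$ boundary system kills the principal reflected piece, the $j=1$ system produces the order $-1$ factor) that the paper leaves to the citation. One small point: your justification that $\partial_{x_1}(b_1^0+b_2^0-b_3^0)|_{x_1=0}=0$ is a bit compressed; the cleanest way is to note that $b_2^0\equiv 0$ (it solves the homogeneous transport equation with zero Cauchy data on a hypersurface transverse to the bicharacteristics) and that $b_1^0, b_3^0$ are restrictions of the same smooth solution since $\varphi_1,\varphi_3$ glue to a $C^\infty$ phase across $x_1=0$. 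Also, your reflection factor $V/(2i\partial_{x_1}\varphi_1)$ differs in sign from the paper's $-V/(2i\xi_1)$; this is a convention issue (which normal component is called $\xi_1$) and does not affect the lemma.
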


This follows from \cite[Proposition 3.3]{Saf1} and \cite[Appendix B]{SafRaySplit}. We compute the $F_n$ for use below. In particular, assume that $b^j=0$ for $j<N.$ Then we have 
$F_N=1$
if the geodesic is transmitted through $\partial \Omega$ 
and 
$$F_N=0\,,\quad\quad F_{N+1}=-\frac{V+\partial_{x_1}(b_1^N+b_2^N-b_3^N)}{2i\xi_1}=-\frac{V}{2i\xi_1}$$
if it is reflected at $\partial\Omega$. For the second equality, we use the transport equations at $x_1=0$. To determine $F_n$ when the geodesic is reflected, we notice that the principal term is 0 so the computation above follows from \eqref{eqn:FIOSymbolsBoundary1} and \eqref{eqn:FIOSymbolsBoundary2} with $j=N+1$.

\subsection{Analysis for Non-Glancing Trajectories Away from the Boundary}
\label{sec:nonGlancNonBound}
Let $\partial\Omega\Subset U$ and $\chi\in \Cc(\re^d)$ have $\chi\equiv 1 $ on $U$. Let $\chi_1=1-\chi$. We now consider $\sigma_{1,\chi_1}(t)$, that is, the contributions from points away from the boundary. Fix $T>0$ and let $\alpha$ be a pseudodifferential operator with $\WF(\alpha)\cap\mc{O}_T=\emptyset$. 

Using Lemma \ref{lem:interiorWaveParametrix} and an analysis similar to that in \cite{Chaz} and \cite{DuiGui}, singularities of $\sigma_{1,\chi_1\alpha}$ can only occur at $T_j$ where $T_j$ is the period of some billiard flow. That is, $G_k^{T_j}(x,\xi)=(x,\xi).$ We assume that $\Omega$ is strictly convex. Thus, the only periodic trajectories are of type $k=0$ and are trapped inside $S^*\Omega$. 

Since we have assumed that the length spectrum of $\Omega$ is simple and discrete, the fixed point set for the billiards trajectory at a time $T$ is always a submanifold of $S^*M$ of dimension 1 with boundary. Moreover, we have assumed that it is a clean submanifold in the sense of \cite{DuiGui} away from the boundary of $\Omega$.

We now follow \cite{DuiGui} to compute the symbol $\sigma_{1,\chi_1\alpha}$ as a Lagrangian distribution. Let $\rho:\re \times \re^d\to \re \times \re^d\times \re^d$ be the diagonal map. Then, given a half density, $u$ on $\re\times \re^d\times \re^d$, we can pull it back to the diagonal and multiply the two half density factors in $\re^d$ to get, $\rho^*u$, a density in $\re^d$ times a half density in $\re$. Then, we integrate over $\re^d$ to get a half density on $\re$. We denote this by $\pi_*\rho^* u$ where $\pi:\re\times X \to \re$ is the projection map. 

Then, letting $\alpha_i$ be a partition of unity for $S^*B(0,M)$, up to smooth terms, $\pi_*\rho^*\chi  U_{\chi_1\alpha}=\sum_{\alpha_i} \chi \sigma_{1,\chi_1\alpha\alpha_i}$.  Fix $T$, the length of a periodic billiard trajectory. For $\alpha_i$ supported away from the periodic trajectory, Lemma \ref{lem:propSymbols} together with analysis similar to that in \cite{Chaz} shows that $\sigma_{1,\chi_1\alpha\alpha_i}\in C^\infty$. Therefore, we assume without loss that $\alpha_i$ is supported near a periodic trajectory with $N$ reflections and period $T$.

Then, 
\m \sigma_{1,\chi_1\alpha\alpha_i}\in I^{\tfrac{1}{2}-\tfrac{1}{4}-N}(\Lambda_T)\,\,\m with $\Lambda_T:=\{(T,\tau):\tau\in \re\setminus\{ 0\}\}.$ 
Thus, $\sigma_{1,\chi_1\alpha\alpha_i}(t)$ is its symbol times
$$\frac{1}{2\pi}\int s^{-N}e^{-is(t-T)}ds$$
plus lower order terms. The symbol of $\sigma_{1,\chi_1\alpha\alpha_i}(t)$ is given, modulo Maslov factors, by

$$\sqrt{ds}\int_Z \chi_1\alpha E_0(T,\cdot,\cdot)\alpha_i d\mu_Z$$
where $Z$ is the fixed point set of the billiard trajectories of length $T$. 

Now, $Z$ consists of a single billiard trajectory and hence $E_0$ is constant on the fixed point set. Thus the symbol is nonzero as long as $V\neq 0 $ on $\pi(Z\cap S^*B(0,M))|_{\partial \Omega}$. Hence, we have that, summing over $\alpha_i$ supported near the periodic trajectory, 
\begin{equation}
\label{eqn:traceEstInterior}|\hat{\sigma}_{1,\chi_1\alpha}(\tau)|=|\sum_{i}\hat{\sigma}_{1,\chi_1\alpha\alpha_i}(\tau)|\geq c\tau^{-N}.\end{equation}

\subsection{Analysis for Non-Glancing Trajectories Near the Boundary}
\label{sec:nonGlancBound}
We now analyze $\sigma_{1,\chi\alpha}$.  That is, we analyze the wave propagator near boundaries. To do this, we assume without loss of generality that the geodesic starting at $\nu\in S^*\re^d$ intersects $\partial\Omega$ for the first time at $0\leq t_1$ and that the geodesic is traveling in the $-x_1$ direction. 
Let $\alpha$ have support in a small conic neighborhood of $\nu$. By Lemma \ref{lem:propSymbols}, and formula \eqref{eqn:boundaryPropForm}, we see that for $t$ sufficiently close to $t_1$, 
\begin{equation}
\label{eqn:nearBoundary}
U_{\alpha}(t)=A_{\alpha}+H(x_1)R_{\alpha}+H(-x_1)T_{\alpha}
\end{equation}
where $H$ is the Heaviside function, $A_{\alpha}$, and $T_{\alpha}$ are classical FIOs associated to $G_{1/3}^t$ and $R$ is an FIO associated with $\tilde{G}_{0}^t$ where 
\m \tilde{G}_0^t=\exp_{t-t_1}\composed G_0^{t_1}.\,\,\m
That is, $A$ and $T$ are associated to trajectories that are transmitted through the boundary while $R$ is associated to a reflected trajectory. Also, $R$ and $T$ are one order lower than $A$. 

We now check that the multiplication of $R_{\alpha}$ and $T_{\alpha}$ by the Heaviside function is well defined as a distribution and compute its wavefront set. 

We have that
\begin{gather*}
\begin{aligned}\WF(H(x_1))&=\WF(H(-x_1))\\
&=\{(t,(0,x'),y,0,(\xi_1,0),0),\,\xi_1\neq 0\}=N^*(\{x_1=0\})\,,\end{aligned}\\
\begin{aligned}\WF(A)&=\WF(T)\\
&\!\!\!\!\!=\{(t,x,y,\tau,-\xi,\eta):\tau^2=|\eta|^2,\, G_{1/3}^t((y,\eta))=(x,\xi),\,(\tau,-\xi,\eta)\neq 0\}.\end{aligned}\\
\WF(R)=\{(t,x,y,\tau,-\xi,\eta):\tau^2=|\eta|^2,\,\tilde{G}_{0}^t((y,\eta))=(x,\xi),\,(\tau,\xi,\eta)\neq 0\}.
\end{gather*}
Therefore, it is easy to check that 
$$\WF(R)\cap -\WF(H)=\WF(T)\cap-\WF(H)=\emptyset$$
where 
\m -B=\{(t,x,y,-\tau,-\xi,-\eta):(t,x,y,\tau,\xi,\eta)\in B)\}.\,\,\m
Thus, the multiplication is a well defined distribution. Moreover, $\WF(HT)\subset \WF(H)\cup \WF(T)\cup A$ where $$
A:=\left\{\begin{gathered}(t,(0,x'),y,\tau,(\xi_1,\xi'),\eta):\\\exists\, \xi_\nu,\, (t,(0,x'),y,\tau,(\xi_\nu+\xi_1,\xi'),\eta)\in \WF(T)\end{gathered}\right\}.$$
The computation for $\WF(HR)$ follows similarly.

Putting this together with Lemma \ref{lem:interiorWaveParametrix}, we have the following description of $U_\alpha$ away from glancing

\begin{lemma}

\label{lem:waveParametrix}
For $t_0<T$, $\nu\in S^*\re^d\setminus \mc{O}_T$ there is a conical neighborhood, $U$, of $\nu$ such that for every $\alpha$ with support in $U$,
\m U_{\alpha}(t_0)=\sum_{k}U_{\alpha,k}(t_0)\,\,\m
where, if $G_k^{t_0}\nu\notin S^*\re^d|_{\partial\Omega}$, $U_{\alpha,k}(t_0)$ is an FIO associated to the canonical relation $G_k^{t_0}$ and if $G_k^{t_0}\nu\in S^*\re^d|_{\partial\Omega}$, $U_{\alpha,k}(t_0)$ is of the form \eqref{eqn:nearBoundary}.
\end{lemma}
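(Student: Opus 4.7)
\begin{outline}
My plan is to reduce this lemma to an inductive combination of Lemma~\ref{lem:interiorWaveParametrix} and the local boundary parametrix constructed via~\eqref{eqn:boundaryPropForm}--\eqref{eqn:FIOSymbolsBoundary2}, gluing with the semigroup property $U_{\alpha_0}(s)U_\alpha(s')=U_\alpha(s+s')$ modulo smoothing operators. The first step is geometric: since $\nu\notin\mc{O}_T$, every billiard trajectory $G_k^t\nu$, $0\le t\le t_0$, of every type $k\in K$ is non-glancing, so it meets $\partial\Omega$ only transversally. By compactness of $[0,t_0]$ and strict convexity of $\Omega$, each such trajectory crosses $\partial\Omega$ at only finitely many times $0<t_1^{(k)}<\dots<t_{N_k}^{(k)}\le t_0$, and, after possibly shrinking the conical neighborhood $U$ of $\nu$, the same bound $N_k$ and the transversality persist uniformly on $U$. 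I will first treat a single type $k$, then sum.

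For a fixed type $k$ with $G_k^{t_0}\nu\notin S^*\re^d|_{\partial\Omega}$, the plan is to partition $[0,t_0]=\bigcup_{j=0}^{N_k}[s_j,s_{j+1}]$ so that each $t_j^{(k)}$ lies in the interior of some $[s_{2i-1},s_{2i}]$ and the complementary intervals $[s_{2i},s_{2i+1}]$ are disjoint from boundary-crossing times. On each boundary-free interval, Lemma~\ref{lem:interiorWaveParametrix} gives an FIO representation associated with the free geodesic flow acting on a microlocal cutoff of the preceding output. On each interval containing a boundary crossing, I will use the ansatz~\eqref{eqn:boundaryPropForm}: the three phases $\varphi_1,\varphi_2,\varphi_3$ solve the eikonal equations~\eqref{eqn:eikonalWave} and exist on a small conic neighborhood by the method of characteristics (transversality of the trajectory to $\partial\Omega$ makes the initial-value problem non-degenerate, and the square roots in~\eqref{eqn:eikonalWave} are real since we are in the hyperbolic region, i.e.\ away from $\mc{O}_T$); the amplitudes $b_k^j$ are then built recursively from the transport equations~\eqref{eqn:transportWave} and the boundary relations~\eqref{eqn:FIOSymbolsBoundary1}--\eqref{eqn:FIOSymbolsBoundary2}, the latter being a $2\times 3$ linear system in $(b_1^j,b_2^j,b_3^j)$ that is uniquely solvable once the lower-order data are known. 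Composing (via Egorov and the group property modulo $C^\infty$) the interior FIOs with these boundary parametrices yields precisely the $A_\alpha+H(x_1)R_\alpha+H(-x_1)T_\alpha$ structure of~\eqref{eqn:nearBoundary} at the output time. The order drop by one at each reflection claimed in Lemma~\ref{lem:propSymbols} is transparent from~\eqref{eqn:FIOSymbolsBoundary1}, since the reflected leading coefficient vanishes and the nonzero contribution first appears at one order lower.

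The main obstacle, and the reason the statement needs to be recorded separately from Lemma~\ref{lem:interiorWaveParametrix}, is that if $G_k^{t_0}\nu\in S^*\re^d|_{\partial\Omega}$ then at the final time one lands \emph{on} the interface and one cannot conclude that $U_{\alpha,k}(t_0)$ is an FIO in the usual sense. Here I will simply stop the iteration just after the last boundary crossing and carry the full three-term expression~\eqref{eqn:nearBoundary} as the definition of $U_{\alpha,k}(t_0)$. The key point to check is that the Heaviside factors $H(\pm x_1)$ multiplying $R_\alpha$ and $T_\alpha$ produce well-defined distributions. This is exactly the wavefront-set calculation recorded immediately before the statement of the lemma: $\WF(H(\pm x_1))=N^*\{x_1=0\}$ while, by transversality, neither $\WF(R_\alpha)$ nor $\WF(T_\alpha)$ contains a covector of the form $(t,(0,x'),y,0,(\xi_1,0),0)$ with $(\tau,\xi',\eta)=0$; hence H\"ormander's product criterion applies and the extra conormal direction $(0,\xi_1,0)$ added to $\WF(HR_\alpha)$ and $\WF(HT_\alpha)$ is harmless.

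Finally, summing over $k$ requires only finitely many terms: for $\alpha$ supported in a sufficiently small conic neighborhood $U$ of $\nu$, all billiard trajectories emanating from $U$ undergo at most a fixed number $N=\max_k N_k$ of transmissions/reflections in time $t_0$, so the binary labels $k\in K$ that contribute to $U_\alpha(t_0)$ above the identity element correspond to the first $N$ coordinates of $k$, a finite set. Hence $U_\alpha(t_0)=\sum_k U_{\alpha,k}(t_0)$ modulo $C^\infty$, with each summand of the claimed form. Uniqueness of $U_\alpha$ (by well-posedness of the transmission problem~\eqref{eqn:wave}) ensures the decomposition agrees with the true propagator, completing the proof sketch.
\end{outline}
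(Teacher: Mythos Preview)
Your proposal is correct and follows essentially the same approach as the paper: combine Lemma~\ref{lem:interiorWaveParametrix} (which handles branches landing off the boundary) with the near-boundary form~\eqref{eqn:nearBoundary} justified by the wavefront-set calculation immediately preceding the lemma, glued via the semigroup property. The paper states the lemma as a direct consequence of these ingredients without writing out the inductive partition or the finiteness argument you supply, but your elaboration is exactly what is implicit in the phrase ``Putting this together with Lemma~\ref{lem:interiorWaveParametrix}.''
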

At this point, we can prove a result analogous to that of Chazarain \cite{Chaz}. 
\begin{lemma}
\label{lem:ChazLem}
Fix $T<\infty$ and $\nu \in S^*\re^d\setminus\mc{O}_T$. Then there exists $W$, a neighborhood of $\nu$, such that for $\alpha$ with $\WF(\alpha)\subset W$
\begin{multline*}
\WF(\sigma_{\alpha})\subset\left\{(t,\tau):\exists \nu\in S^*\re^d\cap\supp\alpha,\,k,\text{ such that }\right.\\
\left.G^t_k\nu=\nu \text{ or }(x,\xi)\in S^*\re^d|_{\partial\Omega}\text{ and } G^t_k(x,\xi)=(x,(-\xi_1,\xi'))\right\}
\end{multline*}
\end{lemma}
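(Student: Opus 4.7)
The strategy is to localize via the parametrix of Lemma \ref{lem:waveParametrix} and then compute $\WF(\sigma_\alpha)$ piece by piece using the pullback/pushforward calculus of Lemmas \ref{lem:pullback} and \ref{lem:pushforward}. First I would shrink $W$ so that for $\alpha$ with $\WF(\alpha)\subset W$ and $t\in(0,T]$, Lemma \ref{lem:waveParametrix} expresses $U_\alpha(t,x,y)$ as a finite sum $\sum_k U_{\alpha,k}(t)$, where each summand is either a classical FIO whose canonical relation is the graph of $G_k^t$ (when $G_k^t(\WF(\alpha))$ is disjoint from $S^*\re^d|_{\partial\Omega}$), or has the three-term form $A_\alpha+H(x_1)R_\alpha+H(-x_1)T_\alpha$ of \eqref{eqn:nearBoundary} (near a boundary crossing). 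Since $\sigma_\alpha=\pi_*\rho^*(\chi U_\alpha)$ with $\rho(t,x)=(t,x,x)$ and $\pi(t,x)=t$, it suffices to track $\WF$ through this pullback and pushforward for each piece.

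For a classical FIO piece with $\WF(U_{\alpha,k})\subset\{(t,x,y,\tau,-\xi,\eta):\tau^2=|\eta|^2,\;G_k^t(y,\eta)=(x,\xi)\}$ the nondegeneracy $\eta\ne 0$ guarantees that $N_\rho$ meets this wavefront set only in the zero section, so Lemma \ref{lem:pullback} applies cleanly: the pullback to the diagonal imposes $x=y$ and sends the covector to $(\tau,-\xi+\eta)$. Lemma \ref{lem:pushforward} then forces $-\xi+\eta=0$, so that $G_k^t(x,\eta)=(x,\eta)$. This gives exactly the first alternative $G_k^t\nu=\nu$ in the statement.

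The main obstacle is the boundary piece. I would treat $H(x_1)R_\alpha$ using the product formula for wavefront sets recalled before Lemma \ref{lem:waveParametrix}, namely
\[
\WF(H(x_1)R)\subset\WF(H(x_1))\cup\WF(R)\cup A,
\]
with $A$ the set of points $(t,(0,x'),y,\tau,(\xi_1,\xi'),\eta)$ for which some $\xi_\nu$ translates the $x_1$-covector into a point of $\WF(R)$. Contributions from $\WF(H(x_1))$ have $\tau=0$ and vanishing cotangential covector, so they disappear under the pushforward and contribute nothing to $\WF(\sigma_\alpha)$. Contributions coming solely from $\WF(R)$, where $\WF(R)=\{(t,x,y,\tau,-\xi,\eta):\tilde G_0^t(y,\eta)=(x,\xi)\}$, yield, after diagonal restriction and integration, the periodic condition $\tilde G_0^t(x,\eta)=(x,\eta)$, which is a genuine closed billiard orbit of some type $k$. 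The genuinely new contributions arise from $A$: on the diagonal they force $x=y=(0,x')\in\partial\Omega$, cotangential covectors to agree, and the normal components to differ by an arbitrary $\xi_\nu$. Reading this through the canonical relation of $\tilde G_0^t$ gives exactly the second alternative $G_k^t(x,\xi)=(x,(-\xi_1,\xi'))$ with $(x,\xi)\in S^*\re^d|_{\partial\Omega}$.

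The analysis of $H(-x_1)T_\alpha$ is similar but simpler: since $T_\alpha$ is associated to $G_{1/3}^t$ (a purely transmitted trajectory) one checks that the set $A$ produces only points already yielding genuine periodic orbits of type $k$ containing that transmission, so no extra contributions arise. Finally, one assembles the sum over the finitely many $k$ produced by Lemma \ref{lem:waveParametrix} on $[0,T]$, verifies that the nontrivial wavefront set intersections $\WF(H(\pm x_1))\cap(-\WF(R,T))$ are empty as noted in the excerpt (so all the pullbacks are well defined), and concludes the stated inclusion. The delicate point throughout is to distinguish the two types of diagonal contributions at the boundary — those coming from $\WF(R)$ directly versus those from the product term $A$ — because only the latter generate the somewhat unusual flipped-normal alternative in the statement.
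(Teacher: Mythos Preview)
Your proposal is correct and follows essentially the same approach as the paper: decompose $U_\alpha$ via Lemma~\ref{lem:waveParametrix}, then for each piece compute $\WF(\pi_*\rho^*(\cdot))$ by first checking the pullback to the diagonal is well defined and then tracking the three components $\WF(H)$, $\WF(T)$ (or $\WF(R)$), and the product set $A$ through the pushforward. One small inaccuracy: your claim that for $H(-x_1)T_\alpha$ the set $A$ yields ``only points already yielding genuine periodic orbits'' is not quite right---in fact the paper carries out the explicit computation for $HT$ rather than $HR$ and obtains \emph{both} the periodic and the flipped-normal alternatives from the $A$ contribution there---but this does not affect the validity of your argument, since either way the contributions lie in the stated set.
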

\begin{proof}

First, we compute the wave front set of $\pi_*\rho^*HT$. The computation for $HR$ will follow similarly. 

We need to check that the pull back $\rho^*(HT)$ is well defined, that is, that $\WF(HT)\cap N^*(\{x=y\})=\emptyset.$ Here 
\m N^*(\{x=y\})=\{(t,x,x,0,\xi,-\xi):\xi\neq 0\}.\,\,\m
Thus, $\WF(H)\cap N^*(\{x=y\})=\emptyset.$ To see that $\WF(T)\cap N^*(\{x=y\})$ is empty, observe that if the intersection were nonempty, then there would be a point with $\tau=0$ in $\WF(T)$. But this implies that $\xi=\eta=0$ and thus $(\tau,\xi,\eta)=0$. Last, we need to check that the third piece of $\WF(HT)$ does not intersect $N^*(\{x=y\}).$ This follows from the same arguments as those for the intersection with $\WF(T)$. 

Finally, we compute 
$$\WF(\pi_*\rho^*(HT))\subset\{(t,\tau):\exists x,\,\eta\text{ such that } (t,x,x,\tau,-\eta,\eta)\in \WF(HT)\}.$$

\noindent Thus, since $\xi_1\neq 0$ and $\eta_1=0$ in $\WF(H)$, the contribution from $\WF(H)$ is empty. As usual, the contribution from $\WF(T)$ is 
\m \{(t,\tau):\exists (x,\eta),\, G^t_{1/3}((x,\eta))=(x,\eta)\}.\,\,\m
Finally, the contribution from 
$$\{(t,(0,x'),y,\tau,(\xi_1,\xi'),\eta):\exists\, \xi_{\nu},\, (t,(0,x'),y,\tau,(\xi_{\nu},\xi'),\eta)\in \WF(T)\}$$
is given by 
$$\{(t,\tau):G^t_{1/3}((0,x'),(\xi_1,\xi'))\in\{((0,x'),(\xi_1,\xi')),((0,x'),(-\xi_1,\xi'))\}\}.$$

\noindent Putting this together with Lemma \ref{lem:waveParametrix}, we obtain the result.
\end{proof}

Now, we need to estimate the size of the singularities of tr$(A)$, tr$(H R)$ and tr$(HT)$ after a given number of reflections. Suppose that there is a periodic trajectory of length $T$ containing $N$ reflections starting at $\nu\in S^*\re^d|_{\partial\Omega}\setminus \mc{O}_{2T}$. 

The terms in \eqref{eqn:nearBoundary} of the form $A$ are classical FIO's and can be analyzed using the methods from the previous section. However, we must determine the size of the singularities. We have that $G^T_0\nu=\nu$. There are two cases.  First, suppose that $\nu$ is inward pointing. Then the relevant term of the form $A$ has no singularities in its trace since it is associated to $G^{T}_{0\dots 01}\nu\neq \nu$ where there are $N-1$ 0's. Now, if $\nu$ is outward pointing, then at $t=0$, $U_\alpha(t)$ is of the form \eqref{eqn:nearBoundary} and the term associated to $G^t_0$ is of the form $HR$ and hence has order -1. Therefore, since at time $T$ the term of the form $A$ has undergone $N-1$ additional reflections, it can be treated as in section \ref{sec:nonGlancNonBound} and cut off in an arbitrarily small neighborhood of the boundary to obtain
\begin{equation}
\label{eqn:mainTerm}
\hat{\sigma}_A(\tau)=o(\tau^{-N}).
\end{equation}

 To handle tr$(HR)$ and tr$(HT)$, observe that, for any $\chi\in \Cc(\re)$ with $\chi(0)=1$, 
$$\chi(x_1)H(x_1)=(2\pi)^{-1}\int (1-\chi_1(\xi_\nu))(\xi_\nu^{-1}+\mc O(\xi_\nu^{-\infty}))e^{ix_1\xi_\nu}d\xi_\nu$$
for some $\chi_1\in \Cc$ with $\chi_1(0)=1$.
By Lemma \ref{lem:ChazLem} (and the fact that $\Omega$ is convex) we only need to consider times $t$ for which there are periodic billiards trajectories. Suppose that there is a billiard trajectory with period $T$ undergoing $N$ reflections.
Then, for $t$ near $T$,
$$\chi H(x_1)R=\sum_j\int e^{ix_1\xi_\nu}e^{i\varphi(t,x,y,\theta)}a_j(t,x,y,\theta,\xi_\nu)d\xi_\nu d\theta$$
where $a_j$ is homogeneous of degree $-j$ in $\theta$ and $-1$ in $\xi_\nu$. Also, since there are $N$ reflections, $a_j\equiv 0$ for $j<N$.

Now, let $\psi\in \Cc$, $\psi(0)=1$, and $\psi_{\e,T}(t)=\psi(\e^{-1}(t-T))$. Let $\chi\in \Cc$ with $\chi\equiv 1$ in a neighborhood of $x_0\in \partial\Omega$ and be  supported in a neighborhood of the boundary with volume $\delta^d$, we are interested in the decay rate of 
\begin{multline*}\hat{\sigma}_{HR}(\tau):=\mc{F}_{t\to \tau}(\text{tr}(\psi_{\e,T}(t)\chi(x)HR))(\tau)=\\
\sum_j\int \psi_{\e,T}(t)e^{ix_1\xi_\nu}e^{i\varphi(t,x,x,\theta)}e^{-it\tau}(\chi(x)a_j(t,x,x,\theta,\xi_\nu))d\xi_\nu dx d\theta dt\\+\mc O(\tau^{-\infty})
\end{multline*}
for sufficiently small $\e$. We rescale $\xi_\nu$ and $\theta$ to $\xi_\nu /\tau$ and $\theta/\tau$ to obtain
\begin{multline*}
\hat{\sigma}_{HR}(\tau)=\\
\sum_j\tau^{d-j}\int \psi_{\e,T}(t)\chi(x)a_j(t,x,x,\theta,\xi_\nu)e^{i\tau(x_1\xi_\nu+\varphi(t,x,x,\theta)-t)}d\xi_\nu dx d\theta dt\\
+\mc O(\tau^{-\infty}).\end{multline*}

\noindent Next, observe that the phase is stationary in $x$, $t$, and $\theta$ on 
$$d_\theta\varphi=d_t\varphi-1=d_{x'}\varphi+d_{y'}\varphi=\xi_\nu+d_{x_1}\varphi +d_{y_1}\varphi=0.$$

Now, since $R$ is associated to $\tilde{G}_0^t$, these equations are equivalent to 
$$|(\xi_1,\xi')|=1,\quad x=\pi \tilde{G}_0^t(x,\xi),\quad \xi'=\pi_{\xi'}\tilde{G}_0^t(x,\xi),\quad \xi_1=\pi_{\xi_1}\tilde{G}_0^t(x,\xi)+\xi_\nu.$$
Hence, since the length spectrum is simple and discrete, the critical points form a submanifold of dimension 1 with volume less than $c\delta$. Now, by assumption the fixed point set of the billiards trajectory is clean and hence $\varphi$ is clean with excess 1. Let $\psi =x_1\xi_\nu +\varphi-t$ then since as functions of $(x,\theta,t)$,
$\partial^2\psi=\partial^2\varphi$
 these stationary points are clean with excess 1.

Thus, after cutting off to a compact set in $\xi_{\nu}$, we may apply the clean intersection theory of Duistermaat and Guillemin \cite{DuiGui}. To handle $|\xi_\nu| \geq C$, observe that there are no critical points for $|\xi_\nu|\geq 3$ and hence that this piece of the integral can be handled using the principle of nonstationary phase.

Now, letting $\delta\to 0$, and observing that we are integrating in $2d+1$ variables and have a $1$ dimensional submanifold of critical points, we obtain
\begin{equation}
\label{eqn:boundarySingEstimate}\hat{\sigma}_{HR}(\tau)=o(\tau^{-N+d-\frac{2d+1}{2}+\frac{1}{2}})=o(\tau^{-N}).\end{equation}

\begin{remark} Observe that this computation relies on the fact that, after reflection, the order of the FIO decreases by 1.
\end{remark}

Now, putting \eqref{eqn:boundarySingEstimate} together with \eqref{eqn:traceEstInterior}, and \eqref{eqn:mainTerm}, 
\begin{equation}
\label{eqn:traceEstNonglancing}
|\hat{\sigma}_{1,\alpha}(\tau)|\geq c\tau^{-N}.
\end{equation}

\subsection{Glancing Trajectories}
\label{sec:glanceTraj}
We are interested in tr($U(t)-U_0(t))$ as a distribution. Let
$$(u_1,u_2)(t)=(U(t)-U_0(t))u_0.$$
Then
\begin{equation}
\label{eqn:trace}
\begin{cases}
(\partial_t^2-\Delta)u_i=0&\text{in }\Omega_i\\
u_1-u_2=0\,,\text{ and }\partial_{\nu}u_1+\partial_{\nu'}u_2+Vu_1=-VU_0(t)u_0&\text{on }\partial\Omega
\end{cases}
\end{equation}
By the arguments in Sections \ref{sec:nonGlancNonBound} and \ref{sec:nonGlancBound}, we can assume that $u_0$ has wave front set in a neighborhood of $\nu\in \mc{O}_T$. Then, by arguments identical to those in Section \ref{sec:glancingPoint}, we can use the operators constructed in Appendix \ref{sec:waveParametrices} (see also \cite[Section 11.3]{MelTayl}) together with the ideas in Section \ref{sec:BLONearGlance} to find a microlocal description of the solution to this problem restricted to the boundary of the form 
$$(I+J\beta^{-1}\mc{A}i\mc{A}_-J^{-1}B)u|_{\pO}=-VU_0(t)u_0|_{\pO}$$
where $B\in \PsiHom^{-2/3}$.
Because $B\in \PsiHom^{-2/3}$, we can invert the operator on the left hand side by Neumann series
\begin{equation}\label{eqn:AirySum}(I+\beta^{-1}J\mc{A}i\mc{A}_-J^{-1}B)^{-1}=\sum_{k=0}^\infty (-\beta ^{-1}J\mc{A}_-\mc{A}iJ^{-1}B)^{k}.\end{equation}
 Hence, truncating the sum \eqref{eqn:AirySum} at a sufficiently high $K>0$ gives a $C^M$ parametrix and the remainder contributes a term of size $O(\tau^{-M})$ to the trace.

To analyze the singularities near glancing, we need the following lemma (see \cite[Sections 5.4]{MelTayl} and Lemma \ref{lem:WFhAiry}).

\begin{lemma}
\label{lem:waveFrontGlance}
$\WF{}'(\mc{A}_-\mc{A}i)\subset \graph(Id)\cup \graph(\beta)=:C_b$
\end{lemma}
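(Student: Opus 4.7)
The plan is to analyze $\mathcal{A}i\mathcal{A}_-$ by exploiting the large-argument asymptotics of the Airy functions and reducing the wavefront computation to stationary phase. Since $J$ is an elliptic semiclassical FIO quantizing a symplectomorphism that carries the model glancing hypersurface $\{\xi_1=0\}$ and the associated model billiard relation to the actual glancing hypersurface and $C_b$, it suffices to prove the analogous inclusion in the model picture, where $\mathcal{A}i\mathcal{A}_-$ is the Fourier multiplier with symbol $Ai(h^{-2/3}\alpha(\xi'))A_-(h^{-2/3}\alpha(\xi'))$ and $\alpha(\xi')=\xi_1+i\varepsilon(h)$. I would use a smooth partition of unity $1=\chi_-(h^{-2/3}\xi_1)+\chi_g(h^{-2/3}\xi_1)+\chi_+(h^{-2/3}\xi_1)$ supported respectively in the hyperbolic, glancing, and elliptic regions and analyze the resulting three pieces separately.

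For the elliptic piece ($\xi_1\geq ch^{2/3}$), both $Ai$ and $A_-$ decay super-polynomially like $e^{-c\xi_1^{3/2}/h}$; since $\varepsilon(h)=\mathcal{O}(h\log h^{-1})$, this decay dominates any growth coming from the imaginary shift, so the corresponding symbol is $\mathcal{O}(h^\infty)$ in every seminorm and the corresponding piece of $\mathcal{A}i\mathcal{A}_-$ is smoothing. For the glancing piece, the Airy factors are uniformly bounded and the symbol lies in $S^{\comp}_{2/3}$, so this piece of the operator is a (mildly exotic) pseudodifferential operator whose wavefront is automatically contained in $\graph(\mathrm{Id})$.

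The principal content is the hyperbolic piece. There, the standard Airy asymptotics give
\[
Ai(z)A_-(z)=c_0(-z)^{-1/2}e^{i\phi_0}+c_1(-z)^{-1/2}e^{i\frac{4}{3}(-z)^{3/2}+i\phi_1}+\mathcal{O}((-z)^{-2})
\]
as $z\to-\infty$, with universal constants $c_j,\phi_j$. The first summand contributes a classical pseudodifferential operator, whose wavefront lies in $\graph(\mathrm{Id})$. The second, after the substitution $z=h^{-2/3}\alpha(\xi')$, produces an oscillatory integral with phase $\langle x-y,\xi'\rangle+\tfrac{4}{3}h^{1/3}(-\alpha(\xi'))^{3/2}$; stationary phase in $\xi'$ forces $x_j=y_j$ for $j\geq 2$ and $x_1-y_1=2(-\xi_1)^{1/2}$, which is precisely the generating relation for the model billiard ball map corresponding to a single reflection off $\{\xi_1=0\}$.

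The main obstacle will be controlling the complex shift $i\varepsilon(h)$ uniformly in the hyperbolic region and matching the three cutoffs without producing spurious wavefront contributions. The shift is manageable because $\varepsilon(h)\leq Ch\log h^{-1}$ generates only a factor of polynomial size $h^{-C}$ in the amplitude, which is absorbed into the symbol without altering the critical set of the phase; the matching between the glancing and hyperbolic regions can be handled by dyadic summation in $|\xi_1|h^{-2/3}$. Finally, conjugating the model wavefront inclusion by $J$ transports $\graph(\mathrm{Id})$ to itself and the model billiard relation to $\graph(\beta)$, yielding $\WF'(\mathcal{A}_-\mathcal{A}i)\subset \graph(\mathrm{Id})\cup\graph(\beta)=C_b$.
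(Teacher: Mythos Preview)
Your strategy—split by the sign/size of $\xi_1$ and invoke the large-argument Airy asymptotics—is the same as the paper's (which itself defers to the semiclassical Lemma~\ref{lem:WFhAiry} after the rescaling $h=\tau^{-1}$). Two points need correction.

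First, your elliptic-region claim is false: for real $z>0$ the function $Ai(z)$ decays like $e^{-\frac23 z^{3/2}}$, but $A_-(z)=Ai(e^{2\pi i/3}z)$ \emph{grows} like $e^{+\frac23 z^{3/2}}$ there, so the product $AiA_-$ is the elliptic symbol $\Xi\Xi_-\in S^{-1/2}$, not a rapidly decaying term. The conclusion (wavefront on the diagonal) survives, but for the right reason.

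Second, and more seriously, your glancing argument is a genuine gap. A symbol in ``$S_{2/3}$'' lies outside the range $\delta<1/2$ where the semiclassical calculus applies, so ``pseudodifferential, hence wavefront in $\graph(\mathrm{Id})$'' is not automatic. Moreover, this lemma lives in the \emph{homogeneous} wave-equation setting: the multiplier is $AiA_-(\zeta)$ with $\zeta=\tau^{-1/3}(\xi_1+i)$, and the operator acts on functions of $(t,x')$. You give no argument controlling the $t$-direction, i.e.\ that $\WF'(\mathcal A_-\mathcal Ai)\cap\{\xi_1=0\}\subset\{t=s\}$. The paper handles exactly this by noting that the vector field $V_1=\partial_\tau+\tfrac13\tau^{-1}(\xi_1+i)\partial_{\xi_1}$ annihilates $\zeta$ and hence the multiplier; its symbol $it+\tfrac13\tau^{-1}(\xi_1+i)x_1$ is elliptic on $\{\xi_1=0\}$ away from $t=0$, forcing the glancing contribution into the diagonal. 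Your sketch does not supply this step. (The final conjugation by $J$ you mention is also not part of this lemma: $\mathcal A_-\mathcal Ai$ is the model multiplier, and $\beta$ here is the model billiard map.)
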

\noindent Thus, up to glancing, the wavefront set of the solution $u$ is contained in the billiard trajectories through $\nu$. 
\begin{proof}
The proof follows that of Lemma \ref{lem:WFhAiry} by letting $h=\tau^{-1}$, $\e(h)=h$, and rescaling $\xi\to \tau\xi.$ Using this transformation and the fact that in the construction of the Melrose Taylor parametrix, we have that $\alpha=\alpha_0+i$, we replace $\alpha_h$ with $\zeta=\tau^{-1/3}(\xi_1+i)$. 
Then in $\xi_1<0$, up to a lower order term $\O{}(\zeta')$, we obtain the phase function
$$\tau=\la t-s,\tau\ra+\la x-y,\xi\ra +\frac{4}{3}(-\xi_1)^{3/2}\tau^{-1/2}$$
which parametrizes $\beta$. The $i$ term is a symbolic perturbation hence the wavefront set is given by the billiard relation.

The only thing that remains is to show that 
\m {\WF} '(\mc{A}_-\mc{A}i)\cap \{\xi_1=0\}\subset\{t=s\}.\,\,\m
To do this, let
\m V_1=\partial_{\tau}+\frac{1}{3}\tau^{-1}(\xi_1+i)\partial_{\xi_1}.\,\,\m 
Then, $V_1\zeta=0$ and hence 
\m V_1(A_-Ai(\zeta))=0.\,\,\m
The symbol of $V_1$ is given by $it+\tfrac{1}{3}\tau^{-1}(\xi_1+i)x_1$. Thus, it is elliptic on $\xi_1=0$ away from $t=0$, and we have
\m {\WF}'(\mc{A}_-\mc{A}i)\cap \{\xi_1=0\}\subset \{t=0\}.\,\,\m
Combining this with the results of Lemma \ref{lem:WFhAiry} completes the proof
\end{proof}

By Lemma \ref{lem:waveFrontGlance} together with the arguments used to prove Lemma \ref{lem:ChazLem}, we have that the singularities of $\tr\,\sigma_{\alpha}(t)$ for $\alpha$ supported near a glancing trajectory are contributed by closed billiards trajectories. Observe that since $\Omega$ is strictly convex, as a trajectory approaches glancing, the number of reflections in a closed billiard trajectory increases without bound. Hence we see that in a small enough neighborhood of glancing the first $K$ terms in \eqref{eqn:AirySum} contribute no singularities to the trace. Thus for all $M>0$, by choosing a small enough neighborhood of glancing, we have
 \begin{equation}
 \label{eqn:traceEstGlancing}
 |\hat{\sigma}_{1,1-\alpha}(\tau)|=\mc O(\tau^{-M}).
 \end{equation}
 
One does not need the precise estimate \eqref{eqn:traceEstGlancing} to prove Theorem \ref{thm:lowerBoundNumRes}. One only needs that the singularities up to glancing are contained in the length of periodic billiards trajectories. That is, Lemma \ref{lem:waveFrontGlance} or another propagation of singularities result is enough. However, the precise estimate \eqref{eqn:traceEstGlancing} proves Proposition \ref{prop:accumulation}.

 \subsection{Completion of the proof of Lemma \ref{lem:resLowerBound}}
 \label{sec:completeLowBound}
Let $T$ be the primitive length of the periodic billiard trajectory with $M$ reflection points contained in $\sigma(V)\neq 0$. Then, let $\varphi\in \Cc$, $\varphi(0)\equiv 1$ and $\supp \varphi\subset (-1,1).$ Define $\varphi_{T,\e}(t)=\varphi(\e^{-1}(t-T)).$ Then, Lemma \ref{lem:waveFrontGlance}, the fact that $T$ is isolated in $L_{\Omega}$, and \eqref{eqn:traceEstNonglancing} show that for $\e>0$ small enough and $\tau$ large enough,
\m \left|\widehat{\varphi_{T,\e}\sigma_1}(\tau)\right|\geq c \tau^{-M}.\,\,\m

Now, using \cite[Lemma 7.1]{GS} we see that 
\m \mu_j(\chi R_V(1+iM))\leq Cj^{-1/d}\,\,\m 
and hence by \cite{SjoDist}, \cite{Vod1}, \cite{Vod2}, and \cite{Vod3},
\m \#\{z_j:|z_j|\leq 1, \Im z_j\geq -\gamma/h\}=\mc O(h^{-d}).\,\,\m
Thus, by \cite[Theorem 1]{SjoZw} for all $\delta>0$ and $\rho>\frac{d+M}{T}$, there exists $c>0$ such that
\begin{equation}
\label{eqn:resLowerBound}
\#\left\{z_j:|z_j|\leq 1,\, \Im z_j\geq-\rho h\log\left( \Re z_jh^{-1}\right)\right\}\geq ch^{-1+\delta}
\end{equation}
for $h$ small enough. But, if there is a periodic geodesic with $M$ reflections of length $T$, then there is also one with $nM$ reflections of length $nT$. Therefore, taking $n$ large enough, we have \eqref{eqn:resLowerBound} for all $\rho >\frac{M}{T}$. This completes the proof of Lemma \ref{lem:resLowerBound}.

Theorem \ref{thm:lowerBoundNumRes} follows from Lemma \ref{lem:resLowerBound}, together with the fact that strictly convex domains in $\re^d$ generically have simple length spectrum such that the only accumulation points are the lengths of closed geodesics in $\partial\Omega$ and have periodic billiards trajectory which are clean submanifolds (\cite[Chapter 3, Section 7.4]{Pet}).

\appendix


\chapter{Model Cases}
\label{ch:model}

In the present appendix, we show that the resonance free regions found in Chapter \ref{ch:resFree} are sharp. In particular, we consider $-\Deltad{\partial\Omega}$, $-\Deltap$, when $\Omega=B(0,1)\subset \re^2$ and $V$ is a constant depending on $h$. In this case, we are able to separate variables and avoid most of the microlocal analysis involved in obtaining the more general results. Separating variables reduces the existence of resonances to the existence of a solution to one of an infinite family of transcendental equations. The symbols of the operators involved in the general analyses appear as asymptotic limits of the Bessel and Airy functions in these equations. 

\subsection{Statement of results for the $\delta$ potential}
For the purposes of this section, we define the resonances of $-\Deltad{\partial\Omega}$ as follows: We say that $z/h$ is a resonance for $-\Deltad{\partial\Omega}$ if there exists a nonzero $z/h$-outgoing solution, $(u_1,u_2)\in H^2(\Omega)\oplus H^2_{\loc}(\re^d\setminus\Omega)$ to 
\begin{equation}
\label{eqn:mainCircle}
\begin{cases}(-h^2\Delta -z^2)u_1=0&\text{ in }\,\Omega\\
(-h^2\Delta -z^2)u_2=0 & \text{ in }\,\re^d\setminus \overline{\Omega}\\
u_1=u_2&\text{ on }\,\partial \Omega\\
\partial_\nu u_1+\partial_{\nu '}u_2+V\gamma u_1=0&\text{ on }\,\partial \Omega
\end{cases}
\end{equation}
where, $\partial_\nu$ and $\partial_{\nu'}$ are respectively the interior and exterior normal derivatives of $u$ at $\partial\Omega\,.$
In Chapter \ref{ch:mer}, we show that having such a solution corresponds to having a pole $R_{-\Deltad{\partial\Omega}}$ and hence that these are indeed the resonances for $-\Deltad{\partial\Omega}$. 

Denote the set of rescaled resonances for $-\Deltad{\partial\Omega}$ by
\begin{equation}
\label{def:lambdaCircle}
\begin{aligned} \resd&:=\{z\in B(h): z/h\text{ is a resonance of } -\Deltad{\partial\Omega}\}\\
B(h)&:=[1-ch^{3/4},1+ch^{3/4}]+i[-Mh\log h^{-1},0]
\end{aligned}.
\end{equation}

\begin{remark} The power $3/4$ can be taken to be any power $>0$.\end{remark}

We assume throughout that $V\equiv h^{-\alpha} V_0$ for $\alpha <1$, and $V_0> 0$ a constant independent of $h$. 
The next theorem shows that the resonance free regions in Chapter \ref{ch:resFree} are sharp for $-\Deltad{\pO}$.
\begin{theorem}
\label{thm:existenceCircle}
For all $N>0$, there exists $h_0>0$ such that for $h<h_0$, there exist $z(h)\in \Lambda$ with 
$$-\Im z(h)= \frac{1-\alpha}{2}h\log h^{-1}-\frac{h}{2}\log \frac{V_0}{2}+\O{}(h^{7/4})
$$
\end{theorem}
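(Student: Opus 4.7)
The plan is to exploit the rotational symmetry of the disk to separate variables, reducing the resonance problem to a countable family of explicit scalar transcendental equations, and then to construct a resonance in the lowest (radial) mode by a contraction-mapping argument based on the large-argument asymptotics of $J_0$ and $H_0^{(1)}$.

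First, writing solutions of \eqref{eqn:mainCircle} in polar coordinates as $u_j(r,\theta) = f_j(r) e^{in\theta}$ and setting $\zeta := z/h$, regularity at $r = 0$ forces $f_1 = a J_{|n|}(\zeta r)$, while the $\zeta$-outgoing condition forces $f_2 = b H_{|n|}^{(1)}(\zeta r)$. Imposing the transmission conditions $u_1 = u_2$ and $\partial_r u_1 - \partial_r u_2 + V u_1 = 0$ at $r = 1$, and using the Wronskian identity $J_\nu {H_\nu^{(1)}}' - J_\nu' H_\nu^{(1)} = 2i/(\pi\zeta)$, the resonance condition for the $n$-th mode reduces to the scalar equation
$$J_{|n|}(\zeta)\,H_{|n|}^{(1)}(\zeta) \;=\; \frac{2i\,h^\alpha}{\pi V_0}.$$
I would specialize to $n = 0$, which corresponds to the diameter-length chord identified by the general bound of Chapter \ref{ch:resFree} as the optimizer for the disk.

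Next, inserting the classical large-argument expansions of $J_0$ and $H_0^{(1)}$, valid uniformly in the region $|\zeta| \sim h^{-1}$ with $|\Im \zeta| \lesssim \log h^{-1}$, and using $2J_0 = H_0^{(1)} + H_0^{(2)}$, one obtains
$$J_0(\zeta)\, H_0^{(1)}(\zeta) \;=\; \frac{1 - i e^{2i\zeta}}{\pi\zeta}\bigl(1 + \mathcal O(|\zeta|^{-1})\bigr).$$
Solving the resulting equation for $e^{2i\zeta}$ and absorbing into a single error term both the asymptotic remainder and the subdominant $1$ (which is of relative size $h^{1-\alpha}$ compared to the leading $h^{\alpha-1}$), the problem becomes
$$e^{2i\zeta} \;=\; -\frac{2 h^\alpha \zeta}{V_0}\bigl(1 + \mathcal O(h^{1-\alpha})\bigr).$$
Taking logarithms with the branch $\log(-1) = i\pi$ and writing $\zeta = \pi(k+\tfrac12) + w$, this reduces to a fixed-point equation for $w$ whose $\zeta$-derivative is $\mathcal O(h)$, hence a contraction on any disk of bounded radius about the initial guess $w_0 := -\tfrac{i(1-\alpha)}{2}\log h^{-1} - \tfrac{i}{2}\log(2/V_0)$. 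Choosing $k = k(h) \in \ints$ so that $h\pi(k+\tfrac12) \in [1-ch^{3/4},1+ch^{3/4}]$ (possible since consecutive values of $k$ yield resonances spaced by $\pi h \ll h^{3/4}$), the unique fixed point $\zeta_*$ produces a resonance $z(h) = h\zeta_* \in B(h)$. Substituting $h\zeta_* = 1 + \mathcal O(h^{3/4})$ back into the logarithm yields $\Im w_* = -\tfrac{1-\alpha}{2}\log h^{-1} - \tfrac{1}{2}\log(2/V_0) + \mathcal O(h^{3/4})$, and multiplying by $h$ gives the claimed formula, with the $\mathcal O(h^{7/4})$ remainder traceable to the $\mathcal O(h^{3/4})$ freedom in $\Re z$ inside $B(h)$.

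The main technical obstacle will be establishing the Bessel--Hankel product asymptotic with sharp, uniform error estimates across the logarithmic strip $|\Im \zeta| \lesssim \log h^{-1}$, where $H_0^{(1)}(\zeta)$ grows exponentially while $J_0(\zeta)$ involves a delicate cancellation between the two Hankel functions; once this uniform expansion is in hand with a relative remainder of $1 + \mathcal O(h)$, the contraction argument and the readout of $\Im z(h)$ are essentially mechanical.
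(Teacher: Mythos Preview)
Your proposal is correct and follows essentially the same route as the paper: separate variables via the Fourier decomposition on the circle, reduce to the scalar transcendental equation $J_{|n|}(\zeta)H_{|n|}^{(1)}(\zeta) = 2ih^\alpha/(\pi V_0)$, insert the large-argument product asymptotic $J_n H_n^{(1)}(\zeta) = (\pi\zeta)^{-1}(1 - ie^{2i\zeta - in\pi})(1+\O{}(|\zeta|^{-1}))$, and solve by a fixed-point argument centered at the explicit approximate root. The paper carries this out for any fixed $n$ (not just $n=0$) and phrases the fixed-point step as a Newton-type lemma (Lemma~\ref{lem:Newton}) rather than a contraction in the logarithm, but the substance is identical; the $\O{}(h^{7/4})$ remainder is likewise attributed to the $\O{}(h^{3/4})$ window in $\Re z$.
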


\subsection{Statement of results for the $\delta'$ potential}
As for the case of the $\delta$ potential, we make a preliminary definition of resonances in order to present simple arguments in the case of the disk. In particular, we say that $z/h$ is a  resonance of $-\Deltap$ if there exists a nonzero $z/h$-outgoing solution $(u_1,u_2)\in H^2(\Omega)\oplus H^2_{\loc}(\re^d\setminus \Omega)$ to 
\begin{equation}
\label{eqn:mainCirclePrime}
\begin{cases}(-h^2\Delta -z^2)u_1=0&\text{ in }\,\Omega\\
(-h^2\Delta -z^2)u_2=0 & \text{ in }\,\re^d\setminus \overline{\Omega}\\
\partial_{\nu_1}u_1=-\partial_{\nu_2}u_2&\text{ on }\,\partial \Omega\\
u_1-u_2+V \partial_{\nu_1}u_1=0&\text{ on }\,\partial \Omega\\
\end{cases}
\end{equation}
As for the $\delta$ potential, we show in Chapter \ref{ch:mer} that having such a solution at $z_0$ corresponds to $R_{-\Deltap}$ having a pole at $z_0$. 

Denote the set of rescaled resonances by 
\begin{equation*}
\begin{aligned}\resp&:=\{z\in B(h)\,:\, z/h\text{ is a resonance of }-\Deltap\}\\
B(h)&:=[1-ch^{3/4},1+ch^{3/4}]+i[-Mh\log h^{-1},0]
\end{aligned}
\end{equation*}

We assume throughout the analysis of the $\delta'$ potential that $V\equiv h^{\alpha} V_0$ for $0\leq \alpha $, and $V_0> 0$ a constant independent of $h$.
The next theorem shows that the resonance free regions in Chapter \ref{ch:resFree} are sharp for $-\Deltap$.
\begin{theorem}
\label{thm:existenceCirclePrime}
Let $\Omega$ and $V\equiv h^\alpha $. Then there exists $h_0>0$ such that for $h<h_0$, there exist $z(h)\in \resp$ with 
$$-\Im z(h)=
\left(V_0^{-2}+\o{}(1)\right)h^{3-2\alpha}\quad 1/2<\alpha<1.
$$
\end{theorem}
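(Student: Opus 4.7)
The plan is to exploit rotational symmetry to reduce existence of resonances for $-\Deltap$ on the disk $\Omega=B(0,1)\subset\re^2$ with $V\equiv h^\alpha V_0$ to a family of transcendental equations in $\lambda=z/h$ indexed by the angular momentum $n\in\mathbb{Z}$, and then solve the equation corresponding to the extremizer of the Sabine bound by a perturbative/Rouch\'e argument. Concretely, separating variables in polar coordinates $(r,\theta)$, any solution of \eqref{eqn:mainCirclePrime} is, mode by mode in $e^{in\theta}$, of the form $u_1=A_n J_n(\lambda r)e^{in\theta}$ inside and $u_2=B_n H_n^{(1)}(\lambda r)e^{in\theta}$ outside. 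The transmission conditions $\partial_{\nu_1}u_1=-\partial_{\nu_2}u_2$ and $u_1-u_2+V\partial_{\nu_1}u_1=0$ at $r=1$, combined with the Wronskian $J_n(\lambda)H_n^{(1)\prime}(\lambda)-J_n'(\lambda)H_n^{(1)}(\lambda)=\frac{2i}{\pi\lambda}$, reduce (after elimination of the constants $A_n,B_n$) to the secular equation
\begin{equation*}
F_n(\lambda,h):=J_n'(\lambda)H_n^{(1)\prime}(\lambda)+\tfrac{2i h^{2-\alpha}}{\pi V_0 z^2}=0,\qquad z=h\lambda.
\end{equation*}
A resonance in $\Lambda(h,\delta')$ is exactly a solution of $F_n(\lambda,h)=0$ with $\Re z\sim1$ for some $n$.

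I will next isolate the mode that saturates Theorem~\ref{thm:resFreePrime}. In the hyperbolic region $|n|<\lambda$, write $n=\lambda\cos\beta$; the chord length of the corresponding billiard trajectory is $2\sin\beta$, and the reflection symbol of $R_{\delta'}$ satisfies $|\tilde\sigma(R_{\delta'})|^2=1-\tfrac{4h^{2-2\alpha}}{V_0^2}(1+o(1))$, so the Sabine quantity $-\log|\tilde\sigma(R_{\delta'})|^2/(2\bar l)$ is maximized as $\bar l\to 2$, i.e.\ by the diameter $n=0$. I therefore specialize to $n=0$ and use the classical large-argument expansions
\begin{equation*}
J_0'(\lambda)=-\sqrt{\tfrac{2}{\pi\lambda}}\sin(\lambda-\tfrac{\pi}{4})+\O{}(\lambda^{-3/2}),\qquad H_0^{(1)\prime}(\lambda)=i\sqrt{\tfrac{2}{\pi\lambda}}e^{i(\lambda-\pi/4)}+\O{}(\lambda^{-3/2}),
\end{equation*}
uniform in the perturbative strip $|\Im \lambda|\le M\log\lambda$. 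Substituting and multiplying through, the secular equation $F_0(\lambda,h)=0$ becomes, after an elementary rearrangement,
\begin{equation*}
e^{2i\lambda}=i+\tfrac{2h^{1-\alpha}}{V_0 z}+\O{}(\lambda^{-1}).
\end{equation*}

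To produce a solution I linearize around the ``unperturbed'' roots. For integer $k$ close to $(h\pi)^{-1}$, set $\lambda_0=\pi/4+k\pi$, so $e^{2i\lambda_0}=i$, and write $\lambda=\lambda_0+\mu$. Using $1/2<\alpha<1$ the right-hand side is $i(1-\tfrac{2ih^{1-\alpha}}{V_0 z})+\O{}(h)$; taking the principal branch of the logarithm,
\begin{equation*}
-2i\mu=\log\!\Bigl(1-\tfrac{2ih^{1-\alpha}}{V_0 z}\Bigr)+\O{}(h)=-\tfrac{2ih^{1-\alpha}}{V_0 z}+\tfrac{2h^{2-2\alpha}}{V_0^2 z^2}+\O{}(h^{3-3\alpha})+\O{}(h),
\end{equation*}
which gives $\Im\lambda=-\tfrac{h^{2-2\alpha}}{V_0^2\,(\Re z)^2}(1+o(1))$ and $\Re z=h\lambda_0+\O{}(h^{2-\alpha})$. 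Hence $\Re z=1+\O{}(h^{2-\alpha})$ and $-\Im z=h|\Im\lambda|=(V_0^{-2}+o(1))h^{3-2\alpha}$. Existence of an actual root of $F_0(\lambda,h)=0$ near the predicted $\lambda_0+\mu_0$ follows by Rouch\'e's theorem applied to $F_0$ on a small disk of radius $\sim h^{2-2\alpha+\epsilon}$ around the approximate root, using that the Bessel asymptotics (including their complex $\lambda$ versions via Debye's expansion) give uniform control of the error; alternatively the implicit function theorem applies once one checks $\partial_\lambda F_0\neq0$ at $\lambda_0$, which is elementary from the leading asymptotics. The main technical obstacle is therefore not the formal computation above but rather verifying that the $\O{}$ remainders in the Bessel expansions remain negligible uniformly in the strip $\Im z\ge -Mh\log h^{-1}$ and justify the Rouch\'e argument; this is a standard, if careful, application of Debye's method and the classical bounds for $J_0,H_0^{(1)}$ and their derivatives on complex arguments.
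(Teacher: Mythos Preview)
Your proposal is essentially the same as the paper's proof: separation of variables reduces to the secular equation $J_n'(\lambda)H_n^{(1)\prime}(\lambda)+\tfrac{2ih^{2-\alpha}}{\pi V_0 z^2}=0$ (equivalent to the paper's \eqref{eqn:toSolveExactPrime} via the Wronskian), one fixes the angular momentum $n$, inserts the large-argument asymptotics \eqref{eqn:normalAsymptoticsPrime}, and locates a root near the unperturbed zeros by an implicit-function/Rouch\'e argument. The paper keeps $n$ fixed but general and writes the equation as $F(\varepsilon,k,n,h)=0$ using its Newton-method Lemma~\ref{lem:Newton}, whereas you specialize to $n=0$ with a Sabine-law motivation and phrase the perturbation as $e^{2i\lambda}=i+\cdots$; these are cosmetic differences and your computation of $-\Im z=(V_0^{-2}+o(1))h^{3-2\alpha}$ agrees with the paper's \eqref{eqn:normalPrime}.
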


\subsection{Asymptotics of Bessel Functions}
We collect here some properties of the Bessel functions that are used in the analysis of case case of the unit disk. These formulae can be found in, for example \cite[Chapter 9,10]{NIST}.

Recall that the Bessel of order $n$ functions are solutions to 
$$z^2y''+zy'+(z^2-n^2)y=0.$$
We consider the two independent solutions $H_n^{(1)}(z)$ and $J_n(z)$. The Wronskian of the two solutions is given by 
\begin{equation} 
\label{eqn:wrongskBessel}W(J_n,H_n^{(1)})=J_n{H_n^{(1)}}'(z)-J_n'H_n^{(1)}(z)=\frac{2i}{\pi z}\end{equation}  

We now record some asymptotic properties of Bessel functions. Consider $n$ fixed and $z\to \infty$ 
\begin{align}
J_n(z)&=\left(\frac{1}{2\pi z}\right)^{1/2}\left( e^{i(z-\frac{n}{2}\pi -\frac{1}{4}\pi)}+e^{-i(z-\frac{n}{2}\pi -\frac{1}{4}\pi )}+\O{}(|z|^{-1}e^{|\Im z|})\right)\nonumber\\
H_n^{(1)}(z)&=\left(\frac{2}{\pi z}\right)^{1/2}\left(e^{i(z-\frac{n}{2}\pi -\frac{1}{4}\pi )}+\O{}(|z|^{-1}e^{|\Im z|})\right)\nonumber\\
J_n'(z)&=i\left(\frac{1}{2\pi z}\right)^{1/2}\left( e^{i(z-\frac{n}{2}\pi -\frac{1}{4}\pi)}-e^{-i(z-\frac{n}{2}\pi -\frac{1}{4}\pi )}+\O{}(|z|^{-1}e^{|\Im z|})\right)\nonumber\\
{H_n^{(1)}}'(z)&=i\left(\frac{2}{\pi z}\right)^{1/2}\left(e^{i(z-\frac{n}{2}\pi -\frac{1}{4}\pi )}+\O{}(|z|^{-1}e^{|\Im z|})\right)\nonumber\\
J_n(z)H_n(z)&=\frac{1}{\pi z}\left(e^{i(2z-n\pi -\frac{1}{2}\pi)}+1+\O{}(|z|^{-1}e^{2|\Im z|})\right)\label{eqn:normalAsymptotics}\\
J_n'(z)H_n'(z)&=-\frac{1}{\pi z}\left(e^{i(2z-n\pi -\frac{1}{2}\pi)}-1+\O{}(|z|^{-1}e^{2|\Im z|})\right)\label{eqn:normalAsymptoticsPrime}
\end{align}

\section{The delta potential}
\label{sec:modelDelta}
This section is organized as follows. In section \ref{sec:preliminaries} we reduce the problem of the existence of resonances to finding solutions of a transcendental equation. Then in Section \ref{sec:existence}, we show the existence of the resonances in Theorem \ref{thm:existenceCircle}.
\subsection{Reduction to Transcendental Equations on the Circle}
\label{sec:preliminaries}
We now consider \eqref{eqn:mainCircle} with $\Omega=B(0,1)\subset \re^2$ and $V\equiv h^{-\alpha}  V_0$ on $\partial \Omega$. Then for $i=1,2$,
\begin{equation}
\label{eqn:circleMain}
\begin{cases} \left(-h^2\partial_r^2-\frac{h^2}{r}\partial_r-\frac{h^2}{r^2}\partial_{\theta}^2-z^2\right)u_{1}=0&\text{in } B(0,1)\\
\left(-h^2\partial_r^2-\frac{h^2}{r}\partial_r-\frac{h^2}{r^2}\partial_{\theta}^2-z^2\right)u_{2}=0&\text{in } \re^2\setminus B(0,1)\\
u_1(1,\theta)=u_2(1,\theta)\\
\partial_ru_1(1,\theta)-\partial_ru_2(1,\theta)+Vu_1(1,\theta)=0\\
u_2\text{ is }z \text{ outgoing}
\end{cases}.
\end{equation}
Expanding in Fourier series, write $u_{i}(r,\theta):=\sum_{n}u_{i,n}(r)e^{in\theta}.$
Then, $u_{i,n}$ solves
$$\left(-h^2\partial_r^2-h^2\recip{r}\partial_r+h^2\frac{n^2}{r^2}-z^2\right)u_{i,n}(r)=0.$$
Multiplying by $r^2$ and rescaling by $x=zh^{-1}r$, we see that $u_{i,n}(r)$ solves the Bessel equation with parameter $n$ in the $x$ variables. Then, using that $u_2$ is outgoing and $u_1\in L^2$, we obtain that $$u_{1,n}(r)=K_nJ_n(zh^{-1} r)\quad\text{ and }\quad  u_{2,n}(r)=C_nH_n^{(1)}(zh^{-1} r)$$
where $J_n$ is the $n^{\text{th}}$ Bessel function of the first kind, and $H_n^{(1)}$ is the $n^{\text{th}}$ Hankel function of the first kind.

To solve \eqref{eqn:circleMain} and hence find a resonance, we only need to find $z$ such that the boundary conditions hold. 
Using the boundary condition $u_1(1,\theta)=u_2(1,\theta)$, we have $K_nJ_n(zh^{-1})=C_nH_n^{(1)}(zh^{-1})$. Hence, 
$$C_n=\frac{K_nJ_n(zh^{-1})}{H_n^{(1)}(zh^{-1})}.$$ 
Next, we rewrite the second boundary condition in \eqref{eqn:circleMain} and use that $V\equiv h^{-\alpha}V_0$ to get
$$\sum_n(K_nzh^{-1} J_n'(zh^{-1})-C_nzh^{-1} H_n^{(1)'} (zh^{-1})+h^{-\alpha}V_0K_n J_n(zh^{-1}))e^{in\theta}=0.$$
Then, since $e^{in\theta}$ are $L^2$ orthogonal, we have for $n\in \mathbb{Z}$
$$K_n\left(zh^{-1} J_n'(zh^{-1})-zh^{-1}\frac{J_n(zh^{-1})}{H_n^{(1)}(zh^{-1})}H_n^{(1)'}(zh^{-1})+h^{-\alpha}V_0J_n(zh^{-1})\right)=0.$$
Thus
\begin{equation*}
\label{eqn:preWronskian}
K_nh^{-\alpha}V_0= K_nzh^{-1}\left(\frac{H_n^{(1)'}(zh^{-1})}{H_n^{(1)}(zh^{-1})}-\frac{J_n'(zh^{-1})}{J_n(zh^{-1})}\right).
\end{equation*}
which can be written
\begin{equation}
\label{eqn:diskResonances}
h^{-\alpha}V_0K_n=K_nzh^{-1}\frac{W(J_n,H_n^{(1)})}{J_n(zh^{-1})H_n^{(1)}(zh^{-1})}=\frac{2 iK_n}{\pi J_n(zh^{-1})H_n^{(1)}(zh^{-1})}
\end{equation}
where $W(f,g)$ is the Wronskian of $f$ and $g$.

Then, without loss, we assume $K_n=1$ or $K_n=0$. 
Hence, we seek solutions $z(h,n)$ to 
\begin{equation} 
\label{eqn:toSolveExact}1-\frac{\pi h^{-\alpha}V_0}{2i}J_n(h^{-1}z(h,n))H_n^{(1)}(h^{-1}z(h,n))=0.
\end{equation}

The quantity $nh^{-1}$ is the tangential frequency of the mode $u_{i,n}e^{in\theta}.$ In particular, the \emph{wave front set,} denoted $\WFh$ (see Chapter \ref{sec:semiclassicalPreliminaries} or \cite[Chapter 4]{EZB}), of $e^{in\theta}$ has 
$$\WFh(e^{in\theta})\subset \{\xi'=nh\mod \o{}(1)\}.$$
 Thus, $|n|<(1-\e)h^{-1}$ corresponds to modes concentrating near directions transverse to the boundary,  $|n|\sim h^{-1}$ are the glancing frequencies, that is directions tangent to the boundary, and $|n|>(1+\e)h^{-1}$ corresponds to elliptic frequencies.

\subsection{Construction of Resonances}
\label{sec:existence}
In this section, we demonstrate the existence of resonances. That is, we prove Theorem \ref{thm:existenceCircle}.  We first prove the following analog of Newton's method:
\begin{lemma}
\label{lem:Newton}
Suppose that $z_0\in\mathbb{C}$. Let 
$\Omega:=\{z\in\complex: |z-z_0|\leq \e(h)\}$
and suppose $f:\Omega\to \mathbb{C}$ is analytic. Suppose that 
$$|f(z_0)|\leq a(h)\,,\quad |\partial_zf(z_0)|\geq b(h)\,,\quad \sup_{z\in \Omega}|\partial_z^2f(z)|\leq d(h).$$
Then if 
\begin{equation}
\label{eqn:condition}
a(h)+d(h)\e(h)^2<\e(h)b(h)<c<1
\end{equation}
there is a unique solution $z(h)$ to $f(z(h))=0$ in $\Omega$.
\end{lemma}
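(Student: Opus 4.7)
My plan is to prove the lemma by a Rouch\'e-type argument comparing $f$ to its linearization at $z_0$. The hypothesis \eqref{eqn:condition} is precisely the quantitative separation needed for this comparison to succeed on the circle $\partial\Omega$.

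First, I would introduce the linear model
$$g(z) := f(z_0) + \partial_z f(z_0)\,(z - z_0)\,,$$
which has the unique zero $z_1 := z_0 - f(z_0)/\partial_z f(z_0)$, well defined since $|\partial_z f(z_0)| \geq b(h) > 0$. Using the two pointwise bounds on $f$ at $z_0$, we have
$$|z_1 - z_0| \;=\; \frac{|f(z_0)|}{|\partial_z f(z_0)|} \;\leq\; \frac{a(h)}{b(h)}\,,$$
and the hypothesis $a(h) + d(h)\epsilon(h)^2 < \epsilon(h)b(h)$ forces $a(h)/b(h) < \epsilon(h)$, so $z_1$ lies strictly inside $\Omega$.

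Next, I would control the remainder in the Taylor expansion of $f$ around $z_0$. Since $f$ is analytic on $\Omega$, integration along the straight segment from $z_0$ to $z$ yields
$$f(z) - g(z) \;=\; (z-z_0)^2 \int_0^1 (1-t)\, \partial_z^2 f\bigl(z_0 + t(z-z_0)\bigr)\,dt\,,$$
so that $|f(z) - g(z)| \leq \tfrac{1}{2} d(h)\,|z-z_0|^2$. On the boundary $|z - z_0| = \epsilon(h)$, the reverse triangle inequality gives $|g(z)| \geq b(h)\epsilon(h) - a(h)$. Combining these two estimates with the hypothesis produces
$$|f(z) - g(z)| \;\leq\; \tfrac{1}{2} d(h)\epsilon(h)^2 \;<\; b(h)\epsilon(h) - a(h) \;\leq\; |g(z)|\,, \qquad |z - z_0| = \epsilon(h)\,,$$
which is the strict Rouch\'e inequality.

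The final step is to apply Rouch\'e's theorem: $f$ and $g$ have the same number of zeros in the open disk $\{|z - z_0| < \epsilon(h)\}$, counted with multiplicity. Since $g$ has exactly one (simple) zero there, namely $z_1$, the same holds for $f$. This supplies both existence and uniqueness of the desired $z(h) \in \Omega$. The main (minor) obstacle is simply keeping track of the constants so that the Taylor remainder, the zero $z_1$ of the linear model, and the Rouch\'e comparison on $\partial\Omega$ all fit under the single quantitative assumption \eqref{eqn:condition}; no analytical difficulty is expected beyond this bookkeeping.
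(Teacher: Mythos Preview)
Your proof is correct. The paper proves the lemma by a contraction mapping argument (hence the name ``Newton''): it sets $g(z)=z-f(z)/\partial_z f(z_0)$, shows $g$ maps $\Omega$ to itself using $|g(z_0)-z_0|\le a(h)/b(h)$ and $|\partial_z g|\le d(h)\epsilon(h)/b(h)<1$, and invokes the Banach fixed point theorem. Your Rouch\'e comparison with the linearization $g(z)=f(z_0)+\partial_z f(z_0)(z-z_0)$ is a genuinely different route to the same conclusion. Both are elementary; your approach has the minor advantage that it never actually needs the upper bound $\epsilon(h)b(h)<c<1$, whereas the paper's contraction constant is $d(h)\epsilon(h)/b(h)$ and the condition used is really just $a(h)+d(h)\epsilon(h)^2<\epsilon(h)b(h)$ as well. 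One small point worth making explicit in your write-up: Rouch\'e gives the count in the open disk, and you should note (as your boundary inequality already implies) that $|f|>0$ on $|z-z_0|=\epsilon(h)$, so uniqueness extends to the closed disk $\Omega$.
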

\begin{proof}
Let 
$$g(z):=z-\frac{f(z)}{\partial_zf(z_0)}.$$
Then, 
$$|\partial_zg(z)|=\left|1-\frac{\partial_zf(z)}{\partial_zf(z_0)}\right|\leq \frac{d(h)\e(h)}{b(h)}$$
and 
$$|g(z)-z_0|\leq |g(z_0)-z_0|+\sup_{\Omega}|\partial_zg(z)||z-z_0|\leq \left|\frac{a(h)}{b(h)}\right|+\left|\frac{d(h)\e(h)^2}{b(h)}\right|.$$
Thus under the condition \eqref{eqn:condition}, $g:\Omega\to \Omega$ and 
$$|g(z)-g(z')|<\sup_{w\in\Omega}|\partial_zg(w)||z-z'|<c|z-z'|.$$
Hence, $g$ is a contraction mapping and by the contraction mapping theorem, there is a unique fixed point of $g$ in $\Omega$ and hence a zero of $f(z)$ in $\Omega$. 
\end{proof}

We consider $n$ fixed relative to $h$. That is, we consider modes that concentrate normal to $\partial B(0,1)$. 

Using asymptotics \eqref{eqn:normalAsymptotics} in \eqref{eqn:toSolveExact}, we have
\begin{equation}
\label{eqn:asympToSolve}
1-\frac{h^{1-\alpha}V_0}{2i z(h,n)}\left(e^{2iz(h,n)/h-(n+\tfrac{1}{2})\pi i}(1+\O{}(hz(h,n)^{-1}))+1\right)=0.
\end{equation}
Let 
$$F(\e,k,n,h)=1-\frac{2h^{1-\alpha}V_0}{i\pi h(4k+2n+1)}\left(e^{2i\e/h}+1\right).$$

Then, 
$$\e_0(k,n,h)=\frac{-ih}{2}\log\left[h^{\alpha-1}\frac{i\pi h(4k+2n+1)}{2V_0}-1\right]$$
has 
$$F(\e_0(k,n,h),k,n,h)=0,\quad |\partial_\e F(\e_0(k,n,h),k,n,h)|\geq ch^{-1}.$$

Now, for $0<c$ and $ch^{-1}<k<Ch^{-1}$ by \eqref{eqn:asympToSolve}, $z(h,k,n)$ can be defined by a solution
$z(h,k,n)=\frac{\pi h}{4}(4k+2n+1)+\e(k,n,h)$ where 
$$F(\e,k,n,h)=\O{}(e^{2i\e/h}h/z+\e).$$
So, by the implicit function theorem there is a solution $\e$ satisfying
\begin{align*} 
\e(k,n,h)&=\e_0(k,n,h)+(\partial_\e F(\e_0(k,n,h),k,n,h))^{-1}\O{}(h^{1-\alpha}e^{2i\e_0/h}(h/z+\e_0))\\
&=\e_0(k,n,h)+\O{}(h^{2}).
\end{align*}

Thus, for all $\e>0$ and $0<h<h_\e$, there exist $z(h)\in \Lambda$ with
\begin{equation}\label{eqn:normal}
\frac{\Im z}{h}\sim\begin{cases} -\frac{(1-\alpha)}{2}\log h^{-1}+\frac{1}{2}\log\left(\frac{2}{V_0}\right)+\O{}(h^{3/4})&\alpha<1\\
-\frac{1}{4}\log \left(1+\frac{4}{V_0^2}\right)+\O{}(h^{3/4})&\alpha=1\end{cases}
\end{equation}
\begin{remark} Note that the size of the error terms in \eqref{eqn:normal} comes from the fact that we allow $\Re z \in [1-Ch^{3/4},1+Ch^{3/4}]$. 
\end{remark}

\section{The delta prime potential}

\label{sec:modelDeltaPrime}

\subsection{Reduction to Transcendental Equations on the Circle $\delta'$}
We now consider \eqref{eqn:mainCirclePrime} with $\Omega=B(0,1)\subset \re^2$ and $V\equiv h^{\alpha}  V_0$ with $V_0>0$ and $0\leq \alpha.$ Then for $i=1,2$,
\begin{equation}
\label{eqn:circleMainPrime}
\begin{cases} \left(-h^2\partial_r^2-\frac{h^2}{r}\partial_r-\frac{h^2}{r^2}\partial_{\theta}^2-z^2\right)u_{1}=0&\text{in } B(0,1)\\
\left(-h^2\partial_r^2-\frac{h^2}{r}\partial_r-\frac{h^2}{r^2}\partial_{\theta}^2-z^2\right)u_{2}=0&\text{in } \re^2\setminus B(0,1)\\
\partial_r u_1(1,\theta)=\partial_r u_2(1,\theta)\\
u_1(1,\theta)-u_2(1,\theta)+V\partial_r u_1(1,\theta)=0\\
u_2\text{ is }z \text{ outgoing}
\end{cases}.
\end{equation}
Expanding in Fourier series, write $u_{i}(r,\theta):=\sum_{n}u_{i,n}(r)e^{in\theta}.$
Then, $u_{i,n}$ solves
$$\left(-h^2\partial_r^2-h^2\recip{r}\partial_r+h^2\frac{n^2}{r^2}-z^2\right)u_{i,n}(r)=0.$$
Multiplying by $r^2$ and rescaling by $x=zh^{-1}r$, we see that $u_{i,n}(r)$ solves the Bessel equation with parameter $n$ in the $x$ variables. Then, using that $u_2$ is outgoing and $u_1\in L^2$, we obtain that $u_{1,n}(r)=K_nJ_n(zh^{-1} r)$ and $u_{2,n}(r)=C_nH_n^{(1)}(zh^{-1} r)$ where $J_n$ is the $n^{\text{th}}$ Bessel function of the first kind, and $H_n^{(1)}$ is the $n^{\text{th}}$ Hankel function of the first kind.

To solve \eqref{eqn:circleMainPrime} and hence find a resonance, we only need to find $z$ such that the boundary conditions hold. 
Using the boundary condition $\partial_r u_1(1,\theta)=\partial_r u_2(1,\theta)$, we have $zh^{-1}K_nJ_n'(zh^{-1})=zh^{-1}C_nH_n'^{(1)}(zh^{-1})$. Hence, 
$$C_n=\frac{K_nJ_n'(zh^{-1})}{H_n'^{(1)}(zh^{-1})}.$$ 
Next, we rewrite the second boundary condition in \eqref{eqn:circleMain} and use that $V\equiv h^{\alpha}V_0$ to get
$$\sum_n(K_nJ_n(zh^{-1})-C_n H_n'^{(1)} (zh^{-1})+h^{\alpha}V_0K_nzh^{-1} J_n'(zh^{-1}))e^{in\theta}=0.$$
Then, since $e^{in\theta}$ are $L^2$ orthogonal, we have for $n\in \mathbb{Z}$
$$K_n\left(J_n(zh^{-1})-\frac{J_n'(zh^{-1})}{H_n'^{(1)}(zh^{-1})}H_n^{(1)}(zh^{-1})+h^{\alpha }V_0zh^{-1}J_n'(zh^{-1})\right)=0.$$
Thus
\begin{equation*}
\label{eqn:preWronskianPrime}
-K_nzh^{-1+\alpha}V_0= K_n\left(\frac{J_n(zh^{-1})}{J_n'(zh^{-1})}-\frac{H_n^{(1)}(zh^{-1})}{H_n'^{(1)}(zh^{-1})}\right).
\end{equation*}
which can be written
\begin{equation}
\label{eqn:diskResonancesPrime}
-h^{-1+\alpha}zV_0K_n=K_n\frac{W(J_n,H_n^{(1)})(zh^{-1})}{J_n'(zh^{-1})H_n'^{(1)}(zh^{-1})}=\frac{2 iK_n}{\pi zh^{-1}J_n'(zh^{-1})H_n'^{(1)}(zh^{-1})}
\end{equation}
where $W(f,g)$ is the Wronskian of $f$ and $g$.

Then, without loss, we assume $K_n=1$ or $K_n=0$. 
Hence, we seek solutions $z(h,n)$ to 
\begin{equation} 
\label{eqn:toSolveExactPrime}1+\frac{\pi z^2(h,n)h^{-2+\alpha}V_0}{2i}J_n'(h^{-1}z(h,n))H_n'^{(1)}(h^{-1}z(h,n))=0.
\end{equation}

\subsection{Resonances normal to the boundary}
As for the $\delta$ potential, we consider $n$ fixed relative to $h$. That is, we consider modes that concentrate normal to $\partial B(0,1)$. 

Using asymptotics \eqref{eqn:normalAsymptoticsPrime} in \eqref{eqn:toSolveExactPrime}, we have
\begin{equation}
\label{eqn:asympToSolvePrime}
1+\frac{ izh^{-1+\alpha}V_0}{2}\left(e^{i(2zh^{-1}-n\pi -\pi/2)}-1+\O{}(|z|^{-1}he^{2|\Im z|h^{-1}})\right)=0.
\end{equation}
Let $z=\frac{h}{4}(\pi(2n+4k+1)+4\e h^{-1})$ with $\pi k=h^{-1}+\O{}(1)$. Substituting  this in to \eqref{eqn:asympToSolvePrime} and ignoring the error term, as well as higher order terms in $\e$, we obtain  
$$F(\e,k,n,h)=1+i\frac{\pi(2n+4k+1)h^\alpha V_0}{8}\left(e^{2i\e/h}-1\right).$$

Then, 
\begin{align*}
\e_0(k,n,h)&=-\frac{ih}{2}\log\left[1+i\frac{8h^{-\alpha}}{\pi(2n+4k+1)V_0}\right]\\
=\frac{-ih}{2}\log\left[1+i2h^{1-\alpha}V_0^{-1}(1+\O{}(h))\right]
\end{align*} 
has 
$$F(\e_0(k,n,h),k,n,h)=0,\quad |\partial_\e F(\e_0(k,n,h),k,n,h)|\geq ch^{-1}.$$

Now, for $0<c$ and $ch^{-1}<k<Ch^{-1}$ by \eqref{eqn:asympToSolvePrime}, $z(h,k,n)$ can be defined by a solution
$z(h,k,n)=\frac{\pi h}{4}(4k+2n+1)+\e(k,n,h)$ where 
$$F(\e,k,n,h)=\O{}(e^{2i\e/h}h^\alpha|z|^{-1}+\e h^{-1+\alpha}).$$
So, by the implicit function theorem there is a solution $\e_1$ satisfying
\begin{align*} 
\e(k,n,h)&=\e_0(k,n,h)\\
&\quad\quad+(\partial_\e F(\e_0(k,n,h),k,n,h))^{-1}\O{}(h^{-1+\alpha}e^{2i\e_0/h}(h|z|^{-1}+\e_0))\\
&=\e_0(k,n,h)+\O{}(h^{2}+\min(h^2\log h^{-1},h^{3-\alpha})=\e_0+\o{}(\Im \e_0)
\end{align*}
where the last equality follows from the fact that $\alpha>1/2$. 

Thus, for all $\alpha>1/2$ $\e>0$ and $0<h<h_\e$, there exist $z(h)\in \Lambda$ with
\begin{equation}\label{eqn:normalPrime}
\Im z=\begin{cases} -(1+\o{}(1))V_0^{-2}h^{3-2\alpha}&1/2<\alpha<1\\
-(1+\o{}(1))\frac{h}{4}\log \left(1+4h^{2-2\alpha}V_0^{-2}\right)+\O{}(h^{3/4})&\alpha\geq 1\end{cases}
\end{equation}


\chapter{Semiclassical Intersecting Lagrangian Distributions}
\label{ch:iLagrange}
We follow \cite{UhlMel} to construct intersecting Lagrangian distributions in the semiclassical regime.
\subsection{Definitions}
A pair $(\Lambda_0,\Lambda_1)$ where $\Lambda_0\subset T^*X$ is a Lagrangian manifold and $\Lambda_1\subset T^*X$ is a Lagrangian manifold with boundary, is said to be an \emph{intersecting pair} of Lagrangian manifolds if $\Lambda_0\cap \Lambda_1=\partial \Lambda_1$ and the intersection is clean:
\m T_\lambda(\Lambda_0)\cap T_\lambda(\Lambda_1)=T_\lambda(\partial \Lambda_1)\text{ for all }\lambda\in \partial\Lambda_1.\,\,\m
Two such pairs $(\Lambda_0,\Lambda_1)$ and $(\Lambda_0',\Lambda_1')$, with given base points $\lambda\in \partial\Lambda_1$ and $\lambda'\in \partial\Lambda_1'$ are said to be locally equivalent if there is a neighborhood $V$ of $\lambda$ and a symplectic transformation $\chi:V\to T^*X$ such that $\chi(\lambda)=\lambda'$, $\chi(\Lambda_0\cap V)\subset \Lambda_0'$ and $\chi(\Lambda_1\cap V)\subset \Lambda_1'$. Then, we have the following lemma \cite[Theorem 21.2.10 and remark thereafter]{HOV3}.
\begin{lemma}
\label{lem:equivalence}
If $\Lambda_1,$ $\Lambda_2\subset M$, and $\bar{\Lambda}_1$, $\bar{\Lambda}_2\subset \bar{M}$ are two pairs of intersecting Lagrangians with $\dim M=\dim \bar{M}$ and $\dim \Lambda_1\cap \Lambda_2=\dim \bar{\Lambda}_1\cap \bar{\Lambda}_2$ then $(\Lambda_1,\Lambda_2)$ is locally equivalent to $(\bar{\Lambda}_1,\bar{\Lambda}_2).$
\end{lemma}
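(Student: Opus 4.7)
The plan is to prove this by reducing both pairs to a common normal form via symplectic transformations, with the dimension of $\Lambda_1 \cap \Lambda_2$ being the sole invariant that determines the equivalence class. This is the standard strategy used in Hörmander's treatment of intersecting pairs and is ultimately a consequence of the symplectic Darboux theorem combined with a careful choice of coordinates adapted to the intersection.

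First, I would fix base points $\lambda \in \partial \Lambda_1$ and $\bar\lambda \in \partial \bar\Lambda_1$ and reduce to the following model. Set $n = \dim M/2 = \dim \bar M/2$ and let $k = \dim(\Lambda_1 \cap \Lambda_2) = \dim \partial \Lambda_1$. The candidate normal form in $T^*\mathbb{R}^n$ with coordinates $(x,\xi) = (x_1,\dots,x_n,\xi_1,\dots,\xi_n)$ is
\begin{equation*}
\Lambda_1^{\mathrm{mod}} = \{\xi = 0\}, \qquad \Lambda_2^{\mathrm{mod}} = \{x_1 \geq 0,\ \xi_2 = \cdots = \xi_{n-k} = 0,\ x_{n-k+1} = \cdots = x_n = 0\},
\end{equation*}
based at the origin. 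One checks directly that $\Lambda_1^{\mathrm{mod}} \cap \Lambda_2^{\mathrm{mod}} = \partial \Lambda_2^{\mathrm{mod}} = \{x_1 = 0,\ \xi = 0,\ x_{n-k+1}=\cdots=x_n=0\}$, which has dimension $k$, and that the intersection is clean.

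Second, I would construct a symplectic map $\chi$ sending $(\Lambda_1,\Lambda_2)$ to $(\Lambda_1^{\mathrm{mod}}, \Lambda_2^{\mathrm{mod}})$. The idea is: since $\Lambda_1$ is Lagrangian, by the Darboux--Weinstein theorem we may choose canonical coordinates $(x,\xi)$ near $\lambda$ so that $\Lambda_1 = \{\xi = 0\}$ and $\lambda$ is the origin. In these coordinates $\Lambda_2$ is a Lagrangian with boundary meeting the zero section cleanly along a $k$-dimensional submanifold $Z \subset \{\xi = 0\}$. A further canonical transformation preserving $\{\xi=0\}$ corresponds to a change of base coordinates lifted symplectically, and such a change lets us arrange $Z = \{x_1 = 0,\ x_{n-k+1} = \cdots = x_n = 0,\ \xi = 0\}$. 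At this stage $\Lambda_2$ agrees with $\Lambda_2^{\mathrm{mod}}$ to first order along $Z$ because the clean intersection condition fixes its tangent space. The remaining step is to use a Moser-type deformation argument (interpolating between $\Lambda_2$ and $\Lambda_2^{\mathrm{mod}}$ through Lagrangians sharing $\partial = Z$ and the same tangent bundle along $Z$) and integrate a time-dependent Hamiltonian vector field to produce a symplectomorphism fixing $\Lambda_1$ and mapping $\Lambda_2$ to $\Lambda_2^{\mathrm{mod}}$.

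Applying the same construction on the barred side and composing yields the desired local symplectic equivalence, so the main obstacle is the Moser argument in the second step: one must show that the family of Lagrangians $\Lambda_2^t$ interpolating between $\Lambda_2$ and $\Lambda_2^{\mathrm{mod}}$ can be realized as the image of $\Lambda_2$ under a flow generated by a Hamiltonian that vanishes on $\Lambda_1$ and along the common boundary $Z$, so that both $\Lambda_1$ and $\partial \Lambda_2 = Z$ are preserved throughout the deformation. Since this is a well-known normal-form argument (Hörmander, Theorem 21.2.10), I would simply invoke that theorem rather than redo the deformation; the hypotheses on $\dim M = \dim \bar M$ and equal intersection dimension are precisely what the normal form requires, and local equivalence of both pairs to the common model $(\Lambda_1^{\mathrm{mod}}, \Lambda_2^{\mathrm{mod}})$ gives the stated result by transitivity.
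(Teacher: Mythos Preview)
Your approach matches the paper's exactly: the paper does not prove this lemma but simply cites H\"ormander, \emph{The Analysis of Linear Partial Differential Operators III}, Theorem~21.2.10 and the remark following it. Your proposal is a sketch of that same normal-form argument and likewise defers to H\"ormander for the deformation step, so there is no substantive difference.

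One cosmetic error worth noting: your explicit model $\Lambda_2^{\mathrm{mod}} = \{x_1 \geq 0,\ \xi_2 = \cdots = \xi_{n-k} = 0,\ x_{n-k+1} = \cdots = x_n = 0\}$ has only $n-1$ equality constraints in $T^*\mathbb{R}^n$, so it is $(n+1)$-dimensional rather than $n$-dimensional and hence not Lagrangian. (Also, in the paper's definition of an intersecting pair the intersection $\Lambda_0\cap\Lambda_1=\partial\Lambda_1$ is automatically of codimension one in each factor, so $k=n-1$ is forced; the general-$k$ formulation you attempt is closer to H\"ormander's Theorem~21.2.10 for two closed Lagrangians, with the boundary case handled in the subsequent remark.) Since you do not actually use this model but invoke H\"ormander directly, the slip does not affect your conclusion.
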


We associate spaces of distributions to the pair $(\tilde{\Lambda}_0,\tilde{\Lambda}_1)$ of intersecting Lagrangian manifolds, where
$ \tilde{\Lambda}_0=T_0^*\re^d$ $\tilde{\Lambda}_1=\{((x_1,x'),\xi)\in T^*\re^d:x'=0, \xi_1=0, x_1\geq 0\}.$ 

\begin{remark} One can also associate distributions to intersecting Lagrangians with intersections of various dimensions as in \cite{GuiUhl}, but we do not pursue that here.
\end{remark}

\begin{defin}
\label{def:prototype}
For $\delta\in[0,1/2)$, denote by $I^{m}_{\delta}(\re^d;\tilde{\Lambda}_0,\tilde{\Lambda}_1)$ the subspace of $\Cc(\re^d)$ consisting of functions $u$ which can be written in the form $u=u_1+u_2$ with $u_2\in h^{1/2}I_{\delta}^{m-1/2}(\tilde{\Lambda}_0)$ and
\begin{equation}\label{eqn:normalForm}u_1(x)=(2\pi h)^{-(3d+2)/4}\int_0^\infty\int_{\re^{d}}e^{\frac{i}{h}((x_1-s)\xi_1+\la x',\xi'\ra)}a(s,x,\xi)d\xi ds=:J(a),\end{equation}
where $a\in S_\delta^{m+\recip{2}-\frac{d}{4}}$ has compact support in $x$. 
\end{defin}
\begin{remarks} 
\item \eqref{eqn:normalForm} is well defined as an oscillatory integral and as such depends continuously on $a$ in the topology of $S_\delta^{m'}$, for any $m'>m+\recip{2}-\recip{4}d$.
\item We show in Lemma \ref{lem:isecDecomposition} that functions of the form \eqref{eqn:normalForm} are microlocalized on $\tilde{\Lambda}_0\cup \tilde{\Lambda}_1.$
\end{remarks}

\begin{lemma}
\label{lem:isecDecomposition}
If $u\in I^{m}_{\delta}(\re^d;\tilde{\Lambda}_0,\tilde{\Lambda}_1)$, then 
\begin{equation}\label{eqn:waveNormForm}\WFh(u)\subset \tilde{\Lambda}_0\cup \tilde{\Lambda}_1.\end{equation}
Suppose $\gamma\leq \delta$ and $B\in S_\gamma$ is a zeroth order pseudo-differential operator with $\MS(B)\cap \tilde{\Lambda}_0=\emptyset$  then $Bu\in I_{\delta}^{m}(\re^d;\tilde{\Lambda}_1)$. If $\MS(B)\cap \tilde{\Lambda}_1=\emptyset$ then  $Bu\in  h^{1/2-\gamma}I_{\delta}^{m-1/2}(\re^d; \tilde{\Lambda}_0)\,)$.
\end{lemma}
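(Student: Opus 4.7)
The plan is to reduce to analyzing the prototype $u_1=J(a)$ of Definition~\ref{def:prototype}, since the remainder $u_2\in h^{1/2}I^{m-1/2}_\delta(\tilde\Lambda_0)$ is already of the form required for all three conclusions. The geometric input is that the phase
\[\phi(x,s,\xi)=(x_1-s)\xi_1+\la x',\xi'\ra,\quad s\geq 0,\]
has interior critical set $\{s=x_1>0,\,x'=0,\,\xi_1=0\}$ whose image under $(x,\partial_x\phi)$ is $\tilde\Lambda_1\setminus\tilde\Lambda_0$, and a boundary critical set at $s=0$ given by $\{x=0\}$ imaging onto all of $\tilde\Lambda_0$.

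To prove~\eqref{eqn:waveNormForm}, fix $(y_0,\eta_0)\notin\tilde\Lambda_0\cup\tilde\Lambda_1$ and let $b\in S^{\comp}$ be supported in a small neighborhood avoiding this union. A stationary phase reduction in $(x,\eta)$ writes $\oph(b)u_1$ as an oscillatory integral in $(s,\xi)$ with phase $\psi(y,s,\xi)=(y_1-s)\xi_1+\la y',\xi'\ra$ and amplitude compactly supported in $y$ near $y_0$ and in $\xi$ near $\eta_0$. Since $(y_0,\eta_0)\notin\tilde\Lambda_0$ forces $y_0\neq 0$, one of the following cases applies: if $y_0'\neq 0$, IBP in $\xi'$ gives $\O{}(h^\infty)$; if $y_0'=0$ and $y_{0,1}<0$, IBP in $\xi_1$ succeeds since $\partial_{\xi_1}\psi=y_1-s<0$ uniformly on $s\geq 0$; and if $y_0'=0$, $y_{0,1}>0$, then $(y_0,\eta_0)\notin\tilde\Lambda_1$ forces $\eta_{0,1}\neq 0$, so $\partial_s\psi=-\xi_1$ is bounded away from zero on the $\xi$-support, and IBP in $s$ succeeds with the boundary term at $s=0$ subsequently killed by IBP in $\xi_1$ using $\partial_{\xi_1}\psi|_{s=0}=y_1>0$.

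For the second statement, choose $\e>0$ so that $\pi\MS(B)\cap\{|x|<\e\}=\emptyset$ and pick $\chi\in C^\infty(\re)$ with $\chi\equiv 1$ on $[\e,\infty)$ and $\supp\chi\subset[\e/2,\infty)$. Decompose $u_1=J(\chi a)+J((1-\chi)a)$. On $\supp(\chi a)$ the phase $\phi$ is a clean phase function of excess $0$ whose interior critical set parametrizes $\tilde\Lambda_1\cap\{x_1\geq\e/2\}$, so two-fold stationary phase in $(s,\xi_1)$ directly yields $J(\chi a)\in I^m_\delta(\tilde\Lambda_1)$ with symbol $a(x_1,(x_1,0),(0,\xi'))$ modulo lower orders. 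The remainder $J((1-\chi)a)$ has $x$-microsupport contained in $\{|x|<\e\}$, disjoint from $\pi\MS(B)$, so $BJ((1-\chi)a)=\O{\Cc}(h^\infty)$.

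For the third statement, $J(\chi a)\in I^m_\delta(\tilde\Lambda_1)$ satisfies $BJ(\chi a)=\O{\Cc}(h^\infty)$ by $\MS(B)\cap\tilde\Lambda_1=\emptyset$. For $J((1-\chi)a)$, iterated IBP in $s$ using $\partial_s e^{-is\xi_1/h}=-ih^{-1}\xi_1 e^{-is\xi_1/h}$, with a frequency cutoff separating $|\xi_1|\geq 1$ (each IBP yielding a boundary contribution $(ih/\xi_1)^k\partial_s^{k-1}((1-\chi)a)|_{s=0}$) from $|\xi_1|\leq 2$ (where the $\xi_1$-integral is over a compact set, contributing an extra $h$ factor relative to the full oscillatory scaling), gives the asymptotic identity
\[J((1-\chi)a)=h^{1/2}(2\pi h)^{-3d/4}\int e^{\frac{i}{h}(x_1\xi_1+\la x',\xi'\ra)}c(x,\xi;h)\,d\xi+\O{\Cc}(h^\infty),\]
with $c\in S^{m-1/2-d/4}_\delta$, which lies in $h^{1/2}I^{m-1/2}_\delta(\tilde\Lambda_0)\subset h^{1/2-\gamma}I^{m-1/2}_\delta(\tilde\Lambda_0)$; applying $B$ preserves this class, with the slack $h^{-\gamma}$ absorbing the $S_\gamma$ seminorms of $B$ in careful bookkeeping. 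The main technical obstacle will be in this last step: verifying that the asymptotic IBP expansion lies in the correct symbol class uniformly across the $\xi_1$ frequency split, and confirming that the normalization discrepancy between $(2\pi h)^{-(3d+2)/4}$ and $(2\pi h)^{-3d/4}$ produces exactly the $h^{1/2}$ prefactor claimed.
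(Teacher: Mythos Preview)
Your wavefront-set argument is fine. The gaps lie in the second and third statements, and both stem from how you handle the exotic class $S_\gamma$.

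First, the step ``choose $\e>0$ so that $\pi\MS(B)\cap\{|x|<\e\}=\emptyset$'' is not justified by the hypothesis $\MS(B)\cap\tilde\Lambda_0=\emptyset$ when $\gamma>0$. For $B\in S_\gamma$ the microsupport can approach $\tilde\Lambda_0$ at scale $h^\gamma$ (think of a symbol like $\psi(h^{-\gamma}(x_1-h^\gamma))$); disjointness from $\tilde\Lambda_0$ only forces the symbol of $B$ to be $O(h^\infty)$ on the fiber $\{x=0\}$, and Taylor's formula propagates this only into an $h^{\gamma'}$-neighborhood for $\gamma'>\gamma$, not an $h$-independent one. Your conclusion $BJ((1-\chi)a)=\O{\Cc}(h^\infty)$ in the second statement therefore fails for $\gamma>0$, and the decomposition becomes circular: $J((1-\chi)a)$ is itself an intersecting Lagrangian distribution of the same type, so reducing to it is no progress.

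Second, and more seriously, in the third statement you assert that $J((1-\chi)a)\in h^{1/2}I^{m-1/2}_\delta(\tilde\Lambda_0)$ \emph{before} applying $B$. This is false: $\WFh(J((1-\chi)a))$ still contains $\tilde\Lambda_1\cap\{0<x_1<\e\}$, and your treatment of the region $|\xi_1|\leq 2$ does not remove it. Integration by parts in $s$ degenerates precisely at $\xi_1=0$, which is where $\tilde\Lambda_1$ sits; having compact $\xi_1$-support contributes no smallness and no change of Lagrangian.

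The paper's proof remedies both issues by first applying $B$ under the integral sign, replacing $a$ by an amplitude $\mc Ba$ that genuinely vanishes in an $h^\gamma$-neighborhood of the relevant Lagrangian. For the third statement this means $\mc Ba=0$ where $|x'|^2+\xi_1^2<\e^2h^{2\gamma}$ and $x_1>-\e h^\gamma$, making the operator
\[
M=(|x'|^2+\xi_1^2)^{-1}\bigl(x'\,hD_{\xi'}-\xi_1\,hD_s\bigr)
\]
well defined with $h^{-\gamma}S_\gamma$ coefficients on $\supp\mc Ba$. This $M$ leaves the phase invariant and combines integration by parts in $s$ (effective where $\xi_1\neq0$) with integration by parts in $\xi'$ (effective where $x'\neq0$); the vanishing of $\mc Ba$ near $\tilde\Lambda_1=\{x'=0,\,\xi_1=0,\,x_1\geq0\}$ guarantees at least one is available. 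Each application of $M^t$ throws off a boundary term at $s=0$ lying in $h^{1/2-\gamma}I^{m-1/2}_\delta(\tilde\Lambda_0)$, and iteration gives the result. This combined operator is the essential idea your frequency split in $\xi_1$ alone cannot replace.
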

\begin{proof}
Let $\pi:\re\times \re^d\to \re^d$ be the projection off the first factor, then $u=\pi_*(H(s)\tilde{u})$ where $H$ is the Heaviside function and 
$$\tilde{u}(s,x)=(2\pi h)^{-(3d+2)/4}\int e^{\frac{i}{h}((x_1-s)\xi_1+\la x',\xi'\ra )}a(s,x,\xi)d\xi.$$
We now use the standard bounds on wavefront sets for pullbacks, tensors, and pushforwards (see Lemmas \ref{lem:tensor}, \ref{lem:pushforward}, and \ref{lem:pullback}) to obtain \eqref{eqn:waveNormForm}. 

Now, suppose $B\in \psi_{\gamma}^{0} $. Then, 
\m B(e^{\frac{i}{h}\la x,\xi\ra}a(x,\xi))=e^{\frac{i}{h}\la x,\xi\ra}(\mc{B}a)\,\,\m
defines a continuous linear map $\mc{B}:S_\delta^{m}\to S_{\delta}^{m}$. In particular, $B$ can be applied under the integral sign in \eqref{eqn:normalForm}. This shows that $Bu_1$ is of the same form with $a$ replaced by $\mc{B}a$. 

Observe that since $u_2\in h^{1/2}I_\delta^{m-1/2}(\tilde{\Lambda}_0)$, $Bu_2=\O{\Cc}(h^\infty)$. Then, if $\MS(B)\cap \tilde{\Lambda}_0=\emptyset$, we can assume, by disregarding an $\O{\Cc}(h^\infty)$ term, that for some $\e>0$, $\mc{B}(a)=0$ in $|x|<\e h^\gamma$.  Choose $\mu\in C^\infty(\re)$ with $\mu(s)=1$ in $s\geq \recip{2}\e$, $\mu(s)=0$ in $s<\recip{4}\e$. From the definition of semiclassical Lagrangian distributions (see Section \ref{sec:preliminaries})
$$v_1=(2\pi h)^{-(3d+2)/4}\iint e^{\frac{i}{h}((x_1-s)\xi_1+\la x',\xi '\ra)}\mu(h^{-\delta}s)(\mc{B}a)(s,x,\xi)d\xi ds$$
is an element of $I_{\delta}^{m}(\re^d;\tilde{\Lambda}_1)$. To show that $Bu$ is also in this space, we need to verify that 
\begin{equation}\label{eqn:residual}\begin{aligned}Bu-v_1&=(2\pi h)^{-(3d+2)/4}\int_0^\infty\!\!\!\int e^{\frac{i}{h}((x_1-s)\xi_1+\la x',\xi'\ra}(1-\mu(h^{-\delta}s))\mc{B}(a)d\xi ds\\
& =\mc O_{C^\infty}(h^\infty).\end{aligned}\end{equation}
The operator 
\m L=((x_1-s)^2+|x'|^2)^{-1}[(x_1-s)hD_{\xi_1}+x'hD_{\xi'}]\,\,\m 
satisfies $L\exp(\frac{i}{h}((x_1-s)\xi_1+\la x',\xi'\ra))=\exp(\frac{i}{h}((x_1-s)\xi_1+\la x',\xi'\ra))$ and has coefficients in $h^{-\gamma}S_\gamma$ on the support of $(1-\mu)\mc{B}a$. Then, \eqref{eqn:residual} follows from integration by parts. Thus, $Bu\in I_{\delta}^{\comp}(\re^d;\tilde{\Lambda}_1)$. 

Now, suppose that $\MS(B)\cap \tilde{\Lambda}_1=\emptyset$. Then we can assume, with $a$ replaced by $\mc{B}a$ that $a=0$ if $|x'|^2+\xi_1^2<\e^2h^{2\gamma}$ and $x_1>-\e h^\gamma$. Thus, the operator 
\m M=(|x'|^2+\xi_1^2)^{-1}(x'hD_{\xi'}-\xi_1hD_s)\,\,\m
has coefficients in $h^{-\gamma}S_{\gamma}$ on supp $a$ provided $x_1>-\e h^\gamma$. Since $\exp(\frac{i}{h}((x_1-s)\xi_1+\la x',\xi'\ra))$ is an eigenfunction of $M$ with eigenvalue 1, integration by parts gives 

\begin{multline}
\label{eqn:WFVanishBoundary}
Bu=(2\pi h)^{-(3d+2)/4}\int_0^\infty\int e^{\frac{i}{h}((x_1-s)\xi_1+\la x',\xi '\ra)}M^t(\mc{B}a)d\xi ds\\ 
+(2\pi h)^{1-(3d+2)/4}\int e^{\frac{i}{h}\la x,\xi\ra} \frac{-i\xi_1\mc{B}a(0,x,\xi)}{(|\xi_1|^2+|x'|^2)}d\xi. 
\end{multline}
The second term in \eqref{eqn:WFVanishBoundary} is a distribution in $h^{1/2-\gamma}I_{\delta}^{m-1/2}(\re^d;\tilde{\Lambda}_0)$. Then, iterating this process, we have for any $k\in \mathbb{N}$, 
$$Bu-(2\pi h)^{-(3d+2)/4}\int_0^\infty\int e^{\frac{i}{h}((x_1-s)\xi_1+\la x',\xi'\ra)}(M^t)^k\mc{B}ad\xi ds$$
lies in $h^{\tfrac{1}{2}-\gamma}I_{\delta}^{m-1/2}(\re^d;\tilde{\Lambda}_0)$. Since $(M^t)^k\mc{B}a\in h^{k(1-\delta)} S_{\delta}^{m-k}$, we conclude that 
$$Bu\in h^{\tfrac{1}{2}-\gamma}I_{\delta}^{m-1/2}(\re^d;\tilde{\Lambda}_0). $$
\end{proof}

Next, we show that $a$ need not be allowed to depend on $s$.
\begin{lemma}
Suppose $u=J(a)$ for $a\in S^m_\delta$. Then there exists $b_j=b(x,\eta)\in S^{m-j}_\delta$ such that
$$u-\sum_{j=0}^{N-1}J(b_j)\in h^{N(1-2\delta)}I^{m-N}_{\delta}(\re^d;\tilde{\Lambda}_0,\tilde{\Lambda_1}).$$
\end{lemma}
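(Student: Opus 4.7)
The plan is to Taylor expand the amplitude $a(s,x,\xi)$ in the $s$ variable around the stationary point $s=x_1$ of the phase $(x_1-s)\xi_1$ in $\xi_1$, and then trade each polynomial factor $(s-x_1)^j$ for a $\xi_1$-derivative of the exponential, followed by integration by parts in $\xi_1$. This mirrors the standard stationary-phase reduction of $s$-dependent amplitudes to $s$-independent ones, adapted to the intersecting-Lagrangian normal form of Definition \ref{def:prototype}.

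Concretely, I would first write
$$a(s,x,\xi)=\sum_{j=0}^{N-1}\frac{(s-x_1)^j}{j!}(\partial_s^j a)(x_1,x,\xi)+R_N(s,x,\xi),$$
with Taylor remainder
$$R_N(s,x,\xi)=\frac{(s-x_1)^N}{(N-1)!}\int_0^1(1-t)^{N-1}(\partial_s^N a)(x_1+t(s-x_1),x,\xi)\,dt,$$
and then exploit the elementary identity
$$(s-x_1)^k e^{\frac{i}{h}(x_1-s)\xi_1}=(ih\partial_{\xi_1})^k e^{\frac{i}{h}(x_1-s)\xi_1}.$$
Integrating by parts $k$ times in $\xi_1$ (boundary terms vanish by the usual oscillatory-integral convention, using the compact $x$-support of $a$ as in Lemma \ref{lem:isecDecomposition}), each Taylor term becomes $J(b_j)$ with the $s$-independent amplitude
$$b_j(x,\xi):=\frac{1}{j!}(-ih\partial_{\xi_1})^j(\partial_s^j a)(x_1,x,\xi).$$
The chain rule together with Definition \ref{def:symbol} shows that each $\partial_s$ and each $\partial_{\xi_1}$ costs $h^{-\delta}$, while the prefactor $h^j$ and the drop in $\xi$-order by $j$ yield $b_j\in h^{j(1-2\delta)}S^{m-j+1/2-d/4}_\delta$. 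Thus $J(b_j)$ lies in the intersecting-Lagrangian class of order $m-j$ with the expected $h$-gain.

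For the remainder, I apply the same identity to the factor $(s-x_1)^N$: after $N$ integrations by parts in $\xi_1$, the amplitude of $J(R_N)$ becomes
$$\frac{(-ih\partial_{\xi_1})^N}{(N-1)!}\int_0^1(1-t)^{N-1}(\partial_s^N a)(x_1+t(s-x_1),x,\xi)\,dt,$$
which lies in $h^{N(1-2\delta)}S^{m-N+1/2-d/4}_\delta$ uniformly in the remaining parameters; hence $J(R_N)\in h^{N(1-2\delta)} I^{m-N}_\delta(\re^d;\tilde\Lambda_0,\tilde\Lambda_1)$, which is precisely the statement.

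The main technical point to watch is the bookkeeping for the exotic class $S_\delta$: since each $\partial_s$ and each $\partial_{\xi_1}$ costs $h^{-\delta}$ but each factor of $h$ from the identity pays $h^{+1}$, the net gain per step is $h^{1-2\delta}$, which is positive exactly because $\delta<1/2$. The only other subtlety is handling the $s$-integration when $a$ is not compactly supported in $s$ near $s=x_1$: this is dealt with by inserting a cutoff $\mu(s-x_1)$ supported near $0$ and showing, via non-stationary-phase in $\xi_1$ with the operator $L$ from the proof of Lemma \ref{lem:isecDecomposition}, that the excluded region contributes only an $\mathcal{O}_{C_c^\infty}(h^\infty)$ error, which is absorbed into the residual $I^{-\infty}$ part.
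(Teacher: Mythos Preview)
Your proposal is correct and takes essentially the same approach as the paper: Taylor expand $a(s,x,\xi)$ in $s$ around $s=x_1$, then trade each factor of $(s-x_1)$ for $ih\partial_{\xi_1}$ acting on the exponential and integrate by parts in $\xi_1$, gaining $h^{1-2\delta}$ per step. The only cosmetic difference is that the paper phrases this as an iteration (subtract $b_0=a|_{s=x_1}$, observe $J(a-b_0)=h^{1-2\delta}J(c)$ with $c$ one order lower, repeat), while you write out the full $N$-term Taylor expansion and explicit formulas $b_j=\tfrac{1}{j!}(-ih\partial_{\xi_1})^j(\partial_s^j a)(x_1,x,\xi)$ at once; the content is identical.
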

\begin{proof}
By Taylor's theorem at $y_1=s$, there exists $b_0$ such that 
$$\left|a(y_1,y',s,\eta)-b_0(y,\eta)\right|=\O{}(h^{-\delta }(y_1-s)).$$
Then, integrating by parts with respect to $\eta_1$ in the formula for $J(a-b_0)$ gives that 
$$J(a-b_0)= h^{1-2\delta}J(c)$$
with $c\in S_\delta^{m-1+1/2-d/4}.$
So, repeating this process gives the Lemma. 
\end{proof}

Finally, we show that an element of $I^{m}_\delta(\re^d;\tilde{\Lambda}_0,\tilde{\Lambda}_1)$ can be written as a Lagrangian distribution with singular symbol.
\begin{lemma}
\label{lem:singSymbol}
Suppose that $u=J(a)$ where $a=a(y,\eta)\in S^m.$ Then,
$$u=(2\pi h)^{-(3d-2)/4}\int_{\re^d}e^{\frac{i}{h}\la x,\xi\ra} \frac{-ia(y,\eta)}{\eta -i0}d\eta+\O{\Cc}(h^\infty). $$
\end{lemma}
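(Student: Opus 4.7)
The approach is to exploit the fact that the amplitude $a=a(y,\eta)$ is independent of the oscillatory variable $s$, so the $s$-integration in the definition of $J(a)$ can be evaluated separately and reduces to a Fourier transform of the Heaviside function. I expect the formula on the right-hand side should read
\[
  (2\pi)^{-1}(2\pi h)^{-(3d-2)/4}\int_{\re^d} e^{\frac{i}{h}\langle x,\xi\rangle}\,\frac{-i\,a(x,\xi)}{\xi_1-i0}\,d\xi+\O{\Cc}(h^\infty),
\]
the $\xi_1$ being the cotangent variable dual to $x_1-s$ in \eqref{eqn:normalForm}.

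First, pulling $a$ out of the $s$-integral in \eqref{eqn:normalForm} and formally interchanging the $s$- and $\xi$-integrations gives
\[
  u(x)=(2\pi h)^{-(3d+2)/4}\int_{\re^d} e^{\frac{i}{h}(x_1\xi_1+\langle x',\xi'\rangle)}\,a(x,\xi)\,\Bigl[\int_0^\infty e^{-\frac{is\xi_1}{h}}\,ds\Bigr]\,d\xi,
\]
and the standard distributional identity
\[
  \int_0^\infty e^{-is\sigma}\,ds=\frac{1}{i(\sigma-i0)}\quad \text{in }\mc{S}'(\re_\sigma),
\]
rescaled via $\sigma=\xi_1/h$, yields $\int_0^\infty e^{-is\xi_1/h}ds=-ih(\xi_1-i0)^{-1}$. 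Substituting and collecting constants using $(2\pi h)^{-(3d+2)/4}\cdot h=(2\pi)^{-1}(2\pi h)^{-(3d-2)/4}$ produces the desired formula.

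The main technical point is justifying the interchange of integrations and the distributional pairing with the symbol $a$ (which is not integrable in $\xi$). To do this rigorously I would split the $s$-integral using a cutoff $\chi\in C^\infty_c(\re)$ with $\chi\equiv 1$ on $[-R,R]$, where $R>\sup_{x\in\supp_x a}|x_1|+1$. On the compactly supported piece $\chi(s) H(s)$, the $s$-integral produces a Schwartz function of $\xi_1/h$ whose limit (as $R\to\infty$) is exactly $-ih/(\xi_1-i0)$ tested against $a$, and the resulting oscillatory integral in $\xi$ converges in the standard symbolic sense. On the tail piece $(1-\chi(s))H(s)$, which is supported in $|s|>R>|x_1|+1$, we use the identity $e^{\frac{i}{h}(x_1-s)\xi_1}=\tfrac{-ih}{x_1-s}\partial_{\xi_1}e^{\frac{i}{h}(x_1-s)\xi_1}$ and integrate by parts $N$ times in $\xi_1$; each integration gains a factor $h/|x_1-s|\le h/(R-|x_1|)$, and since $a\in S^m$, after $N>m+d$ integrations the resulting integrand is uniformly integrable and $\O{}(h^N)$ uniformly on compact sets in $x$. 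Taking $N$ arbitrary gives the $\O{\Cc}(h^\infty)$ remainder.

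The hardest step will be bookkeeping the boundary terms that arise from integration by parts in $\xi_1$ on the tail (since $\xi_1$ ranges over all of $\re$ there are no boundary terms there, but one must be careful that $(\xi_1-i0)^{-1}$ is integrated against compactly supported (in $x$) symbols, so the distributional pairing really does make sense), together with confirming the constant. No genuinely new estimates are required beyond this, since everything reduces to the well-known Fourier transform of the Heaviside function and standard non-stationary phase.
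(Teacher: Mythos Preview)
Your proposal is correct and takes essentially the same approach as the paper: both compute the $s$-integral via the distributional Fourier transform of the Heaviside function, obtaining $\int_0^\infty e^{-is\xi_1/h}\,ds = -ih/(\xi_1-i0)$, and then substitute back. The paper's proof is in fact briefer than yours (it just invokes Paley--Wiener and writes down the answer without discussing the interchange of integrals or the tail), and your observations about the missing factor of $(2\pi)^{-1}$ and the notation $\eta-i0$ versus $\xi_1-i0$ are correct --- these appear to be typos in the statement as printed.
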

\begin{proof}
Observe that by the Paley-Wiener theorem, 
$$f(\eta_1)=\int_0^\infty e^{-\frac{i}{h}s\eta_1}ds$$
is holomorphic in $\Im \eta_1<0$. So, we can take limits from $\eta_1$ in the lower half plane to obtain
$$f(\eta_1)=\frac{h}{i(\eta_1-i0)}.$$
This gives the result.
\end{proof}

\subsection{General Lagrangians}
Suppose that $(\Lambda_0,\Lambda_1)$ is an intersecting pair of Lagrangian manifolds in a $C^\infty$ manifold $X$ with dim $X=d$ and $\dim \Lambda_0\cap \Lambda_1=d-1$ and $\Lambda_0\cap \Lambda_1\Subset T^*X$. Given $\lambda\in \Lambda_0\cap \Lambda_1$, by Lemma \ref{lem:equivalence} we can find a local parametrization of the the intersecting pair. Therefore, we define
\begin{defin}
\label{def:isecLagDef}
$I_\delta^{m}(X;\Lambda_0,\Lambda_1)$ consists of those $C^\infty$ $\recip{2}$ densities, $u$ on $X$ which are modelled microlocally on Definition \ref{def:prototype}. We say that $u\in I_{\delta}^{m}(X;\Lambda_0,\Lambda_1)$ if there exist distributions $u_0\in h^{1/2}I_{\delta}^{m}(\Lambda_0)$, $u_1\in I_\delta^{\comp}(\Lambda_1\setminus \partial\Lambda_1)$, a finite set of parametrizations $\chi_j:V_j\to T^*\re^d$ reducing $(\Lambda_0,\Lambda_1)$ locally to normal form, zeroth order Fourier integral operators $F_j$ associated to $\chi_j^{-1}$ and distributions $v_j\in I^{\comp}_{\delta}(\re^d;\tilde{\Lambda}_0,\tilde{\Lambda}_1)$ such that 
\m u-u_0-u_1-\sum_{j}F_jv_j=\mc O_{\mc{S}}(h^\infty).\m
\end{defin}

\begin{remark} Recall that for open $\Lambda$, all $u\in I^{\comp}_{\delta}(\Lambda)$ are compactly microlocalized inside $\Lambda$. Thus, $I^{\comp}_{\delta}(\Lambda_1\setminus\partial\Lambda_1)$ consists of distributions which are compactly microlocalized away from $\partial\Lambda_1$.
\end{remark}

To show that these distributions are well defined, we need to show that if $\chi$ is a canonical transformation on $\re^d$ which leaves both $\tilde{\Lambda}_0$ and $\tilde{\Lambda}_1$ invariant and $F$ is a properly supported zeroth order Fourier integral operator associated to $\chi$, then $Fu\in I_{\delta}^{\comp}(\re^d;\tilde{\Lambda}_0,\tilde{\Lambda}_1)$ provided $u$ is in this space. We will actually prove something stronger. Let $\tilde{\Lambda}_i^{d'}\subset T^*\re^{d'}$, $\tilde{\Lambda}_i^d\subset T^*\re^d$, $i=0,1$. 

\begin{lemma}
\label{lem:fioIsectLagrangian}
Suppose that $d,d'\geq 2$ and $\Gamma$ is a canonical relation such that $\Gamma\composed \tilde{\Lambda}_0^{d'}\subset \tilde{\Lambda}_0^{d}$, $\Gamma\composed \tilde{\Lambda}_1^{d'}\subset \tilde{\Lambda}_1^d$ and the compositions are transversal. Let $F\in I^{\comp}(\Gamma)$. Then 
\begin{equation}
\label{eqn:FIOmapProperty}
F:I^{\comp}_{\delta}(\re^{d'};\tilde{\Lambda}_0^{d'},\tilde{\Lambda}_1^{d'})\to I^{\comp}_{\delta}(\re^d;\tilde{\Lambda}_0^d,\tilde{\Lambda}_1^d).
\end{equation}
\end{lemma}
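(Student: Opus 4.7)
The plan is to decompose $u \in I^{\comp}_\delta(\re^{d'}; \tilde{\Lambda}_0^{d'}, \tilde{\Lambda}_1^{d'})$ according to Definition \ref{def:prototype} as $u = u_1 + u_2$, where $u_2 \in h^{1/2} I^{m-1/2}_\delta(\tilde{\Lambda}_0^{d'})$ and $u_1 = J(a)$ for some $a \in S^{m+1/2-d'/4}_\delta$ of compact support, and then handle each summand separately. For $u_2$, the hypothesis that $\Gamma \composed \tilde{\Lambda}_0^{d'} \subset \tilde{\Lambda}_0^d$ transversally lets us invoke the standard calculus of semiclassical Fourier integral operators (Lemma \ref{lem:FIOcomp}): the composition is clean of excess zero, so $F u_2 \in h^{1/2} I^{m-1/2}_\delta(\tilde{\Lambda}_0^d)$, which is admissible as an $h^{1/2} I^{m-1/2}_\delta(\tilde\Lambda_0^d)$-type remainder in Definition \ref{def:isecLagDef}.

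The main work is analyzing $F u_1$. Using Lemma \ref{lem:singSymbol}, modulo an $\O_{C_c^\infty}(h^\infty)$ remainder I will rewrite
\[
u_1(y) = (2\pi h)^{-(3d'-2)/4} \int e^{i\langle y, \eta\rangle /h} \frac{-i\, a(y,\eta)}{\eta_1 - i0}\, d\eta,
\]
i.e., as a Lagrangian distribution on $\tilde{\Lambda}_0^{d'}$ with a symbol having a principal-value--type singularity across the hypersurface $\{\eta_1 = 0\} \subset \tilde{\Lambda}_0^{d'}$ whose conormal bundle is exactly $\tilde{\Lambda}_1^{d'}$. Next I would parametrize $\Gamma$ near its microsupport by a non-degenerate phase function $\Phi(x, y, \theta)$ so that the Schwartz kernel of $F$ has the standard oscillatory form $(2\pi h)^{-N}\int e^{i\Phi/h} b(x, y, \theta)\, d\theta$, substitute the representation of $u_1$, and perform stationary phase in $(y, \theta)$. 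The transversality of $\Gamma \composed \tilde{\Lambda}_0^{d'}$ guarantees the resulting phase is non-degenerate in those variables, and the stationary-phase expansion produces an oscillatory integral in $\eta$ with amplitude $(\eta_1 - i0)^{-1}$ times a classical symbol; away from $\eta_1 = 0$ this contributes an element of $h^{1/2} I^{m-1/2}_\delta(\tilde{\Lambda}_0^d)$ by Lemma \ref{lem:isecDecomposition}, while near $\eta_1 = 0$ the singular factor survives and, after reversing Lemma \ref{lem:singSymbol}, produces an integral of the form \eqref{eqn:normalForm} in coordinates adapted to $\partial\tilde{\Lambda}_1^d$.

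The key obstacle, and the place where the second transversality hypothesis $\Gamma \composed \tilde{\Lambda}_1^{d'} \subset \tilde{\Lambda}_1^d$ is essential, is ensuring that the singular boundary factor produced by stationary phase really corresponds to $\partial \tilde{\Lambda}_1^d$ rather than being smeared across some other hypersurface in $\tilde{\Lambda}_0^d$. Concretely, one must show that the canonical map induced by $\Gamma$ sends the conormal direction to $\partial \tilde{\Lambda}_1^{d'}$ inside $\tilde{\Lambda}_0^{d'}$ to the conormal direction to $\partial \tilde{\Lambda}_1^d$ inside $\tilde{\Lambda}_0^d$; this is exactly the infinitesimal content of the transversality of $\Gamma \composed \tilde{\Lambda}_1^{d'}$. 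Once that alignment is established, a change of coordinates in the $\eta$-integral turns the stationary-phase output into the normal form $J(\tilde a)$ with $\tilde a \in S^{m+1/2-d/4}_\delta$ modulo $h^{1/2} I^{m-1/2}_\delta(\tilde{\Lambda}_0^d)$. Combining the treatments of $u_1$ and $u_2$ and matching against Definition \ref{def:isecLagDef} then yields the mapping property \eqref{eqn:FIOmapProperty}.
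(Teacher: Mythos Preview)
Your overall strategy is sound and the treatment of $u_2$ via Lemma~\ref{lem:FIOcomp} is correct. The gap is in the final step for $u_1$: the sentence ``a change of coordinates in the $\eta$-integral turns the stationary-phase output into the normal form $J(\tilde a)$'' hides essentially all of the work. After stationary phase in $(y,\theta)$ you obtain an oscillatory integral of the form $\int e^{i\tilde\psi(x,\eta)/h}\,c(x,\eta)(\eta_1-i0)^{-1}\,d\eta$ where $\tilde\psi$ is some phase parametrizing $\tilde\Lambda_0^d$, not $\langle x,\xi\rangle$. Bringing $\tilde\psi$ to $\langle x,\xi\rangle$ is a phase-equivalence problem, and you must do it while controlling the singular factor: under a generic change of phase variables, $(\eta_1-i0)^{-1}$ becomes $(\eta_1(x,\xi)-i0)^{-1}$, which equals $g(x,\xi)(\xi_1-i0)^{-1}$ only up to corrections you have not accounted for. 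The alignment of conormal directions you cite tells you the two hypersurfaces match to first order, but it does not give you the full reduction. Reversing Lemma~\ref{lem:singSymbol} is easy once you are already in the standard coordinates; getting there is the issue.

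The paper takes a different route that sidesteps this difficulty. It keeps the half-line integral $\int_0^\infty(\cdots)\,ds$ from \eqref{eqn:normalForm} throughout, performs stationary phase in $(y,\eta)$ rather than $(y,\theta)$, and arrives at $\int_0^\infty\!\int e^{i\varphi(x,s,\theta)/h}c\,d\theta\,ds$ with $\varphi(x,s,\theta)=\phi(x,(s,0),\theta)$. The bulk of the proof is then a phase-function argument showing $\varphi$ is equivalent to $x\cdot\theta-s\theta_1$ by a change of phase variables that preserves $\{s=0\}$ and $\{s>0\}$: one first reduces the number of $\theta$-variables by stationary phase until $\partial_\theta^2\varphi=0$ at the reference point, then uses the two transversality hypotheses (encoded as \eqref{eqn:phaseForm1}--\eqref{eqn:phaseForm2}) and a H\"ormander-style quadratic correction to kill the remaining $s^k\gamma$ terms. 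This is where the second transversality hypothesis really does its work, and it is not a one-line coordinate change.
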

\begin{proof}
We can always decompose $F$ by using a microlocal partition of unity and so assume that $\Gamma\composed \tilde{\Lambda}_0^{d'}=\Lambda_0^d$ and $\Gamma\composed \tilde{\Lambda}_1^{d'}=\tilde{\Lambda}_1^d$ in the region of interest. Suppose that 
\begin{gather*} 
u=(2\pi h)^{-(3d'+2)/4}\int_0^\infty\int e^{\frac{i}{h}((y_1-s)\eta_1+\la y', \eta'\ra)}a(s,y,\eta)d\eta ds,\\
Fv=(2\pi h)^{-(d+d'+2L)/4}\int e^{\frac{i}{h}\phi(x,y,\theta)}b(x,y,\theta)v(y)dyd\theta,\end{gather*}
where $\phi$ non-degenerate phase function defining $\Gamma$. Then, 
\begin{equation} \label{eqn:FIOIsectComp}Fu=(2\pi h)^{-(d+4d'+2L+2)/4}\int_0^\infty\int \left[\int e^{\frac{i}{h}\psi(x,y,s,\theta,\eta)}badyd\eta\right]d\theta ds\end{equation}
with $\psi =\phi(x,y,\theta)+(y_1-s)\eta_1+\la y',\eta'\ra.$ Now, note that $d_\eta \psi=0$ if and only if $y_1=s,y_2=...=y_{d'}=0$, $d_y\psi=0$ if and only if $\eta=-d_y\phi$ and
$$\partial_{y\eta}^2\psi=\begin{pmatrix}\partial^2_y\phi &I\\I&0\end{pmatrix}$$
which has determinant 1.

Thus, by stationary phase,
$$Fu=(2\pi h)^{-(d+2L+2)/4}\int_0^\infty\int e^{\frac{i}{h}\phi(x,(s,0),\theta)}c(x,s,\theta)d\theta ds.$$
Notice that $\Gamma\composed \tilde{\Lambda}_i^{(d')}=\Lambda_i^{(d)}$ implies that 
\begin{align*}
d_\theta \phi=0\,\imply\,  y=0&\Leftrightarrow x=0\\
d_\theta \phi=0,\,\imply\, \phi_{y_1}'=0&\imply \phi_{x_1}'=0,\,x'=0,\,x_1 \geq 0 \\
d_\theta \phi=0,\, \phi_{x_1}'=0&\imply \phi_{y_1}'=0,\, y'=0.\, y_1\geq 0.
\end{align*}
Since we have assumed that the compositions $\Gamma\composed \tilde{\Lambda}_1^{d'}$ is transversal $\varphi(x,s,\theta)=\phi(x,(s,0),\theta)$ is non-degenerate and since $\Gamma \composed \tilde{\Lambda}_i^{d'}=\tilde{\Lambda}_1^d$, 
\begin{gather}
\label{eqn:phaseForm1}
d_\theta\varphi=0, s=0\Leftrightarrow x=0, d_\theta\varphi=0\\
\label{eqn:phaseForm2}
d_\theta\varphi=0,\, d_s\varphi=0 \Leftrightarrow x'=0,\, d_{x_1}\varphi=0,\,x_1\geq 0,\, d_\theta\phi=0
\end{gather}
Since away from $s=0$, $u\in I^{\comp}(\Lambda_1)$, we may work in a small neighborhood of $s=0$. Suppose that there exists $\{(s_i,x_i,\theta_i)\}_{i=1}^\infty$ such that $s_i\to 0$, $s_i,x_i\neq 0$, $d_\theta\varphi(x_i,x_i,\theta_i)=0$. Then, since $c$ has compact support, we may assume that $(x_i,\theta_i)\to (x,\theta).$ But, $\varphi\in C^\infty$. Therefore, $d_\theta\varphi(x,0,\theta)=0$ and hence $x=0$ by \eqref{eqn:phaseForm1} and we may also work in a neighborhood of $x=0$. 

Suppose that $\partial^2\varphi/\partial\theta\partial\theta(0,0,\theta)\neq 0$. Then there exist $i,\, j$ such that $\partial^2\varphi/\partial\theta_i\partial\theta_j(0,0,\theta)\neq 0$. Suppose $i=j$. Then $\partial^2\varphi/\partial\theta_i^2(0,0,\theta)\neq 0$ and we can use stationary phase to eliminate the $\theta_i$ variable. Therefore, we may assume that $\partial^2\varphi/\partial\theta_i^2(0,0,\theta)=0$ for all $i$ and $\theta$ in $d_{\theta}\varphi=0$. Suppose that $i\neq j$. Then, since $\partial^2\varphi/\partial^2\theta_i(0,0,\theta)= 0$ for all $i$, we may use stationary phase in the $\theta_i$ and $\theta_j$ variables. Now, observe that if $\partial^2\varphi/\partial\theta\partial\theta(0,0,\theta)\neq 0$ then the same is true in a neighborhood of $s=0$, $x=0$. 

Hence, reducing the size of the neighborhood of $(0,0)$ if necessary and using stationary phase we can reduce the number of $\theta$ variables, $L$, until $\partial^2\varphi/\partial\theta\partial\theta =0$ at $(0,0,\tilde{\theta})$. Then, by \eqref{eqn:phaseForm1} and the fact that $\Gamma\composed \tilde \Lambda_0^{d'}$ is transverse $L=d$  and $\text{det}(\partial^2\varphi/\partial x\partial \theta)\neq 0.$ Therefore, 
\begin{gather*} 
Fu=(2\pi h)^{-(3d+2)/4}\int_0^\infty\int e^{\frac{i}{h}\phi(x,(s,0),\theta)}c(x,s,\theta)d\theta ds,\\
\frac{\partial\varphi}{\partial \theta_j}=\sum_i C_{ji}(x_i-s\alpha_i(x,s,\theta)),\end{gather*}
where $C$ is invertible. Now, we want to show that there is a change of  variables $\theta =\Theta(x,s,\theta)$, $s=sT(x,s,\theta)$ where $T>0$ such that $Fu$ is of the form \eqref{eqn:normalForm}.

First, replace $\theta_i$ by $\sum_jC_{ji}\theta_j$ to reduce $\varphi$ to 
\m \varphi(x,s,\theta)=\theta \cdot x-s \alpha(x,s,\theta).\,\,\m
Now, write $\alpha=\alpha(0,0,\theta)+x\cdot \beta(x,s,\theta)+s\gamma(x,s,\theta)$ and let $\theta_i=\theta_i-s\beta_i$. Then, 
$$\varphi(x,s,\theta)=\theta\cdot x -s\alpha(\theta)+s^2\gamma(x,s,\theta).$$
Now, \eqref{eqn:phaseForm2} reads
\begin{equation}\label{eqn:phase0}
x_i-s\frac{\partial\alpha}{\partial\theta_i}+s^2\frac{\partial\gamma}{\partial\theta_i}=0,\quad -\alpha(\theta)+\frac{\partial}{\partial s}(s^2\gamma(x,s,\theta))=0\end{equation}
if and only if $x'=0$, $\partial_{x_1}\varphi=0$, and $x_1\geq 0$. But, using \eqref{eqn:phaseForm1}, we have that $s\neq 0$ implies $x_1\neq 0$ and hence $x_1>0$. Thus, $\partial \alpha/\partial \theta_1>0$ on the surface $S$, defined by \eqref{eqn:phase0}. Hence, $s$ and $\theta'=(\theta_2,...,\theta_d)$ can be taken as coordinates. Moreover, for $i\geq 2$, $x_i\equiv 0$ on $S$ so that differentiating with respect to $s$ in the first equation of \eqref{eqn:phase0} and setting $s=0$ gives $\partial\alpha/\partial\theta'=0$ on $\alpha=0$. But, $\partial\alpha/\partial\theta_1\neq 0$, so 
\m \alpha(\theta)=(\theta_1-\rho(\theta'))\beta(\theta).\,\,\m
Now,
$$0<\frac{\partial \alpha}{\partial \theta_1}=\beta(\theta)+\frac{\partial \beta}{\partial\theta_1}(\theta_1-\rho(\theta')).$$
But, on $\alpha=0$, $\theta_1-\rho(\theta')=0$ and hence $\beta>0$. Then, since $\beta=0$ implies $\alpha=0$, $\beta>0$. We also have that $\partial \rho /\partial \theta '=0$ since 
$$0=\frac{\partial\alpha}{\partial\theta'}=\frac{\partial \beta}{\partial\theta '}(\theta_1-\rho(\theta'))+\beta(\theta)\frac{\partial\rho}{\partial \theta '}.$$
and $\theta_1-\rho(\theta ')=0$ and on $\alpha=0$. Therefore, $\partial\rho/\partial\theta'\equiv 0$ and $\rho\equiv C_1$. Hence, by relabeling $\theta_1=\theta_1-C_1$, and $s=s\beta$, we have 
\m \varphi=\theta\cdot x +x_1C_1-s\theta_1+s^2\gamma(x,s,\theta).\,\,\m
and, using \eqref{eqn:phaseForm2}, and setting $s=0$, $\theta_1=0$, we have $\partial \varphi /\partial x_1=\theta_1+C_1=0$. Therefore, $C_1=0$ and we have 
\m \varphi=\theta\cdot x-s\theta_1+s^2\gamma(x,s,\theta).\,\,\m

Relabeling $\gamma=\alpha$ and repeating the argument gives for any fixed $k$ that 
\m \varphi(x,s,\theta)=x\cdot \theta -s\theta_1+s^k\gamma(x,s,\theta).\,\,\m

Now, we apply the method used by H\"{o}rmander \cite{Ho} to show that $\varphi$ is equivalent to $\phi(x,s,\theta)=x\cdot \theta-s \theta_1$ under a change of phase variables preserving $s=0$ and $s>0$.

The map 
$$\chi:(x,s,\theta)\mapsto \left(x,\frac{\partial \varphi}{\partial x},\frac{\partial \varphi}{\partial \theta}, \frac{\partial \varphi}{\partial s}\right)$$
is injective and, $\chi(x,s, \theta)=(x,\theta,(x_1-s,x'),-\theta_1)+\mc O(s^\infty)$. Hence $\chi$ has a left inverse $\Psi(x,\xi,\eta,\sigma)$ such that, on the surface $\eta_1=x_1$, $\Psi(x,\theta, (x_1,\eta'),\sigma)=(x,0,\theta)$ to high order. Let $\kappa^{-1}(x,s,\theta)=\Psi(x,\theta,(x_1-s,x'),-\theta_1)$ and put $\psi=\kappa^*\varphi$. Then $\kappa:\tilde{S}\to S$ and $\kappa$ is equal to the identity to high order at $s=0$.

Now, write 
\m \kappa(x,s,\theta)=(x,t(x,s,\theta),\eta(x,s,\theta)).\,\,\m
Then, 
\begin{gather*}\left.{}\frac{\partial\varphi}{\partial x}\right|_{(x,s,\theta)=(x,t,\eta)}=\theta,\quad\quad \left.{}\frac{\partial\varphi}{\partial \theta}\right|_{(x,s,\theta)=(x,t,\eta)}=(x_1-s,x'),\\
 \left.{}\frac{\partial\varphi}{\partial s}\right|_{(x,s,\theta)=(x,t,\eta)}=-\theta_1.\end{gather*}
Hence, by \eqref{eqn:phaseForm2} on $\tilde{S}$
\m x_1-s=0,\quad x'=0,\quad \theta_1=0.\,\,\m
Therefore, on $\theta_1=0$, $\partial_s\psi=0$ and hence $\partial_{x_1}\psi=0$. Thus, $(\psi-\phi)(x_1,x',x_1,0,\theta')=0$. That is, $\psi-\phi$ vanishes on $\tilde{S}$. 
Note also that we have on $\tilde{S}$ that
$$\partial \psi =\partial_{x'}\varphi \frac{\partial x'}{\partial x'}=\partial _{x'}\varphi =\theta'=\partial \phi.$$
and we have $\partial (\psi-\phi) =0.$
Hence $\psi-\phi$ vanishes to second order on $\tilde{S}$. 

Thus, 
\m \psi(x,s,\theta)-\phi(x,s,\theta)=Z\cdot A\cdot Z,\,\,\m
where $Z=(x_1-s,x',-\theta_1)=(\partial \phi/\partial \theta_1,\partial \phi/\partial \theta',\partial \phi/\partial s)$ and $A$ vanishes at $s=0$. We need to find a coordinate change $(\tilde{s},\tilde{\theta})=(s,\theta)+B(x,s,\theta)\cdot Z$ such that $\varphi(x,s,\theta)=\phi(x,\tilde{s},\tilde{\theta})$ and $B=0$ at $s=0$. Since 
$$\phi(x,\tilde{s},\tilde{\theta})=\phi(x,s,\theta)+Z\cdot B\cdot Z+Z\cdot B\cdot G\cdot B\cdot Z$$
where $G$ is a matrix depending smoothly on $x$, $\theta$, $s$ and $B$, it suffices to choose $B$ as the unique small solution  of $B+BGB=A$. Then we have that $B=0$ at $s=0$ since $A=0$ there. Thus the phase functions $\phi$ and $\varphi$ are equivalent.

\end{proof}

Now, the symbol calculus follows from \cite{UhlMel}. We include the relevant results in the semiclassical setting.

First, suppose $\lambda_0\in \partial\Lambda_1$ and choose $h_1,...,h_{d-1}$ functions whose differentials are linearly independent on $\partial\Lambda_1$ near $\lambda_0$. Choose also $f,g$ such that $f=0$ on $\Lambda_0$, $f>0$ on $\Lambda_1\setminus\partial\Lambda_1$, $df(\lambda_0)\neq 0$, $g=0$ on $\Lambda_1$, $dg(\lambda_0)\neq 0$, and $\{f,g\}(\lambda_0)< 0$. Let $a\in C^\infty(\Lambda_0\setminus \partial\Lambda_1;\Omega^{1/2})$ such that if $g\in C^\infty(\Lambda_0)$ vanishes on $\partial\Lambda_1$ then $ga\in C^\infty(\Lambda_0)$. Then write 
\m a=g^{-1}r|dh_1\wedge \dots \wedge dh_{d-1}\wedge dg|^{1/2}\,\,\m 
and define 
$$Ra:=r|dh_1\wedge \dots \wedge dh_{d-1}\wedge df|^{1/2}\{g,f\}^{-1/2}.$$
Then \cite[Section 4]{UhlMel} shows that $R$ is independent of the choice of $h_i$, $g$, and $f$ as above.

\begin{defin}
\label{def:iLagrangeSymbolClass}
We define the symbol class 
$$S_\delta^{\comp}(\Lambda_0\cup\Lambda_1)\subset h^{1/2}S_\delta^{\comp}(\Lambda_0\setminus \partial\Lambda_1;\Omega^{1/2})\times S_\delta^{\comp}(\Lambda_1;\Omega^{1/2})$$
as the subspace consisting of those sections $(a,b)$ such that for all $g$ vanishing on $\partial\Lambda_1$, $ga\in C^\infty(\Lambda_0)$ and $b|_{\partial\Lambda_1}=e^{\pi i/4}(2\pi)^{1/2}h^{-1/2}R(a)$.
\end{defin}

Then we have the following \cite[Theorem 4.13]{UhlMel}
\begin{lemma}
The following sequence is exact: 
$$0\hookrightarrow h^{1-2\delta}I_\delta^{\comp}(\Lambda_0,\Lambda_1)\hookrightarrow I_\delta^{\comp}(\Lambda_0,\Lambda_1) {\overset{\sigma}{  \rightarrow}} S_\delta^{\comp}(\Lambda_0\cup\Lambda_1)\rightarrow 0.$$ 
\end{lemma}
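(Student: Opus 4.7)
The plan is to establish the three claims of the exact sequence — well-definedness of $\sigma$ with image in $S_\delta^{\comp}(\Lambda_0\cup\Lambda_1)$, surjectivity, and identification of $\ker\sigma$ — by reducing to the normal-form situation on $(\tilde\Lambda_0,\tilde\Lambda_1)$ via the parametrizations $\chi_j$ of Definition \ref{def:isecLagDef}. Because Lemma \ref{lem:fioIsectLagrangian} shows that every zeroth order FIO associated to a canonical transformation preserving the pair maps $I_\delta^{\comp}$ into itself, and because the standard calculus of Lagrangian distributions handles the pieces $u_0\in h^{1/2}I_\delta^{\comp}(\Lambda_0)$ and $u_1\in I_\delta^{\comp}(\Lambda_1\setminus\partial\Lambda_1)$ away from $\partial\Lambda_1$, everything comes down to verifying the three assertions for $u=J(a)$, for $a\in S_\delta^{m+1/2-d/4}$ compactly supported, near a point of $\partial\tilde\Lambda_1=\{x=0,\,\xi_1=0\}$.

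First I would compute the two halves of the symbol of $J(a)$. After reducing to $a=a(y,\eta)$ by Taylor expansion in $s$ modulo $h^{1-2\delta}$, Lemma \ref{lem:isecDecomposition} microlocalized away from $\tilde\Lambda_0$ gives an element of $I_\delta^{\comp}(\tilde\Lambda_1)$ whose phase-dependent principal symbol, parametrizing $\tilde\Lambda_1$ by $(s,\xi')$, is $a(s,(s,0),(0,\xi'))\,|ds\wedge d\xi'|^{1/2}$ up to a unimodular constant (this is the symbol $b$). Microlocalized away from $\tilde\Lambda_1$, Lemma \ref{lem:singSymbol} identifies $J(a)$ modulo $\O{\Cc}(h^\infty)$ with the oscillatory integral whose amplitude is $-i\,a(y,\eta)/(\eta_1-i0)$, producing a half-density on $\tilde\Lambda_0=T_0^*\re^d$ with a simple pole along $\partial\tilde\Lambda_1=\{\xi_1=0\}$; this is exactly the factorization $g^{-1}r\,|dh_1\wedge\cdots\wedge dh_{d-1}\wedge dg|^{1/2}$ of Definition \ref{def:iLagrangeSymbolClass} with $g=\xi_1$, $h_i=\xi_{i+1}$, and $r=-i\,a(0,\xi)$.

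The main obstacle is verifying the compatibility relation $b|_{\partial\tilde\Lambda_1}=e^{\pi i/4}(2\pi)^{1/2}h^{-1/2}R(a)$. I would take $f=x_1$ (vanishing on $\tilde\Lambda_0$, positive on $\tilde\Lambda_1\setminus\partial\tilde\Lambda_1$) and $g=\xi_1$, so $\{g,f\}=-1$, and unwind the definition of $R$ applied to the singular symbol $-i\,a(0,\xi)/(\xi_1-i0)$: the residue prescription strips the factor of $\xi_1$ and converts $|d\xi_1|^{1/2}$ into $|\{g,f\}|^{-1/2}|dx_1|^{1/2}$, producing the half-density $\,(\text{constant})\cdot a(0,0,0,\xi')\,|dx_1\wedge d\xi'|^{1/2}$ on $\partial\tilde\Lambda_1$. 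Matching this with the restriction at $s=0$ of $a(s,(s,0),(0,\xi'))|ds\wedge d\xi'|^{1/2}$ reproduces the universal factor $e^{\pi i/4}(2\pi)^{1/2}h^{-1/2}$, where the power of $h$ comes from the mismatch between the $I^{\comp}(\Lambda_0)$ normalization $(2\pi h)^{-d/4}$ and the intersecting-Lagrangian normalization $(2\pi h)^{-(3d+2)/4}$ in the integration-by-parts identity of Lemma \ref{lem:singSymbol}, and the $e^{\pi i/4}$ is the Maslov factor for the half-wave propagation from $\Lambda_0$ into $\Lambda_1$. Independence of the parametrization is then inherited from Lemma \ref{lem:fioIsectLagrangian}, which shows that any equivalence of models acts on $(a,b)$ by a compatible transformation.

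With the symbol map understood, surjectivity is built by a partition of unity on $\Lambda_0\cup\Lambda_1$: away from $\partial\Lambda_1$ one realizes the prescribed $a$ and $b$ by standard Lagrangian distributions on $\Lambda_0$ and $\Lambda_1$ respectively, and near $\partial\Lambda_1$ one chooses the amplitude $a$ in a normal-form $J(a)$ whose restriction at $s=0$ equals $R^{-1}$ of the prescribed boundary value of $b$; the compatibility condition in Definition \ref{def:iLagrangeSymbolClass} guarantees that these local pieces glue. Finally, for exactness at the middle: if $\sigma(u)=0$, then in each chart the symbol of $J(a)$ vanishes on both halves, hence $a\in h^{1-2\delta}S_\delta^{m-1/2-d/4}$ by the standard order-reduction for Lagrangian distributions combined with the definition of $R$, giving $J(a)\in h^{1-2\delta}I_\delta^{\comp}(\tilde\Lambda_0,\tilde\Lambda_1)$; summing over the charts yields $u\in h^{1-2\delta}I_\delta^{\comp}(\Lambda_0,\Lambda_1)$.
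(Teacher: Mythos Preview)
The paper does not supply a proof of this lemma; it simply cites \cite[Theorem~4.13]{UhlMel} and records the remark that $\sigma$ is the usual symbol map applied to each half. Your outline follows the Melrose--Uhlmann argument faithfully: reduction to the model pair via Lemma~\ref{lem:fioIsectLagrangian} and Definition~\ref{def:isecLagDef}, identification of the $\Lambda_0$-symbol through Lemma~\ref{lem:singSymbol} and of the $\Lambda_1$-symbol through Lemma~\ref{lem:isecDecomposition}, verification of the compatibility relation defining $S_\delta^{\comp}(\Lambda_0\cup\Lambda_1)$, and then surjectivity and kernel identification by partition of unity and order reduction. This is the correct structure.

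Two places deserve more care before the argument is complete. First, the step where you ``reproduce the universal factor $e^{\pi i/4}(2\pi)^{1/2}h^{-1/2}$'' is asserted rather than computed; in the model this comes from a genuine stationary-phase (or residue) calculation comparing the $\Lambda_0$-normalization $(2\pi h)^{-d/2}$ against the $(2\pi h)^{-(3d+2)/4}$ normalization of $J(a)$ after integrating out the $s$-variable at the boundary, and the constant must be tracked through the Maslov convention of Lemma~\ref{l:lagrangian-basic}. Second, in the kernel argument you need the implication ``both halves of $\sigma(J(a))$ vanish $\Rightarrow$ $J(a)\in h^{1-2\delta}I_\delta^{\comp}$'' to go through the singular $\Lambda_0$-symbol: vanishing on $\Lambda_0\setminus\partial\Lambda_1$ means the residue $r$ in the factorization $g^{-1}r$ is itself divisible by $g=\xi_1$ modulo $h^{1-2\delta}$, and you then have to combine this with vanishing of $b$ on $\Lambda_1$ (including at $\partial\Lambda_1$, which the compatibility condition forces) to conclude that the amplitude $a$ in $J(a)$ can be taken in $h^{1-2\delta}S_\delta$ after absorbing a correction into the $u_2\in h^{1/2}I_\delta^{m-1/2}(\Lambda_0)$ summand. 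You gesture at this, but the interaction between the singular symbol on $\Lambda_0$ and the boundary value on $\Lambda_1$ is the crux and should be written out.
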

\begin{remark} Here $\sigma$ is the usual symbol map for Lagrangian distributions applied to each component $\Lambda_0\setminus\partial\Lambda_1$ and $\Lambda_1$ separately.
\end{remark}

We need the analog of \cite[Propositions 5.4 and 5.5]{UhlMel} in the semiclassical setting. First, we characterize the appearance of transport equations. The following lemma follows from Lemma \ref{l:lieDerivativeFIO}.
\begin{lemma}
Let $P\in \Psi_{\delta}^m(X)$ be a properly supported pseudodifferential operator such that $p:=\sigma(P)$ vanishes on the part $\Lambda_1$ of an intersecting pair $(\Lambda_0,\Lambda_1)$ of Lagrangians. Then for $u\in I^{m'}_{\delta}(X;\Lambda_0,\Lambda_1)$, $Pu=f+g$, $f\in h^{1/2}I_{\delta}^{m+m'-1/2}(X,\Lambda_0)$, $g\in h^{1-2\delta}I_{\delta}^{m+m'-1}(X;\Lambda_0,\Lambda_1)$ and 
$$\sigma(g)|_{\Lambda_1}=(-ih\mc{L}_{H_p}+p_1)\sigma(u)|_{\Lambda_1}$$
where $\mc{L}_{H_p}$ is the Lie action of the Hamilton vector field $H_p$ and $p_1$ is the subprincipal symbol of $P.$
\end{lemma}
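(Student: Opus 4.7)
The plan is to reduce the assertion to Lemma \ref{l:lieDerivativeFIO} via the local decomposition of intersecting Lagrangian distributions given in Definition \ref{def:isecLagDef}. Writing
$$u = u_0 + u_1 + \sum_j F_j v_j + \O{\mc S}(h^\infty)$$
with $u_0$ supported microlocally on $\Lambda_0$, $u_1 \in I^{\comp}_\delta(\Lambda_1\setminus\partial\Lambda_1)$, and $v_j \in I^{\comp}_\delta(\re^d; \tilde\Lambda_0, \tilde\Lambda_1)$ pulled forward by FIOs $F_j$ reducing $(\Lambda_0,\Lambda_1)$ to normal form, I would treat each piece separately. For $u_0$ the standard action of $P$ on a Lagrangian distribution (Lemma \ref{l:lagrangian-mul}) keeps us on $\Lambda_0$ and contributes to $f$. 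For $u_1$, since $p|_{\Lambda_1}=0$, Lemma \ref{l:lieDerivativeFIO} applies directly and produces an element of $h^{1-2\delta}I^{m+m'-1}_\delta(\Lambda_1\setminus\partial\Lambda_1)$ whose principal symbol is exactly $(-ih\mc L_{H_p}+p_1)\sigma(u_1)$, contributing to $g$ away from $\partial\Lambda_1$.

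The substantive step is the contribution of the model terms $F_j v_j$. I would use the Egorov-type identity $PF_j = F_j Q_j + R_j$, where $Q_j \in \Psi^m_\delta(\re^d)$ has principal symbol $p\circ\chi_j$ (and subprincipal obtained from $p_1$ plus the half-density transport error from $\chi_j$), and $R_j$ is lower order. Because $\chi_j$ maps $\tilde\Lambda_1$ into $\Lambda_1$, $\sigma(Q_j)$ vanishes on $\tilde\Lambda_1$. By Lemma \ref{lem:fioIsectLagrangian} the claim for $F_j v_j$ then reduces to proving the statement in the model pair $(\tilde\Lambda_0, \tilde\Lambda_1)$.

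For the model case I would apply Lemma \ref{lem:singSymbol} to write
$$v = (2\pi h)^{-(3d-2)/4}\int e^{\frac{i}{h}\la x,\xi\ra}\frac{-ia(x,\xi)}{\xi_1 - i0}\,d\xi + \O{\Cc}(h^\infty),$$
and decompose $q=\sigma(Q_j) = \xi_1 b(x,\xi) + x'\cdot c(x,\xi)$, which is possible because $q$ vanishes on $\{\xi_1=0,\ x'=0\}=\tilde\Lambda_1$. Applying $Q_j$ under the integral, the $\xi_1 b$ piece cancels the factor $(\xi_1-i0)^{-1}$ and delivers an ordinary Lagrangian distribution on $\tilde\Lambda_0$, which is absorbed into $f$; the $x' c$ piece is converted via $x' e^{i\la x,\xi\ra/h} = hD_{\xi'} e^{i\la x,\xi\ra/h}$ into an integration-by-parts identity that produces an element of $h^{1-2\delta} I^{m+m'-1}_\delta(\tilde\Lambda_0, \tilde\Lambda_1)$. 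The principal symbol of this remainder on $\tilde\Lambda_1\setminus\partial\tilde\Lambda_1$ is computed by the standard stationary phase argument underlying Lemma \ref{l:lieDerivativeFIO}, giving $(-ih\mc L_{H_p} + p_1)\sigma(v)|_{\tilde\Lambda_1}$, and conjugating back by $\chi_j$ yields the desired formula on $\Lambda_1$.

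The main obstacle is controlling the behavior at $\partial\Lambda_1$, where the singular factor $(\xi_1-i0)^{-1}$ concentrates. The key point is that Definition \ref{def:iLagrangeSymbolClass} already requires the symbol on $\Lambda_1$ to match $e^{i\pi/4}(2\pi)^{1/2}h^{-1/2}R(\sigma|_{\Lambda_0})$ at $\partial\Lambda_1$, so once one verifies membership $g \in h^{1-2\delta}I^{m+m'-1}_\delta(\Lambda_0,\Lambda_1)$ using the model computation above, the extension of the transport formula across $\partial\Lambda_1$ is automatic; the delicate bookkeeping is showing that the cancellation between the $\xi_1 b$ and $x'\cdot c$ contributions respects this compatibility, which amounts to checking that the two classical transport operators on $\Lambda_0$ (via $R$) and on $\Lambda_1$ (via $\mc L_{H_p}$) agree at $\partial\Lambda_1$ — a standard but notationally heavy verification.
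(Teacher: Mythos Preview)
Your proposal is correct and follows the approach the paper indicates: the paper's own justification is the single sentence ``The following lemma follows from Lemma~\ref{l:lieDerivativeFIO},'' with no further detail. Your sketch is precisely a fleshed-out version of how one makes that reduction work---decomposing via Definition~\ref{def:isecLagDef}, applying Lemma~\ref{l:lieDerivativeFIO} directly on $\Lambda_1\setminus\partial\Lambda_1$, and handling the model terms by Egorov reduction and the splitting $q=\xi_1 b + x'\cdot c$ on $\tilde\Lambda_1$---so there is nothing to compare beyond noting that you have supplied the details the paper omits.
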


Second, we need the asymptotic summability of the spaces $I^{m}_{\delta}(X;\Lambda_0,\Lambda_1).$ 

\begin{lemma}
Assume that $u_j\in h^{j(1-2\delta)}I_{\delta}^{m-j}(X;\Lambda_0,\Lambda_1)$ for $j=0,1,\dots$ then there exists $u\in I_{\delta}^{m}(X;\Lambda_0,\Lambda_1)$ such that for every $N$ there exists $N'>0$ large enough such that
$$u-\sum_{j=0}^{N'}u_j\in h^NC^N(X).$$
\end{lemma}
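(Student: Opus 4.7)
The plan is to reduce the statement to an asymptotic summation at the symbol level, for which a standard Borel-type argument applies. Since the definition of $I^m_\delta(X;\Lambda_0,\Lambda_1)$ in Definition~\ref{def:isecLagDef} is local, I first choose a locally finite covering of $\overline{\Lambda_0\cup\Lambda_1}$ by neighborhoods $V_\alpha$ on which $(\Lambda_0,\Lambda_1)$ is carried to the model pair $(\tilde\Lambda_0,\tilde\Lambda_1)$ by a symplectomorphism $\chi_\alpha$, together with a subordinate microlocal partition of unity $\{B_\alpha\}\subset\Psi_\delta^{\comp}(X)$ satisfying $\sum_\alpha B_\alpha = I$ microlocally near $\overline{\Lambda_0\cup\Lambda_1}$. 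Applying each $B_\alpha$ to the $u_j$ and using Lemma~\ref{lem:isecDecomposition} to split contributions, the problem reduces to asymptotically summing, separately on each $V_\alpha$:
\begin{itemize}
\item a sequence in $h^{j(1-2\delta)+1/2}I^{m-j-1/2}_\delta(\Lambda_0)$, for which asymptotic summability is the standard result for semiclassical Lagrangian distributions (proved by a symbol-level Borel construction exactly as in the single-Lagrangian case);
\item a sequence of distributions of the form $F_\alpha J(a_{j,\alpha})$, where $F_\alpha$ is a zeroth-order semiclassical FIO associated to $\chi_\alpha^{-1}$ and $a_{j,\alpha}\in h^{j(1-2\delta)}S_\delta^{m-j+1/2-d/4}$ has compact support in $x$.
\end{itemize}

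For the second bullet, since $F_\alpha$ is a continuous operator on $\mathcal{S}'$, it suffices to asymptotically sum the amplitudes $a_{j,\alpha}$ inside the symbol class $S_\delta^{m+1/2-d/4}$. This is a classical Borel summation: I would pick $\psi\in\CI(\re)$ with $\psi\equiv 0$ near $0$ and $\psi\equiv 1$ outside a neighborhood of $0$, choose a sequence $\lambda_j\to\infty$ sufficiently fast, and set
\[
a_\alpha(s,x,\xi;h)\;:=\;\sum_{j\geq 0}\psi\!\left(\lambda_j h^{-(1-2\delta)}\right)a_{j,\alpha}(s,x,\xi;h).
\]
Choosing $\lambda_j$ to grow rapidly enough forces only finitely many terms to contribute for each fixed $h$ on the support in $s$, and standard estimates give $a_\alpha\in S_\delta^{m+1/2-d/4}$ with $a_\alpha-\sum_{j<N'}a_{j,\alpha}\in h^{N'(1-2\delta)}S_\delta^{m-N'+1/2-d/4}$. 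Setting $u\sim\sum_\alpha F_\alpha J(a_\alpha)$ plus the Lagrangian piece then produces the desired asymptotic sum, where the final $h^N C^N$ gain comes from pushing $N'$ large: an error in $h^{N'(1-2\delta)}I_\delta^{m-N'}$ lies in $h^NC^N$ once $N'(1-2\delta)-(m-N')\gg N+d/2$, by the usual Sobolev embedding and the symbol estimates.

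The compatibility at the intersection, encoded in Definition~\ref{def:iLagrangeSymbolClass} via the map $R$, is automatic here: each $u_j$ already lies in $I_\delta^{m-j}(\Lambda_0,\Lambda_1)$, so its $\Lambda_0$- and $\Lambda_1$-symbols are compatible in the sense of that definition. Because the Borel construction acts linearly on the amplitudes and commutes with the quantization $J$, the resulting $a_\alpha$ produces a distribution whose partial symbols $(\sigma(u)|_{\Lambda_0\setminus\partial\Lambda_1},\sigma(u)|_{\Lambda_1})$ satisfy the compatibility condition on $\partial\Lambda_1$ up to arbitrary order in $h$, and hence exactly, after absorbing the remainder into a smoothing term.

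The main obstacle is the bookkeeping near $\partial\Lambda_1$: one must verify that the local Borel summations, performed in potentially different coordinate charts, yield a globally well-defined element of $I^m_\delta(X;\Lambda_0,\Lambda_1)$. This amounts to checking that on overlaps $V_\alpha\cap V_\beta$ the change-of-parametrization Fourier integral operators (which belong to the class considered in Lemma~\ref{lem:fioIsectLagrangian}) map the local Borel sums into each other up to $\O{\mathcal{S}}(h^\infty)$ errors. Since Lemma~\ref{lem:fioIsectLagrangian} gives \eqref{eqn:FIOmapProperty} and these operators commute with summation up to terms in $h^{N(1-2\delta)}I^{m-N}_\delta$, the overlap errors are absorbed into the asymptotic remainder, completing the proof.
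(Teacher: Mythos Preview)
The paper does not actually prove this lemma; it is stated as the semiclassical analogue of \cite[Proposition~5.5]{UhlMel} and left without argument. Your overall strategy---localize to the model pair $(\tilde\Lambda_0,\tilde\Lambda_1)$ via a microlocal partition of unity and then Borel-sum the amplitudes in the oscillatory representation $J(a)$---is exactly the standard route, and is what one finds in the homogeneous case.

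That said, the Borel cutoff you write down does not do what you claim. With $\psi\equiv 0$ near $0$, $\psi\equiv 1$ away from $0$, and argument $\lambda_j h^{-(1-2\delta)}$ with $\lambda_j\to\infty$, one has $\lambda_j h^{-(1-2\delta)}\to\infty$ as $j\to\infty$ for each fixed $h\in(0,1)$, so $\psi(\lambda_j h^{-(1-2\delta)})\to 1$: for every fixed $h$ all but finitely many terms are \emph{switched on}, not off, and the series has no reason to converge. The correct construction is the usual one: take $\chi\in C_c^\infty(\re)$ with $\chi\equiv 1$ near $0$, set
\[
a_\alpha \;=\; \sum_{j\ge 0}\chi(h/\epsilon_j)\,a_{j,\alpha},
\]
and choose $\epsilon_j\downarrow 0$ fast enough that the $S_\delta$ seminorms of the tail are summable; then for fixed $h$ only the finitely many terms with $\epsilon_j>h$ survive, while each individual term agrees with $a_{j,\alpha}$ for $h$ small. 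With this correction the remainder of your outline (gluing across charts via the FIO-invariance lemma, and passing from $h^{N'(1-2\delta)}I_\delta^{m-N'}$ to $h^N C^N$ for $N'$ large) goes through.
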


Finally, we need the following analog of \cite[Proposition 6.6]{UhlMel}. Define the characteristic set of $P$,
\m \Sigma(P)=\{\nu\in T^*X:\sigma(P)(\nu)=0\}.\,\,\m
We say that $P\in \Psi^m_{\delta}$ is of \emph{real principal type} if, letting $p:=\sigma(P)$ and $p_1:=\sigma_1(P)$, the subprincipal symbol, $p$ is real,
\m \partial p(q)\neq 0\text{ for }q\in \Sigma(P)\,\,\m
and $ \Im p_1\geq 0.$ We say that $P\in \Psi^m_{\delta}$ is elliptic if there exists $M>0$ such that for $|\xi|\geq M$, $|\sigma(P)|\geq C|\xi|^m.$

\begin{lemma}
\label{lem:greenFunc}
Let $P\in \Psi_{\delta}^m(X)$ be elliptic and of real principal type. Then let
\m \Lambda_0=\{(x,\xi,x,-\xi)\in T^*X\times T^*X\},\,\,\m
and $\Lambda_1^e$ be the $H_p$ flow out of $\Lambda_0\cap \Sigma(P)$ with orientation $e$. Assume that $\exp(tH_p)$ is non-trapping on $\Sigma(P)$. Then there exists $ u\in h^{-1/2}I_{\delta}^{-m}(X\times X;\Lambda_0,\Lambda_{1}^e)$, such that for each $K\Subset M$,
$$P(u+v)=\delta(y,y')+\O{\mc{D}'\to C^\infty}(h^\infty)\text{ for }(y,y')\in K\times K.$$
In particular, we have take
$$ \sigma(u)=(\sigma(P)^{-1},r)\in h^{-1/2}S_\delta^{-m}(\Lambda_0\cup\Lambda_1;\Omega^{1/2}) $$
where $r$ solves 
\begin{equation}
\label{eqn:transportILagrange}
h\mc{L}_{H_p}r+ip_1r=0,\quad r|_{\partial\Lambda_1}=e^{\pi i/4}(2\pi)^{1/2}h^{-1/2}R(\chi\sigma(P)^{-1})
\end{equation}
and where $p_1$ is the subprincipal symbol of $P$.
\end{lemma}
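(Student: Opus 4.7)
The plan is to construct $u$ by an iterative parametrix argument, building its principal symbol from ellipticity of $P$ on $\Lambda_0$ and from a transport equation on $\Lambda_1^e$, then correcting to all orders in $h$.

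First, I would set $a_0 := \sigma(P)^{-1} |dx \wedge d\xi|^{1/2}$ as a half-density on $\Lambda_0 \setminus \partial\Lambda_1$. By ellipticity of $P$ away from its characteristic set, $a_0 \in S_\delta^{-m}$ off $\partial\Lambda_1$, and it has exactly the type of singularity admitted by Definition \ref{def:iLagrangeSymbolClass}: the function $g = \sigma(P)$ vanishes simply on $\partial\Lambda_1$ (since $P$ is of real principal type, $d(\sigma(P)) \neq 0$ there), and multiplying $a_0$ by $g$ yields a smooth half-density on $\Lambda_0$. The compatibility condition in that definition then forces $b_0|_{\partial\Lambda_1} = e^{i\pi/4}(2\pi)^{1/2}h^{-1/2}R(a_0)$, which I would extend off the boundary by solving the transport equation \eqref{eqn:transportILagrange} along the integral curves of $H_p$.

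Next, I would use the non-trapping hypothesis on $\Sigma(P)$: by construction of $\Lambda_1^e$ as the flow out of $\partial\Lambda_1$, flowing backward along $H_p$ from any point of $\Lambda_1^e$ reaches $\partial\Lambda_1$ in finite time, so $r$ is well-defined globally, and the non-negativity of $\Im p_1$ ensures $|r|$ does not blow up during this flow, keeping $r \in h^{-1/2}S_\delta^{-m}$. This produces a symbol pair $(a_0, b_0) \in h^{-1/2}S_\delta^{-m}(\Lambda_0 \cup \Lambda_1^e)$, and via local parametrizations (Lemma \ref{lem:fioIsectLagrangian} applied to reduce to the normal form of Definition \ref{def:prototype}) I would assemble a global $u_0 \in h^{-1/2}I_\delta^{-m}(X \times X; \Lambda_0, \Lambda_1^e)$ with this principal symbol.

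The key computation is then to check that $Pu_0 - \delta(y,y') \in h^{1/2-\delta}I_\delta^{1-m}(\Lambda_0, \Lambda_1^e)$, i.e.\ that the leading contribution cancels. On $\Lambda_0$, Lemma \ref{l:lagrangian-mul} gives that the principal symbol of $Pu_0$ equals $\sigma(P) \cdot a_0 = |dx \wedge d\xi|^{1/2}$, matching the symbol of $\delta(y,y')$ realized as a Lagrangian distribution on $\Lambda_0$. On $\Lambda_1^e$ away from $\partial\Lambda_1$, the symbol of $Pu_0$ is $(-ih\mc{L}_{H_p} + p_1)r$ by Lemma \ref{l:lieDerivativeFIO}, which vanishes by construction. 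The boundary compatibility at $\partial\Lambda_1$ holds automatically since both symbols lie in the class of Definition \ref{def:iLagrangeSymbolClass} and agree on $\Lambda_0$. One then iterates: given the remainder $r_0 \in h^{1/2-\delta}I^{1-m}_\delta$, solve the analogous problem with source $-r_0$ to produce $u_1$ of one order lower, and invoke the asymptotic summation lemma preceding the present statement to obtain $u \sim \sum_j u_j$. The compactly supported smooth correction $v$ absorbs any tail remainder supported outside $K \times K$.

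The main obstacle is verifying the symbol compatibility at $\partial\Lambda_1$: one must show that $R(\sigma(P)^{-1})$ produces exactly the initial condition of the transport equation with the correct phase $e^{i\pi/4}$, prefactor $(2\pi)^{1/2}$, and power $h^{-1/2}$. This is the content of Definition \ref{def:iLagrangeSymbolClass}, but it must be checked explicitly against the normal form of Definition \ref{def:prototype} via Lemma \ref{lem:singSymbol} — the $-i0$ prescription in that lemma is precisely what selects the outgoing orientation $e$ of $\Lambda_1^e$, linking the symbol formalism to the non-trapping hypothesis. A secondary subtlety, consistent with the preamble of Section \ref{s:prelim.lagrangian}, is that the overall construction is only canonical modulo Maslov factors and a global phase $e^{iA/h}$ depending on the parametrization; this is harmless since the subsequent applications (the microlocal description of the free resolvent in Theorem \ref{thm:freeResolve}) only ever use the absolute value of the symbol or work in a single coordinate chart.
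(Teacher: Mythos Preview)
Your approach is essentially the same as the paper's: set the $\Lambda_0$-symbol to $\sigma(P)^{-1}$, determine the $\Lambda_1^e$-symbol by solving the transport equation with the initial condition forced by the $R$-map, iterate, and asymptotically sum. One point to sharpen: in the paper, $v$ is not a smooth tail correction but the pseudodifferential part of the parametrix obtained by first solving $Pv=(1-\oph(\chi))\delta(y,y')\chi_1(y)\chi_1(y')$ away from $\Lambda_1^e$ via ellipticity (with $\chi\equiv 1$ near $\Lambda_1^e$, $\chi_1\equiv 1$ on $K$); this cutoff is what makes all symbols lie in $S_\delta^{\comp}$ so that the intersecting-Lagrangian calculus applies, and at the end $v$ is absorbed into $u$. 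Your construction implicitly contains this since $I_\delta^{-m}(\Lambda_0,\Lambda_1^e)$ already includes the $h^{1/2}I_\delta^{-m-1/2}(\Lambda_0)$ piece, but you should make the compact-support reduction explicit rather than defer it to $v$.
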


\begin{remark} Lemma \ref{lem:greenFunc} gives us the kernel of a right parametrix for $Pu=f$.
\end{remark}

\begin{proof}
First, let $\chi \in \Cc(T^*M\times T^*M)$ have $\chi \equiv 1$ on $\Lambda_1^e$ and $\chi_1\in \Cc(M)$ have $\chi_1\equiv 1$ on $K$. Then, since $\WFh(\oph(1-\chi))\cap \Lambda_1^e=\emptyset$, there exists $v\in I^{-m}_{\delta}(\Lambda_0)$ such that 
$$Pv=(1-\oph(\chi))\delta(y,y')\chi_1(y)\chi_1(y')+\O{\mc{D}'\to C^\infty}(h^\infty).$$
In particular, $v$ is the kernel of a pseudodifferential operator $V\in \Psi^{-m}_{\delta}(X).$

We now solve $Pu=\oph(\chi)\delta(y,y')\chi_1(y)\chi_1(y')+\O{\mc{D}'\to C^\infty}(h^\infty)$. To do so, we proceed symbolically. Suppose that $u_0\in h^{-1/2}I_{\delta}^{\comp}(X\times X;\Lambda_0,\Lambda_1^e).$ Then we have 
$$Pu_0=f_0+g_0,\quad f_0\in I_{\delta}^{\comp}(\Lambda_0),\quad g_0\in h^{1/2-2\delta}I_{\delta}^{\comp}(X\times X;\Lambda_0,\Lambda_1^e).$$ 
Now,
\m \sigma(f_0)=\sigma(P)\sigma(u_0)|_{\Lambda_0}.\,\,\m 
Thus, writing $p=\sigma(P)$, we have 
$$\sigma(u_0)|_{\Lambda_0\setminus \partial\Lambda_1^e}=p^{-1}\sigma(\oph(\chi)\delta(y,y')\chi_1(y)\chi_1(y'))\in S^{\comp}_{\delta}(\Lambda_0\setminus \partial\Lambda_1^e).$$
Thus, using the fact that $\chi \equiv 1$ on $\Lambda_1^e$,
$$\sigma(u_0)|_{\partial\Lambda_1}=e^{\pi i/4}(2\pi )^{1/2}h^{-1/2}R(p^{-1}\sigma(\delta(y,y')\chi_1(y)\chi_1(y')))$$
and hence
\m \sigma(g_0)|_{\Lambda_1}=(-ih\mc{L}_{H_p}+p_1)\sigma(u_0)\,\,\m
where $p_1$ is the subprincipal symbol of $P$. Thus, $\sigma(g_0)=0$ on $\Lambda_1$ yields the transport equation
\m h\mc{L}_{H_p}\sigma(u_0)+ip_1\sigma(u_0)=0\text{ on }\Lambda_1.\,\,\m
Under our assumptions, \cite[Section 6.4]{FIO2} gives that this equation has a unique solution. Then since $\Im p_1\geq 0$ and $\sigma(u_0)|_{\Lambda_0}\in S^{\comp}_{\delta}$, we have that for $q\in \partial\Lambda_1^e$, $u_0|_{\Lambda_1^e\cap T^*K}\in h^{-1/2}S^{\comp}_{\delta}$.
Thus, for $(y,y')\in K$,
\begin{align*} Pu_0-\delta(y,y')\chi_1(y)\chi_1(y')&=f_1+g_1\\
&\in h^{1-2\delta}I_{\delta}^{\comp}(\Lambda_0)+h^{3/2-4\delta}I_{\delta}^{\comp}(X;\Lambda_0,\Lambda_1^e).\end{align*}

Finally, let $\chi_2\in \Cc(M)$ have $\chi_2\equiv 1$ on $\supp \chi_1$. Then relabel $u_0=\chi_2(y)\chi_2(y')u_0$.
Now, we proceed iteratively to find $u_j\in h^{j(1-2\delta)-1/2}I_{\delta}^{\comp}(X;\Lambda_0,\Lambda_1^e)$, given 
$f_j\in h^{j(1-2\delta)}I_{\delta}^{\comp}(\Lambda_0),$ and $ g_j\in h^{
j(1-2\delta)+1/2-2\delta}I_{\delta}^{\comp}(X\times X;\Lambda_0,\Lambda_1^e),$
such that 
$\sigma(u_j)|_{\Lambda_0}=p^{-1}\sigma(f_j)|_{\Lambda_0},$ \m h\mc{L}_{H_p}\sigma(u_j)+ip_1\sigma(u_j)=i\sigma(g_j)\,\text{ on }\Lambda_1^e.\,\,\m
As above, the transport equation has a unique solution  satisfying the initial condition 
\m \sigma(u_j)|_{\partial\Lambda_1}=e^{\pi i/4}(2\pi)^{1/2}h^{-1/2}R(p^{-1}\sigma(f_j)).\,\,\m
Then, letting
\m u\sim \sum u_j\,\,\m
gives that for $(y,y')\in K\times K$
$$ P(u+v)=\delta(y,y')+\O{\mc{D}'\to C^\infty}(h^\infty)$$
as desired. Then simply relabel $u=u+v$.
\end{proof}

\chapter{The Semiclassical Melrose--Taylor Parametrix}
\label{ch:semiclassicalDirichletParametrices}

Let $\Omega\subset \re^d$ be strictly convex with smooth boundary. We need to analyze
\begin{equation}\label{eqn:dirichlet}(-h^2\Delta-z^2)u=0\text{ in }\Omega_i\,,\quad\quad
u|_{\partial \Omega}=f
\end{equation}
where $\Omega_1=\Omega$ and $\Omega_2=\re^d\setminus \overline{\Omega}$, and $f$ is microlocalized near glancing.

We give a construction similar to that in \cite[Appendix A.II.3]{Gerard} and \cite[Chapter 11]{TaylorPseud} in $\Omega_2$ and adapt the results there to the case of $\Omega_1$ using methods similar to those in \cite[Chapter 7]{MelTayl}. Throughout, we assume $z=1+i\Im z$, $-Ch\log h^{-1}\leq \Im z\leq Ch\log h^{-1}$.

\begin{remark} To obtain $\Re z\neq 1$, we simply rescale $h$ in the resulting parametrices. 
\end{remark}

Define $\e(h)$ and $\mu(h)$ by
$$h\leq \e(h):= \max(h,|\Im z|)=\O{}(h\log h^{-1})\quad \quad \mu(h):=\Im z.$$
We construct parametrices in a neighborhood of glancing where the size of the neighborhood will depend on $\e(h)$.

In particular, if $\chi\in C_c^\infty(\re)$, $\chi\equiv 1$ in a neighborhood of $0$. Then, let $x_0\in \partial \Omega$, $\delta>0$ and define 
$$\chi^{\delta,\gamma}_h(x,\xi):=\chi\left(\frac{|\xi'|_g-1}{\gamma [h(\e(h))^{-1}]^2}\right)\chi(\delta^{-1}|x-x_0|)$$
where $|\cdot|_g$ denotes the norm induced on the $T^*\partial\Omega$ by the euclidean metric restricted to the boundary.
Then $\chi_h^{\delta,\gamma}$  localizes microlocally near a glancing point $(x_0,\xi_0)$. We construct an operator $H$ such that for $U_i\subset \Omega_i$ a neighborhood of $x_0$
\begin{equation}
\label{eqn:parametrixCondEuc}
\begin{cases}
(-h^2\Delta -z^2)H f=\mc O_{C^\infty}(h^\infty) & \text{in }U_i\\
Hf=\oph (\chi_h^{\delta,\gamma})f+\mc O_{C^\infty}(h^\infty)&\text{ in a neighborhood of }x_0\in \partial\Omega \\
Hf\text{ is outgoing if }\Omega_i=\Omega_2
\end{cases}
\end{equation}
In fact, we need to construct two such operators $H_{g}$ for gliding points and  $H_{d}$ for diffractive points corresponding to $\Omega_1$ and $\Omega_2$ respectively. Because of the fact that we are only looking for properties of $H$ in a neighborhood of $x_0$, we are able to construct operators $H_d$, and $H_g$ with the property \eqref{eqn:parametrixCondEuc} for $|\Im z|\leq Ch\log h^{-1}$.

With these operators in hand, using the arguments in \cite[Appendix A.5]{StefVod}, we show (see Section \ref{sec:relationExact}) that in a neighborhood of $\pO$, $H_d$ is $\O{C^\infty}(h^\infty)$ close to the true solution operator for \eqref{eqn:dirichlet}. However, because of the local nature of our construction of $H_g$, we are not able to do this for the gliding case. Instead, we must restrict our attention to the construction of the operators that were used in Section \ref{sec:BLONearGlance} to produce microlocal descriptions of the single and double layer potentials and operators for $|\Im z|\leq Ch\log h^{-1}$ (see Section \ref{sec:glide}). 

\begin{remark}
Note that in \cite[Chapter 7]{MelTayl}, the parametrix that the authors construct for the wave equation in the interior of $\Omega$ is $C^\infty$ close to the true solution. However, it is valid only for $t\in[0,T)$ for any $T>0$. Showing that $H_d$ is close to the true solution of \eqref{eqn:dirichlet} would be equivalent to a global in time $C^\infty$ parametrix for the wave problem.
\end{remark}

\section{Construction of the operators $H_g$ and $H_d$}
Following \cite[Chapter 7]{MelTayl} and \cite{MelroseParametrix}, the ansatze for our constructions will be Fourier-Airy integral operators \cite{MelroseAiry} of the form:
\begin{gather}
\label{eqn:FAIO}\begin{aligned}B_1F:=(2\pi h)^{-d+1} &\int_{\re^{d-1}} [g_0A_-(h^{-2/3}\rho)+ih^{1/3}g_1A_-'(h^{-2/3}\rho)]\\
&\quad\quad\quad\quad A_-(h^{-2/3}\alpha)^{-1}e^{i\theta/h}\mc{F}_h{F}(\xi)d\xi,\end{aligned}\\
\label{eqn:FAIOIn}\begin{aligned}B_2F:=(2\pi h)^{-d+1} &\int_{\re^{d-1}} [g_0Ai(h^{-2/3}\rho)+ih^{1/3}g_1Ai'(h^{-2/3}\rho)]\\
&\quad\quad\quad\quad Ai(h^{-2/3}\alpha)^{-1}e^{i\theta/h}\mc{F}_h{F}(\xi)d\xi.\end{aligned}\end{gather}

\noindent where $F$ and $f$ will be related below, $\rho|_{\partial \Omega}=\alpha$, $\rho\,,\,\theta \in C^\infty$ solve certain eikonal equations, $g_0\,,\,g_1$ solve transport equations, and $Ai$ is the solution to $-A''(s)+sA=0$ given by 
$$Ai(s)=\frac{1}{2\pi}\int_{-\infty}^\infty e^{i(st+t^3/3)}dt$$
for $s$ real, and $A_-(z)=Ai(e^{2\pi i/3}z).$ Finally, $\theta|_{\partial\Omega}$ will parametrize the canonical transformation reducing the billiard ball map for glancing pair $\{x\in \partial\Omega\}$ and $\{|\xi|^2-1=0\}$ to that for the Friedlander normal form
\begin{equation} \label{eqn:normalFormGlance}Q_{\Fried}:=\{x_d=0\}\subset T^*\re^d\quad\quad \text{ and }\quad \quad P_{\Fried}:=\{ \xi_d^2-x_d+\xi_1=0\}\subset T^*\re^d\end{equation}
The Hamiltonian flow for this system is shown in Figure \ref{fig:FriedlanderModel}.

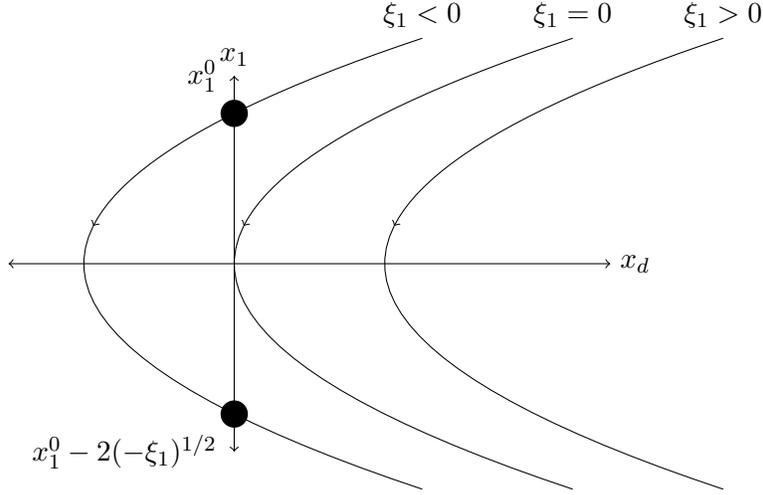
\begin{figure}
\centering
\begin{tikzpicture}
\draw[<->](-3,0)to (5,0)node[right]{$x_d$};
\draw[<->](0,-2.5)to (0,2.5)node[above]{$x_1$};
\foreach \y in {-2,0,2}{
\draw [domain=3:.5,->] plot ({.5*\x*\x-\y},\x);
\draw [domain=-3:.5] plot ({.5*\x*\x-\y},\x);}
\draw (-.1,2.5)node[left]{$x_1^0$};
\draw (-.1,-2.5)node[left]{$x_1^0-2(-\xi_1)^{1/2}$};
\draw (.5*3*3-2,3)node[above]{$\xi_1<0$};
\draw (.5*3*3,3)node[above]{$\xi_1=0$};
\draw (.5*3*3+2,3)node[above]{$\xi_1>0$};
\draw (0,2)node[circle,fill]{};
\draw (0,-2)node[circle,fill]{};
\end{tikzpicture}
\caption[Trajectories for the Friedlander Mode]{\label{fig:FriedlanderModel} The figure shows several trajectories of the Hamiltonian flow for the Friedlander model. When $\xi_1=0$, the trajectory is tangent to the boundary $x_d=0$ and hence is glancing. When, $\xi_1<0$, the billiard ball map takes the upper intersection with $x_d=0$ to the lower. This corresponds to the hyperbolic region. Finally, when $\xi_1>0$, the trajectory does not intersect the boundary and hence this corresponds to the elliptic region.}
\end{figure}

\subsection{The Friedlander Model}
As a first, step, we consider the Friedlander model. This toy example guides us when we consider the general case.
 The Friedlander model is given by 
$$P=(hD_{x_d})^2-x_d+hD_{x_1}\quad \quad \partial\Omega=\{x_d=0\}.$$
Suppose that 
\begin{equation}
\label{eqn:BVPModel}
(P-i\mu)u=0\quad\quad
u|_{\partial\Omega}=f
\end{equation}
Then, taking the semiclassical Fourier transform in the $x'$ variables gives
$$(-h^2\partial_{x_d}^2-x_d+\xi_1-i\mu)\mc{F}_{h,x'}u(x_d,\xi')=0\quad \quad \mc{F}_{h,x'}u(0,\xi')=\mc{F}_h(f)(\xi').$$
The solution to this problem for $\mu=0$ is 
$$u=(2\pi h)^{-d+1}\int \frac{A(h^{-2/3}(-x_d+\xi_1))}{A(h^{-2/3}\xi_1)}e^{\frac{i}{h}\la x',\xi'\ra}\mc{F}_h(f)(\xi')d\xi'$$
where $A$ is a solution to the Airy equation. Let $\rho_0:=-x_d+\xi_1$ and $\theta_0=\la x',\xi'\ra $. Now, suppose that $\mu=\O{}(h\log h^{-1})\neq 0$. We could simply replace $\rho_0$ by $-x_d+\xi_1-i\mu$, however, because the function $Ai$ has zeros on the real axis, it is more convenient when we consider the general case to make a perturbation of $\theta_0$ and $\rho_0$ so that uniformly in $\mu$, $\rho_0|_{x_d=0}$ has nonzero imaginary part. To do this, we compute 
\begin{align*} 
P(A(h^{-2/3}\rho)e^{\frac{i}{h}\theta})&= \left[(\partial_{x_d}\theta)^2-\rho (\partial_{x_d}\rho)^2+\partial_{x_1}\theta-x_d\right]A(h^{-2/3}\rho)e^{\frac{i}{h}\theta}\\
&\quad -ih^{1/3}\left[ 2\partial_{x_d}\rho\partial_{x_d}\theta +\partial_{x_1}\rho \right]A'(h^{-2/3}\rho)e^{\frac{i}{h}\theta}
\end{align*}
So, we seek to find $\theta$ and $\rho$ solving the model eikonal equations
$$\begin{cases} (\partial_{x_d}\theta)^2-\rho(\partial_{x_d}\rho)^2+\partial_{x_1}\theta -x_d=i\mu \\
2\partial_{x_d}\rho \partial_{x_d}\theta +\partial_{x_1}\rho=0
\end{cases}.
$$

We find $\rho \sim \sum_{n\geq 0} \rho_{n}\e(h)^n $ and $\theta \sim \sum_{n\geq0} \theta_{n} \e(h)^n$ where $\rho_0$ and $\theta_0$ are as above, $\theta_n=\theta_n(x,\xi',\mu),$ and $\rho_n=\rho_n(x,\xi',\mu)$. Then, we solve for $\rho_{n}$, $\theta_{n}$ successively by solving transport equations of the form 
$$
\begin{cases}
2\partial_{x_d}\theta_0\partial_{x_d}\theta_{n} -2\rho_0\partial_{x_d}\rho_0\partial_{x_d}\rho_{n}-\rho_{n}(\partial_{x_d}\rho_0)^2+\partial_{x_1}\theta_{n}=F_1\\
2\partial_{x_d}\rho_0\partial_{x_d}\theta_{n}+2\partial_{x_d}\theta_0\partial_{x_d}\rho_{n}+\partial_{x_1}\rho_{n}=F_2
\end{cases}$$
where $F_1$ and $F_2$ depend on $\theta_{n}$ and $\rho_{n}$ for $m<n$.

In the next section, we construct solutions to these equations with $\rho_{1}(x_1,0,\xi')=i$.
With these solutions in hand $u$ will solve \eqref{eqn:BVPModel} up to $\O{}(h^\infty).$

\subsection{Eikonal and Transport Equations}
First, we consider a general differential operator 
$$P(x,hD)=\sum a_{jk}(x)hD_jhD_k+\sum b_j(x)hD_j+c(x)$$
with $a_{jk}=a_{kj}$ applied to \eqref{eqn:FAIO} and \eqref{eqn:FAIOIn}.
Then, for $A$ an Airy function, we have, letting $f_j$ denote $\partial_j f$, and $\rho_h=h^{-2/3}\rho$
\begin{align*}
hD_j \left(g A(\rho_h)e^{\frac{i}{h}\theta}\right)&=\theta_j gA(\rho_h)e^{\frac{i}{h}\theta}-ihg_jA(\rho_h)e^{\frac{i}{h}\theta}-ih^{1/3}\rho_jgA'(\rho_h)e^{\frac{i}{h}\theta}\\
hD_khD_j\left(gA(\rho_h)e^{\frac{i}{h}\theta}\right)&=\\
&\!\!\!\!\!\!\!\!\!\!\!\!\!\!\!\!\!\!\!\!\!\!\!\!\!\left[ (\theta_k\theta_j -\rho_j\rho_k\rho)g-ih(\theta_k g_j+\theta_jg_k+\theta_{jk}g)-h^2g_{jk}\right]A(\rho_h)e^{\frac{i}{h}\theta}\\
&\!\!\!\!\!\!\!\!\!\!\!\!\!\!\!\!\!\!\!\!\!\!\!\!\! -ih^{1/3}\left[ (\theta_j\rho_k+\rho_j\theta_k)g-ih(g_j\rho_k+\rho_jg_k+\rho_{jk}g)\right]A'(\rho_h)e^{\frac{i}{h}\theta}\nonumber\\
hD_j \left(g A'(\rho_h)e^{\frac{i}{h}\theta}\right)&=\\
&\!\!\!\!\!\!\!\!\!\!\!\!\!\!\!\!\!\!\!\!\!\!\!\!\!\theta_j gA'(\rho_h)e^{\frac{i}{h}\theta}-ihg_jA'(\rho_h)e^{\frac{i}{h}\theta}-ih^{-1/3}\rho_j\rho gA(\rho_h)e^{\frac{i}{h}\theta}\\
hD_khD_j\left(gA'(\rho_h)e^{\frac{i}{h}\theta}\right)&=-ih^{-1/3}\left[ (\theta_j\rho_k+\theta_k\rho_j)\rho g\right.\\
&\!\!\!\!\!\!\!\!\!\!\!\!\!\!\!\!\!\!\!\!\!\!\!\!\! \left. -ih(g_j\rho_k\rho +g_k\rho_j\rho  +\rho_{jk}\rho g+\rho_j\rho_k g )\right]A(\rho_h)e^{\frac{i}{h}\theta}\nonumber\\
&\!\!\!\!\!\!\!\!\!\!\!\!\!\!\!\!\!\!\!\!\!\!\!\!\!\!\!\!\!\!\!\!\!  +\left[ (\theta_j\theta_k-\rho_j\rho_k\rho )g-ih(\theta_{kj}g+\theta_jg_k+\theta_kg_j)-h^2g_{jk}\right]A'(\rho_h)e^{\frac{i}{h}\theta}\nonumber
\end{align*}

So, 
\begin{align*} P(g_0A(\rho_h)e^{\frac{i}{h}\theta})&\\
&\!\!\!\!\!\!\!\! \!\!\!\!\!\!\!\! \!\!\!\!\!\!\!\! \!\!\!\!\!\!\!\! =\left[
\begin{aligned} (\la ad\theta,d\theta\ra -\rho\la ad\rho,d\rho\ra+\la b,d \theta\ra +c)g_0 \\
 -ih(2\la a d\theta,dg_0\ra  -P_2\theta g_0+\la b, dg_0\ra )+h^2P_2g_0
 \end{aligned}
 \right]A(\rho_h)e^{\frac{i}{h}\theta}\nonumber \\
&\quad -ih^{1/3}\left[\begin{aligned} (2\la ad\theta,d\rho\ra +\la b,d\rho\ra) g_0\\
-ih(2\la ad\rho,dg_0\ra -(P_2\rho)g_0)
\end{aligned}\right]A'(\rho_h)e^{\frac{i}{h}\theta}\nonumber\\
P(ih^{1/3}g_1A'(\rho_h)e^{\frac{i}{h}\theta})&=\\
&\!\!\!\!\!\!\!\! \!\!\!\!\!\!\!\! \!\!\!\!\!\!\!\! \!\!\!\!\!\!\!\!\left[\begin{aligned} 
\rho( 2\la ad\theta,d\rho\ra + \la b,d\rho\ra )g_1\\
-ih(2\rho\la ad\rho,dg_1\ra +\la ad\rho,d\rho\ra g_1-\rho (P_2\rho) g_1)\end{aligned}
\right]A(\rho_h)e^{\frac{i}{h}\theta}\nonumber \\
&\!\!\!\!\!\!\!\! \!\!\!\!\!\!\!\! \!\!\!\!\!\!\!\! \!\!\!\!\!\!\!\!  \!\!\!\!\!\!\!\!+ih^{1/3}\left[\begin{aligned}  (\la a d\theta,d\theta\ra -\rho\la ad\rho,d\rho\ra+\la b,d\theta\ra +c) g_1\\
-ih(2\la a d\theta,dg_1\ra -(P_2\theta)g_1+\la b,dg_1\ra)+h^2P_2g_1\end{aligned}\right]A'(\rho_h)e^{\frac{i}{h}\theta}\nonumber
\end{align*}
where $a_{jk}=a_{jk}(x)$, $P_2=h^{-2}(P-\la b,hD\ra -c(x)) $ and $\la\cdot, \cdot\ra$ denotes the euclidean inner product.

Now, applying $P$ under the integral in \eqref{eqn:FAIO} and \eqref{eqn:FAIOIn}
gives the eikonal equations 
\begin{equation}
\label{eqn:eikEuc}
\begin{cases}
\la a d \theta, d \theta \ra - \rho \la a d \rho , d \rho \ra +\la b,d\theta\ra +c=0\\
2\la ad \theta, d \rho \ra +\la b,d\rho\ra =0
\end{cases}. \end{equation}
Writing 
\begin{equation}\label{eqn:phipm}\phi^{\pm}=\theta\pm \frac{2}{3}(-\rho)^{3/2},\end{equation}
the eikonal equations are equivalent to $p(x,d\phi^{\pm})=0$. 
Now, suppose that $\rho$ has the form $\sum_{n\geq 0}\rho_{n}\e(h)^n$ and $\theta$ has the form $\sum_{n\geq 0}\theta_n \e(h)^n$ and 
$$g_i\sim \sum_{n} g_{i}^{[n]}(x,\xi',\mu)h^n.$$
Then the transport equations have the form 
\begin{equation}
\label{eqn:transEuc}
\left\{\begin{aligned}
\begin{gathered}2\la a d\theta_0,dg_0^{[n]}\ra+2\rho_0\la ad\rho_0,dg_1^{[n]}\ra  +\la b, dg_0^{[n]}\ra \\
+\la ad\rho_0,d\rho_0\ra g_1^{[n]}-P_2\theta_0 g_0^{[n]}-\rho_0 (P_2\rho_0) g_1^{[n]}\end{gathered}
=F_1^{[n]}(\theta,\rho,g_{i}^{[m]<[n]},\mu)\\
{}\\
\begin{gathered}2\la ad\rho_0,dg_0^{[n]}\ra-2\la a d\theta_0,dg_1^{[n]}-\la b,dg_1^{[n]}\ra \ra\\
-(P_2\rho_0)g_0^{[n]} +(P_2\theta_0)g_1^{[n]}\end{gathered}=F_2^{k,m}(\theta,\rho,g_{i}^{[m]<[n]},\mu).
\end{aligned}\right.
\end{equation}
More generally, we consider transport equations of the form 
\begin{equation}
\label{eqn:TransportGen}
\begin{cases}
\begin{gathered}2\la ad \theta_0, d g_0\ra+2\rho_0 \la ad \rho_0, d g_1\ra+\la b,dg_0\ra \\
+\la a d \rho_0 ,d \rho_0\ra g_1+B_1 g_0+\rho_0 B_2 g_1\end{gathered}=F_1\\
{}\\
2\la a d \rho_0, d g_0\ra -2\la a d \theta_0 ,d g_1\ra-\la b,dg_1\ra  +B_2 g_0 -B_1 g_1=F_2
\end{cases}
\end{equation}
Then, these equations are equivalent to \
\begin{equation} 
\label{eqn:transNewForm}2\la a d\phi^{\pm}, g^{\pm}\ra +\la b, dg^{\pm}\ra +G^{\pm}g^{\pm}=F^{\pm}
\end{equation}
where 
$$g^{\pm}=g_0\pm (-\rho_0)^{1/2}g_1\quad \quad G^{\pm}=B_1\mp(-\rho_0)^{1/2}B_2\quad\quad F^{\pm}=F_1\mp(-\rho_0)^{1/2}F_2.$$

We use the equivalence of glancing hypersurfaces to construct solutions of the eikonal equations near a glancing point. In particular, let $p(x,\xi)$ be the symbol of $P(x,hD)$ and $B$ a hypersurface in $M$. Let $P=\{p(x,\xi)=0\}$ and $Q=\{(x,\xi):x\in B\}$ be a pair of glancing manifolds at $m=(x_0,\xi_0)\in P\cap Q$. That is, if $q(x,\xi)=q(x)$ is a defining function for $Q$
\begin{gather*}dp\text{ and }dq\text{ are linearly independent at }m\\
\{p,q\}=0\quad \quad \{p,\{p,q\}\}\neq 0\quad \quad \{q,\{q,p\}\}\neq 0
\end{gather*}
Then the equivalence of glancing hypersurfaces (see for example \cite[Theorem 21.4.8]{HOV3}) gives the existence of neighborhoods $V$ of $m$ and $U$ of $0$ and a symplectomorphism $\kappa:U\to V$ reducing $P$ and $Q$ to the normal form \eqref{eqn:normalFormGlance}. Since $Q$ is the lift of a hypersurface to $T^*\re^d$, this also induces a symplectomorphism 
$$\kappa_\partial: \gamma\to T^*B\quad \gamma:=\{(y',\eta')\in T^*\re^{d-1}:(y',y_d,\eta',\eta_d)\in U \text{ for some }\eta_d\}$$
such that $\kappa_\partial$ intertwines the billiard ball map on $T^*B$ with that on $T^*Q_{\Fried}$. 

We assume further that $H_p$ is not tangent to $T_{x}\re^d$ at $x=\pi(m)$. This allows us to conclude that 
\begin{equation}
\label{eqn:indep}\kappa_{\partial}^*(d\eta_j),\quad j=1,\dots d-1\text{ are  linearly independent on }T_{x}^*B.
\end{equation}
To see this, observe that the projection of $H_p$ onto $T^*B$ is not tangent to $T_xB$. This image is the direction of the Hamilton vector field on the fold set and hence it follows that $\partial_{y_1}$ is not tangent to 
$$\mc{H}:=\kappa_{\partial}^{-1}(T^*_xB).$$
Observe that $\mc{H}$ is Lagrangian and hence $d\eta_1\neq 0$ on $\mc{H}$. Hence, there exists a symplectic change of coordinates on leaving $(y_1,\eta_1)$ fixed such that $d\eta_j$ $j=1\dots d-1$ are independent on $\mc{H}$ and therefore that \eqref{eqn:indep} holds. This transformation can clearly be extended to leave $Q_{\Fried}$ and $P_{\Fried}$ fixed. 

Now, consider 
$$Y:P\to M\times \re^{d-1}\quad \quad P\ni p\mapsto (\pi(p),\eta_1(\kappa^{-1}(p)),\dots,\eta_{d-1}(\kappa^{-1}(p))).$$
\begin{lemma}
The map $Y$ is a fold at $m$. Moreover, the fold set meets $Q_{\Fried}$ transverally at $\xi_d=0$. 
\end{lemma}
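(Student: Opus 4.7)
The plan is to pull the map back along the symplectomorphism $\kappa$, since $\kappa$ is a local diffeomorphism and both the fold property and transversality with $Q_{\Fried}$ are coordinate-invariant. Set $\tilde{Y} := Y \circ \kappa : P_{\Fried} \to M \times \re^{d-1}$, so that $\tilde{Y}(y,\eta) = (\Phi(y,\eta), \eta_1, \dots, \eta_{d-1})$ with $\Phi := \pi \circ \kappa$. I parametrize $P_{\Fried}$ by $(y',\eta) \in \re^{d-1} \times \re^d$ using the relation $y_d = \eta_d^2 + \eta_1$; the base point of interest is $(y',\eta) = (0,0)$, at which source and target both have dimension $2d-1$, so one expects a standard fold.

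The first task is the rank computation at the origin. The last $d-1$ components of $d\tilde{Y}(0)$ are $d\eta_1, \dots, d\eta_{d-1}$, whose joint kernel inside $T_0 P_{\Fried}$ is the $d$-dimensional subspace $K := \operatorname{span}\{\partial_{y'_1}, \dots, \partial_{y'_{d-1}}, \partial_{\eta_d}\}$. Because $\kappa$ maps $Q_{\Fried}$ into $Q$ and each basis vector of $K$ is tangent to $Q_{\Fried}$ at $0$ (noting that $\partial_{\eta_d}|_{P_{\Fried}} = \partial_{\eta_d} + 2\eta_d \partial_{y_d}$ reduces to $\partial_{\eta_d}$ there), the images $\partial_{y'_j}\Phi$ and $\partial_{\eta_d}\Phi$ all land in $T_{\pi(m)} B$. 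Thus $d\tilde{Y}(K) \subset T_{\pi(m)}B \times \{0\}$ has dimension at most $d-1$, forcing the corank of $d\tilde{Y}(0)$ to be at least one. To see the corank equals exactly one, one must show that $\partial_{y'_1}\Phi, \dots, \partial_{y'_{d-1}}\Phi$ are linearly independent in $T_{\pi(m)}B$; this is where hypothesis \eqref{eqn:indep} enters. If $\sum c_j \partial_{y'_j}\Phi(0) = 0$ then $w := \sum c_j d\kappa(\partial_{y'_j})$ is vertical at $m$ and lies in $T_m Q$; descending along the characteristic foliation of $Q$ via the induced boundary map $\kappa_\partial$ shows that $\sum c_j (d\kappa_\partial)(\partial_{y'_j})$ is tangent to the cotangent fiber $T^*_{\pi(m)}B$ inside $T(T^*B)$. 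But \eqref{eqn:indep} is exactly the statement that the functions $\eta_j \circ \kappa_\partial^{-1}$ give local coordinates on that fiber, equivalently that no nonzero combination of the $(d\kappa_\partial)(\partial_{y'_j})$ is tangent to $T^*_{\pi(m)}B$, forcing all $c_j = 0$.

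With corank one established, the fold structure is verified by Whitney's criterion. The kernel of $d\tilde{Y}(0)$ is generated by a vector of the form $v = \partial_{\eta_d} + \sum c_j \partial_{y'_j}$, and the directional second derivative $\partial_v^2 \tilde{Y}(0)$ picks up a $2\partial_{y_d}\Phi(0)$ contribution in the $T_{\pi(m)}M$ component (all other terms, being iterated derivatives of $\Phi$ along directions tangent to $Q_{\Fried}$, remain in $T_{\pi(m)}B$). Since $\partial_{y_d}$ is transverse to $Q_{\Fried}$ and $\kappa$ is a diffeomorphism carrying $Q_{\Fried}$ onto $Q$, the vector $d\kappa(\partial_{y_d})$ is not tangent to $Q$, so $\partial_{y_d}\Phi(0) = (d\pi)(d\kappa(\partial_{y_d})) \notin T_{\pi(m)}B$; this confirms that $\partial_v^2\tilde{Y}(0)$ escapes $\operatorname{im} d\tilde{Y}(0) = T_{\pi(m)}B \times \re^{d-1}$. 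The critical set is therefore the smooth hypersurface $\{\eta_d = 0\} \subset P_{\Fried}$, transported via $\kappa$ to $\{\xi_d = 0\}$. For the transversality claim, inside $P_{\Fried}$ the fold set $\{\eta_d=0\}$ and the boundary trace $Q_{\Fried}\cap P_{\Fried}=\{\eta_d^2+\eta_1=0\}$ are two codimension-one submanifolds intersecting in the codimension-two set $\{\eta_d=\eta_1=0\}$, so they meet transversally. The main obstacle is the corank-exactly-one step, since it requires carefully unwinding how \eqref{eqn:indep} constrains the descent of $\kappa$ to the boundary symplectomorphism $\kappa_\partial$; the remaining verifications are essentially direct computations once the Friedlander normal form is in place.
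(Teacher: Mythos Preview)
Your approach by direct computation in the Friedlander model is workable, but there is a slip in the fold verification. You assert $\operatorname{im} d\tilde Y(0) = T_{\pi(m)}B \times \re^{d-1}$, yet in your parametrization $(y',\eta)$ of $P_{\Fried}$ (with $y_d = \eta_d^2+\eta_1$) the coordinate vector $\partial_{\eta_1}$ corresponds in ambient coordinates to $\partial_{\eta_1} + \partial_{y_d}$, which is \emph{not} tangent to $Q_{\Fried}$. Hence $d\tilde Y(\partial_{\eta_1}) = (\partial_{\eta_1}\Phi, e_1)$ already carries a $\partial_{y_d}\Phi(0)$ component in its first slot, and the image is not $T_{\pi(m)}B\times\re^{d-1}$ but rather the hyperplane annihilated by $(\mu,t)\mapsto \nu(\mu)-t_1$, where $\nu$ is a conormal to $B$ normalized so that $\nu(\partial_{y_d}\Phi(0))=1$. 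The Whitney test still succeeds, since this functional evaluates to $2$ on $(\partial_v^2\Phi(0),0)$; so the error is repairable. Separately, you assert the fold set equals $\{\eta_d=0\}$ without justification. The Whitney criterion at $0$ only produces a smooth fold hypersurface transverse to $v$, which is actually enough for the transversality claim (since $v\in T_0(Q_{\Fried}\cap P_{\Fried})$ forces the two hyperplanes to differ), but your explicit identification of the fold set remains unproven.

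The paper takes a shorter structural route. Since a defining function $q$ for $B$ has $d(q\circ\pi)\neq 0$ on $P$ near $m$, one may strip off that output coordinate and study $Y' = Y|_{P\cap Q}: P\cap Q \to B\times\re^{d-1}$ instead. This factors as the projection $P\cap Q\to T^*B$, which is a fold along the glancing set by the standard theory of glancing pairs, followed by replacing the fiber coordinates on $T^*B$ by $\eta_1,\dots,\eta_{d-1}$, a local diffeomorphism precisely by \eqref{eqn:indep}. Both the fold property and the transversality then follow immediately, with no differential-matrix bookkeeping; your argument trades this known structural fact for an explicit (and more error-prone) Jacobian computation.
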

\begin{proof}
Let $q\in C^\infty(M)$ be a defining function for $B$. Then $dq\neq 0$ on $P$ near $m$. Thus, we need only consider the restriction of $Y$ to the intersection of $P$ and $Q$. That is, 
$$Y':P\cap Q\to B\times \re^{d-1},\quad Y'=Y|_{P\cap Q}.$$
But, the map from $P\cap Q$ to $T^*B$ is a fold and $Y'$ is this projection composed with replacement of the fiber variables by $\eta_j$ $j=1,\dots d-1$ which has bijective differential. Hence, $Y'$ and $Y$ are folds with the desired properties. 
\end{proof}

The construction of solutions to the eikonal equations near a glancing point now follows from \cite[Proposition 4.3.1]{MelTayl} which we include here.
\begin{lemma}
\label{lem:EikSolve}
Let $p$ be the (real) principal symbol of a differential operator with $C^\infty $ coefficients in a neighborhood of $B\subset \re^d$ with defining function $x_d$. If $P$ and $Q$ form a glancing pair at $m$ then there exist real functions $\theta_0$ and $\rho_0$ smooth in a neighborhood, $\Sigma$, of $\pi(m)\times \{0\}\in \re^d\times \re^{d-1}$ such that 
\begin{gather*}
\rho_0=\eta_1\text{ on }\Sigma\cap(B\times \re^{d-1})\\
\theta|_B\text{ parametrizes $\kappa_\partial$, the reduction of $P$ and $Q$ to normal form}\\
d_x\partial_{\eta_j}\theta,\quad j=1\dots d-1\text{ are linearly independent on }\Sigma \\
\rho_0\text{ is a defining function for the fold}
\end{gather*}
and $\rho_0$ and $\theta_0$ solve the eikonal equations \eqref{eqn:eikEuc} in $\rho_0\leq 0$ and in Taylor series on $B$. 
\end{lemma}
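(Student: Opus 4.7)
The strategy is to transport the explicit solutions from the Friedlander model back through the symplectomorphism $\kappa$ coming from the equivalence of glancing hypersurfaces, and then to verify that the resulting functions have the correct normal form $\phi^\pm = \theta \pm \tfrac{2}{3}(-\rho)^{3/2}$. In the Friedlander case itself the choice $\theta^{\Fried}_0 = \la x',\xi'\ra$ and $\rho^{\Fried}_0 = -x_d + \xi_1$ solves the model eikonal equations \eqref{eqn:eikEuc} identically in a neighborhood of $(0,0)$, so $\phi^{\pm,\Fried}_0 = \la x',\xi'\ra \pm \tfrac{2}{3}(x_d-\xi_1)^{3/2}$ generates the two branches $\Lambda^{\pm}_{\Fried}\subset P_{\Fried}$ meeting along the fold $\{\xi_d = 0\}$. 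All the claims of the lemma hold tautologically in this case.

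The first main step is to produce $\phi^\pm$ on $M\times \re^{d-1}$ in the general setting as follows. The two sheets of $\kappa(\Lambda^{\pm}_{\Fried})$ are Lagrangian submanifolds of $P$ meeting cleanly along the fold of the projection $Y\colon P \to M \times \re^{d-1}$, which by the lemma preceding this statement is transverse to $Q$ at $\xi_d=0$. Because $\kappa_{\partial}^{*}d\eta_j$, $j=1,\dots,d-1$, are independent on $T^*_x B$, the variables $(x,\eta')$ furnish valid oscillatory parameters, and each branch is generated by a non-degenerate phase function $\phi^{\pm}(x,\eta)$ with $d_\eta \phi^{\pm}=0$ on $\Lambda^{\pm}$. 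By construction $p(x,d_x\phi^\pm)=0$, which gives the eikonal system $p(x,d\phi^{\pm})=0$ in the region parametrizing each branch.

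The second and main step is to show that $\phi^{+}$ and $\phi^{-}$ fit together as $\phi^\pm = \theta \pm \tfrac{2}{3}(-\rho)^{3/2}$ with smooth real $\theta,\rho$, and with $\rho_0|_{B} = \eta_1$. Since the two Lagrangians $\Lambda^{\pm}$ coincide precisely on the fold, the phase difference $\psi := (\phi^{+}-\phi^{-})/2$ vanishes there, and by analyzing the Hamilton--Jacobi equation for $\psi$ along the bicharacteristic flow of $p$ off the fold (as in the Friedlander model, where $\psi^{\Fried} = \tfrac{2}{3}(x_d-\xi_1)^{3/2}$) one finds that $\psi^{2/3}$ is smooth and of the form $(-\rho_{0})$ with $d\rho_{0}\neq 0$ transverse to the fold. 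This is a Malgrange-division / normal form argument: the flow-out equations force $\psi$ to have exactly a cubic-in-$(-\rho_0)^{1/2}$ singularity across the fold. Setting $\theta = (\phi^{+}+\phi^{-})/2$ then gives the decomposition. Finally, we are free to compose $\kappa$ (and hence the parametrization) with a symplectomorphism of the Friedlander model preserving $Q_{\Fried}$, $P_{\Fried}$, and the fold; using this freedom we normalize $\rho_{0}|_{B} = \eta_{1}$ and arrange that $\theta_0|_B$ parametrizes $\kappa_{\partial}$, as is already true in the Friedlander model.

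The last paragraph is verification of the Taylor-series statement and the auxiliary conditions. The eikonal system \eqref{eqn:eikEuc} is equivalent, by the computation immediately after its statement, to $p(x,d\phi^{\pm})=0$, so the equations hold in $\rho_0\leq 0$ where $\phi^{\pm}$ are real; in $\rho_0>0$ the expressions $(-\rho_0)^{3/2}$ are formal, but differentiating $p(x,d\phi^{\pm})=0$ to all orders and restricting to $\rho_0=0$ produces a hierarchy of equations satisfied in Taylor series along $B$. The independence of $d_x\partial_{\eta_j}\theta$ on $\Sigma$ reduces, via the normal form, to the independence of $d_x\partial_{\eta_j}\la x',\eta'\ra$ in the Friedlander case, using that $\kappa_{\partial}$ is a diffeomorphism. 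That $\rho_0$ is a defining function for the fold follows from $d\rho_0\neq 0$, which in turn follows from $\rho_0^{\Fried} = -x_d + \xi_1$ and the non-degeneracy of $\kappa$. I expect the main technical difficulty to be the smoothness of $\rho_0$ across the fold: establishing rigorously that $((\phi^{+}-\phi^{-})/2)^{2/3}$ is $C^\infty$ requires a careful use of the Hamilton--Jacobi structure together with the fold condition $\{p,\{p,q\}\}\neq 0$, and is where the normal form genuinely does work for us.
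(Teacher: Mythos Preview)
Your overall strategy—pulling back the Friedlander solution through $\kappa$—is in the right spirit, but the argument as written has a genuine gap at exactly the point you flag as the main difficulty, and the paper's proof handles it by a different (and cleaner) route.

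You try to construct $\phi^{\pm}$ directly on the base $M\times\re^{d-1}$ as generating functions for the two sheets of the folded Lagrangian, and then argue that $\psi=(\phi^{+}-\phi^{-})/2$ satisfies $\psi^{2/3}\in C^\infty$. You do not actually prove this; invoking ``a Malgrange-division / normal form argument'' is not an argument, and this is precisely the heart of the lemma. The paper avoids this difficulty entirely by working \emph{upstairs} on $P$: one integrates the canonical one-form $\xi\,dx$ along the Lagrangian leaves $\Lambda_{\eta'}$ to obtain a single smooth function $\Phi$ on $P$ with $p(x,d\Phi)=0$. Then, because the projection $Y:P\to M\times\re^{d-1}$ is a fold, H\"ormander's fold normal form \cite[Theorem~21.4.1]{HOV3} gives $\Phi=Y^*\bigl(\theta_0\pm\tfrac{2}{3}(-\rho_0)^{3/2}\bigr)$ with $\theta_0,\rho_0$ smooth on the image. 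The point is that smoothness of $\Phi$ is automatic upstairs, and the fold theorem does the work of producing the $3/2$-power structure downstairs. Your approach of starting downstairs forces you to prove exactly the statement that the fold theorem would hand you for free.

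Two further issues. First, your normalization step—``using the freedom to compose $\kappa$ with a symplectomorphism of the Friedlander model, we normalize $\rho_0|_B=\eta_1$''—is misconceived: the paper shows that $\rho_0$ is \emph{independent} of the choice of both the transversal $T$ and of $\kappa$, hence is an invariant, and then identifies it with the model value $\eta_1$ by pullback. There is no gauge freedom in $\rho_0$ to exploit. Second, your Taylor-series extension is too vague: the paper first extends $\theta_0,\rho_0$ smoothly across $\rho_0=0$ (this only gives the eikonal equations to infinite order at $\rho_0=0$), and then uses the Malgrange preparation theorem to write $p(x,\xi)=p'\bigl[(\xi_d-a)^2-b\bigr]$ and iteratively corrects $\phi^{\pm}$ by a Taylor series in $x_d$ at the boundary $B$, not at $\rho_0=0$.
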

\begin{proof}
Let 
$$\Lambda_{\eta'}=\{p\in P\,:\, Y(p)=(\cdot, \eta')\,:\, \eta'\in \re^{d-1}\}.$$
Then, $\Lambda_{\eta'}$ are Lagrangian submanifolds foliating $P$ near $m$. To see that they are Lagrangian, observe that
$$\kappa^{-1}(\Lambda_{\xi'})=\{((y',\eta_1+\eta_d^2),(\eta',\eta_d)):y'\in U\subset \re^{d-1},\eta_d\in V\subset\re \}$$
and hence is Lagrangian. 

This implies that the canonical one form, $\omega=\xi dx|_{\Lambda_{\eta'}}$ is closed and hence there exists $\Phi$ a smooth function on $P$ such that 
$$d(\Phi|_{\Lambda_{\eta'}})=\omega |_{\Lambda_{\eta'}}\quad \text{for $\eta'$ near }\eta_0'$$
and hence $p(x,d\Phi)=0$.

In fact, since $\Phi$ is the integral of a one form, it is locally unique up to a normalization on each $\Lambda_{\eta'}$. We fix this normalization by choosing $T\subset P$ a submanifold of dimension $d$ transverse to the fibration by $\Lambda_{\eta'}$ and contained in the fold of $Y$. We then insist that $\Phi|_{T}=0$. Now, since $Y$ is a fold 
\begin{equation}\label{eqn:projForm} Y(P)=\{\eta_1\leq x_df(x,\eta)\}
\end{equation}
with $f(m)\neq 0$.  Then, since $Y$ is a fold and $\Lambda_{\eta'}$ is Lagrangian, by \cite[Theorem 21.4.1]{HOV3}
\begin{equation}\Phi=Y^*(\theta_0\pm \frac{2}{3}(-\rho_0)^{3/2})\label{eqn:phiForm2}\end{equation}
where $\theta_0$, $\rho_0:Y(P)\to \re$ are smooth and $\rho_0$ is a defining function for the fold. Moreover, the odd part of $\Phi$ vanishes to second order at the fold since $\Phi$ is the integral of a smooth 1-form.

Next, we show that $\rho_0=\eta_1$. Notice that this is independent of the choice of the reduction to normal form, $\kappa$ and the choice of $T$. Fix $\kappa$ and suppose that $\Phi_1$ and $\Phi_2$ are two smooth solutions of $p(x,d\Phi)=0$ corresponding to different submanifolds $T_1$ and $T_2$. Then, let $w=\Phi_1-\Phi_2$. $w$ is constant on each leaf of $\Lambda_{\eta'}$ and hence is a function of only $\eta'$. Observe that the involution defined by $Y$ preserves $\Lambda_{\eta'}$ and hence the $Y$ odd (and even) part of $w$ is a function of only $\eta$. But this implies that the $Y$ odd part vanishes identically. Hence, since $\theta$ is $Y$ even, $\rho_{\Phi_1}=\rho_{\Phi_2}$.

Observe that over $B$, the involution map of $Y$ is just the projection of $P\cap Q$ to $T^*B$ and the function $\Phi$ pulls back under $\kappa$ to a solution to the same problem for the model case. Together, these imply that the odd part of $\Phi$ restricted to the boundary is independent of the choice of $T$ and $\kappa$ and hence is the same as for the model case.


Next, observe that $\theta_{\Phi_1}-\theta_{\Phi_2}$ is a function of only $\eta'$. Hence, $\partial_x(\theta_{\Phi_1}-\theta_{\Phi_2})=0$. But, as in the previous paragraph, at the boundary $B$, $\Phi$ pulls back under $\kappa$ to a solution of $p(x,d\Phi)=0$ for the model problem and hence $\partial^2_{x'\eta'}\kappa^*\theta|_B =I$. In particular, 
\begin{gather*} d_{x'} \partial_{\eta_j}\theta \text{ are linearly independent for }j=1\dots d-1\\
\end{gather*}

Now, by construction 
$$\Lambda_{\eta'}=\{(x,\partial_x\Phi(x,\eta'))\}$$
and $\kappa^{-1}(x,\partial_x\Phi(x,\eta'))=(y(x,\partial_x\Phi(x,\eta')),\eta',(y_d-\eta_1)^{1/2})$ when $(y_d-\eta_1)\geq 0$. Now, on $B$, this holds for $\eta_1\leq 0$ and $\partial_x\Phi(x,\eta')|_{B}=\partial_xY^*\theta,$. Now, if $\kappa(y,\eta)=(x,\xi)$, then, using that $\kappa$ is a symplectomorphism, we have $\left(\frac{\partial y}{\partial x}\right)^t=\frac{\partial \xi}{\partial\eta}$. Therefore, 
$$\frac{\partial y}{\partial x}(x,\partial_x\Phi(x,\eta'))|_{B}=(\partial^2_{\eta x}\theta )^t|_B.$$ Thus,
$$y=\partial_\eta \theta(x',\eta') +f(\eta')$$
and hence, using that $\kappa_{\partial}$ is symplectic, we have that $f=\partial_\eta' g$ and hence by adjusting the normalization $T$, we can arrange that $\theta|_B$ generates $\kappa_{\partial}$. 

At this point, we have solved the eikonal equations $p(x,d\phi^{\pm})=0$ with $\phi^{\pm}$ having the correct form in the region $\rho_0\leq 0$.This is a region of the form \eqref{eqn:projForm}. Our last task is to extend these solutions so that the eikonal equations continue to hold in Taylor series at $B$ and $\rho_0=0$. 

By the Malgrange preparation theorem, we can write 
$$p(x,\xi)=p'[(\xi_d-a(x,\xi'))^2-b(x,\xi')],$$
where $p'$ is nonvanishing near $m$, $a,b$ are real, and $\xi'=(\xi_1,\dots \xi_d).$ Thus, we can drop $p'$ when solving $p(x,d\phi^{\pm})=0$. Then, by the glancing hypothesis on $p$ and $q$ along with $H_p$ not tangent to the fiber at $x=\pi(m)$, 
$$\xi_d=a,\quad b=0,\quad d_{\xi'}b\neq 0\quad \text{at }m$$
with $b=0$, $x_d=0$ the glancing surface. Then, $p(x,d\phi^{\pm})=0$ becomes
\begin{equation}\label{eqn:newEik} \partial_{x_d}\phi^{\pm}-a(x,\partial_{x'}\phi^{\pm})=\pm \left(b(x,\partial_{x'}\phi^{\pm}\right)^{1/2}\quad \text{ in }\rho_0\leq 0.
\end{equation}
Then, extending $\rho_0$ and $\theta_0$ to smooth real valued functions across $\rho_0=0$ gives solutions to \eqref{eqn:newEik} in Taylor series at $\rho_0=0$. We write $\phi_1^{\pm}$ for the extended functions. Then,
\begin{equation}\label{eqn:newEik2} \partial_{x_d}\phi_1^{\pm}-a(x,\partial_{x'}\phi_1^{\pm})\mp \left(b(x,\partial_{x'}\phi_1^{\pm}\right)^{1/2}=e_{\pm}
\end{equation}
with $e_{\pm}=0$ in $\rho_0\leq 0$ and vanishing to all orders at $\rho_0=0$. Then to solve \eqref{eqn:newEik} to all orders at $x_d=0$, we add to $\phi_1$ function
$$\phi_2\sim \sum_{k=1}^\infty x_d^kg_k(x',\xi)$$
with $\phi_2$ vanishing in $\rho_0\leq 0$. 
Then, by \eqref{eqn:newEik2}, we can solve for the $g_k$ successively as functions vanishing in $\rho_0\leq 0$ and $\phi^{\pm}=\phi_1^{\pm}+\phi_2^{\pm}$ solves the required problem. 
\end{proof}

In addition, by two applications of \cite[Section 4.4]{MelTayl} (one for the real part and one for the imaginary), we have the following lemma.
\begin{lemma}
\label{lem:trans}
For $c, d\in S$ and $b_0\in \complex$, $B_1,B_2,F_1,F_2\in C^\infty$ there exist $g_0,\, g_1\in S$  in $\rho\leq 0$ solving \eqref{eqn:TransportGen}
Moreover, the equations \eqref{eqn:transEuc} can be solved in Taylor series at $\rho_0=0$ and $y=0$ and we can arrange that $g_1((0,x'),\xi)=cg_0+d$ and $g_0((0,0),0) =b_0.$.
\end{lemma}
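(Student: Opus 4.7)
The plan is to work with the diagonalized form \eqref{eqn:transNewForm} rather than the coupled system \eqref{eqn:TransportGen}: after setting $g^{\pm}=g_0\pm(-\rho_0)^{1/2}g_1$ and $F^{\pm}=F_1\mp(-\rho_0)^{1/2}F_2$ (and similarly for $G^{\pm}$), the problem becomes two first-order transport equations of the form
\begin{equation*}
X_\pm g^\pm + G^\pm g^\pm = F^\pm,\qquad X_\pm := 2\la a\,d\phi^\pm,d\cdot\ra+\la b,d\cdot\ra.
\end{equation*}
Since $\phi^\pm$ satisfies $p(x,d\phi^\pm)=0$ on $\{\rho_0\leq 0\}$ by Lemma \ref{lem:EikSolve}, the vector fields $X_\pm$ are (up to rescaling) the pullbacks under the projection from $\Lambda_{\phi^\pm}\subset P$ of the Hamilton field $H_p$. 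Near $m$ the projections of the bicharacteristics of $H_p$ emanating from $B$ foliate both sides of the fold $\{\rho_0=0\}$, and at the fold itself $X_+=X_-$ is tangent to $B$ (this is exactly the glancing condition). Away from glancing each $X_\pm$ is transverse to $B$, so one may prescribe Cauchy data on $B$ and integrate along characteristics.

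The boundary prescription $g_1|_B=cg_0|_B+d$ together with $g_0((0,0),0)=b_0$ translates into
\begin{equation*}
g^\pm|_B=g_0|_B\pm(-\eta_1)^{1/2}\bigl(c\,g_0|_B+d\bigr),
\end{equation*}
where I still have the freedom to choose $g_0|_B$ as any symbol with $g_0((0,0),0)=b_0$. With this data fixed, I integrate the transport equation for $g^\pm$ along $X_\pm$ (which amounts to solving a scalar linear ODE with a smooth coefficient, hence produces a solution in $S$ on $\{\rho_0\leq 0\}$); this is where the two applications of \cite[Section 4.4]{MelTayl} enter, one for the real part and one for the imaginary part of the data and coefficients, so that the real scheme of that reference applies verbatim. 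Extension across $\rho_0=0$ and to Taylor series at $x_d=0$ is carried out exactly as in the end of Lemma \ref{lem:EikSolve}: extend $g^\pm$ to smooth functions beyond the fold, so that the transport equations are satisfied to infinite order at $\rho_0=0$, then correct by a formal Taylor series $\sum_{k\geq 1}x_d^k g_k^\pm(x',\xi)$ chosen to kill the errors order by order at $x_d=0$.

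The key obstacle, and the main point requiring care, is producing $g_0,g_1\in S$ \emph{smooth across the fold} from $g^\pm$, since $g_0=\tfrac12(g^++g^-)$ and $(-\rho_0)^{1/2}g_1=\tfrac12(g^+-g^-)$. Writing everything on $P$ using the fold involution $\iota$ induced by the map $Y$ of Lemma \ref{lem:EikSolve}, $g_0$ is the $\iota$-even part of the common function $g:=g^+\cup g^-$ on $P$, and $(-\rho_0)^{1/2}g_1$ is its $\iota$-odd part. The $\iota$-odd part of any smooth function on $P$ vanishes to first order on the fold (the argument giving \eqref{eqn:phiForm2} in Lemma \ref{lem:EikSolve}), so $g_1:=\tfrac12(-\rho_0)^{-1/2}(g^+-g^-)$ is smooth provided the boundary data is chosen $\iota$-symmetrically; this is exactly the content of our prescription, since $g^+|_B-g^-|_B=2(-\eta_1)^{1/2}(cg_0|_B+d)$ vanishes to the required order. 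After this verification, $g_0,g_1\in S$ with the desired normalizations follow directly, and reducing the domain to a small enough neighborhood of $m$ keeps the construction uniform. The rest of the argument — checking that \eqref{eqn:transNewForm} is truly equivalent to \eqref{eqn:TransportGen} at the symbol level and that the Taylor extension across the fold preserves membership in $S$ — is routine.
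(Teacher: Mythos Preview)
Your approach begins on the right track—diagonalizing to $g^\pm$, recognizing $X_\pm$ as projections of $H_p$, and invoking the fold structure of $Y$—but the integration step has a genuine gap. You propose to prescribe $g^\pm|_B$ and integrate along $X_\pm$ to get a solution on $\{\rho_0\le 0\}$, then check smoothness of $g_0,g_1$ via the $\iota$-parity of the boundary data. This misses a second involution. On $P$, each bicharacteristic of $H_p$ meets $P\cap Q$ at \emph{two} points away from glancing (related by a bicharacteristic involution), and at glancing $H_p$ is tangent to $P\cap Q$. Hence prescribing Cauchy data on $P\cap Q$ and flowing by $H_p$ is overdetermined off glancing and degenerate at glancing; you do not get a smooth $g$ on $P$ for free, and your subsequent claim that the $\iota$-odd part of $g$ vanishes to first order at the fold presupposes exactly this smoothness.

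The paper's route is different at this step. After an integrating-factor reduction to $H_p u=0$, it observes that solutions are exactly the functions on $P\cap Q$ that are even under the bicharacteristic involution. The boundary prescription $g_1|_B=cg_0|_B+d$ then becomes a condition of the form $u_O=c'u_E+f$ with respect to the \emph{fold} involution $\mc{I}_Q$. Existence of a $u$ compatible with both involutions is supplied by \cite[Proposition~2.8.2]{MelTayl} after passing to the Friedlander model via $\kappa$. Your reading of \cite[Section~4.4]{MelTayl} as merely ``solving a scalar linear ODE'' understates the point: the substance of that machinery is precisely to reconcile the two involutions, not to integrate a nondegenerate transport equation.
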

\begin{proof}
We saw in \eqref{eqn:transNewForm} that \eqref{eqn:TransportGen} is equivalent to 
$$2\la ad_x\phi^{\pm},d_xg^{\pm}\ra +\la b,dg^{\pm}\ra +G^{\pm}g^{\pm}=F^\pm$$
where $g^{\pm}$, $G^{\pm}$, and $F^{\pm}$ are smooth in $x,\xi$ and $(-\rho_0)^{1/2}$. Hence, pulling back by $Y$, this lifts to 
$$2\la a d_x\Phi, dg\ra +\la b, dg\ra +Ga=F\quad \quad \text{on } P.$$
Then, reinterpreting this as an equation on each $\Lambda_{\eta'}$, $x$ can be used as coordinates on $\Lambda_{\eta'}$ and hence 
$$H_p=\partial_\xi p\partial_x=2\la ad\Phi,\partial_x\ra +\la b,\partial_x\ra $$
and hence $2\la ad\Phi,\cdot\ra +\la b,\cdot\ra $ is the vector field $H_p$. That is, our equation becomes 
\begin{equation} \label{eqn:transportReduce} H_pg+Gg-F.\end{equation}
We can reduce our problem to solving $H_pu=0$ by first solving 
$$H_pa_1+Ga_1=F\quad\quad a_1(m)=0$$
and 
$$H_p\alpha =G',\quad \alpha (m)=0$$
and writing 
$$a-a_1=\exp(-\alpha )u.$$
Then, the equation $H_pu=0$ just reduces to evenness of $u$ under the involution generated by projection from the manifold of bicharacteristics for $P$ to $P\cap Q$.

Our goal is to solve \eqref{eqn:transportReduce} with 
$$g_1=cg_0+d\quad \quad g_0(m)=b_0.$$
Let $\mc{I}_Q$ denote the involution on $P\cap Q$ coming from projection to $T^*B$. Then this amounts to 
$$[g]_O=c[g]_e+d,\text{ where } v_O=\frac{1}{2}(\mc{I}_Q^*v-v)\rho_0^{-1}\quad v_E=\frac{1}{2}(\mc{I}_Q^*v+v).$$
After reducing to $H_pu=0$, we have changed the boundary condition to 
$$[\exp(-\alpha)u]_O=c[\exp(-\alpha)u]_E+e'\quad u(m)=b_0$$
where $e'$ is some $\mc{I}_Q$ even function. Now, observe that 
$$[vw]_E=[v]_E[w]_E+\rho_0^2[v]_O[w]_O$$
So, we can write our boundary condition as 
$$u_O=c'u_E+f$$
where $c'$ and $f$ are $\mc{I}_Q$ even. Then, after applying $\kappa$ to reduce to normal form, we have by \cite[Proposition 2.8.2]{MelTayl} there exists such a function $u$. 

This solves the transport equations in $\rho_0\leq 0$. The extension to $\rho_0>0$ follows as in the proof of Lemma \ref{lem:EikSolve}.

\end{proof}

\subsection{Full Phase and Amplitude Functions for the Dirichlet Parametrices}
We now specialize to the case $P=-h^2\Delta-z$ and work in a neighborhood of the boundary $\partial\Omega$ of the form $O=(-a,a)\times U$ with coordinates $(y,x')$, $U$ an open set in $\partial\Omega$, and $y>0$ in $\Omega$.
Notice that in these coordinates,
$$-h^2\Delta=\la a(y,x')hD,hD\ra +h\la b(y,x'),hD\ra$$
and hence that $h\la b(y,x'),hD\ra$ term can be moved into the right hand side of the transport equations without difficulty.

By the results of Lemma \ref{lem:EikSolve} (or  \cite[Chapter 11]{TaylorPseud}, and \cite[Appendix A.II]{Gerard}), we have
\begin{lemma}
\label{lem:phase}
There exist $\theta_0,\rho_0\in C^\infty$ solving \eqref{eqn:eikEuc} for $P=-h^2\Delta-1$ for $\rho_0\leq 0$ and $((y,x'),\xi)$ near $((0,x_0'),\xi_0)$ and in Taylor series at $\rho_0=0$ and $y=0$.
Moreover,
\begin{gather}
\label{eqn:thetaNondeg}
d_x\partial_{\xi_j}\theta \text{ are linearly independent for }j=1\dots d-1\\
\frac{\partial \rho_0}{\partial y}<0,\,
\quad\quad \rho_0 =\alpha_0 \text{ on }y=0\nonumber
\end{gather}
where $\alpha_0:=\rho_0|_{y=0}=\xi_1$ and $\theta_0|_{y=0}$ parametrizes the reduction of the billiard ball map to that for the normal form \eqref{eqn:normalFormGlance} (i.e. $\kappa_\partial$).
\end{lemma}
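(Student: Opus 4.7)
The plan is to derive Lemma \ref{lem:phase} as a specialization of Lemma \ref{lem:EikSolve} to the operator $P = -h^2\Delta - 1$ in boundary normal coordinates, and then verify the sign condition $\partial\rho_0/\partial y < 0$ using strict convexity of $\Omega$. First I would set up the coordinates: in $O = (-a,a) \times U$ with $y$ the signed distance to $\partial\Omega$ (positive in $\Omega$), the principal symbol of $-h^2\Delta - 1$ takes the form $p(y,x',\eta,\xi') = \eta^2 + r(y,x',\xi') - 1$, where $r$ is the dual metric lifted from $T^*\partial\Omega$ off the boundary and $r(0,x',\xi') = |\xi'|_g^2$. A glancing point is then $(0,x_0',0,\xi_0')$ with $|\xi_0'|_g = 1$, and one checks directly that $\{p,q\}$, $\{p,\{p,q\}\}$, $\{q,\{q,p\}\}$ are as required (with $q = y$), so $P$ and $Q = \{y=0\}$ form a glancing pair, and $H_p$ is not tangent to the fiber at $(0,x_0')$.

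Next I would choose the reduction to normal form $\kappa$ so that the induced boundary symplectomorphism $\kappa_\partial$ has first fiber coordinate $\eta_1$ equal to $\xi_1$ (this is legitimate because the fiber coordinate on the normal form side is only fixed up to a symplectic transformation preserving $\eta_d = 0$; explicitly, composing with the obvious rescaling matches scales). Then Lemma \ref{lem:EikSolve} produces real $\theta_0, \rho_0 \in C^\infty$ solving \eqref{eqn:eikEuc} in $\rho_0 \le 0$ and in Taylor series at $y=0$, with $\rho_0|_{y=0} = \xi_1 =: \alpha_0$, with $\theta_0|_{y=0}$ generating $\kappa_\partial$, and with the nondegeneracy statement \eqref{eqn:thetaNondeg} (which is exactly the conclusion of the linear independence in Lemma \ref{lem:EikSolve}).

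It remains to verify $\partial \rho_0 / \partial y < 0$ near the glancing point. Differentiating the first eikonal equation $\eta^2 - \rho_0|\nabla\rho_0|^2 + r(y,x',\xi') - 1 = 0$ at a glancing point where $\eta = \partial_y\theta_0 = 0$ and $\rho_0 = 0$ gives $\partial_y r(0,x_0',\xi_0') = \rho_0|\nabla\rho_0|^2|_{y=0}$-derivative corrections that vanish, so one obtains (to leading order) that $(\partial_y \rho_0)^2 \rho_0$ vanishes to the appropriate order and the sign of $\partial_y \rho_0$ is controlled by $\partial_y r$ at $(0,x_0',\xi_0')$. But $\partial_y r|_{y=0}$ computed against the glancing covector $\xi_0'$ is precisely (twice) the second fundamental form of $\partial\Omega$ applied to the tangent direction dual to $\xi_0'$; strict convexity of $\Omega$ with $y>0$ inside makes this positive, which forces $\partial_y \rho_0 < 0$ at the glancing point, and hence in a neighborhood.

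The main obstacle is really the bookkeeping in the last step: one must track signs carefully between the boundary-normal coordinate conventions (with $y>0$ interior), the sign convention built into $\rho_0$ (defining function for the fold of the projection $Y$, with $\rho_0 \le 0$ being the shadow region of $Y(P)$), and the sign of the second fundamental form arising from strict convexity. Once those conventions are pinned down, the computation is immediate and the remaining properties are direct restatements of Lemma \ref{lem:EikSolve}.
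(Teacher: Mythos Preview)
Your approach is correct and is precisely what the paper does: it simply invokes Lemma~\ref{lem:EikSolve} (together with the references to Taylor and G\'erard) to obtain all of the stated properties. Your care in isolating the sign condition on $\partial_y\rho_0$ as the one item needing verification beyond that lemma is appropriate---differentiating the first eikonal at the glancing point (after using the second eikonal to see that $\partial_y\theta_0$ and hence $\partial_y\partial_{x'}\theta_0$ vanish on $y=0$) yields $(\partial_y\rho_0)^3=(\partial_y r)(0,x_0',\partial_{x'}\theta_0)$, so the sign is indeed fixed by strict convexity as you outline, modulo the bookkeeping you already flag.
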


Now that we have constructed phase functions for $z=1$, we will correct them to obtain solutions of \eqref{eqn:eikEuc} to $\O{}(h^\infty)$. To do this, let 
\begin{gather*} z= 1+i\mu,\,\quad\quad\theta=\theta_0+\sum_{n>0 }\theta_{n}\e(h)^n=:\theta_0+\theta',\\
\rho=\rho_0+\sum_{n> 0}\rho_{n}\e(h)^n=:\rho_0+\rho'\end{gather*} 
where $\theta_0$ and $\rho_0$ are the solutions found above. Then, 
\begin{align*}
2i\mu-\mu^2=&(2\la a d \theta_0,d \theta'\ra -2\rho_0\la a d\rho_0,d\rho'\ra -\rho'\la a d\rho_0,d\rho_0\ra +\la b,d\theta'\ra )\\
&
+(\la a d \theta',d\theta'\ra -2\rho'\la a d\rho_0,d\rho'\ra -\rho_0\la ad\rho',d\rho'\ra)
- \rho'\la a d \rho',d\rho'\ra \\
0=&(2\la a d \theta_0,d\rho'\ra+2\la ad\theta',d\rho_0\ra+\la b,d\rho'\ra ) +(2\la a d \theta',d \rho'\ra)
\end{align*}
where we have grouped terms according to homogeneity in $\e(h)$. Note that if $\Im z=\o{}(h)$, we have artificially introduced a perturbation of size $h$ to $\rho$ and $\theta$. 

Then, equating powers of $\e(h)$, and letting $$\theta_{<n}=\{\theta_{m}\,:\,m<n\}\,,\quad \rho_{<n}=\{\rho_{m}\,:\,m<n\},$$ we have that 
\begin{equation}
\label{eqn:phaseCorrect}
\begin{cases}
\begin{gathered}2\la a d\theta_0, d\theta_{n}\ra +2\rho_0\la ad \rho_0,d(-\rho_{n})\ra\\
+(-\rho_{n})\la ad\rho_0,d\rho_0\ra +\la b,d\theta_{n}\ra\end{gathered} &=F_{n}(\theta_{<n},\rho_{<n},\mu)\\
{}\\
2\la d\theta_{n},d\rho_0\ra -2\la d\theta_0,d(-\rho_{n})\ra-\la b,d(-\rho_{n})\ra & =G_{n}(\theta_{<n},\rho_{<n}, \mu)
\end{cases}.
\end{equation}
These equations are of the form \eqref{eqn:TransportGen} with $-\rho_{n}$ playing the role of $g_1$. Thus, appealing to Lemma \ref{lem:trans}, we can take $\rho_{1}((0,x'),\xi)=i.$
 For $n>1$, Lemma \ref{lem:trans}, implies that \eqref{eqn:phaseCorrect} can be solved with $\rho_{n}((0,x'),\xi)=0$. 
Putting this together, we have 

\begin{lemma}
\label{lem:phase2}
Let $\theta_0$ and $\rho_0$ be the functions guaranteed by Lemma \ref{lem:phase}. Then there exist $\theta,\rho\in S$ solving 
$$\begin{cases}
\la a d\theta,d\theta\ra -\rho\la a d\rho ,d\rho\ra +\la b,d\theta\ra =z+\O{}(h^\infty)\\
2\la d\theta, d\rho\ra +\la b,d\rho\ra =0
\end{cases}$$
in $\rho_0\leq 0$ and in Taylor series at $\rho_0=0$ and $y=0$.
Moreover,
$$\rho\sim \rho_0+\sum_{n>0}\rho_{n}\e(h)^n\quad\quad \theta\sim\theta_0+ \sum_{n>0}\theta_n\e(h)^n$$
with $\rho_{n},\theta_{n}\in S$, $\rho_0,\theta_0$ real valued, $\Im \theta_{1}\geq 0$ and $\rho|_{y=0}=\alpha:=\xi_1+i\e(h)$.
\end{lemma}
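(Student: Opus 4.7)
The plan is to build $\theta$ and $\rho$ by perturbing the real solutions $\theta_0, \rho_0$ from Lemma \ref{lem:phase} in an asymptotic expansion in powers of $\varepsilon(h)$, solving one transport-like system per order using Lemma \ref{lem:trans}, and then Borel summing. Since $\mu = \Im z$ satisfies $|\mu|/\varepsilon(h)\le 1$, the inhomogeneous source $2iz-|z|^2+1 = 2i\mu-\mu^2$ on the right-hand side of the perturbed eikonal system naturally produces one term at order $\varepsilon(h)$ and one at order $\varepsilon(h)^2$; all higher orders are forced purely by nonlinear self-coupling of the correction.

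Substituting $\theta = \theta_0 + \sum_{n\ge 1}\theta_n \varepsilon(h)^n$ and $\rho = \rho_0 + \sum_{n\ge 1}\rho_n \varepsilon(h)^n$ into \eqref{eqn:eikEuc}, the zeroth-order part vanishes by Lemma \ref{lem:phase}, and equating like powers of $\varepsilon(h)$ yields the system \eqref{eqn:phaseCorrect} at each order $n$. With the identification $g_0 \leftrightarrow \theta_n$, $g_1 \leftrightarrow -\rho_n$, $B_1 \leftrightarrow 0$, $B_2 \leftrightarrow 0$, this is exactly an instance of the general transport system \eqref{eqn:TransportGen} whose coefficients are $d\theta_0, d\rho_0, a, b$, and whose right-hand side $F_n, G_n$ is a smooth polynomial expression in $\theta_{<n}, \rho_{<n}$ (plus $\mu/\varepsilon(h)$ at orders $n=1,2$). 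Lemma \ref{lem:trans} then gives, at each $n$, smooth solutions $\theta_n, \rho_n \in S$ in $\rho_0 \le 0$ and in Taylor series at $\rho_0=0$ and $y=0$, with the freedom to prescribe $\rho_n|_{y=0}$. I pick $\rho_1|_{y=0} = i$ so that $\rho|_{y=0} = \rho_0|_{y=0} + i\varepsilon(h) = \xi_1 + i\varepsilon(h) = \alpha$, and $\rho_n|_{y=0} = 0$ for $n\ge 2$, which preserves the boundary identity exactly.

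Once the full formal series is constructed, a Borel summation (asymptotic summation in $\varepsilon(h)$) yields symbols $\theta,\rho \in S$ such that
$$\theta - \sum_{n=0}^{N}\theta_n\varepsilon(h)^n,\quad \rho - \sum_{n=0}^{N}\rho_n\varepsilon(h)^n = \mathcal{O}(\varepsilon(h)^{N+1})$$
for all $N$, so $\theta,\rho$ solve the perturbed system up to $\mathcal{O}(h^\infty)$ in $\rho_0 \le 0$ and to all orders at $\rho_0=0$, $y=0$, as required. Reality of $\theta_0, \rho_0$ is automatic; the perturbative terms inherit complex values from the inhomogeneity $2i\mu$ at order $n=1$.

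The main delicate point is verifying $\Im\theta_1 \ge 0$. Along each bicharacteristic of $\sigma(-h^2\Delta - 1)$, the first-order transport equation for $\theta_1$ reduces (via the change of variable used in the proof of Lemma \ref{lem:trans}, reinterpreting the system on the glancing Lagrangians $\Lambda_{\eta'}$) to an inhomogeneous ODE $H_p\theta_1 = 2i(\mu/\varepsilon(h)) + (\text{terms linear in }\rho_1)$ with real initial data determined by the choice $\rho_1|_{y=0}=i$; tracking signs of $\mu/\varepsilon(h)$ together with the prescribed boundary value $i$ (which contributes a uniformly positive imaginary part along each bicharacteristic flowing from $y=0$ into $y>0$) yields $\Im\theta_1\ge 0$. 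This sign is precisely what is needed downstream so that the Airy quotient $Ai(h^{-2/3}\rho)/Ai(h^{-2/3}\alpha)$ in the Fourier--Airy ansatz \eqref{eqn:FAIOIn} remains bounded, and it is the only nonroutine step in the argument.
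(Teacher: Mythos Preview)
Your core argument matches the paper's exactly: expand $\theta,\rho$ in powers of $\e(h)$, recognise the resulting hierarchy \eqref{eqn:phaseCorrect} as instances of \eqref{eqn:TransportGen} with $g_0=\theta_n$, $g_1=-\rho_n$, invoke Lemma~\ref{lem:trans} to solve with $\rho_1|_{y=0}=i$ and $\rho_n|_{y=0}=0$ for $n\ge 2$, and Borel sum. That part is fine.

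Your treatment of $\Im\theta_1\ge 0$, however, has gaps. First, the claim that $\rho_1|_{y=0}=i$ determines ``real initial data'' for $\theta_1$ is confused: in Lemma~\ref{lem:trans} the freedom is to set $g_1=cg_0+d$ on $y=0$ and $g_0$ at a single point, so one prescribes $\theta_1((0,0),0)$ independently (e.g.\ equal to $0$). Second, ``tracking signs of $\mu/\e(h)$'' does not yield the inequality without further work, since $\mu/\e(h)\in[-1,1]$ can be negative (this is exactly the resonance region), so the forcing $2i\mu/\e(h)$ may push $\Im\theta_1$ downward; you have not shown that the coupling to $\rho_1$ compensates. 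Third, your stated motivation is off: the boundedness of $Ai(h^{-2/3}\rho)/Ai(h^{-2/3}\alpha)$ is governed by $\Im\alpha'>0$ (keeping $\alpha_h$ off the real zeros of $Ai$), not by $\Im\theta_1$; the latter controls only the size of the oscillatory factor $e^{i\theta/h}$. Note that the paper itself does not supply an argument for $\Im\theta_1\ge 0$ in the text preceding the lemma, and downstream only the pointwise condition $\Im\theta_1((0,0),0)=0$ is used (see the hypotheses listed at the start of the Fourier--Airy operator subsection). So while your instinct to flag this as the delicate point is reasonable, the justification you give does not close the gap.
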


\begin{remark} In this way, we arrange 
$$\alpha(\xi')=\xi_1+i\e(h).$$
\end{remark}

Now, to solve for the amplitudes $g_0$ and $g_1$, we expand them as formal power series in $h^n.$
Then, the successive terms solve equations also of the form \eqref{eqn:TransportGen} (in particular, \eqref{eqn:transEuc}). Since the inhomogeneities do not appear in the equations for $g_i^{[n]}$, there are  solutions with boundary condition $g_0^{[0]}((0,x'),\xi)$ a real valued elliptic function and $g_1^{[0]}=0$.  Then, for $n> 0$, we have solutions with $g_1^{[n]}((0,x'),\xi)=0$.

\subsection{Semiclassical Fourier-Airy integral Operators}
Before proceeding, we give the necessary results on semiclassical Fourier-Airy integral operators following \cite[VIII.6 and X.2]{TaylorPseud} as well as \cite[Chapter 6]{MelTayl}. We denote $h^{-2/3}\alpha=\alpha_h$ and $h^{-2/3}\rho=\rho_h$. 

We make the following basic assumptions throughout this section. Let $\Omega\subset \re^d$ be strictly convex and $U$ be a neighborhood of $x_0\in \partial\Omega$. Suppose that $ch^M<\e(h)<Ch\log h^{-1}$ and let $\rho,\, \theta,\,\in C^\infty(U)$ and $g_0,\,g_1\in S^{\comp}_{\delta}(U).$ Suppose that $\theta_0\,,\,\rho_0\in C^\infty(U;\re)$. Suppose further that $\rho=\rho_0+\e(h)\rho'$, $\theta=\theta_0+\e(h)\theta'$ with $\rho|_{\partial\Omega}=:\alpha,$
\begin{equation}
\label{eqn:mainPhaseAssume}d_x\partial_\xi \theta_0\neq 0\,,\quad \partial_\nu\rho\leq a_0<0\text{ in case } \eqref{eqn:FAIO}\,,\quad \partial_{\nu}\rho \geq a_0>0\text{ in case }\eqref{eqn:FAIOIn}.\end{equation}
with $|\Im \rho'|>c\e(h)$ and $\theta',\, \rho' \in C^\infty(U;\complex),$
$$\theta'=\theta_1+\O{}(\e(h)),\quad \Im \theta_1((0,0),0)= 0.$$
Next, assume 
$\alpha:=\alpha_0(\xi)+\e(h)\alpha'(\xi)$ with  $\alpha_0\in C^\infty(\partial\Omega ;\re)$ and $\alpha'=i+\O{}(\e(h)).$
Then, assume that  $F\in \mc{E}'$ with
\begin{equation}\label{eqn:alphaCond} \MS(F)\subset T^*U\bigcap \left\{|\alpha_0|\leq \min\left[\gamma\left(\frac{h}{\e(h)}\right)^2,\gamma\right]\right\}\,.
\end{equation}

The fact that \eqref{eqn:FAIO} and \eqref{eqn:FAIOIn} are well defined follows from the fact that $g_0$ and $g_1$ have compact support and that $|\Im\rho'|>0$.

\begin{remark}
We could take $\alpha'=-i+\O{}(\e (h))$, but this would change the wavefront relations in Lemma \ref{lem:WFhAiry}. In particular, for $(\mc{A}_-\mc{A}i)^{-1}.$
\end{remark}

\subsection{Preliminary Estimates on Airy functions and multipliers}

We start by recalling some preliminary estimates and asymptotics for Airy functions. We have
\begin{equation}
\label{eqn:airyFormula}
A_-(z)=\Xi_-(z)e^{i2/3(-z)^{3/2}} \quad\quad |\text{Arg}(z)-\pi /3|>\delta 
\end{equation}
 where, letting $\omega:=e^{i\pi/3}$, $\Xi_-(z):=\Xi(z\omega^2)\in S^{-1/4}$, $\Xi$ has \cite[Section X.1]{TaylorPseud}
 $$\Xi(z)= z^{-1/4}\sum_{k=0}^\infty(-1)^ka_kz^{-3k/2}$$
 where $a_k>0$ and $a_0=(2\sqrt{\pi})^{-1}$. 
 and we take the branch of $z^{1/2}$ at Arg$(z)=\pi$ with $(1)^{1/2}=1$. We also write $A_+(z)=\overline{A_-(\bar{z})}$ for another solution to the Airy equation. The asymptotics for $\Xi(z)$ can be differentiated a finite number of times to obtain asymptotic expansions for $A_-^{(k)}(z)$. 

Next, recall
\begin{equation} Ai(z)=\Xi(z)e^{-2/3z^{3/2}}\,,\quad |\text{Arg}(z)-\pi |>\delta.\label{eqn:AiAsympPos}\end{equation}
Moreover, 
\begin{equation}
\label{eqn:linDepAiry}Ai(z)=\omega A_+(z)+\bar{\omega}A_-(z).
\end{equation}
So, using the asymptotics \eqref{eqn:airyFormula} and the analogous asymptotics for $A_+$, we have
\begin{equation}
\label{eqn:AiAsympNeg}
Ai(z)=\omega \Xi_+(z)e^{-2/3i(-z)^{3/2}}+\bar{\omega}\Xi_-( z)e^{2/3i(-z)^{3/2}}\,,\quad \quad |\text{Arg}(z)-\pi |<\delta 
\end{equation}
where $\Xi_+(z)=\Xi(z\bar{\omega}^2)$.

Define 
$$\phi_i(z):=\frac{Ai'(z)}{Ai(z)}\quad\quad\phi_-(z):=\frac{A_-'(z)}{A_-(z)}.$$
We will need the following lemma  (we follow the proof given in \cite[Lemma 3.1]{VodevConcave}). 
\begin{lemma}
\label{lem:airyQuotientEstimate}
Let $\phi_i$ be as above. Then there exists $\delta>0$ such that 
$$ |\phi_i(z)|\leq C\begin{cases}\la z\ra^{1/2}+|\Im z|^{-1}&|z|\geq \delta\,,\Re z<0 \\
\la z\ra^{1/2}&\text{otherwise} \end{cases}$$
and
$$|\phi_i(z)|^{-1}\leq C\begin{cases} \la z\ra^{-1/2}+|\Im z|^{-1}\la z\ra^{-1}&|z|\geq \delta\,,\Re z<0\\
\la z\ra^{-1/2}&\text{otherwise}\end{cases}.$$
\end{lemma}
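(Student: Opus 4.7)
I would prove Lemma~\ref{lem:airyQuotientEstimate} by splitting the $z$-plane into three regions and obtaining the bounds separately in each. First, on a compact region $|z|\le R$, I would argue by compactness: the function $\phi_i(z)$ is meromorphic with poles only at the (real, simple) zeros $\{z_n\}$ of $Ai$, and the reciprocal $1/\phi_i$ has poles only at the (real, simple) zeros of $Ai'$; these two sets are disjoint because the Airy equation $Ai''(z)=zAi(z)$ together with uniqueness for the ODE forces $Ai$ and $Ai'$ to have no common zero. Near any real zero $z_n$ of $Ai$, $|Ai(z)|\ge c|z-z_n|\ge c|\Im z|$ while $|Ai'(z)|\ge c$, and a symmetric statement holds near zeros of $Ai'$. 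A finite partition of unity around the finitely many zeros in $|z|\le R$ yields both stated bounds (with the $|\Im z|^{-1}$ terms only needed in the real neighbourhoods).

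Next, in the region $|z|\ge\delta$ with $|\Arg z-\pi|>\delta_0$ for some small $\delta_0>0$, I would substitute the asymptotic \eqref{eqn:AiAsympPos} and its derivative. Writing $Ai(z)=\Xi(z)e^{-2z^{3/2}/3}$ and differentiating, one gets $Ai'(z)=\bigl(\Xi'(z)-z^{1/2}\Xi(z)\bigr)e^{-2z^{3/2}/3}$; since $\Xi(z)=(2\sqrt{\pi})^{-1}z^{-1/4}(1+O(|z|^{-3/2}))$ is elliptic in $S^{-1/4}$ away from $\Arg z=\pi$, the exponentials cancel in the ratio and
\[
\phi_i(z)=-z^{1/2}+\frac{\Xi'(z)}{\Xi(z)}=-z^{1/2}\bigl(1+O(|z|^{-3/2})\bigr),
\]
giving $|\phi_i|\le C\langle z\rangle^{1/2}$ and $|\phi_i|^{-1}\le C\langle z\rangle^{-1/2}$ directly.

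The main obstacle is the third region, $|z|\ge\delta$ with $|\Arg z-\pi|<\delta_0$, where the zeros of $Ai$ and $Ai'$ accumulate along the negative real axis. Here I would use \eqref{eqn:AiAsympNeg}, writing
\[
Ai(z)=\omega\Xi_+(z)e^{-i\lambda(z)}+\bar\omega\Xi_-(z)e^{i\lambda(z)},\qquad \lambda(z):=\tfrac{2}{3}(-z)^{3/2},
\]
and the analogous formula for $Ai'$ obtained by differentiating (the leading terms pick up a factor $\mp i(-z)^{1/2}$, with lower-order contributions from $\Xi_\pm'$). A short computation with $z=-|z|e^{i\psi}$, $|\psi|<\delta_0$, gives $\Im\lambda(z)=-\tfrac{3}{2}|z|^{1/2}\Im z+O(|z|^{1/2}(\Im z)^2)$, so $|e^{\pm i\lambda(z)}|=e^{\mp\Im\lambda(z)}$ is controlled by $\Im z\cdot|z|^{1/2}$.

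To finish I would establish the two local lower bounds that convert the absence-of-cancellation analysis into the stated estimates. First, since the zeros $\{z_n\}$ of $Ai$ lie on $\RR_-$, are simple, and satisfy $|Ai'(z_n)|\sim|z_n|^{1/4}$, a Taylor expansion near each $z_n$ gives $|Ai(z)|\ge c|z|^{1/4}|z-z_n|\ge c|z|^{1/4}|\Im z|$ when $|z-z_n|$ is smaller than half the spacing of consecutive zeros (which is $\sim|z|^{-1/2}$). Away from this union of small discs, the two exponentials in the formula for $Ai(z)$ do not nearly cancel and $|Ai(z)|\ge c|z|^{-1/4}$. Combining with $|Ai'(z)|\le C|z|^{1/4}$ in both situations yields $|\phi_i(z)|\le C|z|^{1/2}+C|\Im z|^{-1}$. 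Second, the same argument applied to the zeros $\{w_m\}$ of $Ai'$ on $\RR_-$ (which are also simple, with $|Ai''(w_m)|=|w_m||Ai(w_m)|\sim|w_m|^{3/4}$, and distinct from the $z_n$) gives $|Ai'(z)|\ge c|z|^{3/4}|\Im z|$ near these points, and together with $|Ai(z)|\le C|z|^{-1/4}$ this produces the second bound $|\phi_i(z)|^{-1}\le C\langle z\rangle^{-1/2}+C|\Im z|^{-1}\langle z\rangle^{-1}$. The hardest step is keeping the constants in the local Taylor expansions uniform in $n$ and $m$ as $|z|\to\infty$; this is accomplished by rescaling $z$ near each zero using $z-z_n=O(|z_n|^{-1/2})$ coordinates, under which $Ai$ and $Ai'$ become uniformly bounded analytic functions with uniformly bounded nonzero derivatives, after which the estimates patch together via a $|z|$-dependent partition of unity.
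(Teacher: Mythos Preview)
Your argument is correct, but it takes a substantially different route from the paper's proof (which follows \cite[Lemma 3.1]{VodevConcave}). Both proofs split into the same three regions, and in the sector $|\Arg z-\pi|>\delta_0$, $|z|\gg 1$, both simply read off the asymptotics \eqref{eqn:AiAsympPos}. The differences are in the other two regions.

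On the bounded annulus $\epsilon_0<|z|<M$ you use compactness: finitely many simple real zeros, a Taylor expansion $|Ai(z)|\ge c|z-z_n|\ge c|\Im z|$ near each, and a finite partition of unity. The paper instead applies the Hadamard factorization to $Ai$ and $Ai'$, takes the logarithmic derivative to write $\phi_i(z)=C_2+\sum_j\bigl((z+\zeta_j)^{-1}-\zeta_j^{-1}\bigr)$ (and analogously $z\phi_i^{-1}$), and bounds the resulting sums using $|z+\zeta_j|^{-1}\le |\Im z|^{-1}$ for $\Re z<0$ together with the convergence of $\sum \zeta_j^{-2}$. Your approach is more elementary here; the paper's gives the estimate in one stroke without localizing.

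Near the negative real axis at infinity you localize to discs of radius $\sim|z_n|^{-1/2}$ around each zero, Taylor-expand, and rescale to make constants uniform in $n$; away from the discs you use non-cancellation in \eqref{eqn:AiAsympNeg}. The paper avoids tracking individual zeros altogether: it writes $\phi_i(-z)$ in terms of the ratios $A_-/A_+$ and $A_-'/A_+'$ via \eqref{eqn:linDepAiry}, introduces $B_\pm$ and $D_\pm$ so that $A_\pm(-z)=z^{-1/4}e^{\pm i\pi/12}B_\pm(z)e^{\pm i\zeta}$, and proves the monotonicity of $f_a(\tau)=|B_+(a+i\tau)|^2-|B_-(a+i\tau)|^2$ (and the analogous $g_a$) to obtain $|A_-(-z)/A_+(-z)|\le e^{-2\Im\zeta}$ on $\Im z>0$. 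Plugging this ratio bound into the formula for $\phi_i(-z)$ gives $|\phi_i(-z)|\le C|z|^{1/2}(1-e^{-2\Im\zeta})^{-1}$, from which the stated estimate follows since $|\Im\zeta|\ge C_\delta|\Im z||z|^{1/2}$. This monotonicity trick is slicker and sidesteps the uniformity-in-$n$ issue you identify as the hardest step; on the other hand, your rescaling argument is entirely self-contained and arguably more transparent about why the zeros, being simple and $\sim|z|^{-1/2}$-separated, produce exactly the $|\Im z|^{-1}$ loss.
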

\begin{proof}
Since $\phi_i$ is meromorphic and bounded above and below at $z=0$, there exists $\e_0>0$ such that for $|z|<z_0$, $0<c\leq |\phi_i|\leq C$. For $|\Arg(z)-\pi|>\delta$ and $|z|\gg 1$, the estimates follow from the asymptotics \eqref{eqn:AiAsympPos}. Thus, we need to consider the regions $\e_0<|z|<M$ and $|\Arg(z)-\pi |\leq \delta.$ 

First, we consider the region $\e_0<|z|<M$. 

Let $-\zeta_j\sim C_1j^{2/3}$ be the zeros of $Ai(z)$ and $-\zeta_j'\sim C_2 j^{2/3}$ be the zeros of $Ai'(z)$. Recall that both $\zeta_j$ and $\zeta_j'$ are positive and real for all $j$. Now, $Ai$ and $Ai'$ are entire of order $\frac{3}{2}$. Therefore, we can use the Hadamard factorization theorem to write 
$$Ai(z)=e^{C_2z+C_1}\prod_j \left(1+\frac{z}{\zeta_j}\right)e^{-\frac{z}{\zeta_j}},\quad \,\, Ai'(z)=e^{C_3z+C_4}\prod_j \left(1+\frac{z}{\zeta_j'}\right)e^{-\frac{z}{\zeta_j'}}$$
Hence taking the logarithmic derivative of $Ai$ and $Ai'$ respectively,
$$\phi_i(z)=C_2+\sum_j\frac{1}{z+\zeta_j}-\frac{1}{\zeta_j}\quad \quad z\phi_i^{-1}(z)=C_3+\sum_j\frac{1}{z+\zeta_j'}-\frac{1}{\zeta_j'}.$$
Since $\zeta_j$ are real and positive, 
$$|z+\zeta_j|^{-1}\leq \left. \begin{cases}|\Im z|^{-1}&\Re z< 0\\
C|z|^{-1}&\Re z\geq 0\end{cases}\right\}=:a(z)$$
where 
and $\zeta_j\geq 2|z|$, $|z+\zeta_j|^{-1}\leq 2|\zeta_j|^{-1}$. Thus, 
\begin{align*}|\phi_i(z)|&\leq |C_2|+\sum_{j=1}^{2|z|}(|z+\zeta_j|^{-1})+|z|\sum_{j=2|z|}^\infty |z+\zeta_j|^{-1}|\zeta_j|^{-1}\\
&\leq C(1+|z|(a(z)+1+\sum_j|\zeta_j|^{-2}))\leq Ca(z)
\end{align*}
since $\e_0<|z|<M$. By an identical argument, 
$$|z| |\phi_i|^{-1}\leq C a(z)$$
in this region.

Now, we consider the remaining region. Let $|z|\gg 1$ with $|\Arg(z)|<\delta$. First, using \eqref{eqn:linDepAiry}, we have that 
\begin{equation}\label{eqn:phiForm}\begin{split} \phi_i(-z)=\frac{A_+'(-z)}{A_+(-z)}\left(1 +\frac{A_-'(-z)}{\omega^2A_+'(-z)}\right)\left(1+\frac{A_-(-z)}{\omega^2 A_+(-z)}\right)^{-1}\\
\phi_i^{-1}(-z)=\frac{A_+(-z)}{A_+'(-z)}\left( 1+\frac{A_-(-z)}{\omega^2A_+(-z)}\right)\left(1+\frac{A_-'(-z)}{\omega^2A_+'(-z)}\right)^{-1}
\end{split}.
\end{equation}
Thus, to estimate $\phi_i$ and $\phi_i^{-1}$, we proceed by obtaining estimates on $A_+$ and $A_-$. 
Defining $\zeta=\frac{2}{3}z^{3/2}$, we have 
$$\Im \zeta=\Im z(\Re z)^{1/2}(1+\O{}(\delta))\,,\quad \quad |\Im \zeta|\geq C_\delta |\Im z||z|^{1/2}.$$
Now, let 
\begin{gather*} B_{\pm}(z):=z^{1/4}e^{\mp i\pi/12}\Xi(e^{\pm i\pi/3}z)\\
D_{\pm}(z):=\pm iz^{-1/4}e^{\mp\pi i/12}\left(\mp iz^{1/2}\Xi(e^{\pm\pi i/3}z)-\Xi'(e^{\pm \pi i/3})\right)
\end{gather*}
where $\Xi$ is as in \eqref{eqn:airyFormula} so that 
\begin{equation}\label{eqn:AiryB} A_{\pm}(-z)=z^{-1/4}e^{\pm i\pi/12}B_{\pm}(z)e^{\pm i\zeta}\,,\quad\quad A_{\pm}'(-z)=\mp iz^{1/4}e^{\pm i\pi/12}D_{\pm}(z)e^{\mp \zeta}
.\end{equation}
Then, 
\begin{gather*}B_{\pm}(z)=b_0\pm ib_1\zeta ^{-1}+\O{}(\zeta ^{-2})\,,\quad -zB_{\pm}'(z)=\pm\frac{3ib_1}{2}\zeta ^{-1}+\O{}(\zeta ^{-2})\\
D_{\pm}(z)=d_0\pm id_1\zeta ^{-1}+\O{}(\zeta ^{-2})\,,\quad -zD_{\pm}'(z)=\pm\frac{3id_1}{2}\zeta ^{-1}+\O{}(\zeta ^{-2})
\end{gather*}
where $b_i>0$, $d_i>0$ and 
\begin{gather*} \pm \Im \left(B_{\pm}(z)\overline{B_{\pm}'(z)}\right)=\frac{3b_0b_1}{2}|z|^{-5/2}(1+\O{}(\delta)+\O{}(|z|^{-3/2}))>0\\
\pm \Im \left(D_{\pm}(z)\overline{D_{\pm}'(z)}\right)=\frac{3d_0d_1}{2}|z|^{-5/2}(1+\O{}(\delta)+\O{}(|z|^{-3/2}))>0.
\end{gather*}

We first seek to show that $\pm |A_-(-z)|\leq \pm |A_+(-z)|$ in $\pm \Im z\geq 0$. To this end, define 
$$f_a(\tau)=|B_+(a+i\tau)|^2-|B_-(a+i\tau)|^2.$$
Then, 
$$f_a'(\tau)=2\Im \left(B_+(a+i\tau)\overline{B_+'(a+i\tau)}-B_-(a+i\tau)\overline{B_-'(a_i\tau)}\right)>0.$$
So taking $a=\Re z$ and using the fact that $|A_+(\Re z)|=|A_-(\Re z)|$, we have $f_{\Re z}(0)=0$ and $f_{\Re z}'(\tau)>0$ for $0\leq \tau<\delta \Re z $ and $\Re z\gg 1$. This implies
\begin{equation} 
\label{eqn:compareApm}
\pm |B_+(z)|\geq \pm |B_-(z)|\quad \quad \pm \Im z\geq 0
\end{equation}

An identical analysis with the function 
$$g_a(\tau)=|D_+(a+i\tau)|^2-|D_-(a+i\tau)|^2$$
gives 
\begin{equation}
\label{eqn:compareAppm}
\pm |D_+(z)|\geq \pm |D_-(z)|\quad \quad \pm \Im z\geq 0
\end{equation}

We now restrict our attention to $\Im z>0$ and hence $\Im \zeta>0$ since the other region is similar. By \eqref{eqn:AiryB} and \eqref{eqn:compareApm}
\begin{equation} \label{eqn:compareApm2}
\begin{gathered}
\left|\frac{A_-(-z)}{A_+(-z)}\right|=e^{-2\Im \zeta}\left|\frac{B_-(z)}{B_+(z)}\right|\leq e^{-2\Im \zeta}\,,\\
 \left|\frac{A_-'(-z)}{A_+'(-z)}\right|=e^{-2\Im \zeta}\left|\frac{D_-(z)}{D_+(z)}\right|\leq e^{-2\Im \zeta}.
 \end{gathered}
\end{equation}
Now, the asymptotics \eqref{eqn:airyFormula} imply that 
\begin{equation}\label{eqn:Abounds} 0<c\la z\ra^{1/2}\leq \left|\frac{A_{\pm}'(-z)}{A_+(-z)}\right|\leq C\la z\ra^{1/2}\,\quad\quad
0<c\leq \left|\frac{A_{\pm}(-z)}{A_+(-z)}\right|\leq C.
\end{equation}
So, using $|\Im \zeta|\geq C_\delta |\Im z||z|^{1/2}$ together with  using \eqref{eqn:compareApm2} and \eqref{eqn:Abounds} in \eqref{eqn:phiForm} gives 
\begin{align*} |\phi_i(-z)|&\leq \frac{C|z|^{1/2}}{1-e^{-2\Im \zeta}}\leq \frac{C|z|^{1/2}}{\min(1,2\Im \zeta)}\leq C|z|^{1/2}+C|\Im z|^{-1}\\
|\phi_i(-z)|^{-1}&\leq C\frac{|z|^{-1/2}}{1-e^{-2\Im \zeta}}\leq C\frac{|z|^{-1/2}}{\min(1,2\Im\zeta)}\\
&\leq \la z\ra^{-1/2}(1+|\Im z|^{-1}\la z\ra^{-1/2}) .\end{align*}

\end{proof}

The following bounds on products of Airy functions will be useful in our construction of $H_g$ and $H_d$
\begin{lemma}
\label{lem:airyBounds}
Let $\alpha$ be as in \eqref{eqn:alphaCond} and $\alpha_h=h^{-2/3}\alpha$. Then for $\gamma$ small enough and 
$$|\alpha|\leq \gamma (h\e(h)^{-1})^2$$
we have for $\Re \alpha_h\leq -\delta$ 
\begin{gather*}
Ch^{-2/3}\e(h)\leq C|\Im \alpha_h|\leq |Ai(\alpha_h)A_-(\alpha_h)|\leq C\\
c\la \alpha_h\ra^{1/2}\leq |Ai'(\alpha_h)A_-'(\alpha_h)|\leq C\la \alpha_h\ra^{1/2}\\
c(|\Im \alpha_h|^{-1}\la \alpha_h\ra^{-1}+\la \alpha_h\ra^{-1/2})^{-1}\leq |\phi_i(\alpha_h)|\leq |\Im \alpha_h|^{-1}\leq Ch^{2/3}\e(h)^{-1}\\
c\la \alpha_h\ra^{1/2}\leq |\phi_-|\leq C\la \alpha_h\ra^{1/2}
\end{gather*}
and for $\Re \alpha_h\geq -\delta$
\begin{gather*} 
Ch^{1/3}\leq |Ai(\alpha_h)A_-(\alpha_h)|\leq C\\
c(|\Im \alpha_h|^{-1}\la \alpha_h\ra^{-1}+\la \alpha_h\ra^{-1/2})^{-1}\leq |Ai'(\alpha_h)A_-'(\alpha_h)|\leq C\la \alpha_h\ra^{1/2} \\
c\la \alpha_h\ra^{1/2}\leq|\phi_i(\alpha_h)| +|\phi_-(\alpha_h)|\leq C\la \alpha_h\ra^{1/2}.
\end{gather*} 
\end{lemma}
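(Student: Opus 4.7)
The plan is to reduce every estimate to the asymptotics \eqref{eqn:airyFormula}, \eqref{eqn:AiAsympPos}, \eqref{eqn:AiAsympNeg} for $A_\pm$ and $Ai$, together with Lemma \ref{lem:airyQuotientEstimate}. The starting observation is the quantitative content of the hypothesis: since $\alpha = \alpha_0 + \e(h)\alpha'$ with $\alpha'=i+\O{}(\e(h))$, we have $\Im\alpha_h\sim h^{-2/3}\e(h)$ (so in particular $|\Im\alpha_h|\geq ch^{1/3}$), while $|\alpha|\leq \gamma(h/\e(h))^2$ forces $|\Re\alpha_h|\le \gamma h^{-2/3}(h/\e(h))^2$, so for $\gamma$ small $|\Im\alpha_h|/|\Re\alpha_h|$ is small whenever $|\Re\alpha_h|\gtrsim\delta$. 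Setting $\zeta=\tfrac{2}{3}(-\alpha_h)^{3/2}$ in the region $\Re\alpha_h\le-\delta$, one gets $\Im\zeta\sim |\Im\alpha_h|\la\alpha_h\ra^{1/2}$, which is the quantity that governs all cancellations below.

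In the easy regime $\Re\alpha_h\geq -\delta$, $\alpha_h$ stays in a sector bounded away from the negative real axis, so the one-exponential asymptotic \eqref{eqn:AiAsympPos} for $Ai$ together with \eqref{eqn:airyFormula} for $A_-$ applies. The two exponentials are reciprocals of each other, so they cancel in the product, giving $|Ai(\alpha_h)A_-(\alpha_h)|\sim \la\alpha_h\ra^{-1/2}$; differentiating the asymptotics produces an extra $\la\alpha_h\ra^{1/2}$ from each factor and we obtain $|Ai'(\alpha_h)A_-'(\alpha_h)|\sim\la\alpha_h\ra^{1/2}$. The lower bound $|Ai\,A_-|\geq ch^{1/3}$ comes from $|\alpha_h|\gtrsim h^{1/3}$; the bound on $|\phi_i|$ is a direct application of Lemma \ref{lem:airyQuotientEstimate} in the regime where the $|\Im z|^{-1}$ term is absent, and the bound on $|\phi_-|$ follows by taking a logarithmic derivative in \eqref{eqn:airyFormula}.

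The main case is $\Re\alpha_h\leq -\delta$. Using \eqref{eqn:linDepAiry} write
\[
Ai(\alpha_h)A_-(\alpha_h) = \omega\, A_+(\alpha_h)A_-(\alpha_h) + \bar\omega\, A_-(\alpha_h)^2.
\]
From \eqref{eqn:airyFormula} and $A_+(z)=\overline{A_-(\bar z)}$, the first summand equals $\omega\,\Xi_+(\alpha_h)\Xi_-(\alpha_h)$ with no exponential factor, while the second is $\bar\omega\,\Xi_-(\alpha_h)^2 e^{2i\zeta}$ carrying a factor $e^{-2\Im\zeta}$. Both prefactors are of size $\la\alpha_h\ra^{-1/2}$, so
\[
|Ai(\alpha_h)A_-(\alpha_h)| \sim \la\alpha_h\ra^{-1/2}\,\bigl|\,c_0 - c_1 e^{-2i\zeta}(1+\O{}(\la\alpha_h\ra^{-3/2}))\,\bigr|
\]
with $c_0,c_1\neq 0$. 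The upper bound is immediate. For the lower bound, when $\Im\zeta\geq c>0$ the factor $|\cdot|$ is bounded below by a constant, giving $|Ai\,A_-|\gtrsim \la\alpha_h\ra^{-1/2}\gtrsim 1$ (using $\alpha_h=\O{}(\gamma h^{-2/3})$ combined with the support constraint); when $\Im\zeta$ is small, Taylor expansion yields $|1-e^{-2\Im\zeta}(\cdots)|\gtrsim |\Im\zeta|\sim |\Im\alpha_h|\la\alpha_h\ra^{1/2}$, and multiplying by $\la\alpha_h\ra^{-1/2}$ gives $|Ai\,A_-|\gtrsim |\Im\alpha_h|\gtrsim h^{-2/3}\e(h)$. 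The bound on $|Ai'A_-'|$ is identical except that each derivative multiplies the prefactor by $\la\alpha_h\ra^{1/2}$ and—crucially—produces the same linear combination of exponentials, so no cancellation at leading order is possible and one simply obtains $|Ai' A_-'|\sim \la\alpha_h\ra^{1/2}$. The bounds on $|\phi_i|$ and $|\phi_-|$ in this region come directly from Lemma \ref{lem:airyQuotientEstimate}: $A_-$ has no zeros on the negative real axis so only the clean bound $|\phi_-|\sim \la\alpha_h\ra^{1/2}$ applies to it, while $\phi_i$ gets the full statement of that lemma, which combined with $|\Im\alpha_h|\gtrsim h^{-2/3}\e(h)$ yields the stated upper bound $h^{2/3}\e(h)^{-1}$.

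The only real obstacle is the lower bound for $|Ai(\alpha_h)A_-(\alpha_h)|$ in the hyperbolic region: one must show that the leading terms $c_0-c_1 e^{-2i\zeta}$ do not accidentally vanish, which is where the perturbation $\alpha'\approx i$ (forcing $\Im\alpha_h>0$, hence $\Im\zeta>0$ and $|e^{-2i\zeta}|<1$) is essential. Without this sign, the zeros of $Ai$ on the negative axis would produce vanishing of $Ai(\alpha_h)$ and the Lemma would fail; the hypothesis $\alpha'=i+\O{}(\e(h))$ is precisely what rules this out and accounts for both the two-regime structure (bounded vs. small $\Im\zeta$) of the lower bound and the $h^{-2/3}\e(h)$ factor that appears.
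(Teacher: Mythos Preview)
Your overall strategy—reducing to Airy asymptotics plus Lemma~\ref{lem:airyQuotientEstimate}—is the same as the paper's, but the key technical step for the \emph{lower} bounds is organized differently. The paper does not analyze the cancellation in $Ai\,A_-$ directly. Instead it uses the Wronskian identities
\[
\phi_- - \phi_i = \frac{e^{-\pi i/6}}{2\pi\, Ai\,A_-},\qquad
\phi_-^{-1} - \phi_i^{-1} = \frac{e^{5\pi i/6}}{2\pi\, Ai'\,A_-'}
\]
to convert lower bounds on $|Ai\,A_-|$ and $|Ai'\,A_-'|$ into upper bounds on $|\phi_i|+|\phi_-|$ and $|\phi_i^{-1}|+|\phi_-^{-1}|$, which are supplied directly by Lemma~\ref{lem:airyQuotientEstimate} (after observing that $\phi_-(z)=e^{2\pi i/3}\phi_i(e^{2\pi i/3}z)$ and that $e^{2\pi i/3}\alpha_h$ stays away from the negative axis). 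Your direct decomposition $Ai\,A_- = \omega\,A_+A_- + \bar\omega\,A_-^2$ and the ensuing lower bound $|Ai\,A_-|\gtrsim|\Im\alpha_h|$ via $|\omega+\bar\omega e^{2i\zeta}|\geq 1-e^{-2\Im\zeta}$ is a valid alternative for that product, and arguably more transparent.

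There is, however, a genuine gap in your treatment of $Ai'\,A_-'$ for $\Re\alpha_h\leq -\delta$. The assertion that differentiation ``produces the same linear combination of exponentials, so no cancellation at leading order is possible'' is false: $Ai'$ has its own sequence of zeros $-\zeta_j'$ on the negative real axis, and the decomposition $Ai'\,A_-' = \omega\,A_+'A_-' + \bar\omega\,(A_-')^2$ has exactly the same two-exponential structure (with a sign flip in the relative phase) as the undifferentiated product. Near $\Re\alpha_h=-\zeta_j'$ one gets $|Ai'A_-'|\sim|\Im\alpha_h|\,\la\alpha_h\ra$, not $\la\alpha_h\ra^{1/2}$; the Wronskian route handles this automatically through the $|\Im\alpha_h|^{-1}\la\alpha_h\ra^{-1}$ term in the bound for $|\phi_i^{-1}|$. (In fact the sharp two-sided bound $\sim\la\alpha_h\ra^{1/2}$ holds in the region $\Re\alpha_h\geq -\delta$, where $Ai'$ has no zeros, and the weaker bound involving $|\Im\alpha_h|$ is what is needed for $\Re\alpha_h\leq-\delta$.)

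A minor point: your explanation that ``$|Ai\,A_-|\geq ch^{1/3}$ comes from $|\alpha_h|\gtrsim h^{1/3}$'' is backwards. Since $|Ai\,A_-|\sim\la\alpha_h\ra^{-1/2}$ in that region, the lower bound comes from the \emph{upper} bound $|\alpha_h|\leq h^{-2/3}|\alpha|\leq \gamma h^{-2/3}$.
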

\begin{proof}
First observe that 
\begin{equation} \label{eqn:alphaImEst}ch^{-2/3}\e(h)< |\Im \alpha_h| = \O{}(h^{-2/3}\e(h))\ll \delta\,\end{equation}
thus, either $|\alpha_h|<\delta.$ or $|\Im \alpha_h|\ll |\alpha_h|$.

The upper bounds for $Ai(\alpha_h)A_-(\alpha_h)$ and $Ai'(\alpha_h)A_-(\alpha_h)$ follow directly from the asymptotics \eqref{eqn:airyFormula}, \eqref{eqn:AiAsympPos}, and \eqref{eqn:AiAsympNeg} together with the analyticity of these functions.

In order to estimate $(A_-Ai)^{-1}$, we use the Wronskian to write
\begin{equation}\label{eqn:wronskianEst}\frac{A_-'(z)}{A_-(z)}-\frac{Ai'(z)}{Ai(z)}=\frac{W(Ai,A_-)(z)}{Ai(z)A_-(z)}=\frac{e^{-\pi i/6}}{2\pi Ai(z)A_-(z)}.
\end{equation}
Thus, to estimate $|A_-Ai|^{-1}$ it is enough to estimate $\phi_i$ and 
$$ \phi_-:=\frac{A_-'}{A_-}.$$  

Similarly, to estimate $(A_-'Ai')^{-1}$, we use the Wronskain to write
\begin{equation} \label{eqn:wronskianEstPrime}
\frac{A_-(z)}{A_-'(z)}-\frac{Ai(z)}{Ai'(z)}=-\frac{W(Ai,A_-)(z)}{Ai(z)A_-(z)}=\frac{e^{5\pi i/6}}{2\pi Ai'(z)A_-'(z)}.
\end{equation}

By Lemma \ref{lem:airyQuotientEstimate}, there exists $\delta>0$ such that 
\begin{gather*}  |\phi_i(z)|\leq C\begin{cases}|\Im z|^{-1}+\la z\ra^{1/2}&|z|\geq \delta\,,\Re z<0\\
\la z\ra^{1/2}&\text{otherwise} \end{cases}\\
|\phi_i(z)|^{-1}\leq C\begin{cases} \la z\ra^{-1/2}+|\Im z|^{-1}\la z\ra^{-1}&|z|\geq \delta\,,\Re z<0\\
\la z\ra^{-1/2}&\text{otherwise}\end{cases}.
\end{gather*} 
and, since
$$\phi_-(z)=e^{2\pi i/3}\phi_i(e^{2\pi i/3}z)$$
we also have 
\begin{gather*} |\phi_-(z)|\leq C\begin{cases}(|\Im e^{2\pi i/3}z|+\la z\ra^{1/2})&|z|\geq \delta\,\\
1&|z|\leq \delta \end{cases}\\
|\phi_-(z)|^{-1}\leq C\begin{cases} \la z\ra^{-1/2}+|\Im e^{2\pi i/3}z|^{-1}\la z\ra^{-1}&|z|\geq \delta\\
\la z\ra^{-1/2}&\text{otherwise}\end{cases}.
\end{gather*}

Now, by \eqref{eqn:alphaImEst}, either $|\alpha_h|\leq \delta$ or $|\Im e^{2\pi i/3}z|\geq \delta$, so we can estimate 
\begin{gather*}
c\la \alpha_h\ra^{1/2}\leq |\phi_-(\alpha_h)|\leq C\la \alpha_h\ra^{1/2}\\
|\phi_i|(\alpha_h)\leq \begin{cases}|\Im \alpha_h|^{-1}+\la \alpha_h\ra^{1/2}&\Re \alpha_h<0\,,|\alpha_h|\geq \delta\\
\la \alpha_h\ra^{1/2}&\text{ otherwise}\end{cases}\\
|\phi_i^{-1}|(\alpha_h)\leq \begin{cases}|\Im \alpha_h|^{-1}\la \alpha_h\ra^{-1}+\la \alpha_h\ra^{-1/2}&\Re \alpha_h<0\,,|\alpha_h|\geq \delta\\
\la \alpha_h\ra^{-1/2}&\text{ otherwise}\end{cases}.
\end{gather*}

Next, we have $$|\alpha|\leq \gamma\left(\frac{h}{\e(h)}\right)^2$$
and hence
\begin{align*}
 (1+|\alpha_h|)^{-1/2}&\geq\left\la h^{-2/3}\left(\gamma\frac{h^2}{\e(h)^2}+C\e(h)\right)\right\ra^{-1/2}\\
 &\geq \gamma^{-1/2}h^{-2/3}\e(h)\geq |\Im \alpha_h|\end{align*}
provided that $\gamma$ is small enough.  This implies $$\la \alpha_h\ra^{1/2}\leq |\Im \alpha_h|^{-1}$$ and hence 
gives the desired estimates
\end{proof}

Define the Airy multipliers:
\begin{gather*}(\mc{A}_-\mc{A}i)^{-1}F:=(2\pi h)^{-d+1}\int [Ai(\alpha_h)A_-(\alpha_h)]^{-1}e^{i\la x,\xi'\ra/h}\mc{F}_h{F}(\xi)d\xi,\\
(\mc{A}_-\mc{A}i)F:=(2\pi h)^{-d+1}\int Ai(\alpha_h)A_-(\alpha_h)e^{i\la x,\xi'\ra/h}\mc{F}_h{F}(\xi)d\xi\\
(\Phi_i)F:=(2\pi h)^{-d+1}\int \phi_i(\alpha_h)e^{i\la x,\xi'\ra/h}\mc{F}_h{F}(\xi)d\xi\\
(\Phi_i^{-1})F:=(2\pi h)^{-d+1}\int \phi_i^{-1}(\alpha_h)e^{i\la x,\xi'\ra/h}\mc{F}_h{F}(\xi)d\xi\\
(\Phi_-)F:=(2\pi h)^{-d+1}\int \phi_-(\alpha_h)e^{i\la x,\xi'\ra/h}\mc{F}_h{F}(\xi)d\xi.\\
(\Phi_-^{-1})F:=(2\pi h)^{-d+1}\int \phi_-^{-1}(\alpha_h)e^{i\la x,\xi'\ra/h}\mc{F}_h{F}(\xi)d\xi..\end{gather*}
Then the following estimates follow from Lemma \ref{lem:airyBounds}. (see also \cite[Proposition 5.3.10]{MelTayl})
\begin{lemma}
\label{lem:airyOpEst}
\begin{gather*}(\mc{A}_-\mc{A}i)^{-1}=\O{H_h^s\to H_h^s}\left(h^{-1/3}\right),\quad \mc{A}_-\mc{A}i=\O{H_h^s\to H_h^s}(1)\\
(\mc{A}'_-\mc{A}i')^{-1}=\O{H_h^s\to H_h^s}\left(h^{2/3}\e(h)^{-1}\right),\quad \mc{A}'_-\mc{A}i'=\O{H_h^s\to H_h^s}(h^{-1/3})\\
\Phi_i=\O{H_h^s\to H_h^s}(h^{-1/3})\quad \quad \Phi_i^{-1}=\O{H_h^s\to H_h^s}(h^{2/3}\e(h)^{-1})\\
\Phi_-=\O{H_h^s\to H_h^s}(h^{-1/3})\quad \quad \Phi_-^{-1}=\O{H_h^s\to H_h^s}(1).
\end{gather*}
\end{lemma}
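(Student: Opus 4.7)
The plan is to recognize each of the operators in question as a Fourier multiplier in the tangential variables $x'$, namely an operator of the form $m(hD')$, where $m(\xi')$ is a bounded function of $\alpha_h(\xi')=h^{-2/3}\alpha(\xi')$. Since such Fourier multipliers satisfy $\|m(hD')\|_{L^2\to L^2}=\|m\|_{L^\infty}$ by Plancherel, and since they commute with the elliptic multiplier $\la hD'\ra^s$, we also have $\|m(hD')\|_{H_h^s\to H_h^s}=\|m\|_{L^\infty}$. Thus each of the eight bounds reduces to an $L^\infty$ bound for the corresponding symbol, taken over the region singled out by the standing assumption \eqref{eqn:alphaCond}, where $|\alpha|\leq\gamma(h/\e(h))^2$ and $\alpha_h$ satisfies $|\Im\alpha_h|\gtrsim h^{-2/3}\e(h)$ with $|\alpha_h|\leq Ch^{-2/3}\gamma$.

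I would then read off all eight bounds from Lemma \ref{lem:airyBounds}. For $\mc{A}_-\mc{A}i$ the bound $|Ai(\alpha_h)A_-(\alpha_h)|\leq C$ is direct. For $(\mc{A}_-\mc{A}i)^{-1}$, the two cases of Lemma \ref{lem:airyBounds} give $|AiA_-|^{-1}\leq Ch^{2/3}\e(h)^{-1}$ in $\Re\alpha_h\leq-\delta$ and $|AiA_-|^{-1}\leq Ch^{-1/3}$ in $\Re\alpha_h\geq-\delta$; using $\e(h)\geq h$ one sees $h^{2/3}\e(h)^{-1}\leq h^{-1/3}$, so the uniform bound is $h^{-1/3}$. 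For $\mc{A}_-'\mc{A}i'$ use $|Ai'A_-'|\leq C\la\alpha_h\ra^{1/2}\leq Ch^{-1/3}$, exploiting $|\alpha_h|\leq Ch^{-2/3}\gamma$. For $(\mc{A}_-'\mc{A}i')^{-1}$ the lower bound is dominated by $|\Im\alpha_h|^{-1}\la\alpha_h\ra^{-1}\leq Ch^{2/3}\e(h)^{-1}$ (the $\la\alpha_h\ra^{-1/2}$ contribution is absorbed). The bounds on $\Phi_i$, $\Phi_i^{-1}$ follow exactly the same pattern from the $\phi_i$ estimates in Lemma \ref{lem:airyBounds}, and those on $\Phi_-$, $\Phi_-^{-1}$ from the $\phi_-$ estimates, noting $\la\alpha_h\ra^{1/2}\leq Ch^{-1/3}$ and $\la\alpha_h\ra^{-1/2}\leq C$.

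I expect essentially no serious obstacle: the content of the lemma is entirely symbolic, and all work has already been done in Lemma \ref{lem:airyBounds}. The only minor bookkeeping issue is to verify that the support restriction \eqref{eqn:alphaCond} on $F$ lets one replace each Airy multiplier $m(\xi')$ by $m(\xi')\chi(\alpha)$ for a cutoff $\chi$ to the region $|\alpha|\leq\gamma(h/\e(h))^2$, without changing the operator modulo $\mc O(h^\infty)$ on such $F$; this is immediate from the fact that $m(hD')F=m(hD')\oph(\chi)F+\mc O_{H_h^s}(h^\infty)$ whenever $\MS(F)\subset\{\chi\equiv 1\}$ and the pseudodifferential cutoff $\oph(\chi)$ is microlocally trivial on that set. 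After this cutoff the $L^\infty$ bounds from Lemma \ref{lem:airyBounds} apply and the proof is complete.
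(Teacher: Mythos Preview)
Your proposal is correct and matches the paper's approach exactly: the paper's proof is the single sentence ``This follows from the estimates in Lemma \ref{lem:airyBounds},'' and your write-up is a faithful unpacking of that sentence, reducing each Fourier multiplier bound to the corresponding $L^\infty$ estimate on the symbol and reading those off from Lemma \ref{lem:airyBounds}.
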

\begin{proof}
This follows from the estimates in Lemma \ref{lem:airyBounds}.
\end{proof}

\subsection{Estimates for Fourier-Airy Integral Operators}

\subsubsection{Estimates for \eqref{eqn:FAIOIn} type Fourier Airy Integral operators}
To analyze the action of \eqref{eqn:FAIOIn}, we follow the analysis given in \cite[Chapter 6]{MelTayl}. We work in a neighborhood of the boundary $\partial\Omega$ of the form $O=[0,a)\times U$ with coordinates $(y,x')$ and define the symbol classes 
\begin{defin}
We say $p(y,x',\xi;h)\in a(h)S_{\rho,\delta,\nu}$ if 
\m |D_y^kD_{x'}^\beta (hD_{\xi})^\alpha p(y,x',\xi;h)|\leq a(h)h^{\rho |\alpha|-\delta|\beta|-\nu k}.\m
\end{defin}

Write $B_2F:=B_3 \composed (\mc{A}_i\mc{A}_-)^{-1}F$ where 
\begin{equation}
\label{eqn:tempInside}
B_3F:=(2\pi h)^{-d+1}\int [g_0Ai(\rho_h)+ih^{1/3}g_1Ai'(\rho_h)]A_-(\alpha_h)e^{i\theta/h}\mc{F}_h{F}(\xi)d\xi.
\end{equation}
Then all that remains is to analyze $B_3$.

To analyze $B_3$, we break it into several pieces that can be handled using the theory of Fourier integral operators with singular phase. Let $p_1,\,p_2,\,p_3$ have $\supp p_1\subset [C,\infty)$, $\supp p_2\subset (-2C,2C)$, $\supp p_3\subset (-\infty,-C]$ with $p_1+p_2+p_3=1$ and let $q_1=1-p_3$ where $C\gg 1$ will be chosen later. 

We first examine the case where $\Re \alpha_h>-2C$. 
\begin{lemma}
\label{lem:FAIOInEst1}
\label{lem:AiSymbol1}
\begin{gather*} y^jAi(\rho_h)A_-(\alpha_h)q_1(\Re\alpha_h)\in h^{2/3j}e^{C\e(h)/h}S_{1/3,2/3,1}\\
y^jAi'(\rho_h)A_-(\alpha_h)q_1(\Re\alpha_h)\in e^{C\e(h)/h}h^{-1/6+2/3j}S_{1/3,2/3,1}\end{gather*}
for $j\geq 0$.
\end{lemma}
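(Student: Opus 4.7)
The plan is to verify both bounds by induction on the number of $(y,x',\xi)$ derivatives, using the Airy equation $Ai''(z)=zAi(z)$ (and the analogous relation for $A_-$) to reduce every higher derivative of $Ai(\rho_h)$ or $A_-(\alpha_h)$ to linear combinations of $Ai$, $Ai'$, $A_-$, $A_-'$ with coefficients polynomial in $\rho_h$ and $\alpha_h$. First I would invoke \eqref{eqn:mainPhaseAssume}, which gives $\partial_y\rho\le -a_0<0$ in the interior case, together with the Malgrange preparation theorem to write $\rho=\alpha+y\tilde\rho$ with $-\tilde\rho\ge a_0/2>0$ smooth, so that $\rho_h=\alpha_h+h^{-2/3}y\tilde\rho$. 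Derivatives of $\rho_h$ and $\alpha_h$ then scale as $h^{-2/3}$ in $y$ and $x'$, and as $h^{1/3}$ after the $hD_\xi$ normalization.

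Next, on the support of $q_1(\Re\alpha_h)$ one has $\Re\alpha_h\ge -C$, while by the construction preceding Lemma~\ref{lem:phase2} one has $\Im\alpha_h\sim h^{-2/3}\e(h)$, and the localization \eqref{eqn:alphaCond} bounds $\alpha_h$ uniformly from above as well. The asymptotic \eqref{eqn:airyFormula} then yields $|A_-(\alpha_h)|+h^{1/3}|A_-'(\alpha_h)|\le Ce^{C\e(h)/h}$, with the exponential weight arising from $|e^{i(2/3)(-\alpha_h)^{3/2}}|\le e^{C|\Im(-\alpha_h)^{3/2}|}$ and the bound $|\Im(-\alpha_h)^{3/2}|\lesssim|\Im\alpha_h|\la\alpha_h\ra^{1/2}\lesssim h^{-1}\e(h)$. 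For $Ai(\rho_h)$ I would split by cutoff in $\Re\rho_h$: on $\{\Re\rho_h\ge 1\}$ it is Schwartz in $\rho_h$ so derivatives only improve bounds, while on $\{\Re\rho_h\le 1\}$, using $\rho_h\le C-ch^{-2/3}y$, the asymptotic \eqref{eqn:AiAsympNeg} gives $|Ai(\rho_h)|\le C\la\rho_h\ra^{-1/4}$ and $|Ai'(\rho_h)|\le C\la\rho_h\ra^{1/4}$, with $\la\rho_h\ra^{1/2}\le C(1+h^{-1/3}y^{1/2})$.

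Then I would count derivatives. Each $D_y$ hitting $Ai(\rho_h)$ produces $h^{-2/3}(D_y\rho)Ai'(\rho_h)$, which is bounded by $h^{-2/3}(1+h^{-1/6}y^{1/2})$; borrowing one factor of $y^{-1/2}$ from the $y^j$ prefactor yields a net cost $h^{-1}$ per $y$-derivative, matching $\nu=1$ in $S_{1/3,2/3,1}$. Derivatives in $x'$ and in $hD_\xi$ cost only $h^{-2/3}$ and $h^{1/3}$ respectively, since they do not require a $y$-correction. The overall $h^{2j/3}$ prefactor is obtained by writing $y^j=h^{2j/3}(h^{-2/3}y)^j$ and absorbing the factor $h^{-2/3}y$ into $\la\rho_h\ra$ via the bound on $Ai(\rho_h)\la\rho_h\ra^{1/4}$; for the $Ai'(\rho_h)$ version one uncompensated factor of $\la\rho_h\ra^{1/4}$ produces the extra $h^{-1/6}$ loss. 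Higher-order derivatives are treated inductively, with $Ai''$ and $A_-''$ eliminated via the Airy equation at the cost of factors $\rho_h$ and $\alpha_h$ that are absorbed by the same mechanism.

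The main obstacle will be the bookkeeping needed to verify that every derivative-induced term lies in $S_{1/3,2/3,1}$ uniformly across the two regimes $\{|\rho_h|\le 1\}$ and $\{|\rho_h|\ge 1\}$, and that the cross terms generated by derivatives of $\tilde\rho$, $\alpha$, and $q_1(\Re\alpha_h)$ remain admissible. Each derivative of $q_1(\Re\alpha_h)$ costs at most $h^{-2/3}$ and is supported in a bounded transition region in $\alpha_h$, so this is dominated by the $h^{-1}$ already allowed per $y$-derivative; a finite-depth induction combined with dyadic cutoffs in $\rho_h$ then closes the argument.
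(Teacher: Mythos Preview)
There is a genuine gap. Your bound $|A_-(\alpha_h)|\le Ce^{C\e(h)/h}$ on $\supp q_1(\Re\alpha_h)$ rests on the estimate $|\Im(-\alpha_h)^{3/2}|\lesssim|\Im\alpha_h|\la\alpha_h\ra^{1/2}$, which fails when $\Re\alpha_h>0$ is large: for $\alpha_h$ near the positive real axis, $(-\alpha_h)^{3/2}$ is approximately $-i|\alpha_h|^{3/2}$, so $|\Im(-\alpha_h)^{3/2}|\approx|\alpha_h|^{3/2}$. The localization \eqref{eqn:alphaCond} only gives $|\alpha_0|\le\gamma$, hence $|\alpha_h|$ can be as large as $\gamma h^{-2/3}$; your claim that $\alpha_h$ is uniformly bounded is incorrect. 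On $\supp p_1\subset\supp q_1$ one therefore has $|A_-(\alpha_h)|\sim e^{(2/3)|\alpha_h|^{3/2}}$, which can reach size $e^{c/h}$ rather than $e^{C\e(h)/h}$.

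This cannot be repaired by estimating $Ai(\rho_h)$ and $A_-(\alpha_h)$ separately; the exponential growth of $A_-(\alpha_h)$ must be cancelled against the exponential decay of $Ai(\rho_h)$. The paper does precisely this: on $\supp p_1$ it writes $Ai(\rho_h)A_-(\alpha_h)=\Xi(\rho_h)\Xi_-(\alpha_h)e^{-(2/3h)(\rho^{3/2}-\alpha^{3/2})}$ and uses $\rho_0\ge\alpha_0+cy$ (the sign of $\partial_y\rho$ is opposite to what you wrote; in the \eqref{eqn:FAIOIn} case $\rho$ \emph{increases} away from the boundary) to obtain $\Re(\rho^{3/2}-\alpha^{3/2})\ge Cy^{3/2}$. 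The resulting net decay $e^{-Cy^{3/2}/h}$ both controls the derivatives of the exponential and converts $y^j$ into $h^{2j/3}$ via $y^je^{-Cy^{3/2}/h}\le C_jh^{2j/3}$. Your proposed mechanism of absorbing $(h^{-2/3}y)^j$ through the $\la\rho_h\ra^{-1/4}$ oscillatory decay of $Ai$ would buy only a quarter-power and cannot handle $j\ge1$; with the correct sign that oscillatory region does not even occur on $\supp q_1$, but the product structure remains essential on $\supp p_1$.
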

\begin{proof}
We first consider the term involving $p_1$. By \eqref{eqn:airyFormula} We have that 
\begin{gather*}A_-(\alpha_h)=\Xi_-(\alpha_h)e^{2/3\alpha^{3/2}/h} \text{ if }\Re \alpha>0\,,\\Ai(\rho_h)=\Xi(\rho_h)e^{-(2/3)\rho^{3/2}/h}\text{ if } \Re \rho>0.
\end{gather*}
Thus, since $\rho_0\geq \alpha_0+cy$,
$$Ai(\rho_h)A_-(\alpha_h)p_1(\Re \alpha_h)=p_1(\Re\alpha_h)\Xi_-(\alpha_h)\Xi(\rho_h)e^{-(2/3)(\rho^{3/2}-\alpha^{3/2})/h}.$$
Write 
\m p_1(\Re\alpha_h)=\chi_1^2(\alpha_h)\chi_2^2(\rho_h)\,\,\m
where $\chi_1\,,\,\chi_2$ are supported in $\Re s\geq 1/4$ and equal to 1 for $\Re s\geq 2.$ This is possible since $\alpha\leq \rho-Cy +\O{}(h^{-2/3}\e(h)).$ It suffices to show that $ \chi_1(\alpha_h)\Xi_-(\alpha_h)\in S_{1/3,0}\,,$ $ \chi_2(\rho_h)\Xi(\rho_h)\in S_{1/3,2/3,2/3},$ and 
\begin{equation}
\label{eqn:expSymbol}
\chi_1(\Re\alpha_h)\chi_2(\Re\rho_h)e^{-(2/3)(\rho^{3/2}-\alpha^{3/2})/h}\in e^{C\e(h)/h}S_{1/3,2/3,1}.
\end{equation}
The first two estimates follow from elementary estimates on $\Xi$. 

To prove \eqref{eqn:expSymbol}, we apply the chain rule:
\begin{multline*}
D_y^kD_{x'}^\beta D_{\xi}^\gamma e^{-\frac{2}{3h}(\rho^{\frac{3}{2}}-\alpha^{\frac{3}{2}})}=\sum CD_y^{k_1}D_{x'}^{\beta_1} D_{\xi}^{\gamma_1}\left(\frac{\rho^{\frac{3}{2}}-\alpha^{\frac{3}{2}}}{h}\right)\dots \\D_y^{k_\mu}D_{x'}^{\beta_\mu}D_{\xi}^{\gamma_\mu}\left(\frac{\rho^{\frac{3}{2}}-\alpha^{\frac{3}{2}}}{h}\right)e^{-\frac{2}{3h}(\rho^{\frac{3}{2}}-\alpha^{\frac{3}{2}})}\end{multline*}
where the sum is over 
$\sum\gamma_i=\gamma,\,$ $\sum\beta_i =\beta,\,$ $\sum k_i=k.\,$
Note that \eqref{eqn:mainPhaseAssume} implies that for $y$ small on $\supp \chi_1(\alpha_h)\chi_2(\rho_h)$ 
$$\Re (\rho^{3/2}-\alpha^{3/2})\geq Cy^{3/2},\quad \Re\alpha>0.$$

Hence,
\begin{gather*}|D_y^{k}D_{x'}^\beta D_{\xi}^\gamma (\rho^{\frac{3}{2}}-\alpha^{\frac{3}{2}})e^{-\frac{2}{3h}(\rho^{\frac{3}{2}}-\alpha^{\frac{3}{2}})}|\leq e^{-c\frac{y^{3/2}}{h}}e^{\frac{C\e(h)}{h}}  C|\rho|^{\frac{3}{2}-k-|\beta|-|\gamma|},\quad k> 0,\\
|D_{x'}^\beta D_{\xi}^\gamma (\rho^{\frac{3}{2}}-\alpha^{\frac{3}{2}})|e^{-\frac{2}{3h}(\rho^{\frac{3}{2}}-\alpha^{\frac{3}{2}})}\leq Ce^{-c\frac{y^{3/2}}{h}}e^{\frac{C\e(h)}{h}}| \rho|^{\frac{3}{2}-|\beta|-|\gamma|}\quad \beta>0,\\
| D_{\xi}^\gamma (\rho^{\frac{3}{2}}-\alpha^{\frac{3}{2}})|e^{-\frac{2}{3h}(\rho^{\frac{3}{2}}-\alpha^{\frac{3}{2}})}\leq e^{-c\frac{y^{3/2}}{h}}\max\left(|\rho|^{\frac{3}{2}-|\gamma|},|\alpha|^{\frac{3}{2}-|\gamma|}\right).
\end{gather*}
But, on $\supp \chi_1(\Re\alpha_h)\chi_2(\Re \rho_h)$, $ Ch^{2/3}\leq \alpha \leq \rho.$
Thus, 
$$|D_y^{k}D_{x'}^\beta D_{\xi}^\gamma (\rho^{3/2}-\alpha^{3/2})e^{-(2/3)(\rho^{3/2}-\alpha^{3/2})}|y^j\leq Ce^{\e(h)/h}h^{-2/3(k+|\beta|+|\gamma|-3/2-j)}.$$

Now, for the term involving $p_2$, we have
$$ Ai(\rho_h)p_2(\Re \alpha_h),\quad  h^{1/3}Ai(\rho_h)p_2(\Re \alpha_h)\in e^{C\e(h)/h}S_{1/3,2/3,1}.$$
To see this observe that on $p_2(\Re \alpha_h)p_2(\Re \rho_h)$ we have $|\rho_0|,|\alpha_0|\leq Ch^{2/3}$ and hence the main term in the exponential phase is bounded independently of $h$. Moreover, since $\Re \rho_0\geq \alpha_0+Cy$, $|y|\leq h^{2/3}$ so the second statement follows.  On $p_2(\Re \alpha_h)p_1(\Re \rho_h)$, we estimate as above.

The estimate for terms involving $Ai'$ follows from the fact that 
\m Ai'(z)=\tilde{\Xi}(z)e^{-(2/3)z^{3/2}}\,\,\m
where $\tilde{\Xi}=\O{}(z^{1/4}).$
This completes the proof of the lemma.
\end{proof}

Next, we analyze the case where $\alpha_h<-C$. Write 
$$B_3^<F =(2\pi h)^{-d+1}\int [g_0Ai(\rho_h)+ih^{1/3}g_1Ai'(\rho_h)]A_-(\alpha_h)p_3(\alpha_h)e^{i\theta/h}\mc{F}_h{F}d\xi.$$

We have similar to \cite[Section 6.3]{MelTayl}
\begin{lemma}
\label{lem:singPhaseIn}
The operator defined by 
$$\mc{A}^<_-(F):=(2\pi h)^{-d+1}\int A_-(\alpha_h)p_3(\alpha_h)e^{\frac{i\la x,\xi'\ra}{h}}\mc{F}_h{F}d\xi'$$ 
is a Fourier integral operator with singular phase.
\end{lemma}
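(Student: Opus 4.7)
The plan is to reduce $\mc{A}^<_-$ to an explicit oscillatory integral representation by inserting the Airy asymptotic \eqref{eqn:airyFormula} and then identifying the resulting object with one of the standard Melrose--Taylor classes of Fourier integral operators with a $(-\alpha)^{3/2}$ singular phase.

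First, I would exploit the support condition on $p_3$. Since $\supp p_3 \subset (-\infty,-C]$ and, by \eqref{eqn:alphaCond} combined with the fact that $\alpha = \alpha_0 + i\e(h)\alpha'$ with $\alpha' = i + O(\e(h))$, the quantity $\alpha_h$ lies in a sector strictly avoiding the wedge $|\Arg(z)-\pi/3|\le \delta$ whenever $p_3(\Re \alpha_h)\neq 0$ and $C$ is chosen large enough. Hence \eqref{eqn:airyFormula} applies uniformly on $\supp p_3$, and we may write
\[
A_-(\alpha_h)\, p_3(\Re\alpha_h) \;=\; p_3(\Re\alpha_h)\, \Xi_-(\alpha_h)\, e^{\tfrac{2i}{3}(-\alpha_h)^{3/2}} \;=\; p_3(\Re\alpha_h)\,\Xi_-(\alpha_h)\, e^{\tfrac{2i}{3h}(-\alpha)^{3/2}},
\]
where we used $\alpha_h = h^{-2/3}\alpha$ and the chosen branch of $(-\cdot)^{3/2}$.

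Next I would substitute this back into the definition of $\mc{A}^<_-$ and group terms into amplitude and phase:
\[
\mc{A}^<_- F(x) = (2\pi h)^{-d+1}\!\int a(\xi';h)\, e^{i\Phi(x,\xi';h)/h}\,\mc F_h F(\xi')\, d\xi',
\]
with amplitude $a(\xi';h):= p_3(\Re\alpha_h)\,\Xi_-(\alpha_h)$ and phase $\Phi(x,\xi';h):= \la x,\xi'\ra + \tfrac{2}{3}(-\alpha(\xi';h))^{3/2}$. The amplitude lies in $S^{-1/4}$ by the standard asymptotics of $\Xi$, while $\Phi$ is smooth on $\supp a$ because $(-\alpha)^{3/2}$ has a branch singularity only at $\alpha = 0$, which is removed by $p_3$. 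Non-degeneracy of $\Phi$ as a phase function, and the identification of $\Lambda_\Phi$ with (a piece of) the billiard relation, follows from the computation $\partial_{\xi_1}\Phi = x_1 - (-\alpha)^{1/2}\cdot \partial_{\xi_1}\alpha$, which together with $\partial_{\xi_j}\Phi = x_j$ for $j\ge 2$ produces exactly the Lagrangian generated by $\la x,\xi'\ra + \tfrac{2}{3}(-\xi_1)^{3/2}$ up to the harmless $\e(h)$ perturbation carried by $\alpha'$.

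The final step is to match this representation with the definition of a Fourier integral operator with singular phase as employed in Appendix~\ref{ch:semiclassicalDirichletParametrices} (following \cite[Ch.~6]{MelTayl}): such operators are, by definition, those given by an oscillatory integral $\int a\, e^{i\Phi/h}\,\mc F_h F\, d\xi'$ where $a$ is a classical symbol, the smooth part of $\Phi$ is of real principal type, and the singular summand is $\tfrac{2}{3}(-\alpha)^{3/2}$ with $\alpha$ a defining function for the glancing set. The main technical point to verify carefully is the uniform control of the asymptotic expansion and of all seminorms of $a$ on the $h$-dependent support determined by \eqref{eqn:alphaCond}; this is the only place where the smallness of $\gamma$ and the lower bound on $|\Im\alpha_h|$ enter. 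Everything else is a direct transcription of the Melrose--Taylor construction into the present semiclassical, $\e(h)$-perturbed setting.
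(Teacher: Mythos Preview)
Your proposal is correct and is precisely the argument the paper has in mind: the paper itself gives no proof beyond the remark pointing to Section~\ref{sec:singFIO}, and your steps---inserting the asymptotic \eqref{eqn:airyFormula} on $\supp p_3$, extracting the phase $\langle x,\xi'\rangle + \tfrac{2}{3}(-\alpha)^{3/2}$, and checking that the amplitude $p_3(\Re\alpha_h)\Xi_-(\alpha_h)$ and the singular phase satisfy the hypotheses \eqref{eqn:phase2}--\eqref{eqn:amp1} with $\gamma\sim|\xi_1|$, $a=1/2$, $b=2/3$---are exactly what that definition requires. The only point you might make more explicit is that the $i\e(h)$ perturbation in $\alpha$ contributes $e^{O(\e(h)/h)}$ to the amplitude, which is harmless since $\e(h)=O(h\log h^{-1})$; this is the analogue of how the paper handles the same issue in Lemma~\ref{lem:WFhAiry} and Lemma~\ref{lem:rhoOpIn}.
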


\begin{remark} For a treatment of semiclassical Fourier integral operators with singular phase see Section \ref{sec:singFIO}. 
\end{remark}

Let
$$DG=\int [g_0Ai(\rho_h)+ih^{1/3}g_1Ai'(\rho_h)]e^{i\theta/h}\mc{F}_h{G}(\xi)d\xi$$
where $G\in \mc{E}'$. Then 
$B_3^<=D\composed \mc{A}_-^<.$

Hence, we only need to analyze $D$. We decompose $D$ using $p_3(\Re \rho_h)$ and $q_1(\Re(\rho_h))$ and write the resulting operators 
$D:=D_1+D_2.$

Then, using the same analysis as in Lemma \ref{lem:AiSymbol1} we have
\begin{lemma}
\label{lem:AiyEst}
For $j\geq 0$,
\begin{gather*} \rho_0^jAi(\rho_h)q_1(\Re\rho_h)\in h^{2/3j}e^{C\e(h)/h}S_{1/3,2/3,1}\,,\\
\rho_0^jAi'(\rho_h)q_1(\Re\rho_h)\in h^{-1/6+2/3j}e^{C\e(h)/h}S_{1/3,2/3,1}.
\end{gather*}
\end{lemma}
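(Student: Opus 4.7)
The plan is to reduce to the analysis already carried out in Lemma~\ref{lem:AiSymbol1}, since the geometry is almost the same: we have a single Airy factor $Ai(\rho_h)$ multiplied by a symbol and a cutoff that confines us to the region where $\Re \rho_h$ is bounded below. First, I would decompose $q_1 = p_1 + p_2$, where $\supp p_1 \subset [C/2,\infty)$ (the exponentially decaying region of $Ai$) and $\supp p_2 \subset [-2C,2C]$ (the bounded region), reducing the problem to the two sub-cases $\rho_0^j Ai(\rho_h) p_1(\Re \rho_h)$ and $\rho_0^j Ai(\rho_h) p_2(\Re \rho_h)$.

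On the support of $p_2$, the argument $\rho_h$ is confined to a fixed compact set, and Lemma~\ref{lem:phase2} gives $\rho = \rho_0 + \e(h)\rho'$ with $|\Im \rho'| \sim \e(h)$, so $|\rho_0| \leq C h^{2/3} + C\e(h) \leq C' h^{2/3}$ (using $\e(h) \leq Ch\log h^{-1} \ll h^{2/3}$ for $h$ small). Hence the polynomial prefactor contributes the claimed $h^{2j/3}$. For derivatives, since $Ai$ and all its derivatives are bounded on this compact set, one computes $D_y^k D_{x'}^\beta (hD_\xi)^\alpha Ai(\rho_h) = \O(h^{-2(k+|\beta|+|\alpha|)/3}) \cdot h^{|\alpha|} = \O(h^{|\alpha|/3-2|\beta|/3-2k/3})$, and since the symbol class $S_{1/3,2/3,1}$ permits $h^{-k}$ in the $y$-direction, this is absorbed with slack to spare. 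Derivatives of $\rho_0^j$ and $p_2(\Re \rho_h)$ behave similarly.

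On the support of $p_1$, I would invoke the asymptotic $Ai(\rho_h) = \Xi(\rho_h) e^{-(2/3)\rho^{3/2}/h}$ with $\Xi \in S^{-1/4}$, which is valid since $|\text{Arg}(\rho_h) - \pi|$ is bounded away from zero on this support (cf.~\eqref{eqn:AiAsympPos}). Applying the Leibniz and chain rule exactly as in the proof of Lemma~\ref{lem:AiSymbol1}, one obtains
\[
|D_y^k D_{x'}^\beta D_\xi^\gamma e^{-(2/3)\rho^{3/2}/h}| \leq C e^{-c\rho_0^{3/2}/h} e^{C\e(h)/h} \la \rho \ra^{3/2 - k - |\beta| - |\gamma|}
\]
for $k + |\beta| + |\gamma| > 0$, where the $e^{C\e(h)/h}$ factor comes from $|\Im \rho^{3/2}| \leq C\e(h)$ on the support of $p_1$. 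Multiplying by $\rho_0^j$ and using that $\rho_0 \geq ch^{2/3}$ on the support of $p_1(\Re \rho_h)$, the function $\rho_0^j e^{-c\rho_0^{3/2}/h}$ attains its maximum at $\rho_0 \sim h^{2/3}$ with value $\O(h^{2j/3})$, yielding the prefactor $h^{2j/3} e^{C\e(h)/h}$; the remaining polynomial factors in $\la \rho \ra$ are absorbed against the exponential decay, again producing the symbol estimates of class $S_{1/3,2/3,1}$.

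The estimate for $\rho_0^j Ai'(\rho_h) q_1(\Re \rho_h)$ follows identically upon replacing $\Xi$ by the order $+1/4$ symbol $\widetilde \Xi$ in the corresponding asymptotic $Ai'(\rho_h) = \widetilde\Xi(\rho_h) e^{-(2/3)\rho^{3/2}/h}$; this accounts for the extra factor $h^{-1/6}$ (a gain of $\la \rho_h\ra^{1/4} = \O(h^{-1/6})$ at the natural scale $\rho_0\sim h^{2/3}$ of the maximum). The main technical care is in tracking the $e^{C\e(h)/h}$ factor uniformly across both pieces, and in verifying that the bound $\e(h) \ll h^{2/3}$ ensures that the perturbation $\e(h)\rho'$ does not spoil the real-part estimates $\Re\rho^{3/2} \geq c \rho_0^{3/2}$ used in the exponential decay argument; these points are exactly as in Lemma~\ref{lem:AiSymbol1} and require no new ideas.
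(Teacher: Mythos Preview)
Your proposal is correct and follows essentially the same approach as the paper, which simply states that the result follows from ``the same analysis as in Lemma~\ref{lem:AiSymbol1}.'' Your decomposition into the bounded region $p_2$ and the exponentially decaying region $p_1$, together with the observation that $\rho_0^j e^{-c\rho_0^{3/2}/h}$ is maximized at the scale $\rho_0 \sim h^{2/3}$, is precisely the mechanism that produces the $h^{2j/3}$ prefactor in the earlier lemma (there with $y^j$ in place of $\rho_0^j$).
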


Finally, 

\begin{lemma}
\label{lem:rhoOpIn}
$$(2\pi h)^{-d+1} \int [g_0Ai(\rho_h)+ih^{1/3}g_1Ai'(\rho_h)]e^{i\theta/h}\mc{F}_h{G}(\xi)p_3(\rho_h)d\xi=B^++B^-$$
with 
$$B^{\pm}=\begin{aligned}\omega^{\mp}(2\pi h)^{-d+1}\int &[g_0\Xi_{\pm}(\rho_h)+ih^{1/3}g_1\tilde\Xi_{\pm}(\rho_h)]\\
&\quad\quad e^{i[\theta\mp(2/3)(-\rho)^{3/2}]/h}p_3(\rho_h)\mc{F}_h{G}(\xi)d\xi\end{aligned}$$
where 
$\Xi_{\pm}(\rho_h)\in S_{1/3,2/3,2/3},$ $\tilde{\Xi}_{\pm}\in h^{-1/6}S_{1/3,2/3,2/3}.$
\end{lemma}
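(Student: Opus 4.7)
The plan is to carry out the standard WKB splitting of $\mathrm{Ai}$ and $\mathrm{Ai}'$ on the region where their argument has negative real part, and then to substitute the resulting expressions back into the defining oscillatory integral. On $\supp p_3(\rho_h)$ we have $\Re \rho_h \leq -C$ with $C$ arbitrarily large; since $\Im \rho = \mc{O}(\e(h))$ we have $|\Im \rho_h| = \mc{O}(h^{-2/3}\e(h))$, so by choosing $C$ large (relative to the constant governing $\e(h)/h$) we arrange $|\Arg(\rho_h)-\pi|<\delta$ uniformly on $\supp p_3(\rho_h)$. Thus the asymptotic identity \eqref{eqn:AiAsympNeg}
$$
\mathrm{Ai}(\rho_h) \;=\; \omega\,\Xi_+(\rho_h)\,e^{-\frac{2i}{3h}(-\rho)^{3/2}} \;+\; \bar\omega\,\Xi_-(\rho_h)\,e^{\frac{2i}{3h}(-\rho)^{3/2}}
$$
is available on $\supp p_3(\rho_h)$, where the branch of $(-\rho)^{3/2}$ is taken so that it is positive when $\rho$ is real and negative. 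A differentiation of the defining integral for $\mathrm{Ai}$ gives the analogous decomposition for $\mathrm{Ai}'$, namely $\mathrm{Ai}'(\rho_h) = \omega\,\tilde\Xi_+(\rho_h)e^{-\frac{2i}{3h}(-\rho)^{3/2}} + \bar\omega\,\tilde\Xi_-(\rho_h)e^{\frac{2i}{3h}(-\rho)^{3/2}}$, with $\tilde\Xi_\pm(z) \sim z^{1/4}$ at infinity (the $z^{1/4}$ growth is the source of the $h^{-1/6}$ in the final statement).

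Inserting these two formulas into the integrand of the left-hand side of the lemma immediately produces the two terms $B^+$ and $B^-$ with the displayed phases $\theta \mp \tfrac{2}{3}(-\rho)^{3/2}$ and the displayed prefactors. The only substantive thing left is to verify the two symbol statements $\Xi_\pm(\rho_h)\in S_{1/3,2/3,2/3}$ and $\tilde\Xi_\pm(\rho_h)\in h^{-1/6}S_{1/3,2/3,2/3}$ on $\supp p_3(\rho_h)$. I would do this by the chain rule: each $hD_\xi$ falling on $\Xi_\pm(\rho_h)$ produces $h\cdot h^{-2/3}(\partial_\xi\rho)\,\Xi_\pm'(\rho_h)$, which uses one power of $h^{1/3}$ and improves the decay in $\rho_h$ by a factor of $\rho_h^{-1}$; derivatives in $x'$ produce factors of $h^{-2/3}$ from $\rho_h = h^{-2/3}\rho$; derivatives in $y$ produce factors of $h^{-2/3}$ as well, but combined with $\partial_y\rho\neq 0$ they turn into $\mc{O}(h^{-2/3})$ per derivative, which is the $h^{-2k/3}$ required by $S_{1/3,2/3,2/3}$. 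The analogous bookkeeping with $\tilde\Xi_\pm(z)\sim z^{1/4}$ gives an overall extra factor $\la \rho_h\ra^{1/4}\lesssim h^{-1/6}\la \rho\ra^{1/4}$, accounting for the $h^{-1/6}$ loss in the second statement.

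The main obstacle is keeping track of this bookkeeping cleanly. One must check that the $x'$- and $y$-derivatives of $\rho$ remain bounded and, in particular, that the implicit bound $|\Im \rho_h|/|\Re\rho_h|<\tan\delta$ (needed to apply \eqref{eqn:AiAsympNeg}) propagates through differentiation — which it does because the phase $\rho$ and its derivatives are uniformly bounded, while only the evaluation point $\rho_h$ undergoes the $h^{-2/3}$ rescaling. Once these symbolic checks are in place, the splitting of the integrand is algebraic, and the conclusion follows by substituting the two WKB pieces and collecting terms with matching phases.
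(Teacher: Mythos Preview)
Your proposal is correct and follows exactly the approach of the paper's proof: substitute the WKB asymptotic \eqref{eqn:AiAsympNeg} for $Ai(\rho_h)$ (and its analogue for $Ai'$) on $\supp p_3(\rho_h)$, then read off the symbol class memberships by chain-rule estimates on $\Xi_\pm$, $\tilde\Xi_\pm$. The paper's version is terser (it simply says ``the lemma follows from symbol estimates on $\Xi_\pm$ and $\tilde\Xi_\pm$''), but your additional justification that $|\Arg(\rho_h)-\pi|<\delta$ on $\supp p_3(\rho_h)$ and your bookkeeping for the $S_{1/3,2/3,2/3}$ bounds are exactly the details one would supply.
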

\begin{proof}
By \eqref{eqn:AiAsympNeg} we have
$$Ai(\rho_h)=\omega\Xi_+(\rho_h)e^{-(2i/3)(-\rho)^{3/2}/h}+\bar{\omega} \Xi_-(\rho_h)e^{(2i/3)(-\rho)^{3/2}/h}, \quad \Re \rho<0.$$
Similarly for $Ai'$. Thus, the lemma follows from symbol estimates on $\Xi_\pm$ and $\tilde{\Xi}_{\pm}.$
\end{proof}

\subsubsection{Estimates for \eqref{eqn:FAIO} type Fourier Airy Integral operators}

The analysis of \eqref{eqn:FAIO} is similar to that of \eqref{eqn:FAIOIn}. This time, we decompose $B_1$ into $\rho_h<-C$ and $\rho_h>-2C$. We have 
\begin{lemma}
\label{lem:FAIOEst1}
For $j\geq 0$,
\begin{gather*} y^jA_{-}(\rho_h)A_-(\alpha_h)^{-1}q_1(\Re\rho_h)\in e^{C\e(h)/h}h^{-1/6}h^{2/3j}S_{1/3,2/3,1}\,,\\
y^jA_-'(\rho_h)A_-(\alpha_h)^{-1}q_1(\Re \rho_h)\in e^{C\e(h)/h}h^{-1/3}h^{2/3j}S_{1/2,3/2,1}\,.\end{gather*}
\end{lemma}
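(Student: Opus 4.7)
The plan is to follow the template of Lemma \ref{lem:FAIOInEst1} (and the proof of Lemma \ref{lem:AiSymbol1}), adapting the asymptotic analysis to account for the two structural differences: the quotient $A_-(\rho_h)/A_-(\alpha_h)$ replaces the product $Ai(\rho_h) A_-(\alpha_h)$, and the relevant Airy asymptotic is now \eqref{eqn:airyFormula} for $A_-$ (and its derivative $A_-'$) rather than \eqref{eqn:AiAsympPos} for $Ai$. Throughout, the sign convention on $\partial_\nu \rho$ from \eqref{eqn:mainPhaseAssume} is reversed compared to the interior case: $\partial_\nu \rho \le a_0 < 0$, so $|\rho - \alpha|$ is again comparable to $|y|$ but with the opposite sign, which is the crucial analog of the inequality $\rho_0 \ge \alpha_0 + cy$ used in Lemma \ref{lem:FAIOInEst1}.

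First I would decompose $q_1(\Re \rho_h) = p_1(\Re \rho_h) + p_2(\Re \rho_h)$ with $\supp p_1 \subset [C,\infty)$ and $\supp p_2 \subset (-2C, 2C)$, and treat each piece separately. On the support of $p_1$, I would further write $p_1(\Re \rho_h) = \chi_1^2(\alpha_h)\chi_2^2(\rho_h)$ with $\chi_i$ supported in $\Re s \ge C/4$ and equal to $1$ on $\Re s \ge 2C$; this is possible because in the exterior regime $\alpha_0$ and $\rho_0$ differ by a quantity controlled by $y$ (with the sign compatible with both being large positive once $\rho_h \ge C$). On this region, \eqref{eqn:airyFormula} gives
\[
A_-(\rho_h) A_-(\alpha_h)^{-1} p_1(\Re \rho_h) = \chi_2^2(\rho_h)\Xi_-(\rho_h)\cdot \chi_1^2(\alpha_h)\Xi_-(\alpha_h)^{-1} \cdot e^{(2i/3)\bigl((-\rho)^{3/2} - (-\alpha)^{3/2}\bigr)/h}.
\]
Since $\Xi_- \in S^{-1/4}$, the two $\Xi_-$ factors together contribute a bounded symbol in $h^{1/6} \cdot h^{1/6} \cdot h^{-2/3 \cdot (-1/2)} = h^{-1/6}$ (accounting for the rescaling $\alpha_h = h^{-2/3}\alpha$ and similarly for $\rho_h$), which is exactly the $h^{-1/6}$ prefactor in the conclusion. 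The exponential factor is then handled by the same chain-rule argument as in the proof of Lemma \ref{lem:FAIOInEst1}: bounds of the form $|\Re\bigl((-\rho)^{3/2} - (-\alpha)^{3/2}\bigr)| \ge c|y|^{3/2}$ together with the $y^j$ prefactor absorb the negative powers of $h$ arising from derivatives, and the imaginary parts of $\rho$ and $\alpha$ (of size $O(\e(h))$) produce the harmless prefactor $e^{C\e(h)/h}$.

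On the support of $p_2$, the constraint $|\Re \rho_h| \le 2C$ together with $|\partial_y \rho| \ge c$ forces $|y| \le Ch^{2/3}$, so $y^j = O(h^{2j/3})$ automatically. The factor $A_-(\rho_h) p_2(\Re \rho_h)$ is bounded and satisfies the needed symbol estimates directly (it is analytic and uniformly bounded on $|\Re \rho_h|\le 2C$, and the derivatives in $(y,x',\xi)$ acting through $\rho_h$ each bring down at most $h^{-2/3}$, matching the $S_{1/3,2/3,2/3}$ class). For the denominator, Lemma \ref{lem:airyBounds} bounds $|A_-(\alpha_h)^{-1}|$ by $|\phi_-(\alpha_h)|/|W(Ai,A_-)| \cdot |Ai(\alpha_h)|$; using the hypothesis $|\alpha_0| \le \gamma(h/\e(h))^2$ of \eqref{eqn:alphaCond}, this produces at worst a factor of $h^{-1/6}$ together with the exponential tolerance $e^{C\e(h)/h}$. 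For the $A_-'$ estimate, one simply replaces $\Xi_-$ by $\tilde\Xi_-(z) = O(z^{1/4})$ in the asymptotic $A_-'(z) = \tilde\Xi_-(z) e^{(2i/3)(-z)^{3/2}}$, which swaps the sign of the $\Xi$-class and hence contributes an extra $h^{-1/6}$, giving the claimed $h^{-1/3}$ prefactor.

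The main obstacle is verifying, in step on $\supp p_1$, that the phase factor $e^{(2i/3)((-\rho)^{3/2} - (-\alpha)^{3/2})/h}$ lies in $e^{C\e(h)/h} S_{1/3,2/3,1}$ in the claimed uniform way: one must carefully track how derivatives in $y$, $x'$ and $\xi$ act on the phase, using that $\partial_y \rho \le a_0 < 0$ to control $(-\rho)^{3/2} - (-\alpha)^{3/2}$ from below by $c|y|^{3/2}$ and hence dominate the exponential decay of $e^{-c|y|^{3/2}/h}$ against the $y^{-k}$-type growth arising from repeated $y$-differentiation. This is precisely the step that produces the power $h^{2j/3}$ improvement from the $y^j$ prefactor, and it carries over verbatim from the interior case once the correct sign of $\rho - \alpha$ has been identified.
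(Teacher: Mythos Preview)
Your overall strategy is correct and matches the paper's: write $A_-(\rho_h)A_-(\alpha_h)^{-1}$ via the asymptotic \eqref{eqn:airyFormula}, use the sign $\rho_0\le \alpha_0-cy$ in the exterior regime to get exponential decay $e^{(2/3)(\rho^{3/2}-\alpha^{3/2})/h}$ with $\rho<\alpha$, and invoke the chain-rule argument of Lemma~\ref{lem:FAIOInEst1}. The paper's proof is in fact much terser than yours---it simply records the quotient asymptotic and says the rest follows as before.

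There is, however, a genuine error in your treatment of the $p_2(\Re\rho_h)$ piece. You claim that $|\Re\rho_h|\le 2C$ together with $|\partial_y\rho|\ge c$ forces $|y|\le Ch^{2/3}$. This is false: since $\rho_0|_{y=0}=\alpha_0$, the constraint $|\rho_0|\le Ch^{2/3}$ only gives $|y-\alpha_0/c|\le C'h^{2/3}$, and $|\alpha_0|$ is permitted to be as large as $\gamma(h/\e(h))^2$, which in general is much larger than $h^{2/3}$. You have transplanted the $p_2$ step from Lemma~\ref{lem:FAIOInEst1} without noticing that there the cutoff was on $\alpha_h$, so that both $|\alpha_0|$ and $|\rho_0|$ were simultaneously $O(h^{2/3})$ and hence $|y|=|\rho_0-\alpha_0|/c=O(h^{2/3})$; here the cutoff is on $\rho_h$ only, and no such bound on $\alpha_0$ is available.

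The fix is to further split according to the size of $\alpha_h$. If $|\alpha_h|\le C'$ as well, then indeed $|y|\le Ch^{2/3}$ and your argument applies. If $\alpha_h>C'$, then $A_-(\alpha_h)^{-1}=O(\alpha_h^{1/4}e^{-(2/3)\alpha_h^{3/2}})$ is exponentially small; since $y\approx \alpha_0/c$ on this set, writing $s=h^{-2/3}\alpha_0$ one has $y^j|A_-(\alpha_h)^{-1}|\le C s^{j+1/4}e^{-(2/3)s^{3/2}}h^{2j/3}\le C_j h^{2j/3}$, recovering the claimed gain. This is exactly the mechanism the paper has in mind when it says one may ``analyze terms involving only $p_1$''---the $A_-(\alpha_h)^{-1}$ factor already carries the relevant decay, uniformly across $q_1(\Re\rho_h)$. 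Your factorization $p_1(\Re\rho_h)=\chi_1^2(\alpha_h)\chi_2^2(\rho_h)$ is also not quite right at $y=0$ (where $\alpha_h=\rho_h$), but this is a minor bookkeeping issue once the main decay estimate is in place.
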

\begin{proof}
Since $\rho_0\leq \alpha_0-Cy$ and $|\Re \rho|\leq ch^{2/3}$ on $\supp p_2(\Re\rho_h)$, we may analyze terms involving only $p_1$ instead of $q_1$.

By \eqref{eqn:airyFormula}, we have
 \begin{equation*}\label{eqn:outsideEstimate}\frac{A_-(\rho_h)}{A_-(\alpha_h)}=\frac{\Xi_-(\rho_h)}{\Xi_-(\alpha_h)}e^{2/3(\rho^{3/2}-\alpha^{3/2})/h}=\frac{\Xi_-(\rho_h)}{\Xi_-(\alpha_h)}e^{2/3(\rho_0^{3/2}-\alpha_0^{3/2})/h+\O{}(\e(h)/h)}.\end{equation*}
We have that $\rho_0\leq \alpha_0 -cy.$ Therefore, the estimates follow as in Lemma \ref{lem:FAIOInEst1}
\end{proof}

We have 
\begin{lemma}
\label{lem:singPhaseOut}
\begin{equation*}(\mc{A}^<_-)^{-1}F:=(2\pi h)^{-d+1}\int(A_-(\alpha_h))^{-1}p_3(\Re \alpha_h)\mc{F}_h{F}d\xi\end{equation*}
is a Fourier integral operator with singular phase.
\end{lemma}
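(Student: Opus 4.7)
The plan is to deduce Lemma \ref{lem:singPhaseOut} by running the argument of Lemma \ref{lem:singPhaseIn} essentially verbatim, with the single new observation that on the support of $p_3(\Re\alpha_h)$ the factor $\Xi_-(\alpha_h)$ is elliptic. First, I would invoke the asymptotic \eqref{eqn:airyFormula}: provided $|\Arg z - \pi/3| > \delta$ one has $A_-(z) = \Xi_-(z)\, e^{(2i/3)(-z)^{3/2}}$ with $\Xi_-\in S^{-1/4}$. On $\supp p_3(\Re\alpha_h)$ we have $\Re\alpha_h \le -C$ while $|\Im\alpha_h| = \O{}(h^{-2/3}\e(h))$, so $|\Arg\alpha_h-\pi|$ is controlled and, taking $C$ sufficiently large, $\Arg\alpha_h$ is bounded away from $\pi/3$. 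Moreover, on this region $|\alpha_h|\geq C$ so $\Xi_-(\alpha_h)$ is nonvanishing and of size $|\alpha_h|^{-1/4}$ (this is already implicit in Lemma \ref{lem:airyBounds}). Hence we may write
$$A_-(\alpha_h)^{-1}\, p_3(\Re\alpha_h) = \Xi_-(\alpha_h)^{-1}\, p_3(\Re\alpha_h)\, e^{-(2i/3h)(-\alpha)^{3/2}},$$
and the kernel of $(\mc{A}^<_-)^{-1}$ becomes
$$(2\pi h)^{-d+1}\int \Xi_-(\alpha_h)^{-1}\, p_3(\Re\alpha_h)\, e^{i\bigl(\la x-y,\xi'\ra - (2/3)(-\alpha)^{3/2}\bigr)/h}\, d\xi'.$$

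Second, I would check that $\Xi_-(\alpha_h)^{-1} p_3(\Re\alpha_h)$ lies in the symbol class required for the singular-phase FIO calculus of Section \ref{sec:singFIO}. Because $\Xi_-$ is an elliptic classical symbol of order $-1/4$ for $|z|$ large in the sector $|\Arg z - \pi/3|>\delta$, its reciprocal is a symbol of order $+1/4$ on this sector, and the cutoff $p_3$ excludes a neighborhood of $\alpha_h=0$; derivative bounds follow from differentiating the asymptotic expansion of $\Xi_-$ together with the chain rule applied to $\alpha_h = h^{-2/3}(\xi_1+i\e(h))$, exactly as for the analogous estimates on $\Xi_-(\alpha_h)$ in the proof of Lemma \ref{lem:singPhaseIn}.

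Third, the phase $\la x-y,\xi'\ra - (2/3)(-\alpha)^{3/2}$ is identical (up to an overall sign in the $(-\alpha)^{3/2}$ term) to the one appearing in Lemma \ref{lem:singPhaseIn}. Its only singularity is at $\alpha = 0$, which is excluded by $p_3$, so the phase is smooth on the support of the amplitude and satisfies the same singular-phase structure at the boundary of its support. Consequently the framework of Section \ref{sec:singFIO} applies and identifies $(\mc{A}^<_-)^{-1}$ as a semiclassical Fourier integral operator with singular phase.

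The main obstacle I anticipate is purely bookkeeping: verifying that the symbol estimates for $\Xi_-(\alpha_h)^{-1}$ hold uniformly over the range $h \le \e(h) = \O{}(h\log h^{-1})$, since $\Im\alpha_h$ can itself grow like $h^{-2/3}\log h^{-1}$. However, the same verification was already performed (implicitly) in the proof of Lemmas \ref{lem:airyBounds} and \ref{lem:singPhaseIn}, and the reciprocal inherits the corresponding bounds by ellipticity of $\Xi_-$ on the sector in question. No new analytic ingredient is required beyond those already used to handle $A_-(\alpha_h)$ itself.
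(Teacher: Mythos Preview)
Your proposal is correct and is precisely the argument the paper intends. Note that the paper does not actually write out a proof for either Lemma \ref{lem:singPhaseIn} or Lemma \ref{lem:singPhaseOut}; it simply states them, with a pointer to \cite[Section 6.3]{MelTayl} and to Section \ref{sec:singFIO}. Your write-up is the natural way to fill in the details: use the asymptotics \eqref{eqn:airyFormula} on $\supp p_3(\Re\alpha_h)$ to factor $A_-(\alpha_h)^{-1} = \Xi_-(\alpha_h)^{-1} e^{-(2i/3h)(-\alpha)^{3/2}}$, check that $\Xi_-(\alpha_h)^{-1}p_3(\Re\alpha_h)$ is a symbol (ellipticity of $\Xi_-$ in the relevant sector), and verify that the phase $\langle x-y,\xi'\rangle - \tfrac{2}{3}(-\alpha)^{3/2}$ satisfies \eqref{eqn:phase2} with $a=1/2$ while the amplitude is supported in $\{-\xi_1 \ge Ch^{2/3}\}$, matching \eqref{eqn:amp1} with $b=2/3$.
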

Moreover, $(A_-(\alpha_h))^{-1}$ is bounded on $\supp q_1.$ 
Then, similar to above, we have 
\begin{lemma}
\label{lem:rhoOpOut}
$$(2\pi h)^{-d+1}\int [g_0A_-(\rho_h)+ih^{1/3}g_1A_-'(\rho_h)]e^{i\theta/h}\mc{F}_h{G}(\xi)p_3(\rho_h)d\xi=B^-$$
with 
$$B^{-}=\begin{aligned} \omega (2\pi h)^{-d+1}\int&[g_0\Xi_{-}(\rho_h)+ih^{1/3}g_1\tilde\Xi_{-}(\rho_h)]\\
&\quad\quad e^{i[\theta+(2/3)(-\rho)^{3/2}]/h}p_3(\rho_h)\mc{F}_h{G}(\xi)d\xi\end{aligned}$$
where 
$\Xi_{-}(\rho_h)\in S_{1/3,2/3,2/3},$ $\tilde{\Xi}_{-}\in h^{-1/6}S_{1/3,2/3,2/3}.$
\end{lemma}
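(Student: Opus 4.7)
The plan is to mirror the proof of Lemma \ref{lem:rhoOpIn}, with the crucial simplification that only a single oscillatory term appears: whereas $Ai(\rho_h)$ on $\Re\rho<0$ splits into two pieces via \eqref{eqn:AiAsympNeg}, the function $A_-(\rho_h)$ on the same region admits the single asymptotic expansion \eqref{eqn:airyFormula}.

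First I would verify that on $\supp p_3(\Re\rho_h)$ we have $\Re\rho_h\leq -C$. Combined with the bound $|\Im\rho_h|=O(h^{-2/3}\e(h))=O(h^{1/3}\log h^{-1})$ (from $\rho=\rho_0+\e(h)\rho'$ with $\rho_0$ real, via Lemma \ref{lem:phase2}), this keeps $\rho_h$ well inside the sector $|\Arg(z)-\pi/3|>\delta$, so \eqref{eqn:airyFormula} applies. Using the branch convention of the paper, $(-\rho_h)^{3/2}=h^{-1}(-\rho)^{3/2}$, so
\[
e^{i\theta/h}\,A_-(\rho_h)\;=\;\Xi_-(\rho_h)\,e^{i[\theta+(2/3)(-\rho)^{3/2}]/h},
\]
which is exactly the phase claimed in the statement. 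For $A_-'(\rho_h)$, I would differentiate \eqref{eqn:airyFormula} to write $A_-'(z)=\tilde\Xi_-(z)\,e^{i(2/3)(-z)^{3/2}}$ with $\tilde\Xi_-(z)=\Xi_-'(z)-i(-z)^{1/2}\Xi_-(z)$; the same exponential factors out, producing the $ih^{1/3}g_1\tilde\Xi_-(\rho_h)$ term of $B^-$. The overall constant $\omega$ is then read off from tracking the branch of $(-\cdot)^{3/2}$ together with the relation $\Xi_-(z)=\Xi(z\omega^2)$.

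The main technical step is to verify the symbol class memberships $\Xi_-(\rho_h)\in S_{1/3,2/3,2/3}$ and $\tilde\Xi_-(\rho_h)\in h^{-1/6}S_{1/3,2/3,2/3}$. I would argue as in Lemmas \ref{lem:AiSymbol1}, \ref{lem:FAIOEst1} and \ref{lem:AiyEst}: each $\partial_y$ acting on $\Xi_-(\rho_h)$ produces $h^{-2/3}\Xi_-'(\rho_h)\partial_y\rho$, matching the $\nu=2/3$ index; each $\partial_{x'}$ produces $h^{-2/3}\Xi_-'(\rho_h)\partial_{x'}\rho$, matching $\delta=2/3$; and each $hD_\xi$ produces $h^{1/3}\Xi_-'(\rho_h)\partial_\xi\rho$, matching $\rho=1/3$. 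Because $\Xi\in S^{-1/4}$ its derivatives improve these estimates, and the $h^{-1/6}$ prefactor for $\tilde\Xi_-$ arises from the $|\rho_h|^{1/4}$ growth of $(-\rho_h)^{1/2}\Xi_-(\rho_h)$ via $|\rho_h|\leq Mh^{-2/3}$ on our relatively compact region.

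The hard part, I expect, will be the careful branch bookkeeping for $(-\rho)^{3/2}$ once $\rho=\rho_0+\e(h)\rho'$ with $\rho'$ complex. Specifically, one must check that the phase $e^{i(2/3)h^{-1}(-\rho)^{3/2}}$ does not acquire an exponentially growing real contribution: expanding $(-\rho)^{3/2}=(-\rho_0)^{3/2}+\tfrac{3}{2}\e(h)(-\rho_0)^{1/2}\rho'+O(\e(h)^2)$ and using $\Im\rho'>c\e(h)$ with $\rho_0<0$ shows that the imaginary part of $h^{-1}(-\rho)^{3/2}$ is controlled by $O(h^{-1}\e(h))=O(\log h^{-1})$, so the prefactor $e^{C\e(h)/h}$ is exactly of the type already tolerated in Lemmas \ref{lem:AiSymbol1}--\ref{lem:FAIOEst1}. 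Once this is in hand, the rest of the argument is a direct transcription of the $Ai$ case with only the $B^-$ summand surviving.
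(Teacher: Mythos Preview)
Your proposal is correct and follows essentially the same approach as the paper: the paper gives no separate proof for this lemma, and its proof of the companion Lemma~\ref{lem:rhoOpIn} consists of a single sentence---substitute the asymptotic expansion for $Ai$ (here, \eqref{eqn:airyFormula} for $A_-$) and note that the symbol estimates on $\Xi_-,\tilde\Xi_-$ follow. Your write-up simply unpacks those two steps in detail, and your observation that only one oscillatory term survives (since $A_-$ has a single exponential asymptotic on $\Re\rho_h\le -C$, unlike $Ai$) is exactly the point.
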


Together with Lemma \ref{lem:FAIOEst1} and the fact that $A_-(\alpha_h)^{-1}$ is bounded on $\supp q_1(\Re \alpha_h)$, this shows that on $\supp p_3(\Re \rho_h)$, \eqref{eqn:FAIO} is a Fourier integral operator with singular phase.


\subsection{Verification of the properties \eqref{eqn:parametrixCondEuc}}
We now prove that using the phase and amplitudes constructed in the previous section that \eqref{eqn:parametrixCondEuc} is satisfied. First, we construct $F$ so that the boundary conditions are satisfied. We have that $g_1|_{\partial \Omega}=0$ and $\rho|_{\partial \Omega}=\alpha$. Hence, restricting \eqref{eqn:FAIO} or \eqref{eqn:FAIOIn} to $\partial \Omega$ gives
$$BF|_{\partial \Omega}=(2\pi h)^{-d+1}\int ge^{i\theta_b/h}\mc{F}_h{F}(\xi)d\xi$$
where $\theta_b=\theta|_{\partial \Omega}$ and $g=g_0|_{\partial \Omega}$. Now, $d_x\partial_{\xi_j}\theta_0$ are linearly independent and hence $\theta_0$ is a phase function. Fix $\delta>\delta_1>0$. Then, since $\e(h)=\O{}(h\log h^{-1})$, $e^{\frac{i}{h}\e(h)\theta'}\in S_\delta$, and shrinking the neighborhood on which we work if necessary
$$\frac{\sup|e^{\frac{i}{h}\e(h)\theta'}|}{\inf|e^{\frac{i}{h}\e(h)\theta'}|}\leq Ch^{-\delta_1}.$$ 
Thus, $J:=B|_{\partial\Omega}$ is a semiclassical Fourier integral operator that is invertible by the symbol calculus of FIOs. Hence, we just need to take $F=J^{-1}f$ to obtain the appropriate boundary conditions where $J^{-1}$ is a microlocal parametrix for $J$. Thus, we let $H_d=B_1J^{-1}$ and $H_g=B_2J^{-1}.$ We need to verify that if 
$$\MS(f)\subset \{||\xi'|_g-1|<\eta(h)\ll 1\},$$ then $$\MS(J^{-1}f)\subset\{|\xi_1|<C\eta(h)\},$$
but this follows from the fact that $\theta$ parametrizes the reduction of $\partial\Omega$ and $|\xi|^2=1$ to the normal form \eqref{eqn:normalFormGlance} combined with the wavefront set bound \eqref{e:lag-wf}.

After a change of variables near $x_0$, we may assume that locally $\Omega_1=\{y<0\}$ and $\Omega_2=\{y>0\}.$ with $x=(y,x').$

\subsubsection{Diffractive points}
Now, we have that 
$$(-h^2\Delta-z^2)B_1F=(2\pi h)^{-d+1}\int \left[a\frac{A(\rho_h)}{A(\alpha_h)}+b\frac{A'(\rho_h)}{A(\alpha_h)}\right]e^{i\theta/h}$$
where $a\sim \sum a_{j,m}h^j\e(h)^m$ and $b\sim \sum b_{j,m}h^j\e(h)^m$ such that 
\begin{equation}
\label{eqn:ampEst}\begin{gathered}a_{j,m},b_{j,m}=0\quad \text{ for }\rho_0\leq 0,\\
a_{j,m},b_{j,m}=\O{}(y^n),\quad \text{ for any }(x,\xi) \text{ and all }n>0.\end{gathered}\end{equation}
Thus, for diffractive points, by Lemma \ref{lem:FAIOEst1}
\m (-h^2\Delta-z^2)B_1F=\O{C^\infty}(h^\infty)\,\,\m
as desired.

\subsubsection{Gliding Points}
For gliding points, the verification is more complicated because $\rho_0$ may become positive away from the boundary. The case when $\alpha_0>0$  are taken care of by Lemma \ref{lem:FAIOInEst1} and the estimates \eqref{eqn:ampEst} . Suppose that $\alpha_0\leq 0$, but $\rho_0=0$ at $y_1$. Then, since the eikonal and transport equations can be solved in Taylor series at $\rho_0=0$ and $\rho_0\geq \alpha_0+Cy$, we have that $a_{j,m},b_{j,m}=c_{j,m,n}h^j\rho_0^n$, but by Lemma \ref{lem:AiyEst}, for $\alpha_0\leq 0$ and $\rho_0\geq 0$, such an integrand is $\O{}(h^\infty)$ as desired. 
Hence, we also have 
\m (-h^2\Delta-z^2)B_2F=\O{ C^\infty}(h^\infty)\,\,\m
in the gliding case.

\section{Microlocal description of $H_d$, $H_g$ and the Airy multipliers}

In \ref{sec:glanceTraj}, we need the following microlocal characterization of the operator $\mc{A}_-\mc{A}i$ similar to that in \cite[Theorem 5.4.19]{MelTayl}
\begin{lemma}
\label{lem:WFhAiry}
The Airy multipliers have wavefront set bounds as follows:
\begin{gather*} \left\{\begin{gathered}{\WFh} '(\mc{A}_-\mc{A}i)\cup{\WFh} '(\mc{A}_-\mc{A}i')\\
{\WFh} '(\mc{A}'_-\mc{A}i)\cup {\WFh} '(\mc{A}'_-\mc{A}i')\end{gathered}\right\}\subset C_{\beta}\cup\graph( \Id )=:C_b\\
{\WFh}'((\mc{A}_-\mc{A}i)^{-1})\subset
\cup_{n=0}^\infty C_{\beta^n}\cap E_+=:C_b^\infty\\
E_+:=\{\xi_1\neq 0\}\cup\{x_1\geq y_1, x_i=y_i,\, 2\leq i\leq d,\,\xi=\eta, \xi_1=0\}
\end{gather*}
where $C_{\beta^n}$ is the relation generated by $\beta^n$ and $\graph(\Id)$ denotes the graph of the identity map.
\end{lemma}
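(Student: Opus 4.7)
My plan is to treat each Airy multiplier as a semiclassical Fourier multiplier whose symbol factors—modulo lower order terms—into a product of classical amplitudes and explicit oscillatory exponentials coming from the known asymptotics of $Ai$ and $A_-$. Since $\alpha = \xi_1 + i\e(h)$ only depends on $\xi_1$, I would partition phase space by the sign of $\Re\alpha_h$ using cutoffs $p_1(\Re\alpha_h), p_2(\Re\alpha_h), p_3(\Re\alpha_h)$ as in Lemmas \ref{lem:FAIOInEst1} and \ref{lem:FAIOEst1}, and analyze the wavefront set in each region separately. The regions $\Re\alpha_h \geq 2C$ (elliptic) and $\Re\alpha_h \leq -2C$ (hyperbolic) are governed by the usual stationary phase while $|\Re\alpha_h| \leq 2C$ (glancing) will require a separate propagation argument.

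On $\Re\alpha_h \geq 2C$, the asymptotic \eqref{eqn:airyFormula} gives $A_-(\alpha_h)Ai(\alpha_h) = \Xi_-(\alpha_h)\Xi(\alpha_h)$ modulo exponentially small terms, so the multiplier is a classical pseudodifferential operator and its wavefront relation lies in $\graph(\Id)$. On $\Re\alpha_h \leq -2C$, combining \eqref{eqn:airyFormula} with \eqref{eqn:AiAsympNeg} yields
\[
Ai(\alpha_h)A_-(\alpha_h) = \omega\,\Xi_+(\alpha_h)\Xi_-(\alpha_h) + \bar\omega\,\Xi_-(\alpha_h)^2 e^{(4i/3)(-\alpha)^{3/2}/h},
\]
with analogous formulas for the multipliers involving derivatives. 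The first summand generates a pseudodifferential piece with wavefront relation on $\graph(\Id)$; the second is, after inserting $e^{i\la x-y,\xi\ra/h}$, an oscillatory integral with phase $\la x-y,\xi\ra + \tfrac{4}{3}(-\xi_1)^{3/2}$. Its stationary points satisfy $x' = y'$, $\xi = \eta$, and $x_1 - y_1 = 2(-\xi_1)^{1/2}$, which is precisely the generating function for $\beta$ in the Friedlander normal form \eqref{eqn:normalFormGlance}; the same computation as in Lemma \ref{lem:waveFrontGlance} then identifies the wavefront relation with $C_\beta$.

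For $(\mc{A}_-\mc{A}i)^{-1}$ I would invert the factorization above by geometric series. For $\Re\alpha_h \leq -2C$, writing
\[
Ai(\alpha_h)A_-(\alpha_h) = \omega\,\Xi_+\Xi_-\Bigl(1 + \tfrac{\bar\omega\,\Xi_-}{\omega\,\Xi_+} e^{(4i/3)(-\alpha)^{3/2}/h}\Bigr),
\]
the factor in parentheses has modulus bounded away from zero (because $\Im\alpha = \e(h) > 0$ forces $\Im(-\alpha)^{3/2} \geq c\e(h)|\xi_1|^{1/2}$, making $|e^{(4i/3)(-\alpha)^{3/2}/h}| \leq e^{-c\e(h)|\xi_1|^{1/2}/h}$), so
\[
(Ai(\alpha_h)A_-(\alpha_h))^{-1} = \frac{1}{\omega\,\Xi_+\Xi_-} \sum_{n=0}^\infty (-1)^n\Bigl(\tfrac{\bar\omega\,\Xi_-}{\omega\,\Xi_+}\Bigr)^n e^{(4in/3)(-\alpha)^{3/2}/h} .
\]
The $n$-th term, after stationary phase in the Fourier multiplier integral, contributes to the relation $\beta^n$—its critical set enforces $x_1 - y_1 = 2n(-\xi_1)^{1/2} \geq 0$ and $x' = y'$, $\xi = \eta$, which lies inside $E_+$. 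The exponential damping $e^{-cn\e(h)|\xi_1|^{1/2}/h}$ guarantees that the series converges in the sense of distributions and that, after any finite number of terms, the remainder is $\O{}(h^\infty)$ outside an arbitrarily small conic neighborhood of glancing; this yields the claimed bound ${\WFh}'((\mc{A}_-\mc{A}i)^{-1}) \subset C_b^\infty$.

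The main obstacle will be the glancing region $|\Re\alpha_h| \leq 2C$ (which corresponds to $|\xi_1| = \O{}(h^{2/3})$), where the asymptotic expansions break down and the geometric series for the inverse does not converge termwise. To handle this, I would adapt the propagation argument used in Lemma \ref{lem:waveFrontGlance}: the vector field $V_1 = \partial_\tau + \tfrac{1}{3}\tau^{-1}(\xi_1+i)\partial_{\xi_1}$ annihilates each of $Ai(\alpha_h)$, $A_-(\alpha_h)$, and their derivatives (viewed as functions of $\tau = h^{-1}$ and $\xi_1$), hence annihilates any smooth functional expression in them, including $(Ai\,A_-)^{-1}$. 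Rescaled to the semiclassical setting, this produces a pseudodifferential operator elliptic on $\xi_1 = 0$ away from $\{x_1 = y_1,\, x' = y',\, \xi = \eta,\, \xi_1 = 0\}$ that annihilates the kernel, and a standard propagation of singularities argument then shows that the contribution on $\xi_1 = 0$ to the wavefront relation is confined to the diagonal for $\mc{A}_-\mc{A}i$ and to the forward diagonal $\{x_1 \geq y_1,\, x'=y',\,\xi=\eta,\,\xi_1=0\} \subset E_+$ for $(\mc{A}_-\mc{A}i)^{-1}$, matching the geometric limit of $\bigcup_n C_{\beta^n}$ as $\xi_1 \to 0^-$.
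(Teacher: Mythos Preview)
Your treatment of the regions $\Re\alpha_h \geq 2C$ and $\Re\alpha_h \leq -2C$ matches the paper's. Two genuine gaps remain.

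First, the sign in your damping estimate is wrong. For $\xi_1 < 0$ and $\alpha = \xi_1 + i\e(h)$ one has $-\alpha = |\xi_1| - i\e(h)$ in the \emph{lower} half-plane, so the principal branch gives $\Im(-\alpha)^{3/2} \approx -\tfrac{3}{2}\e(h)|\xi_1|^{1/2} < 0$ and hence $\bigl|e^{(4i/3)(-\alpha)^{3/2}/h}\bigr| = e^{2\e(h)|\xi_1|^{1/2}/h} > 1$. The term $\bar\omega\,\Xi_-^2\,e^{(4i/3)(-\alpha)^{3/2}/h}$ is therefore the dominant one, and the geometric series you wrote---which factors out the subdominant term $\omega\,\Xi_+\Xi_-$---diverges. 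The paper does not spell out this step in detail and instead cites \cite[Section~5]{MelTayl} for the summable expansion.

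Second, and more seriously, the vector field $V_1 = \partial_\tau + \tfrac{1}{3}\tau^{-1}(\xi_1+i)\partial_{\xi_1}$ from Lemma~\ref{lem:waveFrontGlance} lives in the homogeneous setting, where $\tau$ is a phase variable; in the semiclassical problem $h$ is a parameter, not a coordinate, so ``rescaling'' $V_1$ does not produce a semiclassical operator annihilating the kernel, and no propagation argument is available. The paper's glancing argument is both different and more direct. For $\mc{A}_-\mc{A}i$ it invokes the symbol bound of Lemma~\ref{lem:AiSymbol1}, namely $AiA_-(\alpha_h)q_1(\Re\alpha_h) \in e^{C\e(h)/h}S_{1/3,2/3,1}$, which gives $(hD_\xi)^\beta(AiA_-\,q_1) = \O{}(h^{|\beta|/3})$ and hence $(x_i-y_i)^k$ times the kernel is $\O{}(h^{k/3})$, forcing the wavefront onto the diagonal. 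For $(\mc{A}_-\mc{A}i)^{-1}$ it exploits that the multiplier depends only on $\xi_1$: for $j\geq 2$ the operator $\partial_{\xi_j}$ annihilates it outright, so the wavefront lies in $\{x_j=y_j,\ j\geq 2\}$, and the sign condition $x_1 \geq y_1$ is then obtained from the Paley--Wiener theorem, using holomorphy of $(A_-Ai)^{-1}\bigl(h^{-2/3}(\xi_1+i\e(h))\bigr)$ in $\Im\xi_1 > 0$.
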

\begin{remark} Note that $C_b^\infty=\overline{\cup_{n\geq 0}C_{\beta^n}}$\end{remark}
\begin{proof}
We have that $\alpha_h=h^{-2/3}(\xi_1+\e(h)\alpha'(\xi))$ where $\xi_1$ is dual to $y$.
First, fix $\delta>0$ and suppose that $\psi_1\in S^0(\re^d)$ is a cutoff function with 
$\psi(\xi)=0\,,\,|\xi_1|\leq \delta\,,$ and $ \psi(\xi)=
1\,,\,|\xi_1|\geq 2\delta.$
Then, we show that
${\WFh}'(\psi(hD)\mc{A}_-\mc{A}i)\subset C_b,$  ${\WFh}'((\psi(hD)\mc{A}_-\mc{A}i)^{-1})\subset C_b^\infty.$
Write $\psi=:\psi_++\psi_-$ where $\supp\psi_{\pm}\subset \{\pm\xi_1>0\}$. Then, in $|\text{Arg}z|<\e$, 
\m A_-(z)Ai(z)=\Xi_-\Xi\,\,\m 
with $\Xi_-\Xi$ an elliptic symbol.
Hence, 
\begin{gather*} \psi_+A_-(\alpha_h)Ai(\alpha_h)=\psi_+\Xi_-(\alpha_h)\Xi(\alpha_h)\\
\psi_+(A_-(\alpha_h)Ai(\alpha_h))^{-1}=\psi_+\Xi_-^{-1}(\alpha_h)\Xi(\alpha_h)^{-1}
\end{gather*}
and $\psi_+(\mc{A}_-\mc{A}_i)$, $\psi_+(\mc{A}_-\mc{A}i)^{-1}$ are classical pseudodifferential operators. Thus, we have 
$${\WFh}'(\psi_+\mc{A}_-\mc{A}i)\,,{\WFh}'(\psi_+(\mc{A}_-\mc{A}i)^{-1})\,\subset \graph{Id}.$$
Now, for the term involving $\psi_-$, we use the asymptotic expansion of $Ai$ and $A_-$ to write in $|$Arg$z-\pi|<\e$,
$AiA_-(z)=\omega \Xi_+(z)\Xi_-(z)+\bar{\omega}\Xi^2_-( z)e^{4/3i(-z)^{3/2}}.$
Thus,
\begin{equation}
\label{eqn:negAsympt}
\psi_-(\xi)A_-Ai(\alpha_h)=a_1\exp\left(\frac{4}{3h}i(-\xi_1-\e(h)\alpha')^{3/2}\right)+a_2
\end{equation}
where $a_i\in h^{1/3}S^{-1/2}.$ 
Therefore $\psi_-\mc{A}_-\mc{A}i\in h^{1/3}I^0(C_b\cap \{|\xi_1|\neq 0\})$ since 
$\varphi=\la x-y,\xi\ra +\frac{4}{3}(-\xi_1)^{3/2}$
parametrizes $\beta$ for the Friedlander model and the $\alpha'$ term is a symbolic perturbation since $\e(h)=\O{}(h\log h^{-1}).$ Identical arguments give the wavefront set bound from $\mc{A}_-'\mc{A}i'$. 

Similarly, using \cite[Section 5]{MelTayl} or simply expanding in power series,
$$\psi_-((A_-Ai)^{-1}(\alpha_h))=\sum_{k\geq 0} a_k\exp\left(\frac{4k}{3h}i(-\xi_1-\e(h)\alpha'))^{3/2}\right)$$
where for any $S^{1/2}$ seminorm, $\|\cdot \|_{S^{1/2}}$,
\m \sum_{k\geq 0}\|a_k\|_{S^{1/2}}<Ch^{-1/3}.\,\,\m
Thus, $$\psi_-(\mc{A}_-\mc{A}i)^{-1}\in h^{-1/3}I^{1/3}(\re^d; C_b^\infty\cap \{|\xi_1|\neq 0\}).$$

Now, by Lemma \ref{lem:AiSymbol1}, 
\m Ai(\alpha_h)A_-(\alpha_h)q_1(\Re\alpha_h)\in e^{C\e(h)/h}S_{1/2,2/3,1}.\,\,\m
Thus, 
$$ hD_{\xi}^\beta AiA_-(\alpha_h)q_1(\Re\alpha_h(hD))=\O{}(h^{|\beta|/3})e^{C\e(h)/h}.$$
So, if $b$ is the kernel of $\mc{A}i\mc{A}_iq(\Re(\alpha_h(hD)))$, then 
\m (x_i-y_i)^kb=\O{}(h^{|\beta|/3})e^{C\e(h)/h}.\,\,\m
Hence, for any $N>0$, taking $|\beta|$ large enough and using that $\e(h)=\O{}(h\log h^{-1}).$, 
\m (x_i-y_i)^{|\beta|}b=\O{}(h^N).\,\,\m

But, $x_i-y_i$ is elliptic away from $x_i-y_i=0$. Hence,
$${\WFh}'(\mc{A}_-\mc{A}iq_1(\Re (\alpha_h(hD)))\subset \graph{(\Id)}.$$
But on $\supp( 1-q_1)$, the asymptotics \eqref{eqn:negAsympt} hold and we have studied this wavefront set. 

Next, observe that $\partial_{\xi_j}(A_-A_i(\alpha_h))^{-1}=0$ for $2\leq j\leq d$. Hence, 
\m {\WFh}'((\mc{A}_-\mc{A}i)^{-1})\subset \{x_2=y_2,\dots x_d=y_d\}.\,\,\m
The sign condition on $x_1$ follows from the fact that $(A_-A_i(h^{-2/3}\xi_1+i\e(h)))^{-1}$ is holomorphic in $\Im \xi_1> 0.$ 
Hence, by the Paley-Weiner theorem \cite[Theorem 7.3.8]{HOV1} 
$$\supp [(\mc{A}_-\mc{A}i)^{-1}\delta(x)]\subset\{ x_1>0\}.$$
\end{proof}
\begin{remark} This is where we use the assumption $\alpha'=i+\O{}(\e(h))$ rather than $\alpha'=-i+\O{}(\e(h)).$
\end{remark}

We need the following characterization of ${\WFh}'(H_d)$ \cite[Appendix A.3]{StefVod}
\begin{lemma}
\label{lem:WFhout}
$$
{\WFh}'(H_d)\subset\left\{\begin{gathered}(x,\xi,y,\eta)\in T^*\Omega_2\times T^*\partial\Omega : \\
|\xi|=1, (x,\xi)\text{ in the outgoing ray from }(y,\eta)\end{gathered}\right\}.$$
\end{lemma}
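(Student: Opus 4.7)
The plan is to reduce the wavefront analysis of $H_d$ to that of the Fourier-Airy integral operator $B_1$ of the form \eqref{eqn:FAIO}, and then to extract the canonical relation of $B_1$ using the decompositions already established.

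First, since $H_d = B_1 J^{-1}$ where $J^{-1}$ is a microlocal inverse of the boundary operator $J = B_1|_{\partial\Omega}$, and $J$ is an elliptic semiclassical FIO on $\partial\Omega$ whose canonical relation is essentially the reduction $\kappa_\partial$ of the billiard map to normal form, the composition ${\WFh}'(B_1) \circ {\WFh}'(J^{-1})$ reduces the problem to computing ${\WFh}'(B_1)$ as a subset of $T^*\Omega_2 \times T^*\partial\Omega$. The main step is to show that this relation sends a boundary point $(y_0, \eta_0)$ into the outgoing ray above it.

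Next, I would decompose $B_1$ using the cutoffs $q_1(\Re\rho_h)$ and $p_3(\Re\rho_h)$ from Section B.1. In the shadow region where $q_1(\Re\rho_h) \neq 0$, Lemma \ref{lem:FAIOEst1} provides estimates of the form $y^j A_-(\rho_h) A_-(\alpha_h)^{-1} q_1 \in e^{C\e(h)/h}h^{-1/6+2j/3} S_{1/3,2/3,1}$, which means multiplying by $y^{|\beta|}$ and integrating by parts in $\xi$ produces $\O{}(h^N)$ loss for any $N$. Since $y$ is elliptic away from $\partial\Omega$, this forces the wavefront contribution of the shadow piece to concentrate at $y = 0$, corresponding to the $t = 0$ endpoint of the outgoing ray. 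In the illuminated region where $p_3(\Re\rho_h) \neq 0$, Lemma \ref{lem:rhoOpOut} rewrites $B_1$ as a semiclassical FIO with phase $\phi^+ = \theta + \tfrac{2}{3}(-\rho)^{3/2}$ (composed on the right with $(\mc{A}_-^<)^{-1}$, which by Lemma \ref{lem:singPhaseOut} is a semiclassical FIO with singular phase). By Lemma \ref{lem:EikSolve} and Lemma \ref{lem:phase2}, $\phi^+$ solves the eikonal equation $p(x, d\phi^+) = 0$, so the Lagrangian generated by $\phi^+$ lies in the unit cosphere bundle $\{|\xi| = 1\}$.

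To identify this Lagrangian with the outgoing ray, I would use the construction of $\theta_0$ and $\rho_0$ via the foliation of $P = \{|\xi|^2 = 1\}$ by the Lagrangians $\Lambda_{\eta'} = Y^{-1}(\cdot, \eta')$ in the proof of Lemma \ref{lem:EikSolve}. The leaves $\Lambda_{\eta'}$ are the integral submanifolds of $H_p$ (the geodesic flow lifted to the cotangent bundle), and on the boundary the projection $\kappa_\partial = Y|_{P\cap Q}$ is what $\theta_0|_{\partial\Omega}$ parametrizes. Thus the Lagrangian $\{(x, d_x\phi^+(x, \eta'))\}$ is exactly the integral curve of $H_p$ starting from the boundary point determined by $\eta'$. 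The sign condition $\partial_\nu \rho \leq a_0 < 0$ in \eqref{eqn:mainPhaseAssume} for the $B_1$ case (corresponding to $\Omega_2 = \re^d \setminus \overline{\Omega}$) singles out the branch where the geodesic proceeds from the boundary into $\Omega_2$, i.e., the outgoing branch; equivalently, this is enforced by the choice of $A_-$ over $A_+$, and also reflected in the support statement $E_+$ from Lemma \ref{lem:WFhAiry}.

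The main obstacle is carefully tracking the outgoing condition. Two subtleties must be handled: (i) that the singular FIO factor $(\mc{A}_-^<)^{-1}$ does not broaden the wavefront relation beyond the outgoing side --- this follows because its kernel is supported where $x_1 \geq 0$ (Paley--Wiener argument, as in Lemma \ref{lem:WFhAiry}, using $\alpha' = i + \O{}(\e(h))$); and (ii) that the contribution from the shadow region is actually confined to $\partial\Omega$ rather than generating spurious interior wavefront. Once these are verified, the composition of the canonical relations of the ingredients precisely picks out the outgoing geodesic flow from $T^*\partial\Omega$, yielding the stated inclusion.
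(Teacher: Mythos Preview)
Your approach is essentially the same as the paper's: decompose $B_1$ according to the sign of $\Re\rho_h$, use the symbol estimates of Lemma~\ref{lem:FAIOEst1} to kill the shadow piece in the interior, and recognize the illuminated piece as a singular FIO whose phase $\theta + \tfrac{2}{3}[(-\rho)^{3/2}-(-\alpha)^{3/2}]$ parametrizes the outgoing geodesic relation (the paper simply cites \cite[Section~X.4]{TaylorPseud} and \cite[Section~6.5]{MelTayl} for this last identification, whereas you spell it out via the eikonal construction and the Lagrangian foliation $\Lambda_{\eta'}$).

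One small point: your item~(i) invoking the Paley--Wiener argument and the set $E_+$ from Lemma~\ref{lem:WFhAiry} is unnecessary here. That argument is needed for $(\mc{A}_-\mc{A}i)^{-1}$ in the \emph{gliding} case, where $Ai(\alpha_h)$ has real zeros and the inverse expands as a sum over billiard iterates. For $H_d$ the relevant factor is $A_-(\alpha_h)^{-1}$, and $A_-$ is zero-free in the region of interest; thus $(\mc{A}_-^<)^{-1}$ is a single singular FIO contributing only the phase $-\tfrac{2}{3}(-\alpha)^{3/2}$, and there is no risk of spreading the wavefront over multiple reflections. The outgoing selection is already fixed by the choice of $A_-$ (equivalently by the sign $\partial_\nu\rho<0$ in \eqref{eqn:mainPhaseAssume}), as you correctly note.
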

\begin{proof}
We decompose the operator into pieces where $\Re \rho\geq - 2Ch^{2/3}$ and $\Re \rho\leq -Ch^{2/3}$. When $\Re \rho\geq -2 Ch^{2/3}$, Lemma \ref{lem:FAIOInEst1} shows that in the interior of $\Omega_1$, $H_d=\O{}(h^\infty)$. When $\Re \rho \leq -Ch^{2/3}$, Lemmas \ref{lem:singPhaseOut} and \ref{lem:rhoOpOut} show that $H_dJ$ is a Fourier integral operator with singular phase
$$\psi =\theta-\frac{2}{3}\left[(-\rho)^{3/2}-(-\alpha)^{3/2}\right].$$
Thus it has 
${\WFh}'(H_dJ)|_{\Omega_2}\subset C_{\psi}$ where $C_{\psi}=\{(x,\nabla_x\psi,\nabla_{\xi}\psi,\xi)\}.$
But, this parametrizes the outgoing geodesics (\cite[Section X.4]{TaylorPseud}, \cite[Section 6.5]{MelTayl}). 

Now, at $\partial\Omega$, $H_d$ is a microlocally invertible Fourier integral operator with phase $\theta_b(x',\xi)-\theta_b(y',\xi)$. Hence, on $\partial\Omega $
\m {\WFh}'(H_d)|_{\partial\Omega}\subset \graph{\Id}.\m
\end{proof}

Similar arguments together with the wavefront set bound on $(\mc{A}_-\mc{A}i)^{-1}$ show \cite[Section 6.5]{MelTayl},
\begin{lemma}
\begin{equation*}{\WFh}'(B_3J^{-1})\subset\left\{\begin{gathered}(x,\xi,y,\eta)\in T^*\Omega_1\times T^*\partial\Omega:\\
|\xi|=1, (x,\xi)\text{ is in an outgoing ray from a point }(y,\eta)\,\end{gathered}\right\}\end{equation*}
\begin{equation*}{\WFh}'(H_g)\subset\left\{\begin{gathered}(x,\xi,y,\eta)\in T^*\Omega_1\times T^*\partial\Omega: |\xi|=1, \\
(x,\xi)\text{ is in an outgoing ray from }\overline{\cup_{n\geq 0}\beta^n((y,\eta))}\,\end{gathered}\right\}\end{equation*}\end{lemma}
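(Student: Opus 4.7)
The plan is to mirror the proof of Lemma \ref{lem:WFhout} for the $H_d$ case, but with two modifications: first, handle $B_3$ in place of $B_1$ (which replaces $A_-(\rho_h)$ with $Ai(\rho_h)$, giving rise to both ``outgoing'' and ``returning'' phase components); second, treat the iterated structure of $H_g$ by factoring out the Airy-inverse multiplier and invoking Lemma \ref{lem:WFhAiry}. Throughout, I will work locally near a glancing point and, as in Lemma \ref{lem:WFhout}, decompose the operators using the partition $p_1+p_2+p_3$ in the variables $\Re\alpha_h$ and $\Re\rho_h$.

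For $B_3 J^{-1}$, I first argue that on the set $\{\Re\rho_h\geq -2C\}$ the operator is negligible in the interior of $\Omega_1$. In this region, combining Lemma \ref{lem:AiSymbol1} and Lemma \ref{lem:AiyEst} gives the estimates $\rho_0^j Ai(\rho_h)A_-(\alpha_h)q_1\in h^{2j/3}e^{C\e(h)/h}S_{1/3,2/3,1}$ for every $j$, so that on the classically forbidden region $\{\rho_0\geq c>0\}$ inside $\Omega_1$ the kernel is $\O{C^\infty}(h^\infty)$ after using $\e(h)=\O{}(h\log h^{-1})$. On the hyperbolic set $\{\Re\rho_h\leq -C\}$, Lemma \ref{lem:rhoOpIn} writes $B_3 p_3(\rho_h)$ as $B^++B^-$, semiclassical Fourier integral operators with phases $\theta\mp\tfrac{2}{3}(-\rho)^{3/2}$; then Lemma \ref{lem:singPhaseIn} handles the $A_-(\alpha_h)p_3(\alpha_h)$ factor as a Fourier integral operator with singular phase, contributing the boundary phase $\tfrac{2}{3}(-\alpha)^{3/2}$. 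A direct computation analogous to \cite[Section 6.5]{MelTayl} shows that the resulting canonical relations
\[
\Psi_\pm = \theta\mp\tfrac{2}{3}(-\rho)^{3/2}+\tfrac{2}{3}(-\alpha)^{3/2}
\]
parametrize the two halves (before and after the caustic) of the geodesic leaving the Friedlander boundary point with tangential covector $\xi'$. Composing with $J^{-1}$, which inverts the boundary FIO whose phase $\theta_b$ generates the Friedlander reduction $\kappa_\partial$, pulls this back to $T^*\partial\Omega$ and yields precisely ``$(x,\xi)$ lies on the outgoing ray from $(y,\eta)$'' into $\Omega_1$.

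For $H_g$, I use the identity $H_g = B_2 J^{-1} = B_3\circ(\mc{A}_i\mc{A}_-)^{-1}\circ J^{-1}$ coming from $B_2 = B_3\circ (\mc{A}_i\mc{A}_-)^{-1}$, and rewrite it as
\[
H_g = (B_3 J^{-1})\circ K,\qquad K := J(\mc{A}_i\mc{A}_-)^{-1}J^{-1}.
\]
By Lemma \ref{lem:WFhAiry}, ${\WFh}'((\mc{A}_i\mc{A}_-)^{-1})\subset C_b^\infty=\overline{\bigcup_{n\geq 0}C_{\beta^n}}$ (the iterated Friedlander billiards relation); conjugating by $J$, which is an FIO associated to $\kappa_\partial$, transports the Friedlander billiards to the true billiards, so that ${\WFh}'(K)\subset\overline{\bigcup_{n\geq 0}\graph(\beta^n)}$ on $T^*\partial\Omega$. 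Composing with the already-established wavefront bound for $B_3 J^{-1}$ then yields
\[
{\WFh}'(H_g)\subset{\WFh}'(B_3 J^{-1})\circ{\WFh}'(K),
\]
which is exactly the set of $(x,\xi,y,\eta)$ with $(x,\xi)$ on an outgoing ray from some point of $\overline{\bigcup_{n\geq 0}\beta^n((y,\eta))}$.

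The main obstacle will be the careful phase bookkeeping in the $B_3 J^{-1}$ step: one must verify that the sum of singular-phase FIOs produced by Lemmas \ref{lem:rhoOpIn} and \ref{lem:singPhaseIn} combines so that the $(2/3)(-\alpha)^{3/2}$ contribution precisely matches the boundary restriction of $\phi^\pm$ in Lemma \ref{lem:phase}, so that after applying $J^{-1}$ we obtain the correct geometric canonical relation (the outgoing geodesic from $(y,\eta)$) rather than a spurious reflection. This is also where the symbol classes $S_{1/3,2/3,1}$ and the singular FIO framework must be used with care to justify the composition of wavefront sets despite the exotic order and non-compactness in the $h$-dependent symbol estimates.
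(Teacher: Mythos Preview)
Your proposal is correct and follows essentially the same approach as the paper: decompose $B_3$ using the $\rho_h$ cutoffs, invoke Lemmas \ref{lem:AiSymbol1}/\ref{lem:AiyEst} in the classically forbidden region, use Lemmas \ref{lem:rhoOpIn} and \ref{lem:singPhaseIn} to get singular-phase FIOs on the hyperbolic region, and then for $H_g$ factor out the Airy multiplier and apply Lemma \ref{lem:WFhAiry}. Your treatment is in fact slightly more careful than the paper's, which records only the single phase $\psi=\theta-\tfrac{2}{3}[(-\rho)^{3/2}-(-\alpha)^{3/2}]$ without explicitly separating the $B^\pm$ pieces; your observation that both $\Psi_\pm$ arise (and together parametrize the full outgoing ray before and after the caustic) is the correct picture in the gliding case.
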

\begin{proof}
We first prove wave front set bounds on operators of type \eqref{eqn:tempInside} decompose the operator into pieces where $\Re \rho\geq - 2Ch^{2/3}$ and $\Re \rho\leq -Ch^{2/3}$. When $\Re \rho\geq -2 Ch^{2/3}$, Lemma \ref{lem:FAIOInEst1} shows that in the interior of $\Omega_2$, $B_3=\O{}(h^\infty)$. When $\Re \rho \leq -Ch^{2/3}$, Lemmas \ref{lem:singPhaseIn} and \ref{lem:rhoOpIn} show that $B_3$ is a Fourier integral operator with singular phase
$$\psi =\theta-\frac{2}{3}\left[(-\rho)^{3/2}-(-\alpha)^{3/2}\right].$$
Thus it has 
$${\WFh}'(B_3)|_{\Omega_1}\subset C_{\psi}\,,\quad\text{where}\quad C_{\psi}:=\{(x,\nabla_x\psi,\nabla_{\xi}\psi,\xi\}.$$
But, this parametrizes the outgoing geodesics (\cite[Section X.4]{TaylorPseud}, \cite[Section 6.5]{MelTayl}). 

Now, at $\partial\Omega$, $B_3$ is a microlocally invertible Fourier integral operator with phase $\theta_b$. Hence, on $\partial\Omega $
$${\WFh}'(H_g)|_{\partial\Omega}\subset \graph{\Id}.$$
Combining this with the wavefront relation for $(\mc{A}_-\mc{A}i)^{-1}$ completes the proof of the lemma. 
\end{proof}


\section{Parametrix for diffractive points}
\label{sec:relationExact}

We follow \cite{StefVod} to show that the parametrices $H_d$ constructed above are $\O{C^\infty}(h^\infty)$ close to the exact solution near $\pO$. We have that for $f$ microsupported near a glancing point $(y_0,\eta_0)$
\begin{equation} 
\label{eqn:parametrixOp}
(-h^2\Delta -z^2)H_df=Kf\text{ in }U\,,\quad\quad\quad
H_df|_{\partial\Omega}=f+Sf.
\end{equation}
Here $K=\O{\mc{S}'\to C^\infty}(h^\infty)$ and $S=\O{\mc{D'}\to C^\infty}(h^\infty).$ Let $\chi\in C_0^\infty$ have $\supp \chi\subset U$ and $\chi \equiv 1$ in a neighborhood of $\partial\Omega$.

Define
\m \tilde{H}_d:=\chi H_d-R_0(\chi K-[h^2\Delta, \chi]H_d).\,\,\m
Then $\tilde{H}_d$ is $z$ outgoing and has $(-h^2\Delta-z)\tilde{H}_d=0.$ Next,
\m (\tilde{H}_df)|_{\partial\Omega}=f+Sf-\gamma R_0(\chi Kf+[-h^2\Delta, \chi]H_df).\,\,\m
The last term is the only potentially problematic term. However, since $\WFh([-h^2\Delta,\chi])$ is away from $\partial\Omega$, $H_d$ and $R_0$ are outgoing, and $\Omega$ is convex, this term is $\O{C^\infty}(h^\infty)$ when restricted to a neighborhood of $\partial\Omega$. 

Thus, writing 
\m \tilde{R}=S-\gamma R_0(\chi K+[-h^2\Delta, \chi]H_d),\,\,\m
we have that the exact solution operator is given by 
$\mc{H}_d=\tilde{H}_d(I+\tilde{R})^{-1}$
where $I+\tilde{R}$ is invertible for $h$ small since $\tilde{R}$ is $\O{C^\infty}(h^\infty)$. Hence, we have
\begin{lemma}
\label{lem:extPara}
Then the solution operator for the exterior Dirichlet problems is given by 
$$\mc{H}_d=\chi H_d-R_0(\chi K-[h^2\Delta, \chi]H_d)+\O{C^\infty}(h^\infty).$$
In a neighborhood, $U$ of $\pO$, this is 
$$\mc{H}_d|_U=\chi H_d|_U+\O{C^\infty(U)}(h^\infty).$$
\end{lemma}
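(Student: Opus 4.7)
The plan is to assemble the pieces that have essentially already been laid out in the paragraph preceding the statement, then verify the local identity on $U$. First I would observe that by construction $\tilde{H}_d$ is $z$-outgoing and satisfies $(-h^2\Delta-z^2)\tilde{H}_d = 0$ (both off-boundary identities follow from the fact that $R_0$ inverts $-h^2\Delta-z^2$ on the non-homogeneous source produced by $(-h^2\Delta-z^2)(\chi H_d)$). Its boundary trace is $I+\tilde{R}$ where $\tilde{R} = \O{C^\infty}(h^\infty)$ on the relevant mapping spaces, so by Neumann series $(I+\tilde R)^{-1} = I + \O{C^\infty}(h^\infty)$. By uniqueness of the outgoing Dirichlet problem at $z$ in the resolvent set, the exact solution operator is $\mc{H}_d = \tilde{H}_d(I+\tilde{R})^{-1}$. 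Composing yields
\[
\mc{H}_d = \tilde{H}_d + \O{C^\infty}(h^\infty) = \chi H_d - R_0\bigl(\chi K - [h^2\Delta,\chi]H_d\bigr) + \O{C^\infty}(h^\infty),
\]
which is the first claim.

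For the localized statement, I would show that $R_0\bigl(\chi K - [h^2\Delta,\chi]H_d\bigr) = \O{C^\infty(U')}(h^\infty)$ in a possibly smaller neighborhood $U' \Subset \{\chi\equiv 1\}$ of $\partial\Omega$. The contribution of $\chi K$ is immediate from $K = \O{\mc{S}'\to C^\infty}(h^\infty)$ and the polynomial-in-$h$ mapping properties of $R_0$ (with the exponential growth absorbed by $h^\infty$). The commutator term is the main obstacle and is handled by a propagation-of-singularities argument: $[h^2\Delta,\chi]$ is a differential operator supported in $\supp(\nabla\chi)$, which is separated from $\overline{U'}$ by a positive distance. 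By Lemma \ref{lem:WFhout}, $\WFh'(H_d)$ consists of outgoing bicharacteristics launched from $\partial\Omega$ into $\Omega_2$, so $[h^2\Delta,\chi]H_d f$ is $h$-tempered with semiclassical wavefront set lying on such outgoing rays but with base points in $\supp(\nabla\chi)$.

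The hard part is then propagating through the outgoing resolvent. The Schwartz kernel $R_0(x,y)$ has semiclassical wavefront relation given by the diagonal together with the forward geodesic flowout, so applying $R_0$ transports wavefront forward along straight-line rays in $\re^d$ in the direction of the (outward) momentum. Since $\Omega$ is strictly convex and the base points sit in the exterior at positive distance from $\partial\Omega$ with outgoing momenta, these forward rays never re-enter the neighborhood $U'$ of $\partial\Omega$. Hence $\WFh(R_0([h^2\Delta,\chi]H_d f)) \cap T^*U' = \emptyset$, and combined with the polynomial resolvent bound and the $\O{}(h^\infty)$ bound on the distance-scaled parametrix near $\WFh([h^2\Delta,\chi])\cap \WFh(H_d)$, we conclude $R_0([h^2\Delta,\chi]H_d) = \O{C^\infty(U')}(h^\infty)$. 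Restricting to $U'$ (renamed $U$) yields $\mc{H}_d|_U = \chi H_d|_U + \O{C^\infty(U)}(h^\infty) = H_d|_U + \O{C^\infty(U)}(h^\infty)$ since $\chi \equiv 1$ on $U$, completing the proof.
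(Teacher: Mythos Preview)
Your proposal is correct and follows essentially the same approach as the paper: define $\tilde H_d$, verify it solves the homogeneous equation and is outgoing, compute the boundary trace as $I+\tilde R$ with $\tilde R=\O{C^\infty}(h^\infty)$, invert by Neumann series, and then argue the resolvent correction term is negligible near $\partial\Omega$ via the outgoing wavefront of $H_d$ and $R_0$ together with convexity. The paper compresses your propagation-of-singularities paragraph into a single sentence (``since $\WFh([-h^2\Delta,\chi])$ is away from $\partial\Omega$, $H_d$ and $R_0$ are outgoing, and $\Omega$ is convex''), but the content is the same.
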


\subsection{Dirichlet to Neumann Maps in the Diffractive Case}
\label{sec:glance}
Using the parametrices constructed above, we construct a microlocal representation of the Dirichlet to Neumann map near glancing. In order to do this, we simply take the normal derivative of $H$ from the previous section. That is, let ${\nu'}$ denote the inward unit normal to $\Omega$,
$$\partial_{\nu'} H_d(f)|_{\partial \Omega}=(2\pi h)^{-d+1}\int\left( g_0'+ih^{1/3}g_1'\frac{A_-'(h^{-2/3}\alpha)}{A_-(h^{-2/3}\alpha)}\right)e^{i\theta_b/h}\mc{F}_h{F}d\xi.$$
The new symbols $g_0'$ and $g_1'$ have 
$g_0'=\partial_{{\nu'}}g_0+ih^{-1}g_0\partial_{\nu'}\theta +ih^{-1}g_1\rho\partial_{{\nu'}}\rho$ and $ g_1'=\partial_{{\nu'}}g_1-ih^{-1}g_0\partial_{\nu'} \rho +h^{-1}g_1\partial_{\nu'}\theta.$
By construction $g_1$ vanishes at the boundary and, moreover $\partial_{{\nu'}}\rho\neq 0$ with $\nabla\rho=\partial_{\nu'}\rho {\nu'}$. Hence, $\partial_{\nu'}\theta=0$ by \eqref{eqn:eikEuc}. So, we have 
\begin{equation}\label{eqn:CB}g_0'=\partial_{\nu'} g_0\quad\quad\quad g_1'=-ih^{-1}g_0\partial_{{\nu'}}\rho+\partial_{\nu'} g_1.\end{equation}

Now, $g_0'\in S$ and $g_1'\in h^{-1}S$ with $g_1'$ elliptic and hence we have 
\begin{gather*} \frac{1}{(2\pi h)^{d-1}}\int g_0'e^{i\theta_b/h}\mc{F}_h{F}(\xi)d\xi=:JB(F)\\
\frac{ih^{1/3}}{(2\pi h)^{d-1}}\int g_1'\frac{A_-'(h^{-2/3}\alpha)}{A_-(h^{-2/3}\alpha)}e^{i\theta_b/h}\mc{F}_h{F}(\xi)d\xi=:Jh^{-2/3}C\Phi_-(F).
\end{gather*}
with $C\in \Psi$ elliptic, $B\in \Psi$, and $\Phi_-$ the operator defined by
$$\widehat{\Phi_- (F)}:=\frac{A'_-(\alpha_h)}{A_-(\alpha_h)}\mc{F}_h{F}=:\phi_-(\alpha_h)\mc{F}_h{F}.$$
Hence, microlocally,
\begin{equation}
\label{eqn:DtoNMicrolocal}
N_2=J(h^{-2/3}C\Phi_-+B)J^{-1}.
\end{equation}
A simple nonstationary phase argument shows that $\WFh'(\Phi_-)\subset\graph{\Id}$. This together with
the microlocal model \eqref{eqn:DtoNMicrolocal} implies the following bounds for the exterior Dirichlet to Neumann maps near glancing.

\begin{theorem}
Let $N_2$ denote the Dirichlet to Neumann map for the exterior of $\Omega$. Let $\chi\in \Cc(\re)$. Fix $0< \e <1/2$ and let $X_\e=\oph(\chi(h^{-\e}||\xi'|_g-1|))$. Then for $|\Im z|\leq Ch\log h^{-1}$, 
$$\|N_2 X_\e\|_{L^2\to L^2}\leq h^{-1+\e/2}.$$
\end{theorem}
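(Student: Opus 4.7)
The plan is to use the microlocal model $N_2 = J(h^{-2/3}C\Phi_- + B)J^{-1}$ from \eqref{eqn:DtoNMicrolocal} and show that the claimed estimate reduces to a pointwise bound on the symbol $\phi_-(\alpha_h)$ restricted to the microsupport of $X_\e$. The key observation is that the cutoff $X_\e$ localizes to $||\xi'|_g - 1| \le 2h^\e$, which translates (under the canonical transformation parametrized by $\theta_0|_{\partial\Omega}$) to $|\alpha_0| \le Ch^\e$, so that $|\alpha_h| = h^{-2/3}|\alpha| \le Ch^{-2/3+\e}$ on the microsupport of $J^{-1}X_\e$.

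First I would verify that $J^{-1}$ preserves microsupport near glancing in the appropriate sense. Since $\theta|_{\partial\Omega}$ parametrizes the canonical transformation $\kappa_\partial$ reducing the glancing pair $(\{x\in\partial\Omega\},\,\{|\xi|^2=1\})$ to Friedlander normal form $(Q_{\Fried}, P_{\Fried})$, and since $\kappa_\partial$ maps a neighborhood of $||\xi'|_g=1|\le h^\e$ into a neighborhood of $|\xi_1|\le Ch^\e$ (as $\alpha_0 = \xi_1$ in the normal coordinates), the operator $J^{-1} X_\e$ is microsupported where $|\xi_1|\le Ch^\e$, and hence where $|\alpha_h|\le Ch^{-2/3+\e}$. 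Thus, after possibly enlarging the cutoff slightly, we may write $J^{-1}X_\e = \tilde X_\e J^{-1} + O_{L^2\to L^2}(h^\infty)$, where $\tilde X_\e = \oph(\tilde\chi(h^{-\e}\alpha_0))$ for some appropriate $\tilde\chi \in C_c^\infty$.

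The main estimate is then for $\Phi_- \tilde X_\e$. Since $\Phi_-$ is a Fourier multiplier with symbol $\phi_-(\alpha_h)$ and $\tilde X_\e$ cuts off $\alpha_0$ to scale $h^\e$, Lemma \ref{lem:airyBounds} gives $|\phi_-(\alpha_h)| \le C\langle\alpha_h\rangle^{1/2} \le Ch^{-1/3+\e/2}$ on the microsupport of $\tilde X_\e$. By the standard $L^2$ bound for Fourier multipliers composed with cutoffs in $S_\e$ (combined with Lemma \ref{lem:goodL2Bound}), this yields
\[
\|\Phi_- \tilde X_\e\|_{L^2\to L^2} \le Ch^{-1/3+\e/2}.
\]
Since $J, J^{-1}, C, B = O_{L^2\to L^2}(1)$ (as $J$ is an elliptic FIO associated to a canonical transformation and $C, B \in \Psi$), we conclude
\[
\|N_2 X_\e\|_{L^2\to L^2} \le Ch^{-2/3}\|\Phi_- \tilde X_\e\|_{L^2\to L^2} + C = O(h^{-1+\e/2}).
\]

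The main obstacle, though a mild one, is the bookkeeping for the perturbation by $\e(h)$ in $\alpha = \alpha_0 + \e(h)\alpha'$ with $\alpha' = i + O(\e(h))$: one must check that the estimate $|\alpha_h| \le Ch^{-2/3+\e}$ on $\MS(\tilde X_\e)$ still holds once the imaginary part of order $h^{-2/3}\e(h) = O(h^{-2/3}\cdot h\log h^{-1}) = O(h^{1/3}\log h^{-1})$ is included, which is true for $0<\e<1/2$ and $h$ small. A secondary point is to ensure the conjugation $J^{-1}X_\e J$ yields a symbol in a manageable class so that Lemma \ref{lem:airyBounds} applies pointwise on its microsupport; this follows because the hypothesis $|\alpha|\le\gamma(h/\e(h))^2$ in Lemma \ref{lem:airyBounds} is compatible with $|\alpha|\le Ch^\e$ provided $\e < 2(1-\log h^{-1}/\log h^{-1})$, which holds trivially for $\e<1/2$ and $\e(h) = O(h\log h^{-1})$.
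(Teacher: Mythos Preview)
Your proposal is correct and follows exactly the approach the paper has in mind: the theorem in the paper is stated immediately after the microlocal model \eqref{eqn:DtoNMicrolocal} with only the remark that it follows from this model together with the wavefront bound on $\Phi_-$, and your argument supplies precisely the missing arithmetic, namely that $X_\e$ transports under $J^{-1}$ to $|\alpha_0|\lesssim h^\e$, whence $\langle\alpha_h\rangle^{1/2}\lesssim h^{-1/3+\e/2}$ by Lemma~\ref{lem:airyBounds}, and $h^{-2/3}\cdot h^{-1/3+\e/2}=h^{-1+\e/2}$. One small wrinkle: as noted in the proof of Lemma~\ref{lem:nearGlanceEst}, the operators $J,J^{-1}$ have symbols in $h^{-\alpha}S_\delta$ for some (arbitrarily small) $\alpha>0$ because of the $e^{i\e(h)\theta'/h}$ factor, so strictly speaking they are $O(h^{-\alpha})$ rather than $O(1)$; this is harmless here and absorbed into the implicit constant, but you may want to flag it.
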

\begin{remark}
Note that one can let $0<\e\leq 2/3$ if we apply the second microlocal calculus of \cite{SjoZwDist}.
\end{remark}

\section{Relation with exact operators in gliding case}
\label{sec:glide}
In the gliding case, we cannot make a simple wavefront set argument to show that $H_g$ is $h^\infty$ close to the exact solution operator.  Instead, we focus on constructing functions that are used in section \ref{sec:BLONearGlance} to produce microlocal descriptions of boundary layer operators and potentials near glancing. In particular, we examine operators of the form $\tilde{A}_g:=B_3J^{-1}$ where
$$B_3F:=\frac{1}{(2\pi h)^{-d+1}}\int (g_0Ai(\rho_h)+ih^{1/3}g_1Ai'(\rho_h))A_-(\alpha_h)e^{\frac{i}{h}\theta(x,\xi)}\mc{F}_h(F)(\xi)d\xi.$$
Let $(y_0,\eta_0)\in S^*\pO$ be a glancing point. Then we have that there exists $U$ a neighborhood of $y_0$ in $\Omega$ such that for $\delta$ and $\gamma$ small enough and $\psi$ with
\begin{gather*} 
\psi\equiv 1\text{ on }\{|y-y_0|<\delta,\, |\eta-\eta_0|<\delta_1,\,\left||\eta|_g-1\right|\leq \gamma h^2\e(h)^{-2}\}\\
\supp\psi \subset \{|y-y_0|<2\delta,\, |\eta-\eta_0|<2\delta_1,\,\left||\eta|_g-1\right|\leq 2\gamma h^2\e(h)^{-2}\}
\end{gather*}
\begin{equation*}\left\{\begin{aligned}(-h^2\Delta-z^2)\tilde{A}_gf&=Kf\\\
\tilde{A}_gf|_{\pO}&=J\mc{A}i\mc{A}_-J^{-1}\oph(\psi) f+Sf\\
\end{aligned}\right.
\end{equation*}
where $K=\O{\mc{D}'(\pO)\to C^\infty(U)}(h^\infty)$ and $S=\O{\mc{D}'\to C^\infty(\pO)}(h^\infty)$. Now, shrinking $\delta $ if necessary, we assume that $B(y_0,3\delta)\subset U$. 
Now, fix $\chi\in C^\infty(\Omega)$ supported in $U$ with $\chi\equiv 1$ on $B(y_0,2\delta)$. Then, using the wavefront set bound on $B_3$, we have that shrinking $\delta$ again if necessary, $\WFh(\tilde{A}_g)\cap \supp \partial\chi=\emptyset.$ So, defining $A_g:=\chi H_g$, we have 
$$\left\{\begin{aligned} 
(-h^2\Delta-z^2)A_gf&=\chi Kf+[h^2\Delta,\chi]\tilde{A}_gf=K_1f\\
A_gf|_{\pO}&=\chi J\mc{A}i\mc{A}_-J^{-1}\oph(\psi)f+\chi Sf\\
&=J\mc{A}i\mc{A}_-J^{-1}\oph(\psi)f+S_1f
\end{aligned}\right.
$$
where $K_1=\O{\mc{D}'(\pO)\to C^\infty(\Omega)}(h^\infty)$ and $S_1=\O{\mc{D}'(\pO)\to C^\infty(\pO)}(h^\infty).$

Similarly, there exists $B_g=\chi B_4J^{-1}$ with 
$$B_4F:=\frac{1}{(2\pi h)^{-d+1}}\int (g_0Ai(\rho_h)+ih^{1/3}g_1Ai'(\rho_h))A_-'(\alpha_h)e^{\frac{i}{h}\theta(x,\xi)}\mc{F}_h(F)(\xi)d\xi$$
such that 
$$\left\{\begin{aligned} 
(-h^2\Delta-z^2)B_gf&=K_2f\\
B_gf|_{\pO}&=J\mc{A}i\mc{A}'_-J^{-1}\oph(\psi)f+S_2f
\end{aligned}\right.
$$
where $K_2=\O{\mc{D}'(\pO)\to C^\infty(\Omega)}(h^\infty)$ and $S_2=\O{\mc{D}'(\pO)\to C^\infty(\pO)}(h^\infty).$

Note also that with $\nu$ the outward unit normal to $\Omega$,
$$\begin{aligned}
\partial_\nu A_gf|_{\pO}=-J(h^{-2/3}C\mc{A}i'\mc{A}_-+B\mc{A}i\mc{A}_-)J^{-1}\oph(\psi)f+S_3f\\
\partial_\nu B_gf|_{\pO}=-J(h^{-2/3}C\mc{A}i'\mc{A}_-'+B\mc{A}i\mc{A}_-')J^{-1}\oph(\psi)f+S_4f
\end{aligned}
$$
where $S_i=\O{\mc{D}'(\pO)\to C^\infty(\pO)}(h^\infty)$ and $B,C\in \Ph{}{}$ are as in \eqref{eqn:DtoNMicrolocal}. Then we have,
\begin{lemma}
\label{lem:glideBLOPara}
Near a gliding point, there exist operators $A_{i,g}$ $i=1,2$ so that 
\begin{equation*}
\left\{\begin{aligned} 
(-h^2\Delta-z^2)A_{i,g}f&=K_if\text{ in }\Omega\\
A_{1,g}|_{\pO}&=J\mc{A}i\mc{A}_-J^{-1}\oph(\psi)f+S_{1,r}f\\
A_{2,g}|_{\pO}&=J\mc{A}i\mc{A}_-'J^{-1}\oph(\psi)f+S_{2,r}f\\
\partial_\nu A_{1,g}|_{\pO}&=-J(h^{-2/3}C\mc{A}i'\mc{A}_-+B\mc{A}i\mc{A}_-)J^{-1}\oph(\psi)f+S_{1,\nu}f\\
\partial_\nu A_{2,g}|_{\pO}&=-J(h^{-2/3}C\mc{A}i'\mc{A}_-'+B\mc{A}i\mc{A}_-')J^{-1}\oph(\psi)f+S_{2,\nu}f
\end{aligned}\right.
\end{equation*}
where $K_i=\O{\mc{D}'(\pO)\to C^\infty(\Omega)}(h^\infty)$ and $S_{i,\cdot}=\O{\mc{D}'(\pO)\to C^\infty(\pO)}(h^\infty).$
\end{lemma}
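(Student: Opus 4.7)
The plan is essentially to collect and extend the construction already set up in the preceding paragraphs. I would define $A_{1,g} := \chi B_3 J^{-1}$ with $B_3$ as in \eqref{eqn:tempInside}, and $A_{2,g} := \chi B_4 J^{-1}$ with $B_4$ defined by the same Fourier-Airy ansatz but with $A_-(\alpha_h)$ replaced by $A_-'(\alpha_h)$. Here $\chi\in C_c^\infty(\Omega)$ is supported in the neighborhood $U$ of $y_0$ where Lemma~\ref{lem:phase2} produces the phases $\theta,\rho$, with $\chi\equiv 1$ on a smaller neighborhood containing $\pO\cap U$; the amplitudes $g_0,g_1$ solve the transport equations \eqref{eqn:transEuc} to infinite order, with $g_0|_{\pO}$ elliptic and $g_1|_{\pO}=0$, exactly as prescribed in the subsection following Lemma~\ref{lem:trans}.

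First I would verify the interior equation. Applying $-h^2\Delta-z^2$ under the integral, the eikonal equations of Lemma~\ref{lem:phase2} cancel the top-order terms and the transport equations cancel the successive subprincipal terms, producing operators whose kernels are $\O{\mc D'(\pO)\to C^\infty(\Omega)}(h^\infty)$ by the symbol estimates in Lemmas~\ref{lem:FAIOInEst1} and~\ref{lem:FAIOEst1}. The cutoff generates the additional term $[h^2\Delta,\chi]B_iJ^{-1}$. To absorb this into $K_i$, I invoke the wavefront bound on $B_3J^{-1}$ (and the analogous bound for $B_4J^{-1}$, proved identically using Lemma~\ref{lem:rhoOpIn} and the singular-phase calculus of Lemma~\ref{lem:singPhaseIn}), which places $\WFh{}'(B_iJ^{-1})$ on outgoing rays from $\overline{\cup_{n\ge 0}\beta^n(y_0,\eta_0)}$. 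After shrinking $\delta,\delta_1,\gamma$ in \eqref{eqn:psiCond} and choosing $\chi\equiv 1$ on a sufficiently large half-ball around $y_0$, each such ray either exits the boundary or remains inside $\{\chi\equiv 1\}$ long enough that its intersection with $\supp d\chi$ is empty microlocally, so $[h^2\Delta,\chi]B_iJ^{-1}\oph(\psi)=\O{\mc D'(\pO)\to C^\infty(\Omega)}(h^\infty)$.

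Second I would compute the boundary traces. Since $\rho|_{\pO}=\alpha$, $\theta|_{\pO}=\theta_b$, $g_1|_{\pO}=0$, and $g_0|_{\pO}$ is the elliptic symbol of $J$, direct restriction gives $B_3F|_{\pO}=J(\mc{A}i\mc{A}_- F)+\O{C^\infty}(h^\infty)$ and $B_4F|_{\pO}=J(\mc{A}i\mc{A}_-'F)+\O{C^\infty}(h^\infty)$. Composing with $J^{-1}\oph(\psi)$ and noting $\chi\equiv 1$ near $\pO$ yields the prescribed boundary values up to the smoothing remainders $S_{i,r}$. For the normal derivatives, I would differentiate under the integral. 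Using $\partial_\nu\theta|_{\pO}=0$ (which follows from the eikonal equations together with $\nabla\rho=\partial_\nu\rho\,\nu$ at $\pO$), equation \eqref{eqn:CB} gives the new amplitudes $g_0'=\partial_\nu g_0$ and $g_1'=-ih^{-1}g_0\partial_\nu\rho+\partial_\nu g_1$, where $g_1'$ is elliptic of order $h^{-1}$. Identifying the principal part with the operator $Jh^{-2/3}C\Phi$ with $\Phi\in\{\mc A i'\mc A_-,\mc A i'\mc A_-'\}$, and absorbing the subprincipal part into $B\in\Ph{}{}$ exactly as in \eqref{eqn:DtoNMicrolocal}, produces the stated formulas.

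The main obstacle will be the wavefront-set step in the first paragraph: unlike the diffractive case handled in Section~\ref{sec:relationExact}, gliding trajectories remain inside $\Omega$, and the wavefront of $B_iJ^{-1}$ spreads along the entire union of iterates $\beta^n(y_0,\eta_0)$, so there is no a~priori reason that a local cutoff $\chi$ kills the commutator. The resolution relies on strict convexity together with the quantitative microlocalization \eqref{eqn:psiCond}: by Lemma~\ref{lem:iteratedStability}, the first several iterates $\beta^n$ of the $h^2\e(h)^{-2}$-neighborhood of $(y_0,\eta_0)$ still lie close to glancing, and each successive reflection moves the base point by an amount controlled by the curvature. Choosing the spatial radius of $\supp\psi$ first, then selecting $\chi$ to be identically one on a ball large enough to contain all bicharacteristic segments emanating from this microsupport before their first boundary intersection outside $\{\chi=1\}$, suffices to eliminate the commutator modulo $h^\infty$. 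This geometric selection of $\delta$ relative to $\gamma$ and $\chi$ is the only delicate point; everything else is a routine stationary-phase and symbol-calculus computation.
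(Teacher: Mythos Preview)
Your construction is exactly the paper's: $A_{1,g}=\chi B_3 J^{-1}$ and $A_{2,g}=\chi B_4 J^{-1}$, and your computation of the boundary traces and normal derivatives via \eqref{eqn:CB} is correct. The error is in the wavefront bound you invoke for the commutator step. You write that ${\WFh}'(B_iJ^{-1})$ lies over outgoing rays from $\overline{\cup_{n\ge 0}\beta^n(y_0,\eta_0)}$, but that is the bound for $H_g=B_3\circ(\mc{A}i\mc{A}_-)^{-1}\circ J^{-1}$, not for $B_3J^{-1}$. The unnumbered lemma immediately preceding Section~\ref{sec:relationExact} gives two distinct bounds: ${\WFh}'(B_3J^{-1})$ lies only over the single outgoing ray from $(y,\eta)$, while the iterated-$\beta$ bound applies to $H_g$. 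The iterates are generated entirely by the multiplier $(\mc{A}i\mc{A}_-)^{-1}$ (Lemma~\ref{lem:WFhAiry}), and that multiplier is \emph{absent} from $B_3$ and $B_4$; this is precisely why the paper works with $B_3,B_4$ rather than with $H_g$ in the gliding case.

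With the correct bound your ``main obstacle'' disappears. For $(y,\eta)\in\supp\psi$ the single interior chord from $y$ to $\pi(\beta(y,\eta))$ has length $O(\sqrt{1-|\eta|_g^2})$, hence stays within an $O(\delta)+o(1)$ ball about $y_0$; shrinking $\delta$ so that this ball lies inside $\{\chi\equiv 1\}\subset U$ gives $\WFh(\tilde A_g f)\cap\supp\partial\chi=\emptyset$ directly, as the paper states in one line. Your proposed workaround, by contrast, would not succeed even on its own terms: if the wavefront really did contain all iterates, the base points $\pi\circ\beta^n(y,\eta)$ advance around $\pO$ by increments of order $\sqrt{1-|\eta|_g^2}$ and eventually leave any fixed neighborhood of $y_0$, so no compactly supported $\chi$ could annihilate $[h^2\Delta,\chi]$ on that set.
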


\section{Wave equation parametrices}
\label{sec:waveParametrices}
Let $\Omega\subset\re^d$ be a strictly convex domain with smooth boundary. Let $\Omega_1=\Omega$ and $\Omega_2=\re^d\setminus\overline{\Omega}$. 
In order to handle the glancing region, we construct microlocal parametrices for 
$$
\begin{cases}(\partial_t^2-\Delta)u_i=0,&\,\text{ in }\Omega_i\\
u_1=u_2&\text{ on }\pO\\
\partial_{\nu_1}u_1+\partial_{\nu_2}u_2=f&\text{ on }\pO.
\end{cases}
$$

That is, we construct $H$ such that if $f$ has wavefront set in a small conic neighborhood of $(t_0,x_0,\tau_0,\xi_0)\in T^*(\re\times \partial\Omega)$ over which a glancing ray passes, then
\begin{equation}
\label{eqn:parametrixCond}
\begin{cases}(\partial_t^2-\Delta)Hf&\in C^\infty(\Omega_i)\\
(H_1f-H_2f)|_{\partial\Omega}&\in C^\infty(\partial\Omega)\\
\partial_{\nu_1} H_1f+\partial_{\nu_2}H_2f-f&\in C^\infty(\pO)\\
Hf\in C^\infty&t\ll 0
\end{cases}
\end{equation}
First, the wavefront set property for $f$ implies that $f$ is $C^\infty$ outside of a compact set in $t$. Hence, by \cite[Theorem 6.24]{HOV1}, the solution when $f$ is replaced by $\chi(t)f$ differs only by a $C^\infty$ function. Thus, without loss of generality, we assume that $f$ has compact support.

We will use the construction in Section \ref{sec:BLONearGlance}. To pass from the parametrix for $-h^2\Delta -z^2$ to a \eqref{eqn:parametrixCond}, set $z=1$, $h=\tau^{-1}$, and rescale $\xi'\to \xi'\tau$. 

\noindent That is, letting 
$$H_hf(x,h):=(2\pi h)^{-d+1}\int g(x,\xi',y,h)f(y,h)dyd\xi'\,,$$
$$\tilde{H}f(x,\tau):=(2\pi \tau^{-1})^{-d+1}\int g(x,\tau\xi', y, \tau^{-1})f(y,\tau^{-1})dyd\xi'.$$
We then have that $\tilde{H}$ acts on functions $f$ with wavefront set in $||\xi'\tau^{-1}|-1|\leq \e$ and is $\O{}(\tau^{-\infty})$ on functions with wavefront set away from this set. That is, $\tilde{H}$ acts on functions with wavefront set in a conic neighborhood of glancing.  Then, 
\m H:=\mc{F}^{-1}_{t\to \tau}\tilde{H}\mc{F}_{t\to\tau}\,\,\m
is the desired parametrix.

\section{Semiclassical Fourier integral operators with singular phase}
\label{sec:singFIO}

We now define the semiclassical analog of Fourier integral operators with singular phase. We follow the treatment in the homogeneous setting given in \cite[Section VII.6]{TaylorPseud} (For another treatment of Fourier integral operators with singular phase in the homogeneous setting, see \cite[Appendix D]{MelTayl}.).

Throughout this section, we assume that $U\subset \re^d$ is open and $\varphi\in C^\infty(U)$ is a nondegenerate phase function with the caveat that, letting $\gamma$ be a boundary defining function for $\overline{U}$ and $0\leq a<1$,  it only has
\begin{equation}
\label{eqn:phase2}
\varphi\in C^1(\overline{U})\,\quad\quad\quad\quad|D_{x}^\beta D_{\xi}^\alpha\varphi|\leq C_{\alpha,\beta}\gamma^{(1+a)-|\alpha|-|\beta|},\quad \text{ if }|\alpha|+|\beta|\geq 2.
\end{equation}
Then, let $a\in S_{\delta}(U)$ have 
\begin{equation}
\label{eqn:amp1}
\supp a\subset \{\gamma \geq ch^b\},\quad \supp a \Subset \overline{U}
\end{equation}
where $c>0$ and $0<b<1$. Here, we allow $\delta \in [0,b)$. 

A {\emph {Fourier integral operator with singular phase $\varphi$}} is an operator $Au$ defined by 
$$Au(x)=(2\pi h)^{-d}\int a(x,\xi)e^{\frac{i}{h}(\varphi(x,\xi)-\la y,\xi\ra)}u(y)dyd\xi.$$
Since $a$ has compact support, this operator is well defined. We need to prove the following lemma.
\begin{lemma}
Let $\varphi$ have \eqref{eqn:phase2} and $a\in S_{\delta}$ have \eqref{eqn:amp1}. Let $A$ be a Fourier integral operator with singular phase $\varphi$. Then
\m {\WFh}'(A)\subset\{ (x,\partial_x\varphi (x,\xi),\xi,\partial_{\xi}\varphi(x,\xi)\}.\,\,\m
\end{lemma}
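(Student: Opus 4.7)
The plan is to pass to the Schwartz kernel
\[
K_A(x,y)=(2\pi h)^{-d}\int a(x,\xi)\,e^{\frac{i}{h}\Phi(x,y,\xi)}\,d\xi,\qquad \Phi(x,y,\xi):=\varphi(x,\xi)-\langle y,\xi\rangle,
\]
and to show that if $(x_0,\eta_0,y_0,\xi_0)$ does not lie on the claimed relation, i.e.\ either $\eta_0\neq\partial_x\varphi(x_0,\xi_0)$ or $y_0\neq\partial_\xi\varphi(x_0,\xi_0)$, then $(x_0,y_0,\eta_0,-\xi_0)\notin\WFh(K_A)$. The first step is to test $K_A$ against a smooth cutoff $\chi(x-x_0)\chi(y-y_0)$ and compute the inverse Fourier transform $\widetilde K(\eta,\xi)$; by Lemma~\ref{lem:pullback}/\ref{lem:pushforward} (or directly), it suffices to prove $\widetilde K(\eta,\xi)=\O{}(h^\infty)$ uniformly for $(\eta,\xi)$ in a small neighborhood of $(\eta_0,\xi_0)$.

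After interchanging orders of integration, the problem reduces to estimating an oscillatory integral
\[
I(\eta,\xi;h)=(2\pi h)^{-d}\iiint \tilde a(x,y,\xi')\,e^{\frac{i}{h}\Psi(x,y,\xi',\eta,\xi)}\,dx\,dy\,d\xi',
\]
with $\Psi=\varphi(x,\xi')-\langle y,\xi'+\xi\rangle-\langle x,\eta\rangle$ and $\tilde a=a(x,\xi')\chi(x-x_0)\chi(y-y_0)$. The stationary points of $\Psi$ occur precisely on $\{\xi'=-\xi,\ \eta=\partial_x\varphi(x,-\xi),\ y=\partial_\xi\varphi(x,-\xi)\}$, which is the relation in question; so our hypothesis rules them out on $\supp\tilde a$ for $(\eta,\xi)$ sufficiently close to $(\eta_0,\xi_0)$.

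The core of the argument is nonstationary phase, performed in three stages with operators of the form $L=(i|\partial\Psi|^2/h)^{-1}\,\partial\Psi\cdot\partial$. First, integration by parts in $y$ with $\partial_y\Psi=-(\xi'+\xi)$ produces, modulo $\O{}(h^\infty)$, a reduction to the region $|\xi'+\xi|\lesssim h^{\gamma_0}$ for any desired $\gamma_0<1$, since $\chi(y-y_0)$ is compactly supported and derivatives act only on the cutoff. Then, depending on which nondegeneracy fails at $(x_0,\eta_0,y_0,\xi_0)$, either integration by parts in $x$ (using $\partial_x\Psi=\partial_x\varphi(x,\xi')-\eta$, bounded below when $\eta_0\neq\partial_x\varphi(x_0,-\xi_0)$) or in $\xi'$ (using $\partial_{\xi'}\Psi=\partial_{\xi'}\varphi(x,\xi')-y$, bounded below when $y_0\neq\partial_\xi\varphi(x_0,-\xi_0)$) gains powers of $h$. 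Each integration by parts produces coefficients involving $\partial^2\varphi\cdot|\partial\Psi|^{-2}$ and derivatives of $a$; because of~\eqref{eqn:phase2} these coefficients are bounded on $\supp a$ by $\gamma^{a-1}\leq C h^{-b(1-a)}$, while derivatives of $a\in S_\delta$ cost $h^{-\delta}$. Hence each IBP step produces a net gain of $h^{1-\delta-b(1-a)}$ (and better if only amplitude, not phase, is differentiated), so after $N$ iterations we obtain $I=\O{}(h^{N(1-\delta-b(1-a))})$, giving $\O{}(h^\infty)$ provided the exponent is positive.

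The main obstacle, and the one requiring the most bookkeeping, is step three: the coefficients produced by the IBP operators involve second and higher derivatives of $\varphi$, which are singular of order $\gamma^{(1+a)-|\alpha|-|\beta|}$ as $\gamma\to0$. One must verify that the combined effect of (i) differentiating the singular coefficient $\partial\Psi/|\partial\Psi|^2$, (ii) differentiating $a\in S_\delta$, and (iii) the rescaled Leibniz rule on $\tilde a$, still produces, at each step, a gain of at least one positive power of $h$. The support condition $\gamma\gtrsim h^b$ and the inequality $\delta<b$ (together with $a<1$) are exactly what keeps the tradeoff $h\cdot h^{-\delta}\cdot \gamma^{a-1-\cdots}$ positive; this last inequality, together with the standard reduction to a finite number of IBPs via a nondegenerate phase argument in $(y,\xi')$ first to compress to $\xi'\approx-\xi$, is the crucial input. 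Once the estimate $I=\O{}(h^N)$ is available for each $N$, uniformly in a neighborhood of $(\eta_0,\xi_0)$ and modulo rescaling of $\eta$, one concludes $(x_0,y_0,\eta_0,-\xi_0)\notin\WFh(K_A)$ and hence the claimed inclusion ${\WFh}'(A)\subset\{(x,\partial_x\varphi(x,\xi),\xi,\partial_\xi\varphi(x,\xi))\}$.
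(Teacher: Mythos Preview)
Your overall strategy—testing the kernel against localized plane waves and running nonstationary phase with the operator $L=(i|\partial\Psi|^2/h)^{-1}\partial\Psi\cdot\partial$—is exactly the paper's approach. The gap is in the power counting for the iterated transpose $(L^t)^k$.

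You assert that each integration by parts yields a net gain of $h^{1-\delta-b(1-a)}$, combining the loss $h^{-\delta}$ from differentiating the amplitude with the loss $\gamma^{a-1}\le h^{-b(1-a)}$ from the singular coefficient $\partial^2\varphi$. But these two losses never occur in the same term of a single $L^t$: one term is $c\cdot\nabla a$ (bounded coefficient, derivative on amplitude) and the other is $(\nabla\cdot c)a$ (singular coefficient, no derivative on amplitude). More seriously, your per-step accounting does not correctly propagate through $k$ iterations. After $k$ steps the most singular coefficient involves $\partial^{k+1}\varphi$, of size $\gamma^{a-k}$, \emph{not} $(\partial^2\varphi)^k\sim\gamma^{k(a-1)}$; writing $(L^t)^k=\sum_{|\sigma|\le k}h^kA_\sigma^k D^\sigma$ one finds $|A_\sigma^k|\lesssim \gamma^{a-(k-|\sigma|)}$, so on $\supp a$ the term of order $|\sigma|$ contributes $h^{k(1-b)+ab+|\sigma|(b-\delta)}$. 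The minimum over $|\sigma|$ is positive precisely because $b<1$ and $\delta<b$ separately, giving a gain of order $h^{k\min(1-b,\,b-\delta)}$.

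Your stated condition $1-\delta-b(1-a)>0$ is \emph{not} implied by the hypotheses $0\le a<1$ and $0\le\delta<b<1$: take $a=0$, $\delta=0.4$, $b=0.9$, for which $1-\delta-b(1-a)=-0.3$ yet the paper's exponent $\min(1-b,b-\delta)=0.1$ is positive. So as written your argument does not close for all admissible parameters; the fix is the sharper coefficient bound above, which is what the paper records.
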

\begin{proof}
To see this, consider
$$\la \chi(x)e^{-\frac{i}{h}\la x,\theta\ra},Au\ra=(2\pi h)^{-d}\int u(y)\chi(x)a(x,\xi)e^{\frac{i}{h}\Phi(x,\xi,y,\theta)}dydxd\xi$$
where $\Phi(x,\xi,y,\theta)=\varphi(x,\xi)-\la y,\xi\ra-\la x,\theta\ra.$
Then, away from $\partial_x\varphi=\partial_{\xi}\varphi=0,$ there exists $L$ such that $Le^{\frac{i}{h}\Phi}=e^{\frac{i}{h}\Phi}$. By \eqref{eqn:phase2} we have
$$(L^t)^k=\sum_{0\leq |\sigma|\leq k}h^kA_{\sigma}^k(x,\xi)D_x^\sigma.$$
Here, $A_\sigma^k$ has 
\m |D_x^\beta D_\xi ^\alpha h^k A_\sigma ^k|\leq h^k (1+\gamma ^{a-(k-|\sigma|)-|\alpha|-|\beta|}).\,\,\m
Thus, on $\supp a$, 
$$|h^kA_{\sigma}^k|\leq Ch^k(1+h^{(a-(k-|\sigma|))b})\leq Ch^{k(1-b)+ab+|\sigma|b}.$$
Thus, 
$$|(L^t)^ka|\leq \sum_{0\leq |\sigma |\leq k}Ch^{k(1-b)+ab+|\sigma|b-\delta|\sigma|)}\leq h^{k\min(1-b,b-\delta)}.$$
Since $\delta<b<1$, this gives the result.
\end{proof}

\backmatter
\bibliographystyle{amsalpha}
\bibliography{biblio}
\printindex
\end{document}